\theoremstyle{plain}
\newtheorem{Thm}{Theorem}[section]
\newtheorem{Lem}[Thm]{Lemma}
\newtheorem{Cor}[Thm]{Corollary}
\newtheorem{Pro}[Thm]{Proposition}
\newtheorem{Prp}[Thm]{Properties}
\newtheorem{Sub}[Thm]{Sublemma}
\theoremstyle{definition}
\newtheorem{Def}[Thm]{Definition}
\newtheorem{Exm}[Thm]{Example}
\newtheorem{Exs}[Thm]{Examples}
\theoremstyle{remark}
\newtheorem{Rem}[Thm]{Remark}
\newtheorem{Rms}[Thm]{Remarks}
\newtheorem*{Com}{Commentary}
\newcommand{\myEmail}{piotr.niemiec@uj.edu.pl}
\newcommand{\myAddress}{\noindent{}Piotr Niemiec\\{}Jagiellonian University\\{}Institute of Mathematics\\{}
   ul. \L{}ojasiewicza 6\\{}30-348 Krak\'{o}w\\{}Poland}
\newcommand{\myData}{\author[P. Niemiec]{Piotr Niemiec}\address{\myAddress}\email{\myEmail}}
\newcommand{\AAA}{\mathbb{A}}\newcommand{\BBB}{\mathbb{B}}\newcommand{\CCC}{\mathbb{C}}
\newcommand{\EEE}{\mathbb{E}}\newcommand{\FFF}{\mathbb{F}}\newcommand{\GGG}{\mathbb{G}}
\newcommand{\III}{\mathbb{I}}\newcommand{\JJJ}{\mathbb{J}}
\newcommand{\MMM}{\mathbb{M}}\newcommand{\OOO}{\mathbb{O}}\newcommand{\PPP}{\mathbb{P}}
\newcommand{\QQQ}{\mathbb{Q}}\newcommand{\RRR}{\mathbb{R}}\newcommand{\SSS}{\mathbb{S}}\newcommand{\TTT}{\mathbb{T}}
\newcommand{\XXX}{\mathbb{X}}
\newcommand{\YYY}{\mathbb{Y}}\newcommand{\ZZZ}{\mathbb{Z}}
\newcommand{\AAa}{\CMcal{A}}\newcommand{\BBb}{\CMcal{B}}\newcommand{\CCc}{\CMcal{C}}
\newcommand{\EEe}{\CMcal{E}}\newcommand{\FFf}{\CMcal{F}}\newcommand{\GGg}{\CMcal{G}}\newcommand{\HHh}{\CMcal{H}}
\newcommand{\IIi}{\CMcal{I}}\newcommand{\KKk}{\CMcal{K}}
\newcommand{\MMm}{\CMcal{M}}\newcommand{\NNn}{\CMcal{N}}\newcommand{\PPp}{\CMcal{P}}
\newcommand{\UUu}{\CMcal{U}}\newcommand{\WWw}{\CMcal{W}}
\newcommand{\ZZz}{\CMcal{Z}}
\newcommand{\AaA}{\EuScript{A}}\newcommand{\BbB}{\EuScript{B}}\newcommand{\CcC}{\EuScript{C}}
\newcommand{\DdD}{\EuScript{D}}\newcommand{\EeE}{\EuScript{E}}\newcommand{\FfF}{\EuScript{F}}
\newcommand{\GgG}{\EuScript{G}}\newcommand{\HhH}{\EuScript{H}}\newcommand{\IiI}{\EuScript{I}}
\newcommand{\LlL}{\EuScript{L}}
\newcommand{\MmM}{\EuScript{M}}\newcommand{\NnN}{\EuScript{N}}
\newcommand{\PpP}{\EuScript{P}}\newcommand{\QqQ}{\EuScript{Q}}\newcommand{\RrR}{\EuScript{R}}
\newcommand{\SsS}{\EuScript{S}}\newcommand{\UuU}{\EuScript{U}}
\newcommand{\XxX}{\EuScript{X}}
\newcommand{\ZzZ}{\EuScript{Z}}
\newcommand{\Bb}{\mathfrak{B}}\newcommand{\Dd}{\mathfrak{D}}
\newcommand{\Ff}{\mathfrak{F}}
\newcommand{\Mm}{\mathfrak{M}}\newcommand{\Nn}{\mathfrak{N}}\newcommand{\Pp}{\mathfrak{P}}
\newcommand{\Ww}{\mathfrak{W}}\newcommand{\Xx}{\mathfrak{X}}
\newcommand{\Zz}{\mathfrak{Z}}
\newcommand{\aA}{\mathfrak{a}}\newcommand{\bB}{\mathfrak{b}}
\newcommand{\fF}{\mathfrak{f}}
\newcommand{\jJ}{\mathfrak{j}}
\newcommand{\mM}{\mathfrak{m}}\newcommand{\pP}{\mathfrak{p}}
\newcommand{\sS}{\mathfrak{s}}
\newcommand{\uU}{\mathfrak{u}}
\newcommand{\aaA}{\mathscr{A}}\newcommand{\bbB}{\mathscr{B}}\newcommand{\ccC}{\mathscr{C}}\newcommand{\ddD}{\mathscr{D}}
\newcommand{\eeE}{\mathscr{E}}\newcommand{\ffF}{\mathscr{F}}
\newcommand{\rrR}{\mathscr{R}}\newcommand{\ssS}{\mathscr{S}}
\newcommand{\wwW}{\mathscr{W}}\newcommand{\xxX}{\mathscr{X}}
\newcommand{\yyY}{\mathscr{Y}}\newcommand{\zzZ}{\mathscr{Z}}
\newcommand{\aAA}{\pmb{A}}\newcommand{\bBB}{\pmb{B}}
\newcommand{\eEE}{\pmb{E}}\newcommand{\fFF}{\pmb{F}}\newcommand{\gGG}{\pmb{G}}
\newcommand{\jJJ}{\pmb{J}}\newcommand{\lLL}{\pmb{L}}
\newcommand{\mMM}{\pmb{M}}
\newcommand{\qQQ}{\pmb{Q}}\newcommand{\sSS}{\pmb{S}}\newcommand{\tTT}{\pmb{T}}
\newcommand{\xXX}{\pmb{X}}
\newcommand{\yYY}{\pmb{Y}}
\newcommand{\SECT}[1]{\section{#1}\renewcommand{\theequation}{\thesection-\arabic{equation}}\setcounter{equation}{0}}
\newcounter{help}
\newcommand{\ITE}[3]{\ifthenelse{#1}{#2}{#3}}\newcommand{\ITEE}[3]{\ITE{\equal{#1}{#2}}{#3}{}}
\newcommand{\supp}{\operatorname{supp}}
\newcommand{\id}{\operatorname{id}}
\newcommand{\cll}{\operatorname{cl}}\newcommand{\intt}{\operatorname{int}}\newcommand{\lin}{\operatorname{lin}}
\newcommand{\card}{\operatorname{card}}
\newcommand{\Card}{\operatorname{Card}}
\newcommand{\red}{\operatorname{red}}\newcommand{\tr}{\operatorname{tr}}
\newcommand{\GCD}{\operatorname{GCD}}
\newcommand{\scalar}[2]{\langle #1,#2\rangle}\newcommand{\leqsl}{\leqslant}\newcommand{\geqsl}{\geqslant}
\newcommand{\varempty}{\varnothing}\newcommand{\dd}{\colon}
\newcommand{\dint}[1]{\,\textup{d} #1}
\newcommand{\TFCAE}{The following conditions are equivalent:}
\newcommand{\tfcae}{the following conditions are equivalent:}\newcommand{\iaoi}{if and only if}
\newcommand{\THM}[1]{Theorem~\textup{\ref{thm:#1}}}
\newcommand{\DEF}[1]{Definition~\textup{\ref{def:#1}}}\newcommand{\COR}[1]{Corollary~\textup{\ref{cor:#1}}}
\newcommand{\LEM}[1]{Lemma~\textup{\ref{lem:#1}}}\newcommand{\PRO}[1]{Proposition~\textup{\ref{pro:#1}}}
\newcommand{\REM}[1]{Remark~\textup{\ref{rem:#1}}}
\newcommand{\EXM}[1]{Example~\textup{\ref{exm:#1}}}\newcommand{\EXS}[1]{Examples~\textup{\ref{exs:#1}}}
\newenvironment{thm}[1]{\begin{Thm}\label{thm:#1}}{\end{Thm}}\newenvironment{lem}[1]{\begin{Lem}\label{lem:#1}}{\end{Lem}}
\newenvironment{cor}[1]{\begin{Cor}\label{cor:#1}}{\end{Cor}}\newenvironment{pro}[1]{\begin{Pro}\label{pro:#1}}{\end{Pro}}
\newenvironment{dfn}[1]{\begin{Def}\label{def:#1}}{\end{Def}}
\newenvironment{exm}[1]{\begin{Exm}\label{exm:#1}}{\end{Exm}}\newenvironment{exs}[1]{\begin{Exs}\label{exs:#1}}{\end{Exs}}
\newenvironment{rem}[1]{\begin{Rem}\label{rem:#1}}{\end{Rem}}
\newenvironment{thmm}[2]{\begin{Thm}[#2]\label{thm:#1}}{\end{Thm}}
\newcommand{\bibITEM}[2]{\ITE{\equal{#2}{}}{\bibitem{#1} }{\bibitem[#2]{#1} }}
\newcommand{\BIB}[8]{
   \bibITEM{#1}{#8} #2, \textit{#3}, #4{} \textbf{#5} (#6), #7.}
\newcommand{\myBIB}[6][P. Niemiec]{#1, \textit{#2}, #3{}\ITE{\equal{#4}{}}{}{ \textbf{#4}} (#5), #6.}
\newcommand{\BIb}[6]{
   \bibITEM{#1}{#6} #2, \textit{#3}, #4, #5.}
\newcommand{\BiB}[9]{
   \bibITEM{#1}{#9} #2, \textit{#3}, #4{} \textit{#5}, #6, #7, #8.}
\newcommand{\myBAPP}[3][P. Niemiec]{
   #1, \textit{#2}, #3}
\newcommand{\jRN}[2][]{
   \ITEE{#2}{ActaM}{\ITE{\equal{#1}{+}}
      {Acta Mathematica}{Acta Math.}}
   \ITEE{#2}{AdvM}{\ITE{\equal{#1}{+}}
      {Advances in Mathematics}{Adv. in Math.}}
   \ITEE{#2}{ACS}{\ITE{\equal{#1}{+}}
      {Applied Categorical Structures}{Appl. Categor. Struct.}}
   \ITEE{#2}{ActaSM}{\ITE{\equal{#1}{+}}
      {Acta Scientiarum Mathematicarum}{Acta Sci. Math.}}
   \ITEE{#2}{AmJM}{\ITE{\equal{#1}{+}}
      {American Journal of Mathematics}{Amer. J. Math.}}
   \ITEE{#2}{AmMMon}{\ITE{\equal{#1}{+}}
      {American Mathematical Monthly}{Amer. Math. Mon.}}
   \ITEE{#2}{AnnSciEcNormSupT}{\ITE{\equal{#1}{+}}
      {Annales Scientifiques de l'\'{E}cole Normale Sup\'{e}rieure (3)}{Ann. Sci. \'{E}c. Norm. Sup\'{e}r. (3)}}
   \ITEE{#2}{AnnM}{\ITE{\equal{#1}{+}}
      {Annals of Mathematics}{Ann. Math.}}
   \ITEE{#2}{AnnProb}{\ITE{\equal{#1}{+}}
      {The Annals of Probability}{Ann. Probab.}}
   \ITEE{#2}{AnnPALog}{\ITE{\equal{#1}{+}}
      {Annals of Pure and Applied Logic}{Ann. Pure Appl. Logic}}
   \ITEE{#2}{APM}{\ITE{\equal{#1}{+}}
      {Annales Polonici Mathematici}{Ann. Polon. Math.}}
   \ITEE{#2}{ArchM}{\ITE{\equal{#1}{+}}
      {Archiv der Mathematik}{Arch. Math.}}
   \ITEE{#2}{AttiAccLincRendNat}{\ITE{\equal{#1}{+}}
      {Atti della Accademia Nazionale dei Lincei. Rendiconti. Classe di Scienze Fisiche, Matematiche e Naturali}
      {Atti Accad. Naz. Lincei Rend. Cl. Sci. Fis. Mat. Nat.}}
   \ITEE{#2}{BAMS}{\ITE{\equal{#1}{+}}
      {Bulletin of the American Mathematical Society}{Bull. Amer. Math. Soc.}}
   \ITEE{#2}{BAustrMS}{\ITE{\equal{#1}{+}}
      {Bulletin of the Australian Mathematical Society}{Bull. Austral. Math. Soc.}}
   \ITEE{#2}{BLondMS}{\ITE{\equal{#1}{+}}
      {Bulletin of the London Mathematical Sociecy}{Bull. Lond. Math. Soc.}}
   \ITEE{#2}{BAPolSSSM}{\ITE{\equal{#1}{+}}
      {Bulletin de l'Acad\'{e}mie Polonaise des Sciences. S\'{e}rie des Sciences Math\'{e}matiques}
      {Bull. Acad. Pol. Sci. S\'{e}r. Sci. Math.}}
   \ITEE{#2}{BullSM}{\ITE{\equal{#1}{+}}
      {Bulletin des Sciences Math\'{e}matiques}{Bull. Sci. Math.}}
   \ITEE{#2}{BullPol}{\ITE{\equal{#1}{+}}
      {Bulletin of the Polish Academy of Sciences: Mathematics}{Bull. Polish Acad. Sci. Math.}}
   \ITEE{#2}{CanadJM}{\ITE{\equal{#1}{+}}
      {Canadian Journal Mathematics}{Canad. J. Math.}}
   \ITEE{#2}{CollectM}{\ITE{\equal{#1}{+}}
      {Collectanea Mathematica}{Collect. Math.}}
   \ITEE{#2}{CMUC}{\ITE{\equal{#1}{+}}
      {Commentationes Mathematicae Universitatis Carolinae}{Comment. Math. Univ. Carolin.}}
   \ITEE{#2}{CRParis}{\ITE{\equal{#1}{+}}
      {C. R. Paris}{C. R. Paris}}
   \ITEE{#2}{CRASParis}{\ITE{\equal{#1}{+}}
      {Comptes Rendus de l'Acad\'{e}mie des Sciences. Paris}{C. R. Acad. Sci. Paris}}
   \ITEE{#2}{CEurJM}{\ITE{\equal{#1}{+}}
      {Central European Journal of Mathematics}{Cent. Eur. J. Math.}}
   \ITEE{#2}{CMHelv}{\ITE{\equal{#1}{+}}
      {Commentarii Mathematici Helvetici}{Comment. Math. Helv.}}
   \ITEE{#2}{CollM}{\ITE{\equal{#1}{+}}
      {Colloquium Mathematicum}{Coll. Math.}}
   \ITEE{#2}{ComposM}{\ITE{\equal{#1}{+}}
      {Compositio Mathematica}{Compos. Math.}}
   \ITEE{#2}{CzMJ}{\ITE{\equal{#1}{+}}
      {Czechoslovak Mathematical Journal}{Czech. Math. J.}}
   \ITEE{#2}{DissM}{\ITE{\equal{#1}{+}}
      {Dissertationes Mathematicae}{Diss. Math.}}
   \ITEE{#2}{DANSSSR}{\ITE{\equal{#1}{+}}
      {Doklady Akademii Nauk SSSR}{Dokl. Akad. Nauk SSSR}}
   \ITEE{#2}{DukeMJ}{\ITE{\equal{#1}{+}}
      {Duke Mathematical Journal}{Duke Math. J.}}
   \ITEE{#2}{ELA}{\ITE{\equal{#1}{+}}
      {The Electronic Journal of Linear Algebra}{Electron. J. Linear Algebra}}
   \ITEE{#2}{ExtrM}{\ITE{\equal{#1}{+}}
      {Extracta Mathematicae}{Extracta Math.}}
   \ITEE{#2}{FM}{\ITE{\equal{#1}{+}}
      {Fundamenta Mathematicae}{Fund. Math.}}
   \ITEE{#2}{FAA}{\ITE{\equal{#1}{+}}
      {Functional Analysis and its Applications}{Funct. Anal. Appl.}}
   \ITEE{#2}{FunkAnalPril}{\ITE{\equal{#1}{+}}
      {Funktsional'ny\u{\i} Analiz i Ego Prilozheniya}{Funkts. Anal. Prilozh.}}
   \ITEE{#2}{GTopA}{\ITE{\equal{#1}{+}}
      {General Topology and its Applications}{General Topol. Appl.}}
   \ITEE{#2}{IllinoisJM}{\ITE{\equal{#1}{+}}
      {Illinois Journal of Mathematics}{Illinois J. Math.}}
   \ITEE{#2}{IndagMP}{\ITE{\equal{#1}{+}}
      {Indagationes Mathematicae (Proceedings)}{Indagationes Math. Proc.}}
   \ITEE{#2}{IndianaUMJ}{\ITE{\equal{#1}{+}}
      {Indiana University Mathematical Journal}{Indiana Univ. Math. J.}}
   \ITEE{#2}{InHauEtSPM}{\ITE{\equal{#1}{+}}
      {Inst. Hautes \'{E}tudes Sci. Publ. Math.}{Inst. Hautes \'{E}tudes Sci. Publ. Math.}}
   \ITEE{#2}{IEOT}{\ITE{\equal{#1}{+}}
      {Integral Equations and Operator Theory}{Integr. Equ. Oper. Theory}}
   \ITEE{#2}{IsraelJM}{\ITE{\equal{#1}{+}}
      {Israel Journal of Mathematics}{Israel J. Math.}}
   \ITEE{#2}{JAusMSA}{\ITE{\equal{#1}{+}}
      {Journal of the Australian Mathematical Society. Series A}{J. Aust. Math. Soc. Ser. A}}
   \ITEE{#2}{JCA}{\ITE{\equal{#1}{+}}
      {Journal of Convex Analysis}{J. Convex Anal.}}
   \ITEE{#2}{JChinUST}{\ITE{\equal{#1}{+}}
      {J. China Univ. Sci. Tech.}{J. China Univ. Sci. Tech.}}
   \ITEE{#2}{JFA}{\ITE{\equal{#1}{+}}
      {Journal of Functional Analysis}{J. Funct. Anal.}}
   \ITEE{#2}{JKoreanMS}{\ITE{\equal{#1}{+}}
      {Journal of the Korean Mathematical Society}{J. Korean Math. Soc.}}
   \ITEE{#2}{JMAnApp}{\ITE{\equal{#1}{+}}
      {J. Math. Anal. Appl.}{J. Math. Anal. Appl.}}
   \ITEE{#2}{JOT}{\ITE{\equal{#1}{+}}
      {Journal of Operator Theory}{J. Operator Theory}}
   \ITEE{#2}{KodaiMSemRep}{\ITE{\equal{#1}{+}}
      {Kodai Math. Sem. Rep.}{Kodai Math. Sem. Rep.}}
   \ITEE{#2}{LAA}{\ITE{\equal{#1}{+}}
      {Linear Algebra and its Applications}{Linear Algebra Appl.}}
   \ITEE{#2}{LMLA}{\ITE{\equal{#1}{+}}
      {Linear and Multilinear Algebra}{Linear Multilinear Algebra}}
   \ITEE{#2}{MathJap}{\ITE{\equal{#1}{+}}
      {Math. Japon.}{Math. Japon.}}
   \ITEE{#2}{MLQ}{\ITE{\equal{#1}{+}}
      {Mathematical Logic Quarterly}{Math. Log. Q.}}
   \ITEE{#2}{MProcCambPhS}{\ITE{\equal{#1}{+}}
      {Mathematical Proceedings of the Cambridge Philosophical Society}{Math. Proc. Cambridge Phil. Soc.}}
   \ITEE{#2}{MMag}{\ITE{\equal{#1}{+}}
      {Mathematics Magazine}{Math. Mag.}}
   \ITEE{#2}{MSb}{\ITE{\equal{#1}{+}}
      {Matematicheski\u{\i} Sbornik}{Mat. Sb.}}
   \ITEE{#2}{MStud}{\ITE{\equal{#1}{+}}
      {Matematychni Studi\"{\i}}{Mat. Stud.}}
   \ITEE{#2}{MScand}{\ITE{\equal{#1}{+}}
      {Mathematica Scandinavica}{Math. Scand.}}
   \ITEE{#2}{MAnn}{\ITE{\equal{#1}{+}}
      {Mathematische Annalen}{Math. Ann.}}
   \ITEE{#2}{MAMS}{\ITE{\equal{#1}{+}}
      {Memoirs of the American Mathematical Society}{Mem. Amer. Math. Soc.}}
   \ITEE{#2}{MichMJ}{\ITE{\equal{#1}{+}}
      {Michigan Mathematical Journal}{Mich. Math. J.}}
   \ITEE{#2}{MonatM}{\ITE{\equal{#1}{+}}
      {Monatshefte f\"{u}r Mathematik}{Mh. Math.}}
   \ITEE{#2}{NonlinA}{\ITE{\equal{#1}{+}}
      {Nonlinear Analysis: Theory, Methods \& Applications}{Nonlinear Anal.}}
   \ITEE{#2}{NAMS}{\ITE{\equal{#1}{+}}
      {Notices of the American Mathematical Society}{Notices Amer. Math. Soc.}}
   \ITEE{#2}{OpusM}{\ITE{\equal{#1}{+}}
      {Opuscula Mathematica}{Opuscula Math.}}
   \ITEE{#2}{PacJM}{\ITE{\equal{#1}{+}}
      {Pacific Journal of Mathematics}{Pacific J. Math.}}
   \ITEE{#2}{PeriodMHung}{\ITE{\equal{#1}{+}}
      {Periodica Mathematica Hungarica}{Period. Math. Hungarica}}
   \ITEE{#2}{PAMS}{\ITE{\equal{#1}{+}}
      {Proceedings of the American Mathematical Society}{Proc. Amer. Math. Soc.}}
   \ITEE{#2}{ProcCambPhS}{\ITE{\equal{#1}{+}}
      {Proceedings of the Cambridge Philosophical Society}{Proc. Cambridge Phil. Soc.}}
   \ITEE{#2}{ProcImpAcadTokyo}{\ITE{\equal{#1}{+}}
      {Proc. Imp. Acad. Tokyo}{Proc. Imp. Acad. Tokyo}}
   \ITEE{#2}{ProcKonink}{\ITE{\equal{#1}{+}}
      {Proceedings of the Koninklijke Nederlandse Akademie van Wetenschappen}{Nederl. Akad. Wetensch. Proc. Ser. A}}
   \ITEE{#2}{PLondMS}{\ITE{\equal{#1}{+}}
      {Proceedings of the London Mathematical Society}{Proc. London Math. Soc.}}
   \ITEE{#2}{PNatlUSA}{\ITE{\equal{#1}{+}}
      {Proceedings of the National Academy of Sciences of the United States of America}{Proc. Natl. Acad. Sci. USA}}
   \ITEE{#2}{PublRIMSKyoto}{\ITE{\equal{#1}{+}}
      {Publ. Res. Inst. Math. Sci. Kyoto Univ.}{Publ. Res. Inst. Math. Sci.}}
   \ITEE{#2}{PWN}{\ITE{\equal{#1}{+}}
      {PWN -- Polish Scientific Publishers, Warszawa}{PWN -- Polish Scientific Publishers, Warszawa}}
   \ITEE{#2}{RCMP}{\ITE{\equal{#1}{+}}
      {Rendiconti del Circolo Matematico di Palermo}{Rend. Circ. Mat. Palermo}}
   \ITEE{#2}{RussMS}{\ITE{\equal{#1}{+}}
      {Russian Mathematical Surveys}{Russian Math. Surveys}}
   \ITEE{#2}{SbM}{\ITE{\equal{#1}{+}}
      {Sbornik: Mathematics}{Sb. Math.}}
   \ITEE{#2}{SciRepTokyoA}{\ITE{\equal{#1}{+}}
      {Science Reports of Tokyo Kyoiku Daigaku, Section A}{Sci. Rep. Tokyo Kyoiku Daigaku Sect. A}}
   \ITEE{#2}{SeminProbStras}{\ITE{\equal{#1}{+}}
      {S\'{e}minaire de probabilit\'{e}s de Strasbourg}{S\'{e}min. Prob. Strasbourg}}
   \ITEE{#2}{SIAMJMAA}{\ITE{\equal{#1}{+}}
      {SIAM Journal on Matrix Analysis and Applications}{SIAM J. Matrix Anal. Appl.}}
   \ITEE{#2}{SibirMZ}{\ITE{\equal{#1}{+}}
      {Sibirski\v{\i} Mat. \v{Z}hurnal}{Sibirsk. Mat. \v{Z}.}}
   \ITEE{#2}{SM}{\ITE{\equal{#1}{+}}
      {Studia Mathematica}{Studia Math.}}
   \ITEE{#2}{TAMS}{\ITE{\equal{#1}{+}}
      {Transactions of the American Mathematical Society}{Trans. Amer. Math. Soc.}}
   \ITEE{#2}{TohokuMJ}{\ITE{\equal{#1}{+}}
      {T\^{o}hoku Mathematical Journal}{T\^{o}hoku Math. J.}}
   \ITEE{#2}{TomskUnivRev}{\ITE{\equal{#1}{+}}
      {Tomsk Universitet Review}{Tomsk. Univ. Rev.}}
   \ITEE{#2}{TopA}{\ITE{\equal{#1}{+}}
      {Topology and its Applications}{Topology Appl.}}
   \ITEE{#2}{TsukubaJM}{\ITE{\equal{#1}{+}}
      {Tsukuba Journal of Mathematics}{Tsukuba J. Math.}}
   \ITEE{#2}{UspekhiMN}{\ITE{\equal{#1}{+}}
      {Uspekhi Matem. Nauk}{Uspekhi Mat. Nauk}}
   }
\newcommand{\paplist}[3][]{
   \ITEE{#3}{NIAkhiezer,IMGlazman1993}{
      \BIb{#2}{N.I. Akhiezer and I.M. Glazman}
         {Theory of Linear Operators in Hilbert Space}
         {Dover Publications, Inc., New York}{1993}{#1}}
   \ITEE{#3}{RDAnderson1967}{
      \BIB{#2}{R.D. Anderson}
         {On topological infinite deficiency}
         {\jRN{MichMJ}}{14}{1967}{365--383}{#1}}
   \ITEE{#3}{RDAnderson,JMcCharen1970}{
      \BIB{#2}{R.D. Anderson and J. McCharen}
         {On extending homeomorphisms to Fr\'{e}chet manifolds}
         {\jRN{PAMS}}{25}{1970}{283--289}{#1}}
   \ITEE{#3}{RDAnderson,DWCurtis,JVanMill1982}{
      \BIB{#2}{R.D. Anderson, D.W. Curtis, J. van Mill}
         {A fake topological Hilbert space}
         {\jRN{TAMS}}{272}{1982}{311--321}{#1}}
   \ITEE{#3}{RArens,JEells1956}{
      \BIB{#2}{R. Arens and J. Eells}
         {On embedding uniform and topological spaces}
         {\jRN{PacJM}}{6}{1956}{397--403}{#1}}
   \ITEE{#3}{NAronszajn,PPanitchpakdi1956}{
      \BIB{#2}{N. Aronszajn and P. Panitchpakdi}
         {Extension of uniformly continuous transformations and hyperconvex metric spaces}
         {\jRN{PacJM}}{6}{1956}{405--439}{#1}}
   \ITEE{#3}{KJBabenko1948}{
      \BIB{#2}{K.J. Babenko}
         {On conjugate functions}
         {\jRN{DANSSSR}}{62}{1948}{157--160}{#1}}
   \ITEE{#3}{TBanakh1995}{
      \BIB{#2}{T.O. Banakh}
         {Topology of spaces of probability measures, I}
         {\jRN{MStud}}{5}{1995}{65--87 (Russian)}{#1}}
   \ITEE{#3}{TBanakh1995a}{
      \BIB{#2}{T.O. Banakh}
         {Topology of spaces of probability measures, II}
         {\jRN{MStud}}{5}{1995}{88--106 (Russian)}{#1}}
   \ITEE{#3}{TBanakh1998}{
      \BIB{#2}{T. Banakh}
         {Characterization of spaces admitting a homotopy dense embedding into a Hilbert manifold}
         {\jRN{TopA}}{86}{1998}{123--131}{#1}}
   \ITEE{#3}{TBanakh,TNRadul1997}{
      \BIB{#2}{T.O. Banakh and T.N. Radul}
         {Topology of spaces of probability measures}
         {\jRN{SbM}}{188}{1997}{973--995}{#1}}
   \ITEE{#3}{TBanakh,TRadul,MZarichnyi1996}{
      \BIb{#2}{T. Banakh, T. Radul, M. Zarichnyi}
         {Absorbing sets in infinite-dimensional manifolds}
         {VNTL Publishers, Lviv}{1996}{#1}}
   \ITEE{#3}{TBanakh,IZarichnyy2008}{
      \BIB{#2}{T. Banakh and I. Zarichnyy}
         {Topological groups and convex sets homeomorphic to non-separable Hilbert spaces}
         {\jRN{CEurJM}}{6}{2008}{77--86}{#1}}
   \ITEE{#3}{HBecker,ASKechris1996}{
      \BIb{#2}{H. Becker and A.S. Kechris}
         {The Descriptive Set Theory of Polish Group Actions \textup{(London Math. Soc. Lecture Note Series, vol. 232)}}
         {University Press, Cambridge}{1996}{#1}}
   \ITEE{#3}{NEBenamara,NNikolski1999}{
      \BIB{#2}{N.E. Benamara and N. Nikolski}
         {Resolvent tests for similarity to a normal operator}
         {\jRN{PLondMS}}{78}{1999}{585--626}{#1}}
   \ITEE{#3}{YBenyamini,JLindenstrauss2000}{
      \BIb{#2}{Y. Benyamini and J. Lindenstrauss}
         {Geometric nonlinear functional analysis I}
         {AMS Colloquium Publications 48}{2000}{#1}}
   \ITEE{#3}{SKBerberian1974}{
      \BIb{#2}{S.K. Berberian}
         {Lectures in Functional Analysis and Operator Theory}
         {Graduate Texts in Mathematics 15, Springer-Verlag, New York}{1974}{#1}}
   \ITEE{#3}{SNBernstein1954}{
      \BIb{#2}{S.N. Bernstein}
         {Collected Works II}
         {Akad. Nauk SSSR, Moscow}{1954 (Russian)}{#1}}
   \ITEE{#3}{CzBessaga,APelczynski1972}{
      \BIB{#2}{Cz. Bessaga and A. Pe\l{}czy\'{n}ski}
         {On spaces of measurable functions}
         {\jRN{SM}}{44}{1972}{597--615}{#1}}
   \ITEE{#3}{CzBessaga,APelczynski1975}{
      \BIb{#2}{Cz. Bessaga and A. Pe\l{}czy\'{n}ski}
         {Selected topics in infinite-dimensional topology}
         {\jRN{PWN}}{1975}{#1}}
   \ITEE{#3}{MBestvina,JMogilski1986}{
      \BIB{#2}{M. Bestvina and J. Mogilski}
         {Characterizing certain incomplete infinite-dimensional absolute retracts}
         {\jRN{MichMJ}}{33}{1986}{291--313}{#1}}
   \ITEE{#3}{MBestvina,PBowers,JMogilsky,JWalsh1986}{
      \BIB{#2}{M. Bestvina, P. Bowers, J. Mogilsky, J. Walsh}
         {Characterization of Hilbert space manifolds revisited}
         {\jRN{TopA}}{24}{1986}{53--69}{#1}}
   \ITEE{#3}{RBhatia1997}{
      \BIb{#2}{R. Bhatia}
         {Matrix Analysis}
         {Springer, New York}{1997}{#1}}
   \ITEE{#3}{GBirkhoff1936}{
      \BIB{#2}{G. Birkhoff}
         {A note on topological groups}
         {\jRN{ComposM}}{3}{1936}{427--430}{#1}}
   \ITEE{#3}{MSBirman,MZSolomjak1987}{
      \BIb{#2}{M.S. Birman and M.Z. Solomjak}
         {Spectral Theory of Self-Adjoint Operators in Hilbert Space}
         {D. Reidel Publishing Co., Dordrecht}{1987}{#1}}
   \ITEE{#3}{EBishop1961}{
      \BIB{#2}{E. Bishop}
         {A generalization of the Stone-Weierstrass theorem}
         {\jRN{PacJM}}{11}{1961}{777--783}{#1}}
   \ITEE{#3}{BBlackadar2006}{
      \BIb{#2}{B. Blackadar}{Operator Algebras. Theory of $\CCc^*$-algebras and von Neumann algebras 
         \textup{(Encyclopaedia of Mathematical Sciences, vol. 122: Operator Algebras and Non-Commutative Geometry III)}}
         {Springer-Verlag, Berlin-Heidelberg}{2006}{#1}}
   \ITEE{#3}{JBlass,WHolsztynski1972}{
      \BIB{#2}{J. Blass and W. Holszty\'{n}ski}
         {Cubical polyhedra and homotopy III}
         {\jRN{AttiAccLincRendNat}}{53}{1972}{275--279}{#1}}
   \ITEE{#3}{FFBonsall,NJDuncan1973}{
      \BIb{#2}{F.F. Bonsall and N.J. Duncan}
         {Complete Normed Algebras}
         {Springer Verlag, Berlin}{1973}{#1}}
   \ITEE{#3}{NBourbaki2002}{
      \BIb{#2}{N. Bourbaki}
         {Lie Groups and Lie Algebras, Chapters 4--6}
         {Springer, New York}{2002}{#1}}
   \ITEE{#3}{PLBowers1989}{
      \BIB{#2}{P.L. Bowers}
         {Limitation topologies on function spaces}
         {\jRN{TAMS}}{314}{1989}{421--431}{#1}}
   \ITEE{#3}{ABrown1953}{
      \BIB{#2}{A. Brown}
         {On a class of operators}
         {\jRN{PAMS}}{4}{1953}{723--728}{#1}}
   \ITEE{#3}{ABrown,CKFong,DWHadwin1978}{
      \BIB{#2}{A. Brown, C.-K. Fong, D.W. Hadwin}
         {Parts of operators on Hilbert space}
         {\jRN{IllinoisJM}}{22}{1978}{306--314}{#1}}
   \ITEE{#3}{AMBruckner,JBBruckner,BSThomson1997}{
      \BIb{#2}{A.M. Bruckner, J.B. Bruckner, B.S. Thomson}
         {Real Analysis}
         {Prentice-Hall, New Jersey}{1997}{#1}}
   \ITEE{#3}{PJCameron,AMVershik2006}{
      \BIB{#2}{P.J. Cameron and A.M. Vershik}
         {Some isometry groups of Urysohn space}
         {\jRN{AnnPALog}}{143}{2006}{70--78}{#1}}
   \ITEE{#3}{CCastaing1966}{
      \BIB{#2}{C. Castaing}
         {Quelques probl\`{e}mes de mesurabilit\'{e} li\'{e}es \`{a} la th\'{e}orie de la commande}
         {\jRN{CRParis}}{262}{1966}{409--411}{#1}}
   \ITEE{#3}{JAVanCasteren1980}{
      \BIB{#2}{J.A. van Casteren}
         {A problem of Sz.-Nagy}
         {\jRN{ActaSM}}{42}{1980}{189--194}{#1}}
   \ITEE{#3}{JAVanCasteren1983}{
      \BIB{#2}{J.A. van Casteren}
         {Operators similar to unitary or selfadjoint ones}
         {\jRN{PacJM}}{104}{1983}{241--255}{#1}}
   \ITEE{#3}{XCatepillan,MPtak,WSzymanski1994}{
      \BIB{#2}{X. Catepill\'{a}n, M. Ptak, W. Szyma\'{n}ski}
         {Multiple canonical decompositions of families of operators and a model of quasinormal families}
         {\jRN{PAMS}}{121}{1994}{1165--1172}{#1}}
   \ITEE{#3}{RCauty1994}{
      \BIB{#2}{R. Cauty}
         {Un espace m\'{e}trique lin\'{e}aire qui n'est pas un r\'{e}tracte absolu}
         {\jRN{FM}}{146}{1994}{85--99, (French)}{#1}}
   \ITEE{#3}{TAChapman1971}{
      \BIB{#2}{T.A. Chapman}
         {Deficiency in infinite-dimensional manifolds}
         {\jRN{GTopA}}{1}{1971}{263--272}{#1}}
   \ITEE{#3}{TAChapman1976}{
      \BIb{#2}{T.A. Chapman}
         {Lectures on Hilbert cube manifolds}
         {C.B.M.S. Regional Conference Series in Math. No 28, Amer. Math. Soc.}{1976}{#1}}
   \ITEE{#3}{RBChuaqui1977}{
      \BIB{#2}{R.B. Chuaqui}
         {Measures invariant under a group of transformations}
         {\jRN{PacJM}}{68}{1977}{313--329}{#1}}
   \ITEE{#3}{JBConway1985}{
      \BIb{#2}{J.B. Conway}
         {A Course in Functional Analysis}
         {Springer-Verlag, New York}{1985}{#1}}
   \ITEE{#3}{JBConway2000}{
      \BIb{#2}{J.B. Conway}
         {A Course in Operator Theory}
         {(Graduate Studies in Mathematics, vol. 21) Amer. Math. Soc., Providence}{2000}{#1}}
   \ITEE{#3}{GCorach,AMaestripieri,MMbekhta2009}{
      \BIB{#2}{G. Corach, A. Maestripieri, M. Mbekhta}
         {Metric and homogeneous structure of closed range operators}
         {\jRN{JOT}}{61}{2009}{171--190}{#1}}
   \ITEE{#3}{MJCowen,RGDouglas1978}{
      \BIB{#2}{M.J. Cowen and R.G. Douglas}
         {Complex geometry and operator theory}
         {\jRN{ActaM}}{141}{1978}{187--261}{#1}}
   \ITEE{#3}{DWCurtis1985}{
      \BIB{#2}{D.W. Curtis}
         {Boundary sets in the Hilbert cube}
         {\jRN{TopA}}{20}{1985}{201--221}{#1}}
   \ITEE{#3}{MMDay1958}{
      \BIb{#2}{M.M. Day}
         {Normed Linear Spaces}
         {Springer Verlag, Berlin}{1958}{#1}}
   \ITEE{#3}{CDellacherie1967}{
      \BIB{#2}{C. Dellacherie}
         {Un compl\'{e}ment au th\'{e}or\`{e}me de Weierstrass-Stone}
         {\jRN{SeminProbStras}}{1}{1967}{52--53}{#1}}
   \ITEE{#3}{JJDijkstra1987}{
      \BIB{#2}{J.J. Dijkstra}
         {Strong negligibility of $\sigma$-compacta does not characterize Hilbert space}
         {\jRN{PacJM}}{127}{1987}{19--30}{#1}}
   \ITEE{#3}{JJDijkstra1990}{
      \BIB{#2}{J.J. Dijkstra}
         {Characterizing Hilbert space topology in terms of strong negligibility}
         {\jRN{ComposM}}{75}{1990}{299--306}{#1}}
   \ITEE{#3}{TDobrowolski,WMarciszewski2002}{
      \BIB{#2}{T. Dobrowolski and W. Marciszewski}
         {Failure of the Factor Theorem for Borel pre-Hilbert spaces}
         {\jRN{FM}}{175}{2002}{53--68}{#1}}
   \ITEE{#3}{TDobrowolski,JMogilski1990}{
      \BiB{#2}{T. Dobrowolski and J. Mogilski}
         {Problems on Topological Classification of Incomplete Metric Spaces}{Chapter 25 in:}
         {Open Problems in Topology}{J. van Mill and G.M. Reed (eds.), North-Holland Amsterdam}{1990}{411--429}{#1}}
   \ITEE{#3}{TDobrowolski,HTorunczyk1981}{
      \BIB{#2}{T. Dobrowolski and H. Toru\'{n}czyk}
         {Separable complete ANR's admitting a group structure are Hilbert manifolds}
         {\jRN{TopA}}{12}{1981}{229--235}{#1}}
   \ITEE{#3}{RGDouglas1966}{
      \BIB{#2}{R.G. Douglas}
         {On majorization, factorization and range inclusion of operators in Hilbert space}
         {\jRN{PAMS}}{17}{1966}{413--416}{#1}}
   \ITEE{#3}{CHDowker1947}{
      \BIB{#2}{C.H. Dowker}
         {Mapping theorems for non-compact spaces}
         {\jRN{AmJM}}{69}{1947}{200--242}{#1}}
   \ITEE{#3}{CHDowker1952}{
      \BIB{#2}{C.H. Dowker}
         {Topology of metric complexes}
         {\jRN{AmJM}}{74}{1952}{555--577}{#1}}
   \ITEE{#3}{JDugundji1951}{
      \BIB{#2}{J. Dugundji}
         {An extension of Tietze's theorem}
         {\jRN{PacJM}}{1}{1951}{353--367}{#1}}
   \ITEE{#3}{JDugundji1958}{
      \BIB{#2}{J. Dugundji}
         {Absolute neighborhood retracts and local connectedness for arbitrary metric spaces}
         {\jRN{ComposM}}{13}{1958}{229--246}{#1}}
   \ITEE{#3}{JDugundji1965}{
      \BIB{#2}{J. Dugundji}
         {Locally equiconnected spaces and absolute neighborhood retracts}
         {\jRN{FM}}{57}{1965}{187--193}{#1}}
   \ITEE{#3}{NDunford,JTSchwartz1958}{
      \BIb{#2}{N. Dunford and J.T. Schwartz}
         {Linear Operators, part I}
         {Interscience Publishers, New York}{1958}{#1}}
   \ITEE{#3}{NDunford,JTSchwartz1963}{
      \BIb{#2}{N. Dunford and J.T. Schwartz}
         {Linear Operators, part II}
         {Interscience Publishers, New York}{1963}{#1}}
   \ITEE{#3}{NDunford,JTSchwartz1971}{
      \BIb{#2}{N. Dunford and J.T. Schwartz}
         {Linear Operators, part III}
         {Wiley-Interscience, New York}{1971}{#1}}
   \ITEE{#3}{MLEaton,MDPerlman1977}{
      \BIB{#2}{M.L. Eaton and M.D. Perlman}
         {Reflection groups, generalized Schur functions and the geometry of majorization}
         {\jRN{AnnProb}}{5}{1977}{829--860}{#1}}
   \ITEE{#3}{EGEffros1965}{
      \BIB{#2}{E.G. Effros}
         {The Borel space of von Neumann algebras on a separable Hilbert space}
         {\jRN{PacJM}}{15}{1965}{1153--1164}{#1}}
   \ITEE{#3}{EGEffros1966}{
      \BIB{#2}{E.G. Effros}
         {Global structure in von Neumann algebras}
         {\jRN{TAMS}}{121}{1966}{434--454}{#1}}
   \ITEE{#3}{REspinola,MAKhamsi2001}{
      \BiB{#2}{R. Espinola and M.A. Khamsi}
         {Introduction to hyperconvex spaces}{Chapter XIII in:}{Handbook of Metric Fixed Point Theory}
         {W.A. Kirk and B. Sims (editors), Kluwer Academic Publishers}{2001}{391--435}{#1}}
   \ITEE{#3}{PAFillmore,JPWilliams1971}{
      \BIB{#2}{P.A. Fillmore and J.P. Williams}
         {On operator ranges}
         {\jRN{AdvM}}{7}{1971}{254--281}{#1}}
   \ITEE{#3}{JEells,NHKuiper1969}{
      \BIB{#2}{J. Eells and N.H. Kuiper}
         {Homotopy negligible subsets in infinite-dimensional manifolds}
         {\jRN{ComposM}}{21}{1969}{151--161}{#1}}
   \ITEE{#3}{REngelking1977}{
      \BIb{#2}{R. Engelking}
         {General Topology}
         {\jRN{PWN}}{1977}{#1}}
   \ITEE{#3}{REngelking1978}{
      \BIb{#2}{R. Engelking}
         {Dimension Theory}
         {\jRN{PWN}}{1978}{#1}}
   \ITEE{#3}{REngelking1989}{
      \BIb{#2}{R. Engelking}
         {General Topology. Revised and completed edition \textup{(Sigma series in pure mathematics, vol. 6)}}
         {Heldermann Verlag, Berlin}{1989}{#1}}
   \ITEE{#3}{PErdos,RDMauldin1976}{
      \BIB{#2}{P. Erd\"{o}s and R.D. Mauldin}
         {The nonexistence of certain invariant measures}
         {\jRN{PAMS}}{59}{1976}{321--322}{#1}}
   \ITEE{#3}{JErnest1976}{
      \BIB{#2}{J. Ernest}
         {Charting the operator terrain}
         {\jRN{MAMS}}{171}{1976}{207 pp}{#1}}
   \ITEE{#3}{RHFox1943}{
      \BIB{#2}{R.H. Fox}
         {On fiber spaces, II}
         {\jRN{BAMS}}{49}{1943}{733--735}{#1}}
   \ITEE{#3}{NAFriedman1970}{
      \BIb{#2}{N.A. Friedman}
         {Introduction to ergodic theory}
         {Van Nostrand Reinhold Company}{1970}{#1}}
   \ITEE{#3}{MFujii,MKajiwara,YKato,FKubo1976}{
      \BIB{#2}{M. Fujii, M. Kajiwara, Y. Kato, F. Kubo}
         {Decompositions of operators in Hilbert spaces}
         {\jRN{MathJap}}{21}{1976}{117--120}{#1}}
   \ITEE{#3}{SGao,ASKechris2003}{
      \BIB{#2}{S. Gao and A.S. Kechris}
         {On the classification of Polish metric spaces up to isometry}
         {\jRN{MAMS}}{161}{2003}{viii+78}{#1}}
   \ITEE{#3}{MIGarrido,FMontalvo1991}{
      \BIB{#2}{M.I. Garrido and F. Montalvo}
         {On some generalizations of the Kakutani-Stone and Stone-Weierstrass theorems}
         {\jRN{ExtrM}}{6}{1991}{156--159}{#1}}
   \ITEE{#3}{LGe,JShen2002}{
      \BIB{#2}{L. Ge and J. Shen}
         {Generator problem for certain property T factors}
         {\jRN{PNAS}}{99}{2002}{565--567}{#1}}
   \ITEE{#3}{IMGelfand,MANaimark1943}{
      \BIB{#2}{I.M. Gelfand and M.A. Naimark}
         {On the embedding of normed rings into the ring of operators in Hilbert space}
         {\jRN{MSb}}{12}{1943}{197--213}{#1}}
   \ITEE{#3}{FGesztesy,MMalamud,MMitrea,SNaboko2009}{
      \BIB{#2}{F. Gesztesy, M. Malamud, M. Mitrea, S. Naboko}
         {Generalized polar decompositions for closed operators in Hilbert spaces and some applications}
         {\jRN{IEOT}}{64}{2009}{83--113}{#1}}
   \ITEE{#3}{LGillman,MJerison1960}{
      \BIb{#2}{L. Gillman and M. Jerison}
         {Rings of continuous functions}
         {New York}{1960}{#1}}
   \ITEE{#3}{JGlimm1960}{
      \BIB{#2}{J. Glimm}
         {A Stone-Weierstrass theorem for $\CCc^*$-algebras}
         {\jRN{AnnM}}{72}{1960}{216--244}{#1}}
   \ITEE{#3}{GGodefroy,NJKalton2003}{
      \BIB{#2}{G. Godefroy and N.J. Kalton}
         {Lipschitz-free Banach spaces}
         {\jRN{SM}}{159}{2003}{121--141}{#1}}
   \ITEE{#3}{ICGohberg,MGKrein1967}{
      \BIB{#2}{I.C. Gohberg and M.G. Krein}
         {On a description of contraction operators similar to unitary ones}
         {\jRN{FunkAnalPril}}{1}{1967}{38--60}{#1}}
   \ITEE{#3}{ELGriffinJr1953}{
      \BIB{#2}{E.L. Griffin Jr.}
         {Some contributions to the theory of rings of operators}
         {\jRN{TAMS}}{75}{1953}{471--504}{#1}}
   \ITEE{#3}{ELGriffinJr1955}{
      \BIB{#2}{E.L. Griffin Jr.}
         {Some contributions to the theory of rings of operators II}
         {\jRN{TAMS}}{79}{1955}{389--400}{#1}}
   \ITEE{#3}{MGromov1981}{
      \BIB{#2}{M. Gromov}
         {Groups of polynomial growth and expanding maps}
         {\jRN{InHauEtSPM}}{53}{1981}{53--73}{#1}}
   \ITEE{#3}{MGromov1999}{
      \BIb{#2}{M. Gromov}
         {Metric Structures for Riemannian and Non-Riemannian Spaces}
         {Progress in Math. \textbf{152}, Birkh\"{a}user}{1999}{#1}}
   \ITEE{#3}{JDeGroot1956}{
      \BIB{#2}{J. de Groot}
         {Non-archimedean metrics in topology}
         {\jRN{PAMS}}{7}{1956}{948--953}{#1}}
   \ITEE{#3}{LCGrove,CTBenson1985}{
      \BIb{#2}{L.C. Grove and C.T. Benson}
         {Finite Reflection Group}
         {2nd ed., Springer-Verlag}{1985}{#1}}
   \ITEE{#3}{VIGurarii1966}{
      \BIB{#2}{V.I. Gurari\v{\i}}{Spaces of universal placement, isotropic spaces and a problem of Mazur 
         on rotations of Banach spaces \textup{(Russian)}}
         {\jRN{SibirMZ}}{7}{1966}{1002--1013}{#1}}
   \ITEE{#3}{DWHadwin1976}{
      \BIB{#2}{D.W. Hadwin}
         {An operator-valued spectrum}
         {\jRN{NAMS}}{23}{1976}{A-163}{#1}}
   \ITEE{#3}{DWHadwin1977}{
      \BIB{#2}{D.W. Hadwin}
         {An operator-valued spectrum}
         {\jRN{IndianaUMJ}}{26}{1977}{329--340}{#1}}
   \ITEE{#3}{HHahn1932}{
      \BIb{#2}{H. Hahn}
         {Reelle Funktionen I}
         {Leipzig}{1932}{#1}}
   \ITEE{#3}{PRHalmos1950}{
      \BIb{#2}{P.R. Halmos}
         {Measure theory}
         {Van Nostrand, New York}{1950}{#1}}
   \ITEE{#3}{PRHalmos1951}{
      \BIb{#2}{P.R. Halmos}
         {Introduction to Hilbert Space and the Theory of Spectral Multiplicity}
         {Chelsea Publishing Company, New York}{1951}{#1}}
   \ITEE{#3}{PRHalmos1956}{
      \BIb{#2}{P.R. Halmos}
         {Lectures on Ergodic Theory}
         {Publ. Math. Soc. Japan, Tokyo}{1956}{#1}}
   \ITEE{#3}{PRHalmos1982}{
      \BIb{#2}{P.R. Halmos}
         {A Hilbert Space Problem Book}
         {Springer-Verlag New York Inc.}{1982}{#1}}
  \ITEE{#3}{PRHalmos,JEMcLaughlin1963}{
      \BIB{#2}{P.R. Halmos and J.E. McLaughlin}
         {Partial isometries}
         {\jRN{PacJM}}{13}{1963}{585--596}{#1}}
   \ITEE{#3}{RWHansell1972}{
      \BIB{#2}{R.W. Hansell}
         {On the nonseparable theory of Borel and Souslin sets}
         {\jRN{BAMS}}{78}{1972}{236--241}{#1}}
   \ITEE{#3}{FHausdorff1930}{
      \BIB{#2}{F. Hausdorff}
         {Erweiterung einer Hom\"{o}omorphie}
         {\jRN{FM}}{16}{1930}{353--360}{#1}}
   \ITEE{#3}{FHausdorff1934}{
      \BIB{#2}{F. Hausdorff}
         {\"{U}ber innere Abbildungen}
         {\jRN{FM}}{23}{1934}{279--291}{#1}}
   \ITEE{#3}{FHausdorff1938}{
      \BIB{#2}{F. Hausdorff}
         {Erweiterung einer stetigen Abbildung}
         {\jRN{FM}}{30}{1938}{40--47}{#1}}
   \ITEE{#3}{DWHenderson1971}{
      \BIB{#2}{D.W. Henderson}
         {Corrections and extensions of two papers about infinite-dimensional manifolds}
         {\jRN{GTopA}}{1}{1971}{321--327}{#1}}
   \ITEE{#3}{DWHenderson1975}{
      \BIB{#2}{D.W. Henderson}
         {$Z$-sets in ANR's}
         {\jRN{TAMS}}{213}{1975}{205--216}{#1}}
   \ITEE{#3}{DWHenderson,RMSchori1970}{
      \BIB{#2}{D.W. Henderson and R.M. Schori}
         {Topological classification of infinite-dimensional manifolds by homotopy type}
         {\jRN{BAMS}}{76}{1970}{121--124}{#1}}
   \ITEE{#3}{DWHenderson,JEWest1970}{
      \BIB{#2}{D.W. Henderson and J.E. West}
         {Triangulated infinite-dimensional manifolds}
         {\jRN{BAMS}}{76}{1970}{655--660}{#1}}
   \ITEE{#3}{BHoffmann1979}{
      \BIB{#2}{B. Hoffmann}
         {A compact contractible topological group is trivial}
         {\jRN{ArchM}}{32}{1979}{585--587}{#1}}
   \ITEE{#3}{DHofmann2002}{
      \BIB{#2}{D. Hofmann}
         {On a generalization of the Stone-Weierstrass theorem}
         {\jRN{ACS}}{10}{2002}{569--592}{#1}}
   \ITEE{#3}{GHognas,AMukherjea1995}{
      \BIb{#2}{G. H\"ogn\"as and A. Mukherjea}
         {Probability Measures on Semigroups. Convolution Products, Random Walks, and Random Matrices}
         {Plenum Press, New York}{1995}{#1}}
   \ITEE{#3}{MRHolmes1992}{
      \BIB{#2}{M.R. Holmes}
         {The universal separable metric space of Urysohn and isometric embeddings thereof in Banach spaces}
         {\jRN{FM}}{140}{1992}{199--223}{#1}}
   \ITEE{#3}{MRHolmes2008}{
      \BIB{#2}{M.R. Holmes}
         {The Urysohn space embeds in Banach spaces in just one way}
         {\jRN{TopA}}{155}{2008}{1479--1482}{#1}}
   \ITEE{#3}{RRHolmes,TYTam1999}{
      \BIB{#2}{R.R. Holmes and T.Y. Tam}
         {Distance to the convex hull of an orbit under the action of a compact group}
         {\jRN{JAusMSA}}{66}{1999}{331--357}{#1}}
   \ITEE{#3}{RHorn,RMathias1990}{
      \BIB{#2}{R. Horn and R. Mathias}
         {Cauchy-Schwartz inequalities associated with positive semidefinite matrices}
         {\jRN{LAA}}{142}{1990}{63--82}{#1}}
   \ITEE{#3}{GEHuhunaisvili1955}{
      \BIB{#2}{G.E. Huhunai\v{s}vili}
         {On a property of Urysohn's universal metric space}
         {\jRN{DANSSSR}}{101}{1955}{607--610 (Russian)}{#1}}
   \ITEE{#3}{JEHumphreys1990}{
      \BIb{#2}{J.E. Humphreys}
         {Reflection Groups and Coxeter Groups}
         {Cambridge University Press}{1990}{#1}}
   \ITEE{#3}{JRIsbell1964}{
      \BIB{#2}{J.R. Isbell}
         {Six theorems about injective metric spaces}
         {\jRN{CMHelv}}{39}{1964}{65--76}{#1}}
   \ITEE{#3}{SIzumino,YKato1985}{
      \BIB{#2}{S. Izumino and Y. Kato}
         {The closure of invertible operators on Hilbert space}
         {\jRN{ActaSM}}{49}{1985}{321--327}{#1}}
   \ITEE{#3}{CJiang2004}{
      \BIB{#2}{C. Jiang}
         {Similarity classification of Cowen-Douglas operators}
         {\jRN{CanadJM}}{56}{2004}{742--775}{#1}}
   \ITEE{#3}{WBJohnson,JLindenstrauss2001}{
      \BiB{#2}{W.B. Johnson and J. Lindenstrauss}{Basic Concepts in the Geometry of Banach Spaces}
         {Chapter 1 in:}{Handbook of the Geometry of Banach Spaces, Vol. 1}
         {W.B. Johnson and J. Lindenstrauss (editors), Elsevier Science B.V., Amsterdam}{2001}{1--84}{#1}}
   \ITEE{#3}{IBJung,JStochel2008}{
      \BIB{#2}{I.B. Jung and J. Stochel}
         {Subnormal operators whose adjoints have rich point spectrum}
         {\jRN{JFA}}{255}{2008}{1797--1816}{#1}}
   \ITEE{#3}{RVKadison,JRRingrose1983}{
      \BIb{#2}{R.V. Kadison and J.R. Ringrose}
         {Fundamentals of the Theory of Operator Algebras. Volume I: Elementary Theory}
         {Academic Press, Inc., New York-London}{1983}{#1}}
   \ITEE{#3}{RVKadison,JRRingrose1986}{
      \BIb{#2}{R.V. Kadison and J.R. Ringrose}
         {Fundamentals of the Theory of Operator Algebras. Volume II: Advanced Theory}
         {Academic Press, Inc., Orlando-London}{1986}{#1}}
   \ITEE{#3}{SKakutani1936}{
      \BIB{#2}{S. Kakutani}
         {\"{U}ber die Metrisation der topologischen Gruppen}
         {\jRN{ProcImpAcadTokyo}}{12}{1936}{82--84}{#1}}
   \ITEE{#3}{SKakutani1938}{
      \BIB{#2}{S. Kakutani}
         {Two fixed-point theorems concerning bicompact convex sets}
         {\jRN{ProcImpAcadTokyo}}{14}{1938}{242--245}{#1}}
   \ITEE{#3}{SKakutani1941}{
      \BIB{#2}{S. Kakutani}
         {Concrete representation of abstract L-spaces}
         {\jRN{AnnM}}{42}{1941}{523--537}{#1}}
   \ITEE{#3}{SKakutani1941a}{
      \BIB{#2}{S. Kakutani}
         {Concrete representation of abstract M-spaces}
         {\jRN{AnnM}}{42}{1941}{994--1024}{#1}}
   \ITEE{#3}{NKalton2007}{
      \BIB{#2}{N. Kalton}
         {Extending Lipschitz maps into $\CCc(K)$-spaces}
         {\jRN{IsraelJM}}{162}{2007}{275--315}{#1}}
   \ITEE{#3}{RKane2001}{
      \BIb{#2}{R. Kane}
         {Reflection Groups and Invariant Theory}
         {Canadian Mathematical Society, Springer}{2001}{#1}}
   \ITEE{#3}{VKannan,SRRaju1980}{
      \BIB{#2}{V. Kannan and S.R. Raju}
         {The nonexistence of invariant universal measures on semigroups}
         {\jRN{PAMS}}{78}{1980}{482--484}{#1}}
   \ITEE{#3}{IKaplansky1951}{
      \BIB{#2}{I. Kaplansky}
         {A theorem on rings of operators}
         {\jRN{PacJM}}{1}{1951}{227--232}{#1}}
   \ITEE{#3}{MKatetov1988}{
      \BiB{#2}{M. Kat\v{e}tov}{On universal metric spaces}{in: Frolik (ed.),}
         {General Topology and its Relations to Modern Analysis and Algebra VI. Proceedings of the Sixth Prague 
         Topological Symposium 1986}{Heldermann Verlag Berlin}{1988}{323--330}{#1}}
   \ITEE{#3}{YKatznelson1960}{
      \BIB{#2}{Y. Katznelson}
         {Sur les alg\'{e}bres dont les \'{e}l\'{e}ments non n\'{e}gatifs admettent des racines carr\'{e}es}
         {\jRN{AnnSciEcNormSupT}}{77}{1960}{167--174}{#1}}
   \ITEE{#3}{OHKeller1931}{
      \BIB{#2}{O.H. Keller}
         {Die Homoiomorphie der kompakten konvexen Mengen in Hilbertschen Raum}
         {\jRN{MAnn}}{105}{1931}{748--758}{#1}}
   \ITEE{#3}{MAKhamsi,WAKirk,CMartinez2000}{
      \BIB{#2}{M.A. Khamsi, W.A. Kirk, C. Martinez}
         {Fixed point and selection theorems in hyperconvex spaces}
         {\jRN{PAMS}}{128}{2000}{3275--3283}{#1}}
   \ITEE{#3}{ABKhararazishvili1998}{
      \BIb{#2}{A.B. Khararazishvili}
         {Transformation groups and invariant measures. Set-theoretic aspects}
         {World Scientific Publishing Co., Inc., River Edge, NJ}{1998}{#1}}
   \ITEE{#3}{YKijima1987}{
      \BIB{#2}{Y. Kijima}
         {Fixed points of nonexpansive self-maps of a compact metric space}
         {\jRN{JMAnApp}}{123}{1987}{114--116}{#1}}
  \ITEE{#3}{JSKim,ChRKim,SGLee1980}{
      \BIB{#2}{J.S. Kim, Ch.R. Kim, S.G. Lee}
         {Reducing operator valued spectra of a Hilbert space operator}
         {\jRN{JKoreanMS}}{17}{1980}{123--129}{#1}}
   \ITEE{#3}{JKindler1995}{
      \BIB{#2}{J. Kindler}
         {Minimax theorems with applications to convex metric spaces}
         {\jRN{CollM}}{68}{1995}{179--186}{#1}}
   \ITEE{#3}{WAKirk1998}{
      \BIB{#2}{W.A. Kirk}
         {Hyperconvexity of $\RRR$-trees}
         {\jRN{FM}}{156}{1998}{67--72}{#1}}
   \ITEE{#3}{VLKleeJr1952}{
      \BIB{#2}{V.L. Klee Jr.}
         {Invariant metrics in groups (solution of a problem of Banach)}
         {\jRN{PAMS}}{3}{1952}{484--487}{#1}}
   \ITEE{#3}{HJKowalsky1957}{
      \BIB{#2}{H.J. Kowalsky}
         {Einbettung metrischer R\"{a}ume}
         {\jRN{ArchM}}{8}{1957}{336--339}{#1}}
   \ITEE{#3}{WKubis,MRubin2010}{
      \BIB{#2}{W. Kubi\'{s} and M. Rubin}
         {Extension and reconstruction theorems for the Urysohn universal metric space}
         {\jRN{CzMJ}}{60}{2010}{1--29}{#1}}
   \ITEE{#3}{KKuratowski1966}{
      \BIb{#2}{K. Kuratowski}
         {Topology. \textup{Vol. I}}
         {\jRN{PWN}}{1966}{#1}}
   \ITEE{#3}{KKuratowski,BKnaster1927}{
      \BIB{#2}{K. Kuratowski and B. Knaster}
         {A connected and connected im kleinen point set which contains no perfect subset}
         {\jRN{BAMS}}{33}{1927}{106--109}{#1}}
   \ITEE{#3}{KKuratowski,AMostowski1976}{
      \BIb{#2}{K. Kuratowski and A. Mostowski}
         {Set Theory with an Introduction to Descriptive Set Theory}
         {\jRN{PWN}}{1976}{#1}}
   \ITEE{#3}{GLewicki1992}{
      \BIB{#2}{G. Lewicki}
         {Bernstein's ``lethargy'' theorem in metrizable topological linear spaces}
         {\jRN{MonatM}}{113}{1992}{213--226}{#1}}
   \ITEE{#3}{ASLewis1996}{
      \BIB{#2}{A.S. Lewis}
         {Group invariance and convex matrix analysis}
         {\jRN{SIAMJMAA}}{17}{1996}{927--949}{#1}}
   \ITEE{#3}{C-KLi,N-KTsing1991}{
      \BIB{#2}{C.-K. Li and N.-K. Tsing}
         {$G$-invariant norms and $G(c)$-radii}
         {\jRN{LAA}}{150}{1991}{179--194}{#1}}
   \ITEE{#3}{AJLazar,JLindenstrauss1971}{
      \BIB{#2}{A.J. Lazar and J. Lindenstrauss}
         {Banach spaces whose duals are $L_1$ spaces and their representing matrices}
         {\jRN{ActaM}}{126}{1971}{165--193}{#1}}
   \ITEE{#3}{EHLieb,MLoss1997}{
      \BIb{#2}{E.H. Lieb and M. Loss}
         {Analysis \textup{(Graduate Studies in Mathematics, vol. 14)}}
         {Amer. Math. Soc., Providence, RI}{1997}{#1}}
   \ITEE{#3}{ALindenbaum1926}{
      \BIB{#2}{A. Lindenbaum}
         {Contributions \`{a} l'\'{e}tude de l'espace m\'{e}trique I}
         {\jRN{FM}}{8}{1926}{209--222}{#1}}
   \ITEE{#3}{DLindenstrauss,LTzafriri1971}{
      \BIB{#2}{D. Lindenstrauss and L. Tzafriri}
         {On the complemented subspaces problem}
         {\jRN{IsraelJM}}{9}{1971}{263--269}{#1}}
   \ITEE{#3}{RILoebl1986}{
      \BIB{#2}{R.I. Loebl}
         {A note on containment of operators}
         {\jRN{BAustrMS}}{33}{1986}{279--291}{#1}}
   \ITEE{#3}{LHLoomis1945}{
      \BIB{#2}{L.H. Loomis}
         {Abstract congruence and the uniqueness of Haar measure}
         {\jRN{AnnM}}{46}{1945}{348--355}{#1}}
   \ITEE{#3}{LHLoomis1949}{
      \BIB{#2}{L.H. Loomis}
         {Haar measure in uniform structures}
         {\jRN{DukeMJ}}{16}{1949}{193--208}{#1}}
   \ITEE{#3}{ERLorch1939}{
      \BIB{#2}{E.R. Lorch}
         {Bicontinuous linear transformation in certain vector spaces}
         {\jRN{BAMS}}{45}{1939}{564--569}{#1}}
   \ITEE{#3}{ATLundell,SWeingram1969}{
      \BIb{#2}{A.T. Lundell and S. Weingram}
         {The topology of CW-complexes}
         {Litton Educ. Publ.}{1969}{#1}}
   \ITEE{#3}{WLusky1976}{
      \BIB{#2}{W. Lusky}
         {The Gurarij spaces are unique}
         {\jRN{ArchM}}{27}{1976}{627--635}{#1}}
   \ITEE{#3}{WLusky1977}{
      \BIB{#2}{W. Lusky}
         {On separable Lindenstrauss spaces}
         {\jRN{JFA}}{26}{1977}{103--120}{#1}}
   \ITEE{#3}{DMaharam1942}{
      \BIB{#2}{D. Maharam}
         {On homogeneous measure algebras}
         {\jRN{PNatlUSA}}{28}{1942}{108--111}{#1}}
   \ITEE{#3}{MMalicki,SSolecki2009}{
      \BIB{#2}{M. Malicki and S. Solecki}
         {Isometry groups of separable metric spaces}
         {\jRN{MProcCambPhS}}{146}{2009}{67--81}{#1}}
   \ITEE{#3}{PMankiewicz1972}{
      \BIB{#2}{P. Mankiewicz}
         {On extension of isometries in normed linear spaces}
         {\jRN{BAPolSSSM}}{20}{1972}{367--371}{#1}}
   \ITEE{#3}{JMartinezMaurica,MTPellon1987}{
      \BIB{#2}{J. Martinez-Maurica and M.T. Pell\'{o}n}
         {Non-archimedean Chebyshev centers}
         {\jRN{IndagMP}}{90}{1987}{417--421}{#1}}
   \ITEE{#3}{KMaurin1980}{
      \BIb{#2}{K. Maurin}
         {Analysis, Part II}
         {D. Reidel, Dordrecht-Boston-London}{1980}{#1}}
   \ITEE{#3}{SMazur,SUlam1932}{
      \BIB{#2}{S. Mazur and S. Ulam}
         {Sur les transformationes isom\'{e}triques d'espaces vectoriels norm\'{e}s}
         {\jRN{CRASParis}}{194}{1932}{946--948}{#1}}
   \ITEE{#3}{SMazurkiewicz1920}{
      \BIB{#2}{S. Mazurkiewicz}
         {Sur les lignes de Jordan}
         {\jRN{FM}}{1}{1920}{166--209}{#1}}
   \ITEE{#3}{SMazurkiewicz,WSierpinski1920}{
      \BIB{#2}{S. Mazurkiewicz and W. Sierpi\'{n}ski}
         {Contributions a la topologie des ensembles denombrables}
         {\jRN{FM}}{1}{1920}{17--27}{#1}}
   \ITEE{#3}{MMbekhta1992}{
      \BIB{#2}{M. Mbekhta}
         {Sur la structure des composantes connexes semi-Fredholm de $B(H)$}
         {\jRN{PAMS}}{116}{1992}{521--524}{#1}}
   \ITEE{#3}{JEMcCarthy1996}{
      \BIB{#2}{J.E. McCarthy}
         {Boundary values and Cowen-Douglas curvature}
         {\jRN{JFA}}{137}{1996}{1--18}{#1}}
   \ITEE{#3}{JMelleray2007}{
      \BIB{#2}{J. Melleray}
         {Computing the complexity of the relation of isometry between separable Banach spaces}
         {\jRN{MLQ}}{53}{2007}{128--131}{#1}}
   \ITEE{#3}{JMelleray2007a}{
      \BIB{#2}{J. Melleray}
         {On the geometry of Urysohn's universal metric space}
         {\jRN{TopA}}{154}{2007}{384--403}{#1}}
   \ITEE{#3}{JMelleray2008}{
      \BIB{#2}{J. Melleray}
         {Some geometric and dynamical properties of the Urysohn space}
         {\jRN{TopA}}{155}{2008}{1531--1560}{#1}}
   \ITEE{#3}{JMelleray,FVPetrov,AMVershik2008}{
      \BIB{#2}{J. Melleray, F.V. Petrov, A.M. Vershik}
         {Linearly rigid metric spaces and the embedding problem}
         {\jRN{FM}}{199}{2008}{177--194}{#1}}
   \ITEE{#3}{EMichael1953}{
      \BIB{#2}{E. Michael}
         {Some extension theorems for continuous functions}
         {\jRN{PacJM}}{3}{1953}{789--806}{#1}}
   \ITEE{#3}{EMichael1954}{
      \BIB{#2}{E. Michael}
         {Local properties of topological spaces}
         {\jRN{DukeMJ}}{21}{1954}{163--171}{#1}}
   \ITEE{#3}{EMichael1956}{
      \BIB{#2}{E. Michael}
         {Selected selection theorems}
         {\jRN{AmMMon}}{58}{1956}{233--238}{#1}}
   \ITEE{#3}{EMichael1956a}{
      \BIB{#2}{E. Michael}
         {Continuous selections. I}
         {\jRN{AnnM}}{63}{1956}{361--382}{#1}}
   \ITEE{#3}{EMichael1956b}{
      \BIB{#2}{E. Michael}
         {Continuous selections. II}
         {\jRN{AnnM}}{64}{1956}{562--580}{#1}}
   \ITEE{#3}{EMichael1959}{
      \BIB{#2}{E. Michael}
         {A theorem on semi-continuous set-valued functions}
         {\jRN{DukeMJ}}{26}{1959}{647--652}{#1}}
   \ITEE{#3}{JVanMill1986}{
      \BIB{#2}{J. van Mill}
         {Another counterexample in ANR theory}
         {\jRN{PAMS}}{97}{1986}{136--138}{#1}}
   \ITEE{#3}{JVanMill2001}{
      \BIb{#2}{J. van Mill}
         {The Infinite-Dimensional Topology of Function Spaces 
         \textup{(North-Holland Mathematical Library, vol. 64)}}
         {Elsevier, Amsterdam}{2001}{#1}}
   \ITEE{#3}{WMlak1991}{
      \BIb{#2}{W. Mlak}
         {Hilbert Spaces and Operator Theory}
         {PWN --- Polish Scientific Publishers and Kluwer Academic Publishers, Warszawa-Dordrecht}{1991}{#1}}
   \ITEE{#3}{JMogilski1979}{
      \BIB{#2}{J. Mogilski}
         {$CE$-decomposition of $l_2$-manifolds}
         {\jRN{BAPolSSSM}}{27}{1979}{309--314}{#1}}
   \ITEE{#3}{RLMoore1916}{
      \BIB{#2}{R.L. Moore}
         {On the foundations of plane analysis situs}
         {\jRN{TAMS}}{17}{1916}{131--164}{#1}}
   \ITEE{#3}{KMorita1955}{
      \BIB{#2}{K. Morita}
         {A condition for the metrizability of topological spaces and for $n$-dimensionality}
         {\jRN{SciRepTokyoA}}{5}{1955}{33--36}{#1}}
   \ITEE{#3}{AMukherjea,NATserpes1976}{
      \BIb{#2}{A. Mukherjea and N.A. Tserpes}
         {Measures on topological semigroups}
         {Springer Lecture Notes in Math. Vol. 547, Berlin}{1976}{#1}}
   \ITEE{#3}{JMycielski1974}{
      \BIB{#2}{J. Mycielski}
         {Remarks on invariant measures in metric spaces}
         {\jRN{CollM}}{32}{1974}{105--112}{#1}}
   \ITEE{#3}{SNNaboko1984}{
      \BIB{#2}{S.N. Naboko}
         {Conditions for similarity to unitary and selfadjoint operators}
         {\jRN{FunkAnalPril}}{18}{1984}{16--27}{#1}}
   \ITEE{#3}{LNachbin1965}{
      \BIb{#2}{L. Nachbin}
         {The Haar Integral}
         {D. Van Nostrand Company, Inc., Princeton-New Jersey-Toronto-New York-London}{1965}{#1}}
   \ITEE{#3}{TDNarang,SKGarg1991}{
      \BIB{#2}{T.D. Narang and S.K. Garg}
         {On the uniqueness of best approximation in non-archimedian spaces}
         {\jRN{PeriodMHung}}{22}{1991}{121--124}{#1}}
   \ITEE{#3}{JVonNeumann1930}{
      \BIB{#2}{J. von Neumann}
         {Zur Algebra der Funktionaloperationen und Theorie der normalen Operatoren}
         {\jRN{MAnn}}{102}{1930}{370--427}{#1}}
   \ITEE{#3}{JVonNeumann1934}{
      \BIB{#2}{J. von Neumann}
         {Zum Haarschen Mass in topologischen Gruppen}
         {\jRN{ComposM}}{1}{1934}{106--114}{#1}}
   \ITEE{#3}{JVonNeumann1937}{
      \BiB{#2}{J. von Neumann}
         {Some matrix-inequalities and metrization of matrix-space}{\jRN{TomskUnivRev}{} \textbf{1} (1937), 286--300; 
         in }{Collected Works}{Pergamon, New York}{1962}{Vol. 4, 205--219}{#1}}
   \ITEE{#3}{JVonNeumann1949}{
      \BIB{#2}{J. von Neumann}
         {On Rings of Operators. Reduction Theory}
         {\jRN{AnnM}}{50}{1949}{401--485}{#1}}
   \ITEE{#3}{ONielson1973}{
      \BIB{#2}{O. Nielson}
         {Borel sets of von Neumann algebras}
         {\jRN{AmJM}}{95}{1973}{145--164}{#1}}
   \ITEE{#3}{pn2002}{\bibITEM{#2}{#1} \mypaplist{pn1}}
   \ITEE{#3}{pn2006a}{\bibITEM{#2}{#1} \mypaplist{pn2}}
   \ITEE{#3}{pn2006b}{\bibITEM{#2}{#1} \mypaplist{pn3}}
   \ITEE{#3}{pn2007}{\bibITEM{#2}{#1} \mypaplist{pn4}}
   \ITEE{#3}{pn2008a}{\bibITEM{#2}{#1} \mypaplist{pn5}}
   \ITEE{#3}{pn2008b}{\bibITEM{#2}{#1} \mypaplist{pn6}}
   \ITEE{#3}{pn2009a}{\bibITEM{#2}{#1} \mypaplist{pn7}}
   \ITEE{#3}{pn2009b}{\bibITEM{#2}{#1} \mypaplist{pn8}}
   \ITEE{#3}{pn2009c}{\bibITEM{#2}{#1} \mypaplist{pn9}}
   \ITEE{#3}{pn2010a}{\bibITEM{#2}{#1} \mypaplist{pn12}}
   \ITEE{#3}{pn2010b}{\bibITEM{#2}{#1} \mypaplist{pn13}}
   \ITEE{#3}{pn2011a}{\bibITEM{#2}{#1} \mypaplist{pn10}}
   \ITEE{#3}{pn2011b}{\bibITEM{#2}{#1} \mypaplist{pn15}}
   \ITEE{#3}{pn2011c}{\bibITEM{#2}{#1} \mypaplist{pn16}}
   \ITEE{#3}{pn2009x}{
      \bibITEM{#2}{#1} \mypaplist{pn11}}
   \ITEE{#3}{pn2010x}{
      \bibITEM{#2}{#1} \mypaplist{pn14}}
   \ITEE{#3}{pnXXXXa}{
      \bibITEM{#2}{#1} \mypaplist{pnX1}}
   \ITEE{#3}{pnXXXXb}{
      \bibITEM{#2}{#1} \mypaplist{pnX2}}
   \ITEE{#3}{pnXXXXc}{
      \bibITEM{#2}{#1} \mypaplist{pnX3}}
   \ITEE{#3}{pnXXXXd}{
      \bibITEM{#2}{#1} \mypaplist{pnX13}}
   \ITEE{#3}{MNiezgoda1998}{
      \BIB{#2}{M. Niezgoda}
         {Group majorization and Schur type inequalities}
         {\jRN{LAA}}{268}{1998}{9--30}{#1}}
   \ITEE{#3}{MNiezgoda1998a}{
      \BIB{#2}{M. Niezgoda}
         {An analytical characterization of effective and of irreducible groups inducing cone orderings}
         {\jRN{LAA}}{269}{1998}{105--114}{#1}}
   \ITEE{#3}{MNiezgoda,TYTam2001}{
      \BIB{#2}{M. Niezgoda and T.Y. Tam}
         {On norm property of $G(c)$-radii and Eaton triples}
         {\jRN{LAA}}{336}{2001}{119--130}{#1}}
   \ITEE{#3}{APazy1983}{
      \BIb{#2}{A. Pazy}{Semigroups of Linear Operators 
         and Applications to Partial Differential Equations \textup{(Applied Mathematical Sciences, vol. 44)}}
         {Springer-Verlag, New York}{1983}{#1}}
   \ITEE{#3}{APelc1982}{
      \BIB{#2}{A. Pelc}
         {Semiregular invariant measures on abelian groups}
         {\jRN{PAMS}}{86}{1982}{423--426}{#1}}
   \ITEE{#3}{RPenrose1955}{
      \BIB{#2}{R. Penrose}
         {A generalized inverse for matrices}
         {\jRN{ProcCambPhS}}{51}{1955}{406--413}{#1}}
   \ITEE{#3}{VPestov2006}{
      \BIb{#2}{V. Pestov}
         {Dynamics of infinite-dimensional groups. The Ramsey-Dvoretzky-Milman phenomenon}
         {University Lecture Series \textbf{40}, AMS, Providence, RI}{2006}{#1}}
   \ITEE{#3}{VPestov2007}{
      \BiB{#2}{V. Pestov}
         {Forty-plus annotated questions about large topological groups}
         {in:}{Open Problems in Topology II}{Elliot Pearl (editor), Elsevier B.V., Amsterdam}{2007}{439--450}{#1}}
   \ITEE{#3}{PVPetersen1993}{
      \BiB{#2}{P.V. Petersen}
         {Gromov-Hausdorff convergence of metric spaces}{in book:}{Differential Geometry: Riemannian Geometry 
         (Los Angeles, CA, 1990)}{Amer. Math. Soc., Providence, RI}{1993}{489--504}{#1}}
   \ITEE{#3}{DRamachandran,MMisiurewicz1982}{
      \BIB{#2}{D. Ramachandran and M. Misiurewicz}
         {Hopf's theorem on invariant measures for a group of transformations}
         {\jRN{SM}}{74}{1982}{183--189}{#1}}
   \ITEE{#3}{JMRosenblatt1974}{
      \BIB{#2}{J.M. Rosenblatt}
         {Equivalent invariant measures}
         {\jRN{IsraelJM}}{17}{1974}{261--270}{#1}}
   \ITEE{#3}{HLRoyden1963}{
      \BIb{#2}{H.L. Royden}
         {Real Analysis}
         {The Macmillan Co., New York}{1963}{#1}}
   \ITEE{#3}{WRudin1962}{
      \BIb{#2}{W. Rudin}
         {Fourier Analysis on Groups \textup{(Interscience Tracts in Pure and Applied Mathematics, Number 12)}}
         {Interscience Publishers, New York}{1962}{#1}}
   \ITEE{#3}{WRudin1991}{
      \BIb{#2}{W. Rudin}
         {Functional Analysis}
         {McGraw-Hill Science}{1991}{#1}}
   \ITEE{#3}{TSaito1972}{
      \BiB{#2}{T. Sait\^{o}}{Generations of von Neumann algebras}
         {Lecture Notes in Math. vol. 247}{\textup{(}Lecture on Operator Algebras\textup{)}}
         {Springer, Berlin-Heidelberg-New York}{1972}{435--531}{#1}}
   \ITEE{#3}{KSakai,MYaguchi2003}{
      \BIB{#2}{K. Sakai and M. Yaguchi}
         {Characterizing manifolds modeled on certain dense subspaces of non-separable Hilbert spaces}
         {\jRN{TsukubaJM}}{27}{2003}{143--159}{#1}}
   \ITEE{#3}{SSakai1971}{
      \BIb{#2}{S. Sakai}
         {$\CCc^*$-Algebras and $\WWw^*$-Algebras}
         {Springer-Verlag, Berlin-Heidelberg-New York}{1971}{#1}}
   \ITEE{#3}{RSchori1971}{
      \BIB{#2}{R. Schori}
         {Topological stability for infinite-dimensional manifolds}
         {\jRN{ComposM}}{23}{1971}{87--100}{#1}}
   \ITEE{#3}{JTSchwartz1967}{
      \BIb{#2}{J.T. Schwartz}
         {$\WWw^*$-algebras}
         {Gordon and Breach, Science Publishers Inc., New York-London-Paris}{1967}{#1}}
   \ITEE{#3}{ZSemadeni1971}{
      \BIb{#2}{Z. Semadeni}
         {Banach Spaces of Continuous Functions (Vol. I)}
         {\jRN{PWN}}{1971}{#1}}
   \ITEE{#3}{JPSerre1951}{
      \BIB{#2}{J.-P. Serre}
         {Homologie singuli\`{e}re des espaces fibr\'{e}s}
         {\jRN{AnnM}}{54}{1951}{425--505}{#1}}
   \ITEE{#3}{DSherman2007}{
      \BIB{#2}{D. Sherman}
         {On the dimension theory of von Neumann algebras}
         {\jRN{MScand}}{101}{2007}{123--147}{#1}}
   \ITEE{#3}{WSierpinski1928}{
      \BIB{#2}{W. Sierpi\'{n}ski}
         {Sur les projections des ensembles compl\'{e}mentaires aux ensembles \textup{(A)}}
         {\jRN{FM}}{11}{1928}{117--122}{#1}}
   \ITEE{#3}{MSlocinski1980}{
      \BIB{#2}{M. S\l{}oci\'{n}ski}
         {On the Wold-type decomposition of a pair of commuting isometries}
         {\jRN{APM}}{37}{1980}{255--262}{#1}}
   \ITEE{#3}{RCSteinlage1975}{
      \BIB{#2}{R.C. Steinlage}
         {On Haar measure in locally compact $T_2$ spaces}
         {\jRN{AmJM}}{97}{1975}{291--307}{#1}}
   \ITEE{#3}{JStochel,FHSzafraniec1989}{
      \BIB{#2}{J. Stochel and F.H. Szafraniec}
         {On normal extensions of unbounded operators. III. Spectral properties}
         {\jRN{PublRIMSKyoto}}{25}{1989}{105--139}{#1}}
   \ITEE{#3}{JStochel,FHSzafraniec1989a}{
      \BIB{#2}{J. Stochel and F.H. Szafraniec}
         {The normal part of an unbounded operator}
         {\jRN{ProcKonink}}{92}{1989}{495--503}{#1}}
   \ITEE{#3}{AHStone1962}{
      \BIB{#2}{A.H. Stone}
         {Absolute $\FFf_{\sigma}$-spaces}
         {\jRN{PAMS}}{13}{1962}{495--499}{#1}}
   \ITEE{#3}{AHStone1962a}{
      \BIB{#2}{A.H. Stone}
         {Non-separable Borel sets}
         {\jRN{DissM}}{28}{1962}{41 pages}{#1}}
   \ITEE{#3}{AHStone1972}{
      \BIB{#2}{A.H. Stone}
         {Non-separable Borel sets II}
         {\jRN{GTopA}}{2}{1972}{249--270}{#1}}
   \ITEE{#3}{MHStone1937}{
      \BIB{#2}{M.H. Stone}
         {Application of the theory of Boolean rings to general topology}
         {\jRN{TAMS}}{41}{1937}{375--481}{#1}}
   \ITEE{#3}{MHStone1948}{
      \BIB{#2}{M.H. Stone}
         {The generalized Weierstrass approximation theorem}
         {\jRN{MMag}}{21}{1948}{167--184}{#1}}
   \ITEE{#3}{BSz-Nagy1947}{
      \BIB{#2}{B. Sz.-Nagy}
         {On uniformly bounded linear transformations in Hilbert space}
         {\jRN{ActaSM}}{11}{1947}{152--157}{#1}}
   \ITEE{#3}{WTakahashi1970}{
      \BIB{#2}{W. Takahashi}
         {A convexity in metric space and nonexpansive mappings, I}
         {\jRN{KodaiMSemRep}}{22}{1970}{142--149}{#1}}
   \ITEE{#3}{MTakesaki2002}{
      \BIb{#2}{M. Takesaki}
         {Theory of Operator Algebras I \textup{(Encyclopaedia of Mathematical Sciences, Volume 124)}}
         {Springer-Verlag, Berlin-Heidelberg-New York}{2002}{#1}}
   \ITEE{#3}{MTakesaki2003}{
      \BIb{#2}{M. Takesaki}
         {Theory of Operator Algebras II \textup{(Encyclopaedia of Mathematical Sciences, Volume 125)}}
         {Springer-Verlag, Berlin-Heidelberg-New York}{2003}{#1}}
   \ITEE{#3}{MTakesaki2003a}{
      \BIb{#2}{M. Takesaki}
         {Theory of Operator Algebras III \textup{(Encyclopaedia of Mathematical Sciences, Volume 127)}}
         {Springer-Verlag, Berlin-Heidelberg-New York}{2003}{#1}}
   \ITEE{#3}{TYTam1999}{
      \BIB{#2}{T.Y. Tam}
         {An extension of a result of Lewis}
         {\jRN{ELA}}{5}{1999}{1--10}{#1}}
   \ITEE{#3}{TYTam2000}{
      \BIB{#2}{T.Y. Tam}
         {Group majorization, Eaton triples and numerical range}
         {\jRN{LMLA}}{47}{2000}{11--28}{#1}}
   \ITEE{#3}{TYTam2002}{
      \BIB{#2}{T.Y. Tam}
         {Generalized Schur-concave functions and Eaton triples}
         {\jRN{LMLA}}{50}{2002}{113--120}{#1}}
   \ITEE{#3}{TYTam,WCHill2001}{
      \BIB{#2}{T.Y. Tam and W.C. Hill}
         {On $G$-invariant norms}
         {\jRN{LAA}}{331}{2001}{101--112}{#1}}
   \ITEE{#3}{AFTiman,IAVestfrid1983}{
      \BIB{#2}{A.F. Timan and I.A. Vestfrid}
         {Any separable ultrametric space can be isometrically imbedded in $l_2$}
         {\jRN{FAA}}{17}{1983}{70--71}{#1}}
   \ITEE{#3}{JTomiyama1958}{
      \BIB{#2}{J. Tomiyama}
         {Generalized dimension function for $\WWw^*$-algebras of infinite type}
         {\jRN{TohokuMJ} (2)}{10}{1958}{121--129}{#1}}
   \ITEE{#3}{HTorunczyk1970}{
      \BIB{#2}{H. Toru\'{n}czyk}
         {Remarks on Anderson's paper ``On topological infinite deficiency''}
         {\jRN{FM}}{66}{1970}{393--401}{#1}}
   \ITEE{#3}{HTorunczyk1970a}{
      \BIb{#2}{H. Toru\'{n}czyk}
         {$G$-$K$-absorbing and skeletonized sets in metric spaces}
         {Ph.D. thesis, Inst. Math. Polish Acad. Sci., Warszawa}{1970}{#1}}
   \ITEE{#3}{HTorunczyk1972}{
      \BIB{#2}{H. Toru\'{n}czyk}
         {A short proof of Hausdorff's theorem on extending metrics}
         {\jRN{FM}}{77}{1972}{191--193}{#1}}
   \ITEE{#3}{HTorunczyk1974}{
      \BIB{#2}{H. Toru\'{n}czyk}
         {Absolute retracts as factors of normed linear spaces}
         {\jRN{FM}}{86}{1974}{53--67}{#1}}
   \ITEE{#3}{HTorunczyk1975}{
      \BIB{#2}{H. Toru\'{n}czyk}
         {On Cartesian factors and the topological classification of linear metric spaces}
         {\jRN{FM}}{88}{1975}{71--86}{#1}}
   \ITEE{#3}{HTorunczyk1978}{
      \BIB{#2}{H. Toru\'{n}czyk}
         {Concerning locally homotopy negligible sets and characterization of $l_2$-manifolds}
         {\jRN{FM}}{101}{1978}{93--110}{#1}}
   \ITEE{#3}{HTorunczyk1980}{
      \BiB{#2}{H. Toru\'{n}czyk}{Characterization of infinite-dimensional manifolds}{in:}
         {Proceedings of the International Conference on Geometric Topology (Warsaw, 1978)}
         {\jRN{PWN}}{1980}{431--437}{#1}}
   \ITEE{#3}{HTorunczyk1981}{
      \BIB{#2}{H. Toru\'{n}czyk}
         {Characterizing Hilbert space topology}
         {\jRN{FM}}{111}{1981}{247--262}{#1}}
   \ITEE{#3}{HTorunczyk1985}{
      \BIB{#2}{H. Toru\'{n}czyk}
         {A correction of two papers concerning Hilbert manifolds}
         {\jRN{FM}}{125}{1985}{89--93}{#1}}
   \ITEE{#3}{KTsuda1985}{
      \BIB{#2}{K. Tsuda}
         {A note on closed embeddings of finite dimensional metric spaces}
         {\jRN{BLondMS}}{17}{1985}{273--278}{#1}}
   \ITEE{#3}{PSUrysohn1925}{
      \BIB{#2}{P.S. Urysohn}
         {Sur un espace m\'{e}trique universel}
         {\jRN{CRASParis}}{180}{1925}{803--806}{#1}}
   \ITEE{#3}{PSUrysohn1927}{
      \BIB{#2}{P.S. Urysohn}
         {Sur un espace m\'{e}trique universel}
         {\jRN{BullSM}}{51}{1927}{43--64, 74--96}{#1}}
   \ITEE{#3}{VVUspenskij1986}{
      \BIB{#2}{V.V. Uspenskij}
         {A universal topological group with a countable basis}
         {\jRN{FAA}}{20}{1986}{86--87}{#1}}
   \ITEE{#3}{VVUspenskij1990}{
      \BIB{#2}{V.V. Uspenskij}
         {On the group of isometries of the Urysohn universal metric space}
         {\jRN{CMUC}}{31}{1990}{181--182}{#1}}
   \ITEE{#3}{VVUspenskij2004}{
      \BIB{#2}{V.V. Uspenskij}
         {The Urysohn universal metric space is homeomorphic to a Hilbert space}
         {\jRN{TopA}}{139}{2004}{145--149}{#1}}
   \ITEE{#3}{VVUspenskij2008}{
      \BIB{#2}{V.V. Uspenskij}
         {On subgroups of minimal topological groups}
         {\jRN{TopA}}{155}{2008}{1580--1606}{#1}}
   \ITEE{#3}{VSVaradarajan1963}{
      \BIB{#2}{V.S. Varadarajan}
         {Groups of automorphisms of Borel spaces}
         {\jRN{TAMS}}{109}{1963}{191--220}{#1}}
   \ITEE{#3}{AMVershik1998}{
      \BIB{#2}{A.M. Vershik}
         {The universal Urysohn space, Gromov's metric triples, and random metrics on the series of natural numbers}
         {\jRN{UspekhiMN}}{53}{1998}{57--64}{#1} English translation: \jRN{RussMS}{} \textbf{53} (1998), 921--928. 
         Correction: \jRN{UspekhiMN}{} \textbf{56} (2001), p. 207. English translation: \jRN{RussMS}{} \textbf{56} 
         (2001), p. 1015.}
   \ITEE{#3}{AMVershik2002}{
      \BIb{#2}{A.M. Vershik}
         {Random metric spaces and the universal Urysohn space}
         {Fundamental Mathematics Today. 10th anniversary of the Independent Moscow University. MCCME Publ.}{2002}{#1}}
   \ITEE{#3}{NWeaver1999}{
      \BIb{#2}{N. Weaver}
         {Lipschitz Algebras}
         {World Scientific}{1999}{#1}}
   \ITEE{#3}{JWeidmann1980}{
      \BIb{#2}{J. Weidmann}
         {Linear Operators in Hilbert Spaces}
         {(Graduate Texts in Mathematics, vol. 68) Springer-Verlag New York Inc.}{1980}{#1}}
   \ITEE{#3}{JEWest1969}{
      \BIB{#2}{J.E. West}
         {Approximating homotopies by isotopies in Fr\'{e}chet manifolds}
         {\jRN{BAMS}}{75}{1969}{1254--1257}{#1}}
   \ITEE{#3}{JEWest1969a}{
      \BIB{#2}{J.E. West}
         {Fixed-point sets of transformation groups on infinite-product spaces}
         {\jRN{PAMS}}{21}{1969}{575--582}{#1}}
   \ITEE{#3}{JEWest1970}{
      \BIB{#2}{J.E. West}
         {The ambient homeomorphy of infinite-dimensional Hilbert spaces}
         {\jRN{PacJM}}{34}{1970}{257--267}{#1}}
   \ITEE{#3}{JHCWhitehead1949}{
      \BIB{#2}{J.H.C. Whitehead}
         {Combinatorial homotopy I}
         {\jRN{BAMS}}{55}{1949}{213--245}{#1}}
   \ITEE{#3}{GTWhyburn1942}{
      \BIb{#2}{G. T. Whyburn}
         {Analytic Topology}
         {Amer. Math. Soc. Colloquium Publications (vol. XXVIII), New York}{1942}{#1}}
   \ITEE{#3}{WWogen1969}{
      \BIB{#2}{W. Wogen}
         {On generators for von Neumann algebras}
         {\jRN{BAMS}}{75}{1969}{95--99}{#1}}
   \ITEE{#3}{RYTWong1967}{
      \BIB{#2}{R.Y.T. Wong}
         {On homeomorphisms of certain infinite dimensional spaces}
         {\jRN{TAMS}}{128}{1967}{148--154}{#1}}
   \ITEE{#3}{LYang,JZhang1987}{
      \BIB{#2}{L. Yang and J. Zhang}
         {Average distance constants of some compact convex space}
         {\jRN{JChinUST}}{17}{1987}{17--23}{#1}}
   \ITEE{#3}{PZakrzewski1993}{
      \BIB{#2}{P. Zakrzewski}
         {The existence of invariant $\sigma$-finite measures for a group of transformations}
         {\jRN{IsraelJM}}{83}{1993}{275--287}{#1}}
   \ITEE{#3}{PZakrzewski2002}{
      \BIb{#2}{P. Zakrzewski}
         {Measures on Algebraic-Topological Structures, Handbook of Measure Thoery}
         {E. Pap, ed., Elsevier, Amsterdam}{2002, 1091--1130}{#1}}
   \ITEE{#3}{KZhu2000}{
      \BIB{#2}{K. Zhu}
         {Operators in Cowen-Douglas classes}
         {\jRN{IllinoisJM}}{44}{2000}{767--783}{#1}}
   }
\newcommand{\mypaplist}[2][]{
   \ITEE{#2}{pn1}{
      \myBIB{Separate and joint similarity to families of normal operators}
         {\jRN[#1]{SM}}{149}{2002}{39--62}}
   \ITEE{#2}{pn2}{
      \myBIB{Locally arcwise connected metrizable spaces with the fixed point property are complete-metrizable}
         {\jRN[#1]{TopA}}{153}{2006}{1639--1642}}
   \ITEE{#2}{pn3}{
      \myBIB{Invariant measures for equicontinuous semigroups of continuous transformations 
         of a compact Hausdorff space}{\jRN[#1]{TopA}}{153}{2006}{3373--3382}}
   \ITEE{#2}{pn4}{
      \myBIB{Approximation of the Hausdorff distance by the distance of continuous surjections}
         {\jRN[#1]{TopA}}{154}{2007}{655--664}}
   \ITEE{#2}{pn5}{
      \myBIB{Generalized Haar integral}
         {\jRN[#1]{TopA}}{155}{2008}{1323--1328}}
   \ITEE{#2}{pn6}{
      \myBIB{Integration and Lipschitz functions}
         {\jRN[#1]{RCMP}}{57}{2008}{391--399}}
   \ITEE{#2}{pn7}{
      \myBIB{Canonical Banach function spaces generated by Urysohn universal spaces. Measures as Lipschitz maps}
         {\jRN[#1]{SM}}{192}{2009}{97--110}}
   \ITEE{#2}{pn8}{
      \myBIB{Urysohn universal spaces as metric groups of exponent $2$}
         {\jRN[#1]{FM}}{204}{2009}{1--6}}
   \ITEE{#2}{pn9}{
      \myBIB{Central subsets of Urysohn universal spaces}
         {\jRN[#1]{CMUC}}{50}{2009}{445--461}}
   \ITEE{#2}{pn10}{
      \myBIB[P. Niemiec and T.Y. Tam]{A representation of $G$-in\-variant norms for Eaton triple}
         {\jRN[#1]{JCA}}{18}{2011}{59--65}}
   \ITEE{#2}{pn11}{
      \myBIB{Functor of extension of contractions on Urysohn universal spaces}
         {\jRN[#1]{ACS}}{}{2009}{\texttt{DOI: 10.1007/s10485-009-9218-z}}}
   \ITEE{#2}{pn12}{
      \myBIB{Ultra-$\mM$-separability}
         {\jRN[#1]{TopA}}{157}{2010}{669--673}}
   \ITEE{#2}{pn13}{
      \myBIB{Functor of extension of $\Lambda$-isometric maps between central subsets 
         of the unbounded Urysohn universal space}{\jRN[#1]{CMUC}}{51}{2010}{541--549}}
   \ITEE{#2}{pn14}{
      \myBIB{Normed topological pseudovector groups}{\jRN[#1]{ACS}}{}{2010}
         {\ITE{\equal{#1}{}}{\texttt{DOI: 10.1007/s10485\-010-9239-7}}{\texttt{DOI: 10.1007/s10485-010-9239-7}}}}
   \ITEE{#2}{pn15}{
      \myBIB{Topological structure of Urysohn universal spaces}
         {\jRN[#1]{TopA}}{158}{2011}{352--359}}
   \ITEE{#2}{pn16}{
      \myBIB{A note on invariant measures}
         {\jRN[#1]{OpusM}}{31}{2011}{425--431}}
   \ITEE{#2}{pnX1}{
      \myBAPP{Strengthened Stone-Weierstrass type theorem}
         {accepted for publication in \jRN[#1]{OpusM}}}
   \ITEE{#2}{pnX2}{
      \myBAPP{Functor of continuation in Hilbert cube and Hilbert space}
         {to appear in \jRN[#1]{FM}}}
   \ITEE{#2}{pnX3}{
      \myBAPP{Norm closures of orbits of bounded operators}
         {to appear.}}
   \ITEE{#2}{pnX6}{
      \myBAPP{Extending maps by injective $\sigma$-$Z$-maps in Hilbert manifolds}
         {to appear in \jRN[#1]{BullPol}}}
   \ITEE{#2}{pnX7}{
      \myBAPP{Spaces of measurable functions}
         {submitted to \jRN[#1]{CollectM}}}
   \ITEE{#2}{pnX8}{
      \myBAPP{Normal systems over ANR's, rigid embeddings and nonseparable absorbing sets}
         {submitted to \jRN[#1]{}}}
   \ITEE{#2}{pnX9}{
      \myBAPP{Borel structure of the spectrum of a closed operator}
         {submitted to \jRN[#1]{JFA}}}
   \ITEE{#2}{pnX10}{
      \myBAPP{Central points and measures and dense subsets of compact metric spaces}
         {submitted to \jRN[#1]{NonlinA}}}
   \ITEE{#2}{pnX11}{
      \myBAPP{Generalized absolute values and polar decompositions of a bounded operator}
         {submitted to \jRN[#1]{IEOT}.}}
   \ITEE{#2}{pnX12}{
      \myBAPP{Ultrametrics, extending of Lipschitz maps and nonexpansive selections}
         {submitted to \jRN[#1]{HJM}}}
   \ITEE{#2}{pnX13}{
      \myBAPP{A note on ANR's}
         {submitted to \jRN[#1]{TopA}}}
   \ITEE{#2}{pnX14}{
      \myBAPP{Problem with almost everywhere equality}
         {submitted to \jRN[#1]{ArchM}}}
   \ITEE{#2}{pnX15}{
      \myBAPP{Universal valued Abelian groups}
         {submitted to \jRN[#1]{MProcCambPhS}}}
   }
\newcommand{\CDDc}{\operatorname{CDD}}\newcommand{\CDD}{\CcC\DdD\DdD}\newcommand{\RED}{\operatorname{cred}}
\newcommand{\zero}{\OOO}\newcommand{\disj}{\perp_u}\newcommand{\sqplus}{\boxplus}\newcommand{\sqminus}{\boxminus}
\newcommand{\fIN}{\FfF\IiI\NnN}\newcommand{\bigsqplus}{\operatornamewithlimits{\raisebox{-0.5ex}{\Large$\boxplus$}}}
\newcommand{\Bigsqplus}{\operatornamewithlimits{\raisebox{-1ex}{\Huge$\boxplus$}}}\newcommand{\Tr}{\operatorname{Tr}}
\newcommand{\contS}[3][]{\left.\bigoplus_{#2}#1\!\right.^{#3}}\newcommand{\Dim}{\operatorname{Dim}}
\newcommand{\contSQ}[3][]{\left.\Bigsqplus_{#2}#1\!\right.^{#3}}\newcommand{\Div}{\operatorname{Div}}
\newcommand{\tII}{I\!I}\newcommand{\tIII}{I\!I\!I}\newcommand{\sdisj}{\perp_s}\newcommand{\rgM}{\operatorname{rgm}}
\newcommand{\rgS}{\operatorname{RGS}}\newcommand{\DiNT}[2]{\frac{\dint{#1}}{\dint{#2}}}
\newcommand{\stSt}{\stackrel{*s}{\to}}
\newcommand{\pREF}[1]{(page~\pageref{#1})}\newcommand{\PREF}[1]{page~\pageref{#1}}
\newcommand{\EQp}[1]{\eqref{eqn:#1} \pREF{eqn:#1}}\newcommand{\EQP}[1]{\eqref{eqn:#1}, \PREF{eqn:#1}}
\newcommand{\THMp}[1]{\THM{#1} \pREF{thm:#1}}\newcommand{\THMP}[1]{\THM{#1}, \PREF{thm:#1}}
\newcommand{\PROp}[1]{\PRO{#1} \pREF{pro:#1}}\newcommand{\PROP}[1]{\PRO{#1}, \PREF{pro:#1}}
\newcommand{\LEMp}[1]{\LEM{#1} \pREF{lem:#1}}\newcommand{\LEMP}[1]{\LEM{#1}, \PREF{lem:#1}}
\newcommand{\CORp}[1]{\COR{#1} \pREF{cor:#1}}\newcommand{\CORP}[1]{\COR{#1}, \PREF{cor:#1}}
\newcommand{\EXMp}[1]{\EXM{#1} \pREF{exm:#1}}
\newcommand{\DEFp}[1]{\DEF{#1} \pREF{def:#1}}
\begin{document}

\title[Unitary equivalence of systems of operators]
   {Unitary equivalence and decompositions of finite systems of closed densely defined operators in Hilbert spaces}
\myData
\begin{abstract}
An \textit{ideal} of $N$-tuples of operators is a class invariant with respect to unitary equivalence which contains
direct sums of arbitrary collections of its members as well as their (reduced) parts. New decomposition theorems (with
respect to ideals) for $N$-tuples of closed densely defined linear operators acting in a common (arbitrary) Hilbert space
are presented. Algebraic and order (with respect to containment) properties of the class $\CDD_N$ of all unitary
equivalence classes of such $N$-tuples are established and certain ideals in $\CDD_N$ are distinguished. It is proved that
infinite operations in $\CDD_N$ may be reconstructed from the direct sum operation of a pair. \textit{Prime decomposition}
in $\CDD_N$ is proposed and its (in a sense) uniqueness is established. The issue of classification of ideals in $\CDD_N$
(up to isomorphism) is discussed. A model for $\CDD_N$ is described and its concrete realization is presented. A new
partial order of $N$-tuples of operators is introduced and its fundamental properties are established. Extremal importance
of unitary disjointness of $N$-tuples and the way how it `tidies up' the structure of $\CDD_N$ are emphasized.\\
\textit{2010 MSC: Primary 47B99; Secondary 46A10.}\\
Key words: closed operator; densely defined operator; unitary equivalence; direct sum of operators; direct integral;
decomposition of an operator; prime decomposition of an operator; finite system of operators.
\end{abstract}
\maketitle


\SECT{Introduction}

Criterions for unitary equivalence of two (bounded linear) operators (acting on Hilbert spaces) and the classification
of operators up to unitary equivalence are subjects which focused an attention of many mathematicians inspired by methods
and ideas proven in practice with quite well explored normal operators. The literature dealing with these and related
topics is still growing up, let us mention here only a few: Brown \cite{bro} classified quasi-normal operators; Halmos and
McLaughlin \cite{hml} reduced the issue of unitary equivalence of arbitrary bounded operators to partial isometries; Ernest
\cite{e}, Hadwin \cite{ha1,ha2} and others (e.g. \cite{kkl}) investigated operator-valued spectra which generalized
standard (scalar) spectrum of a normal operator; Ernest \cite{e}, Brown, Fong and Hadwin \cite{bfh} and Loebl \cite{loe}
studied parts (that is, suboperators) of operators. It was Ernest \cite{e} who first shown that --- in a sense ---
the classification of all operators up to unitary equivalence is an essentially unattainable objective, although he gave
an equivalent condition for two (totally arbitrary) bounded operators to be unitarily equivalent. It was formulated
by means of certain (operator-valued) spectra of operators and multiplicity theory extended from normal to all bounded
operators (roughly speaking, he adapted and generalized the classical Hahn-Hellinger theorem). The recent paper is
motivated by his approach to this subject. One of aims of the paper is to finish Ernest's program of exploring the realm
of unitary equivalence classes of closed densely defined operators by making no assumptions neither on the dimension
of Hilbert spaces nor on boundedness of operators (this solves the problem posed by Ernest in point c of \S7 of Chapter~5
of \cite{e}). Even more, we study the class $\CDD_N$ of finite systems ($N$-tuples) of closed densely defined operators
acting in (totally arbitrary) common Hilbert spaces. Surprisingly, such general considerations lead to more elegant results
and reveal features which become invisible when one restricts only to separable spaces. Although $\CDD_N$ is not a set but
a class, we shall show that it is `controlled' by a single $N$-tuple (acting in a nonseparable space; cf. \PROP{unity})
and this observation will enable us to find an (algebraic as well as order) model for $\CDD_N$ (\THMP{model}).
An elementary form of the model will enable us to establish several interesting properties of $\CDD_N$
(e.g. (AO13)--(AO14), \PREF{AO13}). Also the central decomposition introduced by Ernest for an operator acting
in a separable space may be extended to a general context and translated into a more attractive (at least for us) form
of the `prime decomposition' similar to the one for natural numbers (\THMP{prime}).\par
Another aspect discussed in the treatise concerns various (known) results on decompositions of operators. There is a quite
large number of results stating that a certain operator may be uniquely decomposed into two (or more) parts the first
of which is of special type (kind, class, etc.) and the second has no nontrivial part of this type. The latter part is
often named by a phrase of the form `completely non-\textit{sth}' or `purely \textit{sth}'. Let us mention only a few such
results:
\begin{enumerate}[(DC1)]
\item a contraction operator may be decomposed into a unitary part and a completely non-unitary one,
\item a bounded operator may be decomposed into a normal part and a completely non-normal one,
\item a closed densely defined operator admits a unique decomposition into a normal, a purely formally normal
   and a completely non-formally normal part (\cite{s-sz})
\end{enumerate}
(other results in this fashion are e.g. \cite{fk3}, \cite{slo}, \cite{cps}). There is a striking resemblance in the above
statements. And this is not a coincidence. In this paper we put \textbf{all} results of this type in a \textit{one} general
frame. To be more precise, let us introduce the notion of an \textit{ideal}. It is any nonempty class $\AaA$ of closed
densely defined operators which satisfies the following three axioms:
\begin{itemize}
\item if $A$ and $B$ are unitarily equivalent, then $A \in \AaA \iff B \in \AaA$,
\item every part (including the trivial one acting on a zero-dimen\-sional Hilbert space) of a member of $\AaA$ belongs
   to $\AaA$,
\item $\bigoplus_{s \in S} A_s \in \AaA$ whenever $\{A_s\}_{s \in S} \subset \AaA$ (and $S$ is a nonempty set).
\end{itemize}
For every ideal $\AaA$ we denote by $\AaA^{\perp}$ the class of all operators $A$ whose no nontrivial part belongs
to $\AaA$. In \THMp{1} we show that whenever $\AaA$ and $\BbB$ are ideals, so is $\AaA^{\perp}$ and every (closed densely
defined) operator $T$ acting in a (completely arbitrary) Hilbert space $\HHh$ induces a unique decomposition $\HHh =
\HHh_{11} \oplus \HHh_{10} \oplus \HHh_{01} \oplus \HHh_{00}$ such that $\HHh_{jk}$ are reducing subspaces for $T$
and $T\bigr|_{\HHh_{11}} \in \AaA \cap \BbB$, $T\bigr|_{\HHh_{10}} \in \AaA \cap \BbB^{\perp}$, $T\bigr|_{\HHh_{01}} \in
\AaA^{\perp} \cap \BbB$ and $T\bigr|_{\HHh_{00}} \in \AaA^{\perp} \cap \BbB^{\perp}$. This result covers (DC1)--(DC3)
and all above mentioned theorems on decompositions.\par
Ernest \cite[Definition~1.7]{e} has introduced the notion of \textit{disjoint} operators, say $A$ and $B$. In this paper
we express this by writing `$A \disj B$' and call $A$ and $B$ \textit{unitarily disjoint}. (Unitary disjointness,
as a relation, behaves as singularity of measures or orthogonality in Hilbert spaces. Beside this, unitary disjointness
is formulated in order-theoretic terms in the same way as disjointness in Banach lattices, where the disjointness of two
vectors $x$ and $y$ is expressed by writing $x \perp y$. This is why we prefer symbol `$\disj$' for disjointness
of operators than Ernest's original notation.) For Ernest the disjointness was only one of possible relations between
operators. His \textit{Lebesgue decomposition} of a one operator with respect to the other (Proposition~2.12
and Definition~2.13 in \cite{e}) seems to be a secondary result rather than a `serious' theorem. Another aim of our
treatise is to underline the great importance of (unitary) disjointness (for example, we demonstrate how Ernest's central
decomposition, or our prime one, may be translated into the `intrinsic' language of operators, with use of unitary
disjointness; also the proof of above mentioned \THM{1} depends on properties of unitary disjointness). Roughly speaking,
composing direct sums from arbitrary collections of operators is very \textit{chaotic}, while the direct sum of a family
of mutually unitarily disjoint operators is well `arranged'. We may compare this with representing a simple Borel function
(i.e. whose range is finite) as a linear combination of the characteristic functions of Borel sets---this may be done
in infinitely many ways; there is however only one such a representation in which all appearing sets form a partition
of the domain of the function. The latter form of a simple function tells us \textit{everything} about the function.
The same occurs in the class $\CDD_N$ (see e.g. \THMP{decomp}) when an $N$-tuple is written as the direct sum
of a collection of mutually unitarily disjoint $N$-tuples. To distinguish between these specific decompositions
and `chaotic' ones, we call every direct sum (as well as any collection) of pairwise unitarily disjoint $N$-tuples
\textit{regular}. The notion of regularity may easily be adapt for the `continuous' versions of direct sums (defined
in Section~21 by means of direct integrals). This generalization turns out to be crucial for formulating our Prime
Decomposition Theorem (\THMP{prime}).\par
The main tools of the treatise are, as in Ernest's work \cite{e}, techniques of von Neumann algebras. In Sections~1--17
and 23 we involve dimension theory of $\WWw^*$-algebras, especially a property discovered recently by Sherman \cite{sh}.
All results of these sections may be formulated and proved in the language of a `semigroup' $\CDD_N$ with the direct sum
of a pair as the only available operation (cf. Section~13). The remainder (Sections~18--22) depends on the reduction theory
due to von Neumann \cite{vN2}. This deals with topological and measure-theoretic aspects which are introduced
in Sections~18--20. It is assumed that the reader is well oriented in basics of von Neumann algebras (it is enough to know
the materials of \cite{sak}, \cite{k-r1,k-r2} and \cite{tk1}).\par
The main results of the paper are Theorems~\ref{thm:1} \pREF{thm:1}, \ref{thm:decomp} \pREF{thm:decomp},
\ref{thm:model} \pREF{thm:model}, \ref{thm:prime} \pREF{thm:prime} and \ref{thm:isom} \pREF{thm:isom}.
\vspace{0.3cm}

\textbf{Notation and terminology.} In this paper $\RRR_+ = [0,\infty)$ and all Hilbert spaces are over the complex field.
$\HHh$ and $\KKk$ denote (possibly trivial) Hilbert spaces. By an \textit{operator} we mean a linear function between
linear subspaces of Hilbert spaces. The Hilbert space dimension of $\HHh$ is denoted by $\dim \HHh$. $\BBb(\HHh,\KKk)$ and
$\UUu(\HHh,\KKk)$ denote, respectively, the Banach space of all bounded operators from $\HHh$ into $\KKk$ and the set
of all unitary operators of $\HHh$ onto $\KKk$, and $\BBb(\HHh) = \BBb(\HHh,\HHh)$ and $\UUu(\HHh) = \UUu(\HHh,\HHh)$.
Whenever $A$ is an operator, $\DdD(A)$, $\RrR(A)$, $\overline{\DdD}(A)$ and $\overline{\RrR}(A)$ stand for, respectively,
the domain and the range of $A$ and their closures. Additionally, $\NnN(A)$ denotes the kernel of $A$. The direct sum
of a collection of Hilbert spaces $\{\HHh_s\}_{s \in S}$ is denoted by $\bigoplus_{s \in S} \HHh_s$ and $\oplus_s x_s$ is
a member of $\bigoplus_{s \in S} \HHh_s$ corresponding to a family $\{x_s\}_{s \in S}$ of vectors such that $x_s \in
\HHh_s$ and $\sum_{s \in S} \|x_s\|^2 < \infty$. The same notation is used for direct sums of operators:
if $\{A_s\}_{s \in S}$ is a family of operators, $A = \bigoplus_{s \in S} A_s$ is an operator with
$$
\DdD(A) = \bigl\{\oplus_s x_s \in \bigoplus_{s \in S} \overline{\DdD}(A_s)\dd\ x_s \in \DdD(A_s)\ (s \in S),\
\sum_{s \in S} \|A_s x_s\|^2 < \infty\bigr\}
$$
and for $x = \oplus_s x_s \in \DdD(A)$, $A x = \oplus_s (A_s x_s) \in \bigoplus_{s \in S} \overline{\RrR}(A_s)$.\par
For two operators $A$ and $B$ acting in a common Hilbert space we write $A \subset B$ provided $\DdD(A) \subset \DdD(B)$
and $Bx = Ax$ for $x \in \DdD(A)$.\par
Let $A$ be a closed densely defined operator in $\HHh$. A closed linear subspace $E$ of $\HHh$ is said to be
\textit{reducing} for $A$ if $P_E A \subset A P_E$ where $P_E$ is the orthogonal projection onto $E$ and $\DdD(A P_E) =
P_E^{-1}(\DdD(A))$. The \textit{reduced part} of $A$ to $E$ is denoted by $A \bigr|_E$ and it is the restriction of $A$ to
$\DdD(A) \cap E$. The set of all reducing subspaces for $A$ is denoted by $\red(A)$. $E \in \red(A)$ is \textit{centrally}
reducing iff $P_E P_K = P_K P_E$ for any $K \in \red(A)$. The collection of all centrally reducing subspaces is denoted
by $\RED(A)$. The \textit{$*$-commutant} of $A$ is the set $\WWw'(A)$ consisting of all $T \in \BBb(\HHh)$ such that
$T A \subset A T$ and $T^* A \subset A T^*$; and $\WWw''(A) = (\WWw'(A))'$ is the \textit{$*$-bicommutant} of $A$. When
$A$ is bounded, we may also use $\WWw(A)$ to denote the smallest von Neumann algebra containing $A$, in that case $\WWw(A)
= \WWw''(A)$ (thanks to von Neumann's bicommutant theorem). The \textit{polar decomposition} of $A$ has the form
$A = Q |A|$ where $|A|$ is the square root of $A^* A$ (obtained e.g. by the functional calculus for unbounded selfadjoint
operators) and $Q$ is a partial isometry with $\NnN(Q) = \NnN(A)$. Whenever we use the notation `$Q_T$' with $T$ being
a closed densely defined operator, this denotes the partial isometry appearing in the polar decomposition of $T$.

\SECT{Preliminaries}

In the whole paper, $N$ is a fixed positive integer corresponding to the length of tuples of operators. Whenever $\HHh$ is
a Hilbert space, $\CDDc(\HHh)$ is the collection of all closed densely defined linear operators acting in $\HHh$ and
$\CDDc_N(\HHh) = [\CDDc(\HHh)]^N$. That is, $\CDDc_N(\HHh)$ consists of all $N$-tuples of members of $\CDDc(\HHh)$.
Further, we put
$$
\CDDc_N = \bigcup_{\HHh} \CDDc_N(\HHh)
$$
where $\HHh$ runs over all Hilbert spaces (including zero-dimensional). For simplicity, we shall write $\CDDc$ in place
of $\CDDc_1$. For every $\aAA = (A_1,\ldots,A_N) \in \CDDc_N$ there is a unique Hilbert space, denoted
by $\overline{\DdD}(\aAA)$, such that $\aAA \in \CDDc_N(\overline{\DdD}(\aAA))$. In particular, $\overline{\DdD}(\aAA) =
\overline{\DdD}(A_j)$ for $j=1,\ldots,N$.\par
Suppose $\aAA = (A_1,\ldots,A_N) \in \CDDc_N$. We define $\aAA^*$, $|\aAA|$ and $\qQQ_{\aAA}$ (as members of $\CDDc_N$)
in a coordinatewise manner: $\aAA^* = (A_1^*,\ldots,A_N^*)$, $|\aAA| = (|A_1|,\ldots,|A_N|)$ and $\qQQ_{\aAA} = (Q_{A_1},
\ldots,Q_{A_N})$. In the same way we may define other operations on $N$-tuples, if only they can be made on each of their
entries. For example, if each of $A_j$'s is one-to-one and has dense image, we may define $\aAA^{-1}$ as $(A_1^{-1},\ldots,
A_N^{-1})$.\par
Everywhere below in points (DF1)--(DF11), $\aAA = (A_1,\ldots,A_N)$, $\bBB = (B_1,\ldots,B_N)$ and $\aAA^{(s)} =
(A^{(s)}_1,\ldots,A^{(s)}_N)$ represent arbitrary members of $\CDDc_N$. For a single operator, some of notions stated below
are well-known and some of them were introduced in \cite{e} (with different notation). Probably the only new notion
is the \textit{strong} order `$\leqsl^s$' defined in (DF8) below.
\begin{enumerate}[(DF1)]
\item Let $\bigoplus_{s \in S} \aAA^{(s)} = (\bigoplus_{s \in S} A^{(s)}_1,\ldots,\bigoplus_{s \in S} A^{(s)}_N)$. For
   a positive cardinal $\alpha$ let $\alpha \odot \aAA = \bigoplus_{\xi < \xi_{\alpha}} \aAA^{(\xi)}$ where $\xi_{\alpha}$
   is the first ordinal of cardinality $\alpha$ and $\aAA^{(\xi)} = \aAA$ for any $\xi < \xi_{\alpha}$.
\item $\aAA$ is \textit{trivial} provided $\overline{\DdD}(\aAA)$ is zero-dimensional; otherwise $\aAA$ is
   \textit{nontrivial}.
\item $\aAA$ is \textit{bounded} iff each of $A_1,\ldots,A_N$ is a bounded operator; $\|\aAA\| :=
   \max(\|A_1\|,\ldots,\|A_N\|)$ provided $\aAA$ is bounded, otherwise $\|\aAA\| := \infty$. We say a bounded $\aAA$
   \textit{assumes its norm} provided there is $x \in \overline{\DdD}(\aAA)$ of norm $1$ with $\max(\|A_1 x\|,\ldots,
   \|A_N x\|) = \|\aAA\|$.
\item Let $\red(\aAA) = \bigcap_{j=1}^N \red(A_j)$ and for $E \in \red(\aAA)$,
   $$\aAA\bigr|_E = (A_1\bigr|_E,\ldots,A_N\bigr|_E);$$
   $\RED(\aAA)$ consists of all $E \in \red(\aAA)$ such that $P_E P_K = P_K P_E$ for every $K \in \red(\aAA)$.
\item The \textit{$*$-commutant} of $\aAA$ is the set $\WWw'(\aAA) = \bigcap_{j=1}^N \WWw'(A_j) \subset
   \BBb(\overline{\DdD}(\aAA))$ and $\WWw''(\aAA) = (\WWw'(\aAA))'$ is the \textit{$*$-bicommutant} of $\aAA$. When $\aAA$
   is bounded, we may also use $\WWw(\aAA)$ to denote the smallest von Neumann algebra including $\{A_1,\ldots,A_N\}$,
   in that case $\WWw(\aAA) = \WWw''(\aAA)$.
\item $\aAA \equiv \bBB$ (or, $\aAA$ and $\bBB$ are \textit{unitarily equivalent}) iff there is $U \in
   \UUu(\overline{\DdD}(\aAA),\overline{\DdD}(\bBB))$ such that $A_j = U^{-1} B_j U$ for $j = 1,\ldots,N$.
\item $\aAA \leqsl \bBB$ iff $\aAA \equiv \bBB\bigr|_E$ for some $E \in \red(\bBB)$.
\item $\aAA \leqsl^s \bBB$ iff $\aAA \equiv \bBB\bigr|_E$ for some $E \in \RED(\bBB)$.
\item $\aAA$ and $\bBB$ are \textit{unitarily disjoint}, in symbol $\aAA \disj \bBB$, if there is no nontrivial $N$-tuple
   $\xXX \in \CDDc_N$ with $\xXX \leqsl \aAA$ and $\xXX \leqsl \bBB$.
\item $\aAA$ is \textit{covered} by $\bBB$, in symbol $\aAA \ll \bBB$, if $\aAA \leqsl \alpha \odot \bBB$ for some cardinal
   $\alpha$.
\item The symbols `$\sqplus$' and `$\bigsqplus$' shall often be used instead of `$\oplus$' and `$\bigoplus$' in situations
   when all summands are mutually unitarily disjoint. So, whenever in the sequel notation $\aAA \sqplus \bBB$
   or $\bigsqplus_{s \in S} \aAA^{(s)}$ appears, this will always imply that $\aAA \disj \bBB$ or, respectively,
   $\aAA^{(s')} \disj \aAA^{(s'')}$ for any distinct indices $s',s'' \in S$. The direct sum (a collection) is called
   \textit{regular} provided all its summands (elements) are mutually unitarily disjoint.
\end{enumerate}
The reader should notice that a function $\red(\aAA) \ni E \mapsto P_E \in \WWw'(\aAA)$ establishes a one-to-one
correspondence between $\red(\aAA)$ and the set $E(\WWw'(\aAA))$ of all orthogonal projections belonging to $\WWw'(\aAA)$.
What is more, this map sends $\RED(\aAA)$ onto $E(\WWw'(\aAA)) \cap \ZZz(\WWw'(\aAA))$ where $\ZZz(\WWw'(\aAA))$ is
the center of $\WWw'(\aAA)$.\par
It is quite easy to check that `$\equiv$' is an equivalence on $\CDDc_N$ and thus for each $\aAA \in \CDDc_N$ we may
consider the equivalence class of $\aAA$ with respect to `$\equiv$', which we shall denote by $\AAA$. Let $\CDD_N$ be
the class of (all) equivalence classes of all members of $\CDDc_N$ and let $\CDD = \CDD_1$. Elements of $\CDD_N$ will be
denoted by $\AAA,\BBB,\XXX,\YYY$ and so on and their corresponding representatives by $\aAA,\bBB,\xXX,\yYY$. The symbol
$\zero$ is reserved to denote the equivalence class of a trivial element of $\CDDc_N$. $\zero$ is the unique member
of $\CDD_N$ whose representatives act on zero-dimensional Hilbert spaces. (It is also the neutral element for the action
`$\oplus$'.) For every $\AAA \in \CDD_N$, the following are well defined, in an obvious manner: $\AAA^*$, $|\AAA|$,
$\QQQ_{\AAA}$ (the latter corresponds to $\qQQ_{\aAA}$) and $\dim(\AAA) = \dim \overline{\DdD}(\aAA)$. For simplicity,
we shall use the term `$N$-tuple' for members of $\CDDc_N$ as well as for members of $\CDD_N$.\par
Some of notions in (DF1)--(DF11) may be adapted for members of $\CDD_N$ as follows.
\begin{enumerate}[(UE1)]
\item Let $\bigoplus_{s \in S} \AAA^{(s)} = \XXX$ where $\xXX = \bigoplus_{s \in S} \aAA^{(s)}$. For any cardinal
   $\mM > 0$, put $\mM \odot \AAA = \YYY$ where $\yYY = \mM \odot \aAA$. Additionally, let $0 \odot \AAA = \OOO$.
\item $\AAA$ is bounded, nontrivial, trivial iff so is $\aAA$. $\|\AAA\| = \|\aAA\|$; $\AAA$ assumes its norm iff so does
   $\aAA$.
\item $\AAA \leqsl \BBB$, $\AAA \leqsl^s \BBB$, $\AAA \disj \BBB$, $\AAA \ll \BBB$ iff corresponding relation holds true
   for $\aAA$ and $\bBB$. Note that $\AAA \leqsl^s \BBB \implies \AAA \leqsl \BBB \implies \AAA \ll \BBB$.
\item\label{UE4} Notation $\AAA \sqplus \BBB$ or $\bigsqplus_{s \in S} \AAA^{(s)}$ includes information that $\AAA \disj
   \BBB$ or, respectively, $\AAA^{(s')} \disj \AAA^{(s'')}$ for any distinct indices $s',s'' \in S$. The direct sum
   (a family) of members of $\CDD_N$ is \textit{regular} iff all its summands (elements) are pairwise unitarily disjoint.
\end{enumerate}

A starting point for all of our investigations is the following classical result (see e.g. \cite[Theorem~1.3]{e}).

\begin{pro}{order}
`$\leqsl$' and `$\leqsl^s$' are partial orders on $\CDD_N$. More precisely, if $\AAA \leqsl \BBB$ and $\BBB \leqsl \AAA$,
then $\AAA = \BBB$.
\end{pro}

\SECT{The $\bB$-transform}

This part is mainly devoted to single operators.\par
We fix a Hilbert space $\HHh$ and an operator $T \in \CDDc(\HHh)$. Let $I$ be the identity operator on $\HHh$.

\begin{dfn}{b}
The \textit{$\bB$-transform} of $T$ is the operator $$\bB(T) = T (I + |T|)^{-1} \in \BBb(\HHh).$$
\end{dfn}

The reader should verify with no difficulties that

\begin{pro}{b}
Let $S = \bB(T)$.
\begin{enumerate}[\upshape(A)]
\item $\bB(|T|) = |S| = |T| (I + |T|)^{-1}$ and $Q_T = Q_S$.
\item $\|S x\| < \|x\|$ for each $x \in \HHh \setminus \{0\}$.
\item $T = S (I - |S|)^{-1} =: \uU\bB(S)$.
\item $\WWw'(T) = \WWw'(S)$. Consequently, $\red(T) = \red(S)$ and $\RED(T) = \RED(S)$. For every $E \in \red(T)$,
   $\bB(T\bigr|_E) = S\bigr|_E$.
\item The $\bB$-transform establishes a one-to-one correspondence between members of $\CDDc(\HHh)$ and operators
   $S \in \BBb(\HHh)$ satisfying \textup{(B)}.
\item $\bB(\bigoplus_{s \in S} T_s) = \bigoplus_{s \in S} \bB(T_s)$ for arbitrary family $\{T_s\}_{s \in S} \subset \CDDc$.
\end{enumerate}
\end{pro}

The following result is a little bit surprising.

\begin{thm}{b*}
For every $T \in \CDDc$, $\bB(T^*) = [\bB(T)]^*$.
\end{thm}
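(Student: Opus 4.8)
The plan is to rewrite both sides in terms of the bounded self-adjoint operator $f(|T|)$, where $f(t)=t(1+t)^{-1}$, and the partial isometry $Q:=Q_T$ from the polar decomposition $T=Q|T|$, and then to reduce everything to a single commutation relation. Since $Q$ is bounded, taking adjoints in $T=Q|T|$ gives $T^{*}=|T|Q^{*}$; on the other hand the polar decomposition of $T^{*}$ reads $T^{*}=Q^{*}|T^{*}|$ with $|T^{*}|=(TT^{*})^{1/2}$. By the functional calculus for the non-negative self-adjoint operators $|T|$ and $|T^{*}|$ one has $|T|(I+|T|)^{-1}=f(|T|)$ and $|T^{*}|(I+|T^{*}|)^{-1}=f(|T^{*}|)$, both bounded. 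Since the range of $(I+|T|)^{-1}$ is exactly $\DdD(|T|)=\DdD(T)$,
\[
\bB(T)=T(I+|T|)^{-1}=Q\,|T|(I+|T|)^{-1}=Q\,f(|T|),
\]
so that $[\bB(T)]^{*}=f(|T|)^{*}Q^{*}=f(|T|)Q^{*}$ (both factors being bounded), while likewise
\[
\bB(T^{*})=T^{*}(I+|T^{*}|)^{-1}=Q^{*}\,|T^{*}|(I+|T^{*}|)^{-1}=Q^{*}\,f(|T^{*}|).
\]
Thus the theorem amounts to the identity $f(|T|)Q^{*}=Q^{*}f(|T^{*}|)$.

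Because $f(t)=1-(1+t)^{-1}$, this identity is equivalent to $(I+|T|)^{-1}Q^{*}=Q^{*}(I+|T^{*}|)^{-1}$, an equality of bounded operators on $\HHh$. To prove the latter, fix $y\in\HHh$ and put $x:=(I+|T^{*}|)^{-1}y\in\DdD(|T^{*}|)=\DdD(T^{*})$, so $y=x+|T^{*}|x$. Since $x\in\DdD(T^{*})$ and $T^{*}=|T|Q^{*}=Q^{*}|T^{*}|$, we get $Q^{*}x\in\DdD(|T|)$ and $|T|Q^{*}x=Q^{*}|T^{*}|x$; hence
\[
Q^{*}y=Q^{*}x+Q^{*}|T^{*}|x=Q^{*}x+|T|Q^{*}x=(I+|T|)(Q^{*}x),
\]
and applying $(I+|T|)^{-1}$ yields $(I+|T|)^{-1}Q^{*}y=Q^{*}x=Q^{*}(I+|T^{*}|)^{-1}y$. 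As $y$ was arbitrary, this establishes the required commutation relation, and the theorem follows.

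The main thing to watch will be the bookkeeping with domains of unbounded operators: one must be sure that the two descriptions $T^{*}=Q^{*}|T^{*}|$ and $T^{*}=|T|Q^{*}$ are genuine equalities of operators with common domain $\DdD(T^{*})$, which is exactly what makes the resolvent computation above legitimate. Everything else is routine, and the argument also meshes with \PRO{b}(A), which already records $Q_T=Q_{\bB(T)}$ and $|\bB(T)|=f(|T|)$: from there $\bB(T)=Q\,f(|T|)$ can be read off directly, and $\bB(T^{*})=Q^{*}f(|T^{*}|)$ follows from \PRO{b}(A) applied to $T^{*}$, after which the same commutation relation $f(|T|)Q^{*}=Q^{*}f(|T^{*}|)$ closes the proof.
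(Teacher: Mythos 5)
Your proof is correct. The domain bookkeeping you flag as the main risk is in fact fine: $T=Q|T|$ with $Q$ bounded gives $T^{*}=(Q|T|)^{*}=|T|Q^{*}$ exactly (same domain), and together with the polar decomposition $T^{*}=Q^{*}|T^{*}|$ and $\DdD(|T^{*}|)=\DdD(T^{*})$ this legitimizes the computation $Q^{*}y=(I+|T|)Q^{*}x$ and hence the resolvent identity $(I+|T|)^{-1}Q^{*}=Q^{*}(I+|T^{*}|)^{-1}$.

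Your route differs from the paper's in organization, though it rests on the same two facts ($T^{*}=Q^{*}|T^{*}|$ and $T^{*}=|T|Q^{*}$ on $\DdD(T^{*})$). The paper never factors $\bB(T)$ through the polar decomposition; it verifies the adjoint relation directly at the level of inner products, setting $u=(I+|T|)^{-1}x$, $v=(I+|T^{*}|)^{-1}y$ and checking $\scalar{\bB(T)x}{y}=\scalar{(I+|T|)u}{T^{*}v}=\scalar{x}{\bB(T^{*})y}$ in one chain of equalities. Your version instead isolates the clean operator identities $\bB(T)=Q\,f(|T|)$, $\bB(T^{*})=Q^{*}f(|T^{*}|)$ and reduces everything to the intertwining $f(|T|)Q^{*}=Q^{*}f(|T^{*}|)$; this is slightly longer but makes the structural reason for the theorem more visible and yields the resolvent commutation relation as a reusable by-product. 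Either argument is acceptable.
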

\begin{proof}
Let $T = Q |T|$ be the polar decomposition of $T$. Then $T^* = Q^* |T^*|$ is the polar decomposition of $T^*$. Put $\HHh =
\overline{\DdD}(T)$, $S = \bB(T)$ and $S' = \bB(T^*)$. Fix $x, y \in \HHh$, put $u = (I + |T|)^{-1}x \in \DdD(T)$ and $v =
(I + |T^*|)^{-1}y \in \DdD(T^*)$ and observe that
\begin{multline*}
\scalar{Sx}{y} = \scalar{Tu}{(I + |T^*|)v} = \scalar{Tu}{v} + \scalar{Q|T|u}{|T^*|v} =\\
= \scalar{u}{T^*v} + \scalar{|T|u}{T^*v} = \scalar{(I + |T|)u}{T^*v} = \scalar{x}{S'y}
\end{multline*}
which finishes the proof.
\end{proof}

Involving $\bB$-transform we now easily prove

\begin{thm}{common}
Let $\HHh$ be a nonseparable Hilbert space and $\{T_s\}_{s \in S} \subset \CDDc(\HHh)$ be a \textbf{countable} family
of operators. For every nonzero $x \in \HHh$ there is a separable space $E \subset \HHh$ containing $x$ such that
$E \in \red(T_s)$ for each $s \in S$.
\end{thm}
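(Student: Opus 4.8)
The plan is to use the $\bB$-transform to replace the (possibly unbounded) $T_s$ by bounded operators, and then build $E$ by the standard ``closing-off'' construction. First I would set $S_s:=\bB(T_s)\in\BBb(\HHh)$ for each $s\in S$. By \PRO{b}(D) we have $\red(T_s)=\red(S_s)$, so it suffices to find a separable closed subspace $E$ of $\HHh$ with $x\in E$ and $E\in\red(S_s)$ for every $s\in S$. The advantage of this reduction is that, since each $S_s$ is bounded, the condition $E\in\red(S_s)$ (i.e. $P_ES_s\subset S_sP_E$ together with the domain clause) collapses to the plain requirement that $E$ be invariant under both $S_s$ and $S_s^*$; all the domain bookkeeping that makes ``reducing'' delicate for unbounded operators disappears, which is exactly why one passes through the $\bB$-transform here.

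Next I would construct $E$ recursively. Put $E_0=\overline{\lin}\{x\}$, a separable closed subspace containing $x$. Given a separable closed subspace $E_n$, let
$$
E_{n+1}=\overline{\lin}\Bigl(E_n\cup\bigcup_{s\in S}S_s(E_n)\cup\bigcup_{s\in S}S_s^*(E_n)\Bigr).
$$
Here the countability of $S$ enters: if $D_n$ is a countable dense subset of $E_n$, then the set of all finite $(\QQQ+i\QQQ)$-linear combinations of the countable set $D_n\cup\{S_sd,S_s^*d\dd d\in D_n,\ s\in S\}$ is countable and, by continuity of the $S_s$ and $S_s^*$, dense in $E_{n+1}$, so $E_{n+1}$ is again separable; clearly $E_n\subset E_{n+1}$ and $S_s(E_n)\cup S_s^*(E_n)\subset E_{n+1}$ for all $s$. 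Then I would take $E=\overline{\bigcup_n E_n}$: it is separable (with $\bigcup_n D_n$ as a countable dense subset), it contains $x$, and it is invariant under every $S_s$ and $S_s^*$ --- indeed, any $y\in E$ is a limit of vectors $y_k\in E_{n_k}$, whence $S_sy_k\in E_{n_k+1}\subset E$ and, $E$ being closed, $S_sy=\lim_kS_sy_k\in E$, and likewise $S_s^*y\in E$. Hence $E\in\red(S_s)=\red(T_s)$ for every $s\in S$, as required.

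I do not expect any genuine obstacle: once the problem is transported to the bounded operators $S_s$ and ``reducing'' is rewritten as invariance under $S_s$ and $S_s^*$, the statement reduces to a routine separability argument. The only place the hypotheses are really used is in keeping each $E_{n+1}$ --- and hence $E$ --- separable, which needs $S$ to be countable (finiteness of $N$ plays no role); for an uncountable index set the same recipe would still yield a reducing subspace, but only of density character at most $\card(S)+\aleph_0$.
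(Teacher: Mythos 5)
Your argument is correct and is essentially the paper's own proof: the paper likewise passes to the bounded operators $\bB(T_s)$ via \PRO{b}(D) and then takes $E=\overline{\lin}\{S_1\cdots S_n x\dd n\geqsl 1,\ S_j\in\{T_s,T_s^*,I\dd s\in S\}\}$, which is exactly the subspace your recursive closing-off construction produces. The only difference is presentational (one explicit span versus an increasing union of separable subspaces).
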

\begin{proof}
Thanks to point (D) of \PRO{b}, we may assume each of $T_s$'s is bounded (because we may replace $T_s$ by $\bB(T_s)$).
Now it suffices to put $E = \overline{\lin}\{S_1 \cdot \ldots \cdot S_n x|\ n \geqsl 1,\ S_1,\ldots,S_n \in \{T_s\dd\
s \in S\} \cup \{T_s^*\dd\ s \in S\} \cup \{I\}\}$ where $I$ is the identity operator on $\HHh$.
\end{proof}

Now for $\aAA = (A_1,\ldots,A_N) \in \CDDc_N$ put $\bB(\aAA) = (\bB(A_1),\ldots,\bB(A_N))$ and $\bB(\AAA) = \XXX$ where
$\xXX = \bB(\aAA)$. Below we list most important properties of the $\bB$-transform on $\CDD_N$ and $\CDDc_N$.
\begin{enumerate}[(BT1)]
\item $\bB(\AAA) = \zero \iff \AAA = \zero$.
\item $\bB(\AAA)$ is bounded, $\bB(\AAA^*) = [\bB(\AAA)]^*$, $|\bB(\AAA)| = \bB(|\AAA|)$ and $\QQQ_{\bB(\AAA)} =
   \QQQ_{\AAA}$.
\item\label{BT3} $\WWw'(\aAA) = \WWw'(\bB(\aAA))$, $\WWw''(\aAA) = \WWw(\bB(\aAA))$; $\red(\aAA) = \red(\bB(\aAA))$
   and $\RED(\aAA) = \RED(\bB(\aAA))$; for every $E \in \red(\aAA)$, $\bB(\aAA\bigr|_E) = \bB(\aAA)\bigr|_E$.
\item $\bB(\bigoplus_{s \in S} \AAA^{(s)}) = \bigoplus_{s \in S} \bB(\AAA^{(s)})$.
\item\label{BT5} If `$\sim$' denotes one of the relations $=$, $\leqsl$, $\leqsl^s$, $\ll$, $\disj$, then $\AAA \sim \BBB
   \iff \bB(\AAA) \sim \bB(\BBB)$.
\end{enumerate}

\SECT{Background of von Neumann algebras}

Let $\MMm$ be a von Neumann subalgebra of $\BBb(\HHh)$. Denote by $E(\MMm)$ the set of all orthogonal projections in $\MMm$
and by $\ZZz(\MMm)$ the center of $\MMm$. By `$\sim$' we shall denote the Murray-von Neumann equivalence on $E(\MMm)$.
Further, put $\EEe(\MMm) = E(\MMm) / \sim$ and let `$\preccurlyeq$' denote the Murray-von Neumann order on $\EEe(\MMm)$.
Finally, for each $p \in E(\MMm)$, $c_p \in E(\ZZz(\MMm))$ stands for the central support of $p$.\par
It was observed by several mathematicians that the order `$\leqsl$' on $\CDD$ translates into the Murray-von Neumann
order between (equivalence classes of) projections in a suitable von Neumann algebra. This was explicitly stated and proved
in \cite[Proposition~1.35]{e}. It is nothing new that the same idea works for tuples of operators. We formulate this
precisely in the next result which is the main tool of the paper.

\begin{pro}{transl}
Let $\tTT \in \CDDc_N(\HHh)$, $E, F \in \red(\tTT)$, $\aAA = \tTT\bigr|_E$ and $\bBB = \tTT\bigr|_F$. Further, let $\MMm =
\WWw'(\tTT)$, $p = P_E$ and $q = P_F$ ($p, q \in E(\MMm)$). Then
\begin{enumerate}[\upshape(a)]
\item $\aAA \equiv \bBB \iff p \sim q$,
\item $\aAA \leqsl \bBB \iff p \preccurlyeq q$,
\item $\aAA \leqsl^s \bBB \iff p \sim c_p q$,
\item $\aAA \disj \bBB \iff c_p c_q = 0$,
\item $\aAA \ll \bBB \iff p \leqsl c_q$.
\end{enumerate}
\end{pro}
\begin{proof}
We shall only prove (c), since the other points are covered by \cite[Proposition~1.35]{e} ((d) is stated there in other
form, its recent form may be deduced e.g. from \cite[Lemma~1.7]{tk1}). For this purpose put $\MMm_0 = q \MMm q$, $z_0 =
c_p q \in E(\ZZz(\MMm_0))$ and let $K \in \RED(\bBB)$ be the range of $z_0$. If $z_0 \sim p$, then by (a), $\aAA \equiv
\bBB\bigr|_K$ and thus $\aAA \leqsl^s \bBB$. Conversely, if the latter inequality is fulfilled, there is $z_0 \in
E(\ZZz(\MMm_0))$ such that $p \sim z_0$ (again by (a)). But $\ZZz(\MMm_0) = \ZZz(\MMm) q$ and hence $z_0 = z q$ for some
$z \in E(\ZZz(\MMm))$. To this end, note that $c_p = c_{z q}$ (since $p \sim z q$) and $c_{z q} = z c_q$ and therefore
$z q = z c_q q = c_p q$.
\end{proof}

Some of consequences of \PRO{transl} are formulated below (these are adaptations of suitable results of \cite{e}).
\begin{enumerate}[(PR1)]
\item\label{PR1} $\AAA \sim \XXX \oplus \YYY$ and $\AAA \disj \YYY$ imply $\AAA \sim \XXX$ when `$\sim$' is replaced by one
   of $\leqsl,\leqsl^s,\ll$.
\item\label{PR2} If $\AAA^{(s)} \disj \BBB^{(t)}$ for all $s \in S$ and $t \in T$, then $\bigoplus_{s \in S} \AAA^{(s)}
   \disj \bigoplus_{t \in T} \BBB^{(t)}$.
\item\label{PR3} A function $\RED(\aAA) \ni E \mapsto \XXX(E) \in \{\BBB \in \CDD_N\dd\ \BBB \leqsl^s \AAA\}$ where
   $\xXX(E) = \aAA\bigr|_E$ is a (well defined) bijection.
\item For every $E \in \red(\aAA)$, $\aAA\bigr|_E \disj \aAA\bigr|_{E^{\perp}} \iff E \in \RED(\aAA)$.
\item For every pair $(\AAA,\BBB)$ such that $\AAA \leqsl^s \BBB$ there is a unique $\XXX \in \CDD_N$ such that $\BBB =
   \AAA \sqplus \XXX$. \underline{Notation}: $\BBB \sqminus \AAA := \XXX$. (So, $\BBB \sqminus \AAA$ makes sense iff $\AAA
   \leqsl^s \BBB$.)
\item\label{PR6} For every $\XXX \in \CDD_N$ and a cardinal $\alpha$, $\{\YYY \in \CDD_N\dd\ \YYY \leqsl^s \alpha \odot
   \XXX\} = \{\alpha \odot \YYY\dd\ \YYY \leqsl^s \XXX\}$.
\end{enumerate}
Following (PR5), let us agree with the following convention: whenever for a pair $(\AAA,\BBB)$ there is a unique $\XXX$
for which $\BBB = \AAA \oplus \XXX$, we shall denote this unique $\XXX$ by $\BBB \ominus \AAA$. Observe that $\AAA \leqsl
\BBB$ provided $\BBB \ominus \AAA$ makes sense.

Combining \PRO{transl} with Sherman's theorem \cite{sh}, we obtain an interesting

\begin{thm}{complete}
$(\CDD_N,\leqsl)$ is an order-complete lattice. Precisely, for every nonempty family (i.e. a set) $\{\AAA^{(s)}\}_{s \in S}
\subset \CDD_N$ there are members $\XXX$ and $\YYY$ of $\CDD_N$ such that $\XXX \leqsl \AAA^{(s)} \leqsl \YYY$ for each
$s \in S$ and $\XXX' \leqsl \XXX$ (respectively $\YYY \leqsl \YYY'$) whenever $\XXX' \leqsl \AAA^{(s)}$ (respectively
$\AAA^{(s)} \leqsl \YYY'$) for all $s \in S$.
\end{thm}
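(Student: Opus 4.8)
The plan is to transfer everything, via \PRO{transl}, to the Murray--von Neumann ordered set of projection classes of a suitable von Neumann algebra, and then to quote Sherman's theorem \cite{sh} that such an ordered set is order-complete. Concretely, I would fix representatives $\aAA^{(s)}$ of $\AAA^{(s)}$, form $\tTT = \bigoplus_{s \in S} \aAA^{(s)} \in \CDDc_N$ (legitimate because $S$ is a set), let $\TTT$ be its equivalence class and $\MMm = \WWw'(\tTT)$. For each $s$ the $s$-th coordinate subspace $E_s$ of $\overline{\DdD}(\tTT)$ lies in $\red(\tTT)$ and $\tTT\bigr|_{E_s} \equiv \aAA^{(s)}$, so $\AAA^{(s)} \leqsl \TTT$; write $p_s = P_{E_s} \in E(\MMm)$ and $\tilde{p}_s \in \EEe(\MMm)$ for its $\sim$-class. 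Using \PRO{transl}(a)--(b), the correspondence $\red(\tTT) \ni E \mapsto P_E \in E(\MMm)$, and (DF7), the assignment $E \mapsto [\tTT\bigr|_E]$ descends to an order isomorphism $\iota$ of $(\EEe(\MMm),\preccurlyeq)$ onto $(\{\BBB \in \CDD_N \dd\ \BBB \leqsl \TTT\},\leqsl)$. By Sherman's theorem $(\EEe(\MMm),\preccurlyeq)$ is an order-complete lattice, so I would take $\XXX$ and $\YYY$ to be the $\iota$-images of $\inf_s \tilde{p}_s$ and $\sup_s \tilde{p}_s$; then $\XXX \leqsl \AAA^{(s)} \leqsl \YYY$ for all $s$, since $\iota$ is an order isomorphism.

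Extremality of $\XXX$ is immediate: if $\XXX' \leqsl \AAA^{(s)}$ for every $s$, then (fixing any $s_0 \in S$) $\XXX' \leqsl \AAA^{(s_0)} \leqsl \TTT$, so $\XXX' = \iota(\tilde{r})$ for some $\tilde{r} \in \EEe(\MMm)$ with $\tilde{r} \preccurlyeq \tilde{p}_s$ for all $s$ (by \PRO{transl}(b) inside $\MMm$), hence $\tilde{r} \preccurlyeq \inf_s \tilde{p}_s$ and $\XXX' \leqsl \XXX$. Extremality of $\YYY$ is the one delicate point, because an upper bound $\YYY'$ of the family need \emph{not} satisfy $\YYY' \leqsl \TTT$, so the same argument does not apply. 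Here I would enlarge the ambient tuple: fix a representative $\yYY'$ of $\YYY'$, put $\tTT^* = \tTT \oplus \yYY'$, let $\TTT^*$ be its class and $\MMm^* = \WWw'(\tTT^*)$, and, exactly as above, obtain an order isomorphism $\iota^*$ of $(\EEe(\MMm^*),\preccurlyeq)$ onto $(\{\ZZZ \in \CDD_N \dd\ \ZZZ \leqsl \TTT^*\},\leqsl)$. Since $\TTT \leqsl \TTT^*$ and $\YYY' \leqsl \TTT^*$, the elements $\AAA^{(s)}$, $\TTT$, $\YYY'$ all lie $\leqsl \TTT^*$; applying \PRO{transl}(b) inside $\MMm^*$ turns $\AAA^{(s)} \leqsl \TTT$ and $\AAA^{(s)} \leqsl \YYY'$ into $\tilde{a}_s \preccurlyeq \tilde{t}$ and $\tilde{a}_s \preccurlyeq \tilde{w}$, where $\tilde{a}_s,\tilde{t},\tilde{w}$ are the projection classes of $\AAA^{(s)},\TTT,\YYY'$. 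By Sherman's theorem for $\MMm^*$, the supremum $\tilde{b}$ of $\{\tilde{a}_s\}$ exists in $\EEe(\MMm^*)$ and $\tilde{b} \preccurlyeq \tilde{t}$, $\tilde{b} \preccurlyeq \tilde{w}$; thus $\BBB := \iota^*(\tilde{b})$ is the least upper bound of $\{\AAA^{(s)}\}$ inside the down-set $\{\ZZZ \dd\ \ZZZ \leqsl \TTT^*\}$, and in addition $\BBB \leqsl \TTT$ and $\BBB \leqsl \YYY'$.

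It then remains to observe that $\{\ZZZ \dd\ \ZZZ \leqsl \TTT\}$ is downward closed inside $\{\ZZZ \dd\ \ZZZ \leqsl \TTT^*\}$ (transitivity of $\leqsl$, \PRO{order}) and contains $\BBB$: since $\BBB$ is then an upper bound of $\{\AAA^{(s)}\}$ in this smaller down-set, while $\YYY$ is the least such, $\YYY \leqsl \BBB$; conversely $\YYY$ is an upper bound of the family in the larger down-set, of which $\BBB$ is the least one, so $\BBB \leqsl \YYY$; by antisymmetry of $\leqsl$ (\PRO{order}) I get $\BBB = \YYY$, whence $\YYY \leqsl \YYY'$. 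This yields $\YYY = \sup_s \AAA^{(s)}$ and $\XXX = \inf_s \AAA^{(s)}$, so $(\CDD_N,\leqsl)$ is an order-complete lattice. I expect the only genuinely subtle step to be this last one — checking that the supremum built inside $\{\ZZZ \leqsl \TTT\}$ really is the supremum in the whole class $\CDD_N$, i.e. that the construction is independent of the ambient tuple chosen; everything else reduces to bookkeeping with \PRO{transl} and the citation of Sherman's theorem.
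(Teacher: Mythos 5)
Your proposal is correct and follows essentially the same route as the paper: form $\bigoplus_{s\in S}\AAA^{(s)}$, pass to $\EEe(\WWw'(\cdot))$ via \PRO{transl}, invoke Sherman's order-completeness, and then enlarge the ambient tuple by direct-summing with the given bound to verify extremality (the paper uses $\AAA\oplus\XXX'\oplus\YYY'$ and "skips the details"). Your write-up merely fills in those details, correctly noting along the way that the enlargement is only needed for the supremum, since any lower bound automatically lies below $\bigoplus_{s\in S}\AAA^{(s)}$.
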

\begin{proof}
Put $\AAA = \bigoplus_{s \in S} \AAA^{(s)}$ and $\MMm = \WWw'(\aAA)$. By \cite{sh}, $(\EEe(\MMm),\preccurlyeq)$ is
an order-complete lattice. So, using \PRO{transl} we see that there are $\XXX$ and $\YYY$ (both less than or equal
to $\AAA$) which correspond to the g.l.b. and l.u.b. (with respect to `$\preccurlyeq$') of the projections corresponding
to $\AAA^{(s)}$'s. Now if $\XXX'$ and $\YYY'$ are as in the statement of the theorem, consider $\widetilde{\AAA} = \AAA
\oplus \XXX' \oplus \YYY'$ and $\widetilde{\MMm} = \WWw'(\widetilde{\aAA})$ and repeat the above argument to get
the assertion. We skip the details.
\end{proof}

As it is usually done when working with lattices, for every nonempty collection $\AAa = \{\AAA^{(s)}\}_{s \in S}$ we shall
denote by $\bigvee_{s \in S} \AAA^{(s)}$ and $\bigwedge_{s \in S} \AAA^{(s)}$ the l.u.b. and g.l.b of $\AAa$. Observe that
$\AAA \disj \BBB$ iff $\AAA \wedge \BBB = \zero$.

\SECT{Decompositions relative to ideals}

Let $\AAa$ be a subclass of $\CDDc_N$. We call $\AAa$ an \textit{ideal} iff $\AAa$ satisfies the following four conditions:
\begin{enumerate}[({I}D1)]\label{ID}
\item $\AAa$ is nonempty,
\item whenever $\aAA \in \AAa$ and $\aAA \equiv \bBB \in \CDDc_N$, then $\bBB \in \AAa$,
\item for every $\aAA \in \AAa$ and $E \in \red(\aAA)$, $\aAA\bigr|_E \in \AAa$,
\item $\bigoplus_{s \in S} \aAA_s \in \AAa$ for any nonempty family $\{\aAA_s\}_{s \in S} \subset \AAa$.
\end{enumerate}
Classical examples of ideals the reader may find in \EXS{id} below.\par
For every subclass $\FFf$ of $\CDDc_N$ put
$$
\FFf^{\perp} = \{\tTT \in \CDDc_N\dd\ \tTT \disj \fFF \textup{ for every } \fFF \in \FFf\}.
$$
It is easily seen that $\FFf^{\perp}$ is an ideal for any $\FFf \subset \CDDc_N$ (thanks to (PR2)). As we will see
later, the `converse' is also true, that is, $\AAa$ is an ideal iff $\AAa = (\AAa^{\perp})^{\perp}$. This reminds anologous
characterization of closed linear subspaces of Hilbert spaces. However, the just defined `orthogonal complement' is more
familiar to the orthogonality in spaces of measures than to that in Hilbert spaces.\par
One of main results of the paper is the following

\begin{thm}{main}
Let $\AAa \subset \CDDc_N$ be an ideal. For every $\tTT \in \CDDc_N$ there is a unique $E \in \red(\tTT)$ such that
\begin{equation}\label{eqn:d1}
\tTT\bigr|_E \in \AAa \qquad \textup{and} \qquad \tTT\bigr|_{E^{\perp}} \in \AAa^{\perp}.
\end{equation}
Moreover, $E \in \RED(\tTT)$ and
\begin{equation}\label{eqn:1}
E = \bigvee \{K \in \red(\tTT)\dd\ \tTT\bigr|_K \in \AAa\}.
\end{equation}
\end{thm}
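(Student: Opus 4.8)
The plan is to translate everything into the von Neumann algebra $\MMm := \WWw'(\tTT)$ acting on $\HHh := \overline{\DdD}(\tTT)$ and to exploit the dictionary of \PRO{transl}: via $\red(\tTT)\ni K\mapsto P_K\in E(\MMm)$ the restriction $\tTT\bigr|_K$, unitary equivalence, `$\leqsl$' and `$\disj$' become, respectively, $\RrR(P_K)$, Murray--von Neumann equivalence `$\sim$', the order `$\preccurlyeq$', and the relation $c_pc_q=0$ on central supports. I would first record three routine facts: every ideal contains all trivial tuples (by (ID1)--(ID3)); $\aAA\leqsl\bBB\in\AAa$ forces $\aAA\in\AAa$ (by (ID2)--(ID3)); and $\red(\tTT)$, being order-isomorphic to the projection lattice $E(\MMm)$, is a complete lattice. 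Then I \emph{define}
\[
E\;:=\;\bigvee\bigl\{K\in\red(\tTT)\dd\ \tTT\bigr|_K\in\AAa\bigr\},
\]
the supremum being legitimate because this family is a nonempty \emph{set} (it contains $\{0\}$), so $E\in\red(\tTT)$. Everything then reduces to checking that this particular $E$ works and is the only one that does.

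\textbf{Main step: $\tTT\bigr|_E\in\AAa$.} List the family as $\{K_i\}_{i\in I}$, put $p_i=P_{K_i}$ so that $P_E=\bigvee_{i\in I}p_i$, and well-order $I$. Setting $r_\xi:=\bigvee_{\eta<\xi}p_{i_\eta}$ and $q_\xi:=r_{\xi+1}-r_\xi=(r_\xi\vee p_{i_\xi})-r_\xi\in E(\MMm)$, the $q_\xi$ are pairwise orthogonal and $\sum_\xi q_\xi=P_E$ (strongly). By the parallelogram (Kaplansky) law in $\MMm$,
\[
q_\xi\;=\;(r_\xi\vee p_{i_\xi})-r_\xi\;\sim\;p_{i_\xi}-(r_\xi\wedge p_{i_\xi})\;\preccurlyeq\;p_{i_\xi},
\]
so by \PRO{transl} $\tTT\bigr|_{\RrR(q_\xi)}\leqsl\tTT\bigr|_{K_{i_\xi}}\in\AAa$, whence $\tTT\bigr|_{\RrR(q_\xi)}\in\AAa$. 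Since $E$ is the internal Hilbert-space direct sum $\bigoplus_\xi\RrR(q_\xi)$ with each $\RrR(q_\xi)\in\red(\tTT)$, we obtain $\tTT\bigr|_E\equiv\bigoplus_\xi\tTT\bigr|_{\RrR(q_\xi)}\in\AAa$ by (ID4) and (ID2). I expect this transfinite bookkeeping with Kaplansky's law to be the one place needing genuine care; it is the heart of the proof.

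\textbf{Remaining verifications.} (a) $\tTT\bigr|_{E^{\perp}}\in\AAa^{\perp}$: otherwise some nontrivial $\xXX$ satisfies $\xXX\leqsl\tTT\bigr|_{E^{\perp}}$ and $\xXX\leqsl\fFF$ for some $\fFF\in\AAa$; then $\xXX\in\AAa$, and $\xXX\equiv\tTT\bigr|_K$ for a nonzero $K\in\red(\tTT)$ with $K\subseteq E^{\perp}$; since $\tTT\bigr|_K\equiv\xXX\in\AAa$, $K$ lies in the family defining $E$, so $K\subseteq E\cap E^{\perp}=\{0\}$ — absurd. (b) Uniqueness and formula \eqref{eqn:1}: let $E'$ be \emph{any} member of $\red(\tTT)$ satisfying \eqref{eqn:d1}; then $\tTT\bigr|_{E'}\in\AAa$ puts $E'$ in the defining family, so $E'\subseteq E$; conversely, for each $K$ in that family $\tTT\bigr|_{(E')^{\perp}}\disj\tTT\bigr|_K$, hence by \PRO{transl}(d) $c_{P_{(E')^{\perp}}}c_{P_K}=0$, and since each of $P_{(E')^{\perp}},P_K$ equals its product with its own (central) central support, $P_KP_{(E')^{\perp}}=0$, i.e. $K\subseteq E'$; taking the supremum gives $E\subseteq E'$, so $E'=E$. (c) $E\in\RED(\tTT)$: from $\tTT\bigr|_E\disj\tTT\bigr|_{E^{\perp}}$ and \PRO{transl}(d), $c_{P_E}c_{I-P_E}=0$; but $c_{P_E}\vee c_{I-P_E}\geqsl P_E\vee(I-P_E)=I$, so these two commuting central projections are complementary, giving $I-P_E\leqsl c_{I-P_E}=I-c_{P_E}$, i.e. $c_{P_E}\leqsl P_E$; together with $P_E\leqsl c_{P_E}$ this yields $P_E=c_{P_E}\in\ZZz(\MMm)$, so $P_E$ is a central projection of $\WWw'(\tTT)$, which is exactly what $E\in\RED(\tTT)$ means.
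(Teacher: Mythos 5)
Your proof is correct, and it rests on the same dictionary as the paper's, namely \PRO{transl}, which converts reduced parts of $\tTT$ into projections of $\MMm=\WWw'(\tTT)$; but the existence step is organized genuinely differently. The paper invokes Zorn's lemma to produce a maximal family of mutually \emph{orthogonal} nontrivial reducing subspaces $E_s$ with $\tTT\bigr|_{E_s}\in\AAa$ and sets $E=\bigvee_s E_s$: since the family is orthogonal by construction, $\tTT\bigr|_E=\bigoplus_s\tTT\bigr|_{E_s}\in\AAa$ is immediate from (ID4), $\tTT\bigr|_{E^{\perp}}\in\AAa^{\perp}$ follows from maximality, and no parallelogram law is needed. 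You instead define $E$ directly as the supremum over \emph{all} $K$ with $\tTT\bigr|_K\in\AAa$ (so \eqref{eqn:1} holds by construction) and must then orthogonalize by hand, via transfinite recursion and Kaplansky's parallelogram law, to see that $\tTT\bigr|_E\in\AAa$; you correctly identify this as the nontrivial step, and it is carried out soundly. Your route buys a definition of $E$ that visibly satisfies \eqref{eqn:1} from the outset; the paper's buys a shorter existence argument at the cost of verifying \eqref{eqn:1} afterwards, which it does via the cancellation property (PR1) rather than your central-support computation --- both work. Finally, your step (c) is in effect a re-proof of one implication of (PR4), which the paper simply cites.
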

\begin{proof}
First we shall show the existence of $E$. We may assume that $\tTT \notin \AAa^{\perp}$. By Zorn's lemma, there is
a maximal family $\{E_s\}_{s \in S}$ of mutually orthogonal nontrivial reducing (for $\tTT$) subspaces with
$\tTT\bigr|_{E_s} \in \AAa$ for every $s \in S$. It is clear that \eqref{eqn:d1} is satisfied with $E = \bigvee_{s \in S}
E_s$.\par
Now assume that $E \in \red(\tTT)$ is as in \eqref{eqn:d1}. By (PR4), $E \in \RED(\tTT)$. To establish uniqueness and
finish the proof it is enough to check \eqref{eqn:1}. But this simply follows from (PR1) and \PRO{transl}. (Indeed,
if $K \in \red(\tTT)$ is such that $\tTT\bigr|_K \in \AAa$, then $\tTT\bigr|_K \leqsl \tTT\bigr|_E \oplus
\tTT\bigr|_{E^{\perp}}$ and $\tTT\bigr|_K \disj \tTT\bigr|_{E^{\perp}}$. So, we conclude from (PR1) that $\tTT\bigr|_K
\leqsl \tTT\bigr|_E$. Thus, by \PRO{transl}, $P_K \preccurlyeq P_E$ in $\MMm = \WWw'(\tTT)$. But $P_E \in \ZZz(\MMm)$
and hence $P_K \leqsl P_E$ which means that $K \subset E$.)
\end{proof}

For simplicity, let us introduce the following notation. For every ideal $\AAa \subset \CDDc_N$, $\AAa^{(0)} = \AAa$ and
$\AAa^{(1)} = \AAa^{\perp}$. Under such a notation, by a simple induction argument we obtain

\begin{thm}{1}
Let $\AAa_1,\ldots,\AAa_k \subset \CDDc_N$ be ideals. For every $\tTT \in \CDDc_N(\HHh)$ there is a unique system
$\{E_{\delta}\}_{\delta \in \{0,1\}^k}$ of reducing subspaces for $\tTT$ such that
\begin{enumerate}[\upshape(i)]
\item $E_{\delta} \perp E_{\delta'}$ for distinct $\delta,\delta' \in \{0,1\}^k$; and $\HHh =
   \bigoplus_{\delta \in \{0,1\}^k} E_{\delta}$,
\item $\tTT\bigr|_{E_{\delta}} \in \bigcap_{j=1}^k \AAa_j^{(\delta_j)}$ for each $\delta \in \{0,1\}^k$.
\end{enumerate}
Moreover, $E_{\delta} \in \RED(\tTT)$ for every $\delta \in \{0,1\}^k$.
\end{thm}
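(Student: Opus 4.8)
The statement is essentially an iterated application of \THM{main}, so the natural approach is induction on $k$. The base case $k=1$ is exactly \THM{main}: given $\tTT \in \CDDc_N(\HHh)$ and the ideal $\AAa_1$, one obtains a unique $E \in \RED(\tTT)$ with $\tTT\bigr|_E \in \AAa_1$ and $\tTT\bigr|_{E^{\perp}} \in \AAa_1^{\perp}$; setting $E_0 = E$ and $E_1 = E^{\perp}$ gives the $2$-element system indexed by $\{0,1\}^1$, with orthogonality, direct-sum decomposition of $\HHh$, and the membership conditions all immediate.

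\textbf{Inductive step.} Assume the result for $k-1$ ideals and let $\AAa_1,\ldots,\AAa_k$ be given. First apply the inductive hypothesis to $\AAa_1,\ldots,\AAa_{k-1}$ to obtain a unique system $\{E_{\eta}\}_{\eta \in \{0,1\}^{k-1}}$ of (centrally) reducing subspaces with $\HHh = \bigoplus_{\eta} E_{\eta}$, the $E_{\eta}$ pairwise orthogonal, and $\tTT\bigr|_{E_{\eta}} \in \bigcap_{j=1}^{k-1} \AAa_j^{(\eta_j)}$. Now, for each fixed $\eta$, apply \THM{main} to the operator $\tTT\bigr|_{E_{\eta}} \in \CDDc_N(E_{\eta})$ with the ideal $\AAa_k$: this yields a unique $F_{\eta} \in \RED(\tTT\bigr|_{E_{\eta}})$ with $(\tTT\bigr|_{E_{\eta}})\bigr|_{F_{\eta}} \in \AAa_k$ and $(\tTT\bigr|_{E_{\eta}})\bigr|_{E_{\eta} \ominus F_{\eta}} \in \AAa_k^{\perp}$. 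Then define, for $\delta = (\eta,0) \in \{0,1\}^k$, $E_{\delta} = F_{\eta}$, and for $\delta = (\eta,1)$, $E_{\delta} = E_{\eta} \ominus F_{\eta}$ (the orthocomplement of $F_{\eta}$ inside $E_{\eta}$). One must check two things: that $E_{\delta} \in \red(\tTT)$ (indeed $\in \RED(\tTT)$), and that $\tTT\bigr|_{E_{\delta}} \in \bigcap_{j=1}^k \AAa_j^{(\delta_j)}$. The membership condition is clear: the first $k-1$ conditions are inherited from $\tTT\bigr|_{E_{\eta}}$ via axiom (ID3) (a reduced part of a member of an ideal is a member), and the $k$-th is exactly what \THM{main} gives. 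Orthogonality of the $E_{\delta}$ and $\bigoplus_{\delta} E_{\delta} = \HHh$ follow from the corresponding facts one level down together with $E_{\eta} = F_{\eta} \oplus (E_{\eta} \ominus F_{\eta})$.

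\textbf{The main obstacle.} The delicate point is verifying that $E_{\delta} \in \RED(\tTT)$ — i.e. that central reducibility composes correctly through the two applications. We know $E_{\eta} \in \RED(\tTT)$, so $P_{E_{\eta}} \in \ZZz(\WWw'(\tTT))$; and $F_{\eta} \in \RED(\tTT\bigr|_{E_{\eta}})$, so $P_{F_{\eta}} \in \ZZz(\WWw'(\tTT\bigr|_{E_{\eta}}))$. The issue is that $\WWw'(\tTT\bigr|_{E_{\eta}})$ is, via compression, $P_{E_{\eta}} \WWw'(\tTT) P_{E_{\eta}}$, so $P_{F_{\eta}}$ is central in the corner $P_{E_{\eta}} \MMm P_{E_{\eta}}$ (where $\MMm = \WWw'(\tTT)$). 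Since $P_{E_{\eta}}$ itself is central in $\MMm$, one has $\ZZz(P_{E_{\eta}} \MMm P_{E_{\eta}}) = \ZZz(\MMm) P_{E_{\eta}}$, so $P_{F_{\eta}} \in \ZZz(\MMm) P_{E_{\eta}} \subset \ZZz(\MMm)$, giving $F_{\eta} \in \RED(\tTT)$ as desired — and likewise $E_{\eta} \ominus F_{\eta}$, whose projection is $(P_{E_{\eta}} - P_{F_{\eta}}) \in \ZZz(\MMm)$. This argument mirrors the one used inside the proof of \PRO{transl}(c), so I would simply invoke the same bookkeeping about corners of centers. Once this is in place, uniqueness of the full system follows from the uniqueness at each stage: any system satisfying (i)--(ii) must, upon coarsening the last coordinate, satisfy the hypothesis for $\AAa_1,\ldots,\AAa_{k-1}$ (the union $\bigcup_{\delta_k} E_{(\eta,\delta_k)}$ lies in $\bigcap_{j<k}\AAa_j^{(\eta_j)}$ by (ID3)/(ID4), and the complementary piece lies in the $\perp$ by (PR2)), hence coincides with $\{E_{\eta}\}$ by the inductive uniqueness, and then within each $E_{\eta}$ the split into $\delta_k = 0,1$ pieces is forced by the uniqueness clause of \THM{main}. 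I would close with a sentence noting that \eqref{eqn:1} applied at each stage gives the explicit lattice-theoretic description of each $E_{\delta}$, should one want it.
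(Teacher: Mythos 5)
Your proof is correct and is precisely the ``simple induction argument'' the paper has in mind: the paper derives \THM{1} from \THM{main} by induction on $k$ and explicitly leaves the details to the reader. Your verification that central reducibility survives the second application --- via $\WWw'(\tTT\bigr|_{E_{\eta}}) = P_{E_{\eta}}\WWw'(\tTT)P_{E_{\eta}}$ and $\ZZz(q\MMm q)=\ZZz(\MMm)q$ --- is exactly the bookkeeping already used in the paper's proof of part (c) of \PRO{transl}, so nothing further is needed.
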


We leave the proof of \THM{1} for the reader.\par
\THM{1} covers any known result on decomposition of a single operator into two parts with first of them of a special class
and the other `completely' (or `hereditarily') not of this class. Examples to this are given below.

\begin{exs}{id}
\begin{enumerate}[\upshape(A)]
\item Let $F$ be a closed subset of the complex plane $\CCC$. Let $\NNn(F)$ be the class of all normal operators whose
   spectrum is contained in $F$. (Here we assume that operators on zero-dimensional Hilbert spaces are normal and have
   empty spectra.) It is easily checked that $\NNn(F)$ is an ideal. Thus, every operator $T \in \CDDc$ admits a unique
   decomposition into a part in $\NNn(F)$ and the remainder in $\NNn(F)^{\perp}$. This means that there is a unique $E \in
   \red(T)$ such that $T\bigr|_E$ is normal, $\sigma(T\bigr|_E) \subset F$ and $T\bigr|_{E^{\perp}}$ has no nontrivial
   reduced part which belongs to $\NNn(F)$. When $F = \CCC$, this is the decomposition into the normal part
   and the completely non-normal one. When $F = \RRR$, we get the decomposition into the selfadjoint part
   and the completely non-selfadjoint one. Finally, when $F = \{z \in \CCC\dd\ |z| = 1\}$, the operator decomposes into
   the unitary part and the completely non-unitary one. These three cases are most classical. (Compare with \cite{e},
   page 179.)
\item Single operators of each of the following classes form an ideal: formally normal (for definition see e.g.
   \cite{s-sz}); quasinormal; hyponormal; subnormal; contractions. As we will see in \PRO{id}, also the following class
   $\AAa$ is an ideal: $T \in \AAa$ iff $T$ is the direct sum of bounded operators.
\item Stochel and Szafraniec \cite{s-sz} have shown that every operator $T \in \CDDc$ admits a unique decomposition
   of the form $T = T_{nor} \oplus T_{pfn} \oplus T_{cnfn}$ where $T_{nor}$ is normal, $T_{pfn}$ is purely formally normal
   (here `purely' means that $T_{pfn}$ is in addition completely non-normal) and $T_{cnfn}$ is completely non-formally
   normal. Their result is a special case of \THM{1}.
\item Ernest \cite{e} distinguishes an important class of bounded operators on separable Hilbert spaces, the so-called
   \textit{smooth} operators (see \S6 of Chapter~1 in \cite{e}). Let us say that an operator $T \in \CDDc(\HHh)$ where
   $\HHh$ is separable is \textit{$\sigma$-smooth} iff $\bB(T)$ is the direct sum of countably (finitely or infinitely)
   many smooth operators. By Proposition~1.52 of \cite{e} and \PRO{id} below, operators which are direct sums
   of $\sigma$-smooth operators form an ideal. In particular, every closed densely defined operator acting on a separable
   Hilbert space admits a unique decomposition into a $\sigma$-smooth operator and a completely non-smooth one.
\item\label{E} Let us give some examples dealing with systems of operators. Let $\NNn_N$ and $\widetilde{\NNn}_N$ consist
   of all $N$-tuples (belonging to $\CDDc_N$) of, respectively, commuting normal and arbitrary normal operators
   (commutativity may be defined by means of the spectral measures or, equivalently, of the $\bB$-transforms). It is clear
   that both $\NNn_N$ and $\widetilde{\NNn}_N$ are ideals. So, every $\tTT \in \CDDc_N$ has a unique decomposition
   in the form $\tTT = \tTT_{jn} \oplus \tTT_{psn} \oplus \tTT_{cnsn}$ where $\tTT_{jn} \in \NNn_N$, $\tTT_{psn} \in
   \widetilde{\NNn}_N$ and no nontrivial reduced part of $\tTT_{psn}$ is a member of $\NNn_N$, and no nontrivial reduced
   part of $\tTT_{cnsn}$ belongs to $\widetilde{\NNn}_N$. (The labels `jn', `psn' and `cnsn' appearing here are
   the abbreviations for \textit{jointly normal}, \textit{purely separately normal} and \textit{completely non-separately
   normal}.) We call an $N$-tuple $\AAA$ \textit{normal} iff $\aAA \in \NNn_N$.
\item If $\AAa \subset \CDDc$ is an ideal, so are $\Delta_N(\AAa) \subset \CDDc_N$ and $\AAa^{[N]} \subset \CDDc_N$ where
   $\AAa^{[N]}$ consists of all $N$-tuples $(A_1,\ldots,A_N)$ with $A_1,\ldots,A_N \in \AAa$ acting in a common Hilbert
   space, and $$\Delta_N(\AAa) = \{(A_1,\ldots,A_N)\dd\ A_1 = \ldots = A_N \in \AAa\}.$$
\item \THM{main} may be shortly reformulated in the following significant way: $\CDDc_N = \AAa \oplus \AAa^{\perp}$ for
   every ideal $\AAa \subset \CDDc_N$. Using this notation, \THM{1} with $k = 2$ asserts that
   \begin{equation}\label{eqn:aux2}
   \CDDc_N = (\AAa \cap \BBb) \oplus (\AAa \cap \BBb^{\perp}) \oplus (\AAa^{\perp} \cap \BBb) \oplus
   (\AAa^{\perp} \cap \BBb^{\perp})
   \end{equation}
   for any two ideals $\AAa$ and $\BBb$ in $\CDDc_N$. The counterpart of \eqref{eqn:aux2} for linear subspaces $K$ and $L$
   of a Hilbert space $\HHh$ is fulfilled only when $P_K$ and $P_L$ commute. Thus, as we have said earlier, the `orthogonal
   complement' for ideals behaves in a similar manner as the orthogonal complement of lattices of measures (or of more
   general structures such as abstract $L$-spaces).
\end{enumerate}
\end{exs}

The next result is useful for producing ideals.

\begin{pro}{id}
Let $\AAa$ be a subclass of $\CDDc_N$ and $\Theta_N$ be the class of all trivial members of $\CDDc_N$.
\begin{enumerate}[\upshape(a)]
\item The class
   \begin{multline*}
   J(\AAa) = \{\tTT \in \CDDc_N\dd\ \textup{for some set } S,\ \tTT = \bigoplus_{s \in S} \xXX^{(s)}\\
   \textup{with } \xXX^{(s)} \leqsl \yYY^{(s)} \in \AAa \cup \Theta_N\}
   \end{multline*}
   is an ideal and it is the smallest ideal which contains $\AAa$.
\item $\AAa$ is an ideal iff $\AAa = (\AAa^{\perp})^{\perp}$.
\end{enumerate}
\end{pro}
\begin{proof}
To show (a), we only need to check that $\aAA \in J(\AAa)$ provided $\aAA \leqsl \bigoplus_{s \in S} \yYY^{(s)}$ with
$\yYY^{(s)} \in \AAa$. Assuming $\aAA$ is nontrivial, take a maximal family $\EeE = \{E_{\gamma}\}_{\gamma \in \Gamma}$
of mutually orthogonal nontrivial reducing subspaces for $\aAA$ such that $\aAA\bigr|_{E_{\gamma}} \leqsl \xXX^{(\gamma)}$
for some $\xXX^{(\gamma)} \in \AAa$ ($\gamma \in \Gamma$). Let $F$ be the orthogonal complement
of $\bigoplus_{\gamma \in \Gamma} E_{\gamma}$ (in $\overline{\DdD}(\aAA)$). We only need to check that $F$ is trivial.
We infer from the maximality of $\EeE$ that $\aAA\bigr|_F \in \AAa^{\perp}$. Thus, thanks to (PR2), $\aAA\bigr|_F \disj
\bigoplus_{s \in S} \yYY^{(s)}$ and hence, by (PR1), $\aAA\bigr|_F$ is trivial and we are done.\par
The `if' part of (b) is immediate, while the `only if' one follows from \THM{main}.
\end{proof}

\begin{rem}{unity}
In \PROp{unitid} we shall show that for every ideal $\AAa$ there is a (unique up to unitary equivalence under some
additional properties of $\aAA$) $N$-tuple $\aAA$ such that $\AAa = \{\bBB\dd\ \bBB \ll \aAA\}$. Thus, our \THM{main} is
a generalization of Ernest's Proposition~2.12 in \cite{e}.
\end{rem}

The rest of the paper is devoted to the class $\CDD_N$.

\SECT{Order `$\leqsl^s$'}

Everywhere below the prefix `$\leqsl^s$' says that the suitable term is understood with respect to this order.
The aim of this part is to prove

\begin{thm}{ords}
Let $\Bb$ be a nonempty set of members of $\CDD_N$ and let $\AAA, \BBB \in \CDD_N$.
\begin{enumerate}[\upshape(A)]
\item $\Bb$ has the $\leqsl^s$-g.l.b.
\item $\Bb$ has the $\leqsl^s$-l.u.b. \iaoi{} every two-point subset of $\Bb$ is $\leqsl^s$-upper bounded. If the latter
   happens, $\inf{}_{\leqsl^s} \Bb = \bigwedge \Bb$ and $\sup{}_{\leqsl^s} \Bb = \bigvee \Bb$.
\item \TFCAE
   \begin{enumerate}[\upshape(i)]
   \item the set $\{\AAA,\BBB\}$ is $\leqsl^s$-upper bounded,
   \item $\AAA \leqsl^s \AAA \vee \BBB$ and $\BBB \leqsl^s \AAA \vee \BBB$,
   \item $\AAA$ and $\BBB$ may be written in the forms $\AAA = \EEE \sqplus \XXX$ and $\BBB = \EEE \sqplus \YYY$ for some
      $\EEE, \XXX, \YYY \in \CDD_N$ such that $\XXX \disj \YYY$.
   \end{enumerate}
\item If $\{\AAA,\BBB\}$ is $\leqsl^s$-upper bounded, then $\AAA \leqsl \BBB \iff \AAA \leqsl^s \BBB$.
\end{enumerate}
\end{thm}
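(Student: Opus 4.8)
The plan is to push everything, via \PRO{transl}, into Murray--von Neumann comparison theory inside the $*$-commutant of a single representative, and to use systematically the fact --- implicit in the preliminaries and in (PR3)--(PR4) --- that for a fixed $\AAA\in\CDD_N$ the map $E\mapsto P_E$ identifies $\{\XXX\in\CDD_N\dd \XXX\leqsl^s\AAA\}$ with the complete Boolean algebra $E(\ZZz(\WWw'(\aAA)))$ of central projections, carrying $\leqsl^s$ to $\leqsl$ and (by \THM{complete}, together with the fact that joins and meets of arbitrary families of central projections are again central and are computed correctly in $\EEe(\WWw'(\aAA))$) carrying $\bigvee$ and $\bigwedge$ to the Boolean operations. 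I shall freely use the routine facts that on central projections $\preccurlyeq$ and $\leqsl$ coincide, that a Murray--von Neumann equivalence may be restricted to a central corner, and that cutting by a central projection distributes over joins and meets in $\EEe(\WWw'(\aAA))$.

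I would prove (C) first. For (iii)$\Rightarrow$(i): $\EEE,\XXX,\YYY$ are pairwise unitarily disjoint (by (PR2)), so $\UUU:=\EEE\sqplus\XXX\sqplus\YYY$ is a regular sum whose summand subspaces are centrally reducing by (PR4); hence $\AAA=\EEE\oplus\XXX$ and $\BBB=\EEE\oplus\YYY$ are $\leqsl^s$-parts of $\UUU$. For (i)$\Rightarrow$(iii): let $\tTT$ represent a $\leqsl^s$-upper bound $\UUU$, put $\MMm=\WWw'(\tTT)$, let $p,q\in E(\ZZz(\MMm))$ be the central projections with $\AAA$, $\BBB$ corresponding to $p$, $q$, and let $\EEE,\XXX,\YYY$ correspond to $p\wedge q$, $p-p\wedge q$, $q-p\wedge q$; the splittings $p=(p\wedge q)+(p-p\wedge q)$, $q=(p\wedge q)+(q-p\wedge q)$ give $\AAA=\EEE\sqplus\XXX$ and $\BBB=\EEE\sqplus\YYY$, and orthogonality of the three central projections gives $\XXX\disj\YYY$ via \PRO{transl}(d). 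Finally (ii)$\Rightarrow$(i) is trivial, and for (i)$\Rightarrow$(ii) one uses (iii) to produce the $\leqsl^s$-upper bound $\UUU_0=\EEE\sqplus\XXX\sqplus\YYY$; then $\AAA,\BBB\leqsl\AAA\vee\BBB\leqsl\UUU_0$, and inside the Boolean algebra $\{\XXX\dd\XXX\leqsl^s\UUU_0\}$ the ambient join $\AAA\vee\BBB$ is the Boolean join, which lies $\leqsl^s\UUU_0$ and $\leqsl^s$-dominates $\AAA$ and $\BBB$.

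Part (D) follows quickly from (C): given $\AAA\leqsl\BBB$ with $\{\AAA,\BBB\}$ $\leqsl^s$-bounded, write $\AAA=\EEE\sqplus\XXX$, $\BBB=\EEE\sqplus\YYY$ with $\XXX\disj\YYY$, let $\tTT$ represent $\EEE\oplus\XXX\oplus\YYY$ and pass to $\MMm=\WWw'(\tTT)$, where the three summand projections $r_\EEE,r_\XXX,r_\YYY$ are central, orthogonal and sum to $1$; then $\AAA\leqsl\BBB$ becomes, via \PRO{transl}(b), the honest inequality $r_\EEE+r_\XXX\leqsl r_\EEE+r_\YYY$ of central projections, which forces $r_\XXX=0$, i.e. $\XXX=\zero$, so $\AAA=\EEE\leqsl^s\EEE\sqplus\YYY=\BBB$. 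For (A), given $\Bb=\{\AAA^{(s)}\}_{s\in S}$ put $\tTT=\bigoplus_s\aAA^{(s)}$, $\MMm=\WWw'(\tTT)$, and let $q_s=P_{E_s}$ be the summand projections, so $\AAA^{(s)}$ corresponds to $q_s$. Let $Z$ be the class of central projections $z$ with $z\leqsl c_{q_s}$ for every $s$ and with all the $zq_s$ mutually equivalent; a routine disjointification argument shows $Z$ is closed under arbitrary suprema, so it has a largest element $z^{*}$, and $\GGG:=\tTT\bigr|_{(\text{range of }z^{*}q_s)}$ (independent of $s$, since $z^{*}q_s\sim z^{*}q_t$) is the $\leqsl^s$-g.l.b.: it is $\leqsl^s$-below each $\AAA^{(s)}$ by \PRO{transl}(c) (because $c_{z^{*}q_s}=z^{*}$), and any $\leqsl^s$-lower bound $\WWW\equiv\tTT\bigr|_K$ satisfies, again by \PRO{transl}(c), $P_K\sim c_{P_K}q_s$ for all $s$, which places $c_{P_K}$ in $Z$ and hence gives $\WWW\leqsl^s\GGG$.

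Part (B) is the heart of the matter. With $\tTT,\MMm,q_s$ as in (A), the decisive observation is that $\{\AAA^{(s)},\AAA^{(t)}\}$ is $\leqsl^s$-upper bounded if and only if $c_{q_s}q_t\preccurlyeq q_s$ and $c_{q_t}q_s\preccurlyeq q_t$: by (C) this boundedness is equivalent to $\AAA^{(s)}\leqsl^s\AAA^{(s)}\vee\AAA^{(t)}$ and $\AAA^{(t)}\leqsl^s\AAA^{(s)}\vee\AAA^{(t)}$, and translating the first of these through \PRO{transl}(c) (with ambient $\tTT$, using $\AAA^{(s)}\vee\AAA^{(t)}\leqsl\tTT$ with corresponding projection a representative of $[q_s]\vee[q_t]$) together with $c_{q_s}([q_s]\vee[q_t])=[q_s]\vee[c_{q_s}q_t]$ in $\EEe(\MMm)$ yields exactly $c_{q_s}q_t\preccurlyeq q_s$. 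Thus ``every two-point subset of $\Bb$ is $\leqsl^s$-upper bounded'' says precisely that $c_{q_s}q_t\preccurlyeq q_s$ for all $s,t$; assuming this, if $v$ represents $\bigvee\Bb$ (so $[v]=\bigvee_s[q_s]$ and $\bigvee\Bb\leqsl\tTT$ by \THM{complete}), then $[c_{q_s}v]=\bigvee_t[c_{q_s}q_t]\preccurlyeq[q_s]\preccurlyeq[c_{q_s}v]$, so $c_{q_s}v\sim q_s$, i.e. $\AAA^{(s)}\leqsl^s\bigvee\Bb$; hence $\bigvee\Bb$ is a $\leqsl^s$-upper bound, and for any $\leqsl^s$-upper bound $\VVV$ the Boolean-algebra description of $\{\XXX\dd\XXX\leqsl^s\VVV\}$ (which contains every $\AAA^{(s)}$) forces $\bigvee\Bb\leqsl^s\VVV$, so $\bigvee\Bb=\sup_{\leqsl^s}\Bb$; the same description makes $\bigwedge\Bb$ $\leqsl^s$-below each $\AAA^{(s)}$, which combined with (A) gives $\bigwedge\Bb=\inf_{\leqsl^s}\Bb$. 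The converse implication in (B) is immediate, since a $\leqsl^s$-l.u.b. bounds every pair. The main obstacle is the bookkeeping in (B): extracting the characterization of pairwise $\leqsl^s$-boundedness in exactly the form $c_{q_s}q_t\preccurlyeq q_s$, and marshalling the several easily-mishandled facts about central projections (closure under arbitrary joins and meets, agreement of those with the operations in $\EEe(\MMm)$, restriction of equivalences to a central corner, and $\preccurlyeq=\leqsl$ on them) that make the argument run.
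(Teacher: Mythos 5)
Your proof is correct, and it runs on the same engine as the paper's — everything is funnelled through \PRO{transl} into comparison of projections, with (C) proved first by intersecting the two centrally reducing subspaces (your projection formulation $p\wedge q$, $p-p\wedge q$, $q-p\wedge q$ is literally the paper's $M=K\cap L$, $K'$, $L'$ argument). Where you diverge is in (A) and (B). The paper proves (B) by first extracting the symmetric relation $c_{p_{\BBB}}p_{\AAA}\sim c_{p_{\AAA}}p_{\BBB}$ from pairwise boundedness, then disjointifying the central supports into orthogonal $z_{\AAA}\leqsl c_{p_{\AAA}}$ and assembling the single projection $q=\sum_{\AAA}z_{\AAA}p_{\AAA}$, whose range gives a common $\leqsl^s$-upper bound $\MMM'$ of all of $\Bb$; after replacing $\MMM$ by $\MMM'$ every $p_{\AAA}$ is central and the sup/inf statements become Boolean-algebra trivialities, with (A) then deduced from (B) (applied to the set of $\leqsl^s$-lower bounds). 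You instead characterize pairwise $\leqsl^s$-boundedness as $c_{q_s}q_t\preccurlyeq q_s$ and verify directly that $\bigvee\Bb$ is the $\leqsl^s$-supremum, at the cost of invoking the compatibility of central cutting with arbitrary joins in $\EEe(\MMm)$ (via $\EEe(\MMm)\cong\EEe(\MMm z)\times\EEe(\MMm(1-z))$) — you are right to flag this as the delicate point, but it is sound; and your (A) is an independent construction, taking the largest central $z$ with all $zq_s$ mutually equivalent and $z\leqsl c_{q_s}$, which is arguably cleaner than routing through (B). Your explicit argument for (D) (the paper leaves it to the reader) is also correct: the three summand projections are central and orthogonal, so $\preccurlyeq$ collapses to $\leqsl$ and forces $r_{\XXX}=0$. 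In short: same translation principle, but the paper buys the whole of (B) by manufacturing one common $\leqsl^s$-upper bound, whereas you buy it by lattice identities for central compression; both work.
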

\begin{proof}
We begin with (C). Implications (iii)$\implies$(ii)$\implies$(i) are immediate (indeed, if (iii) is fulfilled, $\AAA \vee
\BBB = \EEE \sqplus \XXX \sqplus \YYY$). To see that (iii) follows from (i), let $\FFF \in \CDD_N$ $\leqsl^s$-majorizes
$\AAA$ and $\BBB$. This means that $\aAA \equiv \fFF\bigr|_K$ and $\bBB \equiv \fFF\bigr|_L$ for some $K, L \in
\RED(\fFF)$. Then $P_K$ and $P_L$ commute and therefore $K = M \oplus K'$ and $L = M \oplus L'$ where $M = K \cap L$, $K' =
M^{\perp} \cap K$ and $L' = M^{\perp} \cap L$. Note that then $\eEE = \fFF\bigr|_M$, $\xXX = \fFF\bigr|_{K'}$ and $\yYY =
\fFF\bigr|_{L'}$ are pairwise unitarily disjoint and $\AAA = \EEE \sqplus \XXX$ and $\BBB = \EEE \sqplus \YYY$.\par
Now we pass to (B). Suppose every two-point subset of $\Bb$ is $\leqsl^s$-upper bounded. Let $\MMM$ be such that $\BBB
\leqsl \MMM$ for every $\BBB \in \Bb$. Put $\MMm = \WWw'(\mMM)$. For every $\BBB \in \Bb$ take $K(\BBB) \in \red(\mMM)$
such that $\bBB \equiv \mMM\bigr|_{K(\BBB)}$ and put $p_{\BBB} = P_{K(\BBB)} \in \MMm$.\par
For a moment fix $\AAA, \BBB \in \Bb$. By (C), there is $\FFF \leqsl \MMM$ such that $\AAA \leqsl^s \FFF$ and $\BBB
\leqsl^s \FFF$. We infer from this, involving \PRO{transl}, that there is a projection $q \in E(\MMm)$ for which $p_{\AAA}
\sim c_{p_{\AAA}} q$ and $p_{\BBB} \sim c_{p_{\BBB}} q$. Notice that then $c_{p_{\BBB}} p_{\AAA} \sim c_{p_{\BBB}}
c_{p_{\AAA}} q$ and $c_{p_{\AAA}} p_{\BBB} \sim c_{p_{\AAA}} c_{p_{\BBB}} q$. This proves that
\begin{equation}\label{eqn:aux10}
c_{p_{\BBB}} p_{\AAA} \sim c_{p_{\AAA}} p_{\BBB}
\end{equation}
for all $\AAA, \BBB \in \Bb$. Now put $w = \bigvee \{c_{p_{\AAA}}\dd\ \AAA \in \Bb\} \in \ZZz(\MMm)$. There is a family
$\{z_{\AAA}\}_{\AAA \in \Bb}$ of mutually orthogonal central projections in $\MMm$ such that $z_{\AAA} \leqsl c_{p_{\AAA}}$
for every $\AAA \in \Bb$ and $\sum_{\AAA \in \Bb} z_{\AAA} = w$. Put
$$
q = \sum_{\AAA \in \Bb} z_{\AAA} p_{\AAA} \in E(\MMm).
$$
For $\AAA, \BBB \in \Bb$ we have, by \eqref{eqn:aux10}, $z_{\BBB} c_{p_{\AAA}} q = z_{\BBB} c_{p_{\AAA}} p_{\BBB} \sim
z_{\BBB} c_{p_{\BBB}} p_{\AAA} = z_{\BBB} p_{\AAA}$ and consequently (since $w \geqsl c_{p_{\AAA}}$),
$$
p_{\AAA} = \sum_{\BBB \in \Bb} z_{\BBB} p_{\AAA} \sim \sum_{\BBB \in \Bb} z_{\BBB} c_{p_{\AAA}} q = c_{p_{\AAA}} q.
$$
Now if $E \in \red(\mMM)$ is the range of $q$ and $\mMM' = \mMM\bigr|_E$, \PRO{transl} shows that $\AAA \leqsl^s \MMM'$
for every $\AAA \in \Bb$. Hence, replacing $\MMM$ by $\MMM'$, we may assume that $p_{\AAA} \in \ZZz(\MMm)$. It is known
that in such a case $\bigvee_{\AAA \in \Bb} p_{\AAA}$ and $\bigwedge_{\AAA \in \Bb} p_{\AAA}$ are, respectively, the l.u.b.
and the g.l.b. with respect to `$\preccurlyeq$' in $E(\MMm)$. It is left as an exercise that the assertion of (B) now
follows.\par
Finally, (A) is implied by (B), and (D) is left for the reader.
\end{proof}

As a very special case of \THM{ords} we get

\begin{cor}{disjoint}
If $\{\AAA^{(s)}\}_{s \in S}$ is a nonempty family of mutually unitarily disjoint $N$-tuples, then $\bigvee_{s \in S}
\AAA^{(s)} = \bigsqplus_{s \in S} \AAA^{(s)}$.
\end{cor}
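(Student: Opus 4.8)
The plan is to verify directly that $\AAA:=\bigoplus_{s\in S}\AAA^{(s)}$ is the $\leqsl$-least upper bound of the family $\{\AAA^{(s)}\}_{s\in S}$; once this is done, the equality $\AAA=\bigsqplus_{s\in S}\AAA^{(s)}$ is merely the notational convention (UE4), since a pairwise unitarily disjoint family is regular. Existence of $\bigvee_{s\in S}\AAA^{(s)}$ is guaranteed a priori by \THM{complete}, but the argument below actually re-establishes it, so I would present it self-containedly: first that $\AAA$ is an upper bound, then that it is dominated by every upper bound, and finally invoke antisymmetry of `$\leqsl$' (\PRO{order}).

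For the first step I would fix a representative $\aAA=\bigoplus_{s\in S}\aAA^{(s)}$ acting on $\bigoplus_{s\in S}\overline{\DdD}(\aAA^{(s)})$. For each $t\in S$, the summand $\overline{\DdD}(\aAA^{(t)})$, embedded in this direct sum, lies in $\red(\aAA)$ and satisfies $\aAA\bigr|_{\overline{\DdD}(\aAA^{(t)})}=\aAA^{(t)}$; hence $\AAA^{(t)}\leqsl\AAA$. (Combining (PR2) with (PR4) one even gets $\AAA^{(t)}\leqsl^s\AAA$, but this sharpening is not needed.)

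The substantive step is to show that any $\leqsl$-upper bound $\BBB$ of the family satisfies $\AAA\leqsl\BBB$. I would fix a representative $\bBB\in\CDDc_N(\KKk)$, put $\MMm=\WWw'(\bBB)$, and for each $s$ choose $K_s\in\red(\bBB)$ with $\bBB\bigr|_{K_s}\equiv\aAA^{(s)}$, writing $p_s=P_{K_s}\in E(\MMm)$. The key point is that the $p_s$ are mutually orthogonal projections: for $s'\ne s''$ we have $\bBB\bigr|_{K_{s'}}\disj\bBB\bigr|_{K_{s''}}$ (disjointness depends only on the unitary equivalence classes), so \PRO{transl}(d) gives $c_{p_{s'}}c_{p_{s''}}=0$, and then $p_{s'}p_{s''}=(p_{s'}c_{p_{s'}})(c_{p_{s''}}p_{s''})=p_{s'}(c_{p_{s'}}c_{p_{s''}})p_{s''}=0$ because $p_s\leqsl c_{p_s}$ and central supports are central. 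Consequently $p:=\sum_{s\in S}p_s$ is a projection lying in the von Neumann algebra $\MMm$; its range $E=\bigoplus_{s\in S}K_s$ is reducing for $\bBB$, and $\bBB\bigr|_E=\bigoplus_{s\in S}(\bBB\bigr|_{K_s})\equiv\bigoplus_{s\in S}\aAA^{(s)}=\aAA$. Thus $\AAA\equiv\BBB\bigr|_E\leqsl\BBB$, which is exactly what was required; together with the previous paragraph this shows $\bigvee_{s\in S}\AAA^{(s)}=\AAA=\bigsqplus_{s\in S}\AAA^{(s)}$.

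I expect the only delicate point to be the orthogonality of the $p_s$: one must route through the central supports (the $p_s$ themselves need not commute), and one must be slightly careful that the increasing net of finite partial sums of the mutually orthogonal $p_s$ converges in the strong operator topology to a projection that still belongs to $\MMm$ — which is exactly the standard fact that a von Neumann algebra is closed under such suprema. Everything else is routine: that $\AAA$ is an upper bound, and the identification $\bBB\bigr|_E=\bigoplus_s\bBB\bigr|_{K_s}$ of the reduced part on an orthogonal direct sum of reducing subspaces. An alternative would be to deduce the result from \THM{ords}: by part (C) (with $\EEE=\zero$, $\XXX=\AAA^{(s')}$, $\YYY=\AAA^{(s'')}$) every two-point subset of $\{\AAA^{(s)}\}_{s\in S}$ is $\leqsl^s$-upper bounded, so by part (B) the family has a $\leqsl^s$-least upper bound equal to $\bigvee_{s\in S}\AAA^{(s)}$; but identifying that supremum with $\bigoplus_{s\in S}\AAA^{(s)}$ still requires essentially the orthogonality computation above (which is precisely how the supremum is built inside $\MMm$ in the proof of \THM{ords}(B)), so I would favour the direct approach.
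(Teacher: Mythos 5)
Your proof is correct, and it takes a more self-contained route than the paper. The paper disposes of this corollary in two lines: it observes that $\bigsqplus_{s\in S}\AAA^{(s)}$ is the $\leqsl^s$-l.u.b.\ of the family (for $\leqsl^s$-upper bounds the projections $p_s$ can be taken central, so their orthogonality and summability are immediate) and then invokes \THM{ords}(B), which identifies the $\leqsl^s$-supremum with $\bigvee$ whenever the former exists. You instead verify directly that $\bigoplus_{s\in S}\AAA^{(s)}$ is the $\leqsl$-least upper bound, which forces you to handle arbitrary $\leqsl$-upper bounds and hence non-central $p_s$; your orthogonality computation $p_{s'}p_{s''}=p_{s'}(c_{p_{s'}}c_{p_{s''}})p_{s''}=0$ via \PRO{transl}(d) is exactly the right substitute, and the remaining steps (SOT-convergence of $\sum_s p_s$ inside $\MMm$, the identification $\bBB\bigr|_E\equiv\bigoplus_s\bBB\bigr|_{K_s}$) are sound. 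What the paper's route buys is brevity by leaning on the already-proved machinery of \THM{ords}; what yours buys is independence from that theorem (you only use \PRO{transl} and \PRO{order}) and an explicit display of where the disjointness hypothesis enters. You correctly note in your closing remark that the \THM{ords}-based alternative ultimately rests on the same central-support computation, so the two arguments are cousins rather than strangers.
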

\begin{proof}
One easily checks that $\bigsqplus_{s \in S} \AAA^{(s)}$ is the $\leqsl^s$-l.u.b. of $\{\AAA^{(s)}\}_{s \in S}$. Thus
the assertion follows from \THM{ords}.
\end{proof}

\begin{exm}{ord-ords}
[$N=1$] Let $I_j$ for $j=1,2$ be the identity operator on a $j$-dimensional Hilbert space. It is clear that $\III_1 \leqsl
\III_2$, $\III_1 \wedge \III_2 = \III_1$ and $\III_1 \vee \III_2 = \III_2$, while $\inf{}_{\leqsl^s} \{\III_1,\III_2\} =
\zero$ and $\{\III_1,\III_2\}$ is not $\leqsl^s$-upper bounded. This shows that the $\leqsl^s$-g.l.b. in general differs
from the $\leqsl$-g.l.b. (although both of them always exist).
\end{exm}

We end the section with a useful

\begin{pro}{leqsl-leqsls}
\begin{enumerate}[\upshape(A)]
\item If $\AAA \leqsl \bigsqplus_{s \in S} \BBB^{(s)}$, then $\AAA = \bigsqplus_{s \in S} (\AAA \wedge \BBB^{(s)})$.
\item Suppose $\AAA^{(s)} \leqsl \XXX$ ($s \in S \neq \varempty$) and $\BBB \leqsl^s \XXX$. Then
   $$
   \bigl[\bigvee_{s \in S} \AAA^{(s)}\bigr] \wedge \BBB = \bigvee_{s \in S} [\AAA^{(s)} \wedge \BBB].
   $$
   If in addition $\bigoplus_{s \in S} \AAA^{(s)} \leqsl \XXX$, then
   $$
   \bigl[\bigoplus_{s \in S} \AAA^{(s)}\bigr] \wedge \BBB = \bigoplus_{s \in S} [\AAA^{(s)} \wedge \BBB].
   $$
\end{enumerate}
\end{pro}
\begin{proof}
To prove (A), put $\BBB = \bigsqplus_{s \in S} \BBB^{(s)}$. Since each of $\BBB^{(s)}$'s corresponds to a central
projection in $\WWw'(\bBB)$, the assertion easily follows. The same argument works in (B)---here $\BBB$ corresponds
to a central projection in $\WWw'(\xXX)$.
\end{proof}

A counterpart of a part of \PRO{leqsl-leqsls} for the order `$\leqsl$' will be proved in \THMp{Boole}. However, this will
be much more complicated.

\SECT{Steering projections in $\WWw^*$-algebras}

We would like to propose a little bit new approach to the so-called \textit{dimension theory} of $\WWw^*$-algebras
(see e.g. \cite[Chapter~5, \S5]{k-r1} and \cite[Chapter~6]{k-r2}; \cite[Chapter~5, \S1]{tk1}; \cite{gr1,gr2}; \cite{tom};
\cite{sh}). Usually one decomposes a projection in a $\WWw^*$-algebra into (in a sense) `homogenous' parts, as it was
done by Griffin \cite{gr1,gr2}, Tomiyama \cite{tom} and Sherman \cite{sh}. In the next section we will do essentially
the same but in a different manner, convenient for applications to the class $\CDD_N$. In every $\WWw^*$-algebra $\MMm$
we shall distinguish a projection, called \textit{steering}, and next we shall show how this projection `controls'
the Murray-von Neumann order on $\EEe(\MMm)$. As we will see, the steering projection is defined in different ways for type
II$_1$; type II$_{\infty}$; and type I or III algebras. Therefore we shall distinguish our investigations into these three
cases.\par
\textbf{Type II$_{\pmb{1}}$.} When $\MMm$ is a type II$_1$ $\WWw^*$-algebra, it seems to be most appropriate to call
the unit of $\MMm$ the steering projection.\par
\textbf{Types I and III.}
We assume that $\MMm$ is a type I or III $\WWw^*$-algebra. We say $\MMm$ is \textit{quasi-commutative} iff $p \sim c_p$
for every $p \in E(\MMm)$. A projection $p \in E(\MMm)$ is \textit{quasi-abelian} iff $p = 0$ or $p \MMm p$ is
quasi-commutative.

\begin{lem}{q-a}
For $p \in E(\MMm)$ \tfcae
\begin{enumerate}[\upshape(i)]
\item $p$ is quasi-abelian,
\item for every $q \in E(\MMm)$ with $q \leqsl p$, $q \sim c_q p$,
\item for every $q \in E(\MMm)$, $p \preccurlyeq q \iff p \leqsl c_q$.
\end{enumerate}
\end{lem}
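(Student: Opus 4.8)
The plan is to reduce everything to standard structure theory of $\WWw^*$-algebras and prove the cycle (i)$\implies$(ii)$\implies$(iii)$\implies$(i). First I would dispose of the case $p = 0$, where all three statements hold vacuously: $q \leqsl 0$ forces $q = 0 = c_q p$, both $0 \preccurlyeq q$ and $0 \leqsl c_q$ hold for every $q$, and $0$ is quasi-abelian by definition. So from now on $p \neq 0$ and $\MMm_0 := p\MMm p$ is a $\WWw^*$-algebra with unit $p$. Throughout I would use the routine reduction facts: the projections of $\MMm_0$ are precisely the $q \in E(\MMm)$ with $q \leqsl p$; Murray--von Neumann equivalence and order in $\MMm_0$ agree with those in $\MMm$ on such projections; $\ZZz(\MMm_0) = \ZZz(\MMm)p$; and hence the central support of $q \leqsl p$ computed in $\MMm_0$ equals $c_q p$. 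Granting these, (i)$\iff$(ii) is a pure unwinding of definitions: $p$ quasi-abelian means $\MMm_0$ is quasi-commutative, i.e. $r \sim c_r^{\MMm_0}$ for every $r \in E(\MMm_0)$, which is exactly the statement ``$q \sim c_q p$ for every $q \leqsl p$'' of (ii). In particular (i)$\implies$(ii).

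For (ii)$\implies$(iii): the implication $p \preccurlyeq q \implies p \leqsl c_q$ needs nothing, since $p \sim p_0 \leqsl q$ gives $p \leqsl c_p = c_{p_0} \leqsl c_q$. For the converse assume $p \leqsl c_q$, equivalently $c_q p = p$, and apply the comparison theorem to $p$ and $q$: there is a central projection $z$ with $zp \preccurlyeq zq$ and $(1-z)q \preccurlyeq (1-z)p$. The second relation produces $q_0 \leqsl (1-z)p \leqsl p$ with $q_0 \sim (1-z)q$, so $c_{q_0} = c_{(1-z)q} = (1-z)c_q$; then (ii), applied to $q_0 \leqsl p$, gives $q_0 \sim c_{q_0}p = (1-z)c_q p = (1-z)p$. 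Hence $(1-z)q \sim q_0 \sim (1-z)p$, and adding $zp \preccurlyeq zq$ yields $p \preccurlyeq q$.

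The last implication (iii)$\implies$(i) is where the hypothesis that $\MMm$ is of type I or III is used, and the case split here, together with the central-support bookkeeping under reduction, is the main thing to get right (there is no deep obstacle). If $\MMm$ is of type III, so is the corner $\MMm_0$, and in a type III algebra any two projections with the same central support are equivalent; applying this to $r$ and $c_r^{\MMm_0}$ shows $\MMm_0$ is quasi-commutative, so $p$ is quasi-abelian (here (iii) is not even needed). If $\MMm$ is of type I, then so is $\MMm_0$, hence $\MMm_0$ contains an abelian projection $e \leqsl p$ whose central support in $\MMm_0$ is the unit $p$; by the reduction identity $c_e^{\MMm_0} = c_e p$ this reads $c_e p = p$, i.e. $p \leqsl c_e$. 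Now (iii), with the test projection $q := e$, gives $p \preccurlyeq e$; together with $e \leqsl p$ and the Cantor--Schr\"{o}der--Bernstein theorem for $\WWw^*$-algebras this forces $p \sim e$. Since a projection equivalent to an abelian one is abelian, $\MMm_0 = p\MMm p$ is abelian, whence $p$ is quasi-abelian, closing the cycle.

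In summary, the genuinely routine ingredients are the reduction identities, the comparison theorem and the Cantor--Bernstein theorem; the only place demanding attention is tracking central supports when moving between $\MMm$ and $\MMm_0$, together with the observation that the type I/III restriction is precisely what makes ``quasi-abelian'' behave well --- type III making every projection quasi-abelian, type I collapsing the notion to the classical abelian projections.
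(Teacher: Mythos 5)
Your treatment of (i)$\iff$(ii) matches the paper's (the reduction identity $c_q^{p\MMm p} = c_q p$), and your (ii)$\implies$(iii) via the comparison theorem is a valid alternative to the paper's exhaustion argument (a maximal family of pieces $c_{p_s}p \preccurlyeq q$ with centrally orthogonal supports). The problem is the type III half of (iii)$\implies$(i).

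There you assert that in a type III algebra any two projections with the same central support are equivalent, conclude that $\MMm_0 = p\MMm p$ is automatically quasi-commutative, and remark that (iii) is not even needed. That assertion is only true for \emph{countably decomposable} projections (this is exactly Kadison--Ringrose, Corollary~6.3.5, which the paper invokes with that hypothesis when building steering projections in \THM{st1,3}); in the nonseparable setting the paper works in, it fails. Concretely, in the type III factor $\MMm_0 = \BBb(\ell^2(I)) \bar{\otimes} \NNn$ with $I$ uncountable and $\NNn$ a type III factor on a separable space, the projection $r = e_{11} \otimes 1$ is countably decomposable while $c_r = 1$ is not, so $r \not\sim 1$; hence $1$ is \emph{not} quasi-abelian, and condition (iii) genuinely fails for $p = 1$ there (take $q = r$: then $p \leqsl c_q$ but $p \not\preccurlyeq q$). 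So your argument proves a false statement and discards precisely the hypothesis that rules out such $p$. The repair is the paper's: do not go back to (i) through a type case-split, but prove (iii)$\implies$(ii) directly and uniformly --- given $q \leqsl p$, set $r = q + (1-c_q)p$, check $c_r = c_p \geqsl p$, apply (iii) to get $p \preccurlyeq r$, multiply by the central projection $c_q$ to get $c_q p \preccurlyeq q$, and combine with $q \leqsl c_q p$ and Cantor--Bernstein to get $q \sim c_q p$. Your type I leg (forcing $p \sim e$ for an abelian $e$ with full central support in $\MMm_0$) is correct but becomes unnecessary once this is done.
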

\begin{proof}
The equivalence of (i) and (ii) follows from the fact that the central support of $q \in E(p \MMm p)$ with respect
to $p \MMm p$ coincides with $c_q p$ (where $c_q$ is the central support of $q$ in $\MMm$).\par
To show that (iii) follows from (ii), assume that $0 \neq p \leqsl c_q$ and take a maximal family $\{p_s\}_{s \in S}
\subset E(\MMm)$ of nonzero projections such that $p_s \leqsl p$, $p_s \preccurlyeq q$ for $s \in S$ and $c_{p_s} c_{p_t}
p = 0$ for distinct $s,t \in S$. Notice that then $p = \sum_{s \in S} c_{p_s} p$ and $c_{p_s} c_{p_t} c_p = 0$ for
different $s,t \in S$. Now we infer from (ii) that $c_{p_s} p \preccurlyeq q$ and consequently $c_{p_s} p \preccurlyeq
c_{p_s} c_p q$. So, $p = \sum_{s \in S} c_{p_s} p \preccurlyeq (\sum_{s \in S} c_{p_s} c_p) q \leqsl q$.\par
Finally, under the assumption of (iii), for $q \leqsl p$ put $r = q + (1 - c_q) p$, notice that $c_r = c_p \geqsl p$
and thus, by (iii), $p \preccurlyeq r$. Consequently, $c_q p \preccurlyeq c_q r = q$ and we are done.
\end{proof}

A \textit{steering} projection in (a type I or III $\WWw^*$-algebra) $\MMm$ is a quasi-abelian projection
$p \in E(\MMm)$ such that $c_p = 1$.

\begin{thm}{st1,3}
\begin{enumerate}[\upshape(I)]
\item Suppose $\MMm$ is type I. A projection $p \in E(\MMm)$ with $c_p = 1$ is steering iff $p$ is abelian. In particular,
   $\MMm$ has a steering projection and any two steering projections are Murray-von Neumann equivalent.
\item Suppose $\MMm$ is type III. $\MMm$ has a steering projection and any two steering projections are Murray-von Neumann
   equivalent.
\end{enumerate}
\end{thm}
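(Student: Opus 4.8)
The plan is to deduce everything from Lemma~\ref{lem:q-a} together with the standard structure and comparison theory of type I and type III $\WWw^*$-algebras. In fact uniqueness is uniform and easy: if $p$ and $q$ are both steering, then Lemma~\ref{lem:q-a}(iii) applied to $p$ gives $p \preccurlyeq q$ (because $p \leqsl 1 = c_q$) and, symmetrically, $q \preccurlyeq p$, whence $p \sim q$ by the Cantor--Bernstein theorem for projections. Thus the real content of both (I) and (II) is the \emph{existence} of a steering projection, and, in (I), the characterization of steering projections among those of central support~$1$.

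For (I), let $\MMm$ be type I. The direction ``abelian $\implies$ steering'' is immediate: if $p\MMm p$ is commutative then every projection of $p\MMm p$ equals its central support in $p\MMm p$, so $p\MMm p$ is quasi-commutative, $p$ is quasi-abelian, and with $c_p = 1$ it is steering. Conversely, suppose $p$ is steering. Since $\MMm$ is type I it contains an abelian projection $f$ with $c_f = 1$ (\cite{k-r2}); being abelian, $f$ is quasi-abelian, so Lemma~\ref{lem:q-a}(iii) applied to $f$ and to $p$ gives $f \preccurlyeq p$ and $p \preccurlyeq f$, whence $p \sim f$. As Murray--von Neumann equivalence carries abelian projections to abelian projections, $p$ is abelian. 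This proves the ``iff''; the projection $f$ shows that a steering projection exists, and uniqueness was observed above (alternatively, two abelian projections of central support~$1$ are equivalent, \cite{k-r2}).

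For (II), let $\MMm$ be type III; I must produce a quasi-abelian projection of central support~$1$. Every nonzero von Neumann algebra has a nonzero $\sigma$-finite projection (a suitable cyclic projection has a separating vector in its corner), so Zorn's lemma yields a maximal family $\{p_s\}_{s \in S}$ of nonzero $\sigma$-finite projections of $\MMm$ with pairwise orthogonal central supports. Put $p = \sum_{s} p_s$ (an orthogonal sum, since $p_s p_t \leqsl c_{p_s} c_{p_t} = 0$ for $s \neq t$); by maximality $\sum_s c_{p_s} = 1$, so $c_p = 1$. Each $p_s$ (being $c_{p_s}p$ with $c_{p_s} \in \ZZz(\MMm)$) is central in $p\MMm p$, hence $p\MMm p = \bigoplus_s p_s\MMm p_s$, where each $p_s\MMm p_s$ is $\sigma$-finite by the choice of $p_s$ and of type III, since a corner of a type III algebra is again type III. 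In a $\sigma$-finite type III algebra every nonzero projection is properly infinite, and two properly infinite projections with the same central support are equivalent in a $\sigma$-finite algebra (\cite{k-r2}); applying this within the central reduction of $p_s\MMm p_s$ by $c_q$ gives $q \sim c_q$ for every projection $q$, i.e.\ $p_s\MMm p_s$ is quasi-commutative. A direct sum of quasi-commutative algebras is quasi-commutative (both central supports and the implementing partial isometries decompose along the summands), so $p\MMm p$ is quasi-commutative; hence $p$ is quasi-abelian with $c_p = 1$, i.e.\ steering. Uniqueness is the observation of the first paragraph.

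The main obstacle is the existence step in (II), specifically the input that a $\sigma$-finite type III $\WWw^*$-algebra is quasi-commutative. This is where $\sigma$-finiteness is essential: for a non-$\sigma$-finite type III algebra the unit need not be quasi-abelian --- for instance $R\,\bar\otimes\,\BBb(\ell^2(I))$ with $R$ a $\sigma$-finite type III factor and $I$ uncountable is a type III factor possessing projections of central support~$1$ not equivalent to~$1$ --- which is exactly why the steering projection has to be assembled from $\sigma$-finite blocks rather than taken to be~$1$. Once the $\sigma$-finite case and the few cited structure facts are in hand, the rest is bookkeeping organized by Lemma~\ref{lem:q-a}.
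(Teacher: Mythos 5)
Your proof is correct and follows essentially the same route as the paper: uniqueness via Lemma~\textup{\ref{lem:q-a}} applied in both directions, and existence in (II) by summing a maximal family of centrally orthogonal $\sigma$-finite (countably decomposable) projections, each quasi-abelian by \cite[Corollary~6.3.5]{k-r2}. The paper leaves (I) to the reader and only sketches (II); your filled-in details (the decomposition of $p\MMm p$ into $\sigma$-finite type III corners, and the reduction of (I) to abelian projections of central support $1$) are exactly the intended argument.
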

\begin{proof}
Point (I) is left for the reader. We shall give a sketch of proof of (II). If $p$ and $q$ are steering, then $c_p = c_q =
1$ and thus $p \preccurlyeq q$ and $q \preccurlyeq p$, by \LEM{q-a}. This establishes uniqueness up to Murray-von Neumann
equivalence. To show the existence, take a maximal family $\{p_s\}_{s \in S} \subset E(\MMm)$ of mutually centrally
orthogonal nonzero projections each of which is countably decomposable and put $p = \sum_{s \in S} p_s$. Such a projection
is steering since each of $p_s$'s is quasi-abelian, e.g. by \cite[Corollary~6.3.5]{k-r2}.
\end{proof}

\textbf{Type II$_{\pmb{\infty}}$.}
Finally, assume $\MMm$ is a type II$_{\infty}$ $\WWw^*$-algebra. Since the unit of $\MMm$ may be written in the form
$\sum_{n=1}^{\infty} p_n$ with $p_n \sim 1$ for each $n \geqsl 1$, for every projection $q \in E(\MMm)$ there is
a countable infinite family of mutually orthogonal projections each of which is Murray-von Neumann equivalent to $q$. For
each $n \in \{1,2,3,\ldots\} \cup \{\omega\}$ we shall write $n \odot q$ to denote any projection (or, a unique member
of $\EEe(\MMm)$) in $\MMm$ which is the sum of (exactly) $n$ copies of $q$. (Here by a \textit{copy} we mean any projection
which is Murray-von Neumann equivalent to $q$; $\omega \odot q$ is the sum of $\aleph_0$ copies of $q$.)\par
We begin with

\begin{lem}{fin}
For $p \in E(\MMm)$ \tfcae
\begin{enumerate}[\upshape(i)]
\item $p$ is finite,
\item whenever $p \leqsl c_q$ for $q \in E(\MMm)$, there is a sequence $(z_n)_{n=1}^{\infty}$ of central projections
   in $\MMm$ such that $\sum_{n=1}^{\infty} z_n = 1$ and $z_n p \preccurlyeq n \odot q$ for any $n \geqsl 1$.
\end{enumerate}
\end{lem}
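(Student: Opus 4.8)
The plan is to prove the two implications separately, \textup{(ii)}$\Rightarrow$\textup{(i)} being the shorter one. For that direction I would first produce, using the fact that every nonzero projection in a semifinite algebra dominates a nonzero finite one together with a Zorn argument over families of subprojections of $p$ with mutually orthogonal central supports, a \emph{finite} projection $q \leqsl p$ with $c_q = c_p$. Then $p \leqsl c_p = c_q$, so \textup{(ii)} supplies central projections $(z_n)_{n\geqsl 1}$ with $\sum_n z_n = 1$ and $z_n p \preccurlyeq n \odot q$. Since $n \odot q$ is a finite sum of copies of the finite projection $q$ it is finite, hence each $z_n p$ is finite; as the $z_n$ are mutually orthogonal central projections, $p = \sum_n z_n p$ is then finite as well, which is \textup{(i)}.

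For \textup{(i)}$\Rightarrow$\textup{(ii)}, fix $q$ with $p \leqsl c_q$. Since $p \leqsl c_q$, the condition is vacuous on $1 - c_q$, so after absorbing $1 - c_q$ into the first term I may assume $c_q = 1$ (note that $c_q\MMm$ is again of type II$_{\infty}$, every central projection in a type II$_{\infty}$ algebra being properly infinite). Using the comparison theorem, for each $n \geqsl 1$ let $w_n$ be the largest central projection with $w_n p \preccurlyeq n \odot (w_n q)$; since $w_n p \preccurlyeq n \odot (w_n q) \preccurlyeq (n+1) \odot (w_n q)$ one gets $w_n \leqsl w_{n+1}$ by maximality of $w_{n+1}$. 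Put $w = \bigvee_n w_n$. If $w = 1$ the proof is finished: setting $z_1 = w_1$ and $z_n = w_n - w_{n-1}$ for $n \geqsl 2$ gives mutually orthogonal central projections with $\sum_n z_n = w = 1$ and, since $z_n \leqsl w_n$, $z_n p = z_n w_n p \preccurlyeq n \odot (z_n q) \preccurlyeq n \odot q$.

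So the heart of the matter is to show $w = 1$. Suppose $1 - w \neq 0$ and write $p' = (1-w)p$, $q' = (1-w)q$; the projection $p'$ is finite, being a subprojection of $p$. Applying the other half of the comparison theorem (relative to each $w_n$) and cutting down by the central projection $1 - w \leqsl 1 - w_n$ gives $n \odot q' = (1-w)(n \odot q) \preccurlyeq p'$ for every $n$. From this family of relations I would derive $q' = 0$: either by an induction producing infinitely many mutually orthogonal copies of $q'$ inside $p'$ (at step $k$ one uses that if $f, f' \leqsl p'$ with $f \sim f'$ then $p' - f \sim p' - f'$, which holds because $p'$ is finite, to move the $(k+1)$-st copy off the first $k$), thereby showing $\omega \odot q' \preccurlyeq p'$, absurd for $q' \neq 0$ since then $\omega \odot q'$ is properly infinite while $p'$ is finite; or, more quickly, by invoking the center-valued trace $\tau$ of the finite algebra $p'\MMm p'$: $n$ mutually orthogonal copies of a copy of $q'$ inside $p'$ force $n\,\tau(q') \leqsl \tau(p')$ for all $n$, hence $\tau(q') = 0$ and $q' = 0$ by faithfulness. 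Either way $(1-w)q = q' = 0$, so $1 - w \leqsl 1 - c_q = 0$, a contradiction; thus $w = 1$.

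The step I expect to be the main obstacle is precisely this last one — turning the infinitely many relations $n \odot q' \preccurlyeq p'$ into a single contradiction — together with the bookkeeping needed to guarantee that all the cut-downs (by $c_q$, by the $w_n$, by $1-w$) stay within the type II$_{\infty}$ framework and interact correctly with the comparison theorem; the center-valued trace route appears to be the cleanest way past it.
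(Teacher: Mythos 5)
Your proof is correct, but for the main implication (i)$\Rightarrow$(ii) it is organized quite differently from the paper's. The paper argues bottom-up: since $p\leqsl c_q$ it writes $p=\sum_{s\in S}q_s$ with each $q_s\preccurlyeq q$, passes to the center-valued trace $\tr$ of the finite algebra $p\MMm p$, and uses a dyadic spectral decomposition of each element $\tr(q_s)$ to manufacture central projections $z^{(s)}_{n,k}$ with $\tr(z^{(s)}_{n,k}p)\leqsl 2^n\tr(q_s)$, hence $z^{(s)}_{n,k}p\preccurlyeq 2^n\odot q_s\preccurlyeq 2^n\odot q$; these are then disjointified and reindexed to give the required sequence. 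You instead argue top-down: you take the largest central projection $w_n$ on which $p\preccurlyeq n\odot q$ holds, observe monotonicity in $n$, and prove exhaustion ($\bigvee_n w_n=1$) by contradiction, using the complementary half of the comparison theorem to get $n\odot q'\preccurlyeq p'$ on the residual central piece and then the faithfulness of the center-valued trace of $p'\MMm p'$ to force $q'=0$. Both routes rest on the same two tools — central comparison and the trace of a finite algebra — but yours localizes the trace to a single contradiction step, whereas the paper uses the trace globally as the comparison device; your version is the more classical exhaustion argument, while the paper's produces the $z_n$ explicitly from spectral data of $\tr$. For (ii)$\Rightarrow$(i) the two proofs are essentially identical: the paper simply applies (ii) to a fixed finite projection $q_0$ with $c_{q_0}=1$ (which exists by semifiniteness), which is marginally shorter than your Zorn construction of a finite $q\leqsl p$ with $c_q=c_p$, but the idea is the same.
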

\begin{proof}
Let $q_0 \in E(\MMm)$ be a finite projection such that $c_{q_0} = 1$. If (iii) is satisfied, then $z_n p \preccurlyeq
n \odot q_0$ for a suitable sequence $(z_n)_{n=1}^{\infty}$ of central projections. Then $z_n p$ is finite and thus so is
$(\bigvee_{n \geqsl 1} z_n) p = p$.\par
Conversely, if $p$ is finite and $p \leqsl c_q$, there is a family $\{q_s\}_{s \in S}$ of mutually orthogonal projections
such that $p = \sum_{s \in S} q_s$ and $q_s \preccurlyeq q$ for all $s \in S$. Let $\tr\dd p \MMm p \to \ZZz(p \MMm p) =
\ZZz(\MMm) p$ be the trace on $p \MMm p$. There are central (in $\MMm$) projections $z^{(s)}_{n,k}$ with $1 \leqsl k \leqsl
2^n$ and $n \geqsl 1$ such that
$$
\tr(q_s) = \sum_{n=1}^{\infty} (\sum_{k=1}^{2^n} \frac{k}{2^n} z^{(s)}_{n,k} p).
$$
Since $\tr(z^{(s)}_{n,k} p) \leqsl 2^n \tr(q_s)$, $z^{(s)}_{n,k} p \preccurlyeq 2^n \odot q_s \preccurlyeq 2^n \odot q$.
Moreover, we infer from the relation $p = \tr(p) = \sum_{s \in S} \tr(q_s)$ that $\bigvee_{s,n,k} z^{(s)}_{n,k} \geqsl p$.
Reindexing the family $\{z^{(s)}_{n,k}\}_{s,n,k}$ we obtain a collection $\{w_t\}_{t \in T} \subset E(\ZZz(\MMm))$ such
that
$$
w_t p \preccurlyeq m(t) \odot q \qquad \textup{and} \qquad w := \bigvee_{t \in T} w_t \geqsl p
$$
where $m(t)$ is some positive integer. Now let $\{v_t\}_{t \in T}$ be a family of mutually orthogonal central projections
such that $v_t \leqsl w_t\ (t \in T)$ and $\sum_{t \in T}  v_t = w$. Let $* \notin T$, $v_* = 1 - w$ and $m(*) = 1$.
Observe that $v_t p \preccurlyeq m(t) \odot q$ for every $t \in T_* := T \cup \{*\}$, and $\sum_{t \in T_*} v_t = 1$.
To this end, define $z_n$ for $n > 0$ by $z_n = \sum \{v_t\dd\ t \in T_*,\ m(t) = n\}$.
\end{proof}

Let $$E_{\omega}(\MMm) = \{q \in E(\MMm)\dd\ q \sim \omega \odot p \textup{ for some finite projection } p\}.$$

\begin{lem}{omega}
\begin{enumerate}[\upshape(a)]
\item For every $p \in E_{\omega}(\MMm)$ and a properly infinite projection $q \in E(\MMm)$, $p \preccurlyeq q \iff
   p \leqsl c_q$.
\item If $p \in E_{\omega}(\MMm)$ is such that $c_p = 1$, then $q \sim c_q p$ for every $q \in E_{\omega}(\MMm)$.
\item If $p \in E_{\omega}(\MMm)$ and $z \in E(\ZZz(\MMm))$, then $z p \in E_{\omega}(\MMm)$.
\end{enumerate}
\end{lem}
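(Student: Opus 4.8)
The plan is to establish (c) first as a warm-up, then the substantive part (a), and finally to read off (b) from (a) by a Cantor–Bernstein argument; parts (a) and (c) are logically independent of one another.

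\textbf{Part (c).} Write $p\sim\omega\odot p_0$ with $p_0$ finite, realized by a partial isometry $v\in\MMm$ with $v^*v=p$ and $vv^*=\omega\odot p_0=\sum_{n\geqsl1}f_n$, the $f_n$ being pairwise orthogonal copies of $p_0$. Since $z$ is central it commutes with $v$ and with every partial isometry implementing $f_n\sim p_0$, so $zv$ implements $zp\sim z(vv^*)=\sum_{n\geqsl1}zf_n$, each $zf_n\sim zp_0$, the $zf_n$ pairwise orthogonal; hence $zp\sim\omega\odot(zp_0)$. As $zp_0\leqsl p_0$ is finite, $zp\in E_{\omega}(\MMm)$ (if $zp_0=0$ this just says $zp=0\in E_{\omega}(\MMm)$). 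The same bookkeeping shows that $\omega\odot r$ is properly infinite whenever $r$ is a nonzero finite projection — a fact used below.

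\textbf{Part (a).} The implication $p\preccurlyeq q\implies p\leqsl c_q$ is general: $p\sim p'\leqsl q$ gives $c_p=c_{p'}\leqsl c_q$, hence $p\leqsl c_p\leqsl c_q$. For the converse, fix $p\sim\omega\odot p_0=\sum_{n\geqsl1}e_n$ with $p_0$ finite (so $c_{p_0}=c_p\leqsl c_q$), $q$ properly infinite. The crux is the sublemma: \emph{any finite projection $p_0$ with $c_{p_0}\leqsl c_q$ satisfies $p_0\preccurlyeq q$.} To prove it, apply the comparison theorem to obtain a central projection $z$ with $zp_0\preccurlyeq zq$ and $(1-z)q\preccurlyeq(1-z)p_0$; the second relation makes $(1-z)q$ finite (it is equivalent to a subprojection of the finite projection $(1-z)p_0$), and a central cut of the properly infinite $q$ is $0$ or infinite, so $(1-z)q=0$, i.e. $c_q\leqsl z$; then $c_{p_0}\leqsl c_q\leqsl z$ forces $(1-z)p_0=0$ as well, whence $p_0=zp_0\preccurlyeq zq=q$. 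Now use that $q$, being properly infinite, contains pairwise orthogonal subprojections $q_1,q_2,\dots$ each Murray–von Neumann equivalent to $q$ (a standard property of properly infinite projections). Each $q_n$ is properly infinite with $c_{q_n}=c_q\geqsl c_{p_0}$, so the sublemma yields $f_n\leqsl q_n$ with $f_n\sim p_0\sim e_n$; the $f_n$ are pairwise orthogonal, so $\sum_{n\geqsl1}e_n\sim\sum_{n\geqsl1}f_n\leqsl\sum_{n\geqsl1}q_n\leqsl q$, i.e. $p\preccurlyeq q$.

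\textbf{Part (b).} Let $p\in E_{\omega}(\MMm)$ with $c_p=1$ and $q\in E_{\omega}(\MMm)$. If $q=0$ then $c_q=0$ and both sides vanish, so assume $q\neq0$. By (c), $c_qp\in E_{\omega}(\MMm)$; its central support is $c_qc_p=c_q\neq0$, and both $q$ and $c_qp$ are properly infinite, each being $\omega\odot$ of a nonzero finite projection. Apply (a) to the pair $(c_qp,q)$: since $c_qp\leqsl c_q$, we get $c_qp\preccurlyeq q$. Apply (a) to the pair $(q,c_qp)$: since $q\leqsl c_q=c_{c_qp}$, we get $q\preccurlyeq c_qp$. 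By the Cantor–Bernstein theorem for Murray–von Neumann equivalence, $q\sim c_qp$.

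The main obstacle is the sublemma inside (a): one must combine the comparison theorem with the fact that a properly infinite projection admits no nonzero finite central summand, while tracking central supports closely enough that the residual piece $(1-z)p_0$ is annihilated by $c_q\leqsl z$. Everything else is routine manipulation of central cuts of the relation $\omega\odot(\cdot)$.
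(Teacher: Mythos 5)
Your proof is correct, and the overall architecture matches the paper's: (c) is a routine central-cut computation, (b) is deduced from (a) and (c) by Schr\"oder--Bernstein, and (a) is reduced to showing $p_0 \preccurlyeq q$ for the finite projection $p_0$ with $p \sim \omega \odot p_0$. Where you diverge is in how you get $p_0 \preccurlyeq q$. The paper invokes its Lemma~6.2 (the trace-based decomposition of a finite projection, which produces central projections $z_n$ with $z_n p_0 \preccurlyeq n \odot q$ and then uses $n \odot q \sim q$ for properly infinite $q$), so its argument leans on the type II$_\infty$ machinery developed just before. You instead prove the subordination directly from the comparison theorem: the central projection $z$ with $(1-z)q \preccurlyeq (1-z)p_0$ forces $(1-z)q$ to be simultaneously finite and a central cut of a properly infinite projection, hence zero, and the hypothesis $c_{p_0} \leqsl c_q \leqsl z$ then kills $(1-z)p_0$ as well. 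This is cleaner and more general -- it works in any von Neumann algebra and makes no use of the trace or of Lemma~6.2 -- at the cost of re-deriving a standard comparison fact that the paper's Lemma~6.2 was partly designed to package. Your final assembly via orthogonal copies $q_1, q_2, \dots$ of $q$ inside $q$ is equivalent to the paper's one-line $\omega \odot p_0 \preccurlyeq \omega \odot q \sim q$. All the auxiliary facts you cite (halving of properly infinite projections, proper infiniteness of $\omega \odot r$ for $r \neq 0$, additivity of $\sim$ over orthogonal families) are standard and correctly applied.
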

\begin{proof}
Point (c) is immediate and (b) follows from (a) and (c). So, it suffices to check (a). Assume $p$ and $q$ are as there
and $p \leqsl c_q$. Take a finite projection $p_0$ such that $p \sim \omega \odot p_0$. By \LEM{fin}, $z_n p_0 \preccurlyeq
n \odot q$ for a suitable sequence $(z_n)_{n=1}^{\infty}$ of central projections. Since $q$ is properly infinite, $q \sim
\omega \odot q$ and hence $z_n p_0 \preccurlyeq z_n q$ which gives $p_0 \preccurlyeq q$. Consequently, $p \sim \omega \odot
p_0 \preccurlyeq \omega \odot q \sim q$ and we are done.
\end{proof}

A \textit{steering} projection in (a type II$_{\infty}$ $\WWw^*$-algebra) $\MMm$ is a projection
$p \in E_{\omega}(\MMm)$ with $c_p = 1$. Since $E_{\omega}(\MMm)$ consists of properly infinite projections, \LEM{omega}
ensures that any two steering projections in $\MMm$ are Murray-von Neumann equivalent.
\vspace{0.3cm}

Now if $\MMm$ is an arbitrary $\WWw^*$-algebra, the steering projection of $\MMm$ is defined as the sum of the steering
projections of type I, II$_1$, II$_{\infty}$ and III parts of $\MMm$. It is clear that any two steering projections
in $\MMm$ are Murray-von Neumann equivalent. The reader should also verify that if $p \in E(\MMm)$ is a steering
projection, then $c_p = 1$ and $z p$ is a steering projection of $\MMm z$ for every central projection $z$ in $\MMm$.

\SECT{Decomposition relative to steering projection}

Let us first generalize the idea of the previous section. Whenever $\alpha$ is an (arbitrary) cardinal number and $p$
and $q$ are projections in a $\WWw^*$-algebra $\MMm$, $p$ is said to be a \textit{copy} of $q$ provided $p \sim q$;
and $p \sim \alpha \odot q$ iff $p$ is a sum of $\alpha$ copies of $q$. In particular, $p \sim 0 \odot q$ is equivalent to
$p = 0$. When $\MMm$ contains $\alpha$ mutually orthogonal copies of $q$, we shall also write $p \preccurlyeq \alpha \odot
q$ with obvious meaning. Similarly, we shall say that $p$ contains $\alpha$ orthogonal copies of $q$ iff $q' \sim \alpha
\odot q$ for some projection $q' \leqsl p$.\par
Using standard methods (such as Lemma~6.3.9 and Theorem~6.3.11 of \cite{k-r2}; cf. \cite[Proposition~III.1.7.1]{bl}),
similar to those in \cite{gr1,gr2}, \cite{tom} or \cite{sh}, one shows the next result (we skip its proof). To simplify
its statement, let us define the classes $\Lambda_I$, $\Lambda_{\tII}$ and $\Lambda_{\tIII}$ as follows. $\Lambda_I =
\Card$ (the class of all cardinals), $\Lambda_{\tII} = \Card_{\infty} \cup \{0,1\}$ and $\Lambda_{\tIII} = \Card_{\infty}
\cup \{0\}$ where $\Card_{\infty}$ is the class of all infinite cardinals. For any cardinal $\alpha$, $\alpha^+$
is the direct successor of $\alpha$, that is, $\alpha^+ = \min \{\beta \in \Card\dd\ \beta > \alpha\}$. Below `$\sim$'
refers to the Murray-von Neumann equivalence in $\MMm$.

\begin{thm}{deco}
Let $\MMm$ be a properly infinite $\WWw^*$-algebra, $p$ a steering projection of $\MMm$ and let $\AAa = p \MMm p$. Let
$z^I, z^{\tII}, z^{\tIII} \in \ZZz(\AAa)$ be projections such that $z^I + z^{\tII} + z^{\tIII} = p$ and $\AAa z^i$ is
of type i for $i = I,\tII,\tIII$. For every $q \in E(\MMm)$ there is a unique system
$\{z^I_{\alpha}(q)\}_{\alpha \in \Lambda_I} \cup \{z^{\tII}_{\alpha}(q)\}_{\alpha \in \Lambda_{\tII}} \cup
\{z^{\tIII}_{\alpha}(q)\}_{\alpha \in \Lambda_{\tIII}} \subset \ZZz(\AAa)$ of mutually orthogonal projections such that
for $i = I,\tII,\tIII$, $\sum_{\alpha \in \Lambda_i} z^i_{\alpha}(q) = z^i$ and $c_{z^i_{\alpha}(q)} q \sim \alpha \odot
z^i_{\alpha}(q)$ if only $\alpha \in \Lambda_i$ and $(i,\alpha) \neq (\tII,1)$, while $c_{z^{\tII}_1(q)} q$ is finite and
$z^{\tII}_1(q) \sim \omega \odot c_{z^{\tII}_1(q)} q$.\par
What is more, $z^i_{\alpha}(q)$ may be characterized as follows:
\begin{multline*}
z^{\tII}_1(q) = \bigvee \{w \in E(\AAa)|\ w \leqsl z^{\tII},\ \forall v \in E(\AAa),\ 0 \neq v \leqsl w\dd\\
c_v q \neq 0 \textup{ and $q$ contains no copy of } \omega \odot v\}
\end{multline*}
and when $(i,\alpha) \neq (\tII,1)$,
\begin{multline*}
z^i_{\alpha}(q) = \bigvee \{w \in E(\AAa)|\ w \leqsl z^i,\ c_w q \sim \alpha \odot w,\ \forall v \in E(\AAa)\dd\\
0 \neq v \leqsl w \implies q \textup{ does not contain $\alpha^+$ orthogonal copies of $v$}\}.
\end{multline*}
\end{thm}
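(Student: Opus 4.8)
The plan is to prove the statement one type at a time, using the central splitting $z^{I}+z^{\tII}+z^{\tIII}=p$ supplied in the hypotheses. Since $\MMm$ is properly infinite there is no finite type I summand and no type $\tII_{1}$ summand, so it suffices to treat the three corners $\AAa z^{I}$ (abelian, with $z^{I}$ an abelian projection of full central support in that corner), $\AAa z^{\tII}$ (type $\tII_{\infty}$, with $z^{\tII}$ itself a member of $E_{\omega}$), and $\AAa z^{\tIII}$ (type $\tIII$, with $z^{\tIII}$ quasi-abelian) separately, and then to reassemble the three families by orthogonality of the $z^{i}$. In each corner the steering projection plays exactly the role that the identity plays in the classical dimension theory of $\WWw^{*}$-algebras: the comparison facts that one usually reads off from the structure of the whole algebra are here furnished, respectively, by the defining property of an abelian projection, by \LEM{fin} together with \LEM{omega}, and by \LEM{q-a}. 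Accordingly I would model the argument closely on Griffin \cite{gr1,gr2}, Tomiyama \cite{tom} and Sherman \cite{sh} (using \cite[Lemma~6.3.9, Theorem~6.3.11]{k-r2}, equivalently \cite[Proposition~III.1.7.1]{bl}) with $p$ in place of $1$, and only spell out the modifications. Uniqueness will be automatic once the two displayed $\bigvee$-formulae have been verified, so the genuine content is the existence of the $z^{i}_{\alpha}(q)$ together with their pairwise orthogonality and the fact that they add up to $z^{i}$.

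To produce the projections, I would fix $q\in E(\MMm)$, work inside one corner $\AAa z^{i}$, and define $z^{i}_{\alpha}(q)$, for each admissible value $\alpha$, as the supremum of a maximal subfamily, with pairwise orthogonal central supports, of the family $\ZzZ_{\alpha}$ of all $w\in E(\AAa)$ satisfying $w\leqsl z^{i}$, $c_{w}q\sim\alpha\odot w$, and the maximality clause that no nonzero $v\leqsl w$ has $\alpha^{+}\odot v\preccurlyeq q$. A Zorn's lemma and exhaustion argument, together with the comparison theorem applied over the center and the order-completeness of $\EEe(\MMm)$ (this is \cite{sh}), shows that the property defining $\ZzZ_{\alpha}$ is inherited by suprema of subfamilies whose members have pairwise orthogonal central supports, whence $z^{i}_{\alpha}(q)\in\ZzZ_{\alpha}$ and in particular $c_{z^{i}_{\alpha}(q)}q\sim\alpha\odot z^{i}_{\alpha}(q)$. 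In the type $\tII_{\infty}$ corner one must first decompose $q$ centrally into its zero part, its finite nonzero part, and its properly infinite part (a standard central decomposition): over the central support of the finite nonzero part one sets $z^{\tII}_{1}(q)$ equal to the local piece of the steering projection, which by \LEM{omega}(a) applied in both directions is Murray-von Neumann equivalent to $\omega\odot(c_{z^{\tII}_{1}(q)}q)$ and over which $q$ is finite; over the zero part one takes $z^{\tII}_{0}(q)$; over the properly infinite part one runs the multiplicity construction above over the infinite cardinals, with \LEM{fin} and \LEM{omega} supplying the comparison of $q$ with multiples of the local steering piece. In types I and $\tIII$ the whole corner is handled directly by the multiplicity construction --- over all cardinals in type I, over the infinite cardinals and $0$ in type $\tIII$ --- because there every nonzero projection is, respectively, a sum of abelian projections, respectively properly infinite with $q\sim c_{q}$; in each case $\alpha=0$ absorbs the central region where $q$ vanishes.

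For orthogonality, if $z^{i}_{\alpha}(q)$ and $z^{i}_{\beta}(q)$ with $\alpha<\beta$ had a nonzero product $v_{0}$, then over $v_{0}$ one would get $c_{v_{0}}q\sim\beta\odot v_{0}$ and hence $\alpha^{+}\odot v_{0}\preccurlyeq q$ (as $\beta\geqsl\alpha^{+}$), contradicting $z^{i}_{\alpha}(q)\in\ZzZ_{\alpha}$. For exhaustiveness, if the central complement $z'$ of $\sum_{\alpha}z^{i}_{\alpha}(q)$ inside $z^{i}$ were nonzero, then the comparison theorem applied to $q$ and mutually orthogonal copies of the local steering piece, together with the order-completeness of $\EEe(\MMm)$, would produce a cardinal $\alpha_{0}$ and a nonzero central subprojection of $z'$ on which $q$ has uniform multiplicity $\alpha_{0}$ with the maximality clause; that subprojection would belong to $\ZzZ_{\alpha_{0}}$, forcing $z'\wedge z^{i}_{\alpha_{0}}(q)\neq 0$, a contradiction. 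The displayed characterizations then follow formally: $z^{i}_{\alpha}(q)\in\ZzZ_{\alpha}$ is precisely the bracketed property, so $z^{i}_{\alpha}(q)\leqsl\bigvee\ZzZ_{\alpha}$, while conversely every $w\in\ZzZ_{\alpha}$ lies below $z^{i}_{\alpha}(q)$ by the orthogonality and exhaustiveness just shown; the special clause for $z^{\tII}_{1}(q)$ comes out the same way, using that a finite projection contains no copy of $\omega\odot v$ for any nonzero $v$ below its support. The hard part will be the type $\tII_{\infty}$ bookkeeping: cleanly isolating the finite and properly infinite parts of $q$ and, above all, verifying that the ``no $\alpha^{+}$ orthogonal copies'' clause genuinely singles out one cardinal over each central region and that the exhaustion leaves no residual central projection unaccounted for --- this is exactly where \LEM{fin} and \LEM{omega} are indispensable and where the argument must be carried out with the most care.
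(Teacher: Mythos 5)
Your outline is correct and is precisely the argument the paper has in mind: the paper itself omits the proof of this theorem, referring the reader to the standard dimension-theoretic methods of Griffin, Tomiyama and Sherman (via \cite[Lemma~6.3.9, Theorem~6.3.11]{k-r2}), which is exactly what you carry out corner by corner with $p$ in place of $1$, using \LEM{q-a}, \LEM{fin} and \LEM{omega} where the usual comparison facts are needed. Your identification of the type $\tII_\infty$ exhaustion step as the delicate point, and your verification arguments for orthogonality and for the two $\bigvee$-characterizations, are all sound.
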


The statement of the above theorem is complicated. We have formulated it in this way for further applications to the class
$\CDD_N$.\par
For purpose of this paper, let us introduce the following

\begin{dfn}{type}
Let $i \in \{I,\tII,\tIII\}$ and $\alpha \in \Card_{\infty}$. A $\WWw^*$-algebra $\MMm$ is said to be
of \textit{(pure) type i$_{\alpha}$} iff $\MMm$ is of pure type $i$ and $1 \sim \alpha \odot p$ where $p$ is the steering
projection of $\MMm$.
\end{dfn}

Recall that the above definition of type I$_{\alpha}$ $\WWw^*$-algebras is equivalent to the classical definition
of this type, and that below types I$_n$ for finite $n$ and II$_1$ are understood in the usual sense.

\begin{pro}{types}
For every $\WWw^*$-algebra $\MMm$ there is a unique system $\{z^i_{\alpha}\dd\ i \in \{I,\tII,\tIII\},\ \alpha \in
\Lambda_i \setminus \{0\}\} \subset E(\ZZz(\MMm))$ such that $1 = \sum_{i,\alpha} z^i_{\alpha}$ and for each $i$ and
$\alpha$ either $z^i_{\alpha} = 0$ or $\MMm z^i_{\alpha}$ is of pure type $i_{\alpha}$.
\end{pro}

To simplify statements of next results, we fix $i \in \{I,\tII,\tIII\}$, $\gamma \in \Card_{\infty}$, a type $i_{\gamma}$
$\WWw^*$-algebra $\MMm$ and a steering projection $p$ of $\MMm$. Additionally, we put $\AAa = p \MMm p$ and $\Lambda =
\{\alpha \in \Lambda_i\dd\ \alpha \leqsl \gamma\}$. For every $q \in E(\MMm)$ let $z_{\alpha}(q) = z^i_{\alpha}(q)$
where $z^i_{\alpha}(q)$ is as in \THM{deco}. It is easily seen that $z_{\alpha}(q) = 0$ for $\alpha > \gamma$ and
$\sum_{\alpha \in \Lambda} z_{\alpha}(q) = p$. Therefore for every $q \in E(\MMm)$ we shall deal with a \underline{set}
$\{z_{\alpha}(q)\}_{\alpha \in \Lambda}$ of projections.\par
We skip the proof of the next result (cf. \cite{sh}).

\begin{pro}{ord}
For $q, q' \in E(\MMm)$ \tfcae
\begin{enumerate}[\upshape(i)]
\item $q \preccurlyeq q'$,
\item $z_{\beta}(q) z_{\alpha}(q') = 0$ whenever $\alpha, \beta \in \Lambda$ and $\beta > \alpha$;
   and $c_{z_1(q)} c_{z_1(q')} q \preccurlyeq c_{z_1(q)} c_{z_1(q')} q'$ provided $i = \tII$.
\end{enumerate}
\end{pro}

The following result explains the terminology proposed by us.

\begin{pro}{puretype}
Let $q \in E(\MMm)$ be nonzero. Then $c_q \sim \gamma \odot q$ and $\MMm$ does not contain $\gamma^+$ orthogonal copies
of $q$.
\end{pro}
\begin{proof}
The second claim is left for the reader. For every positive cardinal $\beta \in \Lambda$ let $S_{\beta}$ be a set such that
$\card(S_{\beta}) = \beta$ and let $\kappa_{\beta}\dd S_{\gamma} \times S_{\beta} \to S_{\gamma}$ be a bijection. Since
$\MMm$ is of type $i_{\gamma}$, there is a collection $\{p_s\}_{s \in S_{\gamma}}$ of mutually orthogonal projections
Murray-von Neumann equivalent to $p$ which sum up to $1$. For $s \in S_{\gamma}$ let
$$
q_s = \sum_{\beta \in \Lambda \setminus \{0\}} c_{z_{\beta}(q)} \sum_{t \in S_{\beta}} p_{\kappa_{\beta}(s,t)}.
$$
Since $c_{z_{\beta}(q)} p_s \sim z_{\beta}(q)$ and $\sum_{\beta \in \Lambda \setminus \{0\}} c_{z_{\beta}(q)} = c_q$,
$q_s \sim q$ for $s \in S_{\gamma}$. To this end, observe that
$$
\sum_{s \in S_{\gamma}} q_s = \sum_{\beta \in \Lambda \setminus \{0\}} c_{z_{\beta}(q)}
\sum_{(s,t) \in S_{\gamma} \times S_{\beta}} p_{\kappa_{\beta}(s,t)} = \sum_{\beta \in \Lambda \setminus \{0\}}
c_{z_{\beta}(q)} = c_q.
$$
\end{proof}

\begin{pro}{complement}
For every $q \in E(\MMm)$ there are projections $q_{\#}, q^{\#} \in E(\MMm)$ such that $1 - q_{\#} \sim q \sim 1 - q^{\#}$
and $q_{\#} \preccurlyeq q' \preccurlyeq q^{\#}$ for every $q' \in E(\MMm)$ with $1 - q' \sim q$. Moreover, $q^{\#} \sim
1$ and $q_{\#} \sim 1 - c_{z_{\gamma}(q)}$.
\end{pro}
\begin{proof}
Since for $i = I,\tIII$ arguments are similar, we shall only sketch the proof for $i = \tII$ (which is most complicated).
Since $1 \sim 2 \odot 1$, it is clear that there is $q^{\#} \in E(\MMm)$ such that $q^{\#} \sim 1$ and $1 - q^{\#} \sim q$.
Thus we only need to find $q_{\#}$. For each $\beta \in \Lambda$ let $S_{\beta}$ be a set of cardinality $\beta$
and $\{p_s\}_{s \in S_{\gamma}}$ be a collection of mutually orthogonal projections which are Murray-von Neumann equivalent
to $p$ and sum up to $1$. We assume that $S_{\beta} \subset S_{\gamma}$ for each $\beta \in \Lambda$. Let $s_1 \in S_1$.
Take $v \in E(\MMm)$ with $v \leqsl p_{s_1}$ and $v \sim c_{z_1(q)} q$, and put
$$
q_{\#} = c_{z_1(q)} (p_{s_1} - v) + \sum_{\beta \in \Lambda} c_{z_{\beta}(q)} \sum_{s \in S_{\gamma} \setminus S_{\beta}}
p_s.
$$
Since $\sum_{\beta \in \Lambda} c_{z_{\beta}(q)} = 1$ and $\card(S_{\gamma} \setminus S_{\beta}) = \gamma$ if only $\beta <
\gamma$, we infer from these that $q_{\#} \sim 1 - c_{z_{\gamma}(q)}$. This implies that $z_{\gamma}(q_{\#}) =
(1 - c_{z_{\gamma}(q)}) p = p - z_{\gamma}(q) = \sum_{\beta \in \Lambda \setminus \{\gamma\}} z_{\beta}(q)$, $z_0(q_{\#}) =
c_{z_{\gamma}(q)} p = z_{\gamma}(q)$ and $z_{\beta}(q_{\#}) = 0$ for each $\beta \in \Lambda \setminus \{0,\gamma\}$
(in particular, $z_1(q_{\#}) = 0$). Further, observe that $c_{z_1(q)} v = v$ and thus
$$
1 - q_{\#} = v + \sum_{\beta \in \Lambda \setminus \{1\}} c_{z_{\beta}(q)} \sum_{s \in S_{\beta}} p_s
$$
which yields that $1 - q_{\#} \sim q$. Now let $q' \in E(\MMm)$ be such that $1 - q' \sim q$. Thanks to \PRO{ord}, $q_{\#}
\preccurlyeq q'$ iff $z_{\beta}(q_{\#}) z_{\alpha}(q') = 0$ whenever $\alpha, \beta \in \Lambda$ and $\alpha < \beta$
(because $z_1(q_{\#}) = 0$). In our situation the latter is equivalent to $z_{\beta}(q) z_{\alpha}(q') = 0$ for every
$\alpha, \beta \in \Lambda \setminus \{\gamma\}$. For such $\alpha$ and $\beta$ we have
$$
w := c_{z_{\beta}(q)} c_{z_{\alpha}(q')} = w q' + w (1 - q')
$$
and $w (1 - q') \sim w q$. But
$$\begin{cases}
w q' \sim \alpha \odot (w p) & \textup{if } \alpha \neq 1,\\
w q' \quad \textup{is finite} & \textup{if } \alpha = 1,
\end{cases}
\quad \textup{and} \quad
\begin{cases}
w q \sim \beta \odot (w p) & \textup{if } \beta \neq 1,\\
w q \quad \textup{is finite} & \textup{if } \beta = 1.
\end{cases}$$
We conclude from this that either $w$ is finite (and hence $w = 0$) or $w \sim \max(\alpha,\beta) \odot wp$. At the same
time, thanks to e.g. \PRO{puretype}, $w \sim \gamma \odot wp$ which implies that $w = 0$ and we are done.
\end{proof}

Since in every finite $\WWw^*$-algebra $\WWw$, $1 - q' \sim q$ iff $q' \sim 1 - q$ for any $q,q' \in E(\WWw)$,
\PRO{complement} gives

\begin{thm}{complement}
Let $\WWw$ be a $\WWw^*$-algebra and $q \in E(\WWw)$. There are projections $q^{\#}$ and $q_{\#}$ such that $1 - q_{\#}
\sim q \sim 1 - q^{\#}$ and $q_{\#} \preccurlyeq q' \preccurlyeq q^{\#}$ whenever $q' \in E(\WWw)$ is such that $1 - q'
\sim q$. What is more, if $\WWw$ is properly infinite, $q^{\#} \sim 1$ and $q_{\#}$ is Murray-von Neumann equivalent to
a central projection.
\end{thm}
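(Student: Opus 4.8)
The plan is to reduce the statement to two already-settled situations by cutting $\WWw$ along its center. First I would take the largest central projection $z_f \in E(\ZZz(\WWw))$ for which $\WWw z_f$ is finite, set $z_{pi} = 1 - z_f$ so that $\WWw z_{pi}$ is properly infinite, and then apply \PRO{types} to $\WWw z_{pi}$ to obtain mutually orthogonal central projections $\{w_{\iota}\}_{\iota \in \Upsilon}$ summing to $z_{pi}$ with each corner $\WWw w_{\iota}$ of pure type $i_{\gamma}$ for some $i \in \{I,\tII,\tIII\}$ and some infinite cardinal $\gamma$ (no finite piece such as $I_n$ with $n$ finite, or $\tII_1$, can occur, precisely because $\WWw z_{pi}$ is properly infinite, so \PRO{complement} is applicable on each $\WWw w_{\iota}$). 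On the finite corner the fact recalled just above the statement — namely $1 - q' \sim q \iff q' \sim 1 - q$ in a finite $\WWw^*$-algebra — makes the candidates trivial: I would take both of them equal to $(1-q)z_f$ on that corner, since then every $q'$ with $1 - q' \sim q$ automatically satisfies $q' z_f \sim (1-q)z_f$. On each properly infinite corner $\WWw w_{\iota}$ I would invoke \PRO{complement} to get projections $(q w_{\iota})^{\#}$ and $(q w_{\iota})_{\#}$ with the complementation and extremality properties relative to $q w_{\iota}$, together with the extra information $(q w_{\iota})^{\#} \sim w_{\iota}$ and $(q w_{\iota})_{\#} \sim w_{\iota} - r_{\iota}$ for a central projection $r_{\iota} \leqsl w_{\iota}$ of $\WWw w_{\iota}$. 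Then I would set $q^{\#} = (1-q)z_f + \sum_{\iota} (q w_{\iota})^{\#}$ and $q_{\#} = (1-q)z_f + \sum_{\iota} (q w_{\iota})_{\#}$.

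Next I would verify the three assertions. For $1 - q^{\#} \sim q \sim 1 - q_{\#}$ I would cut everything by the orthogonal central projections $z_f$ and $w_{\iota}$, observe that $1 - q^{\#} = q z_f + \sum_{\iota}\bigl(w_{\iota} - (q w_{\iota})^{\#}\bigr)$ with each summand Murray--von Neumann equivalent to the corresponding piece of $q$ (by \PRO{complement} on the infinite corners and trivially on the finite one), and use that Murray--von Neumann equivalence is additive over orthogonal families; the same computation handles $q_{\#}$. For the extremality, let $q' \in E(\WWw)$ satisfy $1 - q' \sim q$. Cutting by a central projection $z$ preserves equivalence, so $z - z q' \sim z q$ for every central $z$; on the finite corner this forces $q' z_f \sim (1-q)z_f = q^{\#} z_f = q_{\#} z_f$, hence trivially $q_{\#} z_f \preccurlyeq q' z_f \preccurlyeq q^{\#} z_f$, and on each $\WWw w_{\iota}$ the relation $w_{\iota} - q' w_{\iota} \sim q w_{\iota}$ together with the extremality clause of \PRO{complement} gives $(q w_{\iota})_{\#} \preccurlyeq q' w_{\iota} \preccurlyeq (q w_{\iota})^{\#}$. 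Since $\preccurlyeq$ is additive over orthogonal families of central projections and $q' = q' z_f + \sum_{\iota} q' w_{\iota}$, adding up yields $q_{\#} \preccurlyeq q' \preccurlyeq q^{\#}$.

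Finally, if $\WWw$ is properly infinite then $z_f = 0$, so $q^{\#} = \sum_{\iota} (q w_{\iota})^{\#} \sim \sum_{\iota} w_{\iota} = 1$, and $q_{\#} = \sum_{\iota} (q w_{\iota})_{\#} \sim \sum_{\iota} (w_{\iota} - r_{\iota})$, which is a central projection of $\WWw$ because each $w_{\iota} - r_{\iota}$ is central in $\WWw$ (being a difference of central projections of $\WWw w_{\iota}$, and $\ZZz(\WWw w_{\iota}) = \ZZz(\WWw) w_{\iota}$) and the $w_{\iota} - r_{\iota}$ are mutually orthogonal.

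The only genuinely delicate point, and the one I expect to require the most care, is the extremality clause for a $q'$ that is not adapted to the central decomposition; this is circumvented by the reduction $q' = q' z_f + \sum_{\iota} q' w_{\iota}$ together with the compatibility of both $\sim$ and $\preccurlyeq$ with arbitrary orthogonal sums of central projections. Everything else is routine bookkeeping layered on top of \PRO{complement}, \PRO{types} and the recalled finite-algebra fact.
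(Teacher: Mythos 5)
Your proposal is correct and follows essentially the route the paper intends: the paper states \THM{complement} as an immediate consequence of \PRO{complement} together with the remark that $1-q'\sim q$ iff $q'\sim 1-q$ in a finite $\WWw^*$-algebra, leaving exactly the central decomposition into the finite part and the pure type $i_\gamma$ pieces (via \PRO{types}) and the additivity of $\sim$ and $\preccurlyeq$ over orthogonal central cuts to the reader. You have simply written out that bookkeeping in full, and all the steps check out.
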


Our last aim of this section is

\begin{pro}{limit}
Let $S$ be an (infinite) set whose power is a limit cardinal. Let $\{p_s\}_{s \in S}$ be a family of mutually orthogonal
projections in a $\WWw^*$-algebra $\WWw$ which sum up to $1$. For nonempty set $A \subset S$ put $q_A = \sum_{s \in A}
p_s$. Then $1$ is the l.u.b. of the family $\{q_A\dd\ A \subset S,\ 0 < \card(A) < \card(S)\}$ with respect
to the Murray-von Neumann order.
\end{pro}
\begin{proof}
Thanks to \PRO{types}, we may and do assume that $\WWw$ is of pure type $i_{\gamma}$. Since the assertion of the theorem
is known to be true for finite algebras $\WWw$, we assume in addition that $\WWw$ is properly infinite---that is, that
$\gamma$ is infinite. Finally, we reduce our considerations to the case when the steering projection $p$ of $\WWw$ is
countably decomposable.\par
Let $q \in E(\WWw)$ be such that $q_A \preccurlyeq q$ for each $A \in \SsS := \{A \subset S\dd\ 0 < \card(A) < \card(S)\}$.
We need to show that $q \sim 1$. Equivalently, we have to prove that $z^i_{\alpha}(q) = 0$ provided $\alpha < \gamma$. When
$i = \tII$, $c_{z^{\tII}_1(q)} q$ is finite and $c_{z^{\tII}_1(q)} \sum_{s \in A} p_s \preccurlyeq c_{z^{\tII}_1(q)} q$ for
each $A \in \SsS$ which implies that $c_{z^{\tII}_1(q)} \preccurlyeq c_{z^{\tII}_1(q)} q$. Consequently,
$c_{z^{\tII}_1(q)}$ is finite and thus $z^{\tII}_1(q) = 0$. Also when $i = I$ and $\alpha$ is finite, $z^i_{\alpha}(q) =
0$, beacuse then $\alpha \odot p$ is finite.\par
Now we are in position when $\alpha$ is infinite. Then $c_{z^i_{\alpha}(q)} q \sim \alpha \odot z^i_{\alpha}(q) \sim
\alpha \odot (c_{z^i_{\alpha}(q)} p)$ and $c_{z^i_{\alpha}(q)} q_A \preccurlyeq c_{z^i_{\alpha}(q)} q$ for any $A \in
\SsS$. We argue by contradiction. Assume $z^i_{\alpha}(q) \neq 0$. Replacing $\WWw$ by $\WWw c_{z^i_{\alpha}(q)}$, we may
assume $c_{z^i_{\alpha}(q)} = 1$, that is, $z^i_{\alpha}(q) = p$. We then have $q \sim \alpha \odot p$, $1 \sim \gamma
\odot p$ and $q_A \preccurlyeq q$ ($A \in \SsS$). We distinguish between two cases. When $\card(S) \leqsl \alpha$,
we easily get $p_s \preccurlyeq q$ and thus $1 = \sum_{s \in S} p_s \preccurlyeq \alpha \odot q \sim \alpha^2 \odot p$
which denies the facts that $\alpha^2 < \gamma$ and $1 \sim \gamma \odot p$.\par
Finally, assume that $\card(S) > \alpha$. Since $p$ is countably decomposable and $q \sim \alpha \odot p$,
$\card(\{s \in A\dd\ p_s \neq 0\}) \leqsl \alpha$ for any $A \in \SsS$ (because $q_A \preccurlyeq q$). We conclude from
this that $A := \{s \in S\dd\ p_s \neq 0\} \in \SsS$ (because $\card(S) > \alpha^+$). But then $1 = q_A \preccurlyeq q$
and we are done.
\end{proof}

\begin{exm}{lim}
As the following example shows (compare with \cite[Example~3]{tom}), the assumption in \PRO{limit} that the power
of $S$ is a limit cardinal is essential. Let $\HHh$ be a Hilbert space of dimension $\aleph_1$, $S$ be a set of power
$\aleph_1$ and let $\{e_s\}_{s \in S}$ be an orthonormal basis of $\HHh$. Further, let $\MMm = \BBb(\HHh)$ and for
$s \in S$ let $p_s \in E(\MMm)$ be the orthogonal rank-one projection onto the linear span of $e_s$. Now if $q_A$'s are
defined as in \PRO{limit}, then $q_A \preccurlyeq q_J$ for every nonempty set $A \subset S$ of power less than $\aleph_1$
where $J$ is a countable infinite subset of $S$ and hence $1$ is nonequivalent to the l.u.b. (which is $q_J$).
\end{exm}

\SECT{Minimal and semiminimal $N$-tuples}

The idea of steering projections will now be adapted to the class $\CDD_N$. Following Ernest \cite{e}, we say a nontrivial
$N$-tuple $\AAA \in \CDD_N$ is (\textit{of}) \textit{type} I, II, III iff such is $\WWw'(\aAA)$. Additionally, we let
the trivial $N$-tuple be of each of these types.\par
We begin with a result which will find many applications in the sequel.

\begin{lem}{continuum}
Every collection of mutually unitarily disjoint nontrivial members of $\CDD_N$ has power no greater than $2^{\aleph_0}$.
\end{lem}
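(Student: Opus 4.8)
The plan is to reduce the statement, by means of the $\bB$-transform, to counting unitary equivalence classes of bounded $N$-tuples that act on \emph{separable} Hilbert spaces, and then to observe that there are at most $2^{\aleph_0}$ of these.

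First I would show that every nontrivial $\aAA = (A_1,\ldots,A_N) \in \CDDc_N$ has a nontrivial reduced part living on a separable Hilbert space. Pick a nonzero $x \in \overline{\DdD}(\aAA)$ and let $E$ be the closed linear span of all vectors $R_1 \cdot \ldots \cdot R_n x$ with $n \geqsl 1$ and each $R_k$ in $\{\bB(A_1),\ldots,\bB(A_N)\} \cup \{\bB(A_1)^*,\ldots,\bB(A_N)^*\} \cup \{I\}$ --- this is precisely the construction used in the proof of \THM{common} applied to the finite family $\{A_1,\ldots,A_N\}$. Then $E$ is separable, contains $x$, and is invariant under each $\bB(A_j)$ and each $\bB(A_j)^*$, hence reduces each $\bB(A_j)$; by (BT3) this yields $E \in \red(\aAA)$. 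Since $E \neq \{0\}$, the $N$-tuple $\aAA\bigr|_E$ is nontrivial, its underlying space $\overline{\DdD}(\aAA\bigr|_E) = E$ is separable, and $\aAA\bigr|_E \leqsl \aAA$ by the definition of `$\leqsl$' in (DF7).

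Next I would bound the supply of such tuples. Fix a Hilbert space $H_d$ of each dimension $d \in \{1,2,3,\ldots\} \cup \{\aleph_0\}$; every nontrivial $N$-tuple acting on a separable space is unitarily equivalent to one acting on some $H_d$. By point~(E) of \PRO{b}, the $\bB$-transform puts $\CDDc(H_d)$ in bijection with a subset of $\BBb(H_d)$, so it is enough to count $N$-tuples of bounded operators on the spaces $H_d$. A bounded operator on a separable Hilbert space is determined by its values on a fixed countable dense subset, so $\card(\BBb(H_d)) \leqsl (2^{\aleph_0})^{\aleph_0} = 2^{\aleph_0}$; hence the \emph{set} $\Sigma \subset \CDD_N$ of unitary equivalence classes of nontrivial $N$-tuples acting on separable Hilbert spaces satisfies $\card(\Sigma) \leqsl \aleph_0 \cdot (2^{\aleph_0})^N = 2^{\aleph_0}$.

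Finally, given a family $\{\AAA^{(s)}\}_{s \in S}$ of pairwise unitarily disjoint nontrivial members of $\CDD_N$ with representatives $\aAA^{(s)}$, I would attach to each $s$ the class $\XXX^{(s)} \in \Sigma$ of a nontrivial separable reduced part $\aAA^{(s)}\bigr|_{E_s}$ produced as in the first step, so that $\zero \neq \XXX^{(s)} \leqsl \AAA^{(s)}$. If $\XXX^{(s)} = \XXX^{(t)}$ for some $s \neq t$, then this single nontrivial $N$-tuple lies $\leqsl$-below both $\AAA^{(s)}$ and $\AAA^{(t)}$, contradicting $\AAA^{(s)} \disj \AAA^{(t)}$ (cf. (DF9) and (UE3)). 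Thus $s \mapsto \XXX^{(s)}$ is an injection of $S$ into $\Sigma$, and $\card(S) \leqsl \card(\Sigma) \leqsl 2^{\aleph_0}$. I expect the only genuine content to be the first step --- the passage to a nontrivial separable reduced part --- with Steps~2 and~3 amounting to cardinal arithmetic together with the definitions of `$\leqsl$' and `$\disj$'.
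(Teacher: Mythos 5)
Your proposal is correct and follows essentially the same route as the paper: extract a nontrivial separable reduced part of each $\AAA^{(s)}$ via the construction of \THM{common}, note that unitary disjointness forces these parts to be pairwise distinct, and finish with the cardinality bound $\card(\CDDc_N(\HHh_n)) \leqsl 2^{\aleph_0}$. The only cosmetic difference is that the paper indexes by concrete representatives in fixed spaces $\HHh_{n(s)}$ rather than by unitary equivalence classes, which changes nothing.
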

\begin{proof}
Suppose
\begin{equation}\label{eqn:aux11}
\AAA^{(s)} \disj \AAA^{(s')}
\end{equation}
(and $\AAA^{(s)} \neq \zero$) for distinct $s, s' \in S$. For $n \in J = \{1,2,3,\ldots\} \cup \{\aleph_0\}$ let $\HHh_n$
be a fixed Hilbert space of dimension $n$. By \THMp{common}, for each $s \in S$ there is $n(s) \in J$ and $\bBB^{(s)} \in
\CDDc_N(\HHh_{n(s)})$ such that $\BBB^{(s)} \leqsl \AAA^{(s)}$. We infer from \eqref{eqn:aux11} that $\bBB^{(s)} \neq
\bBB^{(s')}$ for distinct $s, s' \in S$. Now the assertion easily follows from the fact that $\card(\CDDc_N(\HHh_n)) \leqsl
2^{\aleph_0}$ for every $n \in J$.
\end{proof}

\begin{dfn}{minimal}
$\AAA \in \CDD_N$ is said to be \textit{minimal} iff for every $\BBB \in \CDD_N$,
$$
\AAA \ll \BBB \implies \AAA \leqsl \BBB.
$$
$\AAA$ is said to be \textit{multiplicity free} ($\AAA \in \MmM\FfF_N$) iff there is no nontrivial $\BBB \in \CDD_N$
for which $2 \odot \BBB \leqsl \AAA$. $\AAA$ is a \textit{hereditary idempotent} ($\AAA \in \HhH\IiI_N$) iff $\BBB = 2
\odot \BBB$ for every $\BBB \leqsl \AAA$. We shall write $\AAA \in \HhH\IiI\MmM_N$ to express that $\AAA$ is both
a hereditary idempotent and minimal.
\end{dfn}

Minimal members of $\CDD_N$ correspond to quasi-abelian projections.

\begin{rem}{e}
The work of Ernest \cite{e} deals with (single) bounded operators. In this context, our definition of a multiplicity free
operator is equivalent to Ernest's one (Definition~1.21 in \cite{e}).
\end{rem}

\begin{thm}{minimal}
\begin{enumerate}[\upshape(I)]
\item For every $\AAA \in \CDD_N$, $$\AAA = 2 \odot \AAA \iff \AAA = \aleph_0 \odot \AAA.$$
\item For $\AAA \in \CDD_N$ \tfcae
   \begin{enumerate}[\upshape(i)]
   \item $\AAA$ is minimal,
   \item for each $\BBB \in \CDD_N$, $\BBB \leqsl \AAA \implies \BBB \leqsl^s \AAA$.
   \end{enumerate}
   If $\AAA$ is minimal and $\BBB \leqsl \AAA$, then $\BBB$ is minimal as well.
\item For $\AAA \in \CDD_N$ \tfcae
   \begin{enumerate}[\upshape(i)]
   \item $\AAA \in \MmM\FfF_N$,
   \item $\AAA = \zero$ or $\WWw'(\aAA)$ is commutative.
   \end{enumerate}
   In particular, if $\AAA \in\MmM\FfF_N$ and $\BBB \leqsl \AAA$, then $\BBB \in \MmM\FfF_N$ as well.
\item Every multiplicity free $N$-tuple is minimal and unitarily disjoint from any hereditary idempotent.
\item If $\AAA \in \HhH\IiI_N$ and $\BBB \ll \AAA$, then $\BBB \in \HhH\IiI_N$ as well.
\item There exist unique $\JJJ_I, \JJJ_{\tIII} \in \CDD_N$ such that $\JJJ_I \in \MmM\FfF_N$, $\JJJ_{\tIII} \in
   \HhH\IiI\MmM_N$, $\JJJ_I \sqplus \JJJ_{\tIII}$ is minimal and for every $\AAA \in \CDD_N$:
   \begin{enumerate}[\upshape(a)]
   \item $\AAA \in \MmM\FfF_N$ iff $\AAA \leqsl \JJJ_I$,
   \item $\AAA \ll \JJJ_I$ iff $\AAA = \zero$ or $\WWw'(\aAA)$ is type I,
   \item $\AAA \in \HhH\IiI_N$ iff $\AAA \ll \JJJ_{\tIII}$, iff $\AAA = \zero$ or $\WWw'(\aAA)$ is type III,
   \item $\AAA \in \HhH\IiI\MmM_N$ iff $\AAA \leqsl \JJJ_{\tIII}$,
   \item $\AAA$ is minimal iff $\AAA \leqsl \JJJ_I \sqplus \JJJ_{\tIII}$.
   \end{enumerate}
   What is more, $\dim(\JJJ_I) + \dim(\JJJ_{\tIII}) \leqsl 2^{\aleph_0}$.
\end{enumerate}
\end{thm}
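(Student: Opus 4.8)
The plan is to transfer the whole statement into the theory of $\WWw^*$-algebras. By (BT1)--(BT5) each tuple may be replaced by its $\bB$-transform, so one may assume all tuples bounded; then \PRO{transl} (applied with the parent $\tTT=\aAA$, so that the reference projection is $1\in\MMm:=\WWw'(\aAA)$) converts $\leqsl$, $\leqsl^s$, $\ll$, $\disj$ and the notions of \DEF{minimal} into statements about the projection lattice $E(\MMm)$. The dictionary is: $\AAA=2\odot\AAA$ means that $1$ is a properly infinite projection of $\MMm$; $\AAA\in\HhH\IiI_N$ means that \emph{every} projection of $\MMm$ is properly infinite, i.e.\ (no nonzero finite projection) that $\MMm$ is of type $\tIII$; $\AAA\in\MmM\FfF_N$ means that $\MMm$ has no two nonzero orthogonal Murray--von Neumann equivalent projections, i.e.\ that $\MMm$ is abelian; and, reading ``$\BBB\leqsl\AAA\implies\BBB\leqsl^s\AAA$'' through \PRO{transl}(b)--(c), this condition means that $q\sim c_q$ for every $q\in E(\MMm)$, i.e.\ that $\MMm$ is quasi-commutative. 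One structural fact about $\WWw^*$-algebras is used repeatedly: a $\WWw^*$-algebra is quasi-commutative iff it is the direct sum of an abelian algebra and a type $\tIII$ algebra (a type $\tIII$ algebra is quasi-commutative, since there two projections with equal central support are equivalent; neither a nonzero type $\tII$ algebra nor a homogeneous type $\mathrm{I}_n$ algebra with $n\geqsl2$ is), and each of the properties ``abelian'', ``type $\tIII$'' and ``quasi-commutative'' descends to every corner $q\MMm q$ and to every amplification $\WWw'(\alpha\odot\aAA)$.

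Granting the dictionary, most parts are short. \emph{(I)} holds because a properly infinite projection absorbs $\aleph_0$ copies of itself; passing to $\WWw'(2\odot\aAA)$ and using the matrix unit exchanging the two summands gives $\AAA=2\odot\AAA\iff\AAA=\aleph_0\odot\AAA$. \emph{(III)} and its ``in particular'' clause are the corner-stability of commutativity; \emph{(V)} is the corner/amplification-stability of type $\tIII$; and \emph{(IV)}: a nontrivial $\XXX$ with $\XXX\leqsl\AAA\in\MmM\FfF_N$ and $\XXX\leqsl\BBB\in\HhH\IiI_N$ would have $\WWw'(\xXX)$ simultaneously abelian and of type $\tIII$ (using the $\leqsl$-heredity of $\MmM\FfF_N$ from (III) and of $\HhH\IiI_N$ from (V)). For \emph{(II)}, the equivalence of (i) and (ii) is the computation above, where ``(ii)$\Rightarrow$(i)'' uses the implication ``(ii)$\Rightarrow$(iii)'' of \LEM{q-a} applied to the type $\mathrm{I}$ and type $\tIII$ central summands of the ambient algebra --- that suffices to pass from ``$p\MMm p$ quasi-commutative'' to ``$p\leqsl c_q\Rightarrow p\preccurlyeq q$''. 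The $\leqsl$-heredity of minimality is then corner-stability of quasi-commutativity, but it must first be proved for \emph{central} reducing subspaces without using the dictionary: if $\AAA=\CCC\sqplus\CCC'$ is minimal and $\CCC\ll\DDD$, then $\AAA=\CCC\oplus\CCC'\ll\DDD\oplus\CCC'$, so $\AAA\leqsl\DDD\oplus\CCC'$ by minimality, so $\CCC\leqsl\DDD$ by (PR1) (as $\CCC\disj\CCC'$). This central-heredity is exactly what is needed for ``(i)$\Rightarrow$(ii)'': for $q\in E(\MMm)$, the reduced part $\CCC$ of $\AAA$ onto the central projection $c_q$ is minimal and $\CCC\ll\BBB$ (where $\BBB$ is the reduced part onto $q$, using the standard fact $c_q\preccurlyeq\beta\odot q$ for some cardinal $\beta$), whence $\CCC\leqsl\BBB$, i.e.\ $c_q\preccurlyeq q$; with $q\preccurlyeq c_q$ this gives $q\sim c_q$.

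The work is in \emph{(VI)}. Build $\JJJ_I,\JJJ_{\tIII}$ by hand: by \THM{common} every nontrivial member of $\MmM\FfF_N$ (resp.\ of $\HhH\IiI\MmM_N$, which equals $\HhH\IiI_N$ since a type $\tIII$ algebra is quasi-commutative hence minimal) has a nontrivial \emph{separable} reduced part of the same kind, so by \LEM{continuum} one may fix a \emph{maximal} regular family $\{\AAA^{(s)}\}_{s\in S_I}$ (resp.\ $\{\BBB^{(t)}\}_{t\in S_{\tIII}}$) of nontrivial separable members of $\MmM\FfF_N$ (resp.\ of $\HhH\IiI\MmM_N$), necessarily with $\card S_I,\card S_{\tIII}\leqsl2^{\aleph_0}$, and set $\JJJ_I=\bigsqplus_{s\in S_I}\AAA^{(s)}$, $\JJJ_{\tIII}=\bigsqplus_{t\in S_{\tIII}}\BBB^{(t)}$ (legitimate by \COR{disjoint}); then $\dim\JJJ_I+\dim\JJJ_{\tIII}\leqsl\aleph_0\cdot2^{\aleph_0}=2^{\aleph_0}$. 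Since unitary disjointness kills all off-diagonal intertwiners, the commutant of a regular direct sum is the $\WWw^*$-direct sum of the commutants of the summands; thus $\WWw'(\jJJ_I)$ is abelian, $\WWw'(\jJJ_{\tIII})$ is of type $\tIII$, so $\JJJ_I\in\MmM\FfF_N$, $\JJJ_{\tIII}\in\HhH\IiI_N$, $\JJJ_I\disj\JJJ_{\tIII}$ by (IV), and $\JJJ_I\sqplus\JJJ_{\tIII}$ has quasi-commutative commutant, i.e.\ is minimal. The ``$\Leftarrow$'' halves of (a)--(e) are instances of corner-stability together with (IV)/(V). For ``$\Rightarrow$'' in (a) and (d): given $\AAA\in\MmM\FfF_N$, apply \THM{main} with the ideal $\{\XXX\dd\ \XXX\ll\JJJ_I\}$ (an ideal, cf.\ \PRO{id} and \REM{unity}) to write $\AAA=\AAA_1\sqplus\AAA_0$ with $\AAA_1\ll\JJJ_I$ and $\AAA_0$ unitarily disjoint from every $\XXX\ll\JJJ_I$; were $\AAA_0$ nontrivial it would have a nontrivial separable part in $\MmM\FfF_N$ disjoint from every $\AAA^{(s)}$, contradicting maximality; hence $\AAA=\AAA_1\ll\JJJ_I$, and since $\AAA$ is minimal (by (IV)) this forces $\AAA\leqsl\JJJ_I$ by \DEF{minimal}; (d) is verbatim with $\HhH\IiI\MmM_N$ and $\JJJ_{\tIII}$. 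For (b) and (c): if $\WWw'(\aAA)$ is of type $\mathrm{I}$ (resp.\ type $\tIII$) it has by \THM{st1,3} a steering projection $p$ with $c_p=1$, so the corner $\aAA\bigr|_E$ with $P_E=p$ lies in $\MmM\FfF_N$ (resp.\ in $\HhH\IiI\MmM_N$) while $\aAA\ll\aAA\bigr|_E$ by \PRO{transl}(e); combining with (a) (resp.\ (d)) and transitivity of ``$\ll$'' through ``$\leqsl$'' gives $\AAA\ll\JJJ_I$ (resp.\ $\AAA\ll\JJJ_{\tIII}$), the converses being corner/amplification-stability of the type. For (e): a minimal $\AAA$ has $\WWw'(\aAA)$ a direct sum of an abelian and a type $\tIII$ algebra, which splits $\AAA$ over complementary central projections as $\AAA=\AAA'\sqplus\AAA''$ with $\AAA'\in\MmM\FfF_N$, $\AAA''\in\HhH\IiI\MmM_N$; (a) and (d) give $\AAA'\leqsl\JJJ_I$, $\AAA''\leqsl\JJJ_{\tIII}$, and since $\JJJ_I\disj\JJJ_{\tIII}$ these assemble into $\AAA\leqsl\JJJ_I\sqplus\JJJ_{\tIII}$. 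Uniqueness: by (a) (resp.\ (d)) the class of $N$-tuples $\leqsl\JJJ_I$ (resp.\ $\leqsl\JJJ_{\tIII}$) is characterized intrinsically (as $\MmM\FfF_N$, resp.\ $\HhH\IiI\MmM_N$), so two candidates are mutually $\leqsl$, hence equal by \PRO{order}.

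The step I expect to be the main obstacle is the ``$\Rightarrow$'' direction of (VI)(a), (d) --- that the maximal-regular-family construction genuinely yields the $\leqsl$-largest multiplicity free (resp.\ hereditary idempotent and minimal) $N$-tuple. This is where everything converges: one needs the separability reduction \THM{common} both to keep the indexing families set-sized (via \LEM{continuum}) and to fabricate the separable disjoint part that contradicts maximality, the decomposition theorem \THM{main} for the ideal $\{\XXX\dd\ \XXX\ll\JJJ\}$, and the dictionary ``minimal $=$ quasi-commutative commutant'' together with \DEF{minimal} to upgrade ``$\ll\JJJ$'' into ``$\leqsl\JJJ$''.
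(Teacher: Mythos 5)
Your proposal follows the paper's route exactly: pass to bounded tuples via the $\bB$-transform, translate everything into the projection lattice of $\WWw'(\aAA)$ via \PRO{transl} (so that (I) and (V) become standard facts about properly infinite and type $\tIII$ projections, (II) becomes \LEM{q-a}, and (III)--(IV) become corner-stability of commutativity), and build $\JJJ_I$ and $\JJJ_{\tIII}$ as regular direct sums of maximal families of separably-acting multiplicity free tuples, respectively hereditary idempotents, using \LEM{continuum} to keep the families set-sized and \THM{common} plus maximality to show they are $\leqsl$-greatest. The details you supply for (VI), which the paper leaves to the reader, are sound.

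The one genuine flaw is the ``structural fact'' you assert twice: that a type $\tIII$ $\WWw^*$-algebra is quasi-commutative, equivalently that $\HhH\IiI_N=\HhH\IiI\MmM_N$. This is false without countable decomposability: in $\MMm=\BBb(\HHh)\bar{\otimes}\RRr$ with $\HHh$ nonseparable and $\RRr$ a separably acting type $\tIII$ factor, a rank-one projection tensored with $1$ is countably decomposable while its central support $1$ is not, so they are not equivalent. Indeed, the claim would contradict the theorem itself, since (VI)(c) characterizes $\HhH\IiI_N$ by ``$\ll\JJJ_{\tIII}$'' while (VI)(d) characterizes $\HhH\IiI\MmM_N$ by the strictly stronger ``$\leqsl\JJJ_{\tIII}$'' (e.g.\ $\aleph_1\odot\JJJ_{\tIII}$ lies in the first class but not the second). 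The correct statement, which is what \THM{st1,3}(II) actually delivers, is that a type $\tIII$ algebra has a quasi-abelian projection of central support $1$, and that \emph{countably decomposable} type $\tIII$ algebras are quasi-commutative. Fortunately every place you invoke the false claim --- identifying the class from which the maximal family for $\JJJ_{\tIII}$ is drawn, and asserting that separable reduced parts of hereditary idempotents are minimal --- concerns only tuples with separable representatives, whose commutants are countably decomposable, so the construction and the rest of the argument survive once the claim is restricted accordingly; but as written the assertion must be repaired.
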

\begin{proof}
In all points of the theorem we make use of \PROp{transl}. Counterparts of points (I) and (V) are well known for
projections in $\WWw^*$-algebras, (II) follows from \LEMp{q-a}, (III) is immediate, (IV) is implied by (III)
and the definitions of suitable notions. We shall describe how to prove (VI). Take a maximal collection
(cf. \LEM{continuum}) of nontrivial mutually unitarily disjoint multiplicity free $N$-tuples (respectively hereditary
idempotents) whose representatives act in separable spaces and define $\JJJ_I$ ($\JJJ_{\tIII}$) as the direct sum
of this family. One may check that obtained in this way $N$-tuple belongs to $\MmM\FfF_N$ ($\HhH\IiI\MmM_N$) and---since
$\JJJ_I$ and $\JJJ_{\tIII}$ are unitarily disjoint---that $\JJJ_I \sqplus \JJJ_{\tIII}$ is minimal. That $\JJJ_I$
and $\JJJ_{\tIII}$ are greatest members of $\MmM\FfF_N$ and $\HhH\IiI\MmM_N$ it follows from the maximality of the taken
families and \THMp{common}. The details are left for the reader (cf. Propositions~2.12, 1.27 and 1.29 and Corollary~1.37
in \cite{e}). (For the proof of (b) and (c) see also \THMP{decomp}.)
\end{proof}

\THM{minimal} shows that there is a greatest minimal $N$-tuple in $\CDD_N$, namely $\JJJ_I \sqplus \JJJ_{\tIII}$,
and that it covers all type I and III $N$-tuples. Since there are also type II ones, we need to introduce one more notion.

\begin{dfn}{semiminimal}
$\AAA \in \CDD_N$ is said to be \textit{semiminimal} ($\AAA \in \SsS\MmM_N$) iff $\AAA$ is unitarily disjoint from every
minimal $N$-tuple and $\AAA$ satisfies the following condition. Whenever $\BBB \in \CDD_N$ is such that $\AAA \ll \BBB$,
$\AAA$ may be written in the form $\AAA = \bigsqplus_{n=1}^{\infty} \AAA_n$ where $\AAA_n \leqsl n \odot \BBB$ for each
$n \geqsl 1$.
\end{dfn}

Before stating the next result, we underline that there is no greatest semiminimal member of $\CDD_N$.

\begin{thm}{semiminimal}
\begin{enumerate}[\upshape(I)]
\item For $\AAA \in \CDD_N$, $\AAA \in \SsS\MmM_N$ iff $\AAA = \zero$ or $\WWw'(\aAA)$ is type II$_1$. In particular,
   if $\AAA \in \SsS\MmM_N$ and $\BBB \leqsl \AAA$, then $\BBB \in \SsS\MmM_N$ as well; the direct sum of finitely many
   semiminimal $N$-tuples belongs to $\SsS\MmM_N$.
\item There is unique $\JJJ_{\tII} \in \CDD_N$ such that for every $\AAA \in \SsS\MmM_N$ there is $\BBB \in \SsS\MmM_N$
   for which $\JJJ_{\tII} = \aleph_0 \odot (\AAA \sqplus \BBB)$. Moreover, $\dim(\JJJ_{\tII}) \leqsl
   2^{\aleph_0}$ and
   \begin{enumerate}[\upshape(a)]
   \item for $\EEE, \FFF \in \CDD_N$ with $\EEE \leqsl \FFF \leqsl \JJJ_{\tII}$,
      \begin{equation}\label{eqn:J2}
      \EEE \leqsl^s \FFF \leqsl^s \JJJ_{\tII} \iff \EEE = 2 \odot \EEE \quad \textup{and} \quad \FFF = 2 \odot \FFF,
      \end{equation}
   \item $\AAA \ll \JJJ_{\tII}$ iff $\AAA = \zero$ or $\WWw'(\aAA)$ is type II.
   \end{enumerate}
\end{enumerate}
\end{thm}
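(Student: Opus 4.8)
The whole statement will be read off from \PRO{transl}, which turns every relation in \THM{semiminimal} into a relation between projections in a suitable $\WWw^*$-algebra, and then from the dimension theory of \S8--\S10, in the same spirit as the proof of \THM{minimal}. For part~(I), the first clause of \DEF{semiminimal} (disjointness from every minimal $N$-tuple) is, by \THM{minimal}, equivalent to $\AAA\disj(\JJJ_I\sqplus\JJJ_{\tIII})$, hence to $\AAA\disj\JJJ_I$ and $\AAA\disj\JJJ_{\tIII}$, which through \PRO{transl} and \THM{minimal}(VI)(b),(c) says exactly that $\WWw'(\aAA)$ has no type~I and no type~III part, i.e.\ $\AAA=\zero$ or $\WWw'(\aAA)$ is of type~II. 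It remains to see that, for such $\AAA$, the second clause of \DEF{semiminimal} holds iff $\WWw'(\aAA)$ is \emph{finite}. Given $\AAA$ of type~II$_1$ and $\BBB$ with $\AAA\ll\BBB$, choose $\alpha\geqsl\aleph_0$ with $\AAA\leqsl\alpha\odot\BBB$, set $\WWw=\WWw'(\alpha\odot\bBB)$, and let $p,q\in E(\WWw)$ correspond to $\AAA$ and to one copy of $\BBB$ (so $c_q=1$). Since $p\WWw p=\WWw'(\aAA)$ is a finite $\WWw^*$-algebra, $p$ is a finite projection, so $\MMm:=\WWw c_p$ is semifinite (it has a finite projection of full central support) and has no type~I part either (otherwise a nonzero central summand of $\WWw'(\aAA)=p\MMm p$ would be simultaneously of type~I and of type~II); thus $\MMm$ is of type~II$_\infty$. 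Applying \LEM{fin} in $\MMm$ to $p$ and $qc_p$ gives central projections $z_n$ of $\MMm$ with $\sum_n z_n=c_p$ and $z_np\preccurlyeq n\odot(qc_p)$, which reads back as $\AAA=\bigsqplus_n\AAA_n$ with $\AAA_n\leqsl n\odot\BBB$ (a regular decomposition of $\AAA$ corresponds, by \PRO{transl}(d), to a partition of the identity of $\WWw'(\aAA)$ into central projections). Conversely, if $\WWw'(\aAA)$ is of type~II$_\infty$, apply \LEM{fin} with $\MMm=\WWw'(\aAA)$ and the infinite projection $p=1$: it produces a $q\in E(\WWw'(\aAA))$, hence a reduced part $\BBB=\aAA\bigr|_E$ with $\AAA\ll\BBB$, for which no such $z_n$ exist, so $\AAA\notin\SsS\MmM_N$.

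For the two ``in particular'' assertions of (I): if $\BBB\leqsl\AAA$ with $\AAA$ of type~II$_1$, then $\WWw'(\bBB)$ is a corner of the finite type~II algebra $\WWw'(\aAA)$, hence again finite type~II; and for a \emph{finite} direct sum $\sSS=\AAA^{(1)}\oplus\dots\oplus\AAA^{(k)}$ of type~II$_1$ tuples, $\WWw'(\sSS)$ is generated by the $k$ orthogonal finite projections $P_{\overline{\DdD}(\aAA^{(i)})}$ summing to $1$, so it is finite (a finite supremum of finite projections is finite), while $\sSS\disj\JJJ_I$ and $\sSS\disj\JJJ_{\tIII}$ by (PR2), so $\sSS$ stays of type~II. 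For part~(II) I would construct $\JJJ_{\tII}$ as in \THM{minimal}(VI): let $\JJJ^{(0)}$ be the $\bigsqplus$ of a maximal collection --- of power $\leqsl2^{\aleph_0}$ by \LEM{continuum} --- of mutually unitarily disjoint semiminimal $N$-tuples with separable representatives; being a regular sum, $\WWw'(\jJJ^{(0)})=\bigoplus_s\WWw'(\aAA_s)$ is a direct sum of finite type~II algebras, hence finite type~II, so $\JJJ^{(0)}\in\SsS\MmM_N$. Put $\JJJ_{\tII}:=\aleph_0\odot\JJJ^{(0)}$; then $\WWw'(\jJJ_{\tII})$ is of type~II$_\infty$ and $\dim(\JJJ_{\tII})\leqsl\aleph_0\cdot2^{\aleph_0}\cdot\aleph_0=2^{\aleph_0}$. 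Using \THM{common} exactly as in \THM{minimal}(VI) (by maximality no nontrivial reduced part of a separable semiminimal tuple is disjoint from $\JJJ^{(0)}$, and \THM{common} reduces the general case to separable reduced parts) one gets $\AAA\ll\JJJ^{(0)}$ for every semiminimal $\AAA$; more generally $\AAA\ll\JJJ_{\tII}$ for every type~II $\AAA$ --- reduce $\AAA$ to separable reducing parts and note that a separable type~II$_\infty$ part is $\aleph_0\odot$ of a separable type~II$_1$ part by countable decomposability --- which is one direction of (II)(b); the other direction is the corner argument from part~(I) applied to a reduced part of $\alpha\odot\JJJ_{\tII}$.

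Part~(II)(a), i.e.\ \eqref{eqn:J2}, is pure projection algebra in the type~II$_\infty$ algebra $\WWw'(\jJJ_{\tII})$: via \PRO{transl}, for $\EEE\leqsl\FFF\leqsl\JJJ_{\tII}$ with projections $p_{\EEE}\leqsl p_{\FFF}\leqsl1$, the condition $\XXX=2\odot\XXX$ means ``$p_{\XXX}$ properly infinite'', $\EEE\leqsl^s\FFF$ means $p_{\EEE}\sim c_{p_{\EEE}}p_{\FFF}$, and $\FFF\leqsl^s\JJJ_{\tII}$ means $p_{\FFF}\sim c_{p_{\FFF}}$; the asserted equivalence then follows from the standard facts that a central cut-down of a properly infinite algebra is properly infinite and that two properly infinite projections with equal central support are Murray--von Neumann equivalent.

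The step I expect to need the most care is the homogeneity clause of (II): for \emph{every} semiminimal $\AAA$ there must be a semiminimal $\BBB$ with $\JJJ_{\tII}=\aleph_0\odot(\AAA\sqplus\BBB)$. The plan is first to record the lemma --- immediate from \PRO{transl} together with ``properly infinite projections with equal central support are equivalent'' --- that for any $\XXX,\YYY\in\CDD_N$ one has $\aleph_0\odot\XXX=\aleph_0\odot\YYY$ iff $\XXX\ll\YYY$ and $\YYY\ll\XXX$. It then suffices to produce a type~II$_1$ tuple $\BBB$ with $\AAA\disj\BBB$, $\BBB\ll\JJJ^{(0)}$ and $\JJJ^{(0)}\ll\AAA\sqplus\BBB$, for then $\AAA\sqplus\BBB\ll\JJJ^{(0)}$ (using $\AAA\ll\JJJ^{(0)}$ from the previous paragraph) and $\JJJ^{(0)}\ll\AAA\sqplus\BBB$, whence $\aleph_0\odot(\AAA\sqplus\BBB)=\aleph_0\odot\JJJ^{(0)}=\JJJ_{\tII}$. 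Realizing $\AAA$ and $\JJJ^{(0)}$ as reduced parts of a common tuple with projections $p$ and $q$ in $\MMm$ (so $p\leqsl c_q$), put $z=c_p$ and let $\BBB$ correspond to $(1-z)q$: it is a reduced part of $\JJJ^{(0)}$, hence type~II$_1$ and $\ll\JJJ^{(0)}$; its central support $(1-z)c_q$ is orthogonal to $z=c_p$, so $\AAA\disj\BBB$; and $c_p\vee c_{(1-z)q}=z\vee(1-z)c_q\geqsl zc_q\vee(1-z)c_q=c_q$, so $\JJJ^{(0)}\ll\AAA\sqplus\BBB$. Finally, uniqueness is easy: if $\JJJ$ and $\JJJ'$ both enjoy this property, then $\JJJ=\aleph_0\odot\BBB$ for some semiminimal $\BBB$ (take $\AAA=\zero$), and applying $\JJJ'$'s property to $\BBB$ gives $\JJJ'=\aleph_0\odot(\BBB\sqplus\DDD)\geqsl\aleph_0\odot\BBB=\JJJ$; by symmetry $\JJJ\leqsl\JJJ'$ too, so $\JJJ=\JJJ'$ by \PRO{order}.
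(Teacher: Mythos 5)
Your overall route coincides with the paper's: part (I) is read off from \LEM{fin} and \THM{minimal}, and $\JJJ_{\tII}$ is built as $\aleph_0\odot{}$ of the regular sum of a maximal family of mutually unitarily disjoint separable semiminimal $N$-tuples; most of the detail you supply is sound and is exactly what the paper leaves to the reader. There is, however, one genuine error of justification. Twice you invoke, as a standard fact, that ``two properly infinite projections with equal central support are Murray--von Neumann equivalent,'' and from it you derive the lemma that $\aleph_0\odot\XXX=\aleph_0\odot\YYY$ iff $\XXX\ll\YYY\ll\XXX$ for \emph{arbitrary} $\XXX,\YYY\in\CDD_N$. Both statements are false in the nonseparable setting this theorem lives in: in $\BBb(\HHh)$ with $\dim\HHh=\aleph_1$ the projections of rank $\aleph_0$ and of rank $\aleph_1$ are properly infinite with central support $1$ yet inequivalent; correspondingly, for the identity operators on Hilbert spaces of dimensions $\aleph_0$ and $\aleph_1$ one has $\XXX\ll\YYY\ll\XXX$ but $\aleph_0\odot\XXX\neq\aleph_0\odot\YYY$. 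This failure is precisely why the paper introduces $E_{\omega}(\MMm)$ and steering projections, and why \COR{ll} carries the hypothesis $\alpha\geqsl\max(\dim(\AAA),\dim(\BBB))$.

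The gap is local and repairable. In every place you use these claims the relevant projections are of the form $\omega\odot(\textup{finite})$ --- the identity of $\WWw'(\jJJ_{\tII})$, the projections representing $\aleph_0\odot(\AAA\sqplus\BBB)$ and $\aleph_0\odot\JJJ^{(0)}$ with $\AAA\sqplus\BBB$ and $\JJJ^{(0)}$ semiminimal, and (after cutting down by central supports) the projections $p_{\EEE},p_{\FFF}$ in part (II)(a) --- so they lie in $E_{\omega}$ of the ambient algebra, and points (a) and (b) of \LEM{omega}, rather than the false general principle, give exactly the equivalences you need; this is also the paper's intended route (``it suffices to apply \LEM{omega}''). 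Two smaller points: semifiniteness plus the absence of a type I part gives type II, not type II$_{\infty}$, so you should either note that $\WWw'(\alpha\odot\bBB)$ is properly infinite for infinite $\alpha$ or observe that the trace argument proving \LEM{fin} works verbatim on a type II$_1$ central summand; and in the converse direction of (I) you should first pass to the type II$_{\infty}$ central summand of a mixed type II tuple before invoking the failure of condition (ii) of \LEM{fin}.
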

\begin{proof}
Point (I) follows from \LEMp{fin} and \THM{minimal} from which we infer that $\WWw'(\aAA)$ is type II for every $\AAA \in
\SsS\MmM_N$ (because every semiminimal $N$-tuple is unitarily disjoint from $\JJJ_I \sqplus \JJJ_{\tIII}$). To prove (II),
proceed similarly as in the proof of \THM{minimal}. Take a maximal family $\AaA$ of mutually unitarily disjoint nontrivial
members of $\SsS\MmM_N$ whose representatives act in separable spaces and denote by $\SSS(\AaA)$ its direct sum. Next put
$\JJJ_{\tII} = \aleph_0 \odot \SSS(\AaA)$. Check that $\SSS(\AaA) \in \SsS\MmM_N$ for every such $\AaA$. Further, show that
for two maximal families $\AaA$ and $\AaA'$ one has $\SSS(\AaA) \ll \SSS(\AaA') \ll \SSS(\AaA)$ and consequently,
by the definition of semiminimality, $\aleph_0 \odot \SSS(\AaA') = \aleph_0 \odot \SSS(\AaA)$. Having this, one easily
shows the uniqueness of $\JJJ_{\tII}$ and all suitable properties of it. (For example, if $\EEE = 2 \odot \EEE$, then $\EEE
= \aleph_0 \odot \EEE$ and it suffices to apply \LEMP{omega}.)
\end{proof}

The reader should notice that $\JJJ_{\tII}$ corresponds to the steering projection of a type II$_\infty$ $\WWw^*$-algebra.

\begin{rem}{J2}
Point (II) of \THM{semiminimal} implies that $\JJJ_{\tII}$ is the greatest element of the class $\SsS\MmM_N^{\infty} =
\{\aleph_0 \odot \AAA\dd\ \AAA \in \SsS\MmM_N\}$ (and hence $\SsS\MmM_N^{\infty}$ is a set) and that for any $\AAA, \BBB
\in \SsS\MmM_N^{\infty}$, $\AAA \leqsl \BBB \iff \AAA \leqsl^s \BBB$.
\end{rem}

Let us denote by $\JJJ$ the $N$-tuple $\JJJ_I \sqplus \JJJ_{\tII} \sqplus \JJJ_{\tIII}$. We call $\JJJ$ the \textit{unity}
of $\CDD_N$. Since every $\WWw^*$-algebra admits a decomposition into type I, II and III parts, this yields

\begin{pro}{unity}
For every $\AAA \in \CDD_N$, $\AAA \ll \JJJ$.
\end{pro}

\begin{rem}{continuum}
It may be worthwhile to note that $\dim(\JJJ_i) = 2^{\aleph_0}$ for $i = I,\tII,\tIII$. We shall prove this
later (see \CORP{continuum}). We conclude from this and \PRO{unity} that for an infinite cardinal $\alpha$ there exists
$\AAA \in \CDD_N$ such that $\dim(\AAA) = \alpha$ and $\XXX \leqsl \AAA$ whenever $\dim(\XXX) \leqsl \alpha$ iff $\alpha
\geqsl 2^{\aleph_0}$. If the latter happens, such $\AAA$ is of course unique and one may check that $\AAA = \alpha \odot
\JJJ$.
\end{rem}

In the sequel we shall also need the following

\begin{pro}{count}
For every nontrivial $\AAA \leqsl \JJJ$ there is $\BBB \leqsl^s \AAA$ such that $0 < \dim(\BBB) \leqsl \aleph_0$.
\end{pro}
\begin{proof}
By \THMp{common}, there is nontrivial $\BBB_0 \leqsl \AAA$ such that $\bBB_0$ acts in a separable Hilbert space. We may
assume that $\BBB_0 \leqsl \JJJ_i$ for some $i \in \{I,\tII,\tIII\}$. If $i \neq \tII$, we automatically have $\BBB_0
\leqsl^s \AAA$; while when $i = \tII$, it suffices to notice that $\aleph_0 \odot \BBB_0 \leqsl^s \aleph_0 \odot \AAA$
(by \eqref{eqn:J2}) and to apply (PR6) \pREF{PR6} to find $\BBB \leqsl^s \AAA$ with $\aleph_0 \odot \BBB = \aleph_0 \odot
\BBB_0$.
\end{proof}

\begin{exm}{MF}
When $N=1$, one may check that a bounded normal operator on a separable Hilbert space is multiplicity free iff it is
$*$-cyclic (an operator $T \in \BBb(\HHh)$ is $*$-cyclic iff there is $x \in \HHh$ for which the linear span of $\{x\} \cup
\{S_1 \ldots S_m x\dd\ m \geqsl 1,\ S_1,\ldots,S_m \in \{T,T^*\}\}$ is dense in $\HHh$). Taking this into account,
one may ask whether every $*$-cyclic type I operator is multiplicity free. As this simple example shows, it is untrue.
Let $T = \begin{pmatrix}0 & 0\\1 & 0\end{pmatrix}$ and $S = T \oplus T$. Of course, $S \notin \MmM\FfF_1$. However, $S$ is
$*$-cyclic. (For $u = (1,0,0,1)$, $S u = (0,1,0,0)$, $S^* u = (0,0,1,0)$ and $S^*S u = (1,0,0,0)$.)
\end{exm}

\SECT{Unities of ideals in $\CDD_N$}

Reformulating conditions (ID1)--(ID4) \pREF{ID} into the terms of $\CDD_N$, we obtain the notion of an ideal in $\CDD_N$.
Equivalently, a nonempty class $\AaA \subset \CDD_N$ is an ideal provided $\AaA$ is order-complete (i.e. $\bigvee \FfF \in
\AaA$ for every nonempty set $\FfF \subset \AaA$) and $\mM \odot \AAA \in \AaA$ whenever $\mM$ is a cardinal and $\AAA
\leqsl \BBB$ for some $\BBB \in \AaA$.\par
\THMp{main} asserts that for every ideal $\AaA \subset \CDD_N$ and $\XXX \in \CDD_N$ there is unique $\YYY \in \AaA$ such
that $\YYY \leqsl^s \XXX$ and $\XXX \sqminus \YYY \in \AaA^{\perp}$. We shall denote this unique $\YYY$
by $\EEE(\XXX|\AaA)$. Similarly, if $\AAA$ is any member of $\CDD_N$, $\EEE(\XXX | \AAA) := \EEE(\XXX | \{\BBB\dd\
\BBB \ll \AAA\})$. $\EEE(\XXX | \AAA)$ is Ernest's $\AAA$-shadow of $\XXX$ (see \cite[Definition~2.13]{e}).\par
One may easily verify that
$$
\EEE\bigl(\bigoplus_{s \in S} \XXX^{(s)} | \AaA\bigr) = \bigoplus_{s \in S} \EEE(\XXX^{(s)} | \AaA)
$$
for every ideal $\AaA \subset \CDD_N$ and any family $\{\XXX^{(s)}\}_{s \in S} \subset \CDD_N$. We shall use the above
property several times in the sequel.\par
Let $\AaA$ be an ideal in $\CDD_N$. The $N$-tuple $\JJJ(\AaA) := \EEE(\JJJ | \AaA)$ is uniquely determined by $\AaA$
and is called the \textit{unity} of $\AaA$. \PRO{unity} implies that

\begin{pro}{unitid}
For every ideal $\AaA$ in $\CDD_N$, $$\AaA = \{\XXX \in \CDD_N\dd\ \XXX \ll \JJJ(\AaA)\},$$ $\JJJ(\AaA) \leqsl^s \JJJ$ and
$\JJJ(\AaA) = \bigvee \{\AAA \leqsl \JJJ\dd\ \AAA \in \AaA\}$.
\end{pro}

\begin{cor}{1id-1}
There is a one-to-one correspondence between ideals in $\CDD_N$ and members $\AAA$ of $\CDD_N$ such that $\AAA \leqsl^s
\JJJ$. The correspondence is established by assignments $\AaA \mapsto \JJJ(\AaA)$ and $\AAA \mapsto \{\BBB\dd\ \BBB \ll
\AAA\}$. In particular, there is at most $2^{2^{\aleph_0}}$ ideals in $\CDD_N$.
\end{cor}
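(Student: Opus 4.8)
The plan is to read the corollary off from \PRO{unitid} with only a small amount of extra bookkeeping. Write $\Phi$ for the assignment $\AaA \mapsto \JJJ(\AaA)$, defined on the ideals of $\CDD_N$, and $\Psi$ for the assignment $\AAA \mapsto \{\BBB \in \CDD_N\dd\ \BBB \ll \AAA\}$, defined on those $\AAA \in \CDD_N$ with $\AAA \leqsl^s \JJJ$. By \PRO{unitid}, $\Phi$ indeed takes values in $\{\AAA \in \CDD_N\dd\ \AAA \leqsl^s \JJJ\}$, and moreover $\Psi(\Phi(\AaA)) = \{\BBB\dd\ \BBB \ll \JJJ(\AaA)\} = \AaA$, which is exactly the displayed identity of \PRO{unitid}. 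Thus $\Psi \circ \Phi$ is the identity on the collection of ideals of $\CDD_N$, and it remains to see that $\Psi$ takes values among ideals and that $\Phi \circ \Psi$ is the identity on $\{\AAA\dd\ \AAA \leqsl^s \JJJ\}$.

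For any $\AAA \in \CDD_N$ the class $\AaA_{\AAA} := \{\BBB \in \CDD_N\dd\ \BBB \ll \AAA\}$ is an ideal --- indeed it is the smallest ideal containing $\aAA$ (it is closed under unitary equivalence and reduced parts by transitivity of $\leqsl$, and under arbitrary direct sums by (PR2); cf. also \PRO{id}(a)). Now fix $\AAA$ with $\AAA \leqsl^s \JJJ$, so that $\JJJ \sqminus \AAA$ is defined and $\JJJ = \AAA \sqplus (\JJJ \sqminus \AAA)$. I would verify that $\AAA$ has the three properties characterizing $\EEE(\JJJ | \AaA_{\AAA})$: first $\AAA \in \AaA_{\AAA}$, since $\AAA \ll \AAA$; second $\AAA \leqsl^s \JJJ$, by hypothesis; and third $\JJJ \sqminus \AAA \in \AaA_{\AAA}^{\perp}$. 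The last point is where (PR2) does the real work: from $\AAA \disj (\JJJ \sqminus \AAA)$ one gets $\alpha \odot \AAA \disj (\JJJ \sqminus \AAA)$ for every cardinal $\alpha$, whence every $\BBB \ll \AAA$ --- so $\BBB \leqsl \alpha \odot \AAA$ for a suitable $\alpha$ --- is unitarily disjoint from $\JJJ \sqminus \AAA$; that is, $\JJJ \sqminus \AAA \in \AaA_{\AAA}^{\perp}$. Since $\EEE(\JJJ | \AaA_{\AAA})$ is by definition the \emph{unique} $N$-tuple with these three properties (this is the uniqueness clause of \THM{main}), we conclude $\Phi(\Psi(\AAA)) = \JJJ(\AaA_{\AAA}) = \EEE(\JJJ | \AaA_{\AAA}) = \AAA$. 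Hence $\Phi$ and $\Psi$ are mutually inverse, and in particular bijective, giving the stated correspondence.

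For the cardinality assertion, every unity satisfies $\JJJ(\AaA) \leqsl^s \JJJ$, hence $\dim(\JJJ(\AaA)) \leqsl \dim(\JJJ) = \dim(\JJJ_I) + \dim(\JJJ_{\tII}) + \dim(\JJJ_{\tIII}) \leqsl 2^{\aleph_0}$ by \THM{minimal}(VI) and \THM{semiminimal}(II). It therefore suffices to bound the number of unitary equivalence classes of $N$-tuples acting in a Hilbert space of dimension $\leqsl 2^{\aleph_0}$: choosing one space $\HHh_{\kappa}$ of each dimension $\kappa \leqsl 2^{\aleph_0}$, a closed densely defined operator on $\HHh_{\kappa}$ is determined by its $\bB$-transform (point (E) of \PRO{b}), a bounded operator, and $\card(\BBb(\HHh_{\kappa})) \leqsl 2^{2^{\aleph_0}}$ because a bounded operator is determined by its values on a dense subset of cardinality $\leqsl 2^{\aleph_0}$. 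So there are at most $2^{2^{\aleph_0}}$ such $N$-tuples, hence at most $2^{2^{\aleph_0}}$ members $\AAA$ of $\CDD_N$ with $\AAA \leqsl^s \JJJ$, hence at most $2^{2^{\aleph_0}}$ ideals in $\CDD_N$.

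The only step that is not purely formal is the verification $\JJJ \sqminus \AAA \in \AaA_{\AAA}^{\perp}$: one needs the complementary tuple to be unitarily disjoint not merely from $\AAA$ but from the whole ideal it generates, which is precisely the content of the stability of unitary disjointness under arbitrary direct sums, (PR2). Everything else is routine manipulation of \PRO{unitid} together with the elementary counting above.
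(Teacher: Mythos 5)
Your proof is correct and is essentially the argument the paper intends: the corollary is stated without proof as an immediate consequence of \PRO{unitid}, and your verification that $\AAA$ itself satisfies the three defining properties of $\EEE(\JJJ \mid \{\BBB\dd \BBB \ll \AAA\})$ (using (PR2) and the uniqueness clause of \THM{main}) is exactly the missing half of the bijection, with the cardinality count via the $\bB$-transform matching \LEM{continuum}-style bookkeeping. The only cosmetic point is that closure of $\{\BBB\dd \BBB\ll\AAA\}$ under arbitrary direct sums is most directly seen from $\bigoplus_s(\alpha_s\odot\AAA)=(\sum_s\alpha_s)\odot\AAA$ rather than from (PR2) itself, but this does not affect the argument.
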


\begin{exm}{normal}
Let $\NnN_N \subset \CDD_N$ be the ideal of all normal $N$-tuples (see \EXS{id}--(E), \PREF{E}). Since $\WWw''(\mMM)$
is commutative for every $\MMM \in \NnN_N$, $\WWw'(\mMM)$ is type I and hence $\MMM \ll \JJJ_I$. Here we shall give
a description for $\JJJ(\NnN_N)$. First of all, $\MMM \leqsl \JJJ$ iff $\WWw'(\mMM)$ is commutative (provided $\MMM \neq
\zero$ and $\MMM \in \NnN_N$). When $\mMM$ acts in a separable Hilbert space, the latter is equivalent to the fact that
$\mMM$ is $*$-cyclic. That is, there has to exist $x \in \overline{\DdD}(\mMM)$ such that the smallest reducing subspace
for $\mMM$ which contains $x$ coincides with $\overline{\DdD}(\mMM)$. (Indeed, if $\MMM \in \MmM\FfF_N \cap \NnN_N$
is such that $0< \dim(\MMM) \leqsl \aleph_0$, then both $\WWw'(\mMM)$ and $\WWw''(\mMM)$ are commutative
which means that $\WWw''(\mMM)$ is a MASA and consequenty $\WWw''(\mMM)$ is cyclic or, equivalently, $\mMM$ is $*$-cyclic.
Conversely, if $\MMM \in \NnN_N$ and $\mMM$ is $*$-cyclic, then $\mMM$ is unitarily equivalent to $\mMM_{\mu}$ for some
probabilistic Borel measure $\mu$ on $\CCC^N$ where $\mMM_{\mu} = (M_{z_1},\ldots,M_{z_N})$ and $M_{z_j}$ is
the multiplication operator by $z_j$ in $L^2(\mu)$. One may show that $\WWw'(\mMM_{\mu})$ coincides with the algebra
of all multiplication operators by members of $L^{\infty}(\mu)$ and hence $\MMM \in \MmM\FfF_N$.) Having this, one shows
that $\JJJ(\NnN_N)$ may be represented as follows. Take a maximal family $\{\mu_s\}_{s \in S}$ of mutually orthogonal
probabilistic Borel measures on $\CCC^N$. For each $s \in S$ let $\mMM^{(s)} = \mMM_{\mu_s}$ (defined as before). One may
check that $\JJJ(\NnN_N) = \bigsqplus_{s \in S} \MMM^{(s)}$. Moreover, for two probabilistic Borel measures $\mu$ and
$\lambda$ on $\CCC^N$: (a) $\mMM_{\mu} \leqsl \mMM_{\lambda} \iff \mu \ll \lambda$; (b) $\mMM_{\mu} \equiv \mMM_{\lambda}
\iff \mu \ll \lambda \ll \mu$; (c) $\mMM_{\mu} \disj \mMM_{\lambda} \iff \mu \perp \lambda$.
Similar (and more detailed) construction will appear in Section~22.
\end{exm}

Theorems \ref{thm:minimal} and \ref{thm:semiminimal} show that for $i \in \{I,\tII,\tIII\}$ the ideal $\IiI_i = \{\XXX \in
\CDD_N\dd\ \XXX \ll \JJJ_i\}$ consists of all $N$-tuples of type $i$.

\SECT{Decomposition relative to $\JJJ$}

Recall that $\Lambda_I = \Card$, $\Lambda_{\tII} = \Card_{\infty} \cup \{0,1\}$ and $\Lambda_{\tIII} = \Card_{\infty} \cup
\{0\}$. For simplicity, let $\Upsilon = \{(i,\alpha)\dd\ i \in \{I,\tII,\tIII\},\ \alpha \in \Lambda_i\}$ and $\Upsilon_*
= \Upsilon \setminus \{(\tII,1)\}$.

\begin{thm}{decomp}
For every $\AAA \in \CDD_N$ there are a unique regular collection $\{\EEE^i_{\alpha}(\AAA)\dd\ (i,\alpha) \in \Upsilon\}$
and unique $\EEE_{sm}(\AAA) \in \CDD_N$ such that for $i \in \{I,\tII,\tIII\}$,
$$
\JJJ_i = \Bigsqplus_{\alpha \in \Lambda_i} \EEE^i_{\alpha}(\AAA),
$$
$\EEE_{sm}(\AAA)$ is semiminimal and $\EEE^{\tII}_1(\AAA) = \aleph_0 \odot \EEE_{sm}(\AAA)$, and
\begin{equation}\label{eqn:main}
\AAA = \EEE_{sm}(\AAA) \sqplus \Bigsqplus_{(i,\alpha) \in \Upsilon_*} \alpha \odot \EEE^i_{\alpha}(\AAA).
\end{equation}
What is more, $\EEE_{sm}(\AAA) = \AAA \wedge \EEE^{\tII}_1(\AAA)$ and $\EEE^i_{\alpha}(\AAA)$'s may be characterized
as follows:
\begin{equation}\label{eqn:Eii1}
\EEE^{\tII}_1(\AAA) = \bigvee \{\EEE \leqsl \JJJ_{\tII}|\ \EEE \ll \AAA,\ \forall\, \FFF \leqsl \EEE,\ \FFF \neq
\zero\dd\ \aleph_0 \odot \FFF \not\leqsl \AAA\}
\end{equation}
and for $(i,\alpha) \in \Upsilon_*$,
$$
\EEE^i_{\alpha}(\AAA) = \bigvee \{\EEE \leqsl \JJJ_i|\ \alpha \odot \EEE \leqsl \AAA,\ \forall\, \FFF \leqsl \EEE,\
\FFF \neq \zero\dd\ \alpha^+ \odot \FFF \not\leqsl \AAA\}.
$$
\end{thm}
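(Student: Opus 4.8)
The plan is to transport the statement to the dimension-theoretic decomposition \THM{deco}, using \PRO{transl} as a dictionary. By \PRO{unity}, $\AAA \ll \JJJ$, so fix an infinite cardinal $\kappa$ with $\AAA \leqsl \kappa \odot \JJJ$; let $\tTT$ represent $\kappa \odot \JJJ$ and put $\MMm = \WWw'(\tTT)$. Replacing $\tTT$ by its $\bB$-transform (permitted by (BT3)--(BT4)) one identifies $\MMm$ with the von Neumann tensor product $\BBb(\ell^2(\kappa)) \otimes \WWw'(\jJJ)$, where $\jJJ$ represents $\JJJ$; the factor $\BBb(\ell^2(\kappa))$ is properly infinite, hence so is $\MMm$, and its splitting into pure type I, II$_{\infty}$ and III parts corresponds to the regular splitting $\JJJ = \JJJ_I \sqplus \JJJ_{\tII} \sqplus \JJJ_{\tIII}$ (by Theorems~\ref{thm:minimal}, \ref{thm:semiminimal} and the surrounding remarks, $\WWw'(\jJJ_I)$ is abelian, $\WWw'(\jJJ_{\tII})$ is type II$_{\infty}$, $\WWw'(\jJJ_{\tIII})$ is type III). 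A routine verification shows that for a rank-one projection $e \in \BBb(\ell^2(\kappa))$ the projection $p := e \otimes 1_{\overline{\DdD}(\jJJ)}$ is a steering projection of $\MMm$, and that under the bijection $\red(\tTT) \ni E \mapsto P_E \in E(\MMm)$ together with \PRO{transl} it corresponds to $\JJJ$ while its type-$i$ part $z^i$ corresponds to $\JJJ_i$.

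Next I would apply \THM{deco}. Choose $E \in \red(\tTT)$ with $\tTT\bigr|_E \equiv \aAA$ (possible as $\AAA \leqsl \tTT$) and set $q = P_E$. Feeding $\MMm$, $p$, $p\MMm p$, $z^I,z^{\tII},z^{\tIII}$ and $q$ into \THM{deco} yields the unique orthogonal system $\{z^i_{\alpha}(q)\}_{(i,\alpha) \in \Upsilon}$ of central projections of $p\MMm p$. Since each $z^i_{\alpha}(q) \in E(\MMm)$, it equals $P_{E^i_{\alpha}}$ for a unique $E^i_{\alpha} \in \red(\tTT)$; let $\EEE^i_{\alpha}(\AAA)$ be the element of $\CDD_N$ represented by $\tTT\bigr|_{E^i_{\alpha}}$, and $\EEE_{sm}(\AAA)$ the one represented by $\tTT\bigr|_F$ with $P_F = c_{z^{\tII}_1(q)} q$. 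Mutually orthogonal projections of $\ZZz(p\MMm p)$ are mutually centrally orthogonal in $\MMm$, so by \PRO{transl}(d) the $\EEE^i_{\alpha}(\AAA)$ form a regular collection; and $\sum_{\alpha \in \Lambda_i} z^i_{\alpha}(q) = z^i$ together with $z^i \leftrightarrow \JJJ_i$ gives $\JJJ_i = \bigsqplus_{\alpha \in \Lambda_i} \EEE^i_{\alpha}(\AAA)$. As $\alpha \odot$ on $\CDD_N$ mirrors $\alpha \odot$ on projections (a direct consequence of \PRO{transl}(a)), the relation $z^{\tII}_1(q) \sim \omega \odot c_{z^{\tII}_1(q)} q$ of \THM{deco} becomes $\EEE^{\tII}_1(\AAA) = \aleph_0 \odot \EEE_{sm}(\AAA)$; and since $c_{z^{\tII}_1(q)} q$ is a finite projection of the type-II part, the corresponding $\WWw'$ is type II$_1$ or trivial, so $\EEE_{sm}(\AAA)$ is semiminimal by \THM{semiminimal}(I) (equivalently \LEM{fin}).

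For the decomposition \eqref{eqn:main}: because $\sum_{(i,\alpha) \in \Upsilon} z^i_{\alpha}(q) = p$ and $c_p = 1$, the central supports $c_{z^i_{\alpha}(q)}$ are mutually orthogonal with sum $1$, whence $q = \bigoplus_{(i,\alpha)} c_{z^i_{\alpha}(q)} q$; combining this with $c_{z^i_{\alpha}(q)} q \sim \alpha \odot z^i_{\alpha}(q)$ for $(i,\alpha) \in \Upsilon_*$ from \THM{deco} and translating through \PRO{transl} gives \eqref{eqn:main}, which is regular since the $c_{z^i_{\alpha}(q)}$ are mutually orthogonal. The identity $\EEE_{sm}(\AAA) = \AAA \wedge \EEE^{\tII}_1(\AAA)$ comes from computing this meet in $(\CDD_N,\leqsl)$ via \THM{complete} and \PRO{transl}: $c_{z^{\tII}_1(q)} q$ is a central projection of $q\MMm q$ lying below $q$, with $c_{z^{\tII}_1(q)} q \preccurlyeq z^{\tII}_1(q)$ and $c_{z^{\tII}_1(q)} q \leqsl c_{z^{\tII}_1(q)}$, and a short argument with \PRO{transl} identifies it with the greatest lower bound of $q$ and $z^{\tII}_1(q)$. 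Finally, the join formulas \eqref{eqn:Eii1} and the analogous one for $(i,\alpha) \in \Upsilon_*$ are the images under \PRO{transl} of the join-characterizations of $z^{\tII}_1(q)$ and $z^i_{\alpha}(q)$ in \THM{deco}, once internal conditions such as ``$c_v q \neq 0$ for every nonzero $v \leqsl w$'' are rewritten (using \PRO{transl}(b),(d),(e)) as $\FFF \ll \AAA$ on the relevant $\EEE$ and ``$q$ contains no copy of $\omega \odot v$'' as $\aleph_0 \odot \FFF \not\leqsl \AAA$.

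For uniqueness, suppose $(\{\FFF^i_{\alpha}\},\FFF_{sm})$ is another such pair. Each $\FFF^i_{\alpha} \leqsl \JJJ_i \leqsl \tTT$ corresponds to a subprojection $w^i_{\alpha}$ of $z^i$; regularity and $\JJJ_i = \bigsqplus_{\alpha} \FFF^i_{\alpha}$ force the $w^i_{\alpha}$ to be mutually centrally orthogonal central projections of $p\MMm p$ with sum $z^i$, and translating \eqref{eqn:main} together with $\FFF^{\tII}_1 = \aleph_0 \odot \FFF_{sm}$ shows they satisfy the defining relations of \THM{deco}; hence $w^i_{\alpha} = z^i_{\alpha}(q)$ by its uniqueness, so $\FFF^i_{\alpha} = \EEE^i_{\alpha}(\AAA)$, and then $\FFF_{sm} = \EEE_{sm}(\AAA)$ by cancellation of the common regular complement in $\AAA = \FFF_{sm} \sqplus \XXX = \EEE_{sm}(\AAA) \sqplus \XXX$ (cf.\ (PR5)). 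I expect the only real work to be the bookkeeping of the first paragraph --- exhibiting the steering projection $p$ cleanly and verifying $z^i \leftrightarrow \JJJ_i$ --- together with the faithful translation of the (rather intricate) clauses of \THM{deco} into the language of $\CDD_N$; no genuinely new von Neumann algebra input is required beyond \THM{deco} and \PRO{transl}.
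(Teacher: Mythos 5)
Your proposal is correct and is essentially the paper's own proof: the paper likewise fixes an infinite $\gamma$ with $\AAA \leqsl \gamma \odot \JJJ$, observes via \PRO{transl} that $\jJJ$ corresponds to a steering projection of $\WWw'(\gamma \odot \jJJ)$, and then applies \THM{deco}, reading off semiminimality of $\EEE_{sm}(\AAA)$ from \THM{semiminimal} and translating $p \sim \alpha \odot q$ into $\XXX = \alpha \odot \YYY$. Your write-up simply fills in the bookkeeping (the verification that $\JJJ$ yields a steering projection and the clause-by-clause translation) that the paper leaves implicit.
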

\begin{proof}
Thanks to \PRO{unity}, there is an infinite cardinal $\gamma$ such that $\AAA \leqsl \gamma \odot \JJJ =: \BBB$. Put
$\MMm = \WWw'(\bBB)$, observe that $\jJJ$ corresponds (by \PROP{transl}) to a steering projection of $\MMm$ and apply
\THMp{deco}. (Use \THM{semiminimal} to deduce that suitable $\EEE_{sm}(\AAA)$ is semiminimal. Note that if $\XXX$
and $\YYY$ correspond, by \PRO{transl}, to projections $p$ and $q$, then $p \sim \alpha \odot q$ is equivalent to $\XXX =
\alpha \odot \YYY$.)
\end{proof}

The system $\{\EEE^i_{\alpha}(\AAA)\dd\ (i,\alpha) \in \Upsilon\}$ appearing in \THM{decomp} is said to be
the \textit{partition of unity induced by $\AAA$}. (In general, a \textit{partition of unity} is any regular collection
$\{\EEE^{(j)}\}_{j \in I}$ such that $\JJJ = \bigsqplus_{j \in I} \EEE^{(j)}$. Note that in that case $\EEE^{(j)} \leqsl^s
\JJJ$ for each $j \in I$.)

\begin{rem}{Boole}
\THM{decomp} may be formulated in an equivalent manner that after fixing a representative $\jJJ$ for $\JJJ$ for every
$\aAA \in \CDDc_N$ there are unique systems $\{H^i_{\alpha}\dd\ (i,\alpha) \in \Upsilon\} \subset \RED(\jJJ)$ and
$\{K^i_{\alpha}\dd\ (i,\alpha) \in \Upsilon\} \subset \RED(\aAA)$ such that $\overline{\DdD}(\jJJ_i) =
\bigoplus_{\alpha \in \Lambda_i} H^i_{\alpha}$ for $i \in \{I,\tII,\tIII\}$; $\overline{\DdD}(\aAA) =
\bigoplus_{(i,\alpha) \in \Upsilon} K^i_{\alpha}$; $\WWw'(\aAA\bigr|_{K^{\tII}_1})$ is type II$_1$, $\aleph_0 \odot
\aAA\bigr|_{K^{\tII}_1} \equiv \jJJ\bigr|_{H^{\tII}_1}$ and for every $(i,\alpha) \in \Upsilon_*$,
$$
\aAA\bigr|_{K^i_{\alpha}} \equiv \alpha \odot \jJJ\bigr|_{H^i_{\alpha}}.
$$
(In particular, $K^I_0$, $K^{\tII}_0$ and $K^{\tIII}_0$ are trivial.)
\end{rem}

As an immediate consequence of \PROp{ord} we obtain

\begin{pro}{leqsl}
For any $\AAA, \BBB \in \CDD_N$, $\AAA \leqsl \BBB$ iff $\EEE^i_{\alpha}(\AAA) \disj \EEE^i_{\beta}(\BBB)$ whenever
$(i,\alpha),(i,\beta) \in \Upsilon$ and $\alpha > \beta$; and $\EEE_{sm}(\AAA) \wedge \EEE^{\tII}_1(\BBB) \leqsl
\EEE_{sm}(\BBB)$.
\end{pro}

One may also show that

\begin{pro}{leqsls}
For any $\AAA, \BBB \in \CDD_N$, $\AAA \leqsl^s \BBB$ iff $\EEE^i_{\alpha}(\AAA) \leqsl \EEE^i_{\alpha}(\BBB)$ whenever
$(i,\alpha) \in \Upsilon$ is such that $\alpha \neq 0$, and $\EEE_{sm}(\AAA) \leqsl^s \EEE_{sm}(\BBB)$.
\end{pro}

The proofs of Propositions \ref{pro:leqsl} and \ref{pro:leqsls} are skipped.\par
Another interesting consequences of \THM{decomp} are stated below.

\begin{cor}{ll}
Let $\AAA, \BBB \in \CDD_N$ and let $\alpha$ be an arbitrary infinite cardinal number such that $\alpha \geqsl
\max(\dim(\AAA),\dim(\BBB))$.
\begin{enumerate}[\upshape(I)]
\item $\AAA \ll \BBB \iff \alpha \odot \AAA \leqsl^s \alpha \odot \BBB$.
\item $\AAA \ll \BBB \ll \AAA \iff \alpha \odot \AAA = \alpha \odot \BBB$.
\end{enumerate}
\end{cor}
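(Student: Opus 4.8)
The plan is to deduce (I) from (II) and to prove (II) by transporting the relation `$\ll$' into the partition of unity of \THM{decomp}. For $\XXX\in\CDD_N$ and $i\in\{I,\tII,\tIII\}$ I would set $\GGG^i_\XXX:=\bigsqplus_{\beta\in\Lambda_i\setminus\{0\}}\EEE^i_\beta(\XXX)=\JJJ_i\sqminus\EEE^i_0(\XXX)$; by \THM{decomp} this is a $\leqsl^s$-part of $\JJJ_i$, and $\EEE^i_0(\XXX)$ is unitarily disjoint from every summand on the right of \eqref{eqn:main}, so $\EEE^i_0(\XXX)\disj\XXX$. The two facts I would establish first are: \textup{(a)} if $\dim(\XXX)\leqsl\alpha$ for an infinite cardinal $\alpha$, then $\alpha\odot\XXX=\bigsqplus_{i}\alpha\odot\GGG^i_\XXX$; and \textup{(b)} $\GGG^i_\XXX=\bigvee\{\EEE\leqsl\JJJ_i\dd\ \EEE\ll\XXX\}$. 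For (a): each multiplicity $\beta$ with $\EEE^i_\beta(\XXX)\ne\zero$ satisfies $1\leqsl\beta\leqsl\alpha$ because $\beta\odot\EEE^i_\beta(\XXX)\leqsl\XXX$, so $\alpha\odot(\beta\odot\EEE^i_\beta(\XXX))=\alpha\odot\EEE^i_\beta(\XXX)$; using also $\EEE^{\tII}_1(\XXX)=\aleph_0\odot\EEE_{sm}(\XXX)$ (hence $\alpha\odot\EEE_{sm}(\XXX)=\alpha\odot\EEE^{\tII}_1(\XXX)$) and distributing $\alpha\odot$ over the regular sum in \eqref{eqn:main} yields (a). For (b): each $\EEE^i_\beta(\XXX)$ with $\beta\geqsl1$ is $\leqsl\XXX$, hence $\ll\XXX$; conversely, if $\EEE\leqsl\JJJ_i$ with $\EEE\ll\XXX$, then $\EEE\disj\EEE^i_0(\XXX)$ (because $\XXX\disj\EEE^i_0(\XXX)$ and $\EEE\leqsl\delta\odot\XXX$ for some $\delta$, via (PR2)), so $\EEE\leqsl\GGG^i_\XXX$ by \PRO{leqsl-leqsls}(A) applied to $\JJJ_i=\GGG^i_\XXX\sqplus\EEE^i_0(\XXX)$.

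Next I would prove (II). The implication `$\Leftarrow$' is immediate: $\AAA\leqsl\alpha\odot\AAA=\alpha\odot\BBB$ gives $\AAA\ll\BBB$, and symmetrically $\BBB\ll\AAA$. For `$\Rightarrow$', assume $\AAA\ll\BBB\ll\AAA$. Since `$\ll$' is transitive (immediate from $\YYY\leqsl\ZZZ\Rightarrow\delta\odot\YYY\leqsl\delta\odot\ZZZ$), fact (b) gives $\GGG^i_\AAA\leqsl\GGG^i_\BBB$ and $\GGG^i_\BBB\leqsl\GGG^i_\AAA$ for each $i$, hence $\GGG^i_\AAA=\GGG^i_\BBB$ by antisymmetry of `$\leqsl$' (\PRO{order}). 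Because $\dim(\AAA),\dim(\BBB)\leqsl\alpha$, fact (a) then gives $\alpha\odot\AAA=\bigsqplus_i\alpha\odot\GGG^i_\AAA=\bigsqplus_i\alpha\odot\GGG^i_\BBB=\alpha\odot\BBB$.

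Finally I would derive (I). Its `$\Leftarrow$' is immediate: $\alpha\odot\AAA\leqsl^s\alpha\odot\BBB$ gives $\AAA\leqsl\alpha\odot\AAA\leqsl\alpha\odot\BBB$, so $\AAA\ll\BBB$. For `$\Rightarrow$' I would reduce to (II): let $\AaA:=\{\XXX\in\CDD_N\dd\ \XXX\ll\AAA\}$, which is easily seen to be an ideal in $\CDD_N$, and put $\ZZZ:=\EEE(\BBB\,|\,\AaA)$. By \THM{main} (in the form it takes in $\CDD_N$), $\ZZZ\leqsl^s\BBB$, $\ZZZ\ll\AAA$, and $\BBB\sqminus\ZZZ$ is unitarily disjoint from $\AAA$; since $\AAA\ll\BBB=\ZZZ\sqplus(\BBB\sqminus\ZZZ)$, (PR1) gives $\AAA\ll\ZZZ$, hence $\AAA\ll\ZZZ\ll\AAA$. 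As $\dim(\ZZZ)\leqsl\dim(\BBB)\leqsl\alpha$, (II) yields $\alpha\odot\AAA=\alpha\odot\ZZZ$; and $\ZZZ\leqsl^s\BBB$ forces $\alpha\odot\ZZZ\leqsl^s\alpha\odot\BBB$ by (PR6), so $\alpha\odot\AAA=\alpha\odot\ZZZ\leqsl^s\alpha\odot\BBB$.

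The only step that will need genuine care is the bookkeeping behind (a) and (b) --- in particular absorbing the `$(\tII,1)$'/semiminimal slot of \THM{decomp} correctly and verifying $\EEE^i_0(\XXX)\disj\XXX$ --- but this is a direct reading of \THM{decomp}; everything else is formal manipulation of `$\leqsl$', `$\leqsl^s$', `$\ll$', `$\odot$' together with \PRO{order}, \THM{main}, \THM{decomp}, \PRO{leqsl-leqsls}, (PR1), (PR2) and (PR6).
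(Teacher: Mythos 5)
Your proof is correct and rests on the same idea as the paper's: for infinite $\alpha\geqsl\dim(\XXX)$ one has $\alpha\odot\XXX=\alpha\odot\EEE$ for some $\EEE\leqsl^s\JJJ$ (your $\bigsqplus_i\GGG^i_\XXX$), the relation `$\ll$' between $N$-tuples becomes `$\leqsl^s$' between these supports, and `$\leqsl^s$' is preserved by $\alpha\odot$ via (PR6). The paper states this in two lines and leaves the bookkeeping implicit; your facts (a) and (b) supply exactly that bookkeeping, and your detour through the $\AaA$-shadow to deduce (I) from (II) is a harmless variation.
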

\begin{proof}
In both the points implication `$\impliedby$' is immediate. Conversely, observe that for each $\XXX \in \CDD_N$
and $(i,\beta) \in \Upsilon$, $\EEE^i_{\beta}(\XXX) = \zero$ provided $\beta > \dim(\XXX)$. This implies
that if $\beta \geqsl \max(\aleph_0,\dim(\XXX))$, then $\beta \odot \XXX = \beta \odot \EEE$ for some
$\EEE \leqsl^s \JJJ$. This notice yields the inverse implication in both points (I) and (II). (Observe that if $\EEE'
\leqsl^s \EEE''$, then $\gamma \odot \EEE' \leqsl^s \gamma \odot \EEE''$ for every cardinal $\gamma$.)
\end{proof}

\begin{cor}{ideal}
A nonempty class $\AaA$ is an ideal iff $\AaA$ satisfies the following three conditions:
\begin{enumerate}[\upshape(a)]
\item for every $\AAA \in \CDD_N$ and $\alpha \in \Card_{\infty}$, $\AAA \in \AaA \iff \alpha \odot \AAA \in \AaA$,
\item whenever $\{\AAA^{(s)}\}_{s \in S} \subset \AaA$ is a regular family of $N$-tuples such that $0 < \dim(\AAA) \leqsl
   \aleph_0$, $\bigsqplus_{s \in S} \AAA^{(s)} \in \AaA$,
\item $\AAA \leqsl^s \BBB$ and $\BBB \in \AaA$ imply $\AAA \in \AaA$.
\end{enumerate}
\end{cor}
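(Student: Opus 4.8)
The plan is to prove both implications, using the description recalled at the start of this section: a nonempty class $\AaA \subset \CDD_N$ is an ideal exactly when it is order-complete and satisfies $\mM \odot \AAA \in \AaA$ for every cardinal $\mM$ and every $\AAA$ with $\AAA \leqsl \BBB$ for some $\BBB \in \AaA$ (in particular, taking $\mM = 1$, $\AaA$ is then closed under $\leqsl$). Assume first $\AaA$ is an ideal. Then (c) is immediate, since $\AAA \leqsl^s \BBB \implies \AAA \leqsl \BBB$ and $\AaA$ is $\leqsl$-closed. For (a), the forward implication is the case $\mM = \alpha$ of the defining property, and the backward one follows from $\AAA \leqsl \alpha \odot \AAA$ together with $\leqsl$-closedness. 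For (b), if $\{\AAA^{(s)}\}_{s \in S}$ is regular then $\bigsqplus_s \AAA^{(s)} = \bigvee_s \AAA^{(s)}$ by \COR{disjoint}, which lies in $\AaA$ by order-completeness; the dimension restriction in (b) is not needed here.

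Now suppose $\AaA$ is nonempty and satisfies (a)--(c). Since $\zero \leqsl^s \BBB$ for any $\BBB \in \AaA$, condition (c) gives $\zero \in \AaA$. The key auxiliary step is: $\XXX \ll \YYY \in \AaA \implies \XXX \in \AaA$. Indeed, choose an infinite cardinal $\gamma \geqsl \max(\dim\XXX,\dim\YYY)$; by \COR{ll} one gets $\gamma \odot \XXX \leqsl^s \gamma \odot \YYY$, while $\gamma \odot \YYY \in \AaA$ by (a), so $\gamma \odot \XXX \in \AaA$ by (c), whence $\XXX \in \AaA$ by (a) again. In particular $\AaA$ is closed under $\leqsl$, and since $\mM \odot \AAA \ll \AAA$ for every cardinal $\mM$, the $\mM \odot$-closure property follows at once; so only order-completeness remains to be shown.

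For order-completeness, let $\{\AAA^{(s)}\}_{s \in S} \subset \AaA$ be a set and put $\XXX = \bigvee_s \AAA^{(s)}$, which exists by \THM{complete}. Fix an infinite cardinal $\gamma$ with $\gamma \geqsl \dim\JJJ$ and $\gamma \geqsl \dim\AAA^{(s)}$ for all $s$. Since $\AAA^{(s)} \ll \JJJ$ by \PRO{unity}, \COR{ll} (through the construction in its proof) yields $\EEE_s \leqsl^s \JJJ$ with $\gamma \odot \AAA^{(s)} = \gamma \odot \EEE_s$; as $\gamma \odot \AAA^{(s)} \in \AaA$ by (a), also $\EEE_s \in \AaA$ by (a). Put $\EEE = \bigvee_s \EEE_s$; since every $\EEE_s \leqsl^s \JJJ$, \THM{ords} gives $\EEE \leqsl^s \JJJ$ and $\EEE_s \leqsl^s \EEE$ for all $s$. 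A routine disjointification of the corresponding central projections (via \PRO{transl}) writes $\EEE = \bigsqplus_s \FFF_s$ with $\FFF_s \leqsl^s \EEE_s$, so $\FFF_s \in \AaA$ by (c). Each $\FFF_s$ is $\leqsl^s \JJJ$, so by \PRO{count} and Zorn's lemma it splits as a regular sum $\FFF_s = \bigsqplus_r \GGG_{s,r}$ with $0 < \dim\GGG_{s,r} \leqsl \aleph_0$, and $\GGG_{s,r} \in \AaA$ by (c). Then $\{\GGG_{s,r}\}_{s,r}$ is a regular family of countable-dimensional members of $\AaA$ whose regular sum is $\EEE$, so $\EEE \in \AaA$ by (b). Finally $\AAA^{(s)} \leqsl \gamma \odot \AAA^{(s)} = \gamma \odot \EEE_s \leqsl \gamma \odot \EEE$ for every $s$, hence $\XXX \leqsl \gamma \odot \EEE$, i.e. $\XXX \ll \EEE \in \AaA$; the auxiliary step gives $\XXX \in \AaA$. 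Thus $\AaA$ is order-complete, hence an ideal.

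I expect the order-completeness half of the second implication to be the real obstacle: from the set $\{\AAA^{(s)}\}$ one must manufacture a single $N$-tuple $\EEE \in \AaA$ lying $\ll$-above $\XXX = \bigvee_s \AAA^{(s)}$, and present it in a shape to which (b) applies. One cannot chop $\XXX$ (or even $\bigsqplus_s \EEE_s$) directly into countable pieces and conclude they belong to $\AaA$ — that would be circular — so the chopping must be routed through the mutually unitarily disjoint pieces $\FFF_s$, each dominated under $\leqsl^s$ by the single member $\EEE_s$ of $\AaA$; getting the disjointification to mesh with the countable splitting of \PRO{count} is the delicate bookkeeping. By contrast, the $\ll$-closure lemma and the reduction of ideal-ness to order-completeness are short once \COR{ll} is available.
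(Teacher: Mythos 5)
Your proof is correct and takes essentially the same route as the paper's: after the necessity part, both arguments use \COR{ll} to replace each given $N$-tuple by an $\EEE \leqsl^s \JJJ$ with the same $\gamma \odot$-multiple, then disjointify (as in the proof of \THM{ords}), chop the disjoint pieces into countable-dimensional summands via \PRO{count}, apply (b), and lift back through (a) and the $\ll$-closure step. The only cosmetic difference is that you verify order-completeness (invoking the equivalent characterization of ideals recalled at the start of the section on unities of ideals) where the paper verifies closure under arbitrary direct sums directly.
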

\begin{proof}
The necessity is clear. The sufficiency is in fact a consequence of \COR{ll}. Indeed, if $\AAA \leqsl \BBB$ and $\BBB \in
\AaA$, then $\alpha \odot \AAA \leqsl^s \alpha \odot \BBB$ for some infinite cardinal $\alpha$ (by \COR{ll}). It follows
from (a) that $\alpha \odot \BBB \in \AaA$ and consequently $\alpha \odot \AAA \in \AaA$ (by (c)) and $\AAA \in \AaA$,
again by (a). Finally, if $\{\AAA^{(j)}\}_{j \in I} \subset \AaA$ and $\AAA = \bigoplus_{j \in I} \AAA^{(j)}$, then
for huge enough $\alpha \in \Card_{\infty}$ one has $\alpha \odot \AAA^{(j)} = \alpha \odot \EEE^{(j)}$ with $\EEE^{(j)}
\leqsl^s \JJJ$ ($j \in I$) and $\alpha \odot \AAA = \alpha \odot \EEE$ for some $\EEE \leqsl^s \JJJ$ (see the proof
of \COR{ll}). Thanks to (a), $\EEE^{(j)} \in \AaA$ and it is enough to show that $\EEE \in \AaA$. We see that $\EEE^{(j)}
\leqsl^s \EEE$ and $\EEE = \bigvee_{j \in I} \EEE^{(j)}$. These imply (compare with the proof of \THMP{ords}) that there
is a regular family $\{\BBB^{(j)}\}_{j \in I}$ such that $\bigsqplus_{j \in I} \BBB^{(j)} = \EEE$ and $\BBB^{(j)} \leqsl^s
\EEE^{(j)}$ ($j \in I$). We infer from (c) that $\BBB^{(j)} \in \AaA$ for all $j \in I$. Now thanks to \PRO{count}, each
of $\BBB^{(j)}$'s may be written in the form $\bigsqplus_{s \in S_j} \AAA^{(s,j)}$ with $0 < \dim(\AAA^{(s,j)}) \leqsl
\aleph_0$. Consequently, (c) yields $\AAA^{(s,j)} \in \AaA$ and hence $\EEE \in \AaA$ as well, by (b).
\end{proof}

\begin{exm}{commonpart}
Sometimes it may be useful to have the \textit{common} partition of unity for several members of $\CDD_N$ (in particular,
to find the partition of unity induced by their direct sum). It may be understood as follows. For simplicity, we shall
describe this idea only for two $N$-tuples. Below we involve \PROp{leqsl-leqsls} several times, with no comment.\par
Let $\AAA, \BBB \in \CDD_N$. Let $\Upsilon^2 = \{(i,\alpha,\beta)\dd\ (i,\alpha), (i,\beta) \in \Upsilon\}$ and
$\Upsilon^2_* = \{(i,\alpha,\beta)\dd\ (i,\alpha), (i,\beta) \in \Upsilon_*\}$. For $(i,\alpha,\beta) \in \Upsilon^2$ let
$\EEE^i_{\alpha,\beta} = \EEE^i_{\alpha}(\AAA) \wedge \EEE^i_{\beta}(\BBB)$. Additionally, we put $\EEE_{sm,\alpha} =
\EEE_{sm}(\AAA) \wedge \EEE^{\tII}_{\alpha}(\BBB)$ and $\EEE_{\alpha,sm} = \EEE^{\tII}_{\alpha}(\AAA) \wedge
\EEE_{sm}(\BBB)$ for $\alpha \in \Lambda_{\tII}$. One may check that then $\JJJ_i = \bigsqplus_{\alpha,\beta \in \Lambda_i}
\EEE^i_{\alpha,\beta}$ for $i \in \{I,\tII,\tIII\}$; $\EEE_{\alpha,sm}$ and $\EEE_{sm,\alpha}$ are semiminimal and
\begin{equation}\label{eqn:aux20}
\EEE^{\tII}_{1,\alpha} = \aleph_0 \odot \EEE_{sm,\alpha}, \qquad
\EEE^{\tII}_{\alpha,1} = \aleph_0 \odot \EEE_{\alpha,sm}
\end{equation}
for each $\alpha \in \Lambda_{\tII}$. Further,
\begin{multline}\label{eqn:A-AB}
\AAA = \bigl(\Bigsqplus_{\alpha \in \Card_{\infty}} \EEE_{sm,\alpha}\bigr) \sqplus
\bigl(\Bigsqplus_{\alpha \in \Card_{\infty}} \alpha \odot \EEE^{\tII}_{\alpha,1}\bigr)\\
\sqplus \bigl(\Bigsqplus_{(i,\alpha,\beta) \in \Upsilon^2_*} \alpha \odot \EEE^i_{\alpha,\beta}\bigr)
\sqplus (\EEE_{sm,1} \sqplus \EEE_{sm,0})
\end{multline}
and correspondingly
\begin{multline}\label{eqn:B-AB}
\BBB = \bigl(\Bigsqplus_{\alpha \in \Card_{\infty}} \alpha \odot \EEE^{\tII}_{1,\alpha}\bigr) \sqplus
\bigl(\Bigsqplus_{\alpha \in \Card_{\infty}} \EEE_{\alpha,sm}\bigr)\\
\sqplus \bigl(\Bigsqplus_{(i,\alpha,\beta) \in \Upsilon^2_*} \beta \odot \EEE^i_{\alpha,\beta}\bigr)
\sqplus (\EEE_{1,sm} \sqplus \EEE_{0,sm}).
\end{multline}
In particular, thanks to \eqref{eqn:aux20},
$$
\AAA \oplus \BBB = [\EEE_{sm,0} \sqplus \EEE_{0,sm} \sqplus (\EEE_{sm,1} \oplus \EEE_{1,sm})] \sqplus
\Bigsqplus_{(i,\alpha,\beta) \in \Upsilon^2_{\#}} (\alpha + \beta) \odot \EEE^i_{\alpha,\beta}
$$
where $\Upsilon^2_{\#} = \Upsilon^2 \setminus \{(\tII,\alpha,\beta)\dd\ (\alpha,\beta) = (0,1), (1,0), (1,1)\}$. So
(below $(i,\gamma) \in \Upsilon_*$),
\begin{equation}\label{eqn:part2}\begin{cases}
\EEE_{sm}(\AAA \oplus \BBB) = \EEE_{sm,0} \sqplus \EEE_{0,sm} \sqplus [\EEE_{sm,1} \oplus \EEE_{1,sm}],&\\
\EEE^{\tII}_1(\AAA \oplus \BBB) = \EEE^{\tII}_{0,1} \sqplus \EEE^{\tII}_{1,0} \sqplus \EEE^{\tII}_{1,1},&\\
\EEE^i_{\gamma}(\AAA \oplus \BBB) = \Bigsqplus \{\EEE^i_{\alpha,\beta}\dd\ (i,\alpha,\beta) \in \Upsilon^2_{\#},\ \alpha
+ \beta = \gamma\}.&
\end{cases}\end{equation}
In a similar manner one may find the formulas for $\AAA \vee \BBB$ and $\AAA \wedge \BBB$ and the partitions of unity
induced by them.
\end{exm}

\SECT{Algebraic and order properties of $\CDD_N$}

The following is a kind of folklore (see e.g. \cite[Exercise~6.9.14]{k-r2}): if $p$ and $q$ are two projections in a von
Neumann algebra $\MMm$ such that $n \odot p \sim n \odot q$ for some $n \geqsl 1$, then $p \sim q$. This has an interesting
consequence for the class $\CDD_N$:
\begin{enumerate}[({A}O1)]
\item\label{AO1} $n \odot \AAA = n \odot \BBB \implies \AAA = \BBB$
\end{enumerate}
provided $n$ is positive and finite. Further properties in this style are listed below.
\begin{enumerate}[({A}O1)]\addtocounter{enumi}{1}
\item For finite positive $n$ and $m$: $n \odot \AAA = m \odot \BBB \iff \AAA = k \odot \XXX$ and $\BBB = l \odot \XXX$
   for some $\XXX \in \CDD_N$ with $k = m / \GCD(n,m)$ and $l = n / \GCD(n,m)$ (`$\GCD$' is the abbreviation for
   the greatest common divisor). If $n \neq m$, then $n \odot \AAA = m \odot \AAA \iff \AAA = \aleph_0 \odot \AAA$.
\item If $\alpha$ and $\beta$ are cardinals such that $\alpha < \beta$ and $\beta$ is infinite, then
   $$\alpha \odot \AAA = \beta \odot \BBB \iff \AAA = \beta \odot \BBB.$$
   ((AO2) and (AO3) follow from \eqref{eqn:main}; cf. also the beginning of Section~14.)
\item\label{AO4} For a nontrivial $\AAA \in \CDD_N$ \tfcae
   \begin{enumerate}[(i)]
   \item for any $\XXX, \YYY \in \CDD_N$, $\AAA \oplus \XXX = \AAA \oplus \YYY \iff \XXX = \YYY$,
   \item $\BBB \leqsl^s \AAA$ and $\AAA \oplus \BBB = \AAA$ imply $\BBB = \zero$,
   \item $\WWw'(\aAA)$ is finite,
   \item $\EEE^i_{\alpha}(\AAA) = \zero$ for each $i \in \{I,\tII,\tIII\}$ and infinite $\alpha$.
   \end{enumerate}
   All $N$-tuples $\AAA$ satisfying (i) form a \textbf{set}, denoted by $\fIN_N$. $(\fIN_N,\oplus)$ is a semigroup which
   may be enlarged to an Abelian group (by (i)).
\item For every $\AAA \in \fIN_N$ and $\BBB \geqsl \AAA$ there is a unique $\XXX$ such that $\AAA \oplus \XXX = \BBB$.
   Thus, $\BBB \ominus \AAA$ is well defined in that case.
\item\label{AO6} Let $S$ be an infinite set whose power is a limit cardinal. For every collection $\{\AAA^{(s)}\}_{s \in S}
   \subset \CDD_N$,
   \begin{equation}\label{eqn:limcard}
   \bigoplus_{s \in S} \AAA^{(s)} = \bigvee \bigl\{\bigoplus_{s \in S'} \AAA^{(s)}\dd\ S' \subset S,\ 0 < \card(S')
   < \card(S)\bigr\}.
   \end{equation}
   In particular, for every sequence $(\BBB^{(n)})_{n=1}^{\infty} \subset \CDD_N$,
   $$\bigoplus_{n=1}^{\infty} \BBB^{(n)} = \bigvee_{n=1}^{\infty} \BBB^{(1)} \oplus \ldots \oplus \BBB^{(n)},$$
   and for each $\AAA \in \CDD_N$ and an infinite limit cardinal $\gamma$,
   $$\gamma \odot \AAA = \bigvee_{\alpha < \gamma} \alpha \odot \AAA.$$
   (By \PROP{limit}.)
\item Whenever $\AAA \leqsl \BBB$, there are $(\BBB \ominus \AAA)^{\nabla}, (\BBB \ominus \AAA)_{\Delta} \in \CDD_N$
   such that $\AAA \oplus \XXX = \BBB$ iff $(\BBB \ominus \AAA)_{\Delta} \leqsl \XXX \leqsl (\BBB \ominus \AAA)^{\nabla}$.
   Moreover, if $\BBB = 2 \odot \BBB$, then $(\BBB \ominus \AAA)_{\Delta} \leqsl^s \BBB = (\BBB \ominus \AAA)^{\nabla}$.
   $\BBB \ominus \AAA$ is well defined iff $(\BBB \ominus \AAA)^{\nabla} = (\BBB \ominus \AAA)_{\Delta}$.
   (See \THMP{complement}.)
\item If $\AAA \leqsl^s \BBB$, then $(\BBB \ominus \AAA)_{\Delta} = \BBB \sqminus \AAA$. (Thanks to (PR1), \PREF{PR1}.)
\item $(\BBB \ominus \AAA)_{\Delta} \leqsl (\BBB \ominus \XXX)_{\Delta} \oplus (\XXX \ominus \AAA)_{\Delta} \leqsl
   (\BBB \ominus \XXX)^{\nabla} \oplus (\XXX \ominus \AAA)^{\nabla} \leqsl (\BBB \ominus \AAA)^{\nabla}$ whenever
   $\AAA \leqsl \XXX \leqsl \BBB$.
\item $(\BBB \ominus \AAA)_{\Delta} \leqsl^s (\BBB \ominus \AAA)^{\nabla}$ provided $\AAA \leqsl \BBB$.
\end{enumerate}
Let us prove (AO10). We use the notation of \EXM{commonpart}. We infer from \eqref{eqn:A-AB} and \eqref{eqn:B-AB} that
$\AAA \leqsl \BBB$ iff $\EEE_{sm,1} \leqsl \EEE_{1,sm}$, $\EEE_{sm,0} = \zero$ and for every $\gamma \in \Card_{\infty}$
and $(i,\alpha,\beta) \in \Upsilon^2_*$ with $\alpha > \beta$,
$$
\EEE^{\tII}_{\gamma,1} = \EEE^i_{\alpha,\beta} = \zero.
$$
In that case \eqref{eqn:A-AB} reduces to
$$
\AAA = \bigl(\Bigsqplus_{\alpha \in \Card_{\infty}} \EEE_{sm,\alpha}\bigr) \sqplus
\bigl(\Bigsqplus_{\substack{(i,\alpha,\beta) \in \Upsilon^2_*\\\alpha \leqsl \beta}} \alpha \odot
\EEE^i_{\alpha,\beta}\bigr) \sqplus \EEE_{sm,1},
$$
while \eqref{eqn:B-AB} is equivalent to
$$
\BBB = \bigl(\Bigsqplus_{\alpha \in \Card_{\infty}} \alpha \odot \EEE^{\tII}_{1,\alpha}\bigr) \sqplus
\bigl(\Bigsqplus_{\substack{(i,\alpha,\beta) \in \Upsilon^2_*\\\alpha \leqsl \beta}} \beta \odot
\EEE^i_{\alpha,\beta}\bigr) \sqplus \EEE_{sm}(\BBB).
$$
Now we infer from the above formulas and (AO5) that
\begin{multline*}
(\BBB \ominus \AAA)_{\Delta} = [\EEE_{sm}(\BBB) \ominus \EEE_{sm,1}] \sqplus
\bigl(\Bigsqplus_{\alpha \in \Card_{\infty}} \alpha \odot \EEE^{\tII}_{1,\alpha}\bigr)\\
\sqplus \bigl(\Bigsqplus \{(\beta - \alpha) \odot \EEE^i_{\alpha,\beta}\dd\ (i,\alpha,\beta) \in \Upsilon^2_*,\
\alpha < \beta\}\bigr)
\end{multline*}
where $\beta - \alpha = \beta$ provided $\beta$ is infinite (and $\beta > \alpha$). The above formula may be written
in the following self-contained form:
\begin{multline}\label{eqn:B-Al}
(\BBB \ominus \AAA)_{\Delta} = [\EEE_{sm}(\BBB) \ominus (\EEE_{sm}(\AAA) \wedge \EEE^{\tII}_1(\BBB))]\\
\sqplus \Bigl[\Bigsqplus_{(i,\alpha,\beta) \in \Upsilon^2_+} (\beta - \alpha) \odot (\EEE^i_{\alpha}(\AAA) \wedge
\EEE^i_{\beta}(\BBB))\Bigr]
\end{multline}
where $\Upsilon^2_+ = \{(i,\alpha,\beta) \in \Upsilon^2\dd\ \alpha < \beta,\ (i,\alpha,\beta) \neq (\tII,0,1)\}$.
It is also easy to verify that $(\BBB \ominus \AAA)^{\nabla} = (\BBB \ominus \AAA)_{\Delta} \oplus \XXX$ where
$\XXX = \bigsqplus_{\alpha \in \Card_{\infty}} \alpha \odot [\EEE^I_{\alpha,\alpha} \sqplus \EEE^{\tII}_{\alpha,\alpha}
\sqplus \EEE^{\tIII}_{\alpha,\alpha}]$. Since $\XXX \disj (\BBB \ominus \AAA)_{\Delta}$, the proof of (AO10) is finished.
Recall that we have shown that
\begin{equation}\label{eqn:(B-A)-(B-A)}
(\BBB \ominus \AAA)^{\nabla} \sqminus (\BBB \ominus \AAA)_{\Delta} =
\Bigsqplus_{\substack{\alpha \in \Card_{\infty}\\i \in \{I,\tII,\tIII\}}} \alpha \odot
(\EEE^i_{\alpha}(\AAA) \wedge \EEE^i_{\alpha}(\BBB)).
\end{equation}
In particular, $(\BBB \ominus \AAA)^{\nabla} = (\BBB \ominus \AAA)_{\Delta}$ \iaoi{} $\EEE^i_{\alpha}(\AAA) \disj
\EEE^i_{\alpha}(\BBB)$ for every infinite $\alpha$. This proves
\begin{enumerate}[({A}O1)]\addtocounter{enumi}{10}
\item Whenever $\AAA \leqsl \BBB$, $B \ominus \AAA$ is well defined iff $\EEE^i_{\alpha}(\AAA) \disj \EEE^i_{\alpha}(\BBB)$
   for every $\alpha \in \Card_{\infty}$ and $i \in \{I,\tII,\tIII\}$.
\item $(\BBB \ominus \XXX)_{\Delta} \vee (\XXX \ominus \AAA)_{\Delta} \leqsl (\BBB \ominus \AAA)_{\Delta}$ whenever $\AAA
   \leqsl \XXX \leqsl \BBB$.
\item\label{AO13} For any nonempty set $\{\AAA^{(s)}\}_{s \in S} \subset \CDD_N$ and $\BBB \in \CDD_N$, $\BBB \vee
   (\bigwedge_{s \in S} \AAA^{(s)}) = \bigwedge_{s \in S} (\BBB \vee \AAA^{(s)})$ and $\BBB \wedge (\bigvee_{s \in S}
   \AAA^{(s)}) = \bigvee_{s \in S} (\BBB \wedge \AAA^{(s)})$.
\item\label{AO14} For any nonempty set $\{\AAA^{(s)}\}_{s \in S}$ of $N$-tuples, any $\AAA, \BBB \in \CDD_N$ and each
   $\alpha \in \Card$,
   \begin{equation*}\begin{array}{l c l}
   \alpha \odot (\AAA \wedge \BBB) = (\alpha \odot \AAA) \wedge (\alpha \odot \BBB),&&\\
   \alpha \odot (\bigwedge_{s \in S} \AAA^{(s)}) = \bigwedge_{s \in S} (\alpha \odot \AAA^{(s)}) & \textup{if} &
   \begin{array}{l}\alpha \textup{ is finite \ or}\\
   \forall\, s \in S\dd\ \EEE_{sm}(\AAA^{(s)}) = \zero,\end{array}\\
   \alpha \odot (\bigvee_{s \in S} \AAA^{(s)}) = \bigvee_{s \in S} (\alpha \odot \AAA^{(s)}).&&
   \end{array}\end{equation*}
\end{enumerate}
For proofs of (AO12)--(AO14) see \CORp{minus}, \THMp{Boole} and \PROp{multiply}.

\begin{exm}{inf}
Taking into account (AO14), it seems to be surprising that in general $\alpha \odot (\bigwedge_{s \in S} \AAA^{(s)})$
differs from $\bigwedge_{s \in S} (\alpha \odot \AAA^{(s)})$ for infinite cardinal $\alpha$, even if $S$ is countable. Let
us give a short counterexample for this. Let $\alpha \geqsl \aleph_0$ and $\XXX \in \SsS\MmM_N$ be nontrivial. There is
a sequence $(\AAA^{(n)})_{n=1}^{\infty}$ such that $n \odot \AAA^{(n)} = \XXX$ (see the beginning of Section~14). Then
$\alpha \odot \AAA^{(n)} = \alpha \odot \XXX \neq \zero$, while $\bigwedge_{n=1}^{\infty} \AAA^{(n)} = \zero$.
\end{exm}

(AO12) has an interesting consequence.

\begin{pro}{luboplus}
Let $\AaA, \BbB \subset \CDD_N$ be nonempty sets. Then $\bigvee (\AaA \oplus \BbB) = (\bigvee \AaA) \oplus (\bigvee \BbB)$
and $\bigwedge (\AaA \oplus \BbB) = (\bigwedge \AaA) \oplus (\bigwedge \BbB)$ where $\AaA \oplus \BbB = \{\AAA \oplus
\BBB\dd\ \AAA \in \AaA,\ \BBB \in \BbB\}$.
\end{pro}
\begin{proof}
Since the proof for the l.u.b.'s is much simpler, we shall show only the g.l.b.'s part. It is clear that $(\bigwedge \AaA)
\oplus (\bigwedge \BbB) \leqsl \bigwedge (\AaA \oplus \BbB)$. To see the converse inequality, assume that $\XXX \leqsl
\AAA \oplus \BBB$ for any $\AAA \in \AaA$ and $\BBB \in \BbB$. Fix $\BBB \in \BbB$ and put $\EEE = \bigwedge (\AaA \oplus
\{\BBB\})$. For each $\AAA \in \AaA$ we clearly have $\AAA \oplus \BBB \geqsl \EEE \geqsl \BBB$ and consequently, thanks
to (AO12), $(\EEE \ominus \BBB)_{\Delta} \leqsl [(\AAA \oplus \BBB) \ominus \BBB]_{\Delta} \leqsl \AAA$ where the last
inequality follows from the definition of $[\ldots]_{\Delta}$. So, $(\EEE \ominus \BBB)_{\Delta} \leqsl \bigwedge \AaA$
and therefore $\EEE = (\EEE \ominus \BBB)_{\Delta} \oplus \BBB \leqsl (\bigwedge \AaA) \oplus \BBB$. This shows that
$$
\bigwedge(\AaA \oplus \{\BBB\}) \leqsl \bigl(\bigwedge \AaA\bigr) \oplus \BBB
$$
which yields
\begin{multline*}
\bigwedge(\AaA \oplus \BbB) = \bigwedge_{\BBB \in \BbB} \bigl[\bigwedge(\AaA \oplus \{\BBB\})\bigr] \leqsl
\bigwedge_{\BBB \in \BbB} \bigl[(\bigwedge \AaA) \oplus \BBB\bigr] =\\= \bigwedge \Bigl(\BbB \oplus \bigl\{\bigwedge
\AaA\bigr\}\Bigr) \leqsl \bigl(\bigwedge \BbB\bigr) \oplus \bigl(\bigwedge \AaA\bigr)
\end{multline*}
and we are done.
\end{proof}

\begin{cor}{countable}
Let $\AaA_1,\AaA_2,\AaA_3,\ldots$ be nonempty sets of members of $\CDD_N$ and let $\AaA = \{\bigoplus_{n=1}^{\infty}
\AAA^{(n)}\dd\ \AAA^{(n)} \in \AaA_n\ (n \geqsl 1)\}$. Then $\bigvee \AaA = \bigoplus_{n=1}^{\infty} (\bigvee \AaA_n)$.
\end{cor}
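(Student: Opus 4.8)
The plan is to prove that $\MMM_0 := \bigoplus_{n=1}^{\infty} (\bigvee \AaA_n)$ is the least upper bound of $\AaA$; since each $\AaA_n$ is a nonempty set, $\bigvee \AaA_n$ exists by \THM{complete}, $\MMM_0$ is a well-defined member of $\CDD_N$, and $\AaA \neq \varempty$, so the assertion makes sense and it suffices to verify the l.u.b.\ property. Write $\BBB_n = \bigvee \AaA_n$. The whole argument is a bookkeeping combination of the finite statement \PRO{luboplus} (iterated) with (AO6) \pREF{AO6} (to pass from finite to countable sums).

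First I would check that $\MMM_0$ is an upper bound of $\AaA$. For this I need the (routine) monotonicity of countable direct sums with respect to `$\leqsl$': if $\AAA^{(n)} \leqsl \BBB^{(n)}$ for all $n$, then $\bigoplus_n \AAA^{(n)} \leqsl \bigoplus_n \BBB^{(n)}$. This follows by choosing representatives with $\aAA^{(n)} \equiv \bBB^{(n)}\bigr|_{E_n}$, $E_n \in \red(\bBB^{(n)})$, noting that $\bigoplus_n E_n$ reduces $\bigoplus_n \bBB^{(n)}$ and its reduced part is unitarily equivalent to $\bigoplus_n \AAA^{(n)}$ (alternatively one can derive it from (AO6) and the two-summand case of \PRO{luboplus}). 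Applying this with $\BBB^{(n)} = \BBB_n$ shows $\bigoplus_n \AAA^{(n)} \leqsl \MMM_0$ for every $\bigoplus_n \AAA^{(n)} \in \AaA$.

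Next, for the least-upper-bound half, I would fix an arbitrary upper bound $\MMM$ of $\AaA$ and show $\MMM_0 \leqsl \MMM$. By (AO6), $\MMM_0 = \bigvee_{n=1}^{\infty} (\BBB_1 \oplus \ldots \oplus \BBB_n)$, so it is enough to prove $\BBB_1 \oplus \ldots \oplus \BBB_n \leqsl \MMM$ for each fixed $n$. Iterating \PRO{luboplus} rewrites $\BBB_1 \oplus \ldots \oplus \BBB_n = (\bigvee \AaA_1) \oplus \ldots \oplus (\bigvee \AaA_n) = \bigvee \{\AAA^{(1)} \oplus \ldots \oplus \AAA^{(n)}\dd\ \AAA^{(k)} \in \AaA_k,\ 1 \leqsl k \leqsl n\}$, reducing the task to checking $\AAA^{(1)} \oplus \ldots \oplus \AAA^{(n)} \leqsl \MMM$ for each finite selection. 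Here I would use nonemptiness of $\AaA_k$ for $k > n$ to extend the finite selection to a whole sequence $(\AAA^{(k)})_{k \geqsl 1}$; then $\bigoplus_{k=1}^{\infty} \AAA^{(k)} \in \AaA$, hence $\bigoplus_{k=1}^{\infty} \AAA^{(k)} \leqsl \MMM$, and the partial sum $\AAA^{(1)} \oplus \ldots \oplus \AAA^{(n)}$ is $\leqsl$ this, giving $\leqsl \MMM$.

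I do not expect any genuinely hard step. The only two points needing a little care are the monotonicity of countable direct sums invoked in the upper-bound half (which I would record explicitly), and making sure the nonemptiness hypothesis on every $\AaA_n$ is actually used—precisely when extending a finite selection $\AAA^{(1)},\ldots,\AAA^{(n)}$ to an infinite one so as to land inside $\AaA$.
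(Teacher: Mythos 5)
Your proof is correct and takes essentially the same route as the paper's: the paper likewise treats the upper-bound direction as immediate, and for the reverse inequality applies (AO6) to reduce to finite partial sums and then iterates \PRO{luboplus}. Your explicit step of extending a finite selection $\AAA^{(1)},\ldots,\AAA^{(n)}$ to an infinite one is exactly the content hidden in the paper's final inequality $\bigvee(\AaA_1 \oplus \ldots \oplus \AaA_n) \leqsl \bigvee \AaA$.
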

\begin{proof}
It is clear that $\bigvee \AaA \leqsl \bigoplus_{n=1}^{\infty} (\bigvee \AaA_n)$. Conversely, by (AO6),
$\bigoplus_{n=1}^{\infty} (\bigvee \AaA_n) = \bigvee_{n\geqsl1} \bigl[(\bigvee \AaA_1) \oplus \ldots \oplus (\bigvee
\AaA_n)\bigr]$. Now by induction and \PRO{luboplus}, $(\bigvee \AaA_1) \oplus \ldots \oplus (\bigvee \AaA_n) = \bigvee
(\AaA_1 \oplus \ldots \oplus \AaA_n) \leqsl \bigvee \AaA$ and we are done.
\end{proof}

In the next section we shall prove a counterpart of \COR{countable} for uncountable collections of sets of $N$-tuples
(see \THMP{sup-sum}).

\begin{exm}{countsum}
It may be seen surprising that the counterpart of \COR{countable} for infima fails to be true, even if each
of $\AaA_n$'s is a finite collection of minimal normal $N$-tuples. That is, in general $\bigwedge
(\bigoplus_{n=1}^{\infty} \AaA_n)$ differs from $\bigoplus_{n=1}^{\infty} (\bigwedge \AaA_n)$ where
$\bigoplus_{n=1}^{\infty} \AaA_n = \{\bigoplus_{n=1}^{\infty} \AAA^{(n)}\dd\ \AAA^{(n)} \in \AaA_n\}$. Let us
justify this claim.\par
For every $u \in L^{\infty}([0,1])$ we shall write, for simplicity, $\xXX_{u}$ to denote the $N$-tuple
$(M_u,\ldots,M_u)$ where $M_u$ is the multiplication operator by $u$ on $L^2([0,1])$. For each pair $(n,m)$ of naturals
with $1 \leqsl m \leqsl n$ let $j_{n,m}$ be the characteristic function of $[0,1] \setminus [(m-1)/n,m/n]$.
Additionally, let $\id \in L^{\infty}([0,1])$ be the identity map on $[0,1]$. Put $\AAA_{n,m} = \XXX_{j_{n,m}\id}$ and
$\AaA_n = \{\AAA_{n,j}\dd\ j=1,\ldots,n\}$. Then $\AaA_n \subset \MmM\FfF_N$ (because $\WWw'(\xXX_{\id}) = \{M_u\dd\
u \in L^{\infty}([0,1])\}$) and $\bigwedge \AaA_n = \zero$ for every $n \geqsl 1$. However, if $(m_n)_{n=1}^{\infty}$
is any sequence of natural numbers such that $1 \leqsl m_n \leqsl n$, then $\bigoplus_{n=1}^{\infty} \AAA_{n,m_n}
\geqsl \bigvee_{n\geqsl1} \AAA_{n,m_n} = \XXX_{\id}$ (the latter equality holds true since $\bigcup_{n=1}^{\infty}
([0,1] \setminus [(m_n-1)/n,m_n/n])$ is of full Lebesgue measure in $[0,1]$). Consequently, $\bigwedge
(\bigoplus_{n=1}^{\infty} \AaA_n) \geqsl \XXX_{\id} \neq \zero = \bigoplus_{n=1}^{\infty} (\bigwedge \AaA_n)$.
\end{exm}

One may deduce from \EXMp{lim} that the assumption in (AO6) that the power of $S$ is a limit cardinal is essential
(in the next section we shall discuss in details \eqref{eqn:limcard} for sets $S$ whose power is not limit). However,
for semiminimal parts of $N$-tuples a stronger property (than in (AO6)) holds true in general (see below). For simplicity,
for every set $S$ let us denote by $\PPp_f(S)$ and $\PPp_{\omega}(S)$ the families of all finite and, respectively,
countable (finite or infinite) subsets of $S$.

\begin{pro}{SM}
Let $S$ be an infinite set and $\{\AAA^{(s)}\}_{s \in S}$ be an arbitrary collection of $N$-tuples, $\AAA =
\bigoplus_{s \in S} \AAA^{(s)}$ and
$$\AAA' = \bigvee \bigl\{\bigoplus_{s \in S_0} \AAA^{(s)}\dd\ S_0 \in \PPp_f(S)\bigr\}.$$
Then $\EEE_{sm}(\AAA) = \EEE_{sm}(\AAA')$ and $\EEE^i_{\alpha}(\AAA) = \EEE^i_{\alpha}(\AAA')$ for each $(i,\alpha) \in
\Upsilon$ with finite $\alpha$.
\end{pro}
\begin{proof}
It is clear that $\EEE^i_0(\AAA) = \EEE^i_0(\AAA')$ for $i \in \{I,\tII,\tIII\}$. Further, let us prove that
\begin{equation}\label{eqn:aux100}
\EEE^{\tII}_1(\AAA) = \EEE^{\tII}_1(\AAA').
\end{equation}
Since $\AAA' \leqsl \AAA \ll \AAA'$, \EQp{Eii1} shows that $\EEE^{\tII}_1(\AAA) \leqsl \EEE^{\tII}_1(\AAA')$.
Conversely, if $\XXX_a = \EEE(\XXX | \EEE^{\tII}_1(\AAA'))$ for every $\XXX \in \CDD_N$, then $(\AAA')_a = \bigvee
\bigl\{\bigoplus_{s \in S_0} (\AAA^{(s)})_a\dd\ S_0 \in \PPp_f(S)\bigr\}$ and $\AAA_a = \bigoplus_{s \in S}
(\AAA^{(s)})_a$. But $(\AAA')_a = \EEE_{sm}(\AAA') \in \SsS\MmM_N$ and hence $(\AAA')_a = \AAA_a$, thanks to \PRO{fin}
(see below). So, $\AAA_a \in \SsS\MmM_N$ and consequently, again by \eqref{eqn:Eii1}, $\EEE^{\tII}_1(\AAA') \leqsl
\EEE^{\tII}_1(\AAA)$. This proves \eqref{eqn:aux100}.\par
Now we have $\EEE_{sm}(\AAA) = \EEE(\AAA | \EEE^{\tII}_1(\AAA)) = \AAA_a = (\AAA')_a = \EEE_{sm}(\AAA')$.\par
It remains to check that $\EEE^I_n(\AAA) = \EEE^I_n(\AAA')$ for natural $n$. Let $\FFF = \bigsqplus_{n=1}^{\infty}
\EEE^I_n(\AAA)$ and $\FFF' = \bigsqplus_{n=1}^{\infty} \EEE^I_n(\AAA')$. It is enough to show that $\FFF = \FFF'$ which
we leave for the reader (as it is similar to the proof of \eqref{eqn:aux100}).
\end{proof}

The following result is in the same spirit.

\begin{pro}{fin}
Let $S$ be an infinite set, $\{\AAA^{(s)}\}_{s \in S} \subset \CDD_N$ and let
$$\AAA = \bigvee \bigl\{\bigoplus_{s \in S_0} \AAA^{(s)}\dd\ S_0 \in \PPp_f(S)\bigr\}.$$
If $\AAA \in \fIN_N$, then $\AAA = \bigoplus_{s \in S} \AAA^{(s)}$.
\end{pro}
\begin{proof}
Let $\MMm = \WWw'(\aAA)$ and $p_s \in E(\MMm)$ ($s \in S$) correspond to $\AAA^{(s)}$ (by \PROP{transl}). Further, let
$\tr\dd \MMm \to \ZZz(\MMm)$ be the trace on $\MMm$. For every $s \in S$ put $w_s = \tr(p_s)$. Since $\bigoplus_{s \in S_0}
\AAA^{(s)} \leqsl \AAA$ where $S_0 \in \PPp_f(S)$, $\sum_{s \in S_0} w_s \leqsl 1$ and consequently $\sum_{s \in S} w_s$ is
convergent and the sum is no greater than $1$. Recall that for any $q, q' \in E(\MMm)$, $q \preccurlyeq q' \iff \tr(q)
\leqsl \tr(q')$ (see e.g. \cite[Corollary~5.2.8]{tk1} or \cite[Theorem~8.4.3]{k-r2}). This implies that it is possible,
well ordering the set $S$ and using transfinite induction, to construct a family $\{q_s\}_{s \in S}$ of mutually orthogonal
projections in $\MMm$ such that $p_s \sim q_s$ for any $s \in S$. Hence $\sum_{s \in S} q_s \leqsl 1$ which yields
$\bigoplus_{s \in S} \AAA^{(s)} \leqsl \AAA$ and we are done.
\end{proof}

\SECT{Reconstructing infinite operations}

Classical algebraic structures deal with operations on pairs (such as the action of a semigroup). However, some operations
naturally make sense also for infinitely (possibly uncountably) many arguments (e.g. unions of sets) and sometimes it is
necessary to use these extended `infinite' operations in order to understand, formulate or prove some statements. Unless
infinite operations can be `defined' (or characterized) in terms of their finite versions, every such a theorem may be seen
as a result from outside the theory. The most typical example of an infinite operation is the union of a family of sets.
However, it may be characterized by means of the union of two sets. Namely, for any family $\AAa$ put $\AAa^{\Delta} =
\{B\dd\ A \cup B = B \textup{ for each } A \in \AAa\}$ and then $\bigcup \AAa$ is the \textbf{unique} set $B \in
\AAa^{\Delta}$ such that $B \cup C = C$ for any $C \in \AAa^{\Delta}$. This characterization is possible for a one simple
reason: the union coincides with the l.u.b. of the family with respect to the inclusion order which may be defined in terms
of the union of a pair. When we pass to the class $\CDD_N$, the direct sum operation cannot be characterized in a similar
manner, because $\bigoplus_{s \in S} \AAA^{(s)}$ differs, in general, from $\bigvee \{\bigoplus_{s \in S_0} \AAA^{(s)}\dd\
S_0 \textup{ a finite subset of } S\}$. Nevertheless, infinite direct sums may be reconstructed from the finite ones,
and this is the subject of this section. Thus, every result of the paper concerning unitary equivalence classes
of $N$-tuples is a part of the theory which starts with the class $\CDD_N$ and the operation $\CDD_N \times \CDD_N \ni
(\AAA,\BBB) \mapsto \AAA \oplus \BBB \in \CDD_N$. (This refers to the material of Sections~1--17, but does \textbf{not}
to the rest.)\par
Our aim is to show that $\bigoplus_{s \in S} \AAA^{(s)}$ may be `recognized' if the only admissible `tool' in the class
$\CDD_N$ is the direct sum operation of a pair. Below we draw step by step how to do this. Each of the listed steps begins
with the tool which may be defined.\par
\begin{enumerate}[(ST1)]\label{ST*}
\item\label{ST1} `$\zero$': It is the unique member $\AAA$ of $\CDD_N$ such that $\AAA \oplus \XXX = \XXX$ for every $\XXX
   \in \CDD_N$.
\item `$\leqsl$': $\AAA \leqsl \BBB$ iff $\BBB = \AAA \oplus \XXX$ for some $\XXX \in \CDD_N$. Accordingly, the l.u.b.'s
   and g.l.b.'s are well defined.
\item `$\disj$': $\AAA \disj \BBB \iff \AAA \wedge \BBB = \zero$.
\item `$\leqsl^s$': $\AAA \leqsl^s \BBB$ iff $\BBB = \AAA \oplus \XXX$ for some $\XXX$ such that $\XXX \disj \AAA$.
\item `$\bigsqplus$' and `$\sqminus$': $\AAA = \bigsqplus_{s \in S} \AAA^{(s)}$ ($S$ any set) iff $\AAA^{(s)} \disj
   \AAA^{(s')}$ for distinct $s, s' \in S$ and $\AAA = \bigvee_{s \in S} \AAA^{(s)}$; if $\AAA \leqsl^s \BBB$, $\BBB
   \sqminus \AAA$ is the unique $\XXX$ such that $\XXX \disj \AAA$ and $\BBB = \XXX \oplus \AAA$.
\item `$\bigoplus_{n=1}^{\infty}$': $\bigoplus_{n=1}^{\infty} \AAA^{(n)} = \bigvee_{n \geqsl 1} \AAA^{(1)} \oplus \ldots
   \oplus \AAA^{(n)}$. In particular, $\aleph_0 \odot \AAA$ is well defined for each $\AAA$.
\item `$\ll$': $\AAA \ll \BBB$ iff there is no $\XXX \neq \zero$ such that $\XXX \leqsl \AAA$ and $\XXX \disj \BBB$.
\item `$\EEE(\AAA | \BBB)$': $\XXX = \EEE(\AAA | \BBB) \iff \exists\, \YYY\dd\ \AAA = \XXX \oplus \YYY,\ \XXX \ll \BBB$
   and $\YYY \disj \BBB$.
\item `Multiplicity free $N$-tuples': $\AAA \in \MmM\FfF_N$ \iaoi{} there is no $\XXX \neq \zero$ such that $\XXX \oplus
   \XXX \leqsl \AAA$. Accordingly, $\JJJ_I$ is well defined.
\item `Minimal $N$-tuples': $\AAA$ is minimal iff $\AAA \leqsl \XXX$ whenever $\AAA \ll \XXX$.
\item `Hereditary idempotents': $\AAA \in \HhH\IiI_N \iff \BBB = \BBB \oplus \BBB$ for each $\BBB \leqsl \AAA$.
   Accordingly, the class $\HhH\IiI\MmM_N$ and $\JJJ_{\tIII}$ are well defined, thanks to (ST10).
\item `Semiminimal $N$-tuples': Use (ST5), (ST7) and the definition of semiminimality.
\item `$\JJJ_{\tII}$': $\JJJ_{\tII} = \bigvee \{\aleph_0 \odot \XXX\dd\ \XXX \in \SsS\MmM_N\}$ (use (ST6) and (ST12)).
\item `$\alpha \odot \AAA$ for $\AAA \leqsl \JJJ_i$' with $i \in \{I,\tII,\tIII\}$: Thanks to (ST6), we may assume that
   $\alpha > \aleph_0$. If $i = \tII$, $\alpha \odot \AAA = \alpha \odot (\aleph_0 \odot \AAA)$ and $\aleph_0 \odot \AAA
   \leqsl^s \JJJ_{\tII}$; while for $i \neq \tII$ one has $\AAA \leqsl^s \JJJ_i$. These notices show that we may assume
   that $\AAA \leqsl^s \JJJ_i$. Under this assumption, $\alpha \odot \AAA$ may be characterized by means of transfinite
   induction with respect to $\alpha$. When $\alpha$ is limit, it suffices to apply (AO6) \pREF{AO6}. On the other hand,
   if $\alpha = \beta^+$ and $\AAA \neq \zero$,
   \begin{equation*}
   \alpha \odot \AAA = \bigwedge \{\XXX|\quad \forall\, \BBB \leqsl^s \AAA,\ \BBB \neq \zero\dd\
   \beta \odot \BBB \lneqq \EEE(\XXX | \BBB)\}.
   \end{equation*}
   (This formula may be deduced from \THMP{decomp}.)
\item `$\EEE^i_{\alpha}(\AAA)$ and $\EEE_{sm}(\AAA)$': Thanks to \THM{decomp} and previous steps.
\item `$\alpha \odot \AAA$' (arbitrary $\AAA$): Use (ST15), \THM{decomp}, (ST14) and (ST5).
\item\label{ST17} `$\aleph_0 \cdot \dim(\AAA)$': Since
   \begin{multline*}
   \aleph_0 \cdot \dim(\AAA) = \sum_{n=1}^{\infty} \aleph_0 \cdot \dim(\EEE^I_n(\AAA))
   + \aleph_0 \cdot \dim(\EEE^{\tII}_1(\AAA))\\
   + \sum_{\substack{\alpha \in \Card_{\infty}\\i \in \{I,\tII,\tIII\}}} \alpha [\aleph_0 \cdot
   \dim(\EEE^i_{\alpha}(\AAA))]
   \end{multline*}
   (cf. \THM{decomp}; $\dim(\EEE_{sm}(\AAA)) = \dim(\EEE^{\tII}_1(\AAA))$), it suffices
   to characterize the cardinal number $\aleph_0 \cdot \dim(\AAA)$ for $\AAA \leqsl^s \JJJ$. But in that
   case it is quite easy, thanks to \PROp{count}: $\aleph_0 \cdot \dim(\AAA)$ (provided $\AAA \leqsl^s \JJJ$
   is nontrivial) is the least infinite cardinal number $\alpha$ such that each regular subfamily of $\{\XXX\dd\ \zero
   \neq \XXX \leqsl^s \AAA\}$ has power no greater than $\alpha$.
\label{ST+}\end{enumerate}
Now we are ready to characterize infinite direct sums. For simplicity, let us put $\Dim(\FFF) = \aleph_0 \cdot \dim(\FFF)$,
$\EEE_f(\FFF) = \EEE^{\tII}_1(\FFF) \sqplus \bigsqplus_{n=1}^{\infty} \EEE^I_n(\FFF)$ and
$\EEE_{\alpha}(\FFF) = \EEE^I_{\alpha}(\FFF) \sqplus \EEE^{\tII}_{\alpha}(\FFF) \sqplus \EEE^{\tIII}_{\alpha}(\FFF)$ for
$\alpha \in \{0\} \cup \Card_{\infty}$ and any $\FFF \in \CDD_N$. By (ST17) and (ST15), `$\Dim$', `$\EEE_f$' and
`$\EEE_{\alpha}$' are well defined. Fix a collection $\{\AAA^{(s)}\}_{s \in S}$ of $N$-tuples and put $\AAA =
\bigoplus_{s \in S} \AAA^{(s)}$ and $\AAA_f = \bigvee \{\bigoplus_{s \in S'} \AAA^{(s)}\dd\ S' \in \PPp_f(S)\}$. $\AAA_f$
is `known' by (ST2). Thanks to (ST16) and (ST5), it suffices to find $\EEE_{sm}(\AAA)$ and $\EEE^i_{\alpha}(\AAA)$.
By \PRO{SM}, $\EEE^i_{\alpha}(\AAA) = \EEE^i_{\alpha}(\AAA')$ for $(i,\alpha) \in \Upsilon$ with finite $\alpha$ and
$\EEE_{sm}(\AAA) = \EEE_{sm}(\AAA')$. Since $\EEE^i_{\alpha}(\AAA) = \EEE_{\alpha}(\AAA) \wedge \JJJ_i$, we see that
it remains to find $\EEE_{\alpha}(\AAA)$ for infinite $\alpha$ (thanks to (ST9), (ST11) and (ST13)). To this end, let us
show that
\begin{equation}\label{eqn:Ealpha}
\EEE_{\alpha}(\AAA) = \bigvee \XxX
\end{equation}
where
\begin{multline*}
\XxX = \{\XXX \leqsl^s \JJJ \sqminus (\EEE_0(\AAA) \sqplus \EEE_f(\AAA))|\\
\forall\,\YYY \leqsl^s \XXX,\ \Dim(\YYY) = \aleph_0\dd\ \sum_{s \in S} \Dim \EEE(\AAA^{(s)} | \YYY) = \alpha\}.
\end{multline*}
It is clear that $\EEE_{\alpha}(\AAA) \leqsl^s \JJJ \sqminus (\EEE_0(\AAA) \sqplus \EEE_f(\AAA))$. Furthermore, if $\YYY
\leqsl^s \EEE_{\alpha}(\AAA)$, then $\bigoplus_{s \in S} \EEE(\AAA^{(s)} | \YYY) = \EEE(\AAA | \YYY) = \alpha \odot \YYY$
and thus $$\aleph_0 \cdot \sum_{s \in S} \dim(\EEE(\AAA^{(s)} | \YYY)) = \alpha$$ provided $\Dim(\YYY) = \aleph_0$. This
yields $\EEE_{\alpha}(\AAA) \in \XxX$. Consequently, the proof of \eqref{eqn:Ealpha} will be finished if we show that
$\XXX \leqsl \EEE_{\alpha}(\AAA)$ for every $\XXX \in \XxX$. To get this inequality, it is enough to check that $\XXX \disj
\EEE_{\beta}(\AAA)$ for any infinite $\beta \neq \alpha$. Suppose $\YYY' = \XXX \wedge \EEE_{\beta}(\AAA)$ is nontrivial.
Since $\YYY' \leqsl^s \JJJ$, we infer from \PROp{count} that there is $\YYY \leqsl^s \YYY'$ with $\Dim(\YYY) = \aleph_0$.
But then $\YYY \leqsl^s \XXX$ and $\EEE(\AAA | \YYY) = \beta \odot \YYY$ (because $\YYY \leqsl^s \EEE_{\beta}(\AAA)$).
Consequently, $\aleph_0 \cdot \sum_{s \in S} \dim(\EEE(\AAA^{(s)} | \YYY)) = \beta$ which denies the fact that $\XXX \in
\XxX$ and finishes the proof of \eqref{eqn:Ealpha}.\par
The arguments of this section prove

\begin{pro}{fin-infin}
If $\Phi\dd \CDD_N \to \CDD_N$ is a bijective assignment such that $\Phi(\AAA \oplus \BBB) = \Phi(\AAA) \oplus \Phi(\BBB)$
for any $\AAA, \BBB \in \CDD_N$, then $\Phi$ preserves all notions, features and operations appearing
in \textup{(ST1)--(ST17)} and
$$
\Phi\bigl(\bigoplus_{s \in S} \AAA^{(s)}\bigr) = \bigoplus_{s \in S} \Phi(\AAA^{(s)})
$$
for any set $\{\AAA^{(s)}\}_{s \in S} \subset \CDD_N$.
\end{pro}

Let us now discuss the relation between both sides of \eqref{eqn:limcard} for a set $S$ whose cardinality is not limit,
i.e. $\card(S) = \gamma^+$ for some infinite cardinal $\gamma$. Let $\AAA' = \bigvee \{\bigoplus_{s \in S'} \AAA^{(s)}\dd\
S' \subset S,\ 0 < \card(S') \leqsl \gamma\}$. By \PRO{SM}, $\EEE_{sm}(\AAA) = \EEE_{sm}(\AAA')$ and $\EEE^i_{\alpha}(\AAA)
= \EEE^i_{\alpha}(\AAA')$ for every $(i,\alpha) \in \Upsilon$ with finite $\alpha$. These equalities are more general:
we claim that
\begin{equation}\label{eqn:alpha}
\EEE^i_{\alpha}(\AAA) = \EEE^i_{\alpha}(\AAA')
\end{equation}
provided $\alpha \neq \gamma,\gamma^+$. To show this, it suffices to check that $\EEE^i_{\alpha}(\AAA') \leqsl
\EEE^i_{\alpha}(\AAA)$ for $\alpha \neq \gamma,\gamma^+$ and $\EEE^i_{\gamma}(\AAA') \sqplus \EEE^i_{\gamma^+}(\AAA')
\leqsl \EEE^i_{\gamma}(\AAA) \sqplus \EEE^i_{\gamma^+}(\AAA)$ (since we deal with partitions of unity). We need to check
only infinite $\alpha$'s.\par
First assume $\aleph_0 \leqsl \alpha < \gamma$. Fix $\XXX \leqsl^s \EEE^i_{\alpha}(\AAA')$ with $\Dim(\XXX) =
\aleph_0$. Let $$S' = \{s \in S\dd\ \EEE(\AAA^{(s)} | \XXX) \neq \zero\}.$$
If the power of $S'$ was greater than $\alpha$, there would exist a subset $S''$ of $S'$ with $\card(S'') = \alpha^+ \leqsl
\gamma$. Then we would have $\bigoplus_{s \in S''} \AAA^{(s)} \leqsl \AAA'$ and $\alpha \odot \XXX = \EEE(\AAA' | \XXX)
\geqsl \bigoplus_{s \in S''} \EEE(\AAA^{(s)} | \XXX)$ which denies the fact that $\Dim(\alpha \odot \XXX) = \alpha <
\card(S'') \leqsl \Dim(\bigoplus_{s \in S''} \EEE(\AAA^{(s)} | \XXX))$. We infer from this that $\card(S') \leqsl \alpha$.
So, $\EEE(\AAA | \XXX) = \EEE(\AAA' | \XXX)$. Consequently, $\EEE(\AAA | \EEE^i_{\alpha}(\AAA')) = \EEE(\AAA' |
\EEE^i_{\alpha}(\AAA')) = \alpha \odot \EEE^i_{\alpha}(\AAA')$ (thanks to \PROP{count}). This means that
$\EEE^i_{\alpha}(\AAA') \leqsl \EEE^i_{\alpha}(\AAA)$.\par
Now assume that $\alpha \geqsl \gamma^+ = \card(S)$. As before, take $\XXX \leqsl^s \EEE^i_{\alpha}(\AAA')$
with $\Dim(\XXX) = \aleph_0$. Then $\alpha \odot \XXX = \EEE(\AAA' | \XXX) \geqsl \EEE(\AAA^{(s)}
| \XXX)$ for every $s \in S$ and thus $\EEE(\AAA | \XXX) = \bigoplus_{s \in S} \EEE(\AAA^{(s)} | \XXX) \leqsl \card(S)
\odot (\alpha \odot \XXX) = \alpha \odot \XXX = \EEE(\AAA' | \XXX) \leqsl \EEE( \AAA | \XXX)$. So, $\EEE(\AAA | \XXX) =
\EEE(\AAA' | \XXX)$ and consequently $\EEE^i_{\alpha}(\AAA') \leqsl \EEE^i_{\alpha}(\AAA)$.\par
To finish the proof of \eqref{eqn:alpha}, it remains to consider $\alpha = \gamma$. For $\XXX = \EEE^i_{\gamma}(\AAA')$
we readily have $\gamma \odot \XXX = \EEE(\AAA' | \XXX) \leqsl \EEE(\AAA | \XXX) = \bigoplus_{s \in S} \EEE(\AAA^{(s)} |
\XXX) \leqsl \card(S) \odot \EEE(\AAA' | \XXX) = \gamma^+ \odot \XXX$. These inequalities imply that $\XXX \leqsl
\EEE^i_{\gamma}(\AAA) \sqplus \EEE^i_{\gamma^+}(\AAA)$ and we are done.\par
Having \eqref{eqn:alpha}, we obtain that $\EEE^i_{\gamma}(\AAA') \sqplus \EEE^i_{\gamma^+}(\AAA') = \EEE^i_{\gamma}(\AAA)
\sqplus \EEE^i_{\gamma^+}(\AAA)$. We also know that $\EEE^i_{\gamma^+}(\AAA') \leqsl^s \EEE^i_{\gamma^+}(\AAA)$
(by the above argument and \THMP{ords}). So, $\EEE^i_{\gamma^+}(\AAA) \sqminus \EEE^i_{\gamma^+}(\AAA')$ gives
full information about the difference between $\AAA$ and $\AAA'$. We have
\begin{multline}\label{eqn:aux102}
\EEE^i_{\gamma^+}(\AAA) \sqminus \EEE^i_{\gamma^+}(\AAA') = \bigvee \{\YYY \leqsl^s \EEE^i_{\gamma}(\AAA')|\quad
\forall\, \XXX \leqsl^s \YYY,\ \XXX \neq \zero\dd\\
\card(\{s \in S\dd\ \EEE(\AAA^{(s)} | \XXX) \neq \zero\}) = \gamma^+\}.
\end{multline}
Indeed, $\EEE^i_{\gamma^+}(\AAA) \sqminus \EEE^i_{\gamma^+}(\AAA') \leqsl^s \EEE^i_{\gamma}(\AAA')$ and if $\XXX \leqsl^s
\EEE^i_{\gamma^+}(\AAA) \sqminus \EEE^i_{\gamma^+}(\AAA')$ is nontrivial, then there is $\XXX' \leqsl^s \XXX$
with $\Dim(\XXX') = \aleph_0$ (by \PRO{count}). Then $\bigoplus_{s \in S} \EEE(\AAA^{(s)} | \XXX') = \EEE(\AAA | \XXX') =
\gamma^+ \odot \XXX'$ and $$\Dim(\EEE(\AAA^{(s)} | \XXX')) \leqsl \Dim(\EEE(\AAA' | \XXX')) = \Dim(\gamma \odot \XXX')
= \gamma$$ which yields that the set $\{s \in S\dd\ \EEE(\AAA^{(s)} | \XXX') \neq \zero\}$ has power $\gamma^+$.
Conversely, if $\YYY$ is a member of the right-hand side set appearing in \eqref{eqn:aux102}, then necessarily $\YYY \disj
\EEE^i_{\gamma^+}(\AAA')$ and $\YYY \leqsl \EEE^i_{\gamma}(\AAA) \sqplus \EEE^i_{\gamma^+}(\AAA)$. So, we only need to show
that $\YYY \disj \EEE^i_{\gamma}(\AAA)$. Suppose, for the contrary, that $\YYY' = \YYY \wedge \EEE^i_{\gamma}(\AAA)
(\leqsl^s \YYY)$ is nontrivial. Then there is $\XXX \leqsl^s \YYY'$ such that $\Dim(\XXX) = \aleph_0$. Observe
that $\bigoplus_{s \in S} \EEE(\AAA^{(s)} | \XXX) = \EEE(\AAA | \XXX) = \gamma \odot \XXX$ which denies the fact that
$\card(\{s \in S\dd\ \EEE(\AAA^{(s)} | \XXX) \neq \zero\}) = \gamma^+$.\par
One may deduce from \eqref{eqn:alpha} and \eqref{eqn:aux102} that (below we use the notation introduced in this section)
\begin{equation}\label{eqn:g-g+}
\AAA = \AAA' \vee [\gamma^+ \odot (\EEE_{\gamma^+}(\AAA) \sqminus \EEE_{\gamma^+}(\AAA'))].
\end{equation}
The above notices show that \EXMp{lim} demonstrates all reasons for which it may happen that \eqref{eqn:limcard} is false.
We end the section with the announced

\begin{thm}{sup-sum}
Let $S$ be a nonempty set and $\{\AaA_s\}_{s \in S}$ be a collection of nonempty subsets of $\CDD_N$. Then
\begin{equation}\label{eqn:sup-sum}
\bigvee \bigl(\bigoplus_{s \in S} \AaA_s\bigr) = \bigoplus_{s \in S} \bigl(\bigvee \AaA_s\bigr)
\end{equation}
where $\bigoplus_{s \in S} \AaA_s = \{\bigoplus_{s \in S} \XXX^{(s)}\dd\ \XXX^{(s)} \in \AaA_s\ (s \in S)\}$.
\end{thm}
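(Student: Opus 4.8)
The plan is to prove \THM{sup-sum} by transfinite induction on $\kappa=\card(S)$. The inequality ``$\geqsl$'' in \eqref{eqn:sup-sum} is immediate: for any choice $\XXX^{(s)}\in\AaA_s$ ($s\in S$) one has $\bigoplus_{s\in S}\XXX^{(s)}\leqsl\bigoplus_{s\in S}(\bigvee\AaA_s)$, so every element of $\bigoplus_{s\in S}\AaA_s$ is $\leqsl$-dominated by $\bigoplus_{s\in S}(\bigvee\AaA_s)$, whence $\bigvee(\bigoplus_{s\in S}\AaA_s)\leqsl\bigoplus_{s\in S}(\bigvee\AaA_s)$. Only ``$\leqsl$'' needs work. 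When $\kappa$ is finite the claim reduces, by induction on $\kappa$ and associativity of $\bigoplus$, to \PRO{luboplus}. When $\kappa$ is an infinite limit cardinal (this includes $\kappa=\aleph_0$), apply (AO6) to the family $\{\bigvee\AaA_s\}_{s\in S}$ to get $\bigoplus_{s\in S}(\bigvee\AaA_s)=\bigvee\{\bigoplus_{s\in S'}(\bigvee\AaA_s)\dd\ S'\subset S,\ 0<\card(S')<\kappa\}$; for each such $S'$ the inductive hypothesis gives $\bigoplus_{s\in S'}(\bigvee\AaA_s)=\bigvee(\bigoplus_{s\in S'}\AaA_s)$, and extending an arbitrary family $(\XXX^{(s)})_{s\in S'}$ by members of the (nonempty) sets $\AaA_s$, $s\in S\setminus S'$, shows $\bigvee(\bigoplus_{s\in S'}\AaA_s)\leqsl\bigvee(\bigoplus_{s\in S}\AaA_s)$. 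Taking the join over all admissible $S'$ finishes this case.

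The essential case is $\kappa=\gamma^{+}$ for an infinite cardinal $\gamma$. Write $\AAA=\bigoplus_{s\in S}\BBB^{(s)}$ with $\BBB^{(s)}=\bigvee\AaA_s$, put $\DDD=\bigvee(\bigoplus_{s\in S}\AaA_s)$ (so $\DDD\leqsl\AAA$ by the first paragraph), and set $\AAA'=\bigvee\{\bigoplus_{s\in S'}\BBB^{(s)}\dd\ S'\subset S,\ 0<\card(S')\leqsl\gamma\}$. Exactly as in the limit case, the inductive hypothesis applied to each $S'$ of cardinality $\leqsl\gamma<\kappa$ gives $\bigoplus_{s\in S'}\BBB^{(s)}=\bigvee(\bigoplus_{s\in S'}\AaA_s)\leqsl\DDD$, whence $\AAA'\leqsl\DDD$. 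Now invoke \eqref{eqn:g-g+}, i.e.\ $\AAA=\AAA'\vee\bigl[\gamma^{+}\odot(\EEE_{\gamma^{+}}(\AAA)\sqminus\EEE_{\gamma^{+}}(\AAA'))\bigr]$; since $\AAA'\leqsl\DDD$, it suffices to prove $\gamma^{+}\odot\bigl(\EEE_{\gamma^{+}}(\AAA)\sqminus\EEE_{\gamma^{+}}(\AAA')\bigr)\leqsl\DDD$. By \eqref{eqn:aux102}, summed over $i\in\{I,\tII,\tIII\}$, the tuple $\EEE_{\gamma^{+}}(\AAA)\sqminus\EEE_{\gamma^{+}}(\AAA')$ is the join of a family $\ZzZ$ of ``witnesses'' $\YYY\leqsl^s\EEE^i_\gamma(\AAA')$ such that every nonzero $\XXX\leqsl^s\YYY$ satisfies $\card\{s\in S\dd\ \EEE(\BBB^{(s)}|\XXX)\neq\zero\}=\gamma^{+}$. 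Using (AO14) in the form $\gamma^{+}\odot(\bigvee\ZzZ)=\bigvee\{\gamma^{+}\odot\YYY\dd\ \YYY\in\ZzZ\}$, and then \PRO{count} together with order-completeness (a maximal regular family of $\leqsl^s$-subtuples of $\YYY$ of dimension $\leqsl\aleph_0$ joins up to $\YYY$, and each of its members is again a witness, because the witness condition passes to $\leqsl^s$-subtuples), the whole problem reduces to the following single assertion: for every witness $\YYY$ with $0<\dim(\YYY)\leqsl\aleph_0$ one has $\gamma^{+}\odot\YYY\leqsl\DDD$.

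So fix such a $\YYY$. Writing $\gamma^{+}\odot\YYY=\bigoplus_{\xi<\gamma^{+}}\YYY$ and identifying the index set $\gamma^{+}$ with a well-ordering of $S$, I would construct by transfinite recursion along $S$ a family $(\XXX^{(s)})_{s\in S}$ with $\XXX^{(s)}\in\AaA_s$ such that $\bigoplus_{s\in S}\XXX^{(s)}\geqsl\gamma^{+}\odot\YYY$; as $\bigoplus_{s\in S}\XXX^{(s)}\leqsl\DDD$, this yields the claim. The witness condition guarantees that $\gamma^{+}$-many indices $s$ admit a member of $\AaA_s$ possessing a nonzero reduced part not unitarily disjoint from $\YYY$; passing to the von Neumann algebra $\WWw'(\aAA)$ and translating via \PRO{transl}, this says there are cofinally many ``admissible slots'' in which to place, step by step, pairwise orthogonal copies of (pieces of) the projection corresponding to $\YYY$ inside the projection corresponding to $\DDD$, so that these copies accumulate to $\gamma^{+}\odot\YYY$; the bookkeeping is modelled on the trace/transfinite-induction argument in the proof of \PRO{fin}, but now carried out in a possibly properly infinite algebra, where the abundance of room actually helps. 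Turning the purely ``local'' data ``$\EEE(\BBB^{(s)}|\XXX)\neq\zero$ for $\gamma^{+}$-many $s$'' into one ``global'' embedding of $\gamma^{+}$ full copies of $\YYY$ into $\DDD$ is the main obstacle; once it is in place, combining it with the reductions of the previous paragraph, with $\AAA'\leqsl\DDD$, and with ``$\geqsl$'' completes the inductive step and hence the proof.
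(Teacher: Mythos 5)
Your argument tracks the paper's proof exactly up to and including the reduction, via \eqref{eqn:g-g+}, of the successor case to the single inequality $\gamma^{+}\odot\bigl(\EEE_{\gamma^{+}}(\AAA)\sqminus\EEE_{\gamma^{+}}(\AAA')\bigr)\leqsl\DDD$: the trivial inequality, the finite and limit cases, and the use of the inductive hypothesis to get $\AAA'\leqsl\DDD$ are all correct. The gap is precisely the step you yourself flag as ``the main obstacle'' and never carry out: the transfinite construction of a \emph{single} family $(\XXX^{(s)})_{s\in S}$, $\XXX^{(s)}\in\AaA_s$, with $\bigoplus_{s\in S}\XXX^{(s)}\geqsl\gamma^{+}\odot\YYY$. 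Besides being unexecuted, this plan looks unrepairable as stated. The witness condition only says that for each nonzero $\XXX\leqsl^s\YYY$ and each of $\gamma^{+}$ many indices $s$ \emph{some} member of $\AaA_s$ fails to be unitarily disjoint from $\XXX$ --- and which member works depends on $\XXX$. If $\YYY=\YYY_1\sqplus\YYY_2$ and $\AaA_s=\{\BBB^{(s)}_1,\BBB^{(s)}_2\}$ with $\BBB^{(s)}_j\wedge\YYY\leqsl\YYY_j$, then any single choice function leaves $\bigoplus_{s\in S}\XXX^{(s)}$ unitarily disjoint from one of the halves $\YYY_j$ on a large set of indices, so not even one copy of $\YYY$ need embed; one would have to take a join over many choice functions, which destroys the recursion you describe.

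The paper needs no construction at all. After the same reduction it proves instead that $\EEE_{\gamma^{+}}(\AAA)\sqminus\EEE_{\gamma^{+}}(\AAA')\disj\EEE^i_{\alpha}(\bigvee\AaA)$ for every $(i,\alpha)\in\Upsilon$ with $\alpha\leqsl\gamma$. Since the $\EEE^i_{\alpha}(\bigvee\AaA)$ form the partition of unity induced by $\DDD=\bigvee\AaA$, this disjointness forces $\EEE_{\gamma^{+}}(\AAA)\sqminus\EEE_{\gamma^{+}}(\AAA')$ to be carried entirely by the pieces with $\alpha\geqsl\gamma^{+}$ (\PRO{leqsl-leqsls}), and then $\gamma^{+}\odot(\cdots)\leqsl\DDD$ falls out of \eqref{eqn:main} with no embedding to build. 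The disjointness itself is proved by contradiction using exactly the local data you isolated, but in a counting rather than constructive way: a nontrivial common part $\XXX\leqsl^s\EEE^i_{\alpha}(\bigvee\AaA)$ of countable dimension would satisfy $\EEE\bigl(\bigoplus_{s\in S}\BBB^{(s)}\,|\,\XXX\bigr)\leqsl\alpha\odot\XXX$ for \emph{every} choice $\BBB^{(s)}\in\AaA_s$, so at most $\gamma$ indices $s$ could admit some $\BBB^{(s)}\in\AaA_s$ with $\BBB^{(s)}\not\disj\XXX$, contradicting the cardinality $\gamma^{+}$ coming from \eqref{eqn:aux102}. Replacing your final paragraph by this argument closes the gap.
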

\begin{proof}
The inequality `$\leqsl$' in \eqref{eqn:sup-sum} is clear. We shall prove the converse one by transfinite induction
on $\card(S)$. The cases when $\card(S) < \aleph_0$ or $\card(S) = \aleph_0$ are included in \PROp{luboplus}
and \CORp{countable}, respectively. Assume $\beta$ is an uncountable cardinal such that \eqref{eqn:sup-sum} is fulfilled
provided $\card(S) < \beta$. Now suppose $\card(S) = \beta$. If $\beta$ is limit, the assertion (i.e. the inequality
`$\geqsl$' in \eqref{eqn:sup-sum}) follows from (AO6) \pREF{AO6} and the transfinite induction hypothesis. Thus we may
assume that $\beta = \gamma^+$. Put $\AaA = \bigoplus_{s \in S} \AaA_s$, $\AAA^{(s)} = \bigvee \AaA_s$ ($s \in S$), $\AAA =
\bigoplus_{s \in S} \AAA^{(s)}$ and $\AAA' = \bigvee \{\bigoplus_{s \in S'} \AAA^{(s)}\dd\ S' \subset S,\ 0 < \card(S')
\leqsl \gamma\}$. We infer from the transfinite induction hypothesis that $\AAA' \leqsl \bigvee \AaA$. Hence, according
to \eqref{eqn:g-g+}, we only need to show that $\gamma^+ \odot (\EEE_{\gamma^+}(\AAA) \sqminus \EEE_{\gamma^+}(\AAA'))
\leqsl \bigvee \AaA$. Having in mind the partition of unity induced by $\bigvee \AaA$, we see that the latter inequality
will be fulfilled if only
\begin{equation}\label{eqn:aux39}
\EEE_{\gamma^+}(\AAA) \sqminus \EEE_{\gamma^+}(\AAA') \disj \EEE^i_{\alpha}\bigl(\bigvee \AaA\bigr)
\end{equation}
for every $(i,\alpha) \in \Upsilon$ with $\alpha \leqsl \gamma$. Suppose \eqref{eqn:aux39} is false for some $\alpha \leqsl
\gamma$. Then \PROp{count} implies that there is $\XXX \in \CDD_N$ such that $0 < \dim(\XXX) \leqsl
\aleph_0$,
\begin{equation}\label{eqn:aux37}
\XXX \leqsl^s \EEE^i_{\alpha}\bigl(\bigvee \AaA\bigr) \qquad \textup{and} \qquad \XXX \leqsl^s \EEE_{\gamma^+}(\AAA)
\sqminus \EEE_{\gamma^+}(\AAA').
\end{equation}
The first relation of \eqref{eqn:aux37} yields that $\EEE(\bigvee \AaA | \XXX) \leqsl \alpha \odot \XXX$
(by the characterization of $\EEE^i_{\alpha}(\bigvee \AaA)$ given in \THMP{decomp}). Consequently,
\begin{equation}\label{eqn:aux31}
\dim\Bigl(\EEE\bigl(\bigoplus_{s \in S} \BBB^{(s)} | \XXX\bigr)\Bigr) \leqsl \alpha
\end{equation}
whenever $\BBB^{(s)} \in \AaA_s$ ($s \in S$). But $\EEE(\bigoplus_{s \in S} \BBB^{(s)} | \XXX) = \bigoplus_{s \in S}
\EEE(\BBB^{(s)} | \XXX)$ and thus \eqref{eqn:aux31} changes into $\sum_{s \in S} \dim(\EEE(\BBB^{(s)} |
\XXX)) \leqsl \alpha$. So, we have obtained that whatever $\BBB^{(s)} \in \AaA_s$ we choose,
\begin{equation}\label{eqn:aux30}
\card (\{s \in S\dd\ \BBB^{(s)} \not\disj \XXX\}) \leqsl \gamma.
\end{equation}
However, the second relation of \eqref{eqn:aux37} combined with \eqref{eqn:aux102} yields that the set $S' = \{s \in S\dd\
\AAA^{(s)} \not\disj \XXX\}$ has power $\gamma^+$. Observe that for $s \in S$, if $\YYY \disj \XXX$ for every $\YYY \in
\AaA_s$, then necessarily $\AAA^{(s)} = \bigvee \AaA_s \disj \XXX$ and hence $s \notin S'$. We conclude from this that
for every $s \in S'$ there is $\BBB^{(s)} \in \AaA_s$ such that $\BBB^{(s)} \not\disj \XXX$. Now \eqref{eqn:aux30} denies
the fact that $\card(S') = \gamma^+$. Consequently, \eqref{eqn:aux39} is satisfied and we are done.
\end{proof}

\SECT{Semigroup of semiminimal $N$-tuples}

This section is devoted to a deeper study of $\SsS\MmM_N$. Thanks to (AO4) \pREF{AO4}, $\SsS\MmM_N$ is a \textbf{set} and
$(\SsS\MmM_N,\oplus)$ is a semigroup which may be enlarged to an Abelian group.\par
A similar construction to the following may be found in \cite{e} (see Proposition~1.41 there). Fix nontrivial $\AAA \in
\SsS\MmM_N$. Since $\WWw'(\aAA)$ is type II$_1$, for every $n \geqsl 1$ there is a unique (by (AO1), \PREF{AO1})
$\AAA^{(n)} \in \SsS\MmM_N$ such that $\AAA = n \odot \AAA^{(n)}$. Notation: $\frac1n \odot \AAA = \AAA^{(n)}$. Now if $w$
is a positive rational number and $w = \frac{p}{q}$ with natural coprime $p$ and $q$, we define $w \odot \AAA = p \odot
(\frac1q \odot \AAA)$. Finally, for a positive real number $t$ let
$$
t \odot \AAA = \bigvee \{w \odot \AAA\dd\ w \in \QQQ,\ w \leqsl t\}.
$$
Additionally, put $t \odot \zero = \zero$ for each $t \in \RRR_+$. Using traces on $*$-commutants of semiminimal $N$-tuples
(i.e. on $\WWw'(\aAA)$ for $\AAA \in \SsS\MmM_N$), one shows that for every $s,t \in \RRR_+$ and any $\AAA, \BBB \in
\SsS\MmM_N$,
\begin{enumerate}[(VS1)]
\item $0 \odot \AAA = \zero$; $1 \odot \AAA = \AAA$,
\item $s \odot \AAA \leqsl t \odot \AAA$ provided $s \leqsl t$,
\item\label{VS3} $t \odot \AAA = \bigwedge \{x \odot \AAA\dd\ x > t\}$ and for $t > 0$, $t \odot \AAA = \bigvee \{x \odot
   \AAA\dd\ 0 \leqsl x < t\}$,
\item $(st) \odot \AAA = s \odot (t \odot \AAA)$; $(s + t) \odot \AAA = (s \odot \AAA) \oplus (t \odot \AAA)$,
\item $t \odot (\AAA \oplus \BBB) = (t \odot \AAA) \oplus (t \odot \BBB)$,
\item if `$\sim$' denotes one of `$\leqsl$', `$\leqsl^s$', `$\ll$', `$\disj$' and $t > 0$, then $t \odot \AAA \sim t \odot
   \BBB \iff \AAA \sim \BBB$,
\item if $\AAA = \bigoplus_{s \in S} \AAA^{(s)}$, then $t \odot \AAA = \bigoplus_{s \in S} (t \odot \AAA^{(s)})$
   (this follows from (VS5), (VS6) and \PROP{fin}),
\item $\bB(\AAA) \in \SsS\MmM_N$ and $\bB(t \odot \AAA) = t \odot \bB(\AAA)$,
\item for every sequence $(t_n)_{n=1}^{\infty}$ of nonnegative reals, $(\sum_{n=1}^{\infty} t_n) \odot \AAA =
   \bigoplus_{n=1}^{\infty} t_n \odot \AAA$ (where $\infty$ is identified with $\aleph_0$, if needed).
\end{enumerate}
Now by (VS1), (VS4), (VS5) and (AO4) \pREF{AO4}, there is a real vector space
$$(\EeE_N,+,\cdot) \supset (\SsS\MmM_N,\oplus,\odot).$$
The above inclusion means that the addition and multiplication by reals in $\EeE_N$ extend `$\oplus$' and the just
defined `$\odot$'. $\SsS\MmM_N$ as a subset of $\EeE_N$ is a cone ($\SsS\MmM_N + \SsS\MmM_N \subset \SsS\MmM_N$, $\RRR_+
\cdot \SsS\MmM_N \subset \SsS\MmM_N$ and $\SsS\MmM_N \cap (-\SsS\MmM_N) = \{0\} = \{\zero\}$). We may assume that $\EeE_N
= \SsS\MmM_N - \SsS\MmM_N$. Under such an assumption, we may consider the partial order on $\EeE_N$ induced
by $\SsS\MmM_N$: $\xi_1 \leqsl_{\EeE} \xi_2 \iff \xi_2 - \xi_1 \in \SsS\MmM_N$ ($\xi_1, \xi_2 \in \EeE_N$). It may be
checked that for $\AAA, \BBB \in \SsS\MmM_N$, $\AAA \leqsl_{\EeE} \BBB \iff \AAA \leqsl \BBB$. So, `$\leqsl_{\EeE}$'
extends `$\leqsl$' and therefore we shall omit the subscript `$\EeE$' in `$\leqsl_{\EeE}$' and we shall simply write
`$\leqsl$' instead of `$\leqsl_{\EeE}$'. Since every nonempty subset of $\SsS\MmM_N$ which is upper bounded in $\SsS\MmM_N$
has the l.u.b. (in $\SsS\MmM_N$), $\EeE_N$ is a conditionally complete lattice (which means that every nonempty upper
bounded subset of $\EeE_N$ has the l.u.b. in $\EeE_N$). Our aim is to find a `model' for the lattice $\EeE_N$.\par
From now to the end of the section we fix a representative $\jJJ_{\tII}$ of $\JJJ_{\tII}$, a compact Hausdorff space
$\Omega_{\tII}$ homeomorphic to the Gelfand spectrum of $\ZZz(\WWw'(\jJJ_{\tII}))$ and an isomorphism $\Psi\dd
\ZZz(\WWw'(\jJJ_{\tII})) \to \CCc(\Omega_{\tII})$ of $*$-algebras where $\CCc(\Omega_{\tII})$ is the algebra of all
continuous complex-valued functions on $\Omega_{\tII}$. Every $\AAA \leqsl^s \JJJ_{\tII}$ corresponds to a unique
central projection $z_{\AAA}$ in $\WWw'(\jJJ_{\tII})$. Let $U_{\AAA}$ be a clopen (i.e. simultaneously closed and open)
subset of $\Omega_{\tII}$ whose characteristic function coincides with $\Psi(z_{\AAA})$. $\Omega_{\tII}$ is extremely
disconnected (that is, the closure of every open subset of $\Omega_{\tII}$ is open as well; see \cite[Theorem~5.2.1]{k-r1})
and the assignment $\AAA \mapsto U_{\AAA}$ establishes a one-to-one correspondence between all $N$-tuples $\XXX \in
\SsS\MmM_N^{\infty}$ (where $\SsS\MmM_N^{\infty} = \{\XXX \in \CDD_N\dd\ \XXX \leqsl^s \JJJ_{\tII}\} = \{\aleph_0 \odot
\AAA\dd\ \AAA \in \SsS\MmM_N\}$) and all clopen subsets of $\Omega_{\tII}$. Moreover, for $\AAA, \BBB \in
\SsS\MmM_N^{\infty}$, $\AAA \leqsl^s \BBB \iff U_{\AAA} \subset U_{\BBB}$.\par
For every $\AAA \in \SsS\MmM_N$, $\widetilde{\AAA} = \aleph_0 \odot \AAA \leqsl^s \JJJ_{\tII}$ and thus
$U_{\widetilde{\AAA}}$ makes sense. The latter set is said to be the \textit{support} of $\AAA$ and denoted
by $\supp_{\Omega_{\tII}} \AAA$. There is no difficulty in verifying that $\supp_{\Omega_{\tII}} \AAA \subset
\supp_{\Omega_{\tII}} \BBB$ (respectively $\supp_{\Omega_{\tII}} \AAA \cap \supp_{\Omega_{\tII}} \BBB = \varempty$)
provided $\AAA \ll \BBB$ (respectively $\AAA \disj \BBB$) and $\AAA, \BBB \in \SsS\MmM_N$.\par
The following idea comes from theory of $\WWw^*$-algebras (compare with \cite[Definition~5.6.5]{k-r1}) especially when
working with the so-called extended center valued traces (see the notes on page~329 of \cite{tk1} and Definition~V.2.33
there). We consider the set
\begin{equation*}
\MmM(\Omega_{\tII}) = \{f \in \CCc(\Omega_{\tII},[-\infty,+\infty])\dd\
f^{-1}(\RRR) \textup{ is dense in } \Omega_{\tII}\}.
\end{equation*}
To make the space $\MmM(\Omega_{\tII})$ a real vector space, we need the following well-known result (it follows from
\cite[Corollary~5.2.11]{k-r1} or \cite[Corollary~III.1.8]{tk1}; see also \cite{g-j} for more general results in this
direction).

\begin{lem}{C-S}
If $\XxX$ and $K$ are compact Hausdorff spaces and $\XxX$ is extremely disconnected, then every continuous function
of an arbitrary open dense subset of $\XxX$ into $K$ is extendable to a continuous function of $\XxX$ into $K$.
\end{lem}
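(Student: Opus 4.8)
The plan is to reduce to the case $K=[0,1]$ and then to build the extension by hand from a system of clopen sets, the construction being driven by extreme disconnectedness.

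\emph{Reduction to the unit interval.} First I would embed $K$ into a Tychonoff cube: setting $J=\CCc(K,[0,1])$, the evaluation map $e\dd K\to[0,1]^J$ is a homeomorphism onto the compact, hence closed, subset $e(K)$. Writing the given map as $f=(f_j)_{j\in J}$ with each $f_j\dd D\to[0,1]$ continuous (here $D$ denotes the prescribed open dense subset of $\XxX$), it is enough to extend every $f_j$ to a continuous $\bar f_j\dd\XxX\to[0,1]$: then $\bar f=(\bar f_j)\dd\XxX\to[0,1]^J$ is continuous, and since $D$ is dense and $[0,1]^J$ Hausdorff, $\bar f(\XxX)\subset\overline{\bar f(D)}=\overline{f(D)}\subset\overline{e(K)}=e(K)$, so $e^{-1}\circ\bar f$ is the desired extension into $K$. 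Hence I may assume $K=[0,1]$.

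\emph{A clopen stratification.} I would record the elementary fact that in an extremely disconnected space disjoint open sets have disjoint closures: if $A,B$ are open with $A\cap B=\varempty$, then $A$ lies in the closed set $\XxX\setminus B$, so $\overline A\cap B=\varempty$; as $\overline A$ is open, $B$ lies in the closed set $\XxX\setminus\overline A$, whence $\overline A\cap\overline B=\varempty$. For rational $r\in[0,1]$ put $A_r=\{y\in D\dd f(y)<r\}$ and $B_r=\{y\in D\dd f(y)>r\}$; these are open in $\XxX$ (since $D$ is open), so their closures $\overline{A_r}$, $\overline{B_r}$ are clopen. Two relations are the crux: $\overline{A_r}\cap\overline{B_s}=\varempty$ whenever $r\leqsl s$ (apply the fact above to the disjoint open sets $A_r$, $B_s$), and $\overline{A_r}\cup\overline{B_s}=\XxX$ whenever $r>s$ (then $A_r\cup B_s=D$, which is dense).

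\emph{The extension and its continuity.} I would define $g\dd\XxX\to[0,1]$ by $g(x)=\inf\{r\in\QQQ\cap[0,1]\dd x\in\overline{A_r}\}$ (with $\inf\varempty=1$), together with the companion $h(x)=\sup\{s\in\QQQ\cap[0,1]\dd x\in\overline{B_s}\}$ (with $\sup\varempty=0$). The two relations above force $g=h$ pointwise: if $x\in\overline{A_r}\cap\overline{B_s}$ then necessarily $r>s$, giving $g\geqsl h$; and if $g(x)>h(x)$, rationals $s<r$ chosen strictly between would satisfy $x\notin\overline{A_r}$, $x\notin\overline{B_s}$ and $r>s$, contradicting $\overline{A_r}\cup\overline{B_s}=\XxX$. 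Consequently, for $c\in(0,1]$ and $c'\in[0,1)$,
$$
g^{-1}([0,c))=\bigcup_{r\in\QQQ,\ r<c}\overline{A_r},\qquad g^{-1}((c',1])=h^{-1}((c',1])=\bigcup_{s\in\QQQ,\ s>c'}\overline{B_s};
$$
both of these are open, so $g$ is continuous. Finally $g$ restricts to $f$ on $D$: for $x\in D$ with $f(x)=t$, every rational $r>t$ has $x\in A_r\subset\overline{A_r}$, so $g(x)\leqsl t$, while for rational $r<t$ continuity of $f$ and openness of $D$ yield a neighbourhood of $x$ missing $A_r$, so $x\notin\overline{A_r}$ and $g(x)\geqsl t$; thus $g(x)=t$. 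I expect the main obstacle to be precisely the identity $g=h$, i.e. that the lower and upper regularizations of $f$ coincide: this is where extreme disconnectedness is genuinely used (through $\overline{A_r}\cap\overline{B_s}=\varempty$ for $r\leqsl s$), and it fails in general without that hypothesis; the only other point requiring care is checking, after the Tychonoff reduction, that the coordinatewise extension still takes values in $K$, which is immediate from density of $D$ and closedness of $K$ in the cube.
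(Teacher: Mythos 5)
Your proof is correct. Note first that the paper does not actually prove this lemma: it is stated as a well-known fact with references to Kadison--Ringrose (Corollary 5.2.11), Takesaki (Corollary III.1.8) and Gillman--Jerison, so you have supplied a complete argument where the paper offers only a citation. Your argument is the classical one, and every step checks out: the Tychonoff-cube reduction to $K=[0,1]$ is sound (the coordinatewise extension lands in $e(K)$ because $\overline{\bar f(D)}\supset\bar f(\overline D)=\bar f(\XxX)$ and $e(K)$ is closed); the two key relations $\overline{A_r}\cap\overline{B_s}=\varempty$ for $r\leqsl s$ and $\overline{A_r}\cup\overline{B_s}=\XxX$ for $r>s$ are exactly what is needed to force the lower and upper regularizations $g$ and $h$ to coincide; and the identities $g^{-1}([0,c))=\bigcup_{r<c}\overline{A_r}$ and $g^{-1}((c',1])=\bigcup_{s>c'}\overline{B_s}$ give continuity since these sets form a subbasis. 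You also correctly isolate the two places where hypotheses are genuinely used: extreme disconnectedness enters only through the openness of the closures $\overline{A_r}$ (which yields disjointness of closures of disjoint open sets), and openness of $D$ in $\XxX$ is needed both to make $A_r$, $B_r$ open in $\XxX$ and to verify $x\notin\overline{A_r}$ for $r<f(x)$ in the final step. The only stylistic remark is that the coincidence $g=h$ is precisely the statement that the bounded function extends, so your proof is in effect a direct verification of the order-completeness of $\CCc(\XxX)$ restricted to this situation; this is the same mechanism the cited references use.
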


Now if $f,g \in \MmM(\Omega_{\tII})$, the set $D = f^{-1}(\RRR) \cap g^{-1}(\RRR)$ is open and dense in $\Omega_{\tII}$
and the function $f\bigr|_D + g\bigr|_D$ is well defined and continuous. Consequently, thanks to \LEM{C-S}, there is
a unique member of $\MmM(\Omega_{\tII})$, which we shall denote by $f + g$, which coincides with the usual sum on $D$.
Similarly one defines $f \cdot g$ and $t \cdot f$ for $t \in \RRR$. We leave this as an exercise that $\MmM(\Omega_{\tII})$
is a real vector space with these operations. Further, we equip $\MmM(\Omega_{\tII})$ with the pointwise order. One may
easily check that $\MmM(\Omega_{\tII})$ is a lattice (i.e. every finite nonempty subset of $\MmM(\Omega_{\tII})$ has
the l.u.b. and g.l.b.). What is more, $\MmM(\Omega_{\tII})$ is conditionally complete, since $\Omega_{\tII}$ is
extremely disconnected (it follows from \cite[Proposition~III.1.7]{tk1}). In the sequel we shall show that $\EeE_N$ and
$\MmM(\Omega_{\tII})$ are lattice-isomorphic. For every $f \in \MmM(\Omega_{\tII})$ let $\supp f$ be the closure of the set
$\{x \in \Omega_{\tII}\dd\ f(x) \neq 0\}$. Since $\Omega_{\tII}$ is extremely disconnected, $\supp f$ is clopen.\par
When $\XxX$ is a clopen subset of $\Omega_{\tII}$, let $\MmM(\Omega_{\tII} | \XxX)$ be the set of all $f \in
\MmM(\Omega_{\tII})$ for which $\supp f \subset \XxX$. $\MmM(\Omega_{\tII} | \XxX)$ is a sublattice
of $\MmM(\Omega_{\tII})$. By $\MmM_+(\Omega_{\tII})$ and $\MmM_+(\Omega_{\tII} | \XxX)$ we denote the cones of nonnegative
elements of the suitable lattices.\par
For the next step of our considerations we need

\begin{lem}{W*}
Let $\Omega$ be the Gelfand spectrum of a commutative $\WWw^*$-algebra. Every dense $\GGg_{\delta}$ subset of $\Omega$
has dense interior. What is more, for each Borel function $f\dd \Omega \to \RRR$ there is an open dense set $D \subset
\Omega$ such that $f\bigr|_D$ is continuous.
\end{lem}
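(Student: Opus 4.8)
The plan is to notice that the only properties of $\Omega$ actually used are that it is compact Hausdorff (hence a Baire space) and that it is extremely disconnected (recalled for Gelfand spectra of commutative $\WWw^*$-algebras in \cite[Theorem~5.2.1]{k-r1}), and to deduce both assertions from one structural fact: in $\Omega$ every meager set is nowhere dense. So the first step I would carry out is to prove that for any sequence $(F_n)_{n=1}^{\infty}$ of closed nowhere dense subsets of $\Omega$ the set $\overline{\bigcup_n F_n}$ is again nowhere dense. To see this, suppose $U=\intt\bigl(\overline{\bigcup_n F_n}\bigr)\neq\varempty$; then $U$, being open in a compact Hausdorff space, is itself a Baire space, whereas $U=\bigcup_n(F_n\cap U)$ with each $F_n\cap U$ closed in $U$ and of empty interior in $U$ (since $\intt F_n=\varempty$) --- a contradiction. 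It follows that every meager subset of $\Omega$ is nowhere dense, and consequently the complement of a countable union of closed nowhere dense sets is open and dense.

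With this in hand the first assertion is immediate: if $G=\bigcap_n U_n$ is a dense $\GGg_{\delta}$ set with $U_n$ open dense, then $F_n=\Omega\setminus U_n$ are closed nowhere dense, $W:=\Omega\setminus\overline{\bigcup_n F_n}$ is open and dense by the previous step, and $W\subset\bigcap_n U_n=G$, so $\intt(G)\supset W$ is dense. For the second assertion I would first record the auxiliary fact that every Borel set $B\subset\Omega$ has nowhere dense boundary: $B$ has the Baire property, so $B=V\triangle M$ with $V$ open and $M$ meager, hence nowhere dense by the first step; then $\partial B\subset\partial V\cup\overline{M}$, where $\partial V=\overline{V}\setminus V$ is closed with empty interior because $\overline{V}$ is clopen (extremal disconnectedness), and $\overline{M}$ is nowhere dense.

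Finally, given a Borel $f\dd\Omega\to\RRR$, for each $n\geqsl1$ I would cover $\RRR$ by the open intervals $I_{n,k}=(k2^{-n},(k+2)2^{-n})$, $k\in\ZZZ$, each of length $2^{-n+1}$, and set $B_{n,k}=f^{-1}(I_{n,k})$; these are Borel, so $D_n:=\Omega\setminus\overline{\bigcup_k\partial B_{n,k}}$ is open and dense, and on $D_n$ each $B_{n,k}\cap D_n$ is relatively clopen. On the dense $\GGg_{\delta}$ set $D=\bigcap_n D_n$ the restriction $f$ is continuous: for $x\in D$ and $n\geqsl1$ choose $k$ with $x\in B_{n,k}$; then $B_{n,k}\cap D$ is a neighbourhood of $x$ in $D$ on which $f$ oscillates by less than $2^{-n+1}$. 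Replacing $D$ by its interior --- which is open and dense by the first assertion --- yields the required open dense set. The main obstacle is the opening step, the collapse of ``meager'' to ``nowhere dense''; once that is established, the rest is routine bookkeeping with boundaries and dyadic covers.
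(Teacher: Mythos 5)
There is a genuine gap, and it sits in the opening step on which everything else rests. From $U=\intt\bigl(\overline{\bigcup_n F_n}\bigr)\neq\varempty$ you write $U=\bigcup_n(F_n\cap U)$; but $U$ is only contained in the \emph{closure} of $\bigcup_n F_n$, so $\bigcup_n(F_n\cap U)$ is merely dense in $U$, and a Baire space can perfectly well contain a dense meager subset --- no contradiction follows. Note that this step uses nothing beyond compactness and Hausdorffness, so if it were valid it would apply to $\Omega=[0,1]$ with $F_n$ finite sets of rationals and prove that the irrationals have dense interior. Even adding extremal disconnectedness (which your step 1 never invokes) does not rescue the claim: in the Gleason cover $E$ of $[0,1]$, with $\pi\dd E\to[0,1]$ the irreducible surjection, each fibre $\pi^{-1}(\{q\})$, $q\in\QQQ\cap[0,1]$, is closed and nowhere dense, yet the closure of their union maps onto $[0,1]$ and hence, by irreducibility, equals $E$; so $E$ is an extremely disconnected compact Hausdorff space containing a dense meager set. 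Thus the statement ``meager $\implies$ nowhere dense'', to which you correctly reduce both assertions of the lemma, is genuinely a property of \emph{hyperstonean} spaces: one needs the normal measures of the $\WWw^*$-algebra (every nowhere dense set is null for each normal measure, the supports of normal measures have dense union, and normality of the measures lets one pass from a null $\FFf_{\sigma}$ to a clopen set avoiding it). This is precisely what the paper's proof imports by citing \cite[Lemma~5.2.10]{k-r1} together with Proposition~III.1.15 and Theorem~III.1.17 of \cite{tk1}.

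The rest of your argument is sound modulo that step: the deduction of the first assertion, the fact that Borel sets have nowhere dense boundaries via the Baire property (where $\partial V$ having empty interior needs no extremal disconnectedness at all), and the dyadic-cover construction of $D=\bigcap_n D_n$ followed by passing to its interior are all correct and standard. So the proof would be complete once step 1 is replaced by the measure-theoretic argument, or simply by the citation the paper itself uses.
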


\LEM{W*} follows from \cite[Lemma~5.2.10]{k-r1} combined with Proposition~III.1.15 and Theorem~III.1.17 of \cite{tk1}
(see also the statement preceding Corollary~III.1.16 there).\par
Let $\{f_n\}_{n=1}^{\infty} \subset \MmM_+(\Omega_{\tII})$ be such that $\sum_{k=1}^n f_k \leqsl g$ for some $g \in
\MmM_+(\Omega_{\tII})$ and each $n$. We define $\sum_{n=1}^{\infty} f_n \in \MmM_+(\Omega_{\tII})$ as follows. Let $f\dd
\Omega \to \RRR$ be given by $f(x) = \sum_{n=1}^{\infty} f_n(x)$ provided the latter series is convergent and $f(x) = 0$
otherwise. By \LEM{W*}, there is an open dense subset $D$ of $\Omega_{\tII}$ such that $f\bigr|_D$ is continuous. We define
$\sum_{n=1}^{\infty} f_n \in \MmM_+(\Omega_{\tII})$ as the unique continuous extension of $f\bigr|_D$. Since $f \leqsl g$,
$(\sum_{n=1}^{\infty} f_n)(x) = \sum_{n=1}^{\infty} f_n(x)$ for $x$ belonging to an open dense subset of $\Omega_{\tII}$.
One may check that $\sum_{n=1}^{\infty} f_n = \sup{}_{\MmM(\Omega_{\tII})} \{\sum_{k=1}^n f_k\}_{n \geqsl 1}$.\par
Fix nontrivial $\XXX \in \SsS\MmM_N$. Let $\LlL[\XXX] = \{\FFF \in \SsS\MmM_N\dd\ \FFF \ll \XXX\}$ and $\XxX =
\supp_{\Omega_{\tII}} \XXX$. Then we have

\begin{thm}{TR}
There is a unique operator
$$
\LlL[\XXX] \ni \FFF \mapsto \DiNT{\FFF}{\XXX} \in \MmM_+(\Omega_{\tII} | \XxX)
$$
such that for any $\FFF, \FFF^{(n)} \in \SsS\MmM_N$ ($n = 1,2,3,\ldots$):
\begin{enumerate}[\upshape(TR1)]\addtocounter{enumi}{-1}
\item $\DiNT{\XXX}{\XXX} =$ the characteristic function of $\XxX$,
\item $\supp \DiNT{\FFF}{\XXX} \subset \supp_{\Omega_{\tII}} \FFF$,
\item $\DiNT{\bigl(\bigoplus_{n=1}^{\infty} \FFF^{(n)}\bigr)}{\XXX} = \sum_{n=1}^{\infty} \DiNT{\FFF^{(n)}}{\XXX}$
   if $\bigoplus_{n=1}^{\infty} \FFF^{(n)} \in \SsS\MmM_N$ (see the note preceding the theorem).
\end{enumerate}
Moreover, the above operator has further properties:
\begin{enumerate}[\upshape(TR1)]\addtocounter{enumi}{3}
\item[\upshape(TR1')] $\supp \DiNT{\FFF}{\XXX} = \supp_{\Omega_{\tII}} \FFF$ for every $\FFF \in \LlL[\XXX]$,
\item[\upshape(TR2')] whenever $\AAA \in \LlL[\XXX]$ is of the form $\AAA = \bigoplus_{s \in S} \AAA^{(s)}$,
   $$
   \DiNT{\AAA}{\XXX} = \sum_{s \in S} \DiNT{\AAA^{(s)}}{\XXX} := \sup{}_{\MmM(\Omega_{\tII} | \XxX)}
   \bigl\{\sum_{s \in S_0} \DiNT{\AAA^{(s)}}{\XXX}\dd\ S_0 \in \PPp_f(S)\bigr\},
   $$
\item $\DiNT{(t \odot \FFF)}{\XXX} = t \DiNT{\FFF}{\XXX}$ for each $\FFF \in \LlL[\XXX]$,
\item for any $\AAA, \BBB \in \LlL[\XXX]$, $\AAA \leqsl \BBB \iff \DiNT{\AAA}{\XXX} \leqsl \DiNT{\BBB}{\XXX}$,
\item for every $f \in \MmM_+(\Omega_{\tII} | \XxX)$ there is unique $\FFF \in \LlL[\XXX]$ with $\DiNT{\FFF}{\XXX} = f$.
\end{enumerate}
\end{thm}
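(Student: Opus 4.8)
The plan is to identify $\DiNT{\cdot}{\XXX}$ with the (extended, suitably normalised) centre-valued trace of an appropriate type $\tII_{\infty}$ $\WWw^*$-algebra, transported to $\CDD_N$ through \PRO{transl}. First I would fix the ambient. Since every nontrivial member of $\SsS\MmM_N$ has type $\tII_1$, hence finite, $*$-commutant, applying \LEMP{fin} inside $\WWw'$ of a representative of $\gamma\odot\XXX$ for a large enough cardinal $\gamma$ shows that the projection corresponding to any $\FFF\in\LlL[\XXX]$ is a central sum $\bigsqplus_n z_n\FFF$ with $z_n\FFF\leqsl n\odot\XXX$; as $\bigoplus_{n\geqsl1}(n\odot\XXX)=\aleph_0\odot\XXX$, this gives $\FFF\leqsl\widetilde{\XXX}:=\aleph_0\odot\XXX$ for all $\FFF\in\LlL[\XXX]$. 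Because $\widetilde{\XXX}\leqsl^s\JJJ_{\tII}$, the representative $\widetilde{\xXX}=\jJJ_{\tII}\bigr|_{H}$ with $P_{H}=z_{\widetilde{\XXX}}$ makes $\MMm:=\WWw'(\widetilde{\xXX})\cong z_{\widetilde{\XXX}}\WWw'(\jJJ_{\tII})$ a type $\tII_{\infty}$ algebra whose centre is carried by $\Psi$ onto $\{h\in\CCc(\Omega_{\tII})\dd\ \supp h\subset\XxX\}$; fixing $E_0\in\red(\widetilde{\xXX})$ with $\widetilde{\xXX}\bigr|_{E_0}\equiv\XXX$, one has $c_{P_{E_0}}=z_{\widetilde{\XXX}}$, the unit of $\ZZz(\MMm)$.

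Second, I would invoke the classical extended centre-valued semifinite trace $\Tr$ on $\MMm$ (unique, normal, faithful; cf.\ the remarks preceding \LEM{C-S}), normalised by $\Tr(P_{E_0})=z_{\widetilde{\XXX}}$; its values, via $\Psi$, lie exactly in $\MmM_+(\Omega_{\tII}\,|\,\XxX)$ --- this is what the space $\MmM(\Omega_{\tII})$ and the sums $\sum f_n$ introduced before the theorem were set up for, using \LEM{C-S}. For $\FFF\in\LlL[\XXX]$ I would pick $E\in\red(\widetilde{\xXX})$ with $\widetilde{\xXX}\bigr|_{E}\equiv\FFF$ and set $\DiNT{\FFF}{\XXX}:=\Psi(\Tr(P_E))\in\MmM_+(\Omega_{\tII}\,|\,\XxX)$. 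By \PRO{transl}(a) the projection $P_E$ is determined up to Murray--von Neumann equivalence, and $\Tr$ is such an invariant; together with the fact that unitaries carry traces to traces, this makes $\DiNT{\cdot}{\XXX}$ independent of all choices, i.e.\ well defined.

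Third, the axioms and extra properties are read off from $\Tr$. (TR0) is the normalisation. (TR1$'$), whence (TR1), holds since $\supp\Tr(P_E)$ is the clopen set of the central support $c_{P_E}$ which --- central supports being unchanged by $\aleph_0\odot$ --- corresponds to $\supp_{\Omega_{\tII}}\FFF$. (TR2) is the countable normality of $\Tr$, matching the definition of $\sum f_n$. For (TR2$'$): $\AAA=\bigoplus_{s\in S}\AAA^{(s)}\in\LlL[\XXX]$ is semiminimal, hence in $\fIN_N$, so \PRO{fin} gives $\AAA=\bigvee\{\bigoplus_{s\in S_0}\AAA^{(s)}\dd\ S_0\in\PPp_f(S)\}$, which $\Tr$ preserves by order-continuity. (TR3) follows for rational $t$ from finite additivity and then for real $t$ from (VS2)--(VS3) \pREF{VS3} and order-continuity. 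In (TR4) the forward direction is monotonicity of $\Tr$, and the backward one is the comparison theorem for type $\tII$ algebras via traces (as in the proof of \PRO{fin}) together with \PRO{transl}(b); hence $\DiNT{\cdot}{\XXX}$ is order-reflecting and, by \PRO{order}, injective. For (TR5): decompose $f\in\MmM_+(\Omega_{\tII}\,|\,\XxX)$ as $f=\sum_n f_n$ with $f_n$ bounded (say $f_n=f\wedge n-f\wedge(n-1)\leqsl1$), realise each $f_n$ as the trace of a subprojection of a copy of $P_{E_0}$ (divisibility/range of the trace in the type $\tII_1$ corner), assemble these orthogonally inside a projection $q$ by comparison and countable induction, and take $\FFF$ to be the tuple of $q$; additivity gives $\DiNT{\FFF}{\XXX}=f$, and uniqueness of $\FFF$ is the injectivity from (TR4). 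Finally, uniqueness of the operator itself: if $D$ obeys (TR0)--(TR2), then $\XXX=\YYY\sqplus(\XXX\sqminus\YYY)$ with (TR1), (TR2) and disjointness of supports forces $D(\YYY)=$ the characteristic function of $\supp_{\Omega_{\tII}}\YYY$ for all $\YYY\leqsl^s\XXX$, hence $D(t\odot\YYY)=t\cdot(\textup{that})$ for $t\in\RRR_+$, so $D$ coincides with $\DiNT{\cdot}{\XXX}$ on every tuple whose derivative is a bounded simple function; since the decomposition above expresses an arbitrary $\FFF\in\LlL[\XXX]$ as a countable direct sum of such tuples, (TR2) yields $D=\DiNT{\cdot}{\XXX}$.

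The main obstacle will be the genuine semifiniteness of $\Tr$: an $\FFF\in\LlL[\XXX]$ may dominate unboundedly many mutually orthogonal copies of pieces of $\XXX$, so $\DiNT{\FFF}{\XXX}$ really takes the value $+\infty$ on a (closed, nowhere dense) set, and the comparison-by-trace and divisibility arguments underlying the converse of (TR4) and property (TR5) must be carried out on the central projections where the trace is finite and then patched together --- essentially redoing for type $\tII_{\infty}$ the dimension-theoretic arguments that are routine for type $\tII_1$ (cf.\ \cite{sh}, \LEMP{fin} and \THMP{deco}). The existence, normality, faithfulness and uniqueness of the trace $\Tr$ itself is classical and may simply be quoted.
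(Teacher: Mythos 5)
Your proposal is correct in outline, and your uniqueness argument coincides with the paper's: force $D(\YYY)$ to be the characteristic function of $\supp_{\Omega_{\tII}}\YYY$ for $\YYY\leqsl^s\XXX$ using (TR0)--(TR2) and disjointness of supports, pass to rational multiples, then to countable series. For the existence, however, you take the first of the two routes the paper indicates --- the extended centre-valued trace of the type $\tII_{\infty}$ algebra $\WWw'(\yYY)$ with $\YYY=\aleph_0\odot\XXX$, normalised at a copy of $\XXX$; the paper cites \cite[Theorem~V.2.34]{tk1} for exactly this and then moves on. The construction it actually writes out stays entirely inside \emph{finite} trace theory: it picks $\XXX'$ with $\JJJ_{\tII}=\aleph_0\odot(\XXX\sqplus\XXX')$, uses the unit-normalised trace $\tr$ on the type $\tII_1$ algebra $\WWw'(\widetilde{\xXX})$ where $\widetilde{\XXX}=\XXX\sqplus\XXX'$, writes an arbitrary $\AAA\in\LlL[\XXX]$ as $\bigsqplus_{n}\AAA^{(n)}$ with $\AAA^{(n)}\leqsl n\odot\XXX$ (this is just the definition of semiminimality), lets $p_n$ be the projection corresponding to $\frac1n\odot\AAA^{(n)}\leqsl\widetilde{\XXX}$, and sets $\DiNT{\AAA}{\XXX}=\sum_n n\Tr(p_n)$, a sum of disjointly supported functions glued by \LEM{C-S}. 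The trade-off is genuine: your route makes well-definedness and (TR0)--(TR3) immediate trace facts, but concentrates the difficulty exactly where you flag it --- normalising the semifinite trace (one must observe that $\Tr(P_{E_0})$ is invertible in the extended centre because $P_{E_0}$ is finite with central support $1$; note also that $\Tr$ lands in $\MmM_+(\Omega_{\tII}|\XxX)$ only on \emph{finite} projections, which is all you use, not ``exactly'' on all of $\MMm$), and running the comparison argument for (TR4) and the divisibility argument for (TR5) on central pieces where the trace is bounded before patching. The paper's route avoids all semifinite subtleties, at the cost of checking that its formula is independent of the chosen decomposition $\bigsqplus_n\AAA^{(n)}$ (left there as an exercise). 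Your version is viable, but to be complete it would need the semifinite patching written out; you have correctly located where that work sits.
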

\begin{proof}
The existence of the operator may be deduced from the result on faithful normal extended center valued traces for
semifinite $\WWw^*$-algebras (\cite[Theorem~V.2.34]{tk1}) applied to $\WWw'(\yYY)$ with $\YYY = \aleph_0 \odot \XXX$.
The operator may also be constructed as follows. By \THMp{semiminimal}, there is $\XXX' \in \SsS\MmM_N$ such that
$\JJJ_{\tII} = \aleph_0 \odot (\XXX \sqplus \XXX')$. Put $\widetilde{\XXX} = \XXX \sqplus \XXX'$ and let $\MMm =
\WWw''(\widetilde{\xXX})$. Then $\MMm' = \WWw'(\widetilde{\xXX})$. Since $\widetilde{\XXX} \in \SsS\MmM_N$, $\MMm'$ is type
II$_1$ and hence there is a trace $\tr\dd \MMm' \to \ZZz(\MMm')$ (with $\tr(1) = 1$). Since $\ZZz(\MMm') = \ZZz(\MMm)$,
$\ZZz(\WWw''(\jJJ_{\tII})) = \ZZz(\WWw'(\jJJ_{\tII}))$ and a function $\MMm \ni T \mapsto \aleph_0 \odot T
\in \WWw''(\jJJ_{\tII})$ is an isomorphism of $*$-algebras, a function $\kappa\dd \ZZz(\MMm') \ni T \mapsto \aleph_0 \odot
T \in \ZZz(\WWw'(\jJJ_{\tII}))$ is a well defined $*$-isomorphism. Define $\Tr\dd \MMm' \to \CCc(\Omega_{\tII})$
by $\Tr = \Psi \circ \kappa \circ \tr$. Now if $\AAA \ll \XXX$, by \DEFp{semiminimal}, $\AAA$ may be written in the form
$\AAA = \bigsqplus_{n=1}^{\infty} \AAA^{(n)}$ with $\AAA^{(n)} \leqsl n \odot \XXX$. The latter implies that $\frac1n \odot
\AAA^{(n)} \leqsl \widetilde{\XXX}$ and thus there is a projection $p_n$ in $\MMm'(\widetilde{\xXX})$ which corresponds
(by \PROP{transl}) to $\frac1n \odot \AAA^{(n)}$. We put
\begin{equation}\label{eqn:aux105}
\DiNT{\AAA}{\XXX} = \sum_{n=1}^{\infty} n \Tr(p_n).
\end{equation}
Since $\AAA^{(n)} \disj \AAA^{(m)}$ for $n \neq m$, $\supp \Tr(p_n) \cap \supp \Tr(p_m) = \varempty$ and thus
\eqref{eqn:aux105} is well understood, by \LEM{C-S}. We leave this as an exercise that such a definition is independent
of the choice of $(\AAA^{(n)})_{n=1}^{\infty}$ and that all conditions of the theorem are fulfilled (observe that $\XXX$
corresponds to a central projection in $\MMm'(\widetilde{\xXX})$). Here we focus on the uniqueness of the operator.\par
When $\AAA \leqsl^s \XXX$, then $\XXX = \AAA \sqplus \BBB$ with $\BBB = \XXX \sqminus \AAA$ and $\supp_{\Omega_{\tII}} \BBB
= \supp_{\Omega_{\tII}} \XXX \setminus \supp_{\Omega_{\tII}} \AAA$. Consequently, by (TR0)--(TR2), $\DiNT{\AAA}{\XXX}$ is
the characteristic function of $\supp_{\Omega_{\tII}} \AAA$. This shows that $\DiNT{\FFF}{\XXX}$ is uniquely determined
by (TR0)--(TR2) for $\FFF \in \FfF_0 := \{w \odot \AAA\dd\ w \in \QQQ_+,\ \AAA \leqsl^s \XXX\}$.\par
Further, if $\AAA \leqsl \XXX$, then $\AAA$ may be written in the form $\AAA = \bigoplus_{n=1}^{\infty} \FFF^{(n)}$ with
$\FFF^{(n)} \in \FfF_0$ (this may be deduced, by means of the trace, from the representation of a continuous function
on an extremely disconnected compact Hausdorff space as a series of continuous functions with finite ranges). So, according
to (TR2), $\DiNT{\BBB}{\XXX}$ is uniquely determined by (TR0)--(TR2) for $\BBB = w \odot \AAA$ with rational $w$ and $\AAA
\leqsl \XXX$. To this end, it suffices to recall that if $\AAA \in \LlL[\XXX]$, then $\AAA = \bigsqplus_{n=1}^{\infty}
\AAA^{(n)}$ with $\frac1n \odot \AAA^{(n)} \leqsl \XXX$.
\end{proof}

\begin{cor}{lattiso}
$\EeE_N$ and $\MmM(\Omega_{\tII})$ are isomorphic as ordered vector spaces.
\end{cor}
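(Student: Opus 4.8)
The plan is to deduce the corollary directly from \THM{TR} by choosing a reference $N$-tuple whose support is all of $\Omega_{\tII}$, and then symmetrizing the resulting cone isomorphism in the usual Grothendieck fashion. First I would dispose of the degenerate case: if $\JJJ_{\tII} = \zero$ then $\SsS\MmM_N = \{\zero\}$, $\EeE_N = \{0\}$ and $\Omega_{\tII} = \varempty$, so both spaces are trivial (and by \REM{continuum} this case does not actually occur). So assume $\JJJ_{\tII}$ is nontrivial. By point (II) of \THM{semiminimal} there is a nontrivial $\XXX_0 \in \SsS\MmM_N$ with $\JJJ_{\tII} = \aleph_0 \odot \XXX_0$. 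Since the central projection $z_{\aleph_0 \odot \XXX_0}$ attached to $\aleph_0 \odot \XXX_0 \leqsl^s \JJJ_{\tII}$ is the unit, $\Psi$ sends it to the constant function $1$, so $\supp_{\Omega_{\tII}} \XXX_0 = \Omega_{\tII}$. Moreover every $\AAA \in \SsS\MmM_N$ satisfies $\AAA \ll \JJJ_{\tII} = \aleph_0 \odot \XXX_0$ (point (II)(b) of \THM{semiminimal}), hence $\AAA \leqsl \alpha \odot \XXX_0$ for some cardinal $\alpha$, i.e. $\AAA \ll \XXX_0$. Therefore $\LlL[\XXX_0] = \SsS\MmM_N$ and $\MmM_+(\Omega_{\tII} \,|\, \Omega_{\tII}) = \MmM_+(\Omega_{\tII})$, and \THM{TR} applied to $\XXX_0$ furnishes a map $\Phi_0 \colon \SsS\MmM_N \to \MmM_+(\Omega_{\tII})$, $\Phi_0(\AAA) = \DiNT{\AAA}{\XXX_0}$, which is bijective (by (TR4)--(TR5), together with \PRO{order}), positively homogeneous (by (TR3)), and additive: applying (TR2) to $\FFF^{(1)} = \AAA$, $\FFF^{(2)} = \BBB$ and $\FFF^{(n)} = \zero$ for $n \geqsl 3$ (with $\DiNT{\zero}{\XXX_0} = 0$ by (TR1)) gives $\Phi_0(\AAA \oplus \BBB) = \Phi_0(\AAA) + \Phi_0(\BBB)$. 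Finally (TR4) shows that $\Phi_0$ and $\Phi_0^{-1}$ are order preserving.

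Next I would extend $\Phi_0$ to the whole vector spaces. Recall $\EeE_N = \SsS\MmM_N - \SsS\MmM_N$; I claim likewise $\MmM(\Omega_{\tII}) = \MmM_+(\Omega_{\tII}) - \MmM_+(\Omega_{\tII})$, since for $f \in \MmM(\Omega_{\tII})$ the functions $f^+ = f \vee 0$ and $f^- = (-f) \vee 0$ belong to $\MmM_+(\Omega_{\tII})$ (their sets of finiteness both contain the dense set $f^{-1}(\RRR)$) and $f = f^+ - f^-$. Define $\Phi \colon \EeE_N \to \MmM(\Omega_{\tII})$ by $\Phi(\AAA - \BBB) = \Phi_0(\AAA) - \Phi_0(\BBB)$ for $\AAA, \BBB \in \SsS\MmM_N$; this is well defined, because if $\AAA - \BBB = \AAA' - \BBB'$ then $\AAA \oplus \BBB' = \AAA' \oplus \BBB$ and additivity of $\Phi_0$ gives $\Phi_0(\AAA) + \Phi_0(\BBB') = \Phi_0(\AAA') + \Phi_0(\BBB)$. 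Surjectivity of $\Phi$ follows from surjectivity of $\Phi_0$ onto $\MmM_+(\Omega_{\tII})$ and the decomposition $\MmM(\Omega_{\tII}) = \MmM_+(\Omega_{\tII}) - \MmM_+(\Omega_{\tII})$; injectivity follows from injectivity of $\Phi_0$.

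It then remains to check that $\Phi$ respects the ordered-vector-space structure. Additivity of $\Phi$ is immediate from its definition. For scalars: if $t \geqsl 0$ then, using (TR3) on $\Phi_0$, $\Phi\bigl(t \odot (\AAA - \BBB)\bigr) = \Phi\bigl((t\odot\AAA) - (t\odot\BBB)\bigr) = t\,\Phi_0(\AAA) - t\,\Phi_0(\BBB) = t\,\Phi(\AAA - \BBB)$, and the case $t < 0$ follows since $\Phi$ is a group isomorphism. For the order: $\Phi$ restricts to the cone isomorphism $\Phi_0$ of $\SsS\MmM_N$ onto $\MmM_+(\Omega_{\tII})$, so an element of $\EeE_N$ lies in $\SsS\MmM_N$ iff its image lies in $\MmM_+(\Omega_{\tII})$; since the orders on both spaces are exactly those induced by these cones, $\Phi$ and $\Phi^{-1}$ are order preserving. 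Being a linear order isomorphism between the lattices $\EeE_N$ and $\MmM(\Omega_{\tII})$, $\Phi$ automatically preserves $\vee$ and $\wedge$ as well, which completes the proof.

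The substantive content is entirely contained in \THM{TR}; the present statement is then a routine symmetrization, and I do not expect a genuine obstacle. The only points needing a little attention are the identification of a reference tuple $\XXX_0$ with full support (and the observation that $\LlL[\XXX_0]$ exhausts $\SsS\MmM_N$), and the verification that $\MmM(\Omega_{\tII})$ is the difference of its positive cone and that $\Phi$ is well defined on $\EeE_N$ — all of which are straightforward once (TR1)--(TR5) are in hand.
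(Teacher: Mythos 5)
Your proposal is correct and follows essentially the same route as the paper: pick $\XXX_0 \in \SsS\MmM_N$ with $\aleph_0 \odot \XXX_0 = \JJJ_{\tII}$ (so that $\LlL[\XXX_0] = \SsS\MmM_N$ and the target cone is all of $\MmM_+(\Omega_{\tII})$), read off from \THM{TR} that $\FFF \mapsto \DiNT{\FFF}{\XXX_0}$ is an additive, homogeneous, order-preserving bijection of cones, and then symmetrize. The paper's proof is just a terse version of this; your filled-in details (well-definedness on differences, $\MmM(\Omega_{\tII}) = \MmM_+(\Omega_{\tII}) - \MmM_+(\Omega_{\tII})$, additivity via (TR2) with trivial tail) are all sound.
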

\begin{proof}
Take $\XXX \in \SsS\MmM_N$ such that $\aleph_0 \odot \XXX = \JJJ_{\tII}$ and define $\Phi_+\dd \SsS\MmM_N \ni \FFF \mapsto
\DiNT{\FFF}{\XXX} \in \MmM_+(\Omega_{\tII})$. By \THM{TR}, $\Phi_+$ is an additive bijection preserving orders. Now it
suffices to extend $\Phi_+$ in a standard way: $\Phi(\xi) = \Phi(\xi_+) - \Phi(\xi_-)$.
\end{proof}

\begin{pro}{dint}
If $\AAA, \XXX, \YYY \in \SsS\MmM_N$ are such that $\AAA \ll \XXX \ll \YYY$, then
\begin{equation}\label{eqn:var}
\DiNT{\AAA}{\YYY} = \DiNT{\AAA}{\XXX} \cdot \DiNT{\XXX}{\YYY}.
\end{equation}
\end{pro}
\begin{proof}
Arguing as in the uniqueness part of \THM{TR}, we only need to check that \eqref{eqn:var} is fulfilled for $\AAA \leqsl^s
\XXX$. When $\AAA = \XXX$, \eqref{eqn:var} is clear. So, for arbitrary $\AAA \leqsl^s \XXX$, \eqref{eqn:var} follows from
(TR1) and (TR2).
\end{proof}

We end the section with the following two remarks.

\begin{rem}{integr}
The notation `$\DiNT{\AAA}{\XXX}$' suggests to denote the inverse operator, of $\MmM_+(\Omega_{\tII} | \XxX)$ onto
$\LlL[\XXX]$, by $\int f \dint{\XXX}$. Thus, for $f \in \MmM_+(\Omega_{\tII} | \XxX)$, $\int f \dint{\XXX} = \BBB$ iff
$\BBB \in \LlL[\XXX]$ is such that $\DiNT{\BBB}{\XXX} = f$. Arguing similarly as in the proof of \THM{TR}, one may show
that the operator $\MmM_+(\Omega_{\tII} | \XxX) \ni f \mapsto \int f \dint{\XXX} \in \LlL[\XXX]$ is uniquely determined
by the following three conditions:
\begin{enumerate}[({A}D1)]
\item $\int j_{\XxX} \dint{\XXX} = \XXX$ where $j_{\XxX}$ is the characteristic function of $\XxX$,
\item $\supp_{\Omega_{\tII}} (\int f \dint{\XXX}) \subset \supp f$ for each $f \in \MmM_+(\Omega_{\tII} | \XxX)$,
\item whenever $f \in \MmM_+(\Omega_{\tII} | \XxX)$ has the form $f = \sum_{n=1}^{\infty} f_n$ (with $f_n \in
   \MmM_+(\Omega_{\tII} | \XxX)$),
   $$\int f \dint{\XXX} = \bigoplus_{n=1}^{\infty} \int f_n \dint{\XXX}.$$
\end{enumerate}
It seems to be interesting that (AD3) reminds classical Lebesgue's monotone convergence theorem.
\end{rem}

\begin{rem}{selfadj}
Specialists in Hilbert space operators probably would prefer the version of `$\DiNT{\YYY}{\XXX}$' whose values are
operators rather than functions. This is possible and may be provide as follows. Since every bounded member
of $\MmM_+(\Omega_{\tII})$ corresponds, by $\Psi$, to a nonnegative element of $\ZZz(\WWw'(\jJJ_{\tII}))$, each member
of $\MmM_+(\Omega_{\tII})$ corresponds to a (possibly unbounded) nonnegative selfadjoint operator $A$ such that $\bB(A) \in
\ZZz(\WWw'(\jJJ_{\tII}))$ (in von Neumann algebra theory such an operator $A$ is said to be \textit{affiliated} with
$\ZZz(\WWw'(\jJJ_{\tII}))$; see e.g. \cite[Definition~5.6.2]{k-r1}). Thus, if we let $\lLL[\xXX]$
and $\widehat{\ZZz}_+(\WWw'(\jJJ_{\tII}))$ denote, respectively, the classes of all $\yYY \in \CDDc_N$ whose unitary
equivalence class is semiminimal and which are covered by $\xXX$ (i.e. $\yYY \ll \xXX$) and of all last mentioned operators
$A$, then \THM{TR} may be adapted to these settings in such a way that $\DiNT{\yYY}{\xXX} \in
\widehat{\ZZz}_+(\WWw'(\xXX))$ for any $\yYY \in \lLL[\xXX]$ and (here we list only these properties which do not need
additional explanations): (a) $\DiNT{\xXX}{\xXX}$ is the unit of $\ZZz(\WWw'(\xXX))$ (so, $\DiNT{\xXX}{\xXX}$ is a central
projection in $\WWw'(\jJJ_{\tII})$); (b) $\DiNT{\yYY'}{\xXX} = \DiNT{\yYY''}{\xXX}$ iff $\yYY'$ and $\yYY''$ are unitarily
equivalent; (c) if $\aAA \leqsl m \odot \xXX$ and $\bBB \leqsl n \odot \xXX$ for some natural numbers $m$ and $n$, then
both $\DiNT{\aAA}{\xXX}$ and $\DiNT{\bBB}{\xXX}$ are bounded and $\DiNT{(\aAA \oplus \bBB)}{\xXX} = \DiNT{\aAA}{\xXX} +
\DiNT{\bBB}{\xXX}$; (d) if $\yYY_t$ is such that $\YYY_t = t \odot \YYY$ (for some $\yYY \in \lLL[\xXX]$ and $t > 0$), then
$\DiNT{\yYY_t}{\xXX} = t \DiNT{\yYY}{\xXX}$. The reader interested in this approach should consult
\cite[Theorem~5.6.15]{k-r1}.
\end{rem}

\SECT{Model for $\CDD_N$}

Now we shall develop the idea of the previous part. This will also be an adaptation of the dimension theory for
$\WWw^*$-algebras. Let $\jJJ$ be a representative of $\JJJ$, $\Omega$ be a compact Hausdorff space homeomorphic
to the Gelfand spectrum of $\ZZz(\WWw'(\jJJ))$ and let $\Psi\dd \ZZz(\WWw'(\jJJ)) \to \CCc(\Omega)$ be an isomorphism
of $*$-algebras. When the triple $(\jJJ,\Omega,\Psi)$ is fixed, $\JJJ_i$ for $i = I,\tII,\tIII$ corresponds to a clopen
subset $\Omega_i$ of $\Omega$. In what follows, we assume that $\Card \cap \RRR_+ = \ZZZ \cap \RRR_+$. We add and multiply
two reals and two infinite cardinals in the usual way and additionally, we put $0 \cdot \alpha = \alpha \cdot 0 = 0$ and
$t + \alpha = \alpha + t = \alpha + 0 = 0 + \alpha = \alpha = t \cdot \alpha = \alpha \cdot t$ for $t \in \RRR_+ \setminus
\{0\}$ and $\alpha \in \Card_{\infty}$. We also extend the natural total orders on $\RRR_+$ and $\Card_{\infty}$ assuming
that $t < \alpha$ for every real $t$ and an infinite cardinal $\alpha$. In this way the order on $\RRR_+ \cup \Card$ is
total and complete. We equip every set $Y \subset \RRR_+ \cup \Card$ with the topology inherited from the one
of the linearly ordered space $I_{\alpha} := \{\xi \in \RRR_+ \cup \Card\dd\ \xi \leqsl \alpha\}$ where $\alpha = \sup
(Y \cup \{\aleph_0\})$ (cf. \cite[Problem~1.7.4]{eng}). Since the topology of the linearly ordered space $I_{\alpha}$
coincides with the one inherited from the topology of the linearly ordered space $I_{\beta}$ whenever $\aleph_0 \leqsl
\alpha < \beta$, such a definition of the topology on $Y$ makes no confusion. For every topological space $X$, we call
a function $f\dd X \to \RRR_+ \cup \Card$ continuous if $f$ is continuous as a function of $X$ into $f(X)$. One may check
that for every $\alpha \in \Card_{\infty}$, $I_{\alpha}$ is compact, the order is a closed subset of $I_{\alpha} \times
I_{\alpha}$ and functions $I_{\alpha} \times I_{\alpha} \ni (\xi,\xi') \mapsto \xi + \xi' \in I_{\alpha}$ and $I_{\alpha}
\times I_{\alpha} \ni (\xi,\xi') \mapsto \xi \cdot \xi' \in I_{\alpha}$ are continuous.\par
Let $\Lambda(\Omega)$ be the class of all continuous functions $u\dd \Omega \to \RRR_+ \cup \Card$ such that $u(\Omega_I)
\subset \Card$ and $u(\Omega_{\tIII}) \subset \{0\} \cup \Card_{\infty}$. We add and multiply members of $\Lambda(\Omega)$
pointwisely. We shall also multiply elements of $\Lambda(\Omega)$ by cardinal numbers pointwisely and we equip
$\Lambda(\Omega)$ with the pointwise order. For each $f \in \Lambda(\Omega)$, $\supp f$ is the closure of the (open) set
$\{x \in \Omega\dd\ f(x) \neq 0\}$. Observe that $\supp f$ is clopen.\par
Suppose\label{disjsum} $\{f_s\}_{s \in S} \subset \Lambda(\Omega)$ is any family such that $\supp f_s \cap \supp f_{s'}
= \varempty$ for distinct $s, s' \in S$. We define $\sum_{s \in S} f_s \in \Lambda(\Omega)$ in the following manner. Let
$D_0 = \bigcup_{s \in S} \supp f_s$, $D = D_0 \cup \intt (\Omega \setminus D_0)$ (`$\intt$' denotes the interior of a set)
and $u\dd D \to \RRR_+ \cup \Card$ be given by $u(x) = f_s(x)$ for $x \in \supp f_s$ ($s \in S$) and $u(x) = 0$ for $x \in
\intt (\Omega \setminus D_0)$. It is clear that $D$ is open and dense in $\Omega$ and $u$ is continuous. Now by \LEM{C-S},
$u$ may be (uniquely) continuously extended to a member of $\Lambda(\Omega)$, denoted by $\sum_{s \in S} f_s$. One may
check that in that case $\sum_{s \in S} f_s = \sup{}_{\Lambda(\Omega)} \{\sum_{s \in S_0} f_s\dd\ S_0 \in \PPp_f(S)\}$.

\begin{lem}{sup-inf}
Let $\{f_n\}_{n=1}^{\infty} \subset \Lambda(\Omega)$ and $u, v\dd \Omega \to \RRR_+ \cup \Card$ be given by $u(x) =
\inf_{n\geqsl1} f_n(x)$ and $v(x) = \sup_{n\geqsl1} f_n(x)$ ($x \in \Omega$). There are open dense subsets $U$ and $V$
of $\Omega$ such that $u\bigr|_U$ and $v\bigr|_V$ are continuous.
\end{lem}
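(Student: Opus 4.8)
The plan is to show first that the sets $C(v)$ and $C(u)$ of continuity points of $v$ and $u$ are dense in $\Omega$, and then to upgrade this via \LEM{W*}: since $C(v)$ and $C(u)$ are $G_{\delta}$ subsets of $\Omega$ (true for the continuity set of any function), and since a dense $G_{\delta}$ subset of the Gelfand spectrum of a commutative $\WWw^*$-algebra has dense interior, the sets $V=\intt C(v)$ and $U=\intt C(u)$ are open and dense, and $v\bigr|_V$, $u\bigr|_U$ are continuous because continuity at every point of an open set forces continuity of the restriction to that set. So everything reduces to density of $C(v)$ and $C(u)$. For this I would use two standard normalisations. First, each $f_n(\Omega)$ is a compact subset of $\RRR_+\cup\Card$, hence lies in $I_{\alpha}$ for a suitable infinite cardinal $\alpha$, and enlarging $\alpha$ I may assume all the $f_n$ take values in one $I_{\alpha}$. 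Secondly, a nonempty clopen $U\subset\Omega$ is again the Gelfand spectrum of a commutative $\WWw^*$-algebra (namely $\ZZz(\WWw'(\jJJ))\chi_U$), and a continuity point of $v\bigr|_U$ is a continuity point of $v$; hence it suffices to prove that \emph{for every nonempty clopen $U\subset\Omega$ there is a nonempty open $O\subset U$ on which $v$ (respectively $u$) is continuous}.

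That last claim I would establish by transfinite induction on $\alpha$. The base case $\alpha=\aleph_0$ is \LEM{W*} once more: $I_{\aleph_0}$ is homeomorphic to $[0,1]$, so $v$ and $u$ are Borel real-valued functions and \LEM{W*} provides an open dense set of continuity inside $U$. For the inductive step I would exploit the automatic one-sided semicontinuity of the two operations---$\{v>c\}$ is always open (a union of the open sets $\{f_n>c\}$) and $\{u<c\}$ is always open---together with extreme disconnectedness of $\Omega$ (closures of open sets are clopen, and every nonempty open set contains a nonempty clopen, hence compact, subset). At a successor $\alpha^+$ one splits according to whether the closed set $\{v\leqsl\alpha\}$ has nonempty interior: if it does, a clopen subset of that interior carries all the $f_n$ into $I_{\alpha}$ and the inductive hypothesis at $\alpha$ applies; if it does not, then $\{v=\alpha^+\}=\{v>\alpha\}$ is open and dense and $v$ is constant there. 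The argument for $u$ runs in parallel, using in addition that $u=\inf_n(f_n\wedge\alpha)$ wherever $u\leqsl\alpha$, so one may truncate before invoking the inductive hypothesis. At a limit cardinal $\beta$ one splits according to whether $\{v=\beta\}$ is somewhere dense: if it is, this set is either an $F_{\sigma}$ built from the closed sets $\{f_n=\beta\}$ (when $\mathrm{cf}\,\beta>\omega$, since a countable supremum can reach $\beta$ only by one summand reaching $\beta$) or a $G_{\delta}$ (when $\mathrm{cf}\,\beta=\omega$), and in both cases the Baire category theorem or \LEM{W*} produces a nonempty open set on which $v\equiv\beta$; if $\{v=\beta\}$ is nowhere dense, then $\{v<\beta\}$ is comeagre on some clopen $U_0$, and one passes---again by Baire (cofinality $\omega$) or by \LEM{W*} (cofinality $>\omega$)---to a nonempty clopen, hence compact, $C\subset U_0$ on which $v$ is bounded by some cardinal $\gamma<\beta$; here compactness of $C$ is what forces $\sup_{x\in C}v(x)<\beta$ (otherwise, for uncountable cofinality, some $f_n$ would attain $\beta$ on $C$, making $v$ equal to $\beta$ there), after which the inductive hypothesis at $\gamma$ finishes. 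For $u$ at a limit the situation is simpler: $\{u=\beta\}$ is closed, and in the case its interior is empty, $\{u<\beta\}$ is dense and open, where any clopen compact $C$ makes $u\bigr|_C$ upper semicontinuous, hence attaining a maximum $\gamma<\beta$; truncate at $\gamma$ and apply the inductive hypothesis.

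The main obstacle is precisely that the codomain $\RRR_+\cup\Card$ is not metrizable once infinite cardinals $\geqsl\aleph_1$ occur, so the classical theorem that a monotone pointwise limit of continuous functions is continuous off a meagre set is not available off the shelf, and the level sets $\{v\geqsl\aleph_{\xi}\}$ need not be open; this is what forces the transfinite bookkeeping, the delicate point being limit cardinals of uncountable cofinality, where the fact that $v$ and $u$ are suprema and infima of only countably many functions, used jointly with compactness of clopen subsets of $\Omega$, is indispensable. Everything else is routine once one keeps in play the two features of $\Omega$ that are repeatedly invoked: extreme disconnectedness and the conclusion of \LEM{W*}.
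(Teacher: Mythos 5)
Your argument is sound in substance but organized quite differently from the paper's. The paper proceeds globally: for $u$ it shows that $u^{-1}(\RRR_+)\cup\bigcup_{\alpha\in\Card_{\infty}}\intt u^{-1}(\{\alpha\})$ is dense by a single minimal-cardinal contradiction (using that an infimum of countably many cardinals is attained by one of them), and for $v$ it proves $v^{-1}(I_{\aleph_0})\cup\cll\bigl(\bigcup_{\alpha}\intt v^{-1}(\{\alpha\})\bigr)=\Omega$ via a contradiction built around the least $\gamma$ with $\intt[G\cap v^{-1}(I_{\gamma})]\neq\varempty$, using compactness to bound each $f_n$ strictly below $\gamma$ and then Baire together with \LEM{W*}; the residual $I_{\aleph_0}$-valued part is handled by \LEM{W*} in both cases. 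You instead run a transfinite induction on a cardinal bound for the ranges, localized to clopen sets, with a successor/limit dichotomy and a cofinality split at limits. The resulting open dense set is the same in both treatments (pieces where the function is locally constant at an infinite cardinal, plus an $I_{\aleph_0}$-valued piece), and the toolkit is identical (extreme disconnectedness, Baire, compactness forcing attained suprema, \LEM{W*}); your induction is longer but more systematic and makes explicit where countable versus uncountable cofinality matters, whereas the paper's two contradiction arguments are shorter but denser.

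Two small repairs are needed. First, your opening assertion that the continuity sets $C(u)$ and $C(v)$ are $\GGg_{\delta}$ ``for any function'' is unjustified: that fact relies on metrizability (a countable base for the uniformity) of the codomain, which $I_{\alpha}$ lacks for uncountable $\alpha$. This does not matter, because your localized claim already suffices: the union of all open sets on which $v$ is continuous is open, $v$ restricted to it is continuous (continuity being a local property on open sets), and the claim applied to clopen subsets of the complement of its closure shows that it is dense. Second, in the limit case for $v$ with $\{v=\beta\}$ somewhere dense and $\beta$ of uncountable cofinality, plain Baire does not extract a set $\{f_n=\beta\}$ with nonempty interior from a dense $\FFf_{\sigma}$ (dense meager $\FFf_{\sigma}$ sets exist in Baire spaces); one must apply \LEM{W*} to the complementary dense $\GGg_{\delta}$, which you do list as the alternative, so this is a matter of making the right choice explicit rather than a gap. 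The remaining steps---in particular the use of compactness at limits of uncountable cofinality (your parenthetical is the correct argument there, not \LEM{W*}) and the attainment of the maximum of an upper semicontinuous function on a compact clopen set---check out.
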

\begin{proof}
Since the proofs for $u$ and $v$ differ, we shall present both of them. We start with $u$ for which the proof is simpler.
Let $U_0 = u^{-1}(\RRR_+)$ and for $\alpha \in \Card_{\infty}$ let $U_{\alpha} = \intt u^{-1}(\{\alpha\})$. Since $U_0 =
\bigcup_{n=1}^{\infty} f_n^{-1}(\RRR_+)$, $U_0$ is open. Now a function $u'\dd \Omega \to \RRR_+$ given by $u'(x) = u(x)$
for $x \in U_0$ and $u'(x) = 0$ otherwise is Borel (because on $U_0$ it coincides with the infimum of a sequence
of continuous functions taking values in $[0,\infty]$, after suitable change of $f_n$'s). Thus, according to \LEM{W*},
there is a dense open subset $U'$ of $\Omega$ such that $u'\bigr|_{U'}$ is continuous. Consequently, $u\bigr|_{U_1}$ is
continuous where $U_1 = U_0 \cap U'$ is open and dense in $U_0$. We see that $U = U_1 \cup
\bigcup_{\alpha\in\Card_{\infty}} U_{\alpha}$ is open and $u\bigr|_U$ is continuous. To show that $U$ is dense in $\Omega$,
it remains to check that the set $G = \intt [\Omega \setminus (U_0 \cup \bigcup_{\alpha\in\Card_{\infty}} U_{\alpha})]$
is empty. Suppose, for the contrary, that $G \neq \varempty$. Note that $G$ is clopen and $u(G) \subset \Card_{\infty}$.
Let $\alpha = \min u(G) \geqsl \aleph_0$. We conclude from the definition of $u$ that $f_n(x) \geqsl \alpha$ for all $x \in
G$ and $n \geqsl 1$. What is more, there is $x_0 \in G$ such that $u(x_0) = \alpha$ and there exists $m \geqsl 1$ with
$u(x_0) = f_m(x_0)$. Since $\alpha$ is an isolated point of $f_m(G)$, the set $G_0 = f_m^{-1}(\{\alpha\}) \cap G$ is clopen
(and nonempty). We see that then $u(x) = \alpha$ for each $x \in G_0$ and hence $G_0 \subset U_{\alpha}$ which denies
the definition of $G$. This finishes the proof for $u$.\par
To show the assertion for $v$, we begin similarly: let $F = v^{-1}(I_{\aleph_0})$ and $V_{\infty} =
\bigcup_{\alpha\in\Card_{\infty}} \intt v^{-1}(\{\alpha\})$. The set $F$ is closed since $F = \bigcap_{n=1}^{\infty}
f_n^{-1}(I_{\aleph_0})$. We claim that
\begin{equation}\label{eqn:aux90}
F \cup \cll V_{\infty} = \Omega
\end{equation}
(`$\cll$' stands for the closure of a set). Again, we argue by contradiction. Suppose that the set $D = \Omega \setminus
(F \cup \cll V_{\infty})$ is nonempty. Since $D$ is open, there is a clopen set $G \neq \varempty$ such that $G \subset
D$. Notice that $v(G) \subset \Card_{\infty} \setminus \{\aleph_0\}$. Let $\gamma$ be the first infinite cardinal number
such that
\begin{equation}\label{eqn:aux91}
\intt[G \cap v^{-1}(I_{\gamma})] \neq \varempty.
\end{equation}
Let $W$ be any nonempty clopen subset of $G \cap v^{-1}(I_{\gamma})$. Let us show that
\begin{equation}\label{eqn:aux92}
\gamma = \sup \{\sup f_n(W)\dd\ n \geqsl 1\} = \sup v(W) > \aleph_0.
\end{equation}
Put $\gamma' = \sup \{\sup f_n(W)\dd\ n \geqsl 1\}$. It is clear that $\gamma' \leqsl \sup v(W) \leqsl \gamma$ (because
$v(W) \subset I_{\gamma}$). On the other hand, by the definition of $v$, $v(x) \leqsl \gamma'$ for each $x \in W$ which
yields that $\gamma' > \aleph_0$ (since $W \subset G$) and $W \subset v^{-1}(I_{\gamma'}) \cap G$. We infer from this and
the definition of $\gamma$ that $\gamma \leqsl \gamma'$. This proves \eqref{eqn:aux92}.\par
Now let $W_0$ be an arbitrary nonempty clopen subset of $G \cap v^{-1}(I_{\gamma})$ (cf. \eqref{eqn:aux91}). Put $Z = W_0
\cap \bigcup_{n=1}^{\infty} f_n^{-1}(\{\gamma\})$. $Z$ is $\FFf_{\sigma}$ and, by Baire's theorem, $\intt Z = \varempty$
(because, thanks to \eqref{eqn:aux92}, $\intt [W_0 \cap f_n^{-1}(\{\gamma\})] \subset \intt v^{-1}(\{\gamma\}) \subset
V_{\infty}$ and $W_0 \cap V_{\infty} = \varempty$). An application of \LEM{W*} shows that $\intt (\cll Z) = \varempty$.
This implies that there is a nonempty clopen set $W \subset W_0 \setminus Z$. We conclude from the definition of $Z$ that
$f_n(x) < \gamma$ for any $x \in W$ and $n \geqsl 1$. But since $W$ is compact, $f_n$ assumes its maximum on $W$ and
consequently $\gamma_n := \max(\aleph_0,\sup f_n(W)) < \gamma$. Now by \eqref{eqn:aux92},
\begin{equation}\label{eqn:aux93}
\sup_{n\geqsl1} \gamma_n = \gamma.
\end{equation}
Further, by the minimality of $\gamma$, each of the sets $G_n = G \cap v^{-1}(I_{\gamma_n})$ has empty interior.
Moreover, $G_n$'s are closed ($G_n = G \cap \bigcap_{k=1}^{\infty} f_k^{-1}(I_{\gamma_n})$). Consequently, another
applications of Baire's theorem and \LEM{W*} give $\intt[\cll(G_{\infty})] = \varempty$ where $G_{\infty} =
\bigcup_{n=1}^{\infty} G_n$. But $G_{\infty} = G \cap v^{-1}(I_{\gamma} \setminus \{\gamma\})$ (by \eqref{eqn:aux93}).
Finally, by \eqref{eqn:aux91}, we obtain
$$
\intt[G \cap v^{-1}(\{\gamma\})] = \intt(G \cap v^{-1}(I_{\gamma}) \setminus G_{\infty}) \supset
\intt[G \cap v^{-1}(I_{\gamma})] \setminus \cll G_{\infty} \neq \varempty
$$
which denies the fact that $G \cap V_{\infty} = \varempty$. This finishes the proof of \eqref{eqn:aux90}.\par
Relation \eqref{eqn:aux90} means that the set $E = \Omega \setminus \cll V_{\infty}$ is contained in $F$ and consequently
$v(E) \subset I_{\aleph_0}$. Observe that $E$ is clopen and $I_{\aleph_0}$ is both homeomorphic and order-isomorphic to
$[0,1]$. Therefore $v\bigr|_E$ is Borel and by \LEM{W*} there is an open dense subset $V_0$ of $E$ such that
$v\bigr|_{V_0}$ is continuous. To end the proof, put $V = V_0 \cup V_{\infty}$.
\end{proof}

Now assume\label{countsum} $(f_n)_{n=1}^{\infty}$ is a sequence of members of $\Lambda(\Omega)$. Let $v\dd \Omega \ni x
\mapsto \sum_{n=1}^{\infty} f_n(x) \in \RRR_+ \cup \Card$. (The series $\sum_{n=1}^{\infty} f_n(x)$ is understood
as the supremum of its partial sums.) It is clear that $v(\Omega_I) \subset \Card$ and $v(\Omega_{\tIII}) \subset \{0\}
\cup \Card_{\infty}$. By \LEM{sup-inf}, there is an open dense subset $D$ of $\Omega$ such that $v\bigr|_D$ is continuous.
Consequently, thanks to \LEMp{C-S}, there is a unique $\tilde{v} \in \Lambda(\Omega)$ which extends $v\bigr|_D$. This
unique extension $\tilde{v}$ will be denoted by $\sum_{n=1}^{\infty} f_n$. One may check that $\sum_{n=1}^{\infty} f_n =
\sup{}_{\Lambda(\Omega)} \bigl\{\sum_{k=1}^n f_k\dd\ n \geqsl 1\bigr\}$.\par
Now let $\AAA \in \CDD_N$. Put
\begin{equation}\label{eqn:s(A)}
s(\AAA) = \JJJ \sqminus (\EEE^I_0(\AAA) \sqplus \EEE^{\tII}_0(\AAA) \sqplus \EEE^{\tIII}_0(\AAA)).
\end{equation}
Since $s(\AAA) \leqsl^s \JJJ$, $s(\AAA)$ corresponds to a unique central projection $z_{\AAA} \in \MMm'(\jJJ)$. There is
a unique clopen set, denoted by $\supp_{\Omega} \AAA$, in $\Omega$ whose characteristic function coincides with
$\Psi(z_{\AAA})$. It is clear that for $\AAA, \BBB \in \CDD_N$, $\AAA \ll \BBB \iff \supp_{\Omega} \AAA \subset
\supp_{\Omega} \BBB$; and $\AAA \disj \BBB \iff \supp_{\Omega} \AAA \cap \supp_{\Omega} \BBB = \varempty$. When $\XXX,
\YYY \in \SsS\MmM_N$ are such that $\XXX \ll \YYY$, $u = \DiNT{\XXX}{\YYY}$ is defined on $\Omega_{\tII}$ and real-valued
on an open dense subset $D$ of $\Omega_{\tII}$. Extending $u\bigr|_D$ to a continuous function of $\Omega$ into
$I_{\aleph_0}$ by putting zero on $\Omega_I \cup \Omega_{\tIII}$ and applying \LEM{C-S}, we may consider
$\DiNT{\XXX}{\YYY}$ as a member of $\Lambda(\Omega)$, as it is done in this section. Under such an understanding,
\begin{multline}\label{eqn:X|Y}
\Bigl\{\DiNT{\XXX}{\YYY}\dd\ \XXX \in \SsS\MmM_N,\ \XXX \ll \YYY\Bigr\} = \{u \in \Lambda(\Omega)\dd\\
\supp u \subset \supp_{\Omega} \YYY,\ u^{-1}(\RRR_+) \textup{ is dense in } \Omega\}
\end{multline}
(by \THM{TR}). Since the addition is continuous on $I_{\aleph_0}$, $\DiNT{(\XXX' \oplus \XXX'')}{\YYY} = \DiNT{\XXX'}{\YYY}
+ \DiNT{\XXX''}{\YYY}$ whenever $\XXX', \XXX'' \ll \YYY$.\par
Everywhere below $j_E$ denotes the characteristic function of a set $E \subset \Omega$.

\begin{thm}{model}
Let $\TTT \in \SsS\MmM_N$ be such that $\aleph_0 \odot \TTT = \JJJ_{\tII}$ (there exists such $\TTT$). There is a unique
assignment $\Phi_{\TTT}\dd \CDD_N \to \Lambda(\Omega)$ such that
\begin{enumerate}[\upshape(D1)]\addtocounter{enumi}{-1}
\item $\Phi_{\TTT}(\TTT) = j_{\Omega_{\tII}}$, $\Phi_{\TTT}(\JJJ_I) = j_{\Omega_I}$
   and $\Phi_{\TTT}(\JJJ_{\tIII}) = \aleph_0 \cdot j_{\Omega_{\tIII}}$,
\item $\supp \Phi_{\TTT}(\AAA) \subset \supp_{\Omega} \AAA$ for each $\AAA \in \CDD_N$,
\item $\Phi_{\TTT}(\alpha \odot \AAA) = \alpha \cdot \Phi_{\TTT}(\AAA)$ for any $\alpha \in \Card$ and $\AAA \in
   \CDD_N$,
\item whenever $\{\AAA^{(s)}\}_{s \in S} \subset \CDD_N$ is a regular family (cf. \textup{(D1)} and notes
   on page~\pageref{disjsum}),
   $$\Phi_{\TTT}\Bigl(\Bigsqplus_{s \in S} \AAA^{(s)}\Bigr) = \sum_{s \in S} \Phi_{\TTT}(\AAA^{(s)}),$$
\item whenever $(\AAA^{(n)})_{n=1}^{\infty} \subset \CDD_N$ is such that $\bigoplus_{n=1}^{\infty} \AAA^{(n)} \in
   \SsS\MmM_N$ (see notes on page~\pageref{countsum}), $$\Phi_{\TTT}\Bigl(\bigoplus_{n=1}^{\infty} \AAA^{(n)}\Bigr) =
   \sum_{n=1}^{\infty} \Phi_{\TTT}(\AAA^{(n)}).$$
\end{enumerate}
What is more, $\Lambda(\Omega)$ is order-complete and $\Phi_{\TTT}$ has further properties (below $\AAA, \BBB \in \CDD_N$):
\begin{enumerate}[\upshape(D1)]\addtocounter{enumi}{4}
\item[\upshape(D1')] $\supp \Phi_{\TTT}(\AAA) = \supp_{\Omega} \AAA$; in particular, $\AAA \ll \BBB$ ($\AAA
   \disj \BBB$) iff $\supp_{\Omega} \Phi_{\TTT}(\AAA) \subset \supp_{\Omega} \Phi_{\TTT}(\BBB)$
   ($\supp_{\Omega} \Phi_{\TTT}(\AAA) \cap \supp_{\Omega} \Phi_{\TTT}(\BBB) = \varempty$),
\item[\upshape(D4')] for any sequence $(\AAA^{(n)})_{n=1}^{\infty} \subset \CDD_N$,
   $$\Phi_{\TTT}\Bigl(\bigoplus_{n=1}^{\infty} \AAA^{(n)}\Bigr) = \sum_{n=1}^{\infty} \Phi_{\TTT}(\AAA^{(n)});$$
   in particular,
   \begin{equation}\label{eqn:add}
   \Phi_{\TTT}(\AAA \oplus \BBB) = \Phi_{\TTT}(\AAA) + \Phi_{\TTT}(\BBB),
   \end{equation}
\item $\AAA \leqsl \BBB \iff \Phi_{\TTT}(\AAA) \leqsl \Phi_{\TTT}(\BBB)$,
\item $\AAA \leqsl^s \BBB \iff \Phi_{\TTT}(\AAA) = \Phi_{\TTT}(\BBB) \cdot j_E$ for some clopen set $E \subset
   \Omega$,
\item for every $\XXX \in \SsS\MmM_N$, $\Phi_{\TTT}(\XXX) = \DiNT{\XXX}{\TTT}$,
\item for every $u \in \Lambda(\Omega)$ there is a unique $\XXX \in \CDD_N$ such that $\Phi_{\TTT}(\XXX) = u$.
\end{enumerate}
\end{thm}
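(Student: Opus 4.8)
The plan is to \emph{build} $\Phi_{\TTT}$ out of the canonical decomposition of \THM{decomp} and then to read off every listed property. Given $\AAA\in\CDD_N$, take the partition of unity $\{\EEE^i_{\alpha}(\AAA)\}_{(i,\alpha)\in\Upsilon}$ and the semiminimal $\EEE_{sm}(\AAA)$ from \THM{decomp}, so that $\AAA=\EEE_{sm}(\AAA)\sqplus\bigsqplus_{(i,\alpha)\in\Upsilon_*}\alpha\odot\EEE^i_{\alpha}(\AAA)$ and $\EEE^{\tII}_1(\AAA)=\aleph_0\odot\EEE_{sm}(\AAA)$. Each $\EEE^i_{\alpha}(\AAA)$, being a summand of a partition of unity and lying under $\JJJ_i$, satisfies $\EEE^i_{\alpha}(\AAA)\leqsl^s\JJJ_i$; for $i\in\{I,\tIII\}$ it then corresponds to a clopen $E^i_{\alpha}\subset\Omega_i$, while for $i=\tII$ and $\alpha\neq1$ equation \eqref{eqn:J2} forces $\EEE^{\tII}_{\alpha}(\AAA)\in\SsS\MmM_N^{\infty}$, again corresponding to a clopen $E^{\tII}_{\alpha}\subset\Omega_{\tII}$. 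Since $\EEE_{sm}(\AAA)$ is semiminimal it is $\ll\JJJ_{\tII}=\aleph_0\odot\TTT$ (\THM{semiminimal}), hence $\ll\TTT$, so $\DiNT{\EEE_{sm}(\AAA)}{\TTT}\in\MmM_+(\Omega_{\tII}\,|\,\Omega_{\tII})$ is well defined by \THM{TR}. I would then set
\[
\Phi_{\TTT}(\AAA)=\DiNT{\EEE_{sm}(\AAA)}{\TTT}+\sum_{(i,\alpha)\in\Upsilon_*}\alpha\cdot j_{E^i_{\alpha}},
\]
the sum being the disjoint-support sum in $\Lambda(\Omega)$ of \PREF{disjsum}; the supports are pairwise disjoint since $\{\EEE^i_{\alpha}(\AAA)\}$ is regular and $\supp_{\Omega}\EEE_{sm}(\AAA)=\supp_{\Omega}\EEE^{\tII}_1(\AAA)$. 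One checks at once that $\Phi_{\TTT}(\AAA)\in\Lambda(\Omega)$ (cardinal-valued on $\Omega_I$, in $\{0\}\cup\Card_{\infty}$ on $\Omega_{\tIII}$).

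Next I would verify (D0)--(D3). For (D0): $\EEE_{sm}(\TTT)=\TTT$ with all other pieces trivial gives $\Phi_{\TTT}(\TTT)=\DiNT{\TTT}{\TTT}=j_{\Omega_{\tII}}$ by (TR0); $\JJJ_I$ is multiplicity free, so $\EEE^I_1(\JJJ_I)=\JJJ_I$ and $\Phi_{\TTT}(\JJJ_I)=j_{\Omega_I}$; and $\JJJ_{\tIII}$, whose $*$-commutant is of type $\tIII_{\aleph_0}$ (so $\EEE^{\tIII}_{\aleph_0}(\JJJ_{\tIII})=\JJJ_{\tIII}$, by \PRO{puretype}), gives $\Phi_{\TTT}(\JJJ_{\tIII})=\aleph_0\cdot j_{\Omega_{\tIII}}$. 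Property (D1) is immediate from (TR1) and the fact that the clopen sets $E^i_{\alpha}$ with $\alpha\geqsl1$ avoid the $\EEE^i_0$-region of $\Omega_i$. Property (D3) follows because for a \emph{regular} family all of $\EEE_{sm}$, $\EEE^{\tII}_1$ and the $\EEE^i_{\alpha}$ are additive over the index set (no multiplicity mixing occurs between unitarily disjoint $N$-tuples), combined with (TR2'). The genuinely fussy point is (D2): one must track how $\alpha\odot\AAA$ redistributes the pieces of $\AAA$ --- for $\alpha$ infinite, $\EEE^i_{\beta}(\alpha\odot\AAA)$ collects the $\EEE^i_{\gamma}(\AAA)$ with $\max(\alpha,\gamma)=\beta$, and $\EEE_{sm}(\alpha\odot\AAA)=\zero$ once $\alpha>\aleph_0$ --- and then reconcile this with the arithmetic conventions fixed in this section ($t\cdot\alpha=\alpha$, $\alpha\cdot\beta=\max(\alpha,\beta)$, $\alpha\cdot0=0$); this is a finite case check, but needs care where a real-valued and a cardinal-valued contribution meet.

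For uniqueness, let $\Phi$ satisfy (D0)--(D3). Applying (D3) to the regular decomposition above and then (D2) yields $\Phi(\AAA)=\Phi(\EEE_{sm}(\AAA))+\sum_{(i,\alpha)\in\Upsilon_*}\alpha\cdot\Phi(\EEE^i_{\alpha}(\AAA))$, so it suffices to determine $\Phi$ on $\SsS\MmM_N$ and on $N$-tuples $\leqsl^s\JJJ_I$ or $\leqsl^s\JJJ_{\tIII}$. For $\EEE\leqsl^s\JJJ_i$ ($i\in\{I,\tIII\}$), from $\JJJ_i=\EEE\sqplus(\JJJ_i\sqminus\EEE)$ and (D0), (D1), (D3) one is forced to take $\Phi(\EEE)$ equal to the appropriate multiple of $j_{\supp_{\Omega}\EEE}$. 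On $\SsS\MmM_N$ one reruns the uniqueness argument from the proof of \THM{TR}: for semiminimal $\XXX$ write $\XXX=\bigsqplus_n\XXX_n$ with $\XXX_n\leqsl n\odot\TTT$; each $\frac1n\odot\XXX_n\leqsl\TTT$ (via \THM{TR}) is a countable direct sum of rational multiples of $N$-tuples $\leqsl^s\TTT$, on which (D0)-applied-to-$\TTT$ and (D3) leave no freedom, and (D2)--(D3) then propagate to $\Phi(\XXX)=\DiNT{\XXX}{\TTT}$. Hence $\Phi=\Phi_{\TTT}$. The remaining claims follow from the construction: (D1') from (TR1') plus exactness of the clopen partitions; (D4') from the common-partition formulas of \EXM{commonpart} (the arithmetic conventions again handling the overlaps of real- and cardinal-valued parts); (D5) and (D6) by transcribing \PRO{leqsl} and \PRO{leqsls} into pointwise statements about $\Phi_{\TTT}$; (D7) is part of the definition; and injectivity in (D8) follows from (D5) and \PRO{order}.

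The last and hardest step is surjectivity in (D8). Given $u\in\Lambda(\Omega)$, I would realize it in three pieces. On $\Omega_{\tII}$ one splits off the clopen set $\cll(u^{-1}(\RRR_+))$, on which $u$ is real-valued on a dense subset, hence belongs to $\MmM_+(\Omega_{\tII}\,|\,\Omega_{\tII})$ and equals $\DiNT{\FFF}{\TTT}$ for a unique $\FFF\in\LlL[\TTT]$ by the surjectivity clause of \THM{TR}; on the complementary part of $\Omega_{\tII}$, and on $\Omega_I$ and $\Omega_{\tIII}$, $u$ is cardinal-valued and the extremely disconnected spectrum must be cut into clopen sets on which $u$ is constant, each value being lifted to a projection (in the appropriate type I, type III, or type $\tII_{\infty}$ summand) of exactly that local multiplicity. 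This is where the full $\WWw^*$-dimension machinery enters --- \THM{deco}, \PRO{types}, and the extreme-disconnectedness inputs \LEM{W*} and \LEM{C-S} --- and where the argument is most delicate; everything else is bookkeeping around \THM{decomp} and \THM{TR}.
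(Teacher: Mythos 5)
Your construction of $\Phi_{\TTT}$ is the paper's: the same formula
$\Phi_{\TTT}(\AAA)=\DiNT{\EEE_{sm}(\AAA)}{\TTT}+\sum_{(i,\alpha)\in\Upsilon_*}\alpha\cdot j_{\supp_{\Omega}\EEE^i_{\alpha}(\AAA)}$,
the same verification of additivity via the common partition of unity of \EXM{commonpart}, and the same surjectivity argument (cutting $\Omega$ into the clopen sets $\intt\,u^{-1}(\{\alpha\})$ and $\cll\,u^{-1}(\RRR_+\setminus\{0\})\cap\Omega_{\tII}$ and lifting each piece). Your route to (D5)--(D6) through Propositions \REF{pro:leqsl} and \REF{pro:leqsls} is a workable variant of the paper's (which instead deduces (D5) from surjectivity plus \COR{minus}), and you omit only the statement that $\Lambda(\Omega)$ is order-complete, which is immediate once (D5) and (D8) are in hand.

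There is, however, one genuine gap, in the uniqueness argument. You begin with ``let $\Phi$ satisfy (D0)--(D3)'' and, on the semiminimal part, you decompose $\tfrac1n\odot\XXX_n\leqsl\TTT$ as a \emph{countable direct sum of rational multiples of $N$-tuples $\leqsl^s\TTT$} and claim that (D0) and (D3) leave no freedom there. But (D3) only constrains $\Phi$ on \emph{regular} families, and the summands in that decomposition are emphatically not pairwise unitarily disjoint: they arise from writing a continuous $[0,1]$-valued function on the extremely disconnected spectrum as a series of simple functions, whose supports overlap massively. Axiom (D4) --- countable additivity for arbitrary (non-regular) sums inside $\SsS\MmM_N$, the counterpart of (TR2) --- is exactly what is needed at this point, and it is the axiom that drives the uniqueness clause of \THM{TR}; you never invoke it. Without (D4) the remaining axioms give only $\QQQ_+$-homogeneity (from (D2)) and additivity over disjoint supports (from (D3)), which inside a single type II$_1$ factor piece constrain nothing beyond the rational multiples of $\TTT$-subpieces: the value of $\Phi$ on $t\odot\PPP$ for a semiprime $\PPP$ and irrational $t$ is then undetermined (any $\QQQ_+$-homogeneous reassignment of the irrational scales is consistent with (D0)--(D3)). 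So either reinstate (D4) where you wrote (D3) in that step --- after which the argument is the paper's --- or accept that uniqueness under (D0)--(D3) alone is an unproved (and, as indicated, false) strengthening.
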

\begin{proof}
Let us start with uniqueness of $\Phi_{\TTT}$. First of all, for $\AAA \in \SsS\MmM_N$, $s(\AAA) = \aleph_0 \odot \AAA$
and hence $\supp_{\Omega} \AAA$ coincides with $\supp_{\Omega_{\tII}} \AAA$ introduced in the previous section. Therefore
(D0), (D1) and (D4) combined with \THMp{TR} yield that $\Phi_{\TTT}(\AAA) = \DiNT{\AAA}{\TTT}$ for $\AAA \in
\SsS\MmM_N$ (notice that $\AAA \ll \TTT$ for every such $\AAA$). Further, we infer from (D0) and (D2) that
$\Phi_{\TTT}(\JJJ_{\tII}) = \aleph_0 \cdot j_{\Omega_{\tII}}$ and consequently, by (D3) and (D0),
\begin{equation}\label{eqn:Phi(J)}
\Phi_{\TTT}(\JJJ) = j_{\Omega_I} + \aleph_0 \cdot j_{\Omega_{\tII} \cup \Omega_{\tIII}}.
\end{equation}
Now if $\XXX \leqsl^s \JJJ$, (D3) implies that $\Phi_{\TTT}(\JJJ) = \Phi_{\TTT}(\XXX) + \Phi_{\TTT}(\YYY)$ with $\YYY
= \JJJ \sqminus \XXX$. What is more, $\supp_{\Omega} \XXX \cap \supp_{\Omega} \YYY = \varempty$ from which we conclude,
thanks to (D1), that $\Phi_{\TTT}(\XXX) = \Phi_{\TTT}(\JJJ) \cdot j_{\supp_{\Omega} \XXX}$. Finally, if $\AAA \in
\CDD_N$ is arbitrary, the above notices combined with (D3) and (D2) give
\begin{equation}\label{eqn:Phi}
\Phi_{\TTT}(\AAA) = \DiNT{\EEE_{sm}(\AAA)}{\TTT} + \sum_{(i,\alpha) \in \Upsilon_*} \alpha \cdot \Phi_{\TTT}(\JJJ)
\cdot j_{\supp_{\Omega} \EEE^i_{\alpha}(\AAA)}.
\end{equation}
To establish the existence of $\Phi_{\TTT}$ together with all suitable properties, define $\Phi_{\TTT}(\AAA)$
by \eqref{eqn:Phi} with $\Phi_{\TTT}(\JJJ)$ given by \eqref{eqn:Phi(J)}. Observe that (D0), (D1'), (D2) and (D7) are
fulfilled. We shall show now \eqref{eqn:add}. We shall apply here calculations presented in \EXMp{commonpart}. Under
the notation of that example, \eqref{eqn:Phi} and \eqref{eqn:part2} give
\begin{multline*}
\Phi_{\TTT}(\AAA \oplus \BBB) = \DiNT{\EEE_{sm,0}}{\TTT} + \DiNT{\EEE_{sm,1}}{\TTT} + \DiNT{\EEE_{0,sm}}{\TTT}
+ \DiNT{\EEE_{1,sm}}{\TTT}\\+ \sum_{(i,\alpha,\beta) \in \Upsilon^2_{\#}} (\alpha + \beta) \cdot (\Phi_{\TTT}(\JJJ)
\cdot j_{\supp_{\Omega} \EEE^i_{\alpha,\beta}}).
\end{multline*}
Further, it follows from \THMp{TR} that
\begin{gather*}
\DiNT{\EEE_{sm}(\AAA)}{\TTT} = \DiNT{\EEE_{sm,0}}{\TTT} + \DiNT{\EEE_{sm,1}}{\TTT} + \sum_{\alpha \in \Card_{\infty}}
\DiNT{\EEE_{sm,\alpha}}{\TTT},\\
\DiNT{\EEE_{sm}(\BBB)}{\TTT} = \DiNT{\EEE_{0,sm}}{\TTT} + \DiNT{\EEE_{1,sm}}{\TTT} + \sum_{\alpha \in \Card_{\infty}}
\DiNT{\EEE_{\alpha,sm}}{\TTT}.
\end{gather*}
On the other hand, $(i,\alpha) \in \Upsilon_*$, $\EEE^i_{\alpha}(\AAA) = \bigsqplus_{\beta \in \Lambda_i}
\EEE^i_{\alpha,\beta}$ and $\EEE^i_{\alpha}(\BBB) = \bigsqplus_{\beta \in \Lambda_i} \EEE^i_{\beta,\alpha}$ which means
that
\begin{equation*}
j_{\supp_{\Omega} \EEE^i_{\alpha}(\AAA)} = \sum_{\beta \in \Lambda_i} j_{\supp_{\Omega} \EEE^i_{\alpha,\beta}} \quad
\textup{and} \quad
j_{\supp_{\Omega} \EEE^i_{\alpha}(\BBB)} = \sum_{\beta \in \Lambda_i} j_{\supp_{\Omega} \EEE^i_{\beta,\alpha}}.
\end{equation*}
Substituting the above formulas in the ones for $\Phi_{\TTT}(\AAA)$ and $\Phi_{\TTT}(\BBB)$, we see that \eqref{eqn:add}
is satisfied.\par
Now let $g$ be an arbitrary member of $\Lambda(\Omega)$. For $(i,\alpha) \in \Upsilon_*$ let $U^i_{\alpha} = \Omega_i \cap
\intt g^{-1}(\{\alpha\})$ and let $U^{\tII}_1$ be the closure of $g^{-1}(\RRR_+ \setminus \{0\}) \cap \Omega_{\tII}$. Since
$\Omega$ is extremely disconnected, the sets $U^i_{\alpha}$'s (with $(i,\alpha) \in \Upsilon$) are clopen and pairwise
disjoint. The arguments used in the proof of \LEM{sup-inf} show that their union is dense in $\Omega$. This implies that
there is a partition of unity $\{\EEE^i_{\alpha}\}_{(i,\alpha) \in \Upsilon} \subset \CDD_N$ such that $\supp_{\Omega}
\EEE^i_{\alpha} = U^i_{\alpha}$ for every $(i,\alpha) \in \Upsilon$. Moreover, thanks to \eqref{eqn:X|Y}, there is
$\EEE_{sm} \in \SsS\MmM_N$ such that $\DiNT{\EEE_{sm}}{\TTT} = g \cdot j_{U^{\tII}_1}$. The latter implies that
$\supp_{\Omega} \EEE_{sm} = \supp_{\Omega} \EEE^{\tII}_1$ and hence $\EEE^{\tII}_1 = \aleph_0 \odot \EEE_{sm}$. Now
the formulas $\EEE^i_{\alpha}(\AAA) := \EEE^i_{\alpha}$ and $\EEE_{sm}(\AAA) := \EEE_{sm}$ well defines $\AAA \in \CDD_N$
such that $\Phi_{\TTT}(\AAA) = g$. Further, if $\Phi_{\TTT}(\BBB) = g$ and $V^i_{\alpha} = \supp_{\Omega}
\EEE^i_{\alpha}(\BBB)$ ($(i,\alpha) \in \Upsilon$), then $V^i_{\alpha} \subset U^i_{\alpha}$ for $(i,\alpha) \in
\Upsilon$, by \eqref{eqn:Phi}. But the union of all $V^i_{\alpha}$'s is dense in $\Omega$ and $U^i_{\alpha} \setminus
V^i_{\alpha}$ is open. We infer from this that $V^i_{\alpha} = U^i_{\alpha}$ and consequently $\DiNT{\BBB}{\TTT} =
\DiNT{\AAA}{\TTT}$ and $\BBB = \AAA$. This shows (D8).\par
We are now able to prove easily (D5) and (D8). Indeed, if $\AAA \leqsl \BBB$, then $\BBB = \AAA \oplus \XXX$ for some
$\XXX$ and then, by \eqref{eqn:add}, $\Phi_{\TTT}(\BBB) = \Phi_{\TTT}(\AAA) + \Phi_{\TTT}(\XXX) \geqsl \Phi_{\TTT}(\AAA)$.
Conversely, if $\Phi_{\TTT}(\AAA) \leqsl \Phi_{\TTT}(\BBB)$, there is $g \in \Lambda(\Omega)$ (see \COR{minus} below) for
which $\Phi_{\TTT}(\BBB) = \Phi_{\TTT}(\AAA) + g$. We know from the previous argument that $g = \Phi_{\TTT}(\XXX)$ for some
$\XXX \in \CDD_N$. Consequently, $\Phi_{\TTT}(\BBB) = \Phi_{\TTT}(\AAA \oplus \XXX)$ and by (D8), $\BBB = \AAA \oplus \XXX$
and we are done.\par
We have shown that $\Phi_{\TTT}$ is a bijective order isomorphism. This implies that $\Lambda(\Omega)$ is order-complete
(by \THMP{complete}) and for every nonempty set $\{\AAA^{(s)}\}_{s \in S} \subset \CDD_N$,
\begin{multline*}
\Phi_{\TTT}\Bigl(\bigvee \bigl\{\bigoplus_{s \in S_0} \AAA^{(s)}\dd\ S_0 \in \PPp_f(S)\bigr\}\Bigr) =\\
= \sup{}_{\Lambda(\Omega)} \bigl\{\sum_{s \in S_0} \Phi_{\TTT}(\AAA^{(s)})\dd\ S_0 \in \PPp_f(S)\bigr\}.
\end{multline*}
But this and (AO6) \pREF{AO6} imply (D3), (D4) and (D4'). Point (D6) is left for the reader.
\end{proof}

Every topological space homeomorphic to $\Omega$ is called by us an \textit{underlying model space for $\CDD_N$}.
In the sequel we shall show that underlying model spaces for $\CDD_N$ and $\CDD_{N'}$ are homeomorphic for any natural
numbers $N$ and $N'$. We shall also propose a simplified form of them.\par
Let us now list a few basic consequences of \THM{model}. Some of them were announced in Section~12. For simplicity, we fix
$\TTT \in \SsS\MmM_N$ such that $\aleph_0 \odot \TTT = \JJJ_{\tII}$ and for each $\AAA \in \CDD_N$, $\widehat{\AAA}$ will
denote $\Phi_{\TTT}(\AAA)$. Since $\Lambda(\Omega)$ is order-complete, for every nonempty set $\{f_s\}_{s \in S} \subset
\Lambda(\Omega)$, $\bigvee_{s \in S} f_s$ and $\bigwedge_{s \in S} f_s$ will stand for, respectively,
$\sup{}_{\Lambda(\Omega)} \{f_s\dd\ s \in S\}$ and $\inf{}_{\Lambda(\Omega)} \{f_s\dd\ s \in S\}$.

\begin{cor}{minus}
$(\BBB \ominus \XXX)_{\Delta} \vee (\XXX \ominus \AAA)_{\Delta} \leqsl (\BBB \ominus \AAA)_{\Delta}$ provided $\AAA \leqsl
\XXX \leqsl \BBB$.
\end{cor}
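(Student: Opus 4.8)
The plan is to move the statement to the model $\Lambda(\Omega)$ by means of the isomorphism $\Phi_{\TTT}$ of \THMp{model}, keeping the abbreviation $\widehat{\AAA}=\Phi_{\TTT}(\AAA)$ fixed just above. By (D5) and (D8) of \THMp{model} and \eqref{eqn:add}, $\Phi_{\TTT}$ is an order isomorphism of $\CDD_N$ onto $\Lambda(\Omega)$ carrying $\oplus$ into $+$, and $(\CDD_N,\leqsl)$ is a lattice by \THMp{complete}. Hence it suffices to prove the two inequalities $(\BBB \ominus \XXX)_{\Delta} \leqsl (\BBB \ominus \AAA)_{\Delta}$ and $(\XXX \ominus \AAA)_{\Delta} \leqsl (\BBB \ominus \AAA)_{\Delta}$ separately; their $\leqsl$-least upper bound is then automatically $\leqsl$-below $(\BBB \ominus \AAA)_{\Delta}$.

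For a general pair $\AAA \leqsl \BBB$ I would first describe $\widehat{(\BBB \ominus \AAA)_{\Delta}}$ inside $\Lambda(\Omega)$. By (AO8), $\AAA \oplus \ZZZ = \BBB$ holds iff $(\BBB \ominus \AAA)_{\Delta} \leqsl \ZZZ \leqsl (\BBB \ominus \AAA)^{\nabla}$; transporting this through the bijection $\Phi_{\TTT}$ shows $\widehat{(\BBB \ominus \AAA)_{\Delta}}$ is the $\leqsl$-least $g \in \Lambda(\Omega)$ with $\widehat{\AAA} + g = \widehat{\BBB}$. Arguing as in the proof of \LEMp{sup-inf}, one checks that this least element coincides, on a dense open subset of $\Omega$, with the pointwise \textit{truncated difference} of $\widehat{\AAA}$ and $\widehat{\BBB}$: the function equal to $\widehat{\BBB}(t) - \widehat{\AAA}(t)$ where both values lie in $\RRR_+$, to $\widehat{\BBB}(t)$ where $\widehat{\BBB}(t) \in \Card_{\infty}$ and $\widehat{\AAA}(t) < \widehat{\BBB}(t)$, and to $0$ where $\widehat{\AAA}(t) = \widehat{\BBB}(t) \in \Card_{\infty}$. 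Indeed, this function is the pointwise least $c$ with $\widehat{\AAA}(t) + c = \widehat{\BBB}(t)$; it is Borel and real on the open set $\widehat{\BBB}^{-1}(\RRR_+)$ and equal to $\widehat{\BBB}$ (resp.\ $0$) on the relevant open pieces where $\widehat{\BBB}$ is infinite, so by \LEMp{W*} and \LEMp{C-S} it has a continuous representative $h \in \Lambda(\Omega)$; since addition is continuous on every $I_{\alpha}$ one gets $\widehat{\AAA} + h = \widehat{\BBB}$, and every solution dominates $h$ (it dominates the truncated difference on the dense set), whence $\widehat{(\BBB \ominus \AAA)_{\Delta}} = h$.

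With the three functions $\widehat{(\BBB \ominus \AAA)_{\Delta}}$, $\widehat{(\BBB \ominus \XXX)_{\Delta}}$, $\widehat{(\XXX \ominus \AAA)_{\Delta}}$ so described, the two required inequalities reduce to a pointwise check on the intersection $D_0$ of the three dense open sets on which they realize the respective truncated differences. At each $t \in D_0$, using $\widehat{\AAA}(t) \leqsl \widehat{\XXX}(t) \leqsl \widehat{\BBB}(t)$ and running through the finitely many cases according to which of these values are real and which are infinite cardinals, one verifies at once that the truncated difference of $\widehat{\XXX}$ and $\widehat{\BBB}$ and that of $\widehat{\AAA}$ and $\widehat{\XXX}$ are both $\leqsl$ the truncated difference of $\widehat{\AAA}$ and $\widehat{\BBB}$ (the only cases needing attention are those in which a value is an infinite cardinal, where truncated subtraction collapses to $0$ or to the larger value, and monotonicity is immediate). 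Since the order relation is a closed subset of $I_{\alpha} \times I_{\alpha}$ for every $\alpha$, these inequalities, valid on the dense set $D_0$, hold throughout $\Omega$; that is, $\widehat{(\BBB \ominus \XXX)_{\Delta}} \leqsl \widehat{(\BBB \ominus \AAA)_{\Delta}}$ and $\widehat{(\XXX \ominus \AAA)_{\Delta}} \leqsl \widehat{(\BBB \ominus \AAA)_{\Delta}}$. Applying $\Phi_{\TTT}^{-1}$ and combining as in the first paragraph gives the corollary.

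The main obstacle is the bookkeeping: identifying $\widehat{(\BBB \ominus \AAA)_{\Delta}}$ with the pointwise truncated difference (a \LEMp{sup-inf}-type argument) and then carrying out the exhaustive pointwise comparison of truncated subtractions on $\RRR_+ \cup \Card$, together with the routine density-and-closedness step that lifts those pointwise inequalities to $\Lambda(\Omega)$. The rest is a direct translation through \THMp{model} and (AO8).
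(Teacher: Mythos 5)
Your proposal is correct and follows essentially the same route as the paper: pass to $\Lambda(\Omega)$ via $\Phi_{\TTT}$, identify $\widehat{(\BBB\ominus\AAA)_{\Delta}}$ with the continuous extension of the pointwise truncated difference defined on a dense open set, and verify the inequality pointwise there (the paper leaves that case check as an exercise). The only nit is a citation slip — the characterization of $(\BBB\ominus\AAA)_{\Delta}$ you use is (AO7), not (AO8).
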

\begin{proof}
It suffices to prove a counterpart of the corollary in the class $\Lambda(\Omega)$. Let $f, g \in \Lambda(\Omega)$
be such that $f \leqsl g$. The set $D_0 = \{x \in \Omega\dd\ f(x) < f(y) \textup{ or } f(y) \in \RRR_+\}$ is open
in $\Omega$ and there is a unique function $u_0\dd D_0 \to \RRR_+ \cup \Card$ such that $g(x) = u_0(x) + f(x)$ for every
$x \in D_0$. It may be easily seen that $u_0$ is continuous. Let $D(f,g) = D_0 \cup \intt (\Omega \setminus D_0)$ and $u
\in \Lambda(\Omega)$ be a unique continuous function (guaranteed by \LEMP{C-S}) such that $u(x) = u_0(x)$ for $x \in D_0$
and $u(x) = 0$ for $x \in D(f,g) \setminus D_0$. We see that $g = f + u$ on $D(f,g)$ and hence $g = f + u$ on $\Omega$.
It is easily seen that $u$ is the least member of $(\Lambda(\Omega),\leqsl)$ with the latter property. We shall denote this
$u$ by $(g - f)_{\Delta}$. It is clear that
$$
\widehat{(\BBB \ominus \AAA)_{\Delta}} = (\widehat{\BBB} - \widehat{\AAA})_{\Delta}
$$
whenever $\AAA \leqsl \BBB$. Thus, we need to check that $(h - g)_{\Delta} \vee (g - f)_{\Delta} \leqsl (h - f)_{\Delta}$
if only $f \leqsl g \leqsl h$. It suffices to check suitable inequality on a dense subset of $\Omega$. We leave this
as a simple exercise that for $x \in D(f,g) \cap D(g,h) \cap D(f,h)$ it is fulfilled.
\end{proof}

\begin{rem}{more}
Using the same idea as in the proof of \COR{minus}, one may show that whenever $\AAA, \BBB \in \CDD_N$ are such that
$\AAA \leqsl \BBB$, then
\begin{multline*}
[\BBB \ominus (\BBB \ominus \AAA)^{\nabla}]_{\Delta} \leqsl^s [\BBB \ominus (\BBB \ominus \AAA)_{\Delta}]_{\Delta} \leqsl^s
\AAA \leqsl\\
\leqsl [\BBB \ominus (\BBB \ominus \AAA)_{\Delta}]^{\nabla} = [\BBB \ominus (\BBB \ominus \AAA)^{\nabla}]^{\nabla}.
\end{multline*}
\end{rem}

Recall that the \textit{Souslin number} of a topological space $X$, denoted by $c(X)$ (\cite[Problem~1.7.12]{eng}),
is the least infinite cardinal number $\alpha$ such that every family of mutually disjoint nonempty open subsets of $X$
has power no greater than $\alpha$. Let us modify this by putting $c_*(\varempty) = 0$ and $c_*(X) = c(X)$ for nonempty
topological spaces $X$. It turns out that the modified Souslin numbers of certain clopen subsets of $\Omega$ may be used
to give the formula for $\dim(\AAA)$ if only this dimension is infinite. Namely,

\begin{pro}{Dim}
Let $\AAA \in \CDD_N$ and $f = \widehat{\AAA}$. Let $U^{\tII}_1$ be the closure of $f^{-1}(\RRR_+ \setminus \{0\}) \cap
\Omega_{\tII}$ and for $(i,\alpha) \in \Upsilon_*$ let $U^i_{\alpha} = \Omega_i \cap \intt f^{-1}(\{\alpha\})$. Then
$$
\aleph_0 \cdot \dim(\AAA) = \sum_{(i,\alpha) \in \Upsilon} \alpha \cdot c_*(U^i_{\alpha}).
$$
\end{pro}
\begin{proof}
In extremely disconnected spaces, the closures of two disjoint open sets are disjoint as well. Consequently, whenever $E$
is a clopen subset of $\Omega$, $c(E)$ is the least infinite cardinal $\alpha$ such that every family of pairwise disjoint
nonempty clopen sets has power no greater than $\alpha$. Since clopen sets correspond to $N$-tuples $\AAA$ such that $\AAA
\leqsl^s \JJJ$, the assertion follows from the argument used in (ST17) \pREF{ST17}. The details are left for the reader
(cf. the proof of point (D8) in \THM{model}).
\end{proof}

\begin{rem}{fin!}
It is worthwhile to mention that it is impossible to recognize $N$-tuples whose representatives act on finite-dimensional
spaces by means of corresponding to them members of $\Lambda(\Omega)$, unless we distinguish some special subsets
of $\Omega$, as it is done in the next section. To convince of that, it suffices to note that $\widehat{\AAA}$ is
the characteristic function of a one-point subset of $\Omega_I$ if e.g. $\aAA = (T,\ldots,T) \in \CDDc_N$ where $T$ is
either the identity operator on $\CCC$ or a unilateral shift on $l^2$.
\end{rem}

We shall now prove a useful

\begin{lem}{count}
\begin{enumerate}[\upshape(A)]
\item For every clopen nonempty set $E \subset \Omega$ there is a family $\{E_s\}_{s \in S}$ of pairwise disjoint clopen
   nonempty sets such that $c(E_s) = \aleph_0$ for every $s \in S$ and $\bigcup_{s \in S} E_s$ is a dense subset of $E$.
\item Let $\{f_s\}_{s \in S}$ be a nonempty set of members of $\Lambda(\Omega)$ and let $u = \bigwedge_{s \in S} f_s$
   and $v = \bigvee_{s \in S} f_s$. For every clopen nonempty set $E \subset \Omega$ with $c(E) = \aleph_0$ there are
   a nonempty set $S(E) \in \PPp_{\omega}(S)$ and an open dense subset $D(E)$ of $E$ with the following property. Whenever
   $S' \supset S(E)$ ($S' \subset S$) and $x \in D(E)$,
   \begin{equation}\label{eqn:inf}
   u(x) = \inf_{s \in S'} f_s(x)
   \end{equation}
   and if, in addition, $v(E) \subset I_{\aleph_0}$, then also
   $$
   v(x) = \sup_{s \in S'} f_s(x).
   $$
\end{enumerate}
\end{lem}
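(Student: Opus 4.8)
The plan is to prove both parts by reducing questions about $\Omega$ to Baire-category arguments inside clopen subsets, exploiting throughout that $\Omega$ is extremely disconnected (every open set has clopen closure), that clopen subsets of $\Omega$ correspond bijectively to $N$-tuples $\leqsl^s \JJJ$, and that a clopen $E$ with $c(E)=\aleph_0$ has the Baire property and is ``countably saturated'' in the sense that its topology is governed by countably many clopen pieces on dense sets. For part (A), I would first translate the statement into the language of $\CDD_N$: a clopen nonempty $E$ corresponds to a nontrivial $\EEE \leqsl^s \JJJ$, and a family $\{E_s\}$ of pairwise disjoint clopen sets whose union is dense in $E$ corresponds to a regular family $\{\EEE^{(s)}\}$ with $\bigsqplus_s \EEE^{(s)} = \EEE$ (using \COR{disjoint}). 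The requirement $c(E_s) = \aleph_0$ means, by \PRO{Dim} (or directly by the argument in (ST17)), that $\aleph_0 \cdot \dim(\EEE^{(s)}) = \aleph_0$, i.e. $0 < \dim(\EEE^{(s)}) \leqsl \aleph_0$. So (A) is equivalent to: every nontrivial $\EEE \leqsl^s \JJJ$ decomposes as $\bigsqplus_{s \in S} \EEE^{(s)}$ with $0 < \dim(\EEE^{(s)}) \leqsl \aleph_0$. This follows from \PRO{count} together with Zorn's lemma: take a maximal regular family $\{\EEE^{(s)}\}_{s\in S}$ of $N$-tuples $\leqsl^s \EEE$ each of dimension in $(0,\aleph_0]$; if $\FFF := \EEE \sqminus \bigsqplus_s \EEE^{(s)}$ were nontrivial, \PRO{count} would produce $\GGG \leqsl^s \FFF \leqsl^s \EEE$ with $0 < \dim(\GGG) \leqsl \aleph_0$, contradicting maximality. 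Translating back via the bijection between clopen sets and such $N$-tuples (and using that $\bigsqplus$ corresponds to disjoint union with dense union) gives (A).

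For part (B), fix a clopen nonempty $E$ with $c(E) = \aleph_0$. I would handle the infimum first since it is easier. By \LEM{sup-inf} applied to any countable subfamily there is an open dense set on which the relevant countable infimum is continuous; the point is to choose the countable subfamily so that its infimum already equals $u = \bigwedge_{s\in S} f_s$ on a dense subset of $E$. Here is where $c(E) = \aleph_0$ enters crucially: the clopen subsets of $E$ form a Boolean algebra satisfying the countable chain condition, hence (by the usual exhaustion argument) for any ``decreasing directed'' quantity one can extract a cofinal countable subfamily. Concretely, since $u = \inf{}_{\Lambda(\Omega)}\{f_s\}$ and finite infima are computed pointwise, the family $\{f_{s_1} \wedge \dots \wedge f_{s_n}\}$ is downward directed, and its pointwise infimum on a dense subset of $E$ is realized — using the c.c.c.\ of clopen subsets of $E$ — by a countable subfamily $S(E)$. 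More precisely, for each pair of rationals $p<q$ the clopen set $\intt_E\{x : f_s(x) < p \text{ while } u(x)\ge q\ \text{fails to be approximated}\}$, as $s$ ranges over $S$, gives a directed family whose union (being c.c.c.) is attained by a countable subfamily; taking the union over all rational pairs yields a countable $S(E)$. Then $D(E)$ is the intersection of the open dense set supplied by \LEM{sup-inf} (for the sequence enumerating $S(E)$) with $E$, and \eqref{eqn:inf} holds on $D(E)$ for every $S' \supset S(E)$ because adding more indices cannot change an infimum already attained. For the supremum, under the extra hypothesis $v(E) \subset I_{\aleph_0}$ the values lie in a metrizable order-interval homeomorphic to $[0,1]$, so $v\bigr|_E$ is Borel; a completely parallel c.c.c.\ extraction of a countable cofinal subfamily for the upward-directed family of finite suprema, combined with the $v$-part of \LEM{sup-inf}, gives an open dense $D(E)' \subset E$ on which $v(x) = \sup_{s\in S'} f_s(x)$ for all $S' \supset S(E)$; shrinking $D(E)$ to $D(E) \cap D(E)'$ finishes part (B).

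The main obstacle will be making the countable-cofinal extraction rigorous for the suprema when infinite cardinals appear as values — one must be careful that $v(E) \subset I_{\aleph_0}$ is exactly the hypothesis that rules out the phenomenon of \EXM{lim}, where a ``countable'' approximation of the l.u.b.\ fails. In the infimum case no such restriction is needed because a pointwise infimum over a c.c.c.\ Boolean algebra of clopen pieces is always countably attained on a dense set, but for suprema the presence of infinite cardinal values could in principle force genuinely uncountable cofinal families; the hypothesis $v(E)\subset I_{\aleph_0}$ (equivalently, $v\bigr|_E$ is real/countable-valued and hence Borel into $[0,1]$) is what makes the standard c.c.c.\ argument go through. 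I would isolate this as a short sublemma: in an extremely disconnected compact space, for a directed family $\{g_s\}$ of continuous $[0,\infty]$-valued functions whose pointwise supremum is $\leqsl \aleph_0$ everywhere on a clopen c.c.c.\ set $E$, the supremum in $\Lambda(\Omega)$ restricted to $E$ is already the supremum of a countable subfamily on a dense subset of $E$. Everything else is routine bookkeeping with \LEM{C-S}, \LEM{W*}, and \LEM{sup-inf}, which I would not spell out in detail.
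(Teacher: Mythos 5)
Part (A) of your proposal is correct and, after unwinding your translation into $\CDD_N$, it is the same argument as the paper's: take a maximal family of pairwise disjoint nonempty clopen subsets of $E$ with Souslin number $\aleph_0$ (equivalently, a maximal regular family of $N$-tuples of dimension $\leqsl\aleph_0$ that are $\leqsl^s$ the tuple corresponding to $E$), and use \PRO{count} to show the clopen remainder is empty. Likewise, your treatment of the real-valued part of the infimum and of the supremum under the hypothesis $v(E)\subset I_{\aleph_0}$ is in substance the paper's route: $c(E)=\aleph_0$ makes $\CCc(E)$ a countably decomposable commutative $\WWw^*$-algebra, hence isomorphic to $L^{\infty}(\mu)$ for a probability measure $\mu$, the countable cofinal subfamily is extracted by minimizing (resp.\ maximizing) $\int w_{S_0}\dint{\mu}$ over countable $S_0$, and \LEM{sup-inf} then supplies the open dense set $D(E)$.

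There is, however, a genuine gap in your treatment of the infimum where $u$ takes infinite cardinal values. Your extraction device --- clopen sets indexed by pairs of rationals $p<q$ --- detects nothing on $U_{\alpha}:=E\cap\intt u^{-1}(\{\alpha\})$ for $\alpha\in\Card_{\infty}$, since there every $f_s\geqsl\alpha\geqsl\aleph_0$ and no rational threshold separates anything; and your blanket assertion that ``a pointwise infimum over a c.c.c.\ Boolean algebra of clopen pieces is always countably attained on a dense set'' is precisely the nontrivial point, not a fact you may invoke. The statement is true, but the mechanism on the cardinal-valued part is different from the real-valued one and must be supplied: because the cardinals are well ordered, any value of $f_s$ strictly above $\alpha$ is $\geqsl\alpha^{+}$, so if the clopen set $G=U_{\alpha}\setminus\cll\bigl(\bigcup_{s}(U_{\alpha}\cap f_s^{-1}(\{\alpha\}))\bigr)$ were nonempty, one could redefine $u$ to be $\alpha^{+}$ on $G$ and obtain a strictly larger lower bound of $\{f_s\}$, a contradiction. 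Hence on each $U_{\alpha}$ the infimum is densely \emph{attained} by individual members $f_s$; disjointifying the witnessing sets $U_{\alpha}\cap f_s^{-1}(\{\alpha\})$ by transfinite recursion and using $c(E)=\aleph_0$ (the $U_{\alpha}$'s are pairwise disjoint, and for fixed $\alpha$ so are the disjointified witnesses) shows that countably many indices already realize $u$ pointwise on a dense open subset of $\bigcup_{\alpha}U_{\alpha}$. Without this attainment-plus-disjointification step your countable set $S(E)$ need not control $u$ off the real-valued part of $E$, so \eqref{eqn:inf} is not established there. Once this piece is added, the rest of your plan goes through and coincides with the paper's proof.
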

\begin{proof}
(A): Let $\EeE = \{E_s\}_{s \in S}$ be a maximal family of pairwise disjoint nonempty clopen sets such that $c(E_s) =
\aleph_0$ and $E_s \subset E$ for every $s \in S$. Let $D = E \setminus \cll (\bigcup_{s \in S} E_s)$. We have to show
that $D$ is empty. But this follows from \PROp{count}. Indeed, we infer from this result that every nonempty clopen subset
of $\Omega$ contains a nonempty clopen set $G$ with $c(G) = \aleph_0$. Consequently, since $D$ is clopen and $\EeE$
is maximal, $D = \varempty$.\par
(B): Let $U_1 = \cll u^{-1}(\RRR_+) \cap E$ and $U_{\alpha} = \intt u^{-1}(\{\alpha\}) \cap E$ for $\alpha \in
\Card_{\infty}$. We know (cf. the proof of \LEMP{sup-inf}) that the collection $\UUu = \{U_{\alpha}\dd\ \alpha \in
\Card_{\infty} \cup \{1\}\}$ consists of pairwise disjoint clopen sets whose union is dense in $E$. Further, for each
$\alpha \in \Card_{\infty}$ and $s \in S$ put $U_{\alpha,s} = U_{\alpha} \cap f_s^{-1}(\{\alpha\})$. Since $f_s \geqsl
\alpha$ on $U_{\alpha}$ and $\alpha$ is an isolated point of $\Card \setminus \{\beta \in \Card\dd\ \beta < \alpha\}$,
$U_{\alpha,s}$ is clopen. It is clear that $\bigcup_{s \in S} U_{\alpha,s}$ is dense in $U_{\alpha}$. (Indeed, the set
$G = U_{\alpha} \setminus \cll (\bigcup_{s \in S} U_{\alpha,s})$ is clopen and $f_s(x) \geqsl \alpha^+$ for any $x \in G$
and $s \in S$ and thus $u' \in \Lambda(\Omega)$ given by $u'|_G \equiv \alpha^+$ and $u' = u$ on $\Omega \setminus G$
is such that $u' \leqsl f_s$ ($s \in S$) which gives $u' \leqsl u$ and consequently $G = \varempty$.) Let `$<$' be a well
order on $S$ with the first element $s_*$. We define clopen sets $V_{\alpha,s}$ by transfinite induction as follows. Let
$V_{\alpha,s_*} = U_{\alpha,s_*}$ and for any $s \in S \setminus \{s_*\}$,
$$
V_{\alpha,s} = U_{\alpha,s} \setminus \cll \bigl(\bigcup_{s' < s} V_{\alpha,s'}\bigr).
$$
We see that $V_{\alpha,s} \subset U_{\alpha,s}$ and hence
\begin{equation}\label{eqn:fs}
u\bigr|_{V_{\alpha,s}} = f_s\bigr|_{V_{\alpha,s}}.
\end{equation}
Further, the sets $V_{\alpha,s}$ ($s \in S$) are pairwise disjoint. Using transfinite induction one may check that
$\cll(\bigcup_{s' < s} V_{\alpha,s'}) = \cll(\bigcup_{s' < s} U_{\alpha,s'})$ for each $s \in S$ and thus
\begin{equation}\label{eqn:alpha,s}
\cll \bigl(\bigcup_{s \in S} V_{\alpha,s}\bigr) = U_{\alpha}.
\end{equation}
Now we pass to the set $U_1$. By the definition, $U_1$ is clopen and $u(U_1) \subset I_{\aleph_0}$. In what follows, we
assume $U_1$ is nonempty. Let $g_s = f_s \wedge \aleph_0$. We naturally identify $I_{\aleph_0}$ with $[0,\infty]$. Let
$\tau\dd [0,\infty] \ni x \mapsto \frac{x}{x+1} \in [0,1]$ (with convention that $\frac{\infty}{\infty + 1} = 1$). Put
$u' = \tau \circ u\bigr|_{U_1} \in \CCc(U_1,[0,1])$ and $g_s' = \tau \circ g_s\bigr|_{U_1} \in \CCc(U_1,[0,1])$. Note that
\begin{equation}\label{eqn:infim}
u' = \bigwedge_{s \in S} g_s'.
\end{equation}
Since $U_1$ is clopen in $\Omega$ and $\CCc(\Omega)$ is a $\WWw^*$-algebra, so is $\CCc(U_1)$. Further, we conclude
from the fact that $c(U_1) = \aleph_0$ that $\CCc(U_1)$ is countably decomposable. Thus, it may be infered from
\cite[Theorem~III.1.18]{tk1} or \cite[Proposition~1.18.1]{sak} that $\CCc(U_1)$ is isomorphic to $L^{\infty}(\mu)$
for some probabilistic space $(X,\Mm,\mu)$. Under the isomorphism, $g_s'$ and $u'$ correspond to, respectively, $\xi_s \in
L^{\infty}(\mu)$ and $w \in L^{\infty}(\mu)$. Consequently, $w = \inf{}_{L^{\infty}(\mu)} \{\xi_s\dd\ s \in S\}$
(by \eqref{eqn:infim}). For a nonempty set $S_0 \in \PPp_{\omega}(S)$ let $w_{S_0}\dd X \ni x \mapsto \inf_{s \in S_0}
\xi_s(x) \in [0,1]$. Since $S_0$ is countable, $w_{S_0}$ is measurable and hence $w_{S_0} \in L^{\infty}(\mu)$. Let
$$
c = \inf \Bigl\{\int_X w_{S_0} \dint{\mu}\dd\ S_0 \in \PPp_{\omega}(S)\Bigr\}.
$$
It is easily seen that there is $S_1 \in \PPp_{\omega}(S)$ for which $c = \int_X w_{S_1} \dint{\mu}$. Now if $s$ is
an arbitrary element of $S$, then $w_{S_1 \cup \{s\}} \leqsl w_{S_1}$ and $\int_X w_{S_1 \cup \{s\}} \dint{\mu} \geqsl c =
\int_X w_{S_1} \dint{\mu}$. These imply that $w_{S_1 \cup \{s\}} = w_{S_1}$ ($\mu$-almost everywhere) and consequently
$\xi_s \geqsl w_{S_1}$ in $L^{\infty}(\mu)$. The latter gives $w \geqsl w_{S_1} = \inf{}_{L^{\infty}(\mu)} \{\xi_s\dd\
s \in S_1\}$ and therefore $w = w_{S_1}$ (in $L^{\infty}(\mu)$). In $\CCc(U_1)$ this is interpreted as
$u' = \bigwedge_{s \in S_1} g_s'$ which is equivalent to $u\bigr|_{U_1} = \bigwedge_{s \in S_1} g_s\bigr|_{U_1}$.
Now by \LEMp{sup-inf}, $u(x) = \inf_{s \in S_1} g_s(x)$ for $x \in D_1$ where $D_1$ is an open dense subset of $U_1$. This
implies that for each $x \in D_1(E) := D_1 \cap u^{-1}(\RRR_+) \cap E$ there is $s_x \in S_1$ such that $g_{s_x}(x) \in
\RRR_+$. Consequently, $g_{s_x}(x) = f_{s_x}(x)$ and hence
\begin{equation}\label{eqn:S1}
u(x) = \inf_{s \in S_1} f_s(x)
\end{equation}
for $x \in D_1(E)$. Notice that $D_1(E)$ is dense in $U_1$.\par
Further, observe that the family $\{U_1\} \cup \{V_{\alpha,s}\dd\ s \in S,\ \alpha \in \Card_{\infty}\}$ constists
of pairwise disjoint clopen subsets of $E$. Since $c(E) = \aleph_0$, the set $J := \{(\alpha,s)\dd\ s \in S,\ \alpha \in
\Card_{\infty},\ U_{\alpha,s} \neq \varempty\}$ is countable (finite or not). Put $S(E) = S_1 \cup \{s\dd\ (\alpha,s) \in
J\}$ and $D(E) = D_1(E) \cup \bigcup_{(\alpha,s) \in J} V_{\alpha,s}$. We see that $S(E) \in \PPp_{\omega}(S)$ and $D(E)$
is open and dense in $E$ (by \eqref{eqn:alpha,s} and the density of $D_1(E)$ in $U_1$). Take an arbitrary set $S'$ such
that $S(E) \subset S' \subset S$. For each $x \in \Omega$ one has $\inf_{s \in S'} f_s(x) \geqsl u(x)$. On the other hand,
if $x \in D(E)$, then either $x \in D_1(E)$ or $x \in V_{\alpha,s}$ for some $(\alpha,s) \in J$. In the first case
the inequality $\inf_{s \in S'} f_s(x) \leqsl u(x)$ follows from \eqref{eqn:S1}, while in the second one from
\eqref{eqn:fs}.\par
If we additionally assume that $v(E) \subset I_{\aleph_0}$, we have to enlarge the above defined set $S(E)$ and decrease
$D(E)$. Arguing as in the paragraph for $U_1$ (that is, representing $\CCc(E)$ as $L^{\infty}(\mu)$ for some probabilistic
measure $\mu$), we see that there is $S_2 \in \PPp_{\omega}(S)$ such that $v\bigr|_E = \bigvee_{s \in S_2} f_s$.
By \LEM{sup-inf}, there is an open dense subset $D_2$ of $E$ such that $v(x) = \sup_{s \in S_2} f_s(x)$. Now it suffices
to replace $S(E)$ by $S(E) \cup S_2$ and $D(E)$ by $D(E) \cap D_2$. (The details are left for the reader.)
\end{proof}

Both points of \LEM{count} yield

\begin{cor}{inf-sup}
Let $\{f_s\}_{s \in S}$ be a nonempty subset of $\Lambda(\Omega)$.
\begin{enumerate}[\upshape(A)]
\item There is an open dense subset $D$ of $\Omega$ such that for all $x \in D$,
   $$
   (\bigwedge_{s \in S} f_s)(x) = \inf_{s \in S} f_s(x).
   $$
\item If $E$ is a clopen subset of $\Omega$ such that $(\bigvee_{s \in S} f_s)(E) \subset I_{\aleph_0}$, then there exists
   an open dense subset $G$ of $E$ such that for any $x \in G$, $(\bigvee_{s \in S} f_s)(x) = \sup_{s \in S} f_s(x)$.
\end{enumerate}
\end{cor}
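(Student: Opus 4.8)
The plan is to deduce both assertions directly from \LEM{count}, whose two parts were tailored for exactly this. For part (A) I would first apply \LEM{count}(A) with $E=\Omega$ to obtain a family $\{E_t\}_{t\in T}$ of pairwise disjoint nonempty clopen subsets of $\Omega$ with $c(E_t)=\aleph_0$ for every $t$ and with $\bigcup_{t\in T}E_t$ dense in $\Omega$. Writing $u=\bigwedge_{s\in S}f_s$, for each $t$ I would then invoke \LEM{count}(B) on the countably cellular clopen set $E_t$ (with $S'=S(E_t)$ there) to get an open dense $D(E_t)\subseteq E_t$ on which $u(x)=\inf_{s\in S(E_t)}f_s(x)$. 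Since $S(E_t)\subseteq S$ one has $\inf_{s\in S(E_t)}f_s(x)\geqsl\inf_{s\in S}f_s(x)$, whereas $u\leqsl f_s$ holds pointwise for every $s\in S$ because $u$ is the greatest lower bound of $\{f_s\}$ in the lattice $\Lambda(\Omega)$; hence $u(x)=\inf_{s\in S}f_s(x)$ for every $x\in D(E_t)$. The set $D:=\bigcup_{t\in T}D(E_t)$ is then open, and it is dense in $\Omega$: any nonempty open $U\subseteq\Omega$ meets some $E_{t_0}$ by density of the union, and, $E_{t_0}$ being open, $U\cap E_{t_0}$ is a nonempty open subset of $E_{t_0}$, hence meets the dense set $D(E_{t_0})$. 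This $D$ witnesses (A).

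For part (B) I would run the same argument over the clopen set $E$ in place of $\Omega$: apply \LEM{count}(A) to $E$ to split it into pairwise disjoint nonempty clopen pieces $E_t$ with $c(E_t)=\aleph_0$ whose union is dense in $E$. Because $E_t\subseteq E$ we have $v(E_t)\subseteq v(E)\subseteq I_{\aleph_0}$ for $v=\bigvee_{s\in S}f_s$, so the supremum clause of \LEM{count}(B) is available on each $E_t$, producing an open dense $D(E_t)\subseteq E_t$ with $v(x)=\sup_{s\in S(E_t)}f_s(x)$ on $D(E_t)$. Then $\sup_{s\in S(E_t)}f_s(x)\leqsl\sup_{s\in S}f_s(x)\leqsl v(x)$, the last inequality holding because $v\geqsl f_s$ pointwise for every $s$; so $v(x)=\sup_{s\in S}f_s(x)$ on $D(E_t)$, and $G:=\bigcup_t D(E_t)$ is open and dense in $E$ by the same density reasoning as above.

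I do not expect a real obstacle: all the delicate analysis --- the transfinite construction of the sets $V_{\alpha,s}$, the $L^{\infty}(\mu)$-representation argument pinning down countable cofinality of the infimum on the part where $u$ stays finite, and the Baire-category bookkeeping for the supremum --- is already carried out in \LEM{count} and \LEM{sup-inf}. The only things needing a couple of lines of care are, first, the density of $\bigcup_t D(E_t)$ extracted from the disjoint clopen decomposition, and second, the observation that the one-sided pointwise inequalities $u\leqsl f_s$ and $v\geqsl f_s$ (valid since $u$ and $v$ are the g.l.b. and l.u.b. of $\{f_s\}$ in the order-complete lattice $\Lambda(\Omega)$) upgrade the equality of $u$ with the infimum over the \emph{countable} set $S(E_t)$ to its equality with the infimum over all of $S$, and symmetrically for $v$; the hypothesis $(\bigvee_{s\in S}f_s)(E)\subseteq I_{\aleph_0}$ in (B) is precisely what lets that last step descend to the pieces $E_t$.
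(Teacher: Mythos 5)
Your proposal is correct and is exactly the assembly the paper intends: the paper's proof of this corollary is the one-line remark that both points of \LEM{count} yield it, and you have filled in precisely that gluing — decompose $\Omega$ (resp.\ $E$) via \LEM{count}(A) into disjoint clopen pieces of countable Souslin number with dense union, apply \LEM{count}(B) on each piece, and patch the resulting open dense sets together. The only cosmetic remark is that the sandwich argument upgrading from $S(E_t)$ to $S$ is not needed, since \LEM{count}(B) already allows taking $S'=S\supset S(E_t)$ directly; but your version is equally valid.
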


\begin{rem}{sup}
We suspect that the counterpart of point (A) of \COR{inf-sup} for suprema fails to be true in general. However, partial
results in this direction may be shown. Let $u = \bigvee_{s \in S} f_s$. Put $U_1 = u^{-1}(I_{\aleph_0})$ and $U_{\alpha} =
\intt f^{-1}(\{\alpha\})$ for $\alpha \in \Card_{\infty} \setminus \{\aleph_0\}$. The argument used in the proof
of \LEMp{sup-inf} shows that $U_1 \cup \bigcup_{\alpha > \aleph_0} U_{\alpha}$ is dense in $\Omega$. By \COR{inf-sup},
there is an open dense subset of $U_1$ such that
\begin{equation}\label{eqn:aux70}
u(x) = \sup_{s \in S} f_s(x)
\end{equation}
for $x \in D_1$. We ask for which $\alpha \in \Card_{\infty} \setminus \{\aleph_0\}$ there is an open dense subset
$D_{\alpha}$ of $U_{\alpha}$ such that \eqref{eqn:aux70} is satisfied for all $x \in D_{\alpha}$. It is quite easy to show
that this is true when $\alpha = \beta^+$ for some $\beta \geqsl \aleph_0$ (indeed, it suffices to put $D_{\alpha} =
U_{\alpha} \cap \bigcup_{s \in S} f_s^{-1}(\{\alpha\})$; since $f_s \leqsl \alpha$ on $U_{\alpha}$ and $\alpha$ is
an isolated point of $I_{\alpha}$, the set $D_{\alpha}$ is open; that $\cll D_{\alpha} = U_{\alpha}$ may be proved
by a standard argument on the difference of the latter sets). A little bit more difficult is to prove that $D_{\alpha}$
exists for every limit cardinal $\alpha$ which has countable cofinality. The latter means that there is a sequence
$(\beta_n)_{n=1}^{\infty}$ of cardinals such that $\beta_n < \alpha$ for every $n$, and $\alpha = \sup_{n\geqsl1} \beta_n$.
In that case we put $G = U_{\alpha} \cap \bigcap_{n=1}^{\infty} \bigcup_{s \in S} f_s^{-1}(\Card \setminus I_{\beta_n})$
and $D = U_{\alpha} \setminus \cll G$. Our first claim is that $D$ is empty. For if not, there would exist a nonempty
clopen set $E \subset D$. Then put $E_n = E \cap \bigcap_{s \in S} f_s^{-1}(I_{\beta_n})$. Noticing that $E =
\bigcup_{n=1}^{\infty} E_n$ (since $E \cap G = \varempty$) and $E_n$'s are closed, infer from Baire's theorem that
$W = \intt E_n$ is nonempty for some $n$ and thus $\bigvee_{s \in S} (f_s\bigr|_W) \leqsl \beta_n$ ($W$ is clopen),
contradictory to the fact that $[\bigvee_{s \in S} (f_s\bigr|_W)](x) = u(x) = \alpha$ for $x \in W$. So, $D$ is indeed
empty and hence $G$ is a dense $\GGg_{\delta}$ subset of $U_{\alpha}$. Now an application of \LEMp{W*} yields that
$D_{\alpha} = \intt G$ is dense in $U_{\alpha}$ as well.\par
The above arguments show that if $(\bigvee_{s \in S} f_s)(\Omega) \cap \Card_{\infty}$ consists only of cardinals which
are non-limit or have countable cofinality, then $\bigvee_{s \in S} f_s$ may be computed pointwisely on an open dense set.
\end{rem}

\begin{thm}{Boole}
For every nonempty set $\{\AAA^{(s)}\}_{s \in S} \subset \CDD_N$ and each $\BBB \in \CDD_N$,
\begin{gather}
\BBB \wedge \bigl(\bigvee_{s \in S} \AAA^{(s)}\bigr) = \bigvee_{s \in S} (\BBB \wedge \AAA^{(s)}), \label{eqn:aux71}\\
\BBB \vee \bigl(\bigwedge_{s \in S} \AAA^{(s)}\bigr) = \bigwedge_{s \in S} (\BBB \vee \AAA^{(s)}). \label{eqn:aux72}
\end{gather}
\end{thm}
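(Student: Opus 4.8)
The plan is to transport both identities through the model of \THMp{model}. Since $\Phi_{\TTT}\dd \CDD_N \to \Lambda(\Omega)$ is a bijective order isomorphism, it preserves arbitrary joins and meets, so it suffices to prove, for $f = \Phi_{\TTT}(\BBB)$ and $f_s = \Phi_{\TTT}(\AAA^{(s)})$,
\begin{gather*}
f \wedge \bigl(\bigvee_{s \in S} f_s\bigr) = \bigvee_{s \in S} (f \wedge f_s), \qquad
f \vee \bigl(\bigwedge_{s \in S} f_s\bigr) = \bigwedge_{s \in S} (f \vee f_s)
\end{gather*}
in the lattice $\Lambda(\Omega)$. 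Two preliminary remarks make these accessible. First, a meet or a join of \emph{two} members of $\Lambda(\Omega)$ is computed pointwise: the pointwise minimum (resp. maximum) of two members of $\Lambda(\Omega)$ is continuous and respects the range conditions on $\Omega_I$ and $\Omega_{\tIII}$, hence lies in $\Lambda(\Omega)$, and is visibly the infimum (resp. supremum) there. Second, two members of $\Lambda(\Omega)$ that are ordered, resp. equal, on a dense subset of $\Omega$ are ordered, resp. equal, everywhere, since the graph of $\leqsl$ is a closed subset of each $I_{\alpha} \times I_{\alpha}$ and each $I_{\alpha}$ is Hausdorff. Thus each identity may be checked on a convenient dense open subset of $\Omega$, and there it becomes a pointwise statement in the complete chain $\RRR_+ \cup \Card$.

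For the join-over-meet identity I would argue directly. By \COR{inf-sup}(A) there is a dense open set on which both $\bigwedge_s f_s$ and $\bigwedge_s (f \vee f_s)$ are computed pointwise as infima; on that set the identity reduces to $\max(f(x), \inf_s f_s(x)) = \inf_s \max(f(x), f_s(x))$, which holds in any complete chain. Hence it holds everywhere.

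For the meet-over-join identity the inequality $\bigvee_s (f \wedge f_s) \leqsl f \wedge \bigvee_s f_s$ is immediate, since each $f \wedge f_s$ lies below $f$ and below $\bigvee_s f_s$. The reverse inequality is the real content, the difficulty being that $\bigvee_s f_s$ need not be the pointwise supremum of the $f_s$ (cf. \REM{sup}). I would first invoke \LEM{count}(A) to reduce to checking the inequality on each clopen piece $E$ with $c(E) = \aleph_0$, their union being dense. On such a piece $\CCc(E)$ is countably decomposable, so after passing to a dense open subset $E$ splits into clopen parts on which $\bigvee_s f_s$ is either bounded by $\aleph_0$ or constant equal to an infinite cardinal. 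On a part where $\bigvee_s f_s \leqsl \aleph_0$, \LEM{count}(B) together with \LEMp{sup-inf} shows that on a dense open subset $\bigvee_s f_s = \sup_{s \in S'} f_s$ pointwise for a countable $S' \supset S(E)$ and that $\bigvee_{s \in S'}(f \wedge f_s)$ is likewise the pointwise supremum; there the desired inequality reduces to the pointwise chain identity $\min(f(x), \sup_{s \in S'} f_s(x)) = \sup_{s \in S'} \min(f(x), f_s(x))$, and $\bigvee_{s \in S'}(f \wedge f_s) \leqsl \bigvee_{s \in S}(f \wedge f_s)$ closes the gap.

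The remaining, and principal, obstacle is a clopen part $E'$ on which $\bigvee_s f_s$ is a constant infinite cardinal $\alpha$; translating back, the portion of $\bigvee_s \AAA^{(s)}$ over $E'$ is $\alpha \odot \ZZZ$ with $\widehat{\ZZZ} = j_{E'}$, $\ZZZ \leqsl^s \JJJ$, and each $f_s \leqsl \alpha$ on $E'$. Here I expect the clean route to run through $\CDD_N$: using that $\alpha \odot \ZZZ$ is $\aleph_0$-divisible and the partition of unity of \THM{decomp}, one shows that $\BBB \wedge (\alpha \odot \ZZZ)$ is already absorbed into $\bigvee_s (\BBB \wedge \AAA^{(s)})$, exploiting that meeting with an $N$-tuple of the special form $\alpha \odot \ZZZ$ (with $\alpha$ infinite) behaves like multiplication by a central projection capped at multiplicity $\alpha$, and that $\alpha \odot (\cdot)$ commutes with arbitrary joins (cf. (AO14)). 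The careful matching of the multiplicities $\EEE^i_{\beta}(\BBB \wedge \AAA^{(s)})$ against $\EEE^i_{\beta}(\BBB)$ and $\alpha$ in this last case is, I believe, the most delicate step; the rest is routine linear-order algebra transported through the model, after which the two displayed relations in $\Lambda(\Omega)$ yield \eqref{eqn:aux71} and \eqref{eqn:aux72} via $\Phi_{\TTT}^{-1}$.
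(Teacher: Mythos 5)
Your overall route is the paper's own: transport everything through $\Phi_{\TTT}$ into $\Lambda(\Omega)$ and verify the two distributive laws pointwise on dense open subsets of $\Omega$. Your argument for \eqref{eqn:aux72}, for the easy inequality $\bigvee_s(f\wedge f_s)\leqsl f\wedge\bigvee_s f_s$, and for the portion of $\Omega$ on which $\bigvee_s f_s\leqsl\aleph_0$ is correct and essentially identical to what the paper does. The gap is in the one case that carries all the difficulty of \eqref{eqn:aux71}: a clopen piece $E'$ on which $v=\bigvee_s f_s$ is a constant uncountable cardinal $\alpha$. There you explicitly stop proving (``I expect'', ``I believe'') and gesture at a return to $\CDD_N$ via ``absorption'' and ``matching of multiplicities''. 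That sketch is not carried out, and it is doubtful it can be in the form stated: precisely because the lattice join is not a pointwise supremum on $E'$ (your own citation of \REM{sup} is the warning), the hypothesis $\bigvee_s f_s\equiv\alpha$ gives no direct control over the individual partitions of unity $\EEE^i_{\beta}(\AAA^{(s)})$, which is what a multiplicity-matching argument would have to compare.

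What actually closes this case is the same density trick you used implicitly in the bounded case, applied after partitioning $E'$ further by the values of $f=\Phi_{\TTT}(\BBB)$. On $E'\cap\intt f^{-1}(\Card\setminus I_{<\alpha})$ one has $f_s\leqsl\alpha\leqsl f$, so $f\wedge f_s=f_s$ and the claim is trivial; the rest of a dense subset of $E'$ is covered by nonempty clopen sets $W=E'\cap f^{-1}(I_{\gamma})$ with $\gamma$ a cardinal, $\aleph_0\leqsl\gamma<\alpha$. Fix such a $W$ and put $D=W\cap\bigcup_{s\in S} f_s^{-1}(\Card\setminus I_{\gamma})$. Then $D$ is open and dense in $W$: otherwise some nonempty clopen $G\subset W$ would satisfy $f_s\leqsl\gamma$ on $G$ for all $s$, and replacing $\bigvee_s f_s$ by $\min(\bigvee_s f_s,\gamma)$ on $G$ would produce a strictly smaller upper bound in $\Lambda(\Omega)$, contradicting $\bigvee_s f_s\equiv\alpha>\gamma$ on $G$. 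For $x\in D$ there is $s$ with $f_s(x)>\gamma\geqsl f(x)$, whence $(f\wedge f_s)(x)=f(x)=(f\wedge v)(x)$; so $\bigvee_s(f\wedge f_s)\geqsl f\wedge v$ on the dense set $D$ and hence on $W$. This is exactly the mechanism the paper runs on its sets $U_1$ and $U_{\alpha}\setminus V_1$ --- it partitions $\Omega$ by the values of $\Phi_{\TTT}(\BBB)$ from the outset rather than by the values of $\bigvee_s f_s$, but the content is the same. Without this step, or an equivalent one, your proof of \eqref{eqn:aux71} is incomplete.
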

\begin{proof}
As usual, we pass to $\Lambda(\Omega)$. Put $f_s = \widehat{\AAA^{(s)}}$ and $g = \widehat{\BBB}$. Let $u =
\bigwedge_{s \in S} f_s$ and $u' = \bigwedge_{s \in S} (g \vee f_s)$. By \COR{inf-sup}, there are open dense sets $D$ and
$D'$ such that $u(x) = \inf_{s \in S} f_s(x)$ for $x \in D$ and $u'(x) = \inf_{s \in S} (g \vee f_s)(x)$ for $x \in D'$.
Then for $x \in D \cap D'$,
$$
(g \vee u)(x) = \max(g(x),\inf_{s \in S} f_s(x)) = \inf_{s \in S} (\max(g(x),f_s(x)) = u'(x)
$$
which gives \eqref{eqn:aux72}. Now we pass to \eqref{eqn:aux71}.\par
Let $v = \bigvee_{s \in S} f_s$ and $v' = \bigvee_{s \in S} (g \wedge f_s)$. We only need to show that $v' \geqsl g \wedge
v$. As usual, put $U_0 = g^{-1}(I_{\aleph_0}) \cap v^{-1}(I_{\aleph_0})$, $U_1 = g^{-1}(I_{\aleph_0}) \setminus
v^{-1}(I_{\aleph_0})$ and $U_{\alpha} = \intt g^{-1}(\{\alpha\})$ for $\alpha \in \Card_{\infty} \setminus \{\aleph_0\}$.
We know that each of the just defined sets is clopen and their union is dense in $\Omega$. Hence it suffices to show that
$g \wedge v \leqsl v'$ on a dense subset of $U_{\alpha}$ for any $\alpha \in \{0,1\} \cup \Card_{\infty} \setminus
\{\aleph_0\}$.\par
On $U_0$ it suffices to apply \COR{inf-sup}: if $v'(x) = \sup_{s \in S} (g \wedge f_s)(x)$ for $x \in D'$ and $v(x) =
\sup_{s \in S} f_s(x)$ for $x \in D$, then $v' = v \wedge g$ on $D \cap D'$. Further, since $v > \aleph_0$ on $U_1$,
the set $D_1 = U_1 \cap \bigcup_{s \in S} f_s^{-1}(\Card \setminus I_{\aleph_0})$ is dense in $U_1$. What is more,
for every $x \in D_1$ there is $s \in S$ with $f_s(x) > \aleph_0$ and therefore $v'(x) \geqsl (f_s \wedge g)(x) = g(x)$.
Consequently, $v' \geqsl g \wedge v$ on $D_1$ and we are done.\par
Now fix $\alpha \in \Card_{\infty} \setminus \{\aleph_0\}$. We divide $U_{\alpha}$ into two clopen parts: $V_1 = U_{\alpha}
\cap v^{-1}(I_{\alpha})$ and $V_2 = U_{\alpha} \setminus v^{-1}(I_{\alpha})$. Let $D_{\alpha} = V_1 \cup \bigcup_{s \in S}
[U_{\alpha} \setminus f_s^{-1}(I_{\alpha})]$. Notice that $f_s \leqsl \alpha$ on $V_1$ (hence $v' = v$ on $V_1$) and for
every $x \in D_{\alpha} \setminus V_1$ there is $s \in S$ such that $f_s(x) > \alpha$ (so, $v' = g$ on $D_{\alpha}
\setminus V_1$). This proves that $v' \geqsl v \wedge g$ on $D_{\alpha}$. Finally, standard argument shows that $D_{\alpha}
\cap V_2$ is dense in $V_2$ and this finishes the proof.
\end{proof}

\begin{pro}{multiply}
The assertion of \textup{(AO14) \pREF{AO14}} is satisfied.
\end{pro}
\begin{proof}
Again, it suffices to prove the counterpart of (AO14) in the realm $\Lambda(\Omega)$. It is clear that $\alpha \cdot
(f \vee g) = (\alpha \cdot f) \vee (\alpha \cdot g)$ and $\alpha \cdot (f \wedge g) = (\alpha \cdot f) \wedge
(\alpha \cdot g)$ for all $f, g \in \Lambda(\Omega)$ and each $\alpha \in \Card$. Now let $\alpha = k$ be a positive finite
cardinal. In order to show that $k \cdot (\bigvee_{s \in S} f_s) = \bigvee_{s \in S} (k \cdot f_s)$ and $k \cdot
(\bigwedge_{s \in S} f_s) = \bigwedge_{s \in S} (k \cdot f_s)$, let us consider an `extended' version
of $\Lambda(\Omega)$, namely $\widetilde{\Lambda}(\Omega)$ which is defined in the same way as $\Lambda(\Omega)$ with
the only difference that members of $\widetilde{\Lambda}(\Omega)$ send $\Omega_I$ into $\RRR_+ \cup \Card$. We shall prove
in \CORp{123} that $\Omega_I$ is homeomorphic to $\Omega_{\tII}$. Consequently, $\widetilde{\Lambda}(\Omega)$ is
order-complete. It is immediate that the assignment $\widetilde{\Lambda}(\Omega) \ni f \mapsto k \cdot f \in
\widetilde{\Lambda}(\Omega)$ is a bijective order isomorphism. Hence it preserves g.l.b.'s and l.u.b.'s computed
in the space $\widetilde{\Lambda}(\Omega)$. So, we only need to check that $u := \sup{}_{\widetilde{\Lambda}(\Omega)} F$
and $v := \inf{}_{\widetilde{\Lambda}(\Omega)} F$ are members of $\Lambda(\Omega)$ for every nonempty set $F \subset
\Lambda(\Omega)$. Since the proof for $u$ is similar, we shall only show that $v \in \Lambda(\Omega)$. Let $D_0 = \Omega_I
\cap \intt v^{-1}(\{0\})$, $B_0 = v^{-1}(\{0\}) \cap \Omega_I \setminus D_0$ and for positive integer $m$ let $D_m =
\Omega_I \cap \intt v^{-1}((m-1,m])$ and $B_m = v^{-1}((m-1,m]) \cap \Omega_I \setminus D_m$. We claim that $D =
(\Omega_I \cap v^{-1}(\Card_{\infty})) \cup \bigcup_{m=0}^{\infty} D_m$ is dense in $\Omega_I$ ($D$ is of course open).
Indeed, $\Omega_I \setminus D = \bigcup_{m=0}^{\infty} B_m$. Since each of $B_m$'s is nowhere dense (by \LEMP{W*}),
Baire's theorem yields our assertion. Now let $v' \in \Lambda(\Omega)$ be a function such that $v' = v$ on $(\Omega_I \cap
v^{-1}(\Card_{\infty})) \cup \Omega_{\tII} \cup \Omega_{\tIII}$ and $v(D_m) \subset \{m\}$ for every integer $m \geqsl 0$
(see \LEMP{C-S}). We see that $v(x) \leqsl v'(x)$ for $x \in D \cup \Omega_{\tII} \cup \Omega_{\tIII}$ and consequently $v
\leqsl v'$. Moreover, since $v \leqsl f \in \Lambda(\Omega)$ for any $f \in F$, $v' \leqsl f$ as well ($f \in F$) and hence
$v = v' \in \Lambda(\Omega)$.\par
In the second part of the second claim of (AO14) one assumes that $\EEE_{sm}(\AAA^{(s)}) = \zero$ which corresponds
to $f_s(\Omega_{\tII}) \subset \{0\} \cup \Card_{\infty}$. Here we shall weaken this, assuming that $f_s(\Omega_{\tII})
\subset \Card$ for each $s \in S$. It follows from \COR{inf-sup} that there is an open dense subset $D$ of $\Omega$ such
that for all $x \in D$, $(\bigwedge_{s \in S} f_s)(x) = \inf_{s \in S} f_s(x)$ as well as $[(\bigwedge_{s \in S} (\alpha
\cdot f_s)](x) = \inf_{s \in S} (\alpha \cdot f_s)(x)$. Since all values of (all) $f_s$'s are cardinals, we see that
in both the latter formulas `$\inf$' may be replaced by `$\min$'. But $\alpha \cdot \min_{s \in S} f_s(x) = \min_{s \in S}
(\alpha \cdot f_s(x))$ and thus $\alpha \cdot (\bigwedge_{s \in S} f_s)(x) = [\bigwedge_{s \in S} (\alpha \cdot f_s)](x)$
for $x \in D$ and we are done.\par
We now pass to the last claim: that $\alpha \cdot \bigvee_{f \in F} f = \bigvee_{f \in F} (\alpha \cdot f)$ for every
nonempty set $F \subset \Lambda(\Omega)$ and $\alpha \in \Card_{\infty}$. The inequality `$\geqsl$' is clear. To prove
the converse, put $u = \bigvee_{f \in F} (\alpha \cdot f)$. It is enough to show that $\alpha \cdot u = u$. Equivalently,
we have to check that for each $x \in \Omega$, $u(x) \geqsl \alpha$ or $u(x) = 0$. Suppose, for the contrary, that $0 <
u(x_0) < \alpha$ for some $x_0 \in \Omega$. Take a closed set $B \subset I_{\alpha} \setminus \{\alpha\}$ such that
$u(x_0) \in \intt B$ and put $D = \intt u^{-1}(B)$. $D$ is clopen and $x_0 \in D$. Now let $u' \in \Lambda(\Omega)$ be
given by $u' = u$ on $\Omega \setminus D$ and $u' = 0$ on $D$. We see that $u'(x_0) < u(x_0)$. However, $\alpha \cdot f
\leqsl u'$ for every $f \in F$. Indeed, if $x \in D$, then $\alpha > u(x) \geqsl \alpha \cdot f(x)$ which implies that
$f(x) = 0$. Thus, $u$ is not the l.u.b. of $\alpha \cdot F$ and this finishes the proof.
\end{proof}

\begin{rem}{inf-sum}
It is natural to ask which element of $\Lambda(\Omega)$ corresponds to $\AAA = \bigoplus_{s \in S} \AAA^{(s)}$
for an uncountable set $S$. In other words, how to express $\sum_{s \in S} f_s := \widehat{\AAA}$ by means of $f_s =
\widehat{\AAA^{(s)}}$ ($s \in S$). \LEMp{sup-inf} and \THMp{model} show that for countable $S$, $\sum_{s \in S} f_s$ may be
computed pointwisely on an open dense subset of $\Omega$. Let us demonstrate how to find $\sum_{s \in S} f_s$ when $S$
is uncountable. We shall use here the arguments of Section~13. First of all, let $g = \bigvee \{\sum_{s \in S'} f_s\dd\
S' \in \PPp_f(S)\}$ and $U_f = \cll g^{-1}(\RRR_+)$. It may be deduced from the arguments of Section~13 that
$\sum_{s \in S} f_s = g$ on $U_f$ and the function $f := \sum_{s \in S} f_s$ takes infinite values on $\Omega \setminus
U_f$. So, we only need to characterize $U_{\alpha} = \intt f^{-1}(\{\alpha\})$ for $\alpha \in \Card_{\infty}$ (since
we know that $U_f \cup \bigcup_{\alpha \in \Card_{\infty}} U_{\alpha}$ is dense in $\Omega$). This is possible thanks
to \EQp{Ealpha}. For this purpose, we define $\dim_E u$ for $u \in \Lambda(\Omega)$ and a nonempty clopen set $E \subset
\Omega$ with $c(E) = \aleph_0$ as follows:
\begin{multline*}
\dim_E u = \sum \{\alpha\in\Card_{\infty}\dd\ E \cap \intt u^{-1}(\{\alpha\}) \neq \varempty\}\\
+ c_*(E \cap \cll u^{-1}(\RRR_+ \setminus \{0\}))
\end{multline*}
(notice that the latter summand is either $0$ or $\aleph_0$). Now one may conclude from \eqref{eqn:Ealpha} that
$U_{\alpha}$ is the closure of the union of all clopen sets $V \subset \Omega \setminus U_f$ such that $\sum_{s \in S}
\dim_E f_s = \alpha$ for every nonempty clopen set $E \subset V$ with $c(E) = \aleph_0$ (of course, $U_{\alpha}$ may be
empty). We leave the details for the interested reader.
\end{rem}

\begin{rem}{changeT}
It is clear that the formula for $\Phi_{\TTT}$ essentially depends on $\TTT$. However, there is a quite simple
connection between $\Phi_{\TTT}$ and $\Phi_{\SSS}$ for any two semiminimal $N$-tuples $\TTT$ and $\SSS$ such that
$\aleph_0 \odot \TTT = \aleph_0 \odot \SSS = \JJJ_{\tII}$. Put $u = j_{\Omega_I \cup \Omega_{\tIII}} + \DiNT{\SSS}{\TTT}$
and $D := u^{-1}(\RRR_+ \setminus \{0\})$. We leave this as an easy exercise that $D$ is dense in $\Omega$ and for every
$\XXX \in \CDD_N$, $\Phi_{\SSS}(\XXX)$ is the unique continuous extension of $(\frac1u \Phi_{\TTT}(\XXX))\bigr|_D$.
\end{rem}

\SECT{Types of $N$-tuples}

As in the previous section, $\widehat{\AAA} = \Phi_{\TTT}(\AAA)$ for each $\AAA \in \CDD_N$ where $\Phi_{\TTT}$
is as in \THMp{model}. This notation is obligatory to the end of the paper.\par
The following result is an immediate consequence of \PROp{unitid}.

\begin{pro}{id-clopen}
For every clopen set $E \subset \Omega$ the class $\IiI[E] := \{\AAA \in \CDD_N\dd\ \supp \widehat{\AAA} \subset E\}$
is an ideal in $\CDD_N$. Conversely, for every ideal $\AaA \subset \CDD_N$ there is a (unique) clopen set $K \subset
\Omega$ such that $\AaA = \IiI[K]$. What is more, $K = \supp \widehat{\JJJ(\AaA)}$.
\end{pro}

For every ideal $\AaA$, the unique clopen set $K$ such that $\AaA = \IiI[K]$ will be denoted by $\supp_{\Omega} \AaA$.
Below we give some illustrative examples related to this subject.

\begin{exs}{id-clopen}
\begin{enumerate}[(A)]
\item Fix nonnegative real number $r$ and let $\IiI(r)$ be the class of all $N$-tuples $\XXX$ for which $\|\XXX\|
   \leqsl r$. It is clear that $\IiI(r)$ is an ideal. Put $\Omega(r) := \supp_{\Omega} \IiI(r)$
   and $\Omega(\operatorname{bd}) := \bigcup_{r\geqsl0} \Omega(r)$. The set $\Omega(\operatorname{bd})$ is open
   in $\Omega$ and for every $\XXX \in \CDD_N$,
   $$
   \|\XXX\| < \infty \iff \supp \widehat{\XXX} \subset \Omega(\operatorname{bd})
   $$
   (indeed, use the fact that $\supp \widehat{\XXX}$ is compact). What is more, if $\|\XXX\| < \infty$, then $\|\XXX\|
   = \min \{r \geqsl 0\dd\ \supp \widehat{\XXX} \subset \Omega(r)\}$. The ideal $\IiI[\cll \Omega(\operatorname{bd})]$
   consists of all $N$-tuples which are direct sums of bounded $N$-tuples. Further, whenever $0 \leqsl r < s$, the ideal
   $\IiI[\Omega(r) \setminus \Omega(s)]$ consists of all $N$-tuples whose every nontrivial reduced part has norm greater
   than $s$ but no greater than $r$. We conclude from this that $\Omega(s) = \intt(\bigcap_{r>s} \Omega(r))$ for any
   $s \geqsl 0$. For positive $r$ put $\Omega\{r\} = \Omega(r) \setminus \cll(\bigcup_{s<r} \Omega(s))$ and $\IiI\{r\} =
   \IiI[\Omega\{r\}]$. The ideal $\IiI\{r\}$ constists of all $N$-tuples whose every nontrivial reduced part has norm
   equal to $r$.
\item Now let $\IiI(\bB) := \{\bB(\AAA)\dd\ \AAA \in \CDD_N\}$. It follows from suitable properties of the $\bB$-transform
   that $\IiI(\bB)$ is an ideal. Let $\Omega(\bB) = \supp_{\Omega} \IiI(\bB)$. Notice that $\IiI(\bB)$ consists of all
   $N$-tuples $\XXX$ such that either $\|\XXX\| < 1$ or $\|\XXX\| = 1$ and $\XXX$ does not assume its norm. Consequently,
   $\Omega(\bB) \varsubsetneq \Omega(1)$.  The ideal $\IiI[E]$ with $E = \Omega(1) \setminus \Omega(\bB)$ constists of all
   $N$-tuples whose each nontrivial reduced part has norm $1$ and assumes its norm. In particular, $E \subset \Omega\{1\}$
   and the ideal $\IiI[\Omega\{1\} \setminus E] = \IiI[\Omega\{1\} \cap \Omega(\bB)]$ coincides with the class of all
   $N$-tuples whose every nontrivial reduced part has norm equal to $1$ and does not assume its norm.
\end{enumerate}
\end{exs}

As a consequence of \THMp{main} and \EXS{id-clopen} we obtain

\begin{cor}{contraction}
Every contraction $T$ acting on a Hilbert space $\HHh$ induces a unique decomposition $\HHh = \HHh_0 \oplus \HHh_1 \oplus
\HHh_2$ such that $\HHh_0,\HHh_1,\HHh_2 \in \red(T)$ and
\begin{enumerate}[\upshape(a)]
\item every nontrivial reduced part of $T\bigr|_{\HHh_0}$ admits a nontrivial reduced part of norm less than $1$,
\item $T\bigr|_{\HHh_1}$ does not assume its norm (unless $\HHh_1$ is trivial) and every its nontrivial reduced part has
   norm equal to $1$,
\item every nontrivial reduced part of $T\bigr|_{\HHh_2}$ has norm $1$ and assumes its norm.
\end{enumerate}
What is more, $\HHh_0, \HHh_1, \HHh_2 \in \RED(T)$.
\end{cor}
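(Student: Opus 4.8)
The plan is to read this off from \THM{1} with $k=2$, applied in $\CDDc=\CDDc_1$ to the two ideals $\AaA_1:=\IiI\{1\}$ and $\AaA_2:=\IiI(\bB)$ introduced in \EXS{id-clopen}. First I would set up the dictionary with the model space $\Omega$ of \THM{model}. Since $\AaA_1=\IiI[\Omega\{1\}]$ and $\AaA_2=\IiI[\Omega(\bB)]$ with both sets clopen, \PRO{id-clopen} together with the fact (a consequence of \THM{main}) that $\IiI[E]^{\perp}=\IiI[\Omega\setminus E]$ for clopen $E$ shows that each class $\AaA_1^{(\delta_1)}\cap\AaA_2^{(\delta_2)}$ ($\delta\in\{0,1\}^2$) is again of the form $\IiI[F_\delta]$, with $F_{(0,0)}=\Omega\{1\}\cap\Omega(\bB)$, $F_{(0,1)}=\Omega\{1\}\setminus\Omega(\bB)$, $F_{(1,0)}=\Omega(\bB)\setminus\Omega\{1\}$ and $F_{(1,1)}=\Omega\setminus(\Omega\{1\}\cup\Omega(\bB))$. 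Using the inclusion $\Omega(1)\setminus\Omega(\bB)\subseteq\Omega\{1\}$ recorded in \EXS{id-clopen}(B) together with $\Omega(\bB),\Omega\{1\}\subseteq\Omega(1)$, this rearranges to $F_{(1,0)}=\Omega(1)\setminus\Omega\{1\}$, $F_{(0,1)}=\Omega(1)\setminus\Omega(\bB)$ and $\Omega(1)\subseteq\Omega\{1\}\cup\Omega(\bB)$, so that $F_{(0,0)},F_{(0,1)},F_{(1,0)}$ partition $\Omega(1)$ while $F_{(1,1)}\cap\Omega(1)=\varempty$. Finally I would match the three relevant ideals with the three conditions: $\IiI[F_{(0,0)}]$ and $\IiI[F_{(0,1)}]=\IiI[\Omega(1)\setminus\Omega(\bB)]$ are by the descriptions in \EXS{id-clopen}(B) precisely the classes of (contractive) operators satisfying (b) and (c), and $\IiI[F_{(1,0)}]=\IiI[\Omega(1)]\cap\IiI[\Omega\setminus\Omega\{1\}]=\IiI(1)\cap\AaA_1^{\perp}$ is the class of contractions no nontrivial reduced part of which lies in $\IiI\{1\}$, i.e. every nontrivial reduced part of which admits a nontrivial reduced part of norm $<1$ — condition (a).

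With this in hand, existence is immediate. Since $\|T\|\leqsl1$ we have $T\in\IiI(1)$, i.e. $\supp_{\Omega}T\subseteq\Omega(1)$; I would apply \THM{1} to $T$ (as a member of $\CDDc(\HHh)$), with $k=2$ and the ideals above, obtaining mutually orthogonal reducing subspaces $\{E_\delta\}_{\delta\in\{0,1\}^2}$ with $\HHh=\bigoplus_\delta E_\delta$, $T\bigr|_{E_\delta}\in\IiI[F_\delta]$, and each $E_\delta\in\RED(T)$. The restriction $T\bigr|_{E_{(1,1)}}$ then has support inside $\supp_{\Omega}T\cap F_{(1,1)}\subseteq\Omega(1)\cap(\Omega\setminus\Omega(1))=\varempty$, hence $E_{(1,1)}$ is trivial by \THM{model}. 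Setting $\HHh_0:=E_{(1,0)}$, $\HHh_1:=E_{(0,0)}$, $\HHh_2:=E_{(0,1)}$ gives $\HHh=\HHh_0\oplus\HHh_1\oplus\HHh_2$ with $\HHh_0,\HHh_1,\HHh_2\in\RED(T)$, and (a), (b), (c) hold by the dictionary above (for (b) one also uses that a nontrivial reduced part of $T\bigr|_{\HHh_1}$ cannot assume its norm if $T\bigr|_{\HHh_1}$ does not).

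For uniqueness, suppose $\HHh=\HHh_0'\oplus\HHh_1'\oplus\HHh_2'$ is any decomposition into reducing subspaces satisfying (a)--(c), and put $\HHh_3':=\{0\}$. Running the dictionary backwards, $T\bigr|_{\HHh_0'}\in\IiI[F_{(1,0)}]\subseteq\AaA_1^{\perp}\cap\AaA_2$ (the last inclusion because $F_{(1,0)}=\Omega(1)\setminus\Omega\{1\}\subseteq\Omega(\bB)$), $T\bigr|_{\HHh_1'}\in\AaA_1\cap\AaA_2$, $T\bigr|_{\HHh_2'}\in\IiI[F_{(0,1)}]\subseteq\AaA_1\cap\AaA_2^{\perp}$, and $T\bigr|_{\HHh_3'}\in\AaA_1^{\perp}\cap\AaA_2^{\perp}$ trivially. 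Thus $\{\HHh_1',\HHh_2',\HHh_0',\HHh_3'\}$ is a system of reducing subspaces of the kind furnished by \THM{1} for the pair $(\AaA_1,\AaA_2)$, so by the uniqueness assertion of \THM{1} it equals $\{E_{(0,0)},E_{(0,1)},E_{(1,0)},E_{(1,1)}\}$; hence $\HHh_j'=\HHh_j$.

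The step I expect to require the most care is the dictionary of the first paragraph — translating the norm-theoretic conditions (a)--(c) into membership in the ideals $\IiI[F_\delta]$. This leans on the (asserted) descriptions of the relevant ideals in \EXS{id-clopen} and on the single inclusion $\Omega(1)\setminus\Omega(\bB)\subseteq\Omega\{1\}$, from which all the set identities used for the $F_\delta$ — and, crucially, the triviality of $E_{(1,1)}$ and the inclusion $\IiI[F_{(1,0)}]\subseteq\AaA_2$ — follow; once that is settled, the rest is a routine appeal to \THM{1}, \THM{main} and \PRO{id-clopen}.
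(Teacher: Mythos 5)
Your proposal is correct and is essentially the argument the paper intends: the corollary is stated there as an immediate consequence of the decomposition relative to ideals (Theorem \ref{thm:main}/\ref{thm:1}) applied to the ideals $\IiI\{1\}$ and $\IiI(\bB)$ of Examples \ref{exs:id-clopen}, which is exactly your dictionary, including the key inclusion $\Omega(1)\setminus\Omega(\bB)\subset\Omega\{1\}$ that kills the fourth summand. The only cosmetic difference is that you invoke \THM{1} with $k=2$ where the paper cites \THM{main}; these amount to the same two-fold splitting.
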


As it was done by Ernest \cite{e}, the types of $\WWw''(\xXX)$ and $\WWw'(\xXX)$ may be assigned to $\xXX$. It is easily
seen (and in fact, this was used by us in \THMP{decomp}) that for every nontrivial $\xXX \in \CDDc_N$:
\begin{itemize}
\item $\WWw'(\xXX)$ is type I$_{\alpha}$ ($\alpha \in \Card \setminus \{0\}$) iff $\XXX = \alpha \odot \EEE$ for some
   unique $\EEE \leqsl^s \JJJ_I$,
\item $\WWw'(\xXX)$ is type III$_{\alpha}$ ($\alpha \in \Card_{\infty}$) iff $\XXX = \alpha \odot \EEE$ for some unique
   $\EEE \leqsl^s \JJJ_{\tIII}$,
\item $\WWw'(\xXX)$ is type II$_1$ iff $\XXX$ is semiminimal,
\item $\WWw'(\xXX)$ is type II$_{\alpha}$ ($\alpha \in \Card_{\infty}$) iff $\XXX = \alpha \odot \EEE$ for some unique
   $\EEE \leqsl^s \JJJ_{\tII}$.
\end{itemize}
Ernest calls a bounded operator $T$ of type $i_{\alpha}$ provided $\WWw'(T)$ is of this type
(cf. \cite[Definition~1.28]{e}). We call a nontrivial $N$-tuple $\XXX \in \CDD_N$ (\textit{of}) \textit{type} I$^n$ (with
$n=1,2,\ldots,\infty$), II$^1$, II$^{\infty}$ or III$^{(\infty)}$ iff $\WWw''(\xXX)$ is of this type. Additionally,
we agree that the trivial $N$-tuple is of each of these types.\par
Since a von Neumann algebra is type I, II, III iff so is its commutant, we see that for nontrivial $\XXX$, $\WWw''(\xXX)$
is type III iff so is $\WWw'(\xXX)$ and thus the above definition makes no confusion. Later we shall see that if nontrivial
$\XXX$ is type $i^{\infty}$ ($i \in \{I,\tII,\tIII\}$), then $\WWw''(\xXX)$ is type $i_{\aleph_0}$ and thus there is
no need to use uncountable cardinals here.\par
Fix $i^n \in \{I^1,I^2,\ldots,I^{\infty},\tII^1,\tII^{\infty},\tIII^{\infty}\}$ and let $\IiI_{i_n}$ be the class of all
$N$-tuples of type $i^n$. Our first goal is

\begin{pro}{typei}
$\IiI_{i_n}$ is an ideal in $\CDD_N$.
\end{pro}
\begin{proof}
It suffices to verify all points of \CORp{ideal}. The point (a) is fulfilled since for every $\alpha \in \Card_{\infty}$
and nontrivial $\XXX$, the von Neumann algebras $\WWw''(\xXX)$ and $\WWw''(\yYY)$ are isomorphic where $\YYY = \alpha
\odot \XXX$. Point (b) follows from the following result on $\WWw^*$-algebras: if $\MMm$ is a $\WWw^*$-algebra
and $\{z_s\}_{s \in S}$ is a family of mutually orthogonal central projections in $\MMm$ which sum up to $1$ and $\MMm z_s$
is type $i_n$ for each $s \in S$, then $\MMm$ itself is type $i_n$. Finally, point (c) is a consequence of a similar
result: if $\MMm$ is a type $i_n$ $\WWw^*$-algebra and $z$ is a (nonzero) central projection in $\MMm$, then $\MMm z$ is
type $i_n$ as well.
\end{proof}

Now put $\Omega_{i_n} = \supp_{\Omega} \IiI_{i_n}$. It is clear that the sets $\Omega_{I_1}$, $\Omega_{I_2}$, \ldots,
$\Omega_{I_{\infty}}$, $\Omega_{\tII_1}$, $\Omega_{\tII_{\infty}}$ and $\Omega_{\tIII_{\infty}}$ are pairwise disjoint
and their union is dense in $\Omega$. It is obvious that $\Omega_{\tIII_{\infty}} = \Omega_{\tIII}$. Let us now check that
if $\XXX$ is type $i^{\infty}$ (and nontrivial), then $\WWw''(\xXX)$ is type $i_{\aleph_0}$. Indeed, there is
$\EEE \leqsl^s \JJJ$ (namely, $\EEE = s(\XXX)$, cf. \EQP{s(A)}) and an infinite cardinal $\alpha$ such that $\alpha \odot
\XXX = \alpha \odot \EEE$. The latter implies that $\WWw''(\xXX)$ and $\WWw''(\eEE)$ are isomorphic as $\WWw^*$-algebras
and thus $\WWw''(\eEE)$ is type $i_{\infty}$. Further, we conclude from \PROp{count} that $\EEE = \bigsqplus_{s \in S}
\EEE^{(s)}$ for suitable family such that $0 < \dim(\EEE^{(s)}) \leqsl \aleph_0$. Consequently, $\WWw''(\eEE^{(s)})$ is
type $i_{\infty}$ for each $s \in S$ and therefore (since $\eEE^{(s)}$ act in a separable Hilbert space)
$\WWw''(\eEE^{(s)})$ is type $i_{\aleph_0}$. This yields that $\WWw''(\eEE)$ (and hence $\WWw''(\xXX)$) is type
$i_{\aleph_0}$ as well.\par
One may easily check that $\IiI_{I_1}$ coincides with the ideal $\NnN_N$ introduced in \EXS{id}--(E) \pREF{E} and studied
in \EXMp{normal}. Thus $\Omega_{I_1}$ corresponds to normal $N$-tuples.\par
The sets $\Omega_{I_n}$ may be used to compute $\dim(\XXX)$ for every $\XXX \in \CDD_N$ by means
of $\widehat{\XXX}$. For this, let us introduce the \textit{strict Souslin number}, $c_f(X)$, of a topological space $X$.
Namely, $c_f(X) = c(X)$ iff $X$ is an infinite set and $c_f(X) = \card(X)$ otherwise.

\begin{pro}{dim}
Let $\XXX \in \CDD_N$, $f = \widehat{\XXX}$, $U^i_{\alpha} = \Omega_i \cap \intt f^{-1}(\{\alpha\})$ for $(i,\alpha) \in
\Upsilon_*$ and $U^{\tII}_1 = \Omega_{\tII} \cap \cll f^{-1}(\RRR_+ \setminus \{0\})$. Then
\begin{multline}\label{eqn:dim}
\dim(\XXX) = \sum_{n,m=1}^{\infty} nm \cdot c_f(U^I_n \cap \Omega_{I_m}) + \aleph_0\sum_{n=1}^{\infty}
c_f(U^I_n \cap \Omega_{I_{\infty}})\\ + \aleph_0 \cdot c_f(U^{\tII}_1) + \sum_{\alpha \in \Card_{\infty}} \alpha
[c_f(U^I_{\alpha}) + c_f(U^{\tII}_{\alpha}) + c_f(U^{\tIII}_{\alpha})].
\end{multline}
\end{pro}
\begin{proof}
As in the proof of \PROp{Dim}, we see that $\dim(\XXX) = \sum_{(i,\alpha) \in \Upsilon} \alpha \cdot
\dim(\EEE^i_{\alpha}(\XXX))$ and $\aleph_0 \cdot \dim(\EEE^i_{\alpha}(\XXX)) = c_*(U^i_{\alpha})
= \aleph_0 \cdot c_f(U^i_{\alpha})$. Moreover, $\dim(\EEE^{\tII}_1(\XXX)) \in \Card_{\infty} \cup \{0\}$.
So, to show \eqref{eqn:dim}, it suffices to check that $\dim(\EEE^I_n(\XXX)) = \aleph_0 \cdot c_f(U^I_n
\cap \Omega_{I_{\infty}}) + \sum_{m=1}^{\infty} m \cdot c_f(U^I_n \cap \Omega_{I_m})$. Write $\EEE^I_n(\XXX) =
\bigsqplus_{m=1}^{m=\infty} \EEE_{n,m}$ with $\EEE_{n,m} \in \IiI_{I_m}$ and observe that $\supp_{\Omega} \EEE_{n,m} =
U^I_n \cap \Omega_{I_m} =: V_{n,m}$. So, it is enough to show that
\begin{equation}\label{eqn:aux88}
\dim(\EEE_{n,m}) = m \cdot c_f(V_{n,m})
\end{equation}
(for $m = \infty$ the above means that $\dim(\EEE_{n,\infty}) = \aleph_0 \cdot c_f(V_{n,\infty})$).
If the set $V_{n,m}$ is infinite, then we may `divide' it into arbitrarily (finitely) many pairwise disjoint nonempty
clopen sets which yields that representatives of $\EEE_{n,m}$ act in infinite-dimensional Hilbert spaces and hence
\eqref{eqn:aux88} is fulfilled in that case (e.g. by \PRO{Dim}). On the other hand, if $V_{n,m}$ is finite, $\EEE_{n,m}$
may be decomposed into $\card(V_{n,m})$ irreducible $N$-tuples of type $I^m$. Now \eqref{eqn:aux88} easily follows since
an irreducible $N$-tuple of type $I^m$ acts in an $m$-dimensional Hilbert space.
\end{proof}

\SECT{Primes, semiprimes, atoms and fractals}

Prime numbers may be defined in two ways (below $n$, $k$ and $l$ are positive integers):
\begin{itemize}
\item $n$ is prime iff $n \neq 1$, and $n = k l$ implies $k = 1$ or $l = 1$,
\item $n$ is prime iff $n \neq 1$, and $n = k l$ implies $k,l \in \{1,n\}$.
\end{itemize}
These two conditions may naturally be adapted in more general algebraic structures (especially in monoids, i.e. semigroups
with neutral elements). However, in some structures they may be nonequivalent. We will see in the sequel that this occurs
in $\CDD_N$. Therefore we distinguish the following two classes of $N$-tuples.

\begin{dfn}{prime}
Let $\AAA \in \CDD_N$ be nontrivial. We say $\AAA$ is a \textit{prime} iff $\AAA = \XXX \oplus \YYY$ implies $\XXX, \YYY
\in \{\zero, \AAA\}$. $\AAA$ is an \textit{atom} iff $\AAA = \XXX \oplus \YYY$ implies $\XXX = \zero$ or $\YYY = \zero$.
\end{dfn}

In case of a single bounded operator, our definition of an atom is equivalent to Ernest's one of an irreducible operator
(\cite{e}). It is clear that every atom is a prime. But not conversely. To convince of this,
let us first shortly prove

\begin{pro}{factor}
For a nontrivial $\AAA \in \CDD_N$ \tfcae
\begin{enumerate}[\upshape(i)]
\item $\WWw'(\aAA)$ is a factor,
\item $\WWw''(\aAA)$ is a factor,
\item $\{\XXX \in \CDD_N\dd\ \XXX \leqsl^s \AAA\} = \{\zero, \AAA\}$,
\item exactly one of the following three conditions is fulfilled:
   \begin{enumerate}[\upshape(a)]
   \item there are unique $\XXX \in \MmM\FfF_N$ and a unique positive cardinal $\alpha$ such that $\AAA = \alpha \odot
      \XXX$ and $\WWw'(\xXX)$ constists precisely of scalar multiples of the identity operator; what is more,
      $0 < \dim(\XXX) \leqsl \aleph_0$,
   \item there are unique $\XXX \in \HhH\IiI\MmM_N$ and a unique infinite cardinal $\alpha$ such that $\AAA = \alpha \odot
      \XXX$ and $\WWw'(\xXX)$ is a (type III) factor; what is more, $\dim(\XXX) = \aleph_0$,
   \item there are (nonunique) $\XXX \in \SsS\MmM_N$ and a unique cardinal $\alpha \in \{1\} \cup \Card_{\infty}$ such that
      $\AAA = \alpha \odot \XXX$ and $\WWw'(\xXX)$ is a (type II$_1$) factor; what is more, $\dim(\XXX) =
      \aleph_0$.
   \end{enumerate}
\end{enumerate}
\end{pro}
\begin{proof}
The points (i) and (ii) are clearly equivalent. Further, it follows from (PR3) \pREF{PR3} that (i) is equivalent to (iii).
Consequently, we infer from \THMp{decomp} that if $\WWw'(\aAA)$ is a factor, then either $\AAA = \EEE_{sm}(\AAA)$ or $\AAA
= \beta \odot \EEE^i_{\beta}(\AAA)$ for some $(i,\beta) \in \Upsilon_*$. In the first situation put $\XXX = \EEE_{sm}$ and
$\alpha = 1$; in the second one we distinguish between two cases: if $i \neq \tII$, put $\XXX = \EEE^i_{\beta}(\AAA)$,
otherwise take $\XXX \in \SsS\MmM_N$ such that $\aleph_0 \odot \XXX = \EEE^{\tII}_{\beta}$; in both cases we put $\alpha =
\beta$. Note that $\AAA = \alpha \odot \XXX$. Further, we conclude from (PR6) \pREF{PR6} that $\WWw'(\aAA)$ is a factor iff
so is $\WWw'(\xXX)$. Now \PROp{count} implies that $\dim(\XXX) \leqsl \aleph_0$ provided $\{\YYY \in \CDD_N\dd\
\YYY \leqsl^s \XXX\} = \{\zero,\XXX\}$. All these notices show that (i) is equivalent to (iv).
\end{proof}

We now have

\begin{pro}{fractal}
Let $\AAA \in \CDD_N$ be nontrivial.
\begin{enumerate}[\upshape(A)]
\item $\AAA$ is an atom iff $\WWw'(\aAA)$ consists precisely of scalar multiples of the identity operator. If $\AAA$ is
   an atom, then $\AAA \leqsl \JJJ_I$ and $0 < \dim(\AAA) \leqsl \aleph_0$.
\item Suppose $\AAA \in \CDD_N$ is not an atom. Then $\AAA$ is a prime iff $\dim(\AAA) = \aleph_0$
   and $\WWw'(\aAA)$ is a type III factor.
\end{enumerate}
\end{pro}
\begin{proof}
Point (A) is left for the reader. We pass to (B).\par
First note that if $\AAA$ is type III, then $\AAA \ll \JJJ_{\tIII}$. Consequently, if in addition $\dim \overline{\AAA} =
\aleph_0$, $\AAA = \EEE^{\tIII}_{\aleph_0}(\AAA)$ and thus $\AAA$ is minimal. But then $\{\XXX \in \CDD_N\dd\ \XXX \leqsl
\AAA\} = \{\XXX \in \CDD_N\dd\ \XXX \leqsl^s \AAA\}$. So, the sufficiency of the conditions formulated in proposition for
$\AAA$ to be a prime follows from \PRO{factor}. Conversely, if $\AAA$ is a prime but not an atom, an application
of \PRO{factor} gives us that $\AAA = \alpha \odot \XXX$ for suitable $\alpha$ and $\XXX$. Since $\XXX \leqsl \AAA$,
we infer that $\AAA = \XXX$. So, $\XXX \notin \MmM\FfF_N$ (because $\AAA$ is not an atom) and $\XXX$ is not semiminimal
since $\zero \neq \frac12 \odot \YYY \lneqq \YYY$ for every nontrivial $\YYY \in \SsS\MmM_N$. We infer from this that $\XXX
\in \HhH\IiI\MmM_N$. Thus, $\WWw'(\aAA)$ is type III and, of course, it is a factor.
\end{proof}

Let $\AAA$ be a prime which is not an atom. It follows from \PRO{fractal} that $\AAA = \aleph_0 \odot \AAA$. Consequently,
$\red(\aAA)$ is an infinite set. However, for every $E \in \red(\aAA)$, $\aAA\bigr|_E \equiv \aAA$ (because $\AAA$ is
prime). Conversely, if $\bBB \in \CDDc_N$ is such that $\card(\red(\bBB)) > 2$ and $\bBB\bigr|_E \equiv \bBB$ for any
$E \in \red(\bBB)$, then $\BBB$ is a prime and not an atom. This observation leads us to

\begin{dfn}{fractal}
A \textit{fractal} is a prime which is not an atom.
\end{dfn}

We see that\label{note} every prime $\AAA$ is either an atom (if $\AAA \neq 2 \odot \AAA$) or a fractal (if $\AAA = 2 \odot
\AAA$) and that $\AAA$ is type I or type III. It is immediate that two different primes are unitarily disjoint.\par
A counterpart of primes for type II $N$-tuples are semiprimes.

\begin{dfn}{semiprime}
A nontrivial $N$-tuple $\AAA$ is said to be a \textit{semiprime} iff $\AAA$ is not of the form $n \odot \BBB$ where
$n$ is a natural number and $\BBB$ is a prime and the following condition is fulfilled: whenever $\zero \neq \XXX \leqsl
\AAA$, there is a natural number $m$ such that $\AAA \leqsl m \odot \XXX$.
\end{dfn}

Semiprimes may be characterized as follows.

\begin{pro}{semiprime}
\begin{enumerate}[\upshape(I)]
\item A nontrivial $N$-tuple $\AAA$ is a semiprime iff $\WWw'(\aAA)$ is a type II$_1$ factor.
\item Let $\AAA$ be a semiprime. Then $\AAA$ is semiminimal and $\dim(\AAA) = \aleph_0$. If $\BBB \ll
   \AAA$, then $\BBB$ is a semiprime iff $\BBB = t \odot \AAA$ for some $t \in \RRR_+ \setminus \{0\}$.
\end{enumerate}
\end{pro}
\begin{proof}
First assume that $\WWw'(\aAA)$ is a type II$_1$ factor. Then necessarily $\AAA \neq n \odot \BBB$ for any prime $\BBB$,
and $\AAA \in \SsS\MmM_N$. Moreover, $\WWw'(\aleph_0 \odot \aAA)$ is a factor as well. We conclude from this that
$\supp_{\Omega_{\tII}} \AAA$ consists of a single point (see Section~14). This implies that if $\zero \neq \XXX \leqsl
\AAA$, then $\DiNT{\XXX}{\AAA} = \lambda \cdot \DiNT{\AAA}{\AAA}$ for some real number $\lambda > 0$. But $\lambda \cdot
\DiNT{\AAA}{\AAA} = \DiNT{(\lambda \odot \AAA)}{\AAA}$ and therefore $\XXX = \lambda \odot \AAA$. Now it suffices to take
a natural number $m$ such that $m \lambda \geqsl 1$ to ensure that $\AAA \leqsl m \odot \XXX$. Consequently, $\AAA$ is
a semiprime.\par
We now assume that $\AAA$ is a semiprime. Observe that then $\AAA = \XXX \sqplus \YYY$ implies $\XXX = \zero$ or $\YYY =
\zero$. We infer from this that $\WWw'(\aAA)$ is a factor. So, according to \PRO{factor}, $\AAA = \alpha \odot \XXX$ for
suitable $\alpha$ and $\XXX$. Since $\AAA$ is a semiprime and $\zero \neq \XXX \leqsl \AAA$, $\alpha \odot \XXX \leqsl m
\odot \XXX$ for some natural number $m$. This implies that either $\AAA = \XXX \in \HhH\IiI\MmM_N$ or $\alpha \leqsl m$.
Again taking into account that $\AAA$ is a semiprime, we see that $\AAA = \XXX \in \SsS\MmM_N$ and hence $\WWw'(\aAA)$
is type II$_1$ and $\dim(\AAA) = \aleph_0$. Further, if $\BBB = t \odot \AAA$, then $\BBB$ is semiminimal
(hence $\WWw'(\bBB)$ is type II$_1$) and the $\WWw^*$-algebras $\ZZz(\WWw'(\bBB))$, $\ZZz(\WWw'(\aleph_0 \odot \bBB))$,
$\ZZz(\WWw'(\aleph_0 \odot \aAA))$ and $\ZZz(\WWw'(\aAA))$ are isomorphic (since $\aleph_0 \odot \BBB = \aleph_0 \odot
\AAA$) which implies that $\WWw'(\bBB)$ is a factor. Consequently, $\BBB$ is a semiprime. Finally, if $\BBB$ is a semiprime
such that $\BBB \ll \AAA$, then from semiminimality of $\BBB$ it follows that $\frac1n \odot \BBB \leqsl \AAA$ for some
natural number $n$. Now the first paragraph of the proof shows that then $\frac1n \odot \BBB = \lambda \odot \AAA$ for
some $\lambda > 0$ and we are done.
\end{proof}

The reader will now easily check that if $\AAA$ is a prime or a semiprime and $\XXX \in \CDD_N$ is arbitrary, then either
$\AAA \leqsl n \odot \XXX$ for some natural number $n$ or $\AAA \disj \XXX$. It turns out that much stronger property
may be established, similar to a suitable property of prime numbers. Namely:

\begin{pro}{divide}
Let $\{\XXX^{(s)}\}_{s \in S} \in \CDD_N$ be a nonempty set and let $\AAA \leqsl \bigoplus_{s \in S} \XXX^{(s)}$.
\begin{enumerate}[\upshape(I)]
\item If $\AAA$ is a prime, there is $s \in S$ such that $\AAA \leqsl \XXX^{(s)}$.
\item Suppose $\AAA$ is a semiprime. For each $s \in S$ let $\lambda_s = \sup \{t \in \RRR_+\dd\ t \odot \AAA \leqsl
   \XXX^{(s)}\} \in \RRR_+ \cup \{\aleph_0\}$. Then $\lambda_s \odot \AAA \leqsl \XXX^{(s)}\ (s \in S)$
   and $\sum_{s \in S} \lambda_s \geqsl 1$.
\end{enumerate}
\end{pro}
\begin{proof}
To prove (I), observe that there is $s \in S$ such that $\AAA$ and $\XXX^{(s)}$ are not unitarily disjoint. Since $\AAA$
is a prime, the latter yields that $\AAA \leqsl \XXX^{(s)}$.\par
We now pass to (II). By (VS3) \pREF{VS3}, $\AAA^{(s)} := \lambda_s \odot \AAA \leqsl \XXX^{(s)}$. Assume that $\lambda_s <
1$ for every $s \in S$ and $\lambda = \sum_{s \in S} \lambda_s < \infty$. By the maximality of $\lambda_s$,
$((1 - \lambda_s) \odot \AAA =) \AAA \ominus \AAA^{(s)} \disj \XXX^{(s)} \ominus \AAA^{(s)} =: \YYY^{(s)}$ and consequently
$\AAA \disj \YYY^{(s)}$. Thus, $\AAA \disj \bigoplus_{s \in S} \YYY^{(s)}$. Now since $\bigoplus_{s \in S} \XXX^{(s)} =
(\bigoplus_{s \in S} \AAA^{(s)}) \oplus (\bigoplus_{s \in S} \YYY^{(s)})$, we infer from (PR1) \pREF{PR1} that $\AAA \leqsl
\bigoplus_{s \in S} \AAA^{(s)}$. Further, we see that $\bigvee \{\bigoplus_{s \in S'} \AAA^{(s)}\dd\ S' \in \PPp_f(S)\} =
\lambda \odot \AAA$. This combined with \PROp{fin} yields that $\lambda \odot \AAA = \bigoplus_{s \in S} \AAA^{(s)}$. So,
$\AAA \leqsl \lambda \odot \AAA$ and hence $\lambda \geqsl 1$.
\end{proof}

Denote by $\aA_N$, $\fF_N$ and $\sS_N$ the sets of all, respectively, atoms, fractals and semiprimes in $\CDD_N$. Further,
for $n=1,2,\ldots,\infty$ let $\aA_N(n)$ be the set of all atoms of type I$^n$. Similarly, we denote by $\sS_N(1)$
and $\sS_N(\infty)$ the sets of all semiprimes of type II$^1$ and II$^{\infty}$, respectively. The reader should notice
that an atom $\AAA$ belongs to $\aA_N(n)$ for some finite $n$ iff $\dim(\AAA) = n$ (and $\AAA \in
\aA_N(\infty)$ iff $\dim(\AAA) = \aleph_0$). Finally, we put $\pP_N = \aA_N \cup \fF_N \cup \sS_N$.

\begin{pro}{a-f-s}
The sets $\aA_N(n)$ ($n=1,2,\ldots,\infty$), $\fF_N$, $\sS_N(1)$ and $\sS_N(\infty)$ have power $2^{\aleph_0}$. Each
of these sets contains a subset of power $2^{\aleph_0}$ consisting of mutually unitarily disjoint $N$-tuples.
\end{pro}
\begin{proof}
Let us first justify that each of the sets $\aA_1(n)$, $\fF_1$, $\sS_1(1)$ and $\sS_1(\infty)$ contains at least one
bounded nonzero operator. For $\aA_1(n)$ this is clear, while for $\fF_1$, $\sS_1(1)$ and $\sS_1(\infty)$ this follows
from the existence of factors of each type and the results on generators of such factors \cite{wog}, \cite{g-s} (the same
was in fact observed by Ernest, cf. \cite[Proposition~1.30]{e}).\par
Now let $T$ be a bounded nonzero operator of a suitable type (here by a \textit{type} we mean an atom of type I$^n$,
a fractal or a semiprime of type II$^n$). Notice that then $\{(rT,\ldots,rT) \in \CDDc_N\dd\ r \in (0,\infty)\}$ is
a family of mutually unitarily disjoint $N$-tuples of the same type as $T$ (indeed, if $\XXX$ is a bounded semiprime,
then $\|t \odot \XXX\| = \|\XXX\|$ for each $t \in \RRR_+ \setminus \{0\}$ and thus $r \xXX \disj s \xXX$ for distinct
$r$ and $s$). This proves the second claim of the proposition. To show the first one, it suffices to apply \LEMp{continuum}
and observe that if $\XXX$ is a semiprime, then $\card (\{\YYY \in \sS_N\dd\ \YYY \not\disj \XXX\}) = \card (\{t \odot
\XXX\dd\ t \in \RRR_+ \setminus \{0\}\}) = 2^{\aleph_0}$.
\end{proof}

As an immediate consequence of \PRO{a-f-s} we obtain the following result, announced in \REM{continuum}
\pREF{rem:continuum}.

\begin{cor}{continuum}
For $i = \{I,\tII,\tIII\}$, $\dim(\JJJ_i) = 2^{\aleph_0}$.
\end{cor}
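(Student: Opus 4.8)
The upper bounds are already in hand: \THM{minimal}\,(VI) gives $\dim(\JJJ_I)+\dim(\JJJ_{\tIII})\leqsl 2^{\aleph_0}$, hence $\dim(\JJJ_I)\leqsl 2^{\aleph_0}$ and $\dim(\JJJ_{\tIII})\leqsl 2^{\aleph_0}$, while \THM{semiminimal}\,(II) gives $\dim(\JJJ_{\tII})\leqsl 2^{\aleph_0}$. So the whole proof reduces to the reverse inequalities $\dim(\JJJ_i)\geqsl 2^{\aleph_0}$ for $i\in\{I,\tII,\tIII\}$. The strategy is to place below each $\JJJ_i$ a regular direct sum of $2^{\aleph_0}$ nontrivial $N$-tuples: if $S$ is a set and $\{\AAA^{(s)}\}_{s\in S}$ is a pairwise unitarily disjoint family of nontrivial $N$-tuples then, after fixing representatives, $\overline{\DdD}(\bigsqplus_{s\in S}\AAA^{(s)})=\bigoplus_{s\in S}\overline{\DdD}(\AAA^{(s)})$ carries an orthonormal set of cardinality $\card(S)$, whence $\dim(\bigsqplus_{s\in S}\AAA^{(s)})\geqsl\card(S)$; since $\XXX\leqsl\YYY$ trivially forces $\dim(\XXX)\leqsl\dim(\YYY)$, it will then suffice to check that $\bigsqplus_{s\in S}\AAA^{(s)}\leqsl\JJJ_i$ for a suitable $2^{\aleph_0}$-sized family. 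Such pairwise disjoint families are supplied by \PRO{a-f-s}, which produces them inside $\aA_N(1)$, inside $\sS_N(1)$ and inside $\fF_N$; the only remaining point is to land the regular direct sum below the right $\JJJ_i$.

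For $i=I$ and $i=\tIII$ minimality of the relevant $\JJJ_i$ does this directly. Indeed, $\JJJ_I\leqsl^s\JJJ_I\sqplus\JJJ_{\tIII}$ and $\JJJ_I\sqplus\JJJ_{\tIII}$ is minimal by \THM{minimal}\,(VI), so $\JJJ_I$ is minimal by \THM{minimal}\,(II); likewise $\JJJ_{\tIII}$ is minimal. Pick a pairwise unitarily disjoint family $\{\AAA^{(s)}\}_{s\in S}\subset\aA_N(1)$ with $\card(S)=2^{\aleph_0}$. Each atom satisfies $\AAA^{(s)}\leqsl\JJJ_I$ by \PRO{fractal}\,(A), hence $\AAA^{(s)}\leqsl^s\JJJ_I$ since $\JJJ_I$ is minimal (\THM{minimal}\,(II)); thus $\JJJ_I$ is a $\leqsl^s$-upper bound of the family, so by \COR{disjoint} the $\leqsl^s$-least upper bound $\bigsqplus_{s\in S}\AAA^{(s)}$ satisfies $\bigsqplus_{s\in S}\AAA^{(s)}\leqsl^s\JJJ_I$, and therefore $\dim(\JJJ_I)\geqsl\card(S)=2^{\aleph_0}$. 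The case $i=\tIII$ is identical with $\fF_N$ in place of $\aA_N(1)$: a fractal lies in $\HhH\IiI\MmM_N$ (see the proof of \PRO{fractal}\,(B)), hence is $\leqsl\JJJ_{\tIII}$ by \THM{minimal}\,(VI)(d), and then $\leqsl^s\JJJ_{\tIII}$ by minimality, so the regular direct sum of a $2^{\aleph_0}$-sized pairwise disjoint subfamily of $\fF_N$ is $\leqsl\JJJ_{\tIII}$, giving $\dim(\JJJ_{\tIII})\geqsl 2^{\aleph_0}$.

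For $i=\tII$ a small adjustment is needed, because $\JJJ_{\tII}$ is not minimal; instead one uses that $\SsS\MmM_N^{\infty}$ has greatest element $\JJJ_{\tII}$, with $\leqsl$ and $\leqsl^s$ coinciding on it (\REM{J2}). Take a pairwise unitarily disjoint family $\{\AAA^{(s)}\}_{s\in S}\subset\sS_N(1)$ with $\card(S)=2^{\aleph_0}$; by \PRO{semiprime}\,(II) each $\AAA^{(s)}$ is semiminimal and nontrivial, so $\aleph_0\odot\AAA^{(s)}\in\SsS\MmM_N^{\infty}$, these tuples are still pairwise unitarily disjoint and nontrivial (e.g.\ by (AO14) \pREF{AO14}), and each satisfies $\aleph_0\odot\AAA^{(s)}\leqsl\JJJ_{\tII}$, hence $\aleph_0\odot\AAA^{(s)}\leqsl^s\JJJ_{\tII}$. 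As before, \COR{disjoint} identifies the $\leqsl^s$-least upper bound of this family with $\bigsqplus_{s\in S}(\aleph_0\odot\AAA^{(s)})$, which is therefore $\leqsl^s\JJJ_{\tII}$, so $\dim(\JJJ_{\tII})\geqsl\card(S)=2^{\aleph_0}$. Combining the three lower bounds with the cited upper bounds yields $\dim(\JJJ_i)=2^{\aleph_0}$ for $i\in\{I,\tII,\tIII\}$. I do not expect a genuine obstacle here; the only delicacy is bookkeeping with the three orders $\leqsl$, $\leqsl^s$, $\ll$ --- minimality of $\JJJ_I,\JJJ_{\tIII}$ and maximality of $\JJJ_{\tII}$ inside $\SsS\MmM_N^{\infty}$ are exactly what allow one to promote the cheaply available relations to the $\leqsl$-relation that controls dimension.
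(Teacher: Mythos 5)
Your proof is correct and follows exactly the route the paper intends: the upper bounds from Theorems \ref{thm:minimal}(VI) and \ref{thm:semiminimal}(II), and the lower bounds by summing the $2^{\aleph_0}$-sized pairwise unitarily disjoint families supplied by Proposition \ref{pro:a-f-s} (the paper states the corollary as an ``immediate consequence'' of that proposition and gives no further argument). Your detour through $\leqsl^s$ and minimality is sound but could be shortened: once each member of the family is $\leqsl\JJJ_i$, Corollary \ref{cor:disjoint} identifies the regular direct sum with the $\leqsl$-l.u.b., which is therefore $\leqsl\JJJ_i$ directly.
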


Denote by $\IiI^d$ the ideal generated by $\pP_N$ and let $\IiI^c = (\IiI^d)^{\perp}$. In other words, $\AAA \in \IiI^d$
if $\AAA = \bigoplus_{\XXX \in \pP_N} \beta_{\XXX} \odot \XXX$ for some family $\{\beta_{\XXX}\}_{\XXX \in \pP_N} \subset
\Card$; and $\AAA \in \IiI^c$ if $\BBB \notin \pP_N$ for every $\BBB \leqsl \AAA$. Similarly, whenever $\AaA$ is an ideal
in $\CDD_N$, $\AaA^d$ and $\AaA^c$ denote, respectively, the ideals $\AaA \cap \IiI^d$ and $\AaA \cap \IiI^c$. $\AaA^d$
and $\AaA^c$ are called the \textit{discrete} and \textit{continuous} parts of $\AaA$. For example, we shall write
$\IiI_{\tIII}^c$, $\IiI_{I_1}^d$, etc. We also define the discrete and continuous parts of every member of $\CDD_N$
and each clopen set in $\Omega$: $\XXX^d = \EEE(\XXX | \IiI^d)$ and $\XXX^c = \EEE(\XXX | \IiI^c)$ for $\XXX \in \CDD_N$;
$\Omega^d = \supp_{\Omega} \IiI^d$ and $\Omega^c = \supp_{\Omega} \IiI^c$; and $E^d = E \cap \Omega^d$ and $E^c = E \cap
\Omega^c$ for a clopen set $E \subset \Omega$. We should underline that classically the terms \textit{discrete}
and \textit{continuous} as kinds of operators mean \textit{type I} and \textit{without type I parts}, respectively
(as it is practised e.g. by Ernest, see \cite[Definition~1.22]{e}).\par
It may be easily checked that $\AAA \in \pP_N$ iff $\widehat{\AAA}$ has the form $\widehat{\AAA} = c \cdot j_{\{x\}}$
where either $c = 1$ and $x \in \Omega_I$ or $c \in \RRR_+ \setminus \{0\}$ and $x \in \Omega_{\tII}$, or $c = \aleph_0$
and $x \in \Omega_{\tIII}$. Therefore $\Omega^d$ is the closure of the set of all isolated points of $\Omega$.
Consequently, we infer from \LEMp{C-S} and \PRO{a-f-s} that

\begin{pro}{C-S}
Each of the spaces $\Omega_{I_n}^d$ ($n=1,2,\ldots,\infty$), $\Omega_{\tII_1}^d$, $\Omega_{\tII_{\infty}}^d$
and $\Omega_{\tIII}^d$ is the \v{C}ech-Stone compactification of the discrete space of cardinality $2^{\aleph_0}$.
\end{pro}

\PRO{C-S} and the next two results will be used later to classify ideals in $\CDD_N$ up to isomorphism (see Section~23
for definition and details).

\begin{pro}{Linfty}
Every nonempty clopen set $E \subset \Omega^c$ with $c(E) = \aleph_0$ is homeomorphic to the Gelfand spectrum
of $L^{\infty}([0,1])$.
\end{pro}
\begin{proof}
There is (unique) nontrivial $\AAA \in \CDD_N$ such that $\AAA \leqsl^s \JJJ$ and $\supp \widehat{\AAA} = E$. Since $E
\subset \Omega^c$ and $c(E) = \aleph_0$,
\begin{equation}\label{eqn:aux77}
\AAA \in \IiI^c \qquad \textup{and} \qquad \dim(\AAA) = \aleph_0.
\end{equation}
Further, since $\ZZz(\WWw'(\jJJ))$ is isomorphic to $\CCc(\Omega)$, $\ZZz(\WWw'(\aAA))$ is isomorphic to $\CCc(E)$
(because $\AAA \leqsl^s \JJJ$). The latter means that $E$ is the Gelfand spectrum of $\ZZz(\WWw'(\aAA))$. Now the assertion
easily follows from \eqref{eqn:aux77} and Theorem~III.1.22 of \cite{tk1}. (The last mentioned result asserts that every
commutative von Neumann algebra acting on a separable Hilbert space which has no nonzero minimal projections is isomorphic
to $L^{\infty}([0,1])$.)
\end{proof}

Now for a clopen set $E \subset \Omega$ let $\kappa_d(E)$ be the power of the set of all isolated points of $E$ and let
$\kappa_c(E) = c_*(E^c)$. Additionally, let us denote by $D(\mM)$ the discrete space of cardinality $\mM$ and by $\Xx$
the Gelfand spectrum of $L^{\infty}([0,1])$. Recall that for every completely regular topological space $X$, $\beta X$
stands for the \v{C}ech-Stone compactification of $X$.

\begin{thm}{homeo}
Any clopen set $E \subset \Omega$ is homeomorphic to the topological disjoint union of $\beta D(\kappa_d(E))$ and $\beta
[D(\kappa_c(E)) \times \Xx]$.
\end{thm}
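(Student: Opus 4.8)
The plan is to decompose $E$ along the clopen partition $\Omega = \Omega^d \sqcup \Omega^c$ and to recognise each half as a \v{C}ech--Stone compactification, using \LEMp{C-S} as essentially the only real tool. First I would note that since $\IiI^c = (\IiI^d)^{\perp}$, \THMp{main} gives $\JJJ = \JJJ(\IiI^d) \sqplus \JJJ(\IiI^c)$, so $\Omega^d = \supp_{\Omega} \IiI^d$ and $\Omega^c = \supp_{\Omega} \IiI^c$ are complementary clopen subsets of $\Omega$. Hence $E = E^d \sqcup E^c$ with $E^d = E \cap \Omega^d$ and $E^c = E \cap \Omega^c$, and each of $E^d, E^c$ is a compact, extremely disconnected space, being a clopen subspace of $\Omega$. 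Moreover every isolated point of $\Omega$ lies in $\Omega^d$, so $\Omega^c$ has no isolated points at all.

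For the discrete part, writing $I$ for the open discrete set of isolated points of $\Omega$, one has $\Omega^d = \cll I$ (recalled in the text before \PRO{C-S}), so $E^d = E \cap \cll I = \cll(E \cap I)$ because $E$ is clopen. The isolated points of the clopen set $E$ are precisely the points of $E \cap I$, so $\card(E \cap I) = \kappa_d(E)$, and $E \cap I$ is an open dense discrete subspace of the compact extremely disconnected space $E^d$. By \LEMp{C-S} applied with $K = [0,1]$, every bounded continuous real function on $E \cap I$ extends continuously over $E^d$; thus $E \cap I$ is $C^*$-embedded in $E^d$, and by the defining property of the \v{C}ech--Stone compactification this forces $E^d \cong \beta(E \cap I) = \beta D(\kappa_d(E))$.

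For the continuous part I would, assuming $E^c \neq \varempty$, pick by \LEMp{count}(A) a \emph{maximal} pairwise disjoint family $\{E_s\}_{s \in S}$ of nonempty clopen subsets of $E^c$ with $c(E_s) = \aleph_0$; the proof of that lemma shows $\bigcup_{s \in S} E_s$ is dense in $E^c$. I claim $\card(S) = c_*(E^c) = \kappa_c(E)$. Were $S$ finite, $\bigcup_s E_s$ would be clopen and dense, hence equal to $E^c$; but any $E_{s_0}$ in the family, having no isolated points (as $\Omega^c$ has none), is infinite and so splits into two nonempty clopen pieces, each again infinite with Souslin number $\leqsl c(E_{s_0}) = \aleph_0$, hence $= \aleph_0$, contradicting maximality. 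So $S$ is infinite, and then $c(E^c) = \card(S)$ by a routine count: each member of a pairwise disjoint family of nonempty open subsets of $E^c$ meets the dense union $\bigcup_s E_s$, and for fixed $s$ at most $c(E_s) = \aleph_0$ of them meet $E_s$, so the family has at most $\card(S)\cdot\aleph_0 = \card(S)$ members. By \PROp{Linfty} each $E_s$ is homeomorphic to $\Xx$, so fixing such homeomorphisms identifies the open dense subspace $\bigsqcup_{s \in S} E_s$ of $E^c$ with $D(\kappa_c(E)) \times \Xx$. A further application of \LEMp{C-S} (with $K = [0,1]$) shows that $\bigsqcup_s E_s$ is $C^*$-embedded in the compact extremely disconnected space $E^c$, whence $E^c \cong \beta\bigl(\bigsqcup_s E_s\bigr) \cong \beta[D(\kappa_c(E)) \times \Xx]$; if $E^c = \varempty$ both sides are the empty space. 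Putting the two halves together, $E = E^d \sqcup E^c \cong \beta D(\kappa_d(E)) \sqcup \beta[D(\kappa_c(E)) \times \Xx]$.

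The genuinely delicate point is the middle part of the third paragraph: checking that a maximal $\aleph_0$-Souslin clopen decomposition of $E^c$ has cardinality exactly $c_*(E^c)$ (the refinement argument ruling out finite families, together with the Souslin-number count), plus the standard but worth-stating fact that a dense $C^*$-embedded subspace identifies the ambient compactum with its \v{C}ech--Stone compactification. Everything substantial has already been carried out in \LEMp{C-S}, \PROp{Linfty} and \LEMp{count}, so the remainder is organisational bookkeeping.
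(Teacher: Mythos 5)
Your proof is correct and follows essentially the same route as the paper: split $E$ into $E^d$ and $E^c$, use \LEM{count}(A) and \PRO{Linfty} to exhibit an open dense copy of $D(\kappa_c(E))\times\Xx$ in $E^c$, and invoke \LEM{C-S} to recognise each half as the \v{C}ech--Stone compactification of its dense open subspace. The extra details you supply (the cardinality count $\card(S)=c_*(E^c)$ and the $C^*$-embedding argument) are exactly the bookkeeping the paper leaves implicit.
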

\begin{proof}
Thanks to \LEMp{count} and \PRO{Linfty}, $E^c$ contains an open dense subset homeomorphic to $D(\kappa_c(E)) \times \Xx$.
Now it suffices to apply \LEMp{C-S} to infer that $E^d$ and $E^c$ are homeomorphic to, respectively, $\beta D(\kappa_d(E))$
and $\beta[D(\kappa_c(E)) \times \Xx]$.
\end{proof}

\begin{exm}{I1-I2}
It is clear that $\aA_N(1)$ is the collection of all $N$-tuples acting on a one-dimensional Hilbert space. So, $\aA_N(1)$
may naturally be identified with $\CCC^N$.\par
One may also easily check that $\aA_N(2)$ consists of all $N$-tuples acting on a two-dimensional Hilbert space which
are not of type $\IiI_1$. In other words, if $\aAA = (A_1,\ldots,A_N)$ where $A_1,\ldots,A_N$ are $2$ by $2$ matrices,
then $\AAA \in \aA_N(2)$ iff $A_j A_k^* \neq A_k^* A_j$ for some $j,k \in \{1,\ldots,N\}$.\par
For $n \geqsl 3$ the characterization of members of $\aA_N(n)$ is much more complicated.
\end{exm}

\SECT{Strongly unitarily disjoint families}

Thanks to (BT3) \pREF{BT3} and suitable characterizations of the kinds of $N$-tuples appearing below, we see that for every
$\XXX \in \CDD_N$ the following equivalences hold true:
\begin{quote}
\textit{$\XXX$ is type $I$, $I^n$, $\tII$, $\tII^1$, $\tII^{\infty}$, $\tIII$, minimal, multiplicity free, a hereditary
idempotent, semiminimal, a prime, an atom, a fractal or a semiprime iff so is $\bB(\XXX)$.}
\end{quote}
However, so far there was no need to use the $\bB$-transform, beside \THMp{common}. From now on, this transform will
intensively be involved and without it the presentation would be much more complicated.\par
We say that two classes $\AAa, \BBb \subset \CDDc_N$ are unitarily disjoint iff $\AAa \disj \BBb$, that is, if $\aAA \disj
\bBB$ for any $\aAA \in \AAa$ and $\bBB \in \BBb$. We begin with a classical

\begin{pro}{disjoint}
Let $\aAA, \bBB \in \CDDc_N$ be nontrivial $N$-tuples and let $\xXX = \aAA \oplus \bBB$. \TFCAE
\begin{enumerate}[\upshape(i)]
\item $\aAA \disj \bBB$,
\item $\WWw'(\xXX) = \{S \oplus T\dd\ S \in \WWw'(\aAA),\ T \in \WWw'(\bBB)\} =: \WWw'(\aAA) \oplus \WWw'(\bBB)$,
\item $I \oplus 0 \in \WWw''(\xXX)$ (where $I$ is the identity operator on $\overline{\DdD}(\aAA)$ and $0$ is the zero
   operator on $\overline{\DdD}(\bBB)$).
\end{enumerate}
\end{pro}
\begin{proof}
Using $\bB$-transform and taking into account properties (BT3)--(BT5) \pREF{BT3}, we may assume that $\aAA$ and $\bBB$ are
bounded. In that case the equivalence of (i) and (ii) follows from Schur's lemma (cf. Theorem~1.5 in \cite{e}; see also
Corollary~1.8 there). Further, (ii) easily implies (iii), since $I \oplus 0$ commutes with every member of $\WWw'(\aAA)
\oplus \WWw'(\bBB)$. Finally, if (iii) is satisfied, then all elements of $\WWw'(\xXX)$ commute with $I \oplus 0$ and thus
are of the form $S \oplus T$. It is now easily verified that $S \oplus T$ commutes with each entry of $\xXX$ \iaoi{}
$S \in \WWw'(\aAA)$ and $T \in \WWw'(\bBB)$.
\end{proof}

We are mainly interested in the equivalence of (i) and (iii) in \PRO{disjoint}.\par
Adapting the concept due to Ernest \cite{e} (see Definition~1.31 and \S5.7.f there, especially notes on page~187 there),
let us consider the free complex algebra $F = F(z_1,\ldots,z_N;w_1,\ldots,w_N)$ on $2N$ non-commuting variables
$z_1,\ldots,z_N,w_1,\ldots,w_N$. Each member of $F$ may naturally be identified with a polynomial in $2N$ non-commuting
variables. Let $*$ be a unique involution on the algebra $F$ such that $z_j^* = w_j$ for $j=1,\ldots,N$. We denote
by $\PpP(N)$ obtained in this way $*$-algebra equipped with a norm gived by
$$
\|p(z_1,\ldots,z_N;z_1^*,\ldots,z_N^*)\| = \sup_{\|T_j\| \leqsl 1} \|p(T_1,\ldots,T_N;T_1^*,\ldots,T_N^*)\|
$$
where the supremum is taken over all $N$-tuples of contractions acting on a (common, arbitrary) Hilbert space. It follows
from the definition that for every $p \in \PpP(N)$ and $\xXX \in \CDDc_N$ with $\|\xXX\| \leqsl 1$, $\|p(\xXX,\xXX^*)\|
\leqsl \|p\|$. The following is left as an easy exercise (use the separability of $\PpP(N)$).

\begin{lem}{P2N}
There is a sequence $\{\mMM_n\}_{n=1}^{\infty}$ of atoms in $\CDDc_N$ acting on finite-dimensional Hilbert spaces such that
$\|\mMM_n\| \leqsl 1\ (n\geqsl1)$ and for every $p \in \PpP(N)$,
$$
\|p\| = \sup_{n\geqsl1} \|p(\mMM_n,\mMM_n^*)\|.
$$
\end{lem}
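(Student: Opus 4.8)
The plan is to exhibit the required sequence by a density argument combined with the fact that the supremum defining $\|\cdot\|$ on $\PpP(N)$ is already realized over $N$-tuples of contractions acting on \emph{separable} (indeed finite-dimensional) Hilbert spaces, and then to decompose each such contraction into atoms. First I would recall that $\PpP(N)$ is separable: it is the completion of a countably-generated $*$-algebra over $\QQQ+i\QQQ$, so fix a countable dense subset $\{p_k\}_{k=1}^{\infty}$ of $\PpP(N)$. For each $k$, by definition of the norm on $\PpP(N)$ there is an $N$-tuple $\tTT^{(k)}$ of contractions on some Hilbert space with $\|p_k(\tTT^{(k)},(\tTT^{(k)})^*)\| \geqsl \|p_k\| - 1/k$. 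Applying $\bB$-transform considerations is unnecessary here since we are already dealing with contractions; instead, the key reduction is that $\|p_k(\tTT^{(k)},(\tTT^{(k)})^*)\|$ is approximated, to within $1/k$, by $\|p_k(\tTT^{(k)}\bigr|_E,\dots)\|$ for some finite-dimensional reducing subspace $E$ (just take $E$ spanned by the orbit of a near-maximizing unit vector under the words in $\tTT^{(k)}$ and its adjoint, truncated to finitely many words — this uses norm-continuity of $p_k$ as a noncommutative polynomial). Thus we may assume each $\tTT^{(k)}$ acts on a finite-dimensional space.

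The next step is to replace each finite-dimensional contraction $N$-tuple by atoms. A finite-dimensional $N$-tuple $\tTT^{(k)}$ need not be an atom, but since $\WWw'(\tTT^{(k)})$ is a finite-dimensional $\WWw^*$-algebra, $\tTT^{(k)}$ decomposes as a finite direct sum $\bigoplus_{j} m_{j}\odot \mMM^{(k)}_{j}$ of (finitely many) atoms $\mMM^{(k)}_{j}$ acting on finite-dimensional spaces, by \PROp{factor} applied to each central summand together with \THMp{decomp} — or, more elementarily, by decomposing the representation of the free algebra into irreducibles. For a norm computation this is harmless: $\|p_k(\tTT^{(k)},(\tTT^{(k)})^*)\| = \max_{j}\|p_k(\mMM^{(k)}_{j},(\mMM^{(k)}_{j})^*)\|$, and every $\mMM^{(k)}_j$ inherits $\|\mMM^{(k)}_j\|\leqsl 1$ as a reduced part of a contraction tuple. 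Collecting all the atoms $\mMM^{(k)}_j$ over all $k$ and all $j$ into a single sequence $\{\mMM_n\}_{n=1}^{\infty}$, we obtain a sequence of finite-dimensional atoms of norm at most $1$.

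Finally I would verify the norm formula. For every $p\in\PpP(N)$ and every $n$, the inequality $\|p(\mMM_n,\mMM_n^*)\|\leqsl\|p\|$ is immediate from the definition of $\|\cdot\|$ on $\PpP(N)$ (the $\mMM_n$ are contraction tuples), so $\sup_n\|p(\mMM_n,\mMM_n^*)\|\leqsl\|p\|$. For the reverse inequality, given $\epsi>0$ choose $p_k$ with $\|p-p_k\|<\epsi$; then for an appropriate $n$ among the atoms coming from index $k$ we have $\|p_k(\mMM_n,\mMM_n^*)\|\geqsl\|p_k\|-2/k$, and by the triangle inequality (using $\|q(\mMM_n,\mMM_n^*)\|\leqsl\|q\|$ for $q=p-p_k$) we get $\|p(\mMM_n,\mMM_n^*)\|\geqsl\|p_k\|-2/k-\epsi\geqsl\|p\|-2\epsi-2/k$. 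Letting $k\to\infty$ along the subsequence with $\|p-p_k\|<\epsi$ and then $\epsi\to 0$ yields $\sup_n\|p(\mMM_n,\mMM_n^*)\|\geqsl\|p\|$. The only genuinely delicate point is the finite-dimensional truncation in the first paragraph — one must be careful that a single finite-dimensional reducing subspace captures the value of the given polynomial $p_k$ up to $1/k$, which is why it is convenient to do the density reduction first (so that only countably many fixed polynomials need be handled) and to exploit that each $p_k$ involves only finitely many monomials, making the relevant orbit finite.
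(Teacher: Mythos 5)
Your overall strategy is the one the paper intends: the lemma is left there as an exercise with the hint ``use the separability of $\PpP(N)$'', and your combination of a countable dense set of polynomials, a reduction to finite-dimensional contraction tuples, a decomposition into finitely many atoms via the minimal projections of the finite-dimensional algebra $\WWw'(\tTT^{(k)})$, and a final $\epsi$--$1/k$ density argument is exactly the natural way to carry that hint out. Those parts of the argument are fine.

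The step that fails as written is the finite-dimensional truncation. The span $E$ of the vectors $w(\tTT^{(k)},(\tTT^{(k)})^*)x$, with $w$ ranging over the finitely many words of length at most $\deg p_k$, is finite-dimensional but is \emph{not} a reducing subspace, so the reduced part $\tTT^{(k)}|_E$ you invoke is undefined; and no choice of reducing $E$ can rescue the sentence, since finite-dimensional reducing subspaces need not exist at all (if one entry of the tuple is a unilateral shift, its only finite-dimensional reducing subspace is the zero space). The repair is standard but must be said: replace the restriction by the compression $\sSS=(P_ET_1|_E,\dots,P_ET_N|_E)$, which is again a tuple of contractions on a finite-dimensional space, and verify by induction on the length of a word $w$ of length at most $\deg p_k$ that $w(\sSS,\sSS^*)x=w(\tTT^{(k)},(\tTT^{(k)})^*)x$: every intermediate vector is itself a word of length at most $\deg p_k$ applied to $x$, hence already lies in $E$, so each interpolated projection acts as the identity. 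Linearity then gives $p_k(\sSS,\sSS^*)x=p_k(\tTT^{(k)},(\tTT^{(k)})^*)x$, whence $\|p_k(\sSS,\sSS^*)\|\geqsl\|p_k(\tTT^{(k)},(\tTT^{(k)})^*)x\|\geqsl\|p_k\|-2/k$ for a suitably chosen unit vector $x$; note this is an exact algebraic identity, not an appeal to ``norm-continuity''. With this substitution (your atom decomposition then applies verbatim to $\sSS$) the proof is complete.
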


With use of the above result and Kaplansky's density theorem \cite{kap} (cf. \cite[Theorem~5.3.5]{k-r1},
\cite[Theorem~II.4.8]{tk1}, \cite[Theorem~1.9.1]{sak}) we shall now prove a result which is a starting point for our
further investigations. By $\PpP_1(N)$ we denote the closed unit ball of $\PpP(N)$. Everywhere below $I$ and $0$ denote
the identity and zero operators on suitable Hilbert spaces. Recall that a net $(T_{\sigma})_{\sigma \in \Sigma}$
of bounded operators acting on a Hilbert space $\HHh$ converges $*$-strongly to an operator $T \in \BBb(\HHh)$ iff for any
$x \in \HHh$, $T_{\sigma} x \to Tx\ (\sigma \in \Sigma)$ and $T_{\sigma}^* x \to T^* x\ (\sigma \in \Sigma)$. We shall
denote this by $T_{\sigma} \stSt T$.

\begin{pro}{wdisj}
\begin{enumerate}[\upshape(I)]
\item Let $\AAa$ and $\BBb$ be arbitrary subsets of $\CDDc_N$. \TFCAE
   \begin{enumerate}[\upshape(i)]
   \item $\AAa$ and $\BBb$ are unitarily disjoint,
   \item there is a net $(p_{\sigma})_{\sigma \in \Sigma} \subset \PpP_1(N)$ such that for any $\aAA \in \AAa$ and $\bBB
      \in \BBb$, $p_{\sigma}(\bB(\aAA),\bB(\aAA)^*) \stSt I$ and $p_{\sigma}(\bB(\bBB),\bB(\bBB)^*) \stSt 0$.
   \end{enumerate}
\item If $\aAA$ and $\bBB$ are two $N$-tuples acting in separable Hilbert spaces, then $\aAA \disj \bBB$ iff there is
   a sequence $(p_n)_{n=1}^{\infty} \subset \PpP_1(N)$ such that $p_n(\bB(\aAA),\bB(\aAA)^*) \stSt I$
   and $p_n(\bB(\bBB),\bB(\bBB)^*) \stSt 0$.
\end{enumerate}
\end{pro}
\begin{proof}
(I): By (BT5) \pREF{BT5}, (i) is implied by (ii). To prove the converse, assume $\AAa \disj \BBb$. Let $\aAA = \bigoplus
\{\xXX\dd\ \xXX \in \AAa\}$ and $\bBB = \bigoplus \{\yYY\dd\ \yYY \in \BBb\}$. Thanks to (PR2) \pREF{PR2}, $\aAA \disj
\bBB$. Further, let $\{\mMM_n\}_{n=1}^{\infty}$ be as in \LEM{P2N}. Let $\mMM$ be the direct sum of all $\mMM_n$'s which
are unitarily disjoint from $\bB(\bBB)$ ($\mMM$ is trivial provided $\mMM_n \leqsl \bB(\bBB)$ for each $n$). Again by (PR2)
and (BT5), $\mMM \oplus \bB(\aAA) \disj \bB(\bBB)$. Put $\xXX = (\mMM \oplus \bB(\aAA)) \sqplus \bB(\bBB)$, $\HHh_1 =
\overline{\DdD}(\mMM \oplus \bB(\aAA))$ and $\HHh_2 = \overline{\DdD}(\bB(\bBB))$. It follows from our construction that
for each $p \in \PpP(N)$,
\begin{equation}\label{eqn:normP2N}
\|p\| = \|p(\xXX,\xXX^*)\|.
\end{equation}
Let $\MmM = \{p(\xXX,\xXX^*)\dd\ p \in \PpP(N)\}$. $\MmM$ is a unital selfadjoint subalgebra of $\BBb(\HHh_1 \oplus
\HHh_2)$. We infer from von Neumann's double commutant theorem \cite{vN1} (\cite[Theorem~5.3.1]{k-r1},
\cite[Theorem~II.3.9]{tk1}, \cite[Theorem~1.20.3]{sak}) that the closure of $\MmM$ in the strong operator topology
coincides with $\WWw''(\xXX)$. Further, \eqref{eqn:normP2N} yields that the closed unit ball in $\MmM$ coincides with
$\{p(\xXX,\xXX^*)\dd\ p \in \PpP_1(N)\}$. An application of \PRO{disjoint} shows that $I \oplus 0 \in \WWw''(\xXX)$ where
$I \in \BBb(\HHh_1)$ and $0 \in \BBb(\HHh_2)$. Finally, Kaplansky's density theorem asserts that there is a net
$(p_{\sigma})_{\sigma \in \Sigma} \in \PpP_1(N)$ such that $p_{\sigma}(\xXX,\xXX^*) \stSt I \oplus 0$. Since every member
of $\AAa$ and $\BBb$ is a summand of $\aAA$ and $\bBB$, respectively, the assertion of (ii) is fulfilled.\par
To prove (II), repeat the above argument and observe that in that case both $\HHh_1$ and $\HHh_2$ are separable and hence
Kaplansky's density theorem asserts the existence of a suitable sequence, since the closed unit ball in $\BBb(\HHh)$ for
separable $\HHh$ is metrizable in the $*$-strong topology (see e.g. \cite[Proposition~2.2]{e}).
\end{proof}

Let us now introduce the following

\begin{dfn}{sdisj}
Let $\AAa$ and $\BBb$ be arbitrary collections (sets or classes) of $N$-tuples. We say that $\AAa$ and $\BBb$ are
\textit{strongly unitarily disjoint}, in symbol $\AAa \sdisj \BBb$, if there is a sequence $(p_n)_{n=1}^{\infty} \subset
\PpP_1(N)$ such that $p_n(\bB(\aAA),\bB(\aAA)^*) \oplus p_n(\bB(\bBB),\bB(\bBB)^*) \stSt I \oplus 0$ for any $\aAA \in
\AAa$ and $\bBB \in \BBb$. Two $N$-tuples $\xXX, \yYY \in \CDDc_N$ are strongly unitarily disjoint ($\xXX \sdisj \yYY$)
provided so are the sets $\{\xXX\}$ and $\{\yYY\}$.
\end{dfn}

The reader should easily notice that for two sets $\AAa$ and $\BBb$ of $N$-tuples, $\AAa \sdisj \BBb$ iff $(\bigoplus \AAa)
\sdisj (\bigoplus \BBb)$. It is also clear that if $\AAa$ and $\BBb$ are strongly unitarily disjoint, then $\AAa \disj
\BBb$.

\begin{rem}{sdisj}
Let $\aAA$ and $\aAA'$ be two unitarily equivalent $N$-tuples. Observe that then $p(\bB(\aAA),\bB(\aAA)^*) \equiv
p(\bB(\aAA'),\bB(\aAA')^*)$ for every $p \in \PpP(N)$. What is more, for every complex number $\lambda$ and a net
$(p_{\sigma})_{\sigma \in \Sigma} \subset \PpP(N)$, $p_{\sigma}(\bB(\aAA),\bB(\aAA)^*) \to \lambda I$ $*$-strongly
(strongly, weakly, etc.) iff $p_{\sigma}(\bB(\aAA'),\bB(\aAA')^*) \to \lambda I$ in the same topology. This means that for
any $\AAA \in \CDD_N$ and $p \in \PpP(N)$, $p(\bB(\AAA),\bB(\AAA)^*)$ is a well defined member of $\CDD$ and
\begin{equation}\label{eqn:conver}
p_{\sigma}(\bB(\AAA),\bB(\AAA)^*) \stSt \lambda I
\end{equation}
is well understood. (We do not write in \eqref{eqn:conver} `$\III$' instead of `$I$' because `$I$' represents here
the identity operator on a Hilbert space of (suitable) arbitrary dimension. The usage of $\III$ may lead
to misunderstandings. In fact, \eqref{eqn:conver} expresses only a property of the net
$\{p_{\sigma}(\bB(\AAA),\bB(\AAA)^*)\}_{\sigma\in\Sigma}$.) Consequently, in the samy way as in \DEF{sdisj} we may define
strongly unitarily disjoint subclasses of $\CDD_N$. We follow this concept in next sections.
\end{rem}

Surely the main problem concerning the strong unitary disjointness is the question of when two unitarily disjoint families
of $N$-tuples acting in separable Hilbert spaces are strongly unitarily disjoint. We will not answer this question in this
treatise. However, the reader should remember that strong unitary disjointness and unitary disjointness are nonequivalent
even for families of $N$-tuples acting on a one-dimensional Hilbert space. Indeed, such $N$-tuples may naturally be
identified with points of $\CCC^N$. If $p_1,p_2,\ldots$ is an arbitrary sequence of members of $\PpP(N)$ and $\lambda \in
\CCC$, the set $\{z \in \CCC^N\dd\ p_n(\bB(z),\bB(z)^*) \to \lambda\}$ is $\FFf_{\sigma\delta}$ in $\CCC^N$. Thus,
if $A \subset \CCC^N$ is not $\FFf_{\sigma\delta}$, $A \disj \CCC^N \setminus A$ but $A$ and $\CCC^N \setminus A$ are not
strongly unitarily disjoint.\par
The next result is a consequence of \PRO{wdisj}. We omit its proof.

\begin{pro}{sdisj-wdisj}
Let $\AAa$ and $\BBb$ be two \textbf{countable} families of $N$-tuples acting in separable Hilbert spaces. Then $\AAa \disj
\BBb$ \iaoi{} $\AAa \sdisj \BBb$.
\end{pro}

In the sequel we shall also need the following simple

\begin{lem}{separ}
Let $\aAA$ be a bounded $N$-tuple acting on a separable Hilbert space such that $\|\aAA\| \leqsl 1$. For every
$T \in \WWw(\aAA)$ with $\|T\| \leqsl 1$ there is a sequence $(p_n)_{n=1}^{\infty} \subset \PpP_1(N)$ such that
$p_n(\aAA,\aAA^*) \stSt T$.
\end{lem}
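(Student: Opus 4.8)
The plan is to realize $\WWw(\aAA)$ as the $*$-strong closure of the $\CCc^*$-algebra generated by $\aAA$, apply Kaplansky's density theorem, and then use separability to replace the resulting net by a sequence. First I would observe that, since $\|\aAA\| \leqsl 1$, the assignment $p \mapsto p(\aAA,\aAA^*)$ defines a (norm-decreasing, by the very definition of the norm on $\PpP(N)$ recalled before \LEM{P2N}) unital $*$-homomorphism $\pi\dd \PpP(N) \to \BBb(\overline{\DdD}(\aAA))$. Its image $\MMm_0 := \pi(\PpP(N))$ is a norm-closed $*$-subalgebra of $\BBb(\overline{\DdD}(\aAA))$, namely the unital $\CCc^*$-algebra generated by $A_1,\ldots,A_N$. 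The key algebraic input is the standard $\CCc^*$-algebra fact that a surjective $*$-homomorphism between $\CCc^*$-algebras carries the closed unit ball \emph{onto} the closed unit ball; applied to $\pi\dd \PpP(N) \to \MMm_0$ this gives $\pi(\PpP_1(N)) = \{S \in \MMm_0\dd\ \|S\| \leqsl 1\}$.

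Next I would invoke von Neumann's double commutant theorem \cite{vN1} to identify the closure of $\MMm_0$ in the strong (equivalently $*$-strong) operator topology with $\WWw''(\aAA) = \WWw(\aAA)$, and then Kaplansky's density theorem \cite{kap} (cf. \cite[Theorem~5.3.5]{k-r1}, \cite[Theorem~II.4.8]{tk1}, \cite[Theorem~1.9.1]{sak}) to conclude that the closed unit ball of $\MMm_0$ is $*$-strongly dense in the closed unit ball of $\WWw(\aAA)$. Hence, given $T \in \WWw(\aAA)$ with $\|T\| \leqsl 1$, there is a net $(S_{\sigma})_{\sigma\in\Sigma}$ in the closed unit ball of $\MMm_0$ with $S_{\sigma} \stSt T$. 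Since $\overline{\DdD}(\aAA)$ is separable, the closed unit ball of $\BBb(\overline{\DdD}(\aAA))$ is metrizable in the $*$-strong topology (see e.g. \cite[Proposition~2.2]{e}), so I may extract from the net a sequence $(S_n)_{n=1}^{\infty}$ in that ball with $S_n \stSt T$. Finally, using $\pi(\PpP_1(N)) = \{S \in \MMm_0\dd\ \|S\| \leqsl 1\}$, for each $n$ I choose $p_n \in \PpP_1(N)$ with $p_n(\aAA,\aAA^*) = \pi(p_n) = S_n$; then $p_n(\aAA,\aAA^*) \stSt T$, as required.

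The only point that is not entirely routine is the $\CCc^*$-algebra fact used in the first paragraph (image of the closed ball under a surjective $*$-homomorphism is the closed ball, not merely dense in it). I would either cite it from a standard reference on $\CCc^*$-algebras or prove it in one line by functional calculus: for $T \in \WWw(\aAA)$ the lift can first be produced for self-adjoint $T$ by applying $f(t) = \max(-1,\min(1,t))$ to an arbitrary preimage in $\PpP(N)$ of a self-adjoint representative, and the general case follows by the usual $2\times 2$-matrix reduction. (A slightly less economical alternative, avoiding this fact, is to lift $S_n$ to some $q_n \in \PpP(N)$ with $\|q_n\| \leqsl 1 + 2^{-n}$ — possible since the Banach-space quotient map $\PpP(N) \to \PpP(N)/\ker\pi$ maps open balls onto open balls — and then rescale $p_n := (1+2^{-n})^{-1} q_n \in \PpP_1(N)$; one checks $p_n(\aAA,\aAA^*) = (1+2^{-n})^{-1} S_n \stSt T$ since the scalars tend to $1$ and all operators involved are bounded.) Everything else is a direct assembly of the cited theorems.
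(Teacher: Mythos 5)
There is a genuine gap, and it sits exactly where the lemma's real difficulty lies. The norm on $\PpP(N)$ is the \emph{universal} one, a supremum over all contractive $N$-tuples, so $\|p(\aAA,\aAA^*)\| \leqsl 1$ does not imply that $p$ — or any other polynomial with the same image under $\pi$ — lies in $\PpP_1(N)$. Your identity $\pi(\PpP_1(N)) = \{S \in \MMm_0\dd\ \|S\|\leqsl 1\}$ is false as stated: $\pi(\PpP(N))$ is not norm-closed (it is only a dense $*$-subalgebra of the $\CCc^*$-algebra generated by $A_1,\ldots,A_N$), and the ball-onto-ball theorem you invoke is about surjections of \emph{complete} $\CCc^*$-algebras. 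The fallback via the quotient norm on $\PpP(N)/\ker\pi$ fails for the same underlying reason: the distance from $p$ to $\ker\pi$ computed inside $\PpP(N)$ can be far larger than $\|\pi(p)\|$, since $\ker\pi$ need not be dense in the kernel of the extension of $\pi$ to the completion. Concretely, for $N=1$ let $\aAA$ be multiplication by $t$ on $L^2([0,1/2])$ and $p=(z_1+w_1)^{100}$; then $\|p(\aAA,\aAA^*)\|=1$, yet every $k\in\ker\pi$ satisfies $k(t,t)\equiv 0$ as a one-variable polynomial, so evaluating $p+k$ at multiplication by $t$ on $L^2([0,1])$ gives $\|p+k\|\geqsl 2^{100}$; hence no element of $\PpP_1(1)$ has image $p(\aAA,\aAA^*)$, and your two proposed lifts both break down.

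The statement is nevertheless true, and the gap can be bridged in two ways. The paper's route (mimicking the proof of \PRO{wdisj}) adjoins the norming sequence $\mMM=\bigoplus_{n} \mMM_n$ from \LEM{P2N}, chosen unitarily disjoint from $\aAA$, so that $\|p\|=\|p(\mMM\oplus\aAA,\mMM^*\oplus\aAA^*)\|$ for every $p$: in that representation the universal and operator norms coincide, $0\oplus T$ belongs to $\WWw(\mMM\oplus\aAA)$ because $\WWw''(\mMM)\oplus\WWw''(\aAA)\subset\WWw''(\mMM\oplus\aAA)$ by \PRO{disjoint}, and Kaplansky's density theorem together with metrizability of the unit ball in the $*$-strong topology yields the sequence, which one then restricts to the $\aAA$-summand. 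Alternatively, your argument can be repaired by passing to the completion $A$ of $\PpP(N)$: the extension $\bar\pi\dd A\to\CCc^*(A_1,\ldots,A_N)$ does map closed unit ball onto closed unit ball, $\PpP_1(N)$ is norm-dense in the unit ball of $A$ (rescale approximants), hence $\pi(\PpP_1(N))$ is norm-dense in the unit ball of $\CCc^*(A_1,\ldots,A_N)$, and Kaplansky applies from there. As written, however, the proposal does not establish the key density claim.
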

\begin{proof}
We mimic the proof of \PRO{wdisj}. As there, we see that there is a sequence $\{\mMM_n\}_{n=1}^{\infty}$ of $N$-tuples
of contraction matrices such that $\mMM_n \disj \aAA$ for each $n$ and $\|p\| = \|p(\mMM \oplus \aAA,\mMM^* \oplus
\aAA^*)\|$ for every $p \in \PpP(N)$ with $\mMM = \bigoplus_{n=1}^{\infty} \mMM_n$. Since $\mMM \disj \aAA$, $\WWw'(\mMM
\oplus \aAA) = \WWw'(\mMM) \oplus \WWw'(\aAA)$ (by \PRO{disjoint}). Consequently, $\WWw''(\mMM) \oplus \WWw''(\aAA) \subset
\WWw''(\mMM \oplus \aAA)$ and thus $0 \oplus T \in \WWw(\mMM \oplus \aAA)$. Finally, since $\mMM \oplus \aAA$ acts
on a separable Hilbert space, Kaplansky's density theorem finishes the proof (cf. the proof of \PRO{wdisj}).
\end{proof}

\begin{rem}{sdisj2}
Let $\Pp = \{p_{\sigma}\}_{\sigma\in\Sigma} \subset \PpP_1(N)$ be any net and let $\lambda \in \CCC$. Denote
by $\IiI_{\Pp}(\lambda)$ the class of all $\XXX \in \CDD_N$ for which
$$p_{\sigma}(\bB(\XXX),\bB(\XXX)^*) \stSt \lambda I.$$
One easily checks that $\IiI_{\Pp}(\lambda)$ is an ideal and $\IiI_{\Pp}(\lambda) \disj \IiI_{\Pp}(\lambda')$ whenever
$\lambda' \neq \lambda$. For every subclass $\AaA$ of $\CDD_N$ let $J(\AaA)$ denote the smallest ideal in $\CDD_N$ which
contains $\AaA$. The above notice shows that for arbitrary two subclasses $\AaA$ and $\BbB$ of $\CDD_N$, $\AaA \sdisj \BbB$
iff $J(\AaA) \sdisj J(\BbB)$, iff $\JJJ(J(\AaA)) \sdisj \JJJ(J(\BbB))$. In particular, strong unitary disjointness of sets
or classes may always be reduced to the issue of strong unitary disjointness of suitable $N$-tuples $\XXX$ and $\YYY$
such that $\XXX \leqsl^s \JJJ$ and $\YYY \leqsl^s \JJJ$.
\end{rem}

\SECT{Measure-theoretic preliminaries}

Our next objective is to propose the prime decomposition of $N$-tuples (\THMP{prime}). Essentially this will be based
on the same idea (that is, on central decompositions of von Neumann algebras) as Ernest's central decomposition
of a bounded operator (\cite[Chapter~3]{e}). The difference between his and our approaches (beside bigger generality) is
the following. Ernest has focused on a single operator $T$ and studied its (nonscalar) spectrum $\widehat{T}$
and quasi-spectrum $\widetilde{T}$. Central decomposition of the operator $T$ `takes place' in $\widetilde{T}$. Further
the author compares operators (and their central decompositions) which have the same quasi-spectra. It seems to us that
Ernest's work was inspired by the spectral theorem for a normal operator. Our treatise is inspired by the prime
decomposition of natural numbers. Our interpretation is therefore in a more algebraic fashion. Also comparing Ernest's work
and our, we may say that his approach is local, while our is global.\par
The road to Prime Decomposition Theorem is long because of mea\-sure-theoretic technicalities. First we shall define
the Borel structure on $\CDD_N$ (this is done in this part), next we shall generalize the notion of a direct integral
to the context of $N$-tuples (Section~20) to define `continuous' direct sums (Section~21) among which we shall distinguish
regular ones (which deal with unitary disjointness) and finally we shall show that every member of $\CDD_N$ admits
an (in a sense) unique regular prime decomposition (Section~22).\par
The concept of direct integrals (of Hilbert spaces, operators, von Neumann algebras, etc.) is essentially due to
von Neumann and is widely discussed in many classical textbooks on von Neumann algebras. Here we shall focus on main ideas
and many proofs will be omitted. The reader interested in details should consult e.g. Chapters~2 and~3 of \cite{e};
\cite{ef1,ef2}; \cite[Chapter~14]{k-r2}; \S{}IV.8, \S{}V.6 and Appendix in \cite{tk1}; \cite[Chapter~3]{sak};
\cite[Chapter~I]{sch}; or the original paper by von Neumann \cite{vN2}. It is also assumed that the reader is well oriented
in basics of measure theory as well as in aspects of reduction theory of von Neumann algebras.\par
Measurable sets (i.e. elements of a given $\sigma$-algebra) will also be called \textit{Borel}. Everywhere below
by a \textit{measurable} or \textit{Borel} function of a measurable space $(X,\Mm)$ into a measurable space $(Y,\Nn)$
we mean any function $f\dd X \to Y$ such that $f^{-1}(B) \in \Mm$ for any $B \in \Nn$. The function $f$ is a \textit{Borel
isomorphism} if $f$ is a bijection and $f$ and $f^{-1}$ are measurable. For two measures $\mu$ and $\nu$ defined
on a common $\sigma$-algebra $\Mm$ we shall write $\mu \ll \nu$ iff $\mu$ is absolutely continuous with respect to $\nu$
and we call $\mu$ and $\nu$ (mutually) \textit{singular} iff $\mu \perp \nu$, i.e. if $\mu$ and $\nu$ are concetrated
on disjoint measurable sets. If $A \in \Mm$, $\mu\bigr|_A$ denotes the measure on $\Mm$ given by $\mu\bigr|_A(B) =
\mu(A \cap B)$. For a topological space $X$, $\Bb(X)$ stands for the smallest $\sigma$-algebra containing all open subsets
of $X$. Following Takesaki \cite[Appendix]{tk1}, we call a measurable space $(X,\Mm)$ a \textit{standard Borel} space iff
$(X,\Mm)$ is Borel isomorphic to $(Y,\Bb(Y))$ where $Y$ is a Borel subset of a separable complete metric space.
Equivalently, $(X,\Mm)$ is standard iff $(X,\Mm)$ is Borel isomorphic to $(A,\Bb(A))$ where $A$ is a countable (finite
or not) subset of $[0,1]$ or $A = [0,1]$ (cf. \cite[Corollary~A.11]{tk1}). If $(X,\Mm)$ and $(Y,\Nn)$ are standard Borel
spaces and $f\dd X \to Y$ is measurable, then $(X \times Y, \Mm \otimes \Nn)$ is a standard Borel space as well
and $\Gamma(f) \in \Mm \otimes \Nn$ where $\Gamma(f) = \{(x,f(x))\dd\ x \in X\}$ is the graph of $f$. The space $(X,\Mm)$
is \textit{Souslin-Borel} iff it is the image of a standard Borel space under a Borel function and $X$ is \textit{countably
separated} (the latter means that there are sets $E_1,E_2,\ldots \in \Mm$ such that for any two distinct points $x$ and $y$
of $X$ there is $n$ with $\card(\{x,y\} \cap E_n) = 1$). In what follows, we shall often identify $I_{\aleph_0}$ with
$[0,\infty]$.\par
Let $(X,\Mm,\mu)$ be a measure space ($\mu$ need not be $\sigma$-finite nor complete). We denote by $\NnN(\mu)$ the null
$\sigma$-ideal in $\Mm$ induced by $\mu$, that is, $\NnN(\mu) = \{A \in \Mm\dd\ \mu(A) = 0\}$. $(X,\Mm,\mu)$ is said to be
a \textit{standard measure} space (or, equivalently, $\mu$ is \textit{standard}) iff $\mu$ is nonzero $\sigma$-finite
and $X \setminus Z$ is a standard Borel space for some $Z \in \NnN(\mu)$. By \cite[Corollary~A.14]{tk1}, every
$\sigma$-finite measure on a Souslin-Borel space is standard.\par
For $n=1,2,\ldots$ let $\HHh_n$ be a fixed Hilbert space of dimension $n$ and let $\HHh_{\infty}$ be a fixed separable
infinite-dimensional Hilbert space (these spaces are fixed for this and the next two sections). Further, let $\HHh$ denote
one of the spaces $\HHh_1,\HHh_2,\ldots,\HHh_{\infty}$. The norm and the weak topologies of $\HHh$ induces the same
$\sigma$-algebra on $\HHh$ which is for us the default Borel structure of $\HHh$. Similarly, the $*$-strong, strong
and weak operator topologies induces the same Borel structures on $\BBb(\HHh)$. In other words, the $\sigma$-algebra
$\Ww_{\HHh}$ generated by all open sets with respect to any of these topologies is independent of the topology we choose.
Moreover, $(\BBb(\HHh),\Ww_{\HHh})$ is a standard Borel space, which means that $(\BBb(\HHh),\Ww_{\HHh})$ is isomorphic
as a measurable space to $([0,1],\Bb([0,1]))$. The addition and multiplication are measurable as functions of $(\BBb(\HHh)
\times \BBb(\HHh), \Ww_{\HHh} \otimes \Ww_{\HHh})$ into $(\BBb(\HHh),\Ww_{\HHh})$ and the functions $T \mapsto T^*$,
$T \mapsto |T|$, $T \mapsto Q_T$ and $T \mapsto T^{-1}$ are measurable as well (the last mentioned function comes from
the set of all invertible operators which is measurable).\par
The following result will enable us to define a Borel structure on the set $\CDDc(\HHh)$.

\begin{lem}{b-meas}
The open unit ball $B$ of $\BBb(\HHh)$ and the set $\bB(\HHh)$ of all $T \in \BBb(\HHh)$ such that $\|Tx\| < \|x\|$ for any
nonzero $x \in \HHh$ are measurable. The $\bB$-transform is an isomorphism between measurable spaces $\BBb(\HHh)$ and $B$.
\end{lem}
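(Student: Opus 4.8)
The plan is to derive everything from two facts already at our disposal: the separability of $\HHh$, and the measurability of the maps $T\mapsto T^{*}$, $T\mapsto|T|$, $(S,T)\mapsto ST$ and $T\mapsto T^{-1}$ (the last one on the measurable set of invertible operators) on $(\BBb(\HHh),\Ww_{\HHh})$. First I would note that the norm is $\Ww_{\HHh}$-measurable: if $D$ is a countable dense subset of the closed unit ball of $\HHh$, then $\|T\|=\sup\{|\scalar{Tx}{y}|\dd x,y\in D\}$ is a countable supremum of weak-operator-continuous functions, hence lower semicontinuous and in particular Borel. Consequently the open ball $B=\{T\dd\|T\|<1\}$ is measurable.

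For $\bB(\HHh)$ I would first replace the defining condition by a spectral one. Since $\|Tx\|=\||T|x\|$ for all $x$, membership in $\bB(\HHh)$ means $\||T|x\|<\|x\|$ for every $x\neq0$; and for a positive contraction $A$ the equality $\|Ax\|=\|x\|$ with $x\neq0$ forces $Ax=x$ (because $t^{2}\leqsl t$ on $[0,1]$, so $A^{2}\leqsl A$), so this is equivalent to $\|T\|\leqsl1$ together with $\NnN(I-|T|)=\{0\}$. By the functional calculus, when $\|T\|\leqsl1$ the operators $|T|^{n}$ converge strongly to the spectral projection of $|T|$ at $\{1\}$, whose range is $\NnN(I-|T|)$; hence $\bB(\HHh)$ is the intersection of $\{T\dd\|T\|\leqsl1\}$ (measurable, by the previous paragraph) with the set of $T$ for which $\||T|^{n}x\|\to0$ for every $x$ in a fixed countable dense subset $D_{0}$ of $\HHh$. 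In the latter set, $T\mapsto|T|^{n}$ is measurable (a composition of $T\mapsto|T|$ with multiplication), the evaluation $S\mapsto Sx$ is strong-operator-continuous, hence $\Ww_{\HHh}$-measurable, so $T\mapsto\||T|^{n}x\|$ is measurable and the condition ``$\lim_{n}\||T|^{n}x\|=0$ for all $x\in D_{0}$'' is a countable Boolean combination of measurable sets. Thus $\bB(\HHh)$ is measurable.

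Finally, on $\BBb(\HHh)$ the formula $\bB(T)=T(I+|T|)^{-1}$ exhibits $\bB$ as a composition of $T\mapsto|T|$, addition of $I$, inversion (legitimate since $I+|T|\geqsl I$ is invertible), and multiplication, so $\bB$ is measurable; since $\||T|(I+|T|)^{-1}\|<1$, its image lies in $B$, and by \PRO{b}\,(C),(E) the restriction $\bB\dd\BBb(\HHh)\to B$ is a bijection with inverse $S\mapsto S(I-|S|)^{-1}$, which is measurable on $B$ by the same argument (now $I-|S|$ is invertible because $\|S\|<1$). Hence $\bB|_{\BBb(\HHh)}$ is a Borel isomorphism of $\BBb(\HHh)$ onto $B$. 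I expect the only genuinely delicate point to be the step where the ``for every nonzero $x$'' condition defining $\bB(\HHh)$ is converted, via the spectral calculus, into a countably generated Borel condition; the remainder is routine bookkeeping with the already-recorded measurability of the algebraic operations on $\BBb(\HHh)$.
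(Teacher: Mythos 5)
Your proof is correct and follows the same basic line as the paper: both reduce membership in $\bB(\HHh)$ to the triviality of the kernel of $I-T^*T$ (equivalently of $I-|T|$) within the closed unit ball, and both obtain the isomorphism from the explicit formulas $T\mapsto T(I+|T|)^{-1}$ and $S\mapsto S(I-|S|)^{-1}$ together with the measurability of the algebraic operations. The one substantive difference is at the key step: the paper simply cites \cite[Proposition~2.4]{e} for the measurability of the map sending $T$ to the orthogonal projection onto $\NnN(I-T^*T)$, whereas you verify the kernel condition directly via the strong convergence of $|T|^n$ to the spectral projection of $|T|$ at $\{1\}$, tested on a countable dense set --- a self-contained substitute that avoids the external citation at the cost of a little functional calculus (the reduction to a countable dense set being justified by the uniform bound $\||T|^n\|\leqsl1$). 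You are also more careful than the paper on two points it glosses over: the equivalence ``$T\in\bB(\HHh)$ iff $\NnN(I-T^*T)$ is trivial'' requires the additional restriction $\|T\|\leqsl1$, which you state explicitly, and the paper omits the measurability of $B$ and the isomorphism claim altogether (``we shall only explain why $\bB(\HHh)$ is measurable''), both of which you supply in full.
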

\begin{proof}
We shall only explain why $\bB(\HHh)$ is measurable. Notice that $T \in \bB(\HHh)$ iff $\NnN(I - T^* T)$ is trivial. Now
if $P_T$ denotes the orthogonal projection onto $\NnN(I - T^* T)$, then the function $T \mapsto P_T$ is measurable, thanks
to \cite[Proposition~2.4]{e}, and we are done.
\end{proof}

Since the $\bB$-transform establishes a one-to-one correspondence between members of $\CDDc(\HHh)$ and $\bB(\HHh)$,
we may introduce

\begin{dfn}{b-Borel}
The \textit{Borel structure} of $\CDDc(\HHh)$ is the unique Borel structure which makes the $\bB$-transform an isomorphism.
In other words, a set $F \subset \CDDc(\HHh)$ is measurable, in symbol $F \in \Bb(\CDDc(\HHh))$, iff $\{\bB(X)\dd\
X \in F\} \in \Ww_{\HHh}$.
\end{dfn}

\LEM{b-meas} implies that $\CDDc(\HHh)$ is a standard Borel space, that $\BBb(\HHh)$ is a measurable subset
of $\CDDc(\HHh)$ and that the original Borel structure of $\BBb(\HHh)$ coincides with the one inherited from the Borel
structure of $\CDDc(\HHh)$.\par
Recall that $\CDDc_N(\HHh) = \CDDc(\HHh)^N$. We equip $\CDDc_N(\HHh)$ with the product $\sigma$-algebra
$\Bb(\CDDc_N(\HHh)) = \Bb(\CDDc(\HHh)) \otimes \ldots \otimes \Bb(\CDDc(\HHh))$. Observe that $\CDDc_N(\HHh)$ is a standard
Borel space and the $\bB$-transform is an isomorphism of the measurable space $\CDDc_N(\HHh)$ onto a measurable set
$\bB(\HHh)^N$. Moreover, it follows from suitable properties of the $\bB$-transform that each of the functions
$\xXX \mapsto \xXX^*$, $\xXX \mapsto |\xXX|$ and $\xXX \mapsto \qQQ_{\xXX}$ (of $\CDDc_N(\HHh)$ into itself) are
measurable.\par
Now let $\SsS\EeE\PpP_N$ be the \textbf{set} of all $\AAA \in \CDD_N$ such that $0 < \dim(\AAA) \leqsl
\aleph_0$. Observe that the function $\Phi\dd \bigcup_{n=1}^{n=\infty} \CDDc_N(\HHh_n) \ni \xXX \mapsto \XXX \in
\SsS\EeE\PpP_N$ is a surjection. We define a $\sigma$-algebra $\Bb_N$ on $\SsS\EeE\PpP_N$ by the rule: $\FfF \in \Bb_N$
iff for every $n \in \{1,2,3,\ldots,\infty\}$, $\Phi^{-1}(\FfF) \cap \CDDc_N(\HHh_n) \in \Bb(\CDDc_N(\HHh_n))$. It is
obviously seen that the definition of $\Bb_N$ is independent of the choice of $\HHh_n$'s. For every $\AaA \in \Bb_N$
we shall denote by $\Bb(\AaA)$ the $\sigma$-algebra of all sets $\BbB \in \Bb_N$ contained in $\AaA$.\par
As it was shown by Ernest (see \cite[Corollary~2.33]{e}), $\SsS\EeE\PpP_N$ is not countably separated. This causes that
investigating of the Borel structure of $\SsS\EeE\PpP_N$ is difficult and complicated. The rest of this section is devoted
to establish measurability of some (important for us) sets and functions. For $n = 1,2,\ldots,\infty$ let
$\SsS\EeE\PpP_N(n)$ consist of all $\AAA \in \SsS\EeE\PpP_N$ with $\dim(\AAA) = n$. It follows from the definition
of $\Bb_N$ that $\SsS\EeE\PpP_N(n) \in \Bb_N$ for every $n$. When $n$ is finite, much more can be said about
$\SsS\EeE\PpP_N(n)$ (cf. Proposition~2.46 and Corollary~2.47 in \cite{e}). Namely,

\begin{pro}{findim}
For every finite $n$, $\SsS\EeE\PpP_N(n)$ is a standard Borel space and there are a Borel set $S_n \subset \CDDc_N(\HHh_n)$
and a Borel isomorphism $\chi_n\dd \SsS\EeE\PpP_N(n) \ni \AAA \mapsto \tTT_{\AAA} \in S_n$ such that $\tTT_{\AAA}$ is
a representative of $\AAA$ for every $\AAA$.
\end{pro}
\begin{proof}
It is clear that $\CDD_N(\HHh_n)$ coincides with the space $M_n^N$ of all $N$-tuples of $n \times n$ matrices.
Let $\pi\dd M_n^N \to \SsS\EeE\PpP_N(n)$ be the quotient map (i.e. $\pi(\xXX) = \XXX$). Equip $\SsS\EeE\PpP_N(n)$ with
the quotient topology (induced by $\pi$). Since the unitary group of $n \times n$ matrices is compact, $\SsS\EeE\PpP_N(n)$
is locally compact and $\pi$ is a proper continuous mapping. What is more, $\SsS\EeE\PpP_N(n)$ is separable and metrizable.
It is now clear that the $\sigma$-algebra generated by all open sets coincides with the one inherited from $\Bb_N$. This
yields that $\SsS\EeE\PpP_N(n)$ is a standard Borel space. The existence of $S_n$ and $\chi_n$ may easily be deduced e.g.
from \cite[Corollary~XIV.2.1]{k-m} applied to the partition $\{\pi^{-1}(\{\XXX\})\dd\ \XXX \in \SsS\EeE\PpP_N(n)\}$,
or from \cite[Corollary~XIV.1.1]{k-m} (see also \cite{cas}) applied to the multifunction $\SsS\EeE\PpP_N(n) \ni \XXX
\mapsto \pi^{-1}(\{\XXX\}) \subset M_n$.
\end{proof}

We are now mainly interested in the Borel structure of $\SsS\EeE\PpP_N(\infty)$. However, in some arguments we shall need
to work also with $N$-tuples acting on finite-dimensional Hilbert spaces and therefore below we explore
$\CDDc_N(\HHh_{\infty})$ as well as $\CDDc_N(\HHh_n)$ with finite $n$. Since our main interest are primes and semiprimes,
we may restrict our considerations to factor $N$-tuples defined below. Similar results to those presented below the reader
may find in Chapter~2 of \cite{e}.\par
As before, $\HHh$ denotes one of the spaces $\HHh_1,\HHh_2,\ldots,\HHh_{\infty}$. The functions $\CDDc_N(\HHh) \ni \xXX
\mapsto \WWw''(\xXX) \in \wwW(\HHh)$ and $\CDDc_N(\HHh) \ni \xXX \mapsto \WWw'(\xXX) \in \wwW(\HHh)$ are measurable when
$\wwW(\HHh)$ denotes the collection of all von Neumann subalgebras of $\BBb(\HHh)$ and is equipped with the Effros Borel
structure \cite{ef1,ef2} (cf. \cite[page~54]{e} combined with Theorem~IV.8.4 and Corollary~IV.8.6 in \cite{tk1}).
Consequently, the following sets are measurable subsets of $\CDDc_N(\HHh)$ (compare with notes on page~55 of \cite{e};
\cite[Theorem~V.6.6]{tk1} and \cite{nie}):
\begin{itemize}
\item the set of all atoms $\aA_N(\HHh) = \{\aAA \in \CDDc_N(\HHh)\dd\ \AAA \in \aA_N\}$,
\item the set of all fractals $\fF_N(\HHh) = \{\aAA \in \CDDc_N(\HHh)\dd\ \AAA \in \fF_N\}$,
\item the set of all semiprimes $\sS_N(\HHh) = \{\aAA \in \CDDc_N(\HHh)\dd\ \AAA \in \sS_N\}$,
\item the set of all factor $N$-tuples $$\Ff_N(\HHh) = \{\aAA \in \CDDc_N(\HHh)\dd\ \WWw''(\aAA) \textup{ is a factor}\},$$
\item the sets of all factor $N$-tuples of type I, I$^n$, II, II$^1$, II$^{\infty}$ and III.
\end{itemize}
(The above properties imply that $\aA_N, \fF_N, \sS_N$ as well as $\Ff_N := \{\FFF \in \SsS\EeE\PpP_N\dd\ \WWw''(\fFF)
\textup{ is a factor}\}$ are members of $\Bb_N$. When $\HHh$ is finite-dimensional, $\sS_N(\HHh)$ and $\fF_N(\HHh)$ are
of course empty.) We infer from \PROp{factor} that for every $\fFF \in \Ff_N(\HHh) \setminus (\aA_N(\HHh) \cup \fF_N(\HHh)
\cup \sS_N(\HHh))$ either there exist a unique $n \in \{2,3,4,\ldots,\aleph_0\}$ and a unique $\AAA \in \aA_N$ such that
$\FFF = n \odot \AAA$ or there is (nonunique) $\AAA \in \sS_N$ for which $\FFF = \aleph_0 \odot \AAA$.\par
Everywhere below $n$ and $m$ represent positive integer as well as $\infty$.\par
The following result appears in \cite{e} as Corollary~2.11. Below we give a shorter proof.

\begin{lem}{1}
The set $$\Dd_N(n,m) = \{(\aAA,\bBB) \in \CDDc_N(\HHh_n) \times \CDDc_N(\HHh_m)\dd\ \aAA \disj \bBB\}$$ is measurable
(i.e. $\Dd_N(n,m) \in \Bb(\CDDc_N(\HHh_n)) \otimes \Bb(\CDDc_N(\HHh_m))$).
\end{lem}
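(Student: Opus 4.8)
The plan is to rewrite unitary disjointness as a Borel condition using the sequential characterization in \PRO{wdisj}. Since $n,m\in\{1,2,\ldots,\infty\}$, the spaces $\HHh_n$, $\HHh_m$ and $\KKk:=\HHh_n\oplus\HHh_m$ are all separable. For $(\aAA,\bBB)\in\CDDc_N(\HHh_n)\times\CDDc_N(\HHh_m)$ I set $\yYY:=\bB(\aAA)\oplus\bB(\bBB)\in\CDDc_N(\KKk)$ (the direct sum taken coordinatewise) and $P:=I_{\HHh_n}\oplus 0_{\HHh_m}\in\BBb(\KKk)$. Since polynomials respect direct sums, $p(\yYY,\yYY^*)=p(\bB(\aAA),\bB(\aAA)^*)\oplus p(\bB(\bBB),\bB(\bBB)^*)$ for every $p\in\PpP(N)$, and a sequence of such block-diagonal operators converges $*$-strongly to $P$ iff its first block tends $*$-strongly to $I$ and its second block to $0$. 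So \PRO{wdisj}(II) becomes: $\aAA\disj\bBB$ iff there is a sequence $(p_j)\subset\PpP_1(N)$ with $p_j(\yYY,\yYY^*)\stSt P$. Because $\|\yYY\|\leqsl 1$ and $\|p_j\|\leqsl 1$, all the operators $p_j(\yYY,\yYY^*)$, and $P$ itself, lie in the unit ball of $\BBb(\KKk)$, on which the $*$-strong topology is metrizable (separability of $\KKk$; see \cite[Proposition~2.2]{e}); hence the condition is equivalent to: $P$ lies in the $*$-strong closure of $\{p(\yYY,\yYY^*)\dd\ p\in\PpP_1(N)\}$.

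Next I would discretize. Fix a countable norm-dense subset $Q=\{q_1,q_2,\ldots\}$ of $\PpP_1(N)$, which exists because $\PpP(N)$ is separable, and fix a complete metric $d$ for the $*$-strong topology on the unit ball of $\BBb(\KKk)$, say $d(S,T)=\sum_{i\geqsl1}2^{-i}\min\bigl(1,\|(S-T)e_i\|+\|(S^*-T^*)e_i\|\bigr)$ for some orthonormal basis $(e_i)$ of $\KKk$. For fixed $\yYY$ with $\|\yYY\|\leqsl1$ the map $p\mapsto p(\yYY,\yYY^*)$ is norm-contractive, so $\{q_k(\yYY,\yYY^*)\}_k$ is norm-dense, hence $*$-strongly dense, in $\{p(\yYY,\yYY^*)\dd\ p\in\PpP_1(N)\}$; the two sets therefore have the same $*$-strong closure. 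Combining this with the previous paragraph,
\begin{equation*}
\Dd_N(n,m)=\bigcap_{j=1}^{\infty}\bigcup_{k=1}^{\infty}\bigl\{(\aAA,\bBB)\dd\ d\bigl(P,q_k(\yYY,\yYY^*)\bigr)<\tfrac1j\bigr\},
\end{equation*}
where $\yYY=\bB(\aAA)\oplus\bB(\bBB)$ depends on $(\aAA,\bBB)$.

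It remains to verify that, for each fixed $k$, the function $(\aAA,\bBB)\mapsto d(P,q_k(\yYY,\yYY^*))$ is Borel on $\CDDc_N(\HHh_n)\times\CDDc_N(\HHh_m)$. This is a composition of measurable maps: $(\aAA,\bBB)\mapsto(\bB(\aAA),\bB(\bBB))$ is measurable coordinatewise by \LEM{b-meas}; $(\bB(\aAA),\bB(\bBB))\mapsto\yYY$ is measurable, since the coordinatewise block-diagonal embedding $\BBb(\HHh_n)\times\BBb(\HHh_m)\to\BBb(\KKk)$ is continuous for the relevant operator topologies and hence Borel for $\Ww_{\HHh_n}$, $\Ww_{\HHh_m}$, $\Ww_{\KKk}$; $\yYY\mapsto q_k(\yYY,\yYY^*)$ is measurable because $q_k$ is a finite sum of finite products of the entries of $\yYY$, their adjoints and scalars, and addition, multiplication and $T\mapsto T^*$ are measurable on $\BBb(\KKk)$; and $T\mapsto d(P,T)$ is measurable, being a uniformly convergent series of the $\Ww_{\KKk}$-measurable functions $T\mapsto\|(P-T)e_i\|$ and $T\mapsto\|(P^*-T^*)e_i\|$. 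Hence each member of the displayed union lies in $\Bb(\CDDc_N(\HHh_n))\otimes\Bb(\CDDc_N(\HHh_m))$, and therefore so does $\Dd_N(n,m)$.

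The only genuinely delicate step is the passage from the existential statement of \PRO{wdisj}(II) to the closed $\bigcap_j\bigcup_k$ description: this uses both the metrizability of the $*$-strong topology on the unit ball (valid because $\HHh_n\oplus\HHh_m$ is separable, i.e. $n,m\leqsl\aleph_0$) and the norm-density of the countable set $Q$ in $\PpP_1(N)$; everything after that is routine bookkeeping with the measurable structure set up in the preceding section. (Alternatively one could use the equivalence $\aAA\disj\bBB\iff I\oplus0\in\WWw''(\bB(\aAA)\oplus\bB(\bBB))$ from \PRO{disjoint}, together with the fact that $\xXX\mapsto\WWw''(\xXX)$ is Borel into the Effros Borel structure and that $\{\MMm\dd\ P\in\MMm\}$ is Borel there; the route above has the advantage of not invoking properties of the Effros Borel structure.)
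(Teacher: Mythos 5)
Your proof is correct and follows essentially the same route as the paper's: both reduce unitary disjointness to a sequential polynomial condition (convergence of $p(\cdot,\cdot^*)$ to $I\oplus 0$ over a countable family of polynomials), exploit metrizability of the ($*$-)strong topology on a norm-bounded set of operators on a separable space, and then write $\Dd_N(n,m)$ as a countable Boolean combination of Borel sets built from the measurable operations of Section~19. The only differences are cosmetic: the paper replaces $\aAA,\bBB$ by $\aleph_0\odot\aAA,\aleph_0\odot\bBB$ so as to work in the single fixed space $\HHh_{\infty}$, takes rational-coefficient polynomials of norm $\leqsl 2$ instead of an abstract countable dense subset of $\PpP_1(N)$, and expresses the set as $u^{-1}(\{0\})$ for $u=\inf_p\theta_p$ rather than as your $\bigcap_j\bigcup_k$.
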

\begin{proof}
Let $\KKk = \HHh_{\infty}$ and let $U_j\dd \aleph_0 \odot \HHh_j \to \KKk$ be unitary ($\aleph_0 \odot \HHh_j$ symbolizes
the Hilbert space in which act $N$-tuples of the form $\aleph_0 \odot \xXX$ with $\xXX \in \CDDc_N(\HHh_j)$). Let $\QqQ$ be
the set of all $p \in \PpP$ such that $\|p\| \leqsl 2$ and all coefficients of $p$ belong to $\QQQ + i\QQQ$. It may be
deduced from \PROp{disjoint} and \LEMp{separ} that $\aAA \disj \bBB$ where $\aAA \in \CDDc_N(\HHh_n)$ and $\bBB \in
\CDDc_N(\HHh_m)$ iff there is a sequence $(p_k)_{k=1}^{\infty} \subset \QqQ$ such that $U_n p_k(\bB(\aleph_0 \odot \aAA),
\bB(\aleph_0 \odot \aAA)^*) U_n^{-1} \to I$ and $U_m p_k(\bB(\aleph_0 \odot \bBB),\bB(\aleph_0 \odot \bBB)^*) U_m^{-1} \to
0$ strongly as $k \to \infty$. Now if $d$ is a metric on $D = \{T \in \BBb(\KKk)\dd\ \|T\| \leqsl 2\}$ which induces
the strong operator topology of $D$, then for every $p \in \QqQ$ the function $\psi_p^j\dd \CDDc_N(\HHh_j) \ni \xXX \mapsto
U_j p(\bB(\aleph_0 \odot \xXX),\bB(\aleph_0 \odot \xXX)^*) U_j^{-1} \in D$ is measurable and thus so is $\theta_p\dd
\CDDc_N(\HHh_n) \times \CDDc_N(\HHh_m) \ni (\xXX,\yYY) \mapsto d(\psi_p^j(\xXX),I) + d(\psi_p^m(\yYY),0) \in \RRR_+$.
Finally, since $\QqQ$ is countable, also the function $u\dd \CDDc_N(\HHh_n) \times \CDDc_N(\HHh_m) \ni (\xXX,\yYY) \mapsto
\inf_{p \in \QqQ} \theta_p(\xXX,\yYY) \in \RRR_+$ is measurable. The observation that $\Dd_N(n,m) = u^{-1}(\{0\})$
finishes the proof.
\end{proof}

\begin{thm}{ue-factor}
The sets $$\Delta_N(n,m) = \{(\aAA,\bBB) \in \Ff_N(\HHh_n) \times \Ff_N(\HHh_m)\dd\ \aAA \equiv \bBB\}$$
and $\trianglelefteq_N\!\!(n,m) = \{(\aAA,\bBB) \in \Ff_N(\HHh_n) \times \Ff_N(\HHh_m)\dd\ \aAA \leqsl \bBB\}$ are
measurable.
\end{thm}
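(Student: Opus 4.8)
The plan is to reduce the measurability of $\Delta_N(n,m)$ and $\trianglelefteq_N\!(n,m)$ to measurability facts already available, principally \LEM{1} (measurability of unitary disjointness) together with the translation of the order $\leqsl$ into the Murray--von Neumann order on projections (\PRO{transl}) and the factor-structure analysis in \PRO{factor}. The guiding observation is that, on the \emph{factor} part, the order and equivalence relations become very rigid: if $\aAA \in \Ff_N(\HHh_n)$ and $\bBB \in \Ff_N(\HHh_m)$, then $\aAA \leqsl \bBB$ iff $\aAA$ is unitarily equivalent to a reduced part of $\bBB$ on a projection in the factor $\WWw'(\bBB)$, and since all nonzero projections in a factor of a given type are comparable, this collapses to a dimension/multiplicity comparison. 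So the scheme is: (i) handle $\Delta_N$ first; (ii) express $\trianglelefteq_N$ in terms of $\Delta_N$ applied to reduced parts; (iii) verify the reduced-part operations are Borel.

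For $\Delta_N(n,m)$ I would proceed as follows. Two factor $N$-tuples $\aAA$ and $\bBB$ are unitarily equivalent iff $\bB(\aAA)$ and $\bB(\bBB)$ are unitarily equivalent (by (BT5)), iff there is a unitary $U\dd\HHh_n\to\HHh_m$ with $U\bB(A_j)U^{-1}=\bB(B_j)$ for all $j$. When $n\neq m$ this can only happen for $n,m$ finite with $n=m$, or in degenerate cases; when $n=m$ the set $\{(S,T)\in\BBb(\HHh)^N\times\BBb(\HHh)^N\dd\ \exists U\in\UUu(\HHh),\ US_jU^{-1}=T_j\}$ is the continuous image of $\UUu(\HHh)\times\BBb(\HHh)^N$ under a Borel (indeed continuous in the $*$-strong topology) map, hence analytic; to get genuine measurability in the standard Borel space $\CDDc_N(\HHh)$ one invokes the same circle of ideas as in \LEM{1}: $\aAA\equiv\bBB$ iff $\aleph_0\odot\aAA\equiv\aleph_0\odot\bBB$ iff (using \LEM{separ} and Kaplansky density as in the proof of \PRO{wdisj}) there are sequences of polynomials $p_k,q_k\in\QqQ$ implementing partial isometries in both directions, i.e. $\aAA\leqsl\bBB$ \emph{and} $\bBB\leqsl\aAA$; then apply \PRO{order}. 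This already reduces $\Delta_N$ to $\trianglelefteq_N$, so really the heart of the matter is $\trianglelefteq_N$.

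For $\trianglelefteq_N(n,m)$, the key is to realize $\aAA\leqsl\bBB$ through disjointness of complements. Fix unitaries identifying $\aleph_0\odot\HHh_j$ with $\KKk=\HHh_\infty$. Using \PRO{transl} and \PRO{complement}/\THM{complement} (applied in the factor $\WWw'(\bBB)$), one shows $\aAA\leqsl\bBB$ iff there is a projection $q$ in $\WWw'(\aleph_0\odot\bBB)$ with $(\aleph_0\odot\bBB)\bigr|_{\,\text{ran}\,q}\equiv\aleph_0\odot\aAA$ and $q$ is, up to Murray--von Neumann equivalence, any given projection of the right relative dimension; concretely, $\aAA\leqsl\bBB$ iff there is a sequence $(p_k)\subset\QqQ$ with $p_k(\bB(\aleph_0\odot\bBB),\bB(\aleph_0\odot\bBB)^*)$ converging $*$-strongly to a projection $q$ and $p_k$ also witnessing (via a second coordinate of polynomials) the unitary equivalence of $q\,\KKk$ with $\aleph_0\odot\aAA$. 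Each condition ``$p_k(\cdots)\to$ projection'' and ``$U p_k(\cdots)U^{-1}\to$ partial isometry from $\aAA$ onto a subspace'' is, by the measurability of $\xXX\mapsto\bB(\xXX)$, of polynomial functional calculus, and of the strong operator topology (\cite[Prop.~2.4]{e}), an intersection/countable-quantifier combination of Borel sets in $\CDDc_N(\HHh_n)\times\CDDc_N(\HHh_m)$; as in \LEM{1} the crucial point is that the witnessing family $\QqQ$ is \emph{countable}, so the existential quantifier over it stays Borel. Finally $\Delta_N(n,m)=\trianglelefteq_N(n,m)\cap(\trianglelefteq_N(m,n))^{-1}$ (with the flip map, which is Borel), giving its measurability.

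The main obstacle I anticipate is exactly the one flagged by Ernest's remark that $\SsS\EeE\PpP_N$ is not countably separated: one cannot simply say ``the orbit equivalence relation of the unitary group is Borel'' — in general for non-type-I actions it is merely analytic, not Borel, on all of $\CDDc_N(\HHh)$. The restriction to the \emph{factor} sets $\Ff_N(\HHh)$ is what saves us, because on factors the order and equivalence are determined by a single (Murray--von Neumann class of a) projection which can be pinned down by a countable approximation scheme. So the delicate step is to make rigorous that on $\Ff_N(\HHh_n)\times\Ff_N(\HHh_m)$ the relations $\equiv$ and $\leqsl$ admit a \emph{countable} (Borel) system of witnesses — precisely the combination of \PRO{factor}, \PRO{complement}, \PRO{wdisj} and \LEM{separ} — after which the measurability is a routine application of the measurability of $\bB$, of polynomial functional calculus, and of the $*$-strong Borel structure, exactly parallel to the proof of \LEM{1}. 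The finitely-many combinatorial cases coming from $n,m$ finite versus infinite (where $\aAA\leqsl\bBB$ forces $n\le m$, and $\equiv$ forces $n=m$) are handled separately and trivially, since for finite $n$ the space $\SsS\EeE\PpP_N(n)$ is standard Borel by \PRO{findim} and everything is classical there.
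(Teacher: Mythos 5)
You have correctly located the central difficulty---that the orbit relations of the unitary group are a priori only analytic---but the mechanism you propose to overcome it does not work. Your plan is to produce \emph{countable} Borel witnesses for $\aAA\leqsl\bBB$ by Kaplansky-density approximation: a sequence $(p_k)\subset\QqQ$ with $p_k(\bB(\aleph_0\odot\bBB),\bB(\aleph_0\odot\bBB)^*)$ converging $*$-strongly to a projection $q$ cutting out the relevant reduced part, plus further polynomials ``witnessing'' the unitary equivalence of that part with $\aleph_0\odot\aAA$. The obstruction is that both witnesses live in the \emph{commutant}: the projection $q$ must belong to $\WWw'(\aleph_0\odot\bBB)$ for its range to be reducing, and the intertwiner realizing $\aAA\equiv\bBB\bigr|_E$ is an off-diagonal element of $\WWw'(\aAA\oplus\bBB)$. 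Polynomial functional calculus and \LEM{separ} only reach $\WWw''$, and for a factor tuple $\WWw''\cap\WWw'=\CCC I$, so the only projections you can approximate this way are $0$ and the identity; likewise $\WWw''(\aAA\oplus\bBB)$ contains no off-diagonal intertwiners. \LEM{1} succeeds precisely because disjointness is equivalent to membership of the single element $I\oplus 0$ in the \emph{bicommutant}; that special feature does not extend to $\leqsl$ or $\equiv$, and your appeal to \PRO{complement} and \LEM{separ} does not actually produce the claimed countable system of witnesses.

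The paper's proof sidesteps countable witnesses entirely. For $(\aAA,\bBB)\in\Ff_N(\HHh_n)\times\Ff_N(\HHh_m)$ one has $\aAA\not\disj\bBB\iff\aleph_0\odot\AAA=\aleph_0\odot\BBB$, so the set $C(n,m)$ of non-disjoint factor pairs is Borel by \LEM{1}. By comparability of projections in a factor, $C(n,m)$ is the \emph{disjoint} union of $\Delta_N(n,m)$, of $L_N(n,m)=\{(\aAA,\bBB)\dd\ \aAA\lneqq\bBB\}$ and of its flip $R_N(n,m)$. Each of these three sets is Souslin: e.g.\ $\Delta_N(n,n)$ is the image of the standard Borel space $\UUu(\HHh_n)\times\Ff_N(\HHh_n)$ under $(U,\aAA)\mapsto(\aAA,U\aAA U^{-1})$, and $L_N(n,m)$ is written analogously using $C(n,m-n)$, the Borel set $\Ff_N^{fin}(\HHh_n)$ and a unitary identification of $\HHh_n\oplus\HHh_{m-n}$ with $\HHh_m$---here the uncountable quantifier over unitaries is harmless because one only needs Souslin, not Borel. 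Since a standard Borel set partitioned into finitely many Souslin sets has each piece both Souslin and co-Souslin, each piece is Borel by Souslin's theorem (\cite[Theorem~A.3]{tk1}), and finally $\trianglelefteq_N\!\!(n,m)=\Delta_N(n,m)\cup L_N(n,m)$. To repair your write-up you should replace the polynomial-witness step for $\leqsl$ and $\equiv$ by this partition-into-Souslin-pieces argument; the cases with $n,m$ finite, which you handle via \PRO{findim}, are indeed unproblematic.
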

\begin{proof}
First of all, note that for $(\aAA,\bBB) \in \Ff_N(\HHh_n) \times \Ff_N(\HHh_m)$, $\aAA \not\disj \bBB \iff \aleph_0 \odot
\AAA = \aleph_0 \odot \BBB$. So, \LEM{1} implies that the set $C(n,m) = \{(\aAA,\bBB) \in \Ff_N(\HHh_n) \times
\Ff_N(\HHh_m)\dd\ \aleph_0 \odot \AAA = \aleph_0 \odot \BBB\}$ is measurable. Put $L_N(n,m) = \{(\aAA,\bBB) \in
\Ff_N(\HHh_n) \times \Ff_N(\HHh_m)\dd\ \aAA \lneqq \bBB\}$ and $R_N(n,m) = \{(\aAA,\bBB)\dd\ (\bBB,\aAA) \in L_N(n,m)\}$.
Observe that $$\trianglelefteq_N\!\!(n,m) = \Delta_N(n,m) \cup L_N(n,m),$$
$C(n,m) = \Delta_N(n,m) \cup L_N(n,m) \cup R_N(n,m)$ and the sets $\Delta_N(n,m)$, $L_N(n,m)$ and $R_N(n,m)$ are pairwise
disjoint. Since $C(n,m)$ is a standard Borel space, it suffices therefore to show that each of the latter sets is Souslin
(cf. \cite[Theorem~A.3]{tk1}). We see that $\Delta(n,m) = \varempty$ if $n \neq m$ and
$\Delta_N(n,n) = \{(\aAA,U \aAA U^{-1})\dd\ U \in \UUu(\HHh_n),\ \aAA \in \Ff_N(\HHh_n)\}$
(where $U (A_1,\ldots,A_N) U^{-1} = (U A_1 U^{-1},\ldots,U A_N U^{-1})$) is the image of a standard Borel space
$\UUu(\HHh_n) \times \Ff_N(\HHh_n)$ under a Borel function and thus $\Delta_N(n,n)$ is Souslin. Finally, the set
$\Ff_N^{fin}(\HHh_n)$ of all $N$-tuples $\xXX \in \Ff_N(\HHh_n)$ such that $\WWw'(\xXX)$ is finite is Borel and therefore
$L_N(n,m)$ is Souslin, since $L(n,m) = \varempty$ for $n > m$ or $n=m < \infty$, for $n < m$:
\begin{multline*}
L_N(n,m) = \{(\aAA,U (\aAA \oplus \gGG) U^{-1})\dd\ U \in \UUu(\HHh_n \oplus \HHh_{m-n},\HHh_m),\\
\aAA \in \Ff_N(\HHh_n),\ (\aAA,\gGG) \in C(n,m-n)\},
\end{multline*}
and
\begin{multline*}
L_N(\infty,\infty) = \bigcup_{k=1}^{k=\infty} \Bigl\{(\aAA,U (\aAA \oplus \gGG) U^{-1})\dd\ U \in \UUu(\HHh_{\infty} \oplus
\HHh_k,\HHh_{\infty}),\\
\aAA \in \Ff_N^{fin}(\HHh_{\infty}),\ (\aAA,\gGG) \in C(\infty,k)\Bigr\}.
\end{multline*}
The note that $R_N(n,m)$ is the Borel image of $L_N(n,m)$ finishes the proof.
\end{proof}

\begin{cor}{selector}
Let $\FFf$ be such a Borel subset of $\Ff_N(\HHh_n)$ that the function $\Phi\dd \FFf \ni \xXX \mapsto \XXX \in \CDD_N$
is one-to-one. Then $\widehat{\FFf} = \{\yYY \in \CDDc_N(\HHh_n)\dd\ \yYY \equiv \xXX \textup{ for some } \xXX \in \FFf\}$
is a Borel subset of $\CDDc_N(\HHh_n)$ and $\FfF = \{\XXX\dd\ \xXX \in \FFf\} \subset \CDD_N$ is measurable and it is
a standard Borel space.
\end{cor}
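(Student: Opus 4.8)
The plan is to reduce the whole statement to \THM{ue-factor} together with the classical Lusin--Souslin theorem (a one-to-one Borel image of a standard Borel space is Borel; see e.g. \cite{k-m}). Throughout, $n$ denotes the fixed element of $\{1,2,\ldots,\infty\}$ and one uses that $\CDDc_N(\HHh_n)$ and its square are standard Borel spaces. The first — and essential — step is to show that $\widehat{\FFf}$ is Borel. By \THM{ue-factor} the set $\Delta_N(n,n)$ is a Borel subset of $\CDDc_N(\HHh_n)\times\CDDc_N(\HHh_n)$, and $\FFf\subset\Ff_N(\HHh_n)$ is Borel in $\CDDc_N(\HHh_n)$, so the set $D=(\FFf\times\CDDc_N(\HHh_n))\cap\Delta_N(n,n)$ is a standard Borel space. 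The second-coordinate projection $\rho\dd D\ni(\xXX,\yYY)\mapsto\yYY\in\CDDc_N(\HHh_n)$ is Borel with image exactly $\widehat{\FFf}$. The crucial point is that $\rho$ is injective: if $(\xXX_1,\yYY),(\xXX_2,\yYY)\in D$, then $\xXX_1\equiv\yYY\equiv\xXX_2$ with $\xXX_1,\xXX_2\in\FFf$, whence $\xXX_1=\xXX_2$ because $\Phi\bigr|_{\FFf}$ is one-to-one. Lusin--Souslin then gives that $\widehat{\FFf}=\rho(D)$ is Borel. The same argument applied to an arbitrary Borel $\GgG\subset\FFf$ shows that $\widehat{\GgG}$ is Borel as well, and injectivity of $\Phi\bigr|_{\FFf}$ gives the identity $\widehat{\GgG}\cap\FFf=\GgG$, which will be reused.

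Next I would verify $\FfF\in\Bb_N$ and then that $\FfF$ is standard Borel. Write $\Phi$ also for the canonical surjection of $\bigcup_m\CDDc_N(\HHh_m)$ onto $\SsS\EeE\PpP_N$, of which the map in the statement is a restriction. For $m\neq n$ one has $\Phi^{-1}(\FfF)\cap\CDDc_N(\HHh_m)=\varempty$, since a representative in $\CDDc_N(\HHh_m)$ has closure of domain $\HHh_m$ and hence cannot be unitarily equivalent to any member of $\FFf$, whose representatives have closure of domain $\HHh_n$; for $m=n$ that intersection equals $\widehat{\FFf}$, Borel by the first step. By the definition of $\Bb_N$ this yields $\FfF\in\Bb_N$. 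To see that $\FfF$ is standard Borel it suffices to check that $\Phi\bigr|_{\FFf}\dd\FFf\to\FfF$ is a Borel isomorphism for the inherited Borel structures (then $\FfF$ is Borel-isomorphic to the Borel subset $\FFf$ of a standard Borel space). It is a bijection by the definition of $\FfF$ and the hypothesis. If $\GgG\subset\FFf$ is Borel, the argument of the previous paragraph applied to $\GgG$ in place of $\FFf$ gives $\Phi(\GgG)\in\Bb_N$, so $\Phi\bigr|_{\FFf}$ carries Borel sets to Borel sets. Conversely, for $\BbB\in\Bb(\FfF)$ one has $\BbB\in\Bb_N$, hence $\Phi^{-1}(\BbB)\cap\CDDc_N(\HHh_n)$ is Borel; since $\BbB\subset\FfF$, injectivity of $\Phi\bigr|_{\FFf}$ identifies this set with $\widehat{\GgG_0}$, where $\GgG_0=\Phi^{-1}(\BbB)\cap\FFf$, and then $\GgG_0=\widehat{\GgG_0}\cap\FFf$ is Borel in $\FFf$. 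Thus $\Phi\bigr|_{\FFf}$ is a Borel isomorphism, and $\FfF$ is standard Borel.

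The hard part is the first step: without the injectivity assumption the set $D$ would only project onto an \emph{analytic} subset of $\CDDc_N(\HHh_n)$, and such a set need not be Borel — indeed $\SsS\EeE\PpP_N$ is not even countably separated, which is precisely why one cannot argue inside it. The one-to-one hypothesis on $\Phi\bigr|_{\FFf}$ is exactly what upgrades ``analytic image'' to ``Borel image'' via Lusin--Souslin; everything afterwards is bookkeeping with the definition of $\Bb_N$ and the identity $\widehat{\GgG}\cap\FFf=\GgG$. The only external inputs are the measurability of $\Delta_N(n,n)$ (\THM{ue-factor}) and the Lusin--Souslin theorem.
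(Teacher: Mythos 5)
Your proof is correct and follows essentially the same route as the paper: both reduce the statement to the Borel-ness of $\Delta_N(n,n)$ from \THM{ue-factor}, form the set $\Delta_N(n,n)$ intersected with a product having one factor restricted to $\FFf$, observe that the projection onto the other coordinate is an injective Borel map onto $\widehat{\FFf}$ (injectivity coming exactly from the one-to-one hypothesis on $\Phi\bigr|_{\FFf}$), and invoke the Lusin--Souslin theorem (the paper cites it as Corollary~A.7 of Takesaki), after which the claims about $\FfF$ are bookkeeping with the definition of $\Bb_N$. The only difference is the cosmetic choice of which coordinate carries $\FFf$ and which projection is used, plus your somewhat more explicit verification that $\Phi\bigr|_{\FFf}$ is a Borel isomorphism.
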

\begin{proof}
By \THM{ue-factor}, the set $\DdD = \Delta_N(n,n) \cap (\CDDc_N(\HHh_n) \times \FFf)$ is Borel. What is more, it follows
from the assumptions that the function $\DdD \ni (\aAA,\bBB) \mapsto \aAA \in \widehat{\FFf}$ is a bijection. It is also
Borel and thus $\widehat{\FFf} \in \Bb(\CDDc_N(\HHh_n))$, by \cite[Corollary~A.7]{tk1}. Since $\{\xXX \in
\CDDc_N(\HHh_n)\dd\ \XXX \in \FfF\} = \widehat{\FFf}$, we obtain that $\FfF \in \Bb_N$.\par
It is clear that $\Phi$ is a Borel bijection of $\FFf$ onto $\FfF$. However, if $\BBb$ is a Borel subset of $\FFf$, then
the above argument shows that $\{\XXX\dd\ \xXX \in \BBb\} \in \Bb_N$ and hence $\Phi$ is a Borel isomorphism
and the assertion follows.
\end{proof}

A variation of \THM{ue-factor} is contained in

\begin{lem}{2}
For each $t \in (0,\infty)$ the sets $\Delta_N^t = \{(\aAA,\bBB) \in \sS_N(\HHh_{\infty}) \times \sS_N(\HHh_{\infty})\dd\
\AAA = t \odot \BBB\}$ and $\trianglelefteq_N^t\, = \{(\aAA,\bBB) \in \sS_N(\HHh_{\infty}) \times \sS_N(\HHh_{\infty})\dd\
\AAA \leqsl t \odot \BBB\}$ are measurable.
\end{lem}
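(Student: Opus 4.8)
The plan is to reduce the measurability of $\Delta_N^t$ and $\trianglelefteq_N^t$ to the already established measurability results of \THM{ue-factor} (and \LEM{1}) by exhibiting a Borel ``multiplication by $t$'' construction on $\sS_N(\HHh_\infty)$. First I would fix a unitary $W_t\dd \HHh_\infty \to \HHh_\infty \oplus \HHh_\infty \oplus \cdots$ (a countably infinite ampliation), so that for $\aAA \in \sS_N(\HHh_\infty)$ the $N$-tuple $\aleph_0 \odot \aAA$ may be realized as $W_t^{-1}(\bigoplus_{k=1}^\infty \aAA) W_t \in \CDDc_N(\HHh_\infty)$; the assignment $\xXX \mapsto W_t^{-1}(\bigoplus_k \xXX)W_t$ is Borel, since direct sums of bounded operators (via the $\bB$-transform, using (BT6) and \PRO{b}) are coordinatewise Borel operations on $\CDDc_N(\HHh_\infty)$. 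Because $\aleph_0 \odot \aAA = \aleph_0 \odot (t\odot\aAA)$ for every semiprime $\aAA$ and positive real $t$, the point is not to realize $t\odot\aAA$ itself but rather to recognize the relation $\AAA = t\odot\BBB$ (respectively $\AAA \leqsl t\odot\BBB$) intrinsically.

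The key observation, following \PRO{semiprime}(II) and the structure of $\SsS\MmM_N$ developed in Section~14, is this: for semiprimes $\aAA, \bBB$ acting on $\HHh_\infty$ with $\aAA \disj \bBB$ deleted as a trivial case, one has $\AAA = t\odot\BBB$ if and only if $\aAA$ and $\bBB$ are not unitarily disjoint (equivalently $\aleph_0\odot\AAA = \aleph_0\odot\BBB$, which by \LEM{1} is a Borel condition) \emph{and} the unique positive scalar $\lambda$ with $\AAA = \lambda\odot\BBB$ equals $t$. To pin down $\lambda$ in a Borel way, I would use that $n\odot\AAA \leqsl m\odot\BBB$ (for naturals $n,m$) holds iff $n\lambda \leqsl m$; and each relation of the form ``$n\odot\AAA \leqsl m\odot\BBB$'' is Borel in $(\aAA,\bBB)$ because it translates, via the ampliation maps $\xXX\mapsto n\odot\xXX$ and $\xXX\mapsto m\odot\xXX$ (Borel by the first paragraph), into membership of the pair $(n\odot\aAA, m\odot\bBB)$ in the Borel set $\trianglelefteq_N\!\!(\infty,\infty)$ of \THM{ue-factor}. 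Hence
$$
\Delta_N^t = \bigcap_{\substack{n,m\geqsl 1\\ n/m \leqsl t}} \{(\aAA,\bBB)\dd\ n\odot\aAA \leqsl m\odot\bBB\} \cap \bigcap_{\substack{n,m\geqsl 1\\ n/m > t}} \{(\aAA,\bBB)\dd\ n\odot\aAA \not\leqsl m\odot\bBB\},
$$
intersected with the Borel set $C(\infty,\infty)$ from the proof of \THM{ue-factor}; this is a countable Boolean combination of Borel sets, hence Borel. The set $\trianglelefteq_N^t$ is handled the same way: $\AAA \leqsl t\odot\BBB$ iff $\AAA$ decomposes (using that $\WWw'(\aAA)$ is a type II$_1$ factor) as $\lambda\odot\BBB$ for some $\lambda \leqsl t$ with $\aAA\not\disj\bBB$, plus the trivial-intersection case $\aAA\disj\bBB$; equivalently $n\odot\aAA\leqsl m\odot\bBB$ for all $n,m$ with $n/m > t$ forces nothing, while one needs $n\odot\aAA\leqsl m\odot\bBB$ for \emph{some} $n/m \leqsl t$ together with $\aAA\not\disj\bBB$ --- again a countable union/intersection of Borel conditions, unioned with $\Dd_N(\infty,\infty)$ from \LEM{1}.

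The main obstacle I expect is justifying carefully that the scalar multiplication $\xXX\mapsto n\odot\xXX$ (and more importantly the ``$\frac1n$'' direction implicit in identifying $\lambda$) interacts correctly with measurability on $\sS_N(\HHh_\infty)$ --- that is, that for a semiprime $\aAA$ the pointwise arithmetic of $\lambda$ is genuinely recovered by the family of Borel relations $\{n\odot\aAA\leqsl m\odot\bBB\}$. This rests on \PRO{semiprime}(II) (every $\BBB\ll\AAA$ among semiprimes is $t\odot\AAA$ and the II$_1$-factor trace on $\WWw'(\aleph_0\odot\aAA)$ is faithful) together with (VS2)--(VS3), which give that $n\lambda\leqsl m \iff n\odot\AAA\leqsl m\odot\BBB$; the density of rationals then recovers $\lambda = t$ exactly. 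I would also need to verify that the ampliation maps send $\sS_N(\HHh_\infty)$ into $\Ff_N(\HHh_\infty)$ so that \THM{ue-factor} applies, which is immediate since $\WWw''(n\odot\xXX)$ is a factor iff $\WWw''(\xXX)$ is (and $n\odot$ of a semiprime is a type II factor $N$-tuple). With these pieces in place the two displayed Borel formulas finish the proof, and I would leave the routine check that each ``$n\odot$'' map is Borel to the reader, referring back to the coordinatewise Borel structure of $\CDDc_N(\HHh_\infty)$ and \DEF{b-Borel}.
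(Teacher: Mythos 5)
Your overall strategy---reducing both sets to countable Boolean combinations of the sets $\{(\aAA,\bBB)\dd\ n\odot\AAA\leqsl m\odot\BBB\}$, each of which is Borel because the ampliation $\aAA\mapsto n\odot\aAA$ is measurable and \THM{ue-factor} applies---is exactly the paper's, and your treatment of $\Delta_N^t$ (a two-sided sandwich by rationals) works once you fix the inverted indexing: since you yourself note that $n\odot\AAA\leqsl m\odot\BBB\iff n\lambda\leqsl m$, the first intersection should run over $m/n\geqsl t$, not $n/m\leqsl t$. (The paper gets $\Delta_N^t$ more cheaply as $\trianglelefteq_N^t\cap\trianglerighteq_N^t$, so only $\trianglelefteq_N^t$ needs an argument.)

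However, your reduction of $\trianglelefteq_N^t$ contains two genuine errors. First, the existential formulation ``$n\odot\aAA\leqsl m\odot\bBB$ for \emph{some} $n,m$ with $m/n\leqsl t$'' amounts to asking for a rational in the interval $[\lambda,t]$, and this fails precisely when $t$ is irrational and $\AAA=t\odot\BBB$: then $[\lambda,t]=\{t\}$ contains no rational, so your set omits these pairs. The correct quantifier is universal over rationals above $t$: by (VS3), $t\odot\BBB=\bigwedge\{w\odot\BBB\dd\ w\in\QQQ,\ w>t\}$, so $\AAA\leqsl t\odot\BBB$ iff $n\odot\AAA\leqsl m\odot\BBB$ for every rational $m/n>t$ (equivalently, for a sequence $m_k/n_k$ decreasing to $t$); this countable intersection of the Borel sets $D(n_k,m_k)$ is what the paper uses. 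Second, unioning with $\Dd_N(\infty,\infty)$ is wrong: semiprimes are nontrivial by definition, and $\aAA\disj\bBB$ implies $\AAA\disj t\odot\BBB$ (by (PR2), since $t\odot\BBB\ll\BBB$), so unitarily disjoint pairs never belong to $\trianglelefteq_N^t$; your formula as written would declare all of them members. Both defects are repairable without any new idea, but as stated your displayed description of $\trianglelefteq_N^t$ defines the wrong set.
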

\begin{proof}
Since $\Delta_N^t =\, \trianglelefteq_N^t \cap \trianglerighteq_N^t$ where $\trianglerighteq_N^t = \{(\aAA,\bBB)\dd\
(\bBB,\aAA) \in\, \trianglelefteq_N^t\}$, it is enough to prove that $\trianglelefteq_N^t$ is measurable. It is clear that
for every $n \geqsl 1$ the function $\sS_N(\HHh_{\infty}) \ni \aAA \mapsto n \odot \aAA \in \sS_N(n \odot \HHh_{\infty})$
is measurable. Consequently, thanks to \THM{ue-factor}, the set $D(n,m) = \{(\aAA,\bBB) \in \sS_N(\HHh_{\infty}) \times
\sS_N(\HHh_{\infty})\dd\ n \odot \AAA \leqsl m \odot \BBB\}$ is measurable as well. Now if $w_k = \frac{m_k}{n_k}$ are
rationals which decrease to $t$ (as $k$ increases to $\infty$), then $\trianglelefteq_N^t\, = \bigcap_{k=1}^{\infty}
D(n_k,m_k)$ and we are done.
\end{proof}

Whenever $\AAA, \BBB \in \sS_N$ are such that $\AAA \ll \BBB$, there is a unique positive real number denoted
by $\AAA : \BBB$ such that
\begin{equation}\label{eqn:div}
\AAA = (\AAA : \BBB) \odot \BBB.
\end{equation}
Further, we put $\zero : \XXX = 0$ and $(\alpha \odot \XXX) : \XXX = \alpha$ for any $\XXX \in \Ff_N$ and $\alpha \in
\Card_{\infty}$, and $(n \odot \AAA) : (m \odot \AAA) = n/m$ for every $\AAA \in \aA_N$ and positive integers $n$ and $m$.
It is clear that \eqref{eqn:div} is fulfilled whenever $\BBB \in \Ff_N^{fin} := \Ff_N \cap \fIN_N$ and $\AAA \in \CDD_N$
are such that $\AAA \ll \BBB$.\par
For $n, m \in \{1,2,\ldots,\infty\}$ put $\nabla_N(m,n) = \{(\aAA,\bBB) \in \Ff_N(\HHh_m) \times \Ff_N^{fin}(\HHh_n)\dd\
\AAA \ll \BBB\}$. It follows from \LEM{1} that $\nabla_N(m,n)$ is a Borel subset of $\CDDc_N(\HHh_m) \times
\CDDc_N(\HHh_n)$. What we want is to show measurability of the function
$$\label{Div}\Div\dd \nabla_N(m,n) \ni (\xXX,\yYY) \mapsto \XXX : \YYY \in I_{\aleph_0}.$$
It may be easily shown that $\Div$ is measurable on $\nabla_N(m,n)$ for finite $n$ and on $\nabla_N(\infty,\infty)
\setminus (\sS_N(\HHh_{\infty}) \times \sS_N(\HHh_{\infty}))$ ($\nabla(n,\infty)$ is empty if $n$ is finite). On the other
hand, $\Div^{-1}((0,t]) \cap (\sS_N(\HHh_{\infty}) \times \sS_N(\HHh_{\infty})) =\, \trianglelefteq_N^t$ and therefore
$\Div$ is measurable on $\nabla_N(\infty,\infty) \cap (\sS_N(\HHh_{\infty}) \times \sS_N(\HHh_{\infty}))$ as well.
As a corollary of this we get that the sets $\{(t,\aAA,\bBB) \in (0,\infty) \times \sS_N(\HHh_{\infty}) \times
\sS_N(\HHh_{\infty})\dd\ \AAA \leqsl t \odot \BBB\}$ and
\begin{multline}\label{eqn:rrr}
\rrR_N(n,m) = \{(\AAA : \BBB,\bBB,\aAA) \in I_{\aleph_0} \times \Ff_N^{fin}(\HHh_n) \times \Ff_N(\HHh_m)|\\
\AAA \ll \BBB\} \cup \{(\aleph_0,\bBB,\bBB)\dd\ \bBB \in \Ff_N(\HHh_n) \setminus \Ff_N^{fin}(\HHh_n)\}
\end{multline}
are measurable. This fact will be used in the proof of

\begin{thm}{arb-meas}
Let $(X,\Mm,\mu)$ be a standard measure space, $\FfF \subset \Ff_N$ be a countably separated measurable set and $\Phi\dd
X \ni x \mapsto \AAA^{(x)} \in \FfF$ be a measurable function. Further, let $f\dd X \to I_{\aleph_0} \setminus \{0\}$ be
a Borel function such that $f(X \setminus \Phi^{-1}(\sS_N)) \subset \Card$. Then there are measurable sets
$X_1,X_2,\ldots,X_{\infty} \subset X$ and Borel functions $\Phi_n\dd X_n \ni x \mapsto \bBB^{(x)} \in \CDDc_N(\HHh_n)\
(n=1,2,\ldots,\infty)$ such that $\BBB^{(x)} = f(x) \odot \AAA^{(x)}$ for each $x \in X' := \bigcup_{n=1}^{n=\infty} X_n$
and $\mu(X \setminus X') = 0$. If, in addition,
\begin{equation}\label{eqn:disj-iso}
\AAA^{(x)} \disj \AAA^{(y)}
\end{equation}
for distinct $x, y \in X$, then $\Phi_n(X_n) \in \Bb(\CDDc_N(\HHh_n))$ and $\Phi_n$ is a Borel isomorphism of $X_n$ onto
its range.
\end{thm}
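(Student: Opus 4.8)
The plan is, after deleting a $\mu$-null set so that $X$ may be assumed to be a standard Borel space, to produce first a Borel choice of representatives of the $N$-tuples $\AAA^{(x)}$ themselves, and then to pass from $\AAA^{(x)}$ to $f(x)\odot\AAA^{(x)}$ while keeping track of dimensions. For the first step, set $\EeE=\bigcup_{n=1}^{n=\infty}\Ff_N(\HHh_n)$, a standard Borel space since each $\Ff_N(\HHh_n)$ is a Borel subset of the standard Borel space $\CDDc_N(\HHh_n)$ (see the list of measurable sets preceding \LEM{1}). The quotient map $q\dd\EeE\ni\aAA\mapsto\AAA\in\Ff_N$ is Borel by the definition of $\Bb_N$, so $q^{-1}(\FfF)$ is a Borel---hence standard---subset of $\EeE$ and $q$ restricts to a Borel surjection $q^{-1}(\FfF)\to\FfF$. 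By \THM{ue-factor} the unitary equivalence relation $E$ on $q^{-1}(\FfF)$ is Borel; and the hypothesis that $\FfF$ is countably separated (choose $F_1,F_2,\ldots\in\Bb(\FfF)$ separating its points, and compose the resulting Borel injection $\FfF\to 2^{\NNN}$ with $q$) exhibits $E$ as a Borel reduction to equality, i.e.\ $E$ is smooth. Hence $E$ admits a Borel transversal $\EeE_0$ (e.g.\ by \cite[Corollary~XIV.2.1]{k-m}; cf.\ \cite{cas}), and $q\bigr|_{\EeE_0}\dd\EeE_0\to\FfF$ is a Borel bijection of standard Borel spaces, thus a Borel isomorphism by \cite[Corollary~A.7]{tk1}. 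Writing $s=(q\bigr|_{\EeE_0})^{-1}$ and $\Psi_0=s\circ\Phi\dd X\to\EeE$, the map $\Psi_0$ is Borel, $\Psi_0(x)$ is a representative of $\AAA^{(x)}$, and for $d\in\{1,2,\ldots,\aleph_0\}$ the set $X^{(d)}:=\Psi_0^{-1}(\CDDc_N(\HHh_d))=\Phi^{-1}(\FfF\cap\SsS\EeE\PpP_N(d))$ is Borel; the $X^{(d)}$ partition $X$ and $\Psi_0\bigr|_{X^{(d)}}$ is a Borel map into $\CDDc_N(\HHh_d)$.

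Next I would multiply by $f$. For $d,k\in\{1,2,\ldots,\aleph_0\}$ with $dk\leqsl\aleph_0$ fix unitaries $\HHh_d^{\oplus k}\to\HHh_{dk}$ (with $dk:=\aleph_0$ whenever $d$ or $k$ is $\aleph_0$); conjugation by them turns the Borel operation $\tTT\mapsto\tTT^{\oplus k}$ on $\CDDc_N$ (Borel because the $\bB$-transform is a Borel isomorphism and coordinatewise direct sums of bounded operators are Borel) into a Borel map $\nu_{d,k}\dd\CDDc_N(\HHh_d)\to\CDDc_N(\HHh_{dk})$ with $\nu_{d,k}(\tTT)\equiv k\odot\tTT$. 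Split $X$ into the Borel sets $X_{\Card}=\{x\dd f(x)\in\Card\}$ and $X_{\mathrm{sp}}=X\setminus X_{\Card}$; by hypothesis $X_{\mathrm{sp}}\subset\Phi^{-1}(\sS_N)$, so $\Psi_0(x)\in\sS_N(\HHh_{\infty})$ there. On $X_{\Card}$, decomposing further by the Borel value $f(x)=k\in\Card$ and by $X^{(d)}$, put $\bBB^{(x)}=\nu_{d,k}(\Psi_0(x))$ on $X^{(d)}\cap\{f=k\}$: this is a representative of $k\odot\AAA^{(x)}=f(x)\odot\AAA^{(x)}$ in $\CDDc_N(\HHh_{dk})$ and $x\mapsto\bBB^{(x)}$ is Borel there. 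On $X_{\mathrm{sp}}$ I would use the graph
\[
G=\{(x,\bBB)\in X_{\mathrm{sp}}\times\sS_N(\HHh_{\infty})\dd\ \BBB=f(x)\odot\AAA^{(x)}\}.
\]
Since $\BBB=t\odot\AAA$ iff $\AAA\leqsl t\odot\BBB$ and $\BBB\leqsl\frac{1}{t}\odot\AAA$, and the sets $\{(t,\aAA,\bBB)\dd\AAA\leqsl t\odot\BBB\}$ are Borel by the discussion following \LEM{2} (cf.\ \eqref{eqn:rrr}) while $t\mapsto\frac{1}{t}$ is Borel, the set $\{(t,\aAA,\bBB)\in(0,\infty)\times\sS_N(\HHh_{\infty})^2\dd\BBB=t\odot\AAA\}$ is Borel; hence $G$, the preimage of it under $(x,\bBB)\mapsto(f(x),\bBB,\Psi_0(x))$, is Borel, and every section $G_x$ is a nonempty Borel set (because $f(x)\odot\AAA^{(x)}$ is then a semiprime of dimension $\aleph_0$, by \PRO{semiprime}). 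A $\mu$-measurable selection $x\mapsto\bBB^{(x)}\in G_x$ of this Souslin graph then exists (von Neumann's selection theorem; cf.\ \cite[Corollary~XIV.1.1]{k-m}, \cite{cas}), and it is Borel after deleting a $\mu$-null set.

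Finally, for $n\in\{1,2,\ldots,\aleph_0\}$ let $X_n$ be the Borel set of those $x$ (outside the deleted null set) with $\dim(f(x)\odot\AAA^{(x)})=n$---Borel since $\{x\dd\dim(\AAA^{(x)})=d\}=\Phi^{-1}(\SsS\EeE\PpP_N(d))$ and $\{f=k\}$ are Borel---and let $\Phi_n(x)=\bBB^{(x)}$; by construction $\Phi_n$ maps $X_n$ Borel-measurably into $\CDDc_N(\HHh_n)$, $\BBB^{(x)}=f(x)\odot\AAA^{(x)}$, and $\mu(X\setminus\bigcup_n X_n)=0$. If moreover \eqref{eqn:disj-iso} holds, then for distinct $x,y$ the relation $\AAA^{(x)}\disj\AAA^{(y)}$ forces $f(x)\odot\AAA^{(x)}\disj f(y)\odot\AAA^{(y)}$ (by (PR2) \pREF{PR2} together with the fact that $\XXX\ll\YYY\ll\XXX$ makes $\XXX$ and $\YYY$ unitarily disjoint from exactly the same $N$-tuples, applied with $\YYY=f(x)\odot\AAA^{(x)}$ resp.\ $f(y)\odot\AAA^{(y)}$), so these are distinct nontrivial $N$-tuples and hence $\Phi_n(x)\neq\Phi_n(y)$ as operators (equal operators lie in one equivalence class); thus $\Phi_n$ is a Borel injection between standard Borel spaces, whence $\Phi_n(X_n)\in\Bb(\CDDc_N(\HHh_n))$ and $\Phi_n$ is a Borel isomorphism onto its range, again by \cite[Corollary~A.7]{tk1}.

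The main obstacle is the first step: $\SsS\EeE\PpP_N$ (hence $\Ff_N$) is not countably separated, so representatives cannot be chosen globally, and the countable separatedness of $\FfF$ is exactly what promotes the Borel equivalence relation of \THM{ue-factor} to a smooth one admitting a Borel transversal. The second delicate point is the Borelness of the graph $G$ in the semiprime case, which rests on \LEM{2} together with the measurability statements about the division map $\Div$ recorded after it; it is also here that one is forced down to an almost-everywhere-defined selection, which is the source of the null set $X\setminus\bigcup_n X_n$.
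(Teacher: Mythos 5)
Your proposal follows essentially the same route as the paper's proof: (i) exploit the countable separation of $\FfF$ to select representatives of the classes $\AAA^{(x)}$ measurably, (ii) realize $x\mapsto f(x)\odot\AAA^{(x)}$ measurably by splitting into the cardinal-multiple and real-multiple (semiprime) cases and invoking the Borel structure of $\trianglelefteq_N^t$ and of $\Div$ established after \LEM{2}, and (iii) derive injectivity of $\Phi_n$ from unitary disjointness and finish with \cite[Corollary~A.7]{tk1}. Two remarks. First, the one step I would not accept as written is ``$E$ is smooth, hence admits a Borel transversal $\EeE_0$'': smoothness of a Borel equivalence relation does not in general produce a Borel transversal (a smooth relation whose classes are the fibers of a Borel surjection without Borel uniformization has none), and \cite[Corollary~XIV.2.1]{k-m} --- a selection theorem for partitions of nice spaces into closed sets, which the paper uses only in the finite-dimensional, compact-group setting of \PRO{findim} --- does not cover the orbits of $\UUu(\HHh_{\infty})$, which need not be closed. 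One could invoke Burgess's selection theorem for smooth orbit equivalence relations of Polish group actions, but it is simpler, and it is what the paper actually does, to push $\mu$ forward to a standard measure on the Souslin-Borel space $\FfF$ and apply the measurable cross-section theorem \cite[Theorem~A.16]{tk1}, obtaining representatives only off a $\mu$-null set; since the theorem's conclusion is itself only almost everywhere, nothing downstream of this step changes. Second, a small slip: $\BBB=t\odot\AAA$ is equivalent to $\BBB\leqsl t\odot\AAA$ together with $\AAA\leqsl\frac1t\odot\BBB$ (you have $t$ and $\frac1t$ interchanged); this does not affect the Borelness of the graph $G$. With these repairs the argument is correct.
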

\begin{proof}
Since $\Phi^{-1}(\Ff_N \setminus \Ff_N^{fin})$ is measurable, we may change the function $f$ (with no change
of $f(x) \odot \AAA^{(x)}$) so that $f(x) = \aleph_0$ whenever $\Phi(x) \notin \Ff_N^{fin}$. But then for every $x \in X$,
\begin{equation}\label{eqn:aux69}
(f(x) \odot \AAA^{(x)}) : \AAA^{(x)} = f(x).
\end{equation}
Let $\nu\dd \Bb(\FfF) \ni A \mapsto \mu(\Phi^{-1}(A)) \in [0,\infty]$. Since $\FfF$ is the Borel image of a standard Borel
space $\bigcup_{n=1}^{n=\infty} \{\xXX \in \CDDc_N(\HHh_n)\dd\ \XXX \in \FfF\}$ (and $\FfF$ is countably separated),
$\FfF$ is a Souslin-Borel space and therefore $\nu$ is a standard measure on $\FfF$ (cf. \cite[Corollary~A.14]{tk1}).
So, we may assume (reducing $\FfF$ and $X$) that $\FfF$ and $X$ are standard Borel spaces. For each $n=1,2,\ldots,\infty$
let $\GgG_n$ be the set of all $N$-tuples $\xXX \in \CDDc_N(\HHh_n)$ such that $\XXX \in \FfF(n) := \FfF \cap
\SsS\EeE\PpP_N(n)$. Note that $\GgG_n \in \Bb(\CDDc_N(\HHh_n))$. Since $\FfF$ is a standard Borel space, it follows from
\cite[Theorem~A.16]{tk1} that there are a set $\FfF_n \in \Bb(\FfF(n))$ and a measurable function $\FfF_n \ni \XXX \mapsto
\gGG^{(\XXX)} \in \GgG_n$ such that $\nu(\FfF(n) \setminus \FfF_n) = 0$ and $\GGG^{(\XXX)} = \XXX$ for each $\XXX \in
\FfF_n$. Again, we may assume that $\FfF = \bigcup_{n=1}^{n=\infty} \FfF_n$ (since $\nu(\FfF \setminus
\bigcup_{n=1}^{n=\infty} \FfF_n) = 0$). Put $X(n) = \{x \in X\dd\ \AAA^{(x)} \in \FfF_n\}$ and $\tTT^{(x)} =
\gGG^{(\AAA^{(x)})}$ for $x \in X(n)$. Note that the function $X(n) \ni x \mapsto \tTT^{(x)} \in \CDDc_N(\HHh_n)$ is
measurable. This implies that the set $\Gamma_n = \{(x,f(x),\tTT^{(x)})\dd\ x \in X(n)\}$ is Borel in $X(n) \times
I_{\aleph_0} \times \CDDc_N(\HHh_n)$ (as the graph of a Borel function) and consequently for each $m=1,2,\ldots,\infty$
the set $\bbB_{n,nm} = \{(x,\yYY)\dd\ \YYY = f(x) \odot \AAA^{(x)},\ x \in X(n),\ \yYY \in \Ff_N(\HHh_{nm})\}$,
as the image of $(\Gamma_n \times \Ff_N(\HHh_{nm})) \cap (X(n) \times \rrR_N(n,nm))$ under the projection map
(cf. \eqref{eqn:rrr} and \eqref{eqn:aux69}) which is one-to-one on the latter set, is Borel as well. Now put $X(n,nm) =
\{x \in X(n)\dd\ f(x) \cdot \dim(\AAA^{(x)}) = nm\}$ and note that $X(n,nm)$'s are measurable sets such that $X(n) =
\bigcup_{m=1}^{m=\infty} X(n,nm)$. Since the function $p_{n,nm}\dd \bbB_{n,nm} \ni (x,\yYY) \mapsto x \in X(n,nm)$ is
a Borel surjection, we deduce from \cite[Theorem~A.16]{tk1} that there is a Borel function $w_{n,nm}\dd X(n,nm) \to
\bbB_{n,nm}$ such that $(p_{n,nm} \circ w_{n,nm})(x) = x$ for $\mu$-almost all $x \in X(n,nm)$. For $x \in X(n,nm)$ let
$\bBB^{(x)} \in \Ff_N(\HHh_{nm})$ be the second coordinate of $w_{n,nm}(x)$. Then the function $\Phi_{n,nm}\dd X(n,nm) \ni
x \mapsto \bBB^{(x)} \in \CDDc_N(\HHh_{nm})$ is measurable and for $\mu$-almost all $x \in X$,
\begin{equation}\label{eqn:aux64}
\BBB^{(x)} = f(x) \odot \AAA^{(x)}.
\end{equation}
Again, by reducing $X$, we may assume that \eqref{eqn:aux64} is fulfilled for all $x \in X$. To this end, put $X_k =
\bigcup \{X(n,nm)\dd\ nm = k\}$ and let $\Phi_k\dd X_k \to \CDDc_N(\HHh_k)$ be given by $\Phi_k(x) = \Phi_{n,nm}(x)$
provided $nm = k$ and $x \in X(n,nm)$. Since the sets $X(n,nm)$ are pairwise disjoint, $\Phi_k$ is well defined and Borel.
Finally, if \eqref{eqn:disj-iso} is fulfilled, \eqref{eqn:aux64} implies that $\Phi_k$ is one-to-one and thus the assertion
follows.
\end{proof}

\SECT{Direct integrals and measurable domains\\of strong unitary disjointness}

In this part we establish only the most relevant (for our further investigations) properties of direct integrals.
The `continuous' operation in $\CDD_N$ is defined and main results on it are placed in the next two sections.\par
We now fix a standard measure space $(X,\Mm,\mu)$. For a separable Hilbert space $\HHh$ the Hilbert space $L^2(X,\HHh) =
L^2(\mu,\HHh)$ consists of all (equivalence classes of) measurable functions $\xi\dd X \to \HHh$ such that $\|\xi\|_2^2 =
\int_X \|\xi(x)\|^2 \dint{\mu(x)} < \infty$ ($L^2(\mu,\HHh)$ is separable). Let $X \ni x \mapsto T_x \in \CDDc(\HHh)$ be
a measurable function. We define an operator $T := \int^{\oplus}_X T_x \dint{\mu(x)}$ in $L^2(\mu,\HHh)$ by
\begin{multline*}
\DdD(T) = \{\xi \in L^2(\mu,\HHh)\dd\ \xi(x) \in \DdD(T_x) \textup{ for $\mu$-almost all } x \in X\\
\textup{and } \int_X \|T_x \xi(x)\|^2 \dint{\mu(x)} < \infty\}
\end{multline*}
and $(T \xi)(x) = T_x \xi(x)$ for $\xi \in \DdD(T)$ and ($\mu$-almost all) $x \in X$. It is not obvious that $T \xi$
is measurable (for $\xi \in \DdD(T)$) and that $T \in \CDDc(\HHh)$. These are guaranteed by the next result which may be
deduced from \cite[Lemma~VI.3.3]{tk2} (cf. \cite[Definition~VI.3.4]{tk2}).

\begin{pro}{d-int}
For every measurable function $X \ni x \mapsto T_x \in \CDDc(\HHh)$ the operator $\int^{\oplus}_X T_x \dint{\mu(x)}$
is well defined, closed and densely defined. What is more, $\bB(\int^{\oplus}_X T_x \dint{\mu(x)}) = \int^{\oplus}_X
\bB(T_x) \dint{\mu(x)}$.
\end{pro}

Now let $\Phi\dd X' \ni x \mapsto \tTT^{(x)} \in \bigcup_{n=1}^{n=\infty} \CDDc_N(\HHh_n)$ where $X \setminus X'
\in \NnN(\mu)$ be any function and $\tTT^{(x)} = (\tTT^{(x)}_1,\ldots,\tTT^{(x)}_N)$ for each $x \in X'$. If there are
measurable sets $X_1,X_2,\ldots,X_{\infty} \subset X'$ such that $\mu(X' \setminus \bigcup_{n=1}^{n=\infty} X_n) = 0$,
$\Phi(X_j) \subset \CDDc_N(\HHh_j)$ (the latter implies that $X_j$'s are pairwise disjoint) and $\Phi\bigr|_{X_j}\dd
X_j \to \CDDc_N(\HHh_j)$ is measurable for each $j$, we call $\Phi$ \textit{integrable} and define the \textit{direct
integral} $\int^{\oplus}_X \tTT^{(x)} \dint{\mu(x)}$ of the field $\{\tTT^{(x)}\}_{x \in X'}$ by
$$
\int^{\oplus}_X \tTT^{(x)} \dint{\mu(x)} = \bigoplus_{n=1}^{n=\infty}
\bigl(\int^{\oplus}_{X_n} T^{(x)}_1 \dint{\mu(x)},\ldots,\int^{\oplus}_{X_n} T^{(x)}_N \dint{\mu(x)}\bigr).
$$
Below we list most important (for our investigations) properties of direct integrals of measurable fields of $N$-tuples.
\begin{enumerate}[(d{i}1)]\addtocounter{enumi}{-1}
\item $\dim \overline{\DdD}(\int^{\oplus}_X \tTT^{(x)} \dint{\mu(x)}) \leqsl \aleph_0$.
\item\label{di1} $\bB(\int^{\oplus}_X \tTT^{(x)} \dint{\mu(x)}) = \int^{\oplus}_X \bB(\tTT^{(x)}) \dint{\mu(x)}$.
\item If $X_1,X_2,X_3,\ldots$ are pairwise disjoint measurable subsets of $X$ such that $\mu(X_j) > 0$ for each $j$
   and $\mu(X \setminus \bigcup_{n=1}^{\infty} X_n) = 0$, then $$\int^{\oplus}_X \aAA^{(x)} \dint{\mu(x)} \equiv
   \bigoplus_{n=1}^{\infty} \int^{\oplus}_{X_n} \aAA^{(x)} \dint{\mu(x)}.$$
\item $\bigoplus_{n=1}^{\infty}(\int^{\oplus}_X \tTT^{(x)}_n \dint{\mu(x)}) \equiv \int^{\oplus}_X
   (\bigoplus_{n=1}^{\infty} \tTT^{(x)}_n) \dint{\mu(x)}$.
\item If $\tTT^{(x)} \equiv \sSS^{(x)}$ for $\mu$-almost all $x \in X$, then $\int^{\oplus}_X \tTT^{(x)} \dint{\mu(x)}
   \equiv \int^{\oplus}_X \sSS^{(x)} \dint{\mu(x)}$. This follows from (BT5) \pREF{BT5}, (di1) and the proof of
   \cite[Theorem~IV.8.28]{tk1}.
\item If $\nu$ is a $\sigma$-finite measure on $(X,\Mm)$ such that $\nu \ll \mu \ll \nu$ (that is, $\NnN(\mu) =
   \NnN(\nu)$), then $\int^{\oplus}_X \tTT^{(x)} \dint{\mu(x)} \equiv \int^{\oplus}_X \tTT^{(x)} \dint{\nu(x)}$.
\item If $(Y,\Nn,\nu)$ is a standard measure space, $X_0 \in \NnN(\mu)$, $Y_0 \in \NnN(\nu)$ and $\psi\dd Y \setminus
   Y_0 \to X \setminus X_0$ is a Borel isomorphism such that $\mu(\psi(A)) = \nu(A)$ for every $A \in \Nn$ disjoint from
   $Y_0$, then $$\int^{\oplus}_X \tTT^{(x)} \dint{\mu(x)} \equiv \int^{\oplus}_Y \tTT^{(\psi(y))} \dint{\nu(y)}.$$
\end{enumerate}

Further, let $X \ni x \mapsto \AAA^{(x)} \in \SsS\EeE\PpP_N$ be any function. If there exist sets $X_1,X_2,\ldots,
X_{\infty} \in \Mm$ and measurable functions
\begin{equation}\label{eqn:meas}
X_n \ni x \mapsto \aAA^{(x)} \in \CDDc_N(\HHh_n)
\end{equation}
($n=1,2,\ldots,\infty$) such that $\mu(X \setminus \bigcup_{n=1}^{n=\infty} X_n) = 0$ and for each $x \in
\bigcup_{n=1}^{n=\infty} X_n$, $\aAA^{(x)}$ is a representative of $\AAA^{(x)}$, we say the field
$\{\AAA^{(x)}\}_{x \in X}$ is \textit{integrable} and define the \textit{direct integral} $\int^{\oplus}_X \AAA^{(x)}
\dint{\mu(x)}$ as the unitary equivalence class of
\begin{equation}\label{eqn:mu-dir-int}
\bigoplus_{n=1}^{n=\infty} \int^{\oplus}_{X_n} \aAA^{(x)} \dint{\mu(x)}.
\end{equation}
Thanks to (di4), $\int^{\oplus}_X \AAA^{(x)} \dint{\mu(x)}$ is well defined, i.e. it does not depend of the choice
of measurable functions \eqref{eqn:meas} of representatives. As it is easily seen, in the above situation the function
$\bigcup_{n=1}^{n=\infty} X_n \ni x \mapsto \AAA^{(x)} \in \SsS\EeE\PpP_N$ is measurable. We call a field $\Psi\dd
X \ni x \mapsto \BBB^{(x)} \in \SsS\EeE\PpP_N$ \textit{almost measurable} (or \textit{almost Borel}) iff
$\Psi\bigr|_{X \setminus X_0}$ is Borel for some $X_0 \in \NnN(\mu)$. Thus, every integrable field is almost
measurable.\par
In our investigations all almost measurable fields are defined on standard measure spaces. Properties (di0)--(di6) may
naturally be translated into the realm of unitary equivalence classes of $N$-tuples:
\begin{enumerate}[(D{I}1)]\addtocounter{enumi}{-1}
\item\label{DI0} $\int^{\oplus}_X \AAA^{(x)} \dint{\mu(x)} \in \SsS\EeE\PpP_N$.
\item $\bB(\int^{\oplus}_X \AAA^{(x)} \dint{\mu(x)}) = \int^{\oplus}_X \bB(\AAA^{(x)}) \dint{\mu(x)}$.
\item If $X_1,X_2,X_3,\ldots$ are pairwise disjoint measurable subsets of $X$ such that $\mu(X_j) > 0$ for each $j$
   and $\mu(X \setminus \bigcup_{n=1}^{\infty} X_n) = 0$, then $$\int^{\oplus}_X \AAA^{(x)} \dint{\mu(x)} =
   \bigoplus_{n=1}^{\infty} \int^{\oplus}_{X_n} \AAA^{(x)} \dint{\mu(x)}.$$
\item\label{DI3} $\bigoplus_{n=1}^{\infty}(\int^{\oplus}_X \TTT^{(x)}_n \dint{\mu(x)}) = \int^{\oplus}_X
   (\bigoplus_{n=1}^{\infty} \TTT^{(x)}_n) \dint{\mu(x)}$.
\item\label{DI4} If $(Y,\Nn,\nu)$ is a standard measure space, $X_0 \in \NnN(\mu)$, $Y_0 \in \NnN(\nu)$, $\psi\dd Y
   \setminus Y_0 \to X \setminus X_0$ is a Borel isomorphism and $\{\psi(B)\dd\ B \in \NnN(\nu),\ B \cap Y_0 = \varempty\}
   = \{A \in \NnN(\nu)\dd\ A \cap X_0 = \varempty\}$, then $$\int^{\oplus}_X \AAA^{(x)} \dint{\mu(x)} = \int^{\oplus}_Y
   \AAA^{(\psi(y))} \dint{\nu(y)}.$$
\end{enumerate}

A counterpart of regular collections and direct sums ((UE4), \PREF{UE4}) for direct integrals are \textit{regular fields}
and \textit{regular direct integrals} `$\int^{\sqplus}$' which we define as follows. Assume $X \ni x \mapsto \AAA^{(x)} \in
\SsS\EeE\PpP_N$ is an integrable field. If for any two disjoint Borel sets $A, B \subset X$ one has
\begin{equation}\label{eqn:disj-int}
\int^{\oplus}_A \AAA^{(x)} \dint{\mu(x)} \disj \int^{\oplus}_B \AAA^{(x)} \dint{\mu(x)},
\end{equation}
we call the field $\{\AAA^{(x)}\}_{x \in X}$ \textit{regular} and write $\int^{\sqplus}_X \AAA^{(x)} \dint{\mu(x)}$
in place of $\int^{\oplus}_X \AAA^{(x)} \dint{\mu(x)}$. (Condition \eqref{eqn:disj-int} naturally corresponds to (PR2),
\PREF{PR2}.) As in case of direct sums, the notation `$\int^{\sqplus}$' includes information that the integrable field
is regular.\par
In practice it is quite difficult to verify whether an almost measurable field is integrable. However, as an immediate
consequence of \PRO{findim} we obtain

\begin{pro}{finint}
Every almost measurable field of a standard measure space into $\SsS\EeE\PpP_N \setminus \SsS\EeE\PpP_N(\infty)$ is
integrable.
\end{pro}
\begin{proof}
Let $\Phi\dd X \to \SsS\EeE\PpP_N \setminus \SsS\EeE\PpP_N(\infty)$ be measurable. The sets $X_n =
\Phi^{-1}(\SsS\EeE\PpP_N(n))$ are Borel and if $\chi_n$'s are as in \PROp{findim}, then $\chi_n \circ \Phi\bigr|_{X_n}$
is a measurable field of representatives for $\Phi$.
\end{proof}

In general we are unable to characterize integrable fields taking values in $\SsS\EeE\PpP_N$. This is in fact not of our
interest. More preferable are regular fields taking values in $\Ff_N$. In that case a characterization is possible
and we formulate it in the next result. For this purpose we introduce

\begin{dfn}{meas-domain}
A set $\FfF \in \Bb_N$ is said to be a \textit{measurable domain of strong unitary disjointness} iff there is a sequence
$(\EeE_n)_{n=1}^{\infty}$ of subsets of $\CDD_N$ which separates points of $\FfF$ and for every $n \geqsl 1$ the families
$\FfF \cap \EeE_n$ and $\FfF \setminus \EeE_n$ are strongly unitarily disjoint (cf. \REM{sdisj}, \PREF{rem:sdisj}).
We shall shorten the name of this and we shall speak briefly of \textit{measurable domains}.
\end{dfn}

It follows from the definition that measurable domains consist of pairwise unitarily disjoint $N$-tuples. It may also be
easily verified that the union of a countable family of measurable domains each two of which are strongly unitarily
disjoint as well as every measurable subset of a measurable domain is again a measurable domain. Another important property
of measurable domains is that they are Souslin-Borel. Indeed, when $\FfF$ is a measurable domain, it is the Borel image
of a standard Borel space (by measurability of $\FfF$) and $\FfF$ is countably separated, since if $\EeE \subset \CDD_N$
is such that $\FfF \cap \EeE \sdisj \FfF \setminus \EeE$, then $\FfF \cap \EeE \in \Bb_N$ (because for every sequence
$(p_n)_{n=1}^{\infty} \subset \PpP_1(N)$ and each complex scalar $\lambda$ the set of all $\tTT \in \CDDc_N(\HHh_k)$ such
that $p_n(\bB(\tTT),\bB(\tTT)^*)$ converges $*$-strongly to $\lambda I$ is Borel and invariant under unitary equivalence),
and thus the assertion follows from \DEF{meas-domain}.\par
Measurable domains are useful to produce regular fields, as it is shown by

\begin{pro}{meas-domain}
Let $(X,\Mm,\mu)$ be a standard measure space and $\Phi\dd X \ni x \mapsto \AAA^{(x)} \in \Ff_N$ be any field. Then \tfcae
\begin{enumerate}[\upshape(i)]
\item $\{\AAA^{(x)}\}_{x \in X}$ is regular,
\item there is a Borel set $X' \subset X$ such that $X \setminus X' \in \NnN(\mu)$, $\Phi(X')$ is a measurable domain
   and $\Phi\bigr|_{X'}$ is a Borel isomorphism of $X'$ onto its range.
\end{enumerate}
\end{pro}
\begin{proof}
First of all, by reducing $X$, we may assume that $X$ is a standard Borel space. Suppose condition (i) is satisfied.
This yields that there is $Z \in \NnN(\mu)$ and an integrable field $\{\aAA^{(x)}\}_{x \in X \setminus Z} \subset
\bigcup_{n=1}^{n=\infty} \CDDc_N(\HHh_n)$ of representatives for $\Phi$. Take a separating sequence $X_1,X_2,\ldots$
of measurable subsets of $X$. We infer from (di0), \eqref{eqn:disj-int} and \PROp{wdisj} that for each $k \geqsl 1$
there is a sequence $(q^{(k)}_n)_{n=1}^{\infty} \subset \PpP_1(N)$ such that
\begin{gather*}
q^{(k)}_n\left(\bB\Bigl(\int^{\oplus}_{X_k} \aAA^{(x)} \dint{\mu(x)}\Bigr),\bB\Bigl(\int^{\oplus}_{X_k} \aAA^{(x)}
\dint{\mu(x)}\Bigr)^*\right) \stSt I,\\
q^{(k)}_n\left(\bB\Bigl(\int^{\oplus}_{X \setminus X_k} \aAA^{(x)} \dint{\mu(x)}\Bigr),
\bB\Bigl(\int^{\oplus}_{X \setminus X_k} \aAA^{(x)} \dint{\mu(x)}\Bigr)^*\right) \stSt 0.
\end{gather*}
Now taking into account that
\begin{multline}\label{eqn:aux48}
p\left(\bB\Bigl(\int^{\oplus}_D \aAA^{(x)} \dint{\mu(x)}\Bigr),\bB\Bigl(\int^{\oplus}_D \aAA^{(x)}
\dint{\mu(x)}\Bigr)^*\right) =\\= \int^{\oplus}_D p(\bB(\aAA^{(x)}),\bB(\aAA^{(x)})^*) \dint{\mu(x)}
\end{multline}
for every measurable set $D \subset X$ and $p \in \PpP(N)$ (cf. (di1)), we infer from \cite[Proposition~3.2.7]{sak} that
there are a subsequence $(p^{(k)}_n)_{n=1}^{\infty}$ of $(q^{(k)}_n)_{n=1}^{\infty}$ and a measurable set $X'_k \subset X
\setminus Z$ such that $X \setminus X'_k \in \NnN(\mu)$ and
\begin{equation}\label{eqn:aux44}
p(\bB(\aAA^{(x)}),\bB(\aAA^{(x)})^*) \stSt j_k(x) I
\end{equation}
for any $x \in X'_k$ where $j_k$ is the characteristic function of $X_k$. Put $X' = \bigcap_{k=1}^{\infty} X'_k$ and note
that $\mu(X \setminus X') = 0$. Since $\{X_k\}_{k\geqsl1}$ is a separating family and thanks to \eqref{eqn:aux44},
$\Phi\bigr|_{X'}$ is one-to-one. It may be also deduced from \CORp{selector} that $\Phi(X')$ is measurable.
Consequently, $\Phi(X')$ is a measurable domain, by \eqref{eqn:aux44}. Now it suffices to apply \cite[Corollary~A.10]{tk1}
to get that $\Phi\bigr|_{X'}$ is a Borel isomorphism.\par
We now pass to the converse implication. It follows from \THMp{arb-meas} that $\Phi$ is integrable. So, let
$$\{\aAA^{(x)}\}_{x \in X''} \subset \bigcup_{n=1}^{n=\infty} \CDDc_N(\HHh_n)$$ be an integrable field of representatives
for $\Phi$ where $X'' \subset X'$ and $X \setminus X'' \in \NnN(\mu)$. Put $\aAA = \int^{\oplus}_X \aAA^{(x)}
\dint{\mu(x)}$. Let $\EeE_1,\EeE_2,\ldots$ be a separating family for $\Phi(X')$ such that
\begin{equation}\label{eqn:aux46}
\Phi(X') \cap \EeE_k \sdisj \Phi(X') \setminus \EeE_k
\end{equation}
for every $k$. It follows from the note preceding the proposition that $\EeE_k \cap \Phi(X')
\in \Bb_N$. Consequently, the sets $X_k = \Phi^{-1}(\EeE_k) \cap X''\ (k=1,2,\ldots)$ are measurable and separate
the points of $X''$ (because $\Phi$ is one-to-one on $X' \supset X''$). We infer from this, thanks
to \cite[Corollary~A.12]{tk1}, that the $\sigma$-algebra of subsets of $X''$ generated by the sets $X_k$'s coincides with
$\Mm'' := \{A \subset X''|\quad A \in \Mm\}$. Further, the space $\overline{\DdD}(\aAA)$ has the form
$\bigoplus_{n=1}^{n=\infty} L^2(X_n'',\HHh_n)$ where $X_1'',X_2'',\ldots$ are pairwise disjoint members of $\Mm''$ whose
union is $X''$. For each $k$ let $M_k$ be the multiplication operator by the characteristic function $j_k$ of $X_k''$
on $\overline{\DdD}(\aAA)$. Fix for a moment $k$. By \eqref{eqn:aux46}, there is a sequence $(p_n)_{n=1}^{\infty} \subset
\PpP_1(N)$ such that $p_n(\bB(\aAA^{(x)}),\bB(\aAA^{(x)})^*)$ converges $*$-strongly to $j_k(x) I$ for every $x \in X''$.
Since in addition $\|p_n(\bB(\aAA^{(x)}),\bB(\aAA^{(x)})^*)\| \leqsl 1$, Proposition~3.2.7 of \cite{sak} implies that
$$
\int^{\oplus}_{X''} p_n(\bB(\aAA^{(x)}),\bB(\aAA^{(x)})^*) \dint{\mu(x)} \stSt \int^{\oplus}_{X''} j_k(x) I \dint{\mu(x)}.
$$
This combined with \eqref{eqn:aux48} gives $p_n(\bB(\aAA),\bB(\aAA)^*) \stSt M_k$ and consequently $M_k \in \WWw''(\aAA)$.
In this way we have shown that $\{X_1,X_2,\ldots\} \subset \Nn$ where $\Nn$ consists of all $B \in \Mm''$ such that
the multiplication operator $M(B)$ by the characteristic function of $B$ belongs to $\WWw''(\aAA)$. Since $\Nn$ is
a $\sigma$-algebra, we finally obtain that $\Nn = \Mm''$.\par
Since $\WWw''(\aAA) = \WWw(\bB(\aAA))$ and each entry of $\bB(\aAA)$ is a decomposable operator, $\WWw''(\aAA)$ consists
of decomposable operators. If $B$ is an arbitrary member of $\Mm$, $M(B \cap X'')$ is a diagonalizable operator and hence
$M(B \cap X'') \in \ZZz(\WWw''(\aAA))$. So, $\int^{\oplus}_B \AAA^{(x)} \dint{\mu(x)} (= \int^{\oplus}_{B \cap X''}
\AAA^{(x)} \dint{\mu(x)})$ and $\int^{\oplus}_{X \setminus B} \AAA^{(x)} \dint{\mu(x)}$ correspond (by \PROP{transl})
to mutually orthogonal central projections in $\WWw''(\aAA)$ from which we conclude that $\int^{\oplus}_B \AAA^{(x)}
\dint{\mu(x)} \disj \int^{\oplus}_{X \setminus B} \AAA^{(x)} \dint{\mu(x)}$. Now \eqref{eqn:disj-int} is implied by (di2).
\end{proof}

\begin{rem}{weak}
Since every Borel injection of a standard Borel space into a Souslin-Borel one has measurable image and is a Borel
isomorphism between its domain and range (cf. Theorem~A.6 and Corollary~A.7 in \cite{tk1}), condition (ii)
of \PRO{meas-domain} may be weakened by replacing the assumption that \textit{$\Phi(X')$ is a measurable domain
and $\Phi\bigr|_{X'}$ is a Borel isomorphism} by the one that \textit{$\Phi\bigr|_{X'}$ is Borel and one-to-one
and $\Phi(X')$ is contained in a measurable domain}.
\end{rem}

For simplicity, let us call a $\sigma$-finite measure $\nu$ on a measurable set $\BbB \subset \Ff_N$ a \textit{regularity}
measure ($\nu \in \rgM(\BbB)$) if $\nu$ is standard and the identity field of $\BbB$ into $\Ff_N$ is regular. Equivalently,
$\nu \in \rgM(\BbB)$ iff $\nu$ is concentrated on a measurable domain (since measurable domains are Souslin-Borel and all
$\sigma$-finite measures on the latter sets are standard). To shorten statements, we shall write $(\mu,\Phi) \in
\rgS(X,\Mm)$ to express that $\mu$ is a standard measure on $(X,\Mm)$ and $\Phi\dd X \to \Ff_N$ is a regular field.\par
Suppose $(\mu,\Phi) \in \rgS(X,\Mm)$ is a regular field. Let $X'$ be as in point (ii) of \PRO{meas-domain}. Define
a measure $\nu = \Phi^*(\mu)\dd \Bb(\Ff_N) \to [0,\infty]$ by $\nu(\BbB) = \mu(\Phi^{-1}(\BbB) \cap X')$. Notice that $\nu
\in \rgM(\Ff_N)$ and that $\int^{\sqplus}_X \Phi(x) \dint{\mu(x)} = \int^{\sqplus}_{\Ff_N} \FFF \dint{\nu(\FFF)}$, thanks
to (DI4). This observation shows that it suffices to consider regularity measures instead of abstract regular fields.\par
The following result is a link between regular fields and central decompositions of von Neumann algebras.

\begin{pro}{pd-transl}
Let $(X,\Mm,\mu)$ be a standard measure space, $\Phi\dd X \ni x \mapsto \aAA^{(x)} \in \bigcup_{n=1}^{n=\infty}
\Ff_N(\HHh_n)$ an integrable field and let $$\aAA = \int^{\oplus}_X \aAA^{(x)} \dint{\mu(x)}.$$ Then \tfcae
\begin{enumerate}[\upshape(i)]
\item $\{\AAA^{(x)}\}_{x \in X}$ is regular,
\item $\{\XXX \in \CDD_N\dd\ \XXX \leqsl^s \AAA\} = \{\int^{\oplus}_B \AAA^{(x)} \dint{\mu(x)}\dd\ B \in \Mm\}$,
\item $\int^{\oplus}_X \WWw''(\aAA^{(x)}) \dint{\mu(x)}$ is the central decomposition of the von Neumann algebra
   $\WWw''(\aAA)$.
\end{enumerate}
\end{pro}
\begin{proof}
First of all, note that the field $\{\WWw''(\aAA^{(x)})\}_{x \in X}$ is measurable according
to \cite[Definition~3.2.9]{sak}, since $\WWw''(\aAA^{(x)}) = \WWw(\bB(\aAA^{(x)}))$. Further, under the assumptions
of the proposition, (iii) is equivalent to
\begin{enumerate}[\upshape(i')]\addtocounter{enumi}{2}
\item the von Neumann algebra $\AAa$ of all diagonalizable operators is contained in $\WWw''(\aAA)$.
\end{enumerate}
It is clear that (iii') is implied by (iii). Conversely, when (iii') is fulfilled, $\WWw'(\aAA)$ consists of (some)
decomposable operators (thanks to \cite[Corollary~IV.8.16]{tk1} or \cite[Theorem~14.1.10]{k-r2}). We see that so does
$\WWw''(\aAA)$ (since $\bB(\aAA)$ is an $N$-tuple of decomposable operators) and hence $\AAa \subset \WWw'(\aAA)$. This
yields that $\AAa \subset \ZZz(\WWw''(\aAA))$. Now using the terminology of Kadison and Ringrose \cite{k-r2}, we conclude
that $\WWw''(\aAA)$ is decomposable (Theorem~14.1.16 and Proposition~14.1.18 in \cite{k-r2}), i.e. $\WWw''(\aAA) =
\int^{\oplus}_X \MmM_x \dint{\mu(x)}$ for some measurable field $\{\MmM_x\}_{x \in X}$ of von Neumann algebras.
By the uniqueness of the decomposition $\bB(\aAA) = \int^{\oplus}_X \bB(\aAA^{(x)}) \dint{\mu(x)}$ (cf. (di1), \PREF{di1}),
$\WWw(\bB(\aAA^{(x)})) \subset \MmM_x$ for $\mu$-almost all $x \in X$ and thus $\int^{\oplus}_X \WWw''(\aAA^{(x)})
\dint{\mu(x)} \subset \WWw''(\aAA)$. Since the converse inclusion is immediate, we get $\WWw''(\aAA) = \int^{\oplus}_X
\WWw''(\aAA^{(x)}) \dint{\mu(x)}$. This proves (iii) because $\WWw''(\aAA^{(x)})$ is a factor for all $x \in X$
and consequently (by \cite[Corollary~IV.8.20]{tk1}) $\ZZz(\WWw''(\aAA)) = \int^{\oplus}_X \ZZz(\WWw''(\aAA^{(x)}))
\dint{\mu(x)} = \AAa$.\par
We leave this as a simple exercise that the whole assertion of the proposition now easily follows.
\end{proof}

As an important for us consequence of \PRO{pd-transl} we now obtain

\begin{cor}{ue}
Let $(\mu,\Phi) \in \rgS(X,\Mm)$, $(\nu,\Psi) \in \rgS(Y,\Nn)$ and let $\widehat{\mu} = \Phi^*(\mu)$ and $\widehat{\nu}
= \Psi^*(\nu)$. For $$\XXX = \int^{\sqplus}_X \Phi(x) \dint{\mu(x)} \qquad \textup{and} \qquad \YYY = \int^{\sqplus}_Y
\Psi(y) \dint{\nu(y)}$$ we have:
\begin{enumerate}[\upshape(a)]
\item $\XXX = \YYY \iff \widehat{\mu} \ll \widehat{\nu} \ll \widehat{\mu}$,
\item $\XXX \leqsl^s \YYY \iff \widehat{\mu} \ll \widehat{\nu}$.
\end{enumerate}
\end{cor}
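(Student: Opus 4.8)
The plan is to deduce (a) from (b) and to prove (b) by pushing the regular fields forward to $\Ff_N$ and then appealing to the uniqueness of the central decomposition. First, (a) follows at once from (b): by \PRO{order} the relation $\leqsl^s$ is a partial order on $\CDD_N$, so $\XXX=\YYY$ holds \iaoi{} $\XXX\leqsl^s\YYY$ and $\YYY\leqsl^s\XXX$, which by (b) is equivalent to $\widehat\mu\ll\widehat\nu$ and $\widehat\nu\ll\widehat\mu$; hence it suffices to prove (b). By the discussion following \PRO{meas-domain} (in particular (DI4)), $\widehat\mu=\Phi^*(\mu)$ and $\widehat\nu=\Psi^*(\nu)$ are regularity measures on $\Ff_N$ and
$$
\XXX=\int^{\sqplus}_{\Ff_N}\FFF\dint{\widehat\mu(\FFF)},\qquad \YYY=\int^{\sqplus}_{\Ff_N}\FFF\dint{\widehat\nu(\FFF)}.
$$
I would fix an integrable field $\FFF\mapsto\aAA^{(\FFF)}$ of representatives for the identity field over $(\Ff_N,\widehat\nu)$ (it exists since this field is regular, via \PRO{meas-domain} and \THM{arb-meas}) and put $\aAA=\int^{\oplus}_{\Ff_N}\aAA^{(\FFF)}\dint{\widehat\nu(\FFF)}$, a representative of $\YYY$; by (DI0) $\aAA$ acts on a separable Hilbert space.

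For the implication $\widehat\mu\ll\widehat\nu\implies\XXX\leqsl^s\YYY$: if $\widehat\nu$ is concentrated on a measurable domain $\FfF$, then $\widehat\mu$ is concentrated on $\FfF$ as well; letting $f=\textup{d}\widehat\mu/\textup{d}\widehat\nu$ and $C=\{f>0\}$, the measure $\widehat\nu\bigr|_C$ is again a regularity measure and $\NnN(\widehat\nu\bigr|_C)=\NnN(\widehat\mu)$. Hence by (di5),
$$
\XXX=\int^{\oplus}_{\Ff_N}\FFF\dint{\widehat\mu(\FFF)}\equiv\int^{\oplus}_C\FFF\dint{\widehat\nu(\FFF)}=\int^{\oplus}_C\aAA^{(\FFF)}\dint{\widehat\nu(\FFF)},
$$
and the last $N$-tuple is of the form $\int^{\oplus}_B\aAA^{(\FFF)}\dint{\widehat\nu(\FFF)}$ for a Borel set $B\subset\Ff_N$, so $\XXX\leqsl^s\YYY$ by \PRO{pd-transl}.

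For the converse $\XXX\leqsl^s\YYY\implies\widehat\mu\ll\widehat\nu$: by \PRO{pd-transl} there is a Borel set $B\subset\Ff_N$ with $\XXX=\int^{\oplus}_B\aAA^{(\FFF)}\dint{\widehat\nu(\FFF)}=\int^{\sqplus}_{\Ff_N}\FFF\dint{(\widehat\nu\bigr|_B)(\FFF)}$. Since also $\XXX=\int^{\sqplus}_{\Ff_N}\FFF\dint{\widehat\mu(\FFF)}$, it remains to establish the following uniqueness statement: if $\lambda,\lambda'$ are regularity measures on $\Ff_N$ with $\int^{\sqplus}_{\Ff_N}\FFF\dint{\lambda(\FFF)}\equiv\int^{\sqplus}_{\Ff_N}\FFF\dint{\lambda'(\FFF)}$, then $\NnN(\lambda)=\NnN(\lambda')$ (applying this with $\lambda=\widehat\mu$ and $\lambda'=\widehat\nu\bigr|_B$ yields $\widehat\mu\ll\widehat\nu\bigr|_B\ll\widehat\nu$). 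Here I would pick integrable fields $\FFF\mapsto\cCC^{(\FFF)}$ and $\FFF\mapsto\dDD^{(\FFF)}$ of representatives over $(\Ff_N,\lambda)$ and $(\Ff_N,\lambda')$, and set $\bBB=\int^{\oplus}_{\Ff_N}\cCC^{(\FFF)}\dint{\lambda(\FFF)}$, $\bBB'=\int^{\oplus}_{\Ff_N}\dDD^{(\FFF)}\dint{\lambda'(\FFF)}$. By \PRO{pd-transl}(iii), $\int^{\oplus}_{\Ff_N}\WWw''(\cCC^{(\FFF)})\dint{\lambda(\FFF)}$ and $\int^{\oplus}_{\Ff_N}\WWw''(\dDD^{(\FFF)})\dint{\lambda'(\FFF)}$ are the central decompositions, into factors, of $\WWw''(\bBB)$ and $\WWw''(\bBB')$. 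As $\bBB\equiv\bBB'$, these are two factor decompositions of one and the same (separably acting) von Neumann algebra, so by the uniqueness of the central decomposition into factors (reduction theory; see e.g. \cite{k-r2} or \cite{tk1}) there is a Borel isomorphism $\psi$ between the two base spaces, defined modulo $\lambda$- and $\lambda'$-null sets, carrying the measure class of $\lambda'$ to that of $\lambda$ and satisfying $\cCC^{(\psi(\GGG))}\equiv\dDD^{(\GGG)}$ for $\lambda'$-almost every $\GGG$. Since $\cCC^{(\psi(\GGG))}$ and $\dDD^{(\GGG)}$ represent $\psi(\GGG)$ and $\GGG$ respectively, we get $\psi(\GGG)=\GGG$ for $\lambda'$-almost every $\GGG$; thus $\psi$ is essentially the identity and $\NnN(\lambda)=\NnN(\lambda')$.

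The main obstacle is precisely this last step: one must invoke the uniqueness of the central decomposition in exactly the form where the base space is a measurable domain inside $\Ff_N$ and the fibre over a point is a representative of that very point, and then use the fact (available from \DEF{meas-domain} and \PRO{pd-transl}) that the distinct fibres of such a field are pairwise unitarily disjoint, in order to force the intertwining isomorphism to be the identity modulo null sets. The remaining points — standardness of restricted measures, regularity of restricted fields, compatibility of $\Phi^*$ and $\Psi^*$ with restriction to Borel subsets, and the identification $\int^{\oplus}_B\aAA^{(\FFF)}\dint{\widehat\nu(\FFF)}=\int^{\sqplus}_{\Ff_N}\FFF\dint{(\widehat\nu\bigr|_B)(\FFF)}$ — are routine and I would dispatch them briefly.
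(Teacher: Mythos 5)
Your proposal is correct and rests on the same key ingredient as the paper's proof: a unitary equivalence of the two regular direct integrals carries one algebra of diagonalizable operators onto the other, so the uniqueness of the central (factor) decomposition (Takesaki, Theorem~IV.8.23) yields a measure-class-preserving Borel isomorphism of the base spaces together with a fibrewise unitary intertwining the representative fields. The only organisational difference is that you first push both fields forward to $\Ff_N$ — which forces that Borel isomorphism to be the identity, since each fibre represents its own base point — and then derive (a) from (b) by antisymmetry of $\leqsl^s$, whereas the paper works on the abstract bases $X$ and $Y$, proves (a) directly and deduces (b) from it; your ordering merely streamlines the final transfer of measure classes that the paper leaves as an exercise.
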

\begin{proof}
We know that $\XXX = \int^{\sqplus}_{\Ff_N} \FFF \dint{\widehat{\mu}(\FFF)}$ and $\YYY = \int^{\sqplus}_{\Ff_N} \FFF
\dint{\widehat{\nu}(\FFF)}$. Observe that point (b) follows from (a) and \PRO{pd-transl}, and implication `$\impliedby$'
in (a) is a consequence of (DI4). To prove the converse one, assume $\xXX = \int^{\sqplus}_X \aAA^{(x)} \dint{\mu(x)}$ with
$\AAA^{(x)} = \Phi(x)$ for $\mu$-almost all $x \in X$, $\yYY = \int^{\sqplus}_Y \bBB^{(y)} \dint{\nu(y)}$ with $\BBB^{(y)}
= \Psi(y)$ for $\nu$-almost all $y \in Y$, and $U$ is a unitary operator such that $U \cdot \xXX \cdot U^{-1} = \yYY$.
It then follows from \PRO{pd-transl} that $U$ sends the algebra of all diagonalizable operators on $\overline{\DdD}(\xXX)$
onto the algebra of all diagonalizable operators on $\overline{\DdD}(\yYY)$. Thus, according
to \cite[Theorem~IV.8.23]{tk1}, there is a Borel isomorphism $\kappa\dd Y \setminus Y_0 \to X \setminus X_0$ where $X_0 \in
\NnN(\mu)$ and $Y_0 \in \NnN(\nu)$ such that
\begin{equation}\label{eqn:aux55}
\kappa^*(\nu) \ll \mu \ll \kappa^*(\nu)
\end{equation}
and $U$ may be written in the form $U = \int^{\oplus}_X U_x \sqrt{\frac{\dint{\kappa^*(\nu)}}{\dint{\mu}}(x)}
\dint{\mu(x)}$ where $\{U_x\}_{x \in X}$ is a certain measurable field of unitary operators (for the details we refer
to Takesaki's book \cite{tk1}). Since $U \cdot \bB(\xXX) = \bB(\yYY) \cdot U$, we conclude from (di1) \pREF{di1} that
$\int^{\oplus}_X U_x \cdot \bB(\aAA^{(x)}) \sqrt{\frac{\dint{\kappa^*(\nu)}}{\dint{\mu}}(x)} \dint{\mu(x)} =
\int^{\oplus}_X \bB(\bBB^{(\kappa(x))}) \cdot U_x \sqrt{\frac{\dint{\kappa^*(\nu)}}{\dint{\mu}}(x)} \dint{\mu(x)}$. Now
thanks to the uniqueness of the decomposition of a bounded decomposable operator and positivity of the function
$\sqrt{\frac{\dint{\kappa^*(\nu)}}{\dint{\mu}}}$, the latter equation implies that $U_x \cdot \bB(\aAA^{(x)}) =
\bB(\bBB^{(\kappa(x))}) \cdot U_x$ for $\mu$-almost all $x \in X$. Consequently, $\BBB^{(\kappa(x))} = \AAA^{(x)}$
for $\mu$-almost all $x \in X$. We leave this as an exercise that the latter combined with \eqref{eqn:aux55} gives
$\widehat{\mu} \ll \widehat{\nu} \ll \widehat{\mu}$ which finishes the proof.
\end{proof}

A similar result was obtained by Ernest (cf. \cite[Theorem~3.8]{e}). However, he was working (when speaking of the central
decomposition of an operator) with quasi-equivalence classes instead of unitary equivalence ones.\par
To avoid repetitions, let us say a function $f\dd X \to I_{\aleph_0}$ \textit{fits} to $(\mu,\Phi) \in \rgS(X,\Mm)$ iff
$f$ is almost measurable and there are disjoint measurable sets $X_1$ and $X_2$ such that $\mu(X \setminus (X_1 \cup X_2))
= 0$, $f(X_1) \subset \Card$ and $\Phi(X_2) \subset \sS_N$. Note that if the latter happens, the function $f \odot \Phi$
given by $(f \odot \Phi)(x) = f(x) \odot \Phi(x)$ is well defined on $X_1 \cup X_2$.

\begin{lem}{disj-pr}
Let $(\mu,\Phi) \in \rgS(X,\Mm)$ and $f\dd X \to I_{\aleph_0} \setminus \{0\}$ be a function which fits to $(\mu,\Phi)$.
Then $(\mu,f \odot \Phi) \in \rgS(X,\Mm)$ as well.
\end{lem}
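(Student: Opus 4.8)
The plan is to verify \emph{directly} the defining property of a regular field for $f\odot\Phi$, namely that $\int^{\oplus}_A(f\odot\Phi)(x)\dint{\mu(x)}\disj\int^{\oplus}_B(f\odot\Phi)(x)\dint{\mu(x)}$ for every pair of disjoint Borel sets $A,B\subset X$ (cf.\ \eqref{eqn:disj-int}). First I would put things in normal form. Since all quantities depend only on the $\mu$-almost-everywhere class, I may discard a $\mu$-null set and assume $(X,\Mm)$ is a standard Borel space with $\sigma$-finite $\mu$; by \PROP{meas-domain} I may shrink $X$ by a further $\mu$-null set so that $\FfF:=\Phi(X)$ is a measurable domain (hence a countably separated Borel subset of $\Ff_N$) and $\Phi$ is a Borel isomorphism onto $\FfF$. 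As for $f$: by hypothesis there are disjoint measurable $X_1,X_2$ with $\mu(X\setminus(X_1\cup X_2))=0$, $f(X_1)\subset\Card$, $\Phi(X_2)\subset\sS_N$, and $f$ is Borel off a $\mu$-null set; after discarding that null set and then redefining $f\equiv1$ on the (now $\mu$-null) leftover, I obtain a globally Borel $f\dd X\to I_{\aleph_0}\setminus\{0\}$ with $f\bigl(X\setminus\Phi^{-1}(\sS_N)\bigr)\subset\Card$ (indeed, $\Phi(x)\notin\sS_N$ forces $x\notin X_2$, hence $x\in X_1$ up to a null set), without changing $f\odot\Phi$ $\mu$-a.e. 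Then \THMP{arb-meas} applies and shows that $f\odot\Phi$ is an integrable field, so every direct integral below is meaningful; also $\Phi$ is integrable, being regular.

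The heart of the argument is the pointwise identity $\aleph_0\odot\bigl((f\odot\Phi)(x)\bigr)=\aleph_0\odot\Phi(x)$ for $\mu$-a.e.\ $x$, which I would establish by a short case check using the arithmetic conventions of Section~15. If $x\in X_1$ then $f(x)\in\Card\cap I_{\aleph_0}=\{1,2,3,\dots\}\cup\{\aleph_0\}$ and $\aleph_0\odot\bigl(n\odot\Phi(x)\bigr)=(\aleph_0\cdot n)\odot\Phi(x)=\aleph_0\odot\Phi(x)$. If $x\in X_2$ and $f(x)=\aleph_0$ the identity is immediate; and if $x\in X_2$ and $f(x)=t\in(0,\infty)$ it follows from \textup{(VS4)} (which gives $m\odot(t\odot\Phi(x))=(mt)\odot\Phi(x)$ for positive integers $m$) together with \textup{(AO6)} \pREF{AO6}, since then
\[
\aleph_0\odot\bigl(t\odot\Phi(x)\bigr)=\bigvee_{m\geqsl1}m\odot\bigl(t\odot\Phi(x)\bigr)=\bigvee_{m\geqsl1}(mt)\odot\Phi(x)=\bigvee_{k\geqsl1}k\odot\Phi(x)=\aleph_0\odot\Phi(x),
\]
the last two sups agreeing because the families $\{(mt)\odot\Phi(x)\}_m$ and $\{k\odot\Phi(x)\}_k$ are mutually cofinal with respect to $\leqsl$ by \textup{(VS2)}. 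I expect this case analysis---making the ``$t\odot$'' operation on semiprimes interact correctly with the cardinal arithmetic---to be the only genuinely delicate point; the rest is bookkeeping.

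Granting the identity, fix a measurable $A\subset X$. Applying \textup{(DI3)} twice, and \textup{(di4)} (a direct integral depends only on the $\mu$-a.e.\ class of the field), I get
\begin{multline*}
\aleph_0\odot\int^{\oplus}_A(f\odot\Phi)(x)\dint{\mu(x)}=\int^{\oplus}_A\bigl(\aleph_0\odot(f\odot\Phi)(x)\bigr)\dint{\mu(x)}\\
=\int^{\oplus}_A\bigl(\aleph_0\odot\Phi(x)\bigr)\dint{\mu(x)}=\aleph_0\odot\int^{\oplus}_A\Phi(x)\dint{\mu(x)}.
\end{multline*}
Both integrals lie in $\SsS\EeE\PpP_N$, so \CORP{ll}\,(II) with $\alpha=\aleph_0$ yields that $\XXX_A:=\int^{\oplus}_A(f\odot\Phi)(x)\dint{\mu(x)}$ and $\YYY_A:=\int^{\oplus}_A\Phi(x)\dint{\mu(x)}$ cover each other: $\XXX_A\ll\YYY_A\ll\XXX_A$. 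Now take disjoint Borel $A,B$. Regularity of $\Phi$ gives $\YYY_A\disj\YYY_B$. If some nontrivial $\ZZZ\in\CDD_N$ satisfied $\ZZZ\leqsl\XXX_A$ and $\ZZZ\leqsl\XXX_B$, then $\ZZZ\ll\YYY_A$ and $\ZZZ\ll\YYY_B$; choosing a cardinal $\gamma$ with $\ZZZ\leqsl\gamma\odot\YYY_A$ and $\ZZZ\leqsl\gamma\odot\YYY_B$ (by monotonicity of $\odot$) and invoking \textup{(PR2)} \pREF{PR2} to get $\gamma\odot\YYY_A\disj\gamma\odot\YYY_B$, we would contradict $\YYY_A\disj\YYY_B$. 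Hence $\XXX_A\disj\XXX_B$, which is exactly \eqref{eqn:disj-int} for $f\odot\Phi$. Thus $f\odot\Phi$ is a regular field over the standard measure space $(X,\Mm,\mu)$, i.e.\ $(\mu,f\odot\Phi)\in\rgS(X,\Mm)$.
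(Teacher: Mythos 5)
Your proposal is correct and follows essentially the same route as the paper's (very terse) proof: integrability via \THM{arb-meas}, then the computation $\aleph_0 \odot \int^{\oplus}_D f(x)\odot\Phi(x)\dint{\mu(x)} = \int^{\oplus}_D (\aleph_0\cdot f(x))\odot\Phi(x)\dint{\mu(x)} = \aleph_0\odot\int^{\oplus}_D\Phi(x)\dint{\mu(x)}$ via \textup{(DI3)}, and finally disjointness inherited from that of $\Phi$. The extra detail you supply (the pointwise case analysis justifying $\aleph_0\odot(t\odot\Phi(x))=\aleph_0\odot\Phi(x)$, and the explicit covering argument at the end) is exactly what the paper leaves implicit in the notation $\aleph_0\cdot f(x)$ and in its closing ``thus \dots since''.
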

\begin{proof}
It follows from \THMp{arb-meas} that $f \odot \Phi$ is integrable. Further, we infer from (DI3) \pREF{DI3} that $\aleph_0
\odot \int^{\oplus}_D f(x) \odot \Phi(x) \dint{\mu(x)} = \int^{\oplus}_D (\aleph_0 \cdot f(x)) \odot \Phi(x) \dint{\mu(x)}
= \aleph_0 \odot \int^{\oplus}_D \Phi(x) \dint{\mu(x)}$ and thus $\int^{\oplus}_D f(x) \odot \Phi(x) \dint{\mu(x)} \disj
\int^{\oplus}_{X \setminus D} f(x) \odot \Phi(x) \dint{\mu(x)}$ since $$\int^{\oplus}_D \Phi(x) \dint{\mu(x)} \disj
\int^{\oplus}_{X \setminus D} \Phi(x) \dint{\mu(x)}.$$
\end{proof}

Whenever a function $f\dd X \to I_{\aleph_0}$ fits to $(\mu,\Phi) \in \rgS(X,\Mm)$, we define $\int^{\sqplus}_X f(x) \odot
\Phi(x) \dint{\mu(x)}$ as follows. Put $s(f) = \{x \in X\dd\ f(x) > 0\}$ and take $X_0 \in \NnN(\mu)$ such that $f$ is
measurable on $X \setminus X_0$. If $\mu(s(f) \setminus X_0) > 0$, $\int^{\sqplus}_X f(x) \odot \Phi(x) \dint{\mu(x)}$
denotes $\int^{\sqplus}_{s(f) \setminus X_0} f(x) \odot \Phi(x) \dint{\mu(x)}$ (see \LEM{disj-pr}). Otherwise let
$\int^{\sqplus}_X f(x) \odot \Phi(x) \dint{\mu(x)} = \zero$. The usage of `$\int^{\sqplus}$' here is justified
by \LEM{disj-pr}.\par
Below we formulate a variation of \cite[Proposition~3.2]{e}. We shall use it in our theorem on prime decomposition.

\begin{lem}{pd-sep}
Let $\AAA \in \SsS\EeE\PpP_N$ be the direct sum of a minimal $N$-tuple and a semiminimal one.
\begin{enumerate}[\upshape(A)]
\item There is $\mu_{\AAA} \in \rgM(\pP_N)$ such that $\AAA = \int^{\sqplus}_{\pP_N} \PPP \dint{\mu_{\AAA}(\PPP)}$.
   For $\mu \in \rgM(\pP_N)$, $\AAA = \int^{\sqplus}_{\pP_N} \PPP \dint{\mu(\PPP)} \iff \mu \ll \mu_{\AAA} \ll \mu$.
\item For $\BBB \in \SsS\EeE\PpP_N$ \tfcae
   \begin{enumerate}[\upshape(i)]
   \item $\BBB \ll \AAA$,
   \item there is an almost measurable function $f\dd \pP_N \to I_{\aleph_0}$ such that $f(\aA_N) \subset \Card$,
      $f(\fF_N) \subset \{0,\aleph_0\}$ and
      \begin{equation}\label{eqn:pd-sep}
      \BBB = \int^{\sqplus}_{\pP_N} f(\PPP) \odot \PPP \dint{\mu_{\AAA}(\PPP)}.
      \end{equation}
   \end{enumerate}
\item Let $(\mu,\Phi) \in \rgS(X,\Mm)$.
   \begin{enumerate}[\upshape(a)]
   \item If $\Phi(X) \subset \aA_N$, $\int^{\sqplus}_X \Phi(x) \dint{\mu(x)} \in \MmM\FfF_N$.
   \item If $\Phi(X) \subset \fF_N$, $\int^{\sqplus}_X \Phi(x) \dint{\mu(x)} \in \HhH\IiI\MmM_N$.
   \item If $\Phi(X) \subset \sS_N$ and $f\dd X \to \RRR_+$ is almost measurable, $\int^{\sqplus}_X f(x) \odot \Phi(x)
      \dint{\mu(x)} \in \SsS\MmM_N$.
   \end{enumerate}
\end{enumerate}
\end{lem}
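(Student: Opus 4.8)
The plan is to read off the measure $\mu_{\AAA}$ and the functions $f$ from the central decompositions of the relevant von Neumann algebras, translating between projections and $N$-tuples via \PRO{transl} and between central decompositions and regular fields via \PRO{pd-transl}.

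For part (A), I would fix a representative $\aAA$ of $\AAA$ on a separable Hilbert space and write $\AAA = \MMM \sqplus \SSS$ with $\MMM$ minimal and $\SSS$ semiminimal (unitary disjointness of the summands is automatic, a semiminimal $N$-tuple being unitarily disjoint from every minimal one by \DEF{semiminimal}), so that $\WWw''(\aAA) = \WWw''(\mMM) \oplus \WWw''(\sSS)$ by \PRO{disjoint}. Splitting further $\MMM = \MMM_I \sqplus \MMM_{\tIII}$ according to $\MMM \leqsl \JJJ_I \sqplus \JJJ_{\tIII}$ (\THM{minimal}), the three pieces have $\WWw'$ commutative, type III and type II$_1$, respectively. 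The classical central decomposition of $\WWw''(\aAA)$ over a standard measure space then gives an integrable field $\{\aAA^{(x)}\}_{x\in X}$ with each $\WWw''(\aAA^{(x)})$ a factor and $\WWw''(\aAA) = \int^{\oplus}_X \WWw''(\aAA^{(x)})\dint{\mu(x)}$; by \PRO{pd-transl} the field $\{\AAA^{(x)}\}$ is regular and $\AAA = \int^{\sqplus}_X \AAA^{(x)}\dint{\mu(x)}$. The key point is that each $\AAA^{(x)}$ lands in $\pP_N$: the fibre commutants $\WWw'(\aAA^{(x)})$ are $\CCC I$ over the commutative part (so $\AAA^{(x)}$ is an atom), type III factors over the type III part (so, the fibres being separable, $\dim(\AAA^{(x)}) = \aleph_0$ and $\AAA^{(x)}$ is a fractal by \PRO{fractal}(B)), and finite type II factors over the type II$_1$ part (finiteness passing to almost every fibre), hence type II$_1$ factors, making $\AAA^{(x)}$ a semiprime by \PRO{semiprime}(I). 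Pushing $\mu$ forward along $x \mapsto \AAA^{(x)}$ onto $\pP_N$ gives $\mu_{\AAA} \in \rgM(\pP_N)$ (its support lies inside a measurable domain, and $\sigma$-finite measures on Souslin--Borel sets are standard), and (DI4) gives $\AAA = \int^{\sqplus}_{\pP_N}\PPP\dint{\mu_{\AAA}(\PPP)}$. The uniqueness clause is exactly \COR{ue}(a) applied to the inclusion fields $(\mu,\mathrm{incl})$ and $(\mu_{\AAA},\mathrm{incl})$ of $\pP_N$ into $\Ff_N$.

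For part (B), the implication (ii)$\Rightarrow$(i) is a computation: by (DI3) (and distributivity of $\aleph_0\odot$ over $\wedge$, cf.\ (AO14)) one gets $\aleph_0 \odot \BBB = \int^{\sqplus}_{\{f>0\}} \aleph_0 \odot \PPP \dint{\mu_{\AAA}(\PPP)} \leqsl^{s} \int^{\sqplus}_{\pP_N} \aleph_0 \odot \PPP \dint{\mu_{\AAA}(\PPP)} = \aleph_0 \odot \AAA$, whence $\BBB \ll \AAA$ by \COR{ll}(I). For (i)$\Rightarrow$(ii) I would observe that $\aleph_0 \odot \AAA = \int^{\sqplus}_{\pP_N} \aleph_0 \odot \PPP \dint{\mu_{\AAA}(\PPP)}$ realizes the central decomposition of $\WWw''(\aleph_0 \odot \aAA)$ (each $\aleph_0 \odot \PPP$ being a factor $N$-tuple), so that $\aleph_0 \odot \BBB \leqsl^{s} \aleph_0 \odot \AAA$ together with \PRO{pd-transl}(ii) yields a measurable $B_0 \subset \pP_N$ with $\aleph_0 \odot \BBB = \int^{\sqplus}_{B_0} \aleph_0 \odot \PPP \dint{\mu_{\AAA}(\PPP)}$. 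Since $\ZZz(\WWw''(\bBB)) = \ZZz(\WWw''(\aleph_0 \odot \bBB))$, the central decomposition of $\WWw''(\bBB)$ is carried by the same base $(B_0,\mu_{\AAA}\bigr|_{B_0})$, say $\BBB = \int^{\sqplus}_{B_0}\BBB^{(\PPP)}\dint{\mu_{\AAA}(\PPP)}$ with factor fibres $\BBB^{(\PPP)}$; uniqueness of the central decomposition (invoked in the proof of \COR{ue}) forces $\aleph_0 \odot \BBB^{(\PPP)} = \aleph_0 \odot \PPP$ for almost all $\PPP$, so $\BBB^{(\PPP)} = f(\PPP) \odot \PPP$ with $f(\PPP) = \BBB^{(\PPP)} : \PPP$, extended by $0$ off $B_0$ and normalized to $\aleph_0$ on $\fF_N$. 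Almost-measurability of $f$ comes from the machinery of Section~19 --- Borel-ness of the division function $\Div$ on $\rrR_N$ together with \THM{arb-meas} --- and then $\BBB = \int^{\sqplus}_{\pP_N} f(\PPP) \odot \PPP \dint{\mu_{\AAA}(\PPP)}$. I expect this measurability step --- matching the two central decompositions and extracting an almost measurable multiplicity function inside a measurable domain, $\SsS\EeE\PpP_N$ not being countably separated --- to be the main obstacle, everything else being a translation of standard direct-integral facts.

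Part (C) is the most routine. In each case the field (after \LEM{disj-pr} in (c), which makes $f \odot \Phi$ regular) is a regular field into $\Ff_N$, so by \PRO{pd-transl} the bicommutant of its integral is the direct integral of the fibre factor algebras; passing to commutants, $\WWw'$ of the integral is the direct integral of the $\WWw'(\aAA^{(x)})$ (resp.\ $\WWw'(f(x) \odot \aAA^{(x)})$), which are $\CCC I$ for atoms, type III factors for fractals, and type II$_1$ factors for scaled semiprimes; hence $\WWw'$ of the integral is commutative in (a), type III in (b), and finite type II (i.e.\ type II$_1$) in (c). By \THM{minimal}(III) this gives $\int^{\sqplus}_X\Phi(x)\dint{\mu(x)} \in \MmM\FfF_N$ in (a); in (b) the integral is type III, hence a hereditary idempotent by \THM{minimal}(VI), and --- being in $\SsS\EeE\PpP_N$, so of dimension $\aleph_0$ --- also minimal by the argument in the proof of \PRO{fractal}(B), so it lies in $\HhH\IiI\MmM_N$; and in (c) \THM{semiminimal}(I) gives membership in $\SsS\MmM_N$.
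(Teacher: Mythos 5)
Your proposal is correct and follows essentially the same route as the paper's proof: reduction theory / central decomposition of $\WWw''(\aAA)$ translated through \PRO{transl} and \PRO{pd-transl}, type preservation under central decomposition to place the fibres in $\pP_N$, \COR{ue} for the uniqueness clause, and the $\aleph_0\odot$ trick combined with the Borel division function $\Div$ to extract the multiplicity function in part (B). The only substantive points your outline compresses are (1) in part (A), the verification that the fibres of $\bB(\aAA)$ under the central decomposition stay in the range of the $\bB$-transform --- the paper rules out a positive-measure set where $\NnN(I-(T_j^{(x)})^*T_j^{(x)})\neq\{0\}$ by a measurable vector-field selection --- and (2) in part (B), the paper realizes your ``matching of the two central decompositions'' concretely as a Borel isomorphism $\Theta$ between measurable domains supplied by \PRO{meas-domain}, after which $f(\PPP)=\Theta(\PPP):\PPP$ is measurable exactly as you predict.
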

\begin{proof}
Let $\FFF$ be an arbitrary member of $\SsS\EeE\PpP_N$ and let $\fFF$ be a representative of $\FFF$. It follows from
the reduction theory of von Neumann algebras (see e.g. \cite[Theorem~IV.8.21]{tk1}) that there is a standard Borel space
$(X,\Mm)$ with a probabilistic Borel measure $\lambda$ and a measurable field $\{\MmM_x\}_{x \in X}$ of factors (each
of which acts on some $\HHh_n$) such that the von Neumann algebras  $\MmM := \int^{\oplus}_X \MmM_x \dint{\lambda(x)}$
and $\WWw''(\fFF)$ are spatially isomorphic. Write $\bB(\fFF) = (T_1,\ldots,T_N)$. $T_j$ corresponds (under the spatial
isomorphism) to $T_j' \in \MmM$. Since then $\bB(\fFF) \equiv (T_1',\ldots,T_N')$, we see that there is $\fFF' \in \CDDc_N$
such that $\bB(\fFF') = (T_1',\ldots,T_N')$ and consequently $\fFF' \equiv \fFF$. Thus replacing $\fFF$ by $\fFF'$, we may
assume that $\WWw''(\fFF) = \MmM$. Write $T_j = \int^{\oplus}_X T^{(x)}_j \dint{\lambda(x)}$ where $T^{(x)}_j \in \MmM_x$
for $\lambda$-almost all $x \in X$. Since $\|T_j\| \leqsl 1$, we also have $\|T^{(x)}_j\| \leqsl 1$ $\lambda$-almost
everywhere. Further, the function $x \mapsto \NnN(I - (T^{(x)}_j)^* T^{(x)}_j)$ is measurable (in the target space
we consider the Effros Borel structure separately on each of $\HHh_n$'s) and hence the set $X_0 = \{x \in X\dd\
\NnN(I - (T^{(x)}_j)^* T^{(x)}_j) \neq \{0\}\}$ is measurable. Suppose $\lambda(X_0) > 0$. Then there exists a measurable
vector field $x \mapsto \xi_x$ such that $\xi_x \in \NnN(I - (T^{(x)}_j)^* T^{(x)}_j)$ and $\|\xi_x\| \leqsl 1$
for $\lambda$-almost all $x \in X$, and $\int_X \|\xi_x\|^2 \dint{\lambda(x)} > 0$ (see Corollary after Theorem~2
in \cite{ef1}; or \cite[Corollary~IV.8.3]{tk1}). We infer from this that $\xi = \int^{\oplus}_X \xi_x \dint{\lambda(x)}$
is well defined and nonzero, and $T_j^* T_j \xi = \xi$ which denies the fact that $T_j$ is a value of the $\bB$-transform.
This shows that $\lambda(X_0) = 0$ and hence for $\lambda$-almost all $x \in X$ there is an operator $F^{(x)}_j \in \CDDc$
such that $\bB(F^{(x)}_j) = T^{(x)}_j$. Put $\fFF^{(x)} = (F^{(x)}_1,\ldots,F^{(x)}_N)$ and observe that the function
$x \mapsto \fFF^{(x)}$ is measurable (since the $\bB$-transform is an isomorphism) and $\fFF = \int^{\oplus}_X \fFF^{(x)}
\dint{\lambda(x)}$. Since the field $x \mapsto \WWw''(\fFF^{(x)})$ is measurable and $\WWw''(\fFF^{(x)}) \subset \MmM_x$,
$\int^{\oplus}_X \WWw''(\fFF^{(x)}) \dint{\lambda(x)} \subset \MmM = \WWw''(\fFF)$. At the same time, $T_1,\ldots,T_N \in
\int^{\oplus}_X \WWw''(\fFF^{(x)}) \dint{\lambda(x)}$ and therefore $\WWw''(\fFF) \subset \int^{\oplus}_X
\WWw''(\fFF^{(x)}) \dint{\lambda(x)}$ as well. We conclude from this that $\WWw''(\fFF^{(x)}) = \MmM_x$
for $\lambda$-almost all $x \in X$ and consequently $\int^{\oplus}_X \WWw''(\fFF^{(x)}) \dint{\lambda(x)}$ is the central
decomposition of $\WWw''(\fFF)$. In particular, $\FFF^{(x)} \in \Ff_N$ for $\lambda$-almost all $x \in X$. Now
\PRO{pd-transl} implies that $\FFF = \int^{\sqplus}_X \Phi(x) \dint{\lambda(x)}$ where $\Phi\dd X \ni x \mapsto \FFF^{(x)}
\in \Ff_N$. Let $\mu_{\FFF} = \Phi^*(\lambda) \in \rgM(\Ff_N)$. We know that then
\begin{equation}\label{eqn:centr-dec}
\FFF = \int^{\sqplus}_{\Ff_N} \XXX \dint{\mu_{\FFF}(\XXX)}.
\end{equation}
Further, since central decompositions of von Neumann algebras preserve the types (\cite[Theorem~14.1.21]{k-r1}
or \cite[Corollary~V.6.7]{tk1}), we infer from this that $\FFF$ is type I, I$^n$, II, II$^1$, II$^{\infty}$ or III iff
$\mu_{\FFF}$-almost all $\XXX \in \Ff_N$ are such. In particular, if $\FFF$ is the direct sum of a minimal $N$-tuple
and a semiminimal one, $\WWw''(\fFF)$ decomposes into type I$_1$, II$_1$ and III parts (and no other parts)
and consequently $\mu_{\FFF}$-almost all $\XXX \in \Ff_N$ are type I$^1$ (atoms) or II$^1$ (semiprimes), or III
(fractals)---cf. Propositions~\ref{pro:fractal} \pREF{pro:fractal} and \ref{pro:semiprime} \pREF{pro:semiprime}. This
proves the first claim of (A). The remainder of (A) follows from \COR{ue}.\par
We pass to (B). First of all, note that \eqref{eqn:pd-sep} makes sense thanks to \LEM{disj-pr}. Suppose $\BBB$ is
given by \eqref{eqn:pd-sep}. We may assume that $f$ is measurable. Then $s(f) = \{\PPP \in \pP_N\dd\ f(\PPP) > 0\} \in
\Bb_N$. It follows from (DI3) \pREF{DI3} that $\aleph_0 \odot \BBB = \int^{\oplus}_{\pP_N} (\aleph_0 \cdot f(\PPP)) \odot
\PPP \dint{\mu_{\AAA}(\PPP)} = \aleph_0 \odot \int^{\oplus}_{s(f)} \PPP \dint{\mu_{\AAA}(\PPP)} \leqsl \aleph_0 \odot \AAA$
and thus $\BBB \ll \AAA$.\par
Now assume that $\BBB \ll \AAA$. Let $\mu_{\BBB} \in \rgM(\Ff_N)$ be as in \eqref{eqn:centr-dec} with $\FFF = \BBB$. Since
$\BBB \ll \AAA$ and $\AAA, \BBB \in \SsS\EeE\PpP_N$, $\aleph_0 \odot \BBB \leqsl^s \aleph_0 \odot \AAA$ (cf. \CORP{ll}).
So, (PR6) \pREF{PR6} and \PRO{pd-transl} yield that there is a measurable set $\BbB \subset \pP_N$ such that $\aleph_0
\odot \BBB = \aleph_0 \odot \int^{\sqplus}_{\BbB} \PPP \dint{\mu_{\AAA}(\PPP)}$. Now we infer from (DI3) and \LEM{disj-pr}
that
\begin{equation}\label{eqn:aux110}
\int^{\sqplus}_{\Ff_N} \aleph_0 \odot \FFF \dint{\mu_{\BBB}(\FFF)} = \int^{\sqplus}_{\BbB} \aleph_0 \odot \PPP
\dint{\mu_{\AAA}(\PPP)}.
\end{equation}
An application of \PRO{meas-domain} shows that there are measurable domains $\FfF_0 \subset \BbB$ and $\GgG_0 \subset
\Ff_N$ such that $\mu_{\AAA}(\BbB \setminus \FfF_0) = 0$, $\mu_{\BBB}(\Ff_N \setminus \GgG_0) = 0$, $\FfF_0^* = \{\aleph_0
\odot \PPP\dd\ \PPP\in\FfF_0\} \in \Bb_N$, $\GgG_0^* = \{\aleph_0 \odot \FFF\dd\ \FFF\in\GgG_0\} \in \Bb_N$, the sets
$\FfF_0$, $\GgG_0$, $\FfF_0^*$ and $\GgG_0^*$ are standard Borel spaces and the functions $\Phi\dd \FfF_0 \ni \PPP \mapsto
\aleph_0 \odot \PPP \in \FfF_0^*$ and $\Psi\dd \GgG_0 \ni \FFF \mapsto \aleph_0 \odot \FFF \in \GgG_0^*$ are Borel
isomorphisms. Put $\FfF = \Phi^{-1}(\FfF_0^* \cap \GgG_0^*) \in \Bb_N$ and $\GgG = \Psi^{-1}(\FfF_0^* \cap \GgG_0^*) \in
\Bb_N$. Let $\Theta = \Psi^{-1} \circ \Phi\bigr|_{\FfF}$. Observe that $\Theta$ is a Borel isomorphism of $\FfF$ onto
$\GgG$. One may deduce from \COR{ue} and \eqref{eqn:aux110} that $\mu_{\AAA}(\BbB \setminus \FfF) = 0$
and $\mu_{\BBB}(\Ff_N \setminus \GgG) = 0$, and $\lambda \ll \mu_{\AAA}\bigr|_{\FfF} \ll \lambda$ where $\lambda(\sigma) =
\mu_{\BBB}(\Theta(\sigma \cap \FfF))$ for measurable $\sigma \subset \pP_N$. Consequently (by (DI4), \PREF{DI4}),
\begin{equation}\label{eqn:aux50}
\YYY = \int^{\sqplus}_{\FfF} \Theta(\PPP) \dint{\mu_{\AAA}(\PPP)}.
\end{equation}
Since $\Theta(\PPP) \ll \PPP$ for any $\PPP \in \FfF$, we may define $f\dd \pP_N \to I_{\aleph_0}$ by $f(\PPP) =
\Theta(\PPP) : \PPP$ for $\PPP \in \FfF$ and $f(\PPP) = 0$ for $\PPP \in \pP_N \setminus \FfF$. Thanks
to \eqref{eqn:aux50}, it suffices to show that $f\bigr|_{\FfF}$ is measurable. Since $\FfF$ and $\GgG$ are standard Borel
spaces, the graph $\Gamma = \{(\PPP,\Theta(\PPP))\dd\ \PPP \in \FfF\}$ of $\Theta$ is a Borel subset of $\FfF \times \GgG$
and $u\dd \FfF \ni \PPP \mapsto (\PPP,\Theta(\PPP)) \in \Gamma$ is a Borel isomorphism. Finally, since $\Div$ is Borel (see
Section~19, \PREF{Div}), so is the function $v\dd \Gamma \ni (\AAA,\BBB) \mapsto \BBB : \AAA \in I_{\aleph_0}$ (here is
important that $\FfF$ and $\GgG$ are standard Borel spaces). The notice that $f\bigr|_{\FfF} = v \circ u$ finishes
the proof.\par
Finally, point (C) follows from \PRO{pd-transl} and the previously mentioned fact that central decompositions
of von Neumann algebras preserve the types.
\end{proof}

The formula \eqref{eqn:centr-dec} corresponds to Ernest's central decomposition of a bounded operator
\cite[Chapter~III]{e}. It is however not of our interest. Also a variation of \eqref{eqn:pd-sep} appears
in \cite[Lemma~4.4]{e}.\par
In the sequel we shall need one more result.

\begin{lem}{orth-meas}
For $\mu, \nu \in \rgM(\Ff_N)$ \tfcae
\begin{enumerate}[\upshape(i)]
\item $\int^{\sqplus}_{\Ff_N} \FFF \dint{\mu(\FFF)} \disj \int^{\sqplus}_{\Ff_N} \FFF \dint{\nu(\FFF)}$,
\item there are measurable sets $\AaA, \BbB \subset \Ff_N$ such that $\mu(\Ff_N \setminus \AaA) = 0$, $\nu(\Ff_N \setminus
   \BbB) = 0$ and $\AaA \disj \BbB$,
\item there are measurable sets $\AaA, \BbB \subset \Ff_N$ such that $\mu(\Ff_N \setminus \AaA) = 0$, $\nu(\Ff_N \setminus
   \BbB) = 0$ and $\AaA \sdisj \BbB$,
\item $\mu \perp \nu$ and $\mu + \nu \in \rgM(\Ff_N)$.
\end{enumerate}
\end{lem}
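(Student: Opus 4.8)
The plan is to prove the cycle $\text{(iii)}\Rightarrow\text{(i)}\Rightarrow\text{(ii)}\Rightarrow\text{(iii)}$ together with $\text{(iii)}\Rightarrow\text{(iv)}\Rightarrow\text{(ii)}$, after a preliminary reduction. Since $\mu,\nu\in\rgM(\Ff_N)$, each is concentrated on a measurable domain; fix measurable domains $\AaA_0$ and $\BbB_0$ carrying $\mu$ and $\nu$ respectively. By $\sigma$-finiteness I may replace $\mu,\nu$ by equivalent finite measures: all four conditions depend on $\mu,\nu$ only through $\NnN(\mu),\NnN(\nu)$ (the direct integrals being invariant under passage to an equivalent measure, by (di5)/(DI4)). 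I will use repeatedly that a factor $N$-tuple is never unitarily disjoint from itself, so that two unitarily disjoint measurable domains are literally disjoint as subsets of $\Ff_N$; and that a measurable subset of a measurable domain is again one, so intersecting the sets occurring in (ii)--(iv) with $\AaA_0,\BbB_0$ lets me assume throughout that the sets carrying $\mu$ and $\nu$ are measurable domains.

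For $\text{(iii)}\Rightarrow\text{(i)}$ and $\text{(iii)}\Rightarrow\text{(iv)}$: after the reduction $\AaA,\BbB$ are disjoint measurable domains with $\AaA\sdisj\BbB$, so their union is again a measurable domain (union of two strongly unitarily disjoint measurable domains). Hence the identity field of $(\AaA\cup\BbB,\mu+\nu)$ into $\Ff_N$ is regular by \PRO{meas-domain} (direction (ii)$\Rightarrow$(i)), and \eqref{eqn:disj-int} applied to the partition $\AaA\sqcup\BbB$ yields $\int^{\sqplus}_{\AaA}\FFF\dint{\mu(\FFF)}\disj\int^{\sqplus}_{\BbB}\FFF\dint{\nu(\FFF)}$, which is (i). For (iv): disjointness of $\AaA,\BbB$ gives $\mu\perp\nu$, while $\mu+\nu$ is $\sigma$-finite and concentrated on the measurable domain $\AaA\cup\BbB$, hence lies in $\rgM(\Ff_N)$.

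For $\text{(i)}\Rightarrow\text{(ii)}$ and $\text{(iv)}\Rightarrow\text{(ii)}$: choose integrable fields of representatives over $(\AaA_0,\mu)$ and $(\BbB_0,\nu)$ and set $\xXX=\int^{\oplus}_{\AaA_0}\aAA^{(x)}\dint{\mu(x)}$, $\yYY=\int^{\oplus}_{\BbB_0}\bBB^{(y)}\dint{\nu(y)}$, so that $\xXX,\yYY$ are the integrals in (i). Realise $\xXX\oplus\yYY$ as the direct integral over the standard measure space $(\AaA_0\sqcup\BbB_0,\lambda)$, $\lambda=\mu\sqcup\nu$, via (di2). Granting (i), i.e. $\xXX\disj\yYY$, I would verify that this field is regular: for disjoint Borel $A,B\subseteq\AaA_0\sqcup\BbB_0$ the four ``cross'' disjointness relations among the integrals over $A\cap\AaA_0$, $A\cap\BbB_0$, $B\cap\AaA_0$, $B\cap\BbB_0$ hold --- two because each field over $\AaA_0$ and over $\BbB_0$ is regular (supported on a measurable domain), two more from $\xXX\disj\yYY$ and monotonicity of $\disj$ under passage to reduced parts --- whence $\int^{\oplus}_{A}\disj\int^{\oplus}_{B}$ by (PR2). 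Then \PRO{meas-domain} (direction (i)$\Rightarrow$(ii)) produces a $\lambda$-conull Borel set on which the field is a Borel isomorphism onto a measurable domain; cutting it into its two parts yields disjoint conull measurable sets $\AaA\subseteq\AaA_0$, $\BbB\subseteq\BbB_0$ lying inside a single measurable domain, so $\AaA\disj\BbB$, i.e. (ii). For $\text{(iv)}\Rightarrow\text{(ii)}$: from $\mu+\nu\in\rgM(\Ff_N)$, concentrated on a measurable domain $\CcC$, and $\mu\perp\nu$, one takes $\AaA=A_0\cap\CcC$, $\BbB=B_0\cap\CcC$ with $A_0,B_0$ the supporting disjoint sets; these are disjoint measurable subsets of $\CcC$, hence $\AaA\disj\BbB$.

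The remaining implication $\text{(ii)}\Rightarrow\text{(iii)}$ is the heart of the matter and the step I expect to be the main obstacle. After the reduction I have disjoint measurable domains $\AaA,\BbB$ with $\mu$ on $\AaA$, $\nu$ on $\BbB$ and $\AaA\disj\BbB$, and I must discard a $\mu$- resp. $\nu$-null set so as to upgrade plain unitary disjointness to strong unitary disjointness. This genuinely requires measure theory: \PRO{wdisj} only supplies a \emph{net} of polynomials witnessing $\AaA\disj\BbB$, and, as the discussion following \DEF{sdisj} shows, for uncountable families $\disj$ is strictly weaker than $\sdisj$. The plan is to mimic the proof of \PRO{meas-domain}: combine the separating sequences of the measurable domains $\AaA$ and $\BbB$ with the nets furnished by \PRO{wdisj} applied to the relevant pieces, push everything through the direct integrals over $(\AaA,\mu)$ and $(\BbB,\nu)$, and invoke a Nikodym/almost-everywhere selection result (Proposition~3.2.7 of \cite{sak}, exactly as in the proof of \PRO{meas-domain}) to extract, off $\mu$- resp. $\nu$-null sets, a \emph{single} sequence in $\PpP_1(N)$ witnessing strong unitary disjointness of the surviving full-measure subsets; \PRO{sdisj-wdisj} handles the countable bookkeeping. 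Producing that one common polynomial sequence valid almost everywhere is the delicate point; everything else follows routinely from the direct-integral machinery and from \PRO{meas-domain}. (Together the proved implications close the cycle: $\text{(ii)}\Leftrightarrow\text{(iii)}$, $\text{(iii)}\Rightarrow\text{(i)}\Rightarrow\text{(ii)}$, and $\text{(iii)}\Rightarrow\text{(iv)}\Rightarrow\text{(ii)}$.)
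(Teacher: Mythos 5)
Your implications (iii)$\Rightarrow$(i), (i)$\Rightarrow$(ii), (iii)$\Rightarrow$(iv) and (iv)$\Rightarrow$(ii) are sound (the verification of regularity of the combined field in (i)$\Rightarrow$(ii) via the four cross-disjointness relations and (PR2) is correct), but your cycle does not close: it stands or falls with (ii)$\Rightarrow$(iii), which you rightly call the main obstacle and then do not actually overcome. The obstruction is exactly the one you name and set aside. From (ii) alone, \PRO{wdisj} applied to the families $\AaA$ and $\BbB$ produces only a \emph{net} in $\PpP_1(N)$, and the almost-everywhere selection result (\cite[Proposition~3.2.7]{sak}) extracts subsequences, not subnets; it has no purchase here. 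To obtain a genuine \emph{sequence} one must invoke Kaplansky's density theorem on a separably acting algebra, i.e.\ one must already know that the two direct integrals $\XXX=\int^{\sqplus}_{\Ff_N}\FFF\dint{\mu(\FFF)}$ and $\YYY=\int^{\sqplus}_{\Ff_N}\FFF\dint{\nu(\FFF)}$ are unitarily disjoint --- which is condition (i). Since your cycle establishes (i)$\Rightarrow$(ii) but not (ii)$\Rightarrow$(i), you cannot use (i) inside the proof of (ii)$\Rightarrow$(iii); and ``mimicking the proof of \PRO{meas-domain}'' is unavailable for the same reason, because that proof starts from regularity of the field, i.e.\ from unitary disjointness of separably acting integrals, precisely the datum you lack.

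The paper closes the loop by a different route, (ii)$\Rightarrow$(i)$\Rightarrow$(iv)$\Rightarrow$(iii), and the step you are missing is (ii)$\Rightarrow$(i), proved by contradiction: if (i) fails, there are nontrivial $\AAA\leqsl^s\XXX$ and $\BBB\leqsl^s\YYY$ with $\aleph_0\odot\AAA=\aleph_0\odot\BBB$; by \COR{ue} these are integrals over measurable sets, and the uniqueness part of \COR{ue}, applied to the pushforward measures under $\FFF\mapsto\aleph_0\odot\FFF$, forces two mutually absolutely continuous measures to be carried by the unitarily disjoint families $\AaA$ and $\BbB$, so both vanish --- a contradiction. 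With (i) in hand one gets (iv) via the central-decomposition measure $\mu_{\AAA\sqplus\BBB}$ and \COR{ue}, and (iv)$\Rightarrow$(iii) is then exactly the argument you wanted to run: $\mu+\nu\in\rgM(\Ff_N)$ makes the single field over the union of the two carriers regular, its two halves are separably acting and unitarily disjoint, \PRO{wdisj} supplies a sequence, and the a.e.\ selection discards a $(\mu+\nu)$-null set. To repair your proof, replace (ii)$\Rightarrow$(iii) by (ii)$\Rightarrow$(i) along these lines and derive (iii) from (iv) rather than from (ii).
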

\begin{proof}
(i)$\implies$(iv): Put $\AAA = \int^{\sqplus}_{\Ff_N} \XXX \dint{\mu(\XXX)}$, $\BBB = \int^{\sqplus}_{\Ff_N} \XXX
\dint{\nu(\XXX)}$ and $\FFF = \AAA \sqplus \BBB$, and let $\lambda = \mu_{\FFF}$ where $\mu_{\FFF}$ is
as in \eqref{eqn:centr-dec}. Since $\AAA, \BBB \leqsl^s \FFF$, we infer from \COR{ue} that $\mu, \nu \ll \lambda$. So,
$\mu + \nu \ll \lambda$ and therefore $\mu + \nu \in \rgM(\Ff_N)$. Further, there are measurable sets $\AaA, \BbB \subset
\Ff_N$ such that $\mu \ll \lambda\bigr|_{\AaA} \ll \mu$ and $\nu \ll \lambda\bigr|_{\BbB} \ll \nu$ and consequently, again
by \COR{ue}, $\AAA = \int^{\sqplus}_{\AaA} \XXX \dint{\lambda(\XXX)}$ and $\BBB = \int^{\sqplus}_{\BbB} \XXX
\dint{\lambda(\XXX)}$. Since then $\int^{\sqplus}_{\AaA \cap \BbB} \XXX \dint{\lambda(\XXX)} \leqsl^s \AAA, \BBB$, one has
$\lambda(\AaA \cap \BbB) = 0$ and hence $\mu \perp \nu$.\par
(iv)$\implies$(iii): Put $\lambda = \mu + \nu$ and let $\AaA_0$ and $\BbB_0$ be disjoint measurable subsets of $\Ff_N$
on which (respectively) $\mu$ and $\nu$ are concentrated. Since $\lambda \in \rgM(\Ff_N)$, $\int^{\oplus}_{\AaA_0} \FFF
\dint{\lambda(\FFF)} \disj \int^{\oplus}_{\BbB_0} \FFF \dint{\lambda(\FFF)}$ which yields (cf. \PROP{wdisj}, and the proof
of \PROP{meas-domain}, or \cite[Proposition~3.2.7]{sak}) that there exist a sequence $(p_n)_{n=1}^{\infty} \subset
\PpP_1(N)$ and a set $\ZzZ \in \NnN(\lambda)$ such that $p_n(\bB(\FFF),\bB(\FFF)^*) \stSt j(\FFF) I$ for each $\FFF \in
(\AaA_0 \cup \BbB_0) \setminus \ZzZ$ where $j$ is the characteristic function of $\AaA_0$. Consequently, $\mu$ and $\nu$
are concentrated on, respectively, $\AaA = \AaA_0 \setminus \ZzZ$ and $\BbB = \BbB_0 \setminus \ZzZ$, and $\AaA \sdisj
\BbB$.\par
Since (ii) obviously follows from (iii), it remains to show that (i) is implied by (ii). Suppose (i) is false. This means
that there are nontrivial $N$-tuples $\AAA \leqsl^s \int^{\sqplus}_{\Ff_N} \FFF \dint{\mu(\FFF)}$ and $\BBB \leqsl^s
\int^{\sqplus}_{\Ff_N} \FFF \dint{\nu(\FFF)}$ such that $\aleph_0 \odot \AAA = \aleph_0 \odot \BBB$. By \COR{ue}, there are
measurable sets $\AaA_1, \BbB_1 \subset \Ff_N$ such that $\AAA = \int^{\sqplus}_{\AaA_1} \FFF \dint{\mu(\FFF)}$ and $\BBB =
\int^{\sqplus}_{\BbB_1} \FFF \dint{\nu(\FFF)}$. All these combined with (DI3) \pREF{DI3} and \LEM{disj-pr} give
\begin{equation}\label{eqn:aux111}
\int^{\sqplus}_{\AaA_1 \cap \AaA} \aleph_0 \odot \FFF \dint{\mu(\FFF)} = \int^{\sqplus}_{\BbB_1 \cap \BbB} \aleph_0 \odot
\FFF \dint{\nu(\FFF)}
\end{equation}
where $\AaA$ and $\BbB$ are as in (ii). Thanks to \PROp{meas-domain}, we may assume that $\FfF = \{\aleph_0 \odot \FFF\dd\
\FFF \in \AaA_1 \cap \AaA\}$ and $\GgG = \{\aleph_0 \odot \FFF\dd\ \FFF \in \BbB_1 \cap \BbB\}$ are measurable. We conclude
from unitary disjointness of $\AaA$ and $\BbB$ that
\begin{equation}\label{eqn:aux112}
\Phi^*(\mu)(\GgG) = 0 \qquad \textup{and} \qquad \Phi^*(\nu)(\FfF) = 0
\end{equation}
where $\Phi\dd \Ff_N \ni \FFF \mapsto \aleph_0 \odot \FFF \in \Ff_N$. But \eqref{eqn:aux111} yields, by \COR{ue}, that
$\Phi^*(\mu) \ll \Phi^*(\nu) \ll \Phi^*(\mu)$. Consequently, it follows from \eqref{eqn:aux112} that $\mu(\AaA_1 \cap \AaA)
= 0$ and $\nu(\BbB \cap \BbB_1) = 0$ which denies the fact that $\AAA$ and $\BBB$ were nonzero.
\end{proof}

Taking into account the above result, for arbitrary two measures $\mu,\nu \in \rgM(\Ff_N)$ we shall write $\mu \perp_s \nu$
iff any of the equivalent conditions (i)--(iv) of \LEM{orth-meas} is fulfilled.

\SECT{`Continuous' direct sums}

Property (DI4) \pREF{DI4} suggests replacing standard measures $\mu$ by their null $\sigma$-ideals $\NnN(\mu)$. In this
section we follow this concept. In that way we shall extend the notion of the (standard `discrete') direct sum to more
general context. We begin with

\begin{dfn}{meas-null}
A \textit{measurable space with nullity} is a triple $(\xxX,\Mm,\NnN)$ where $(\xxX,\Mm)$ is a measurable space and $\NnN$
is a $\sigma$-ideal in $\Mm$; that is, $\varempty \in \NnN \subset \Mm$, $\bigcup_{n=1}^{\infty} A_n \in \NnN$ whenever
$\{A_n\}_{n=1}^{\infty} \subset \NnN$ and $\{B \in \Mm\dd\ B \subset A\} \subset \NnN$ for every $A \in \NnN$.\par
Whenever $(\xxX,\Mm,\NnN)$ is a measurable space with nullity, $\overline{\NnN}$ denotes the family of all (possibly
nonmeasurable) sets which are contained in members of $\NnN$. Members of $\overline{\NnN}$ are called \textit{null} sets,
the other subsets of $\xxX$ are called \textit{nonnull}. For $Y \in \Mm$, $(Y,\Mm\bigr|_Y,\NnN\bigr|_Y)$ is the induced
measurable space with nullity, i.e. $\Mm\bigr|_Y = \{B \in \Mm\dd\ B \subset Y\}$ and $\NnN\bigr|_Y = \Mm\bigr|_Y \cap
\NnN$. The space $(\xxX,\Mm,\NnN)$ is \textit{trivial} iff $\xxX \in \NnN$.\par
A function $\Phi\dd \xxX_1 \to \xxX_2$ is a \textit{null-isomorphism} between measurable spaces with nullities
$(\xxX_1,\Mm_1,\NnN_1)$ and $(\xxX_2,\Mm_2,\NnN_2)$ if $\Phi$ is a Borel isomorphism such that $\NnN_2 = \{\Phi(Z)\dd\
Z \in \NnN_1\}$. If $\Psi\dd X_1 \to \xxX_2$ (with $X_1 \subset \xxX_1$) is a function such that there are sets
$Z_1 \in \NnN_1$ and $Z_2 \in \NnN_2$ for which $\xxX_1 \setminus Z_1 \subset X_1$ and $\Psi\bigr|_{\xxX_1 \setminus Z_1}$
is a null-isomorphism of $(\xxX_1 \setminus Z_1,\Mm_1\bigr|_{\xxX_1 \setminus Z_1},\NnN_1\bigr|_{\xxX_1 \setminus Z_1})$
onto $(\xxX_2 \setminus Z_2,\Mm_2\bigr|_{\xxX_2 \setminus Z_2},\NnN_2\bigr|_{\xxX_2 \setminus Z_2})$, $\Psi$ is said to be
an \textit{almost null-isomorphism} and spaces $(\xxX_1,\Mm_1,\NnN_1)$ and $(\xxX_2,\Mm_2,\NnN_2)$ are \textit{almost
isomorphic}. Similarly, a function $u\dd X \to Y$ (where $X \subset \xxX$, $(\xxX,\Mm,\NnN)$ is a measurable space with
nullity and $(Y,\Nn)$ is a measurable space) is said to be \textit{almost measurable} iff there is a set $X' \in \Mm$
contained in $X$ such that $X \setminus X' \in \overline{\NnN}$ and $u\bigr|_{X'}$ is measurable.
\end{dfn}

Our main interest are measurable spaces whose nullities come from certain measures. For this purpose we introduce

\begin{dfn}{standard}
Let $(\xxX,\Mm,\NnN)$ be a measurable space with nullity. A measurable set $A \subset \xxX$ is \textit{standard} iff
$(A,\Mm\bigr|_A,\NnN\bigr|_A)$ is almost isomorphic to $(Y,\Nn,\NnN(\nu))$ for some standard measure space
$(Y,\Nn,\nu)$. Standard sets are nonnull.\par
A family $\BbB$ is said to be a \textit{base} of $(\xxX,\Mm,\NnN)$ iff the following two conditions are fulfilled:
\begin{itemize}
\item $\BbB$ consists of pairwise disjoint measurable sets and $\xxX \setminus \bigcup \BbB \in \NnN$,
\item for any $A \subset \bigcup \BbB$, $A \in \Mm$ (respectively $A \in \NnN$) iff $A \cap B \in \Mm$
   ($A \cap B \in \NnN$) for any $B \in \BbB$.
\end{itemize}
A base is \textit{standard} iff it consists of standard sets. $(\xxX,\Mm,\NnN)$ is called \textit{multi-standard} iff
it admits a standard base.
\end{dfn}

Let $\ffF = \{(\xxX_s,\Mm_s,\NnN_s)\}_{s \in S}$ be a family of measurable spaces with nullities. The direct sum of $\ffF$,
denoted by $\bigoplus_{s \in S} (\xxX_s,\Mm_s,\NnN_s)$, is a measurable space with nullity $(\xxX,\Mm,\NnN)$ defined
as follows: $\xxX = \bigcup_{s \in S} (\xxX_s \times \{s\})$; $\pi\dd \xxX \to \bigcup_{s \in S} \xxX_s$ is given
by $\pi(x,s) = x$; $A \in \Mm$ (respectively $A \in \NnN$) iff $\pi(A \cap (\xxX_s \times \{s\})) \in \Mm_s$
($\pi(A \cap (\xxX_s \times \{s\})) \in \NnN_s$) for every $s \in S$. Note that $\{\xxX_s \times \{s\}\}_{s \in S}$ is
a base of $\bigoplus_{s \in S} (\xxX_s,\Mm_s,\NnN_s)$. We call $\pi$ the \textit{canonical projection}.\par
Let $(\xxX,\Mm,\NnN)$ be a multi-standard measurable space with nullity. Let $\xxX^d$ be the set of all points $x \in \xxX$
such that $\{x\} \notin \overline{\NnN}$. One may show that $\xxX^d \in \Mm$ (since $\xxX$ is multi-standard),
$\Mm\bigr|_{\xxX^d}$ is the power set of $\xxX^d$ and $\NnN\bigr|_{\xxX^d} = \{\varempty\}$. Points of $\xxX^d$ are called
\textit{atoms}, while $\xxX^d$ and its complement $\xxX^c$ are called, respectively, the \textit{discrete}
and \textit{continuous} parts of $\xxX$. Further, if $(Y,\Nn,\mu)$ is a nonatomic standard measure space, then there is
$Z \in \NnN(\mu)$ such that $(Y \setminus Z,\Nn\bigr|_{Y \setminus Z},\NnN(\mu)\bigr|_{Y \setminus Z})$ is isomorphic
to $([0,1],\Bb([0,1]),\LlL_0)$ where $\LlL_0$ is the $\sigma$-ideal of all Borel subsets of $[0,1]$ whose Lebesgue measure
is equal to $0$ (by Theorem~14.3.9 on page~270 in \cite{roy}). Using this fact, one may check that there is a base
of $(\xxX,\Mm,\NnN)$ whose every member either consists of a single point belonging to $\xxX^d$ or is isomorphic
to $([0,1],\Bb([0,1]),\LlL_0)$. Since every base of the last mentioned measurable space with nullity is countable (finite
or not; to convince of that see the proof of \LEM{alm-meas} below), one deduces from this that either $\xxX^c$ is null
or is a standard set, or every standard base of $(\xxX,\Mm,\NnN)$ contains the same, uncountable, number of sets almost
isomorphic to $([0,1],\Bb([0,1]),\LlL_0)$. We define two characteristic cardinal numbers related to $\xxX$ as follows:
$\iota^d(\xxX) = \card(\xxX^d)$ and $\iota^c(\xxX)$ is either $0$ (if $\xxX^c$ is null) or $\aleph_0$ (if $\xxX^c$ is
standard), or is equal to the uncountable number of members of a standard base which are almost isomorphic
to $([0,1],\Bb([0,1]),\LlL_0)$. We see that two multi-standard measurable spaces with nullities $\xxX$ and $\yyY$ are
almost isomorphic iff $\iota^d(\xxX) = \iota^d(\yyY)$ and $\iota^c(\xxX) = \iota^c(\yyY)$. What is more, for any $\alpha
\in \Card$ and $\beta \in \Card_{\infty} \cup \{0\}$ there is a multi-standard measurable space with nullity $\zzZ$
for which $\iota^d(\zzZ) = \alpha$ and $\iota^c(\zzZ) = \beta$. (Indeed, take a set $D$ of cardinality $\alpha$ and a set
$S$ disjoint from $D$ whose cardinality is either $\beta$ if $\beta \neq \aleph_0$ or $1$ if $\beta = \aleph_0$. For each
$s \in S$ let $(I_s,\Mm_s,\NnN_s)$ be a copy of $([0,1],\Bb([0,1]),\LlL_0)$ and for $d \in D$ let $(I_d,\Mm_d,\NnN_d)$ be
a standard one-point measurable space with nullity. Now it suffices to define $\zzZ$ as $\bigoplus_{x \in D \cup S}
(I_x,\Mm_x,\NnN_x)$.)\par
From now on, $(\xxX,\Mm,\NnN)$ and $(\xxX',\Mm',\NnN')$ denote multi-standard measurable spaces with nullities.
Let $\Phi\dd \xxX \ni x \mapsto \BBB^{(x)} \in \SsS\EeE\PpP_N$ be any function. If there exist $Z \in \NnN$
and an integrable field $\xxX \setminus Z \ni x \mapsto \aAA^{(x)} \in \bigcup_{n=1}^{n=\infty} \CDDc_N(\HHh_n)$ such that
$\AAA^{(x)} = \Phi(x)$ for all $x \in \xxX \setminus Z$, we call $\Phi$ a \textit{summable field} and define
$\bigoplus^{\NnN}_{x\in\xxX} \BBB^{(x)}$ as follows. If $\xxX$ is trivial, we put $\bigoplus^{\NnN}_{x\in\xxX} \BBB^{(x)}
= \zero$. Otherwise let $\BbB$ be a standard base of $(\xxX,\Mm,\NnN)$. For every $B \in \BbB$ there is a standard measure
$\mu_B$ on $(B,\Mm\bigr|_B)$ such that $\NnN(\mu_B) = \NnN\bigr|_B$. We put
\begin{equation}\label{eqn:cont-sum}
\contS{x\in\xxX}{\NnN} \BBB^{(x)} = \bigoplus_{B\in\BbB} \int^{\oplus}_B \BBB^{(x)} \dint{\mu_B}(x).
\end{equation}
The next result shows that $\bigoplus^{\NnN}_{x\in\xxX} \BBB^{(x)}$ is well defined.

\begin{pro}{well-def}
Formula \eqref{eqn:cont-sum} well defines $\bigoplus^{\NnN}_{x \in \xxX} \BBB^{(x)}$. That is, the right-hand side
expression of \eqref{eqn:cont-sum} is independent of the choice of a standard base $\BbB$ and standard measures $\mu_B$'s;
and $\{\BBB^{(x)}\}_{x \in B}$ is an integrable (with respect to $\mu_B$) field for each $B \in \BbB$.
\end{pro}
\begin{proof}
Let $\BbB_1$ and $\BbB_2$ be standard bases for $(\xxX,\Mm,\NnN)$ and $\{\mu^{(j)}_B\dd\ B \in \BbB_j\}\ (j=1,2)$
corresponding families of standard measures. For each $D \in \BbB_j$ let $D' \in \Mm\bigr|_D$ be such that $D \setminus D'
\in \NnN$ and $(D',\Mm\bigr|_{D'})$ is a standard Borel space. Then the set
\begin{equation}\label{eqn:aux121}
I(D',\BbB_{3-j}) = \{E \in \BbB_{3-j}\dd\ D' \cap E \neq \varempty\} \quad \textup{is countable}
\end{equation}
(see the last fragment of the proof of \LEM{alm-meas} below). Additionally put $\IIi = \{(D_1,D_2) \in \BbB_1 \times
\BbB_2\dd\ D_1' \cap D_2' \notin \NnN\}$. Thanks to (DI3) \pREF{DI3} and \eqref{eqn:aux121} we obtain
\begin{multline*}
\bigoplus_{A \in \BbB_1} \int^{\oplus}_A \BBB^{(x)} \dint{\mu^{(1)}_A(x)} =\\
= \bigoplus_{A \in \BbB_1} \Bigl(\bigoplus \Bigl\{\int^{\oplus}_{A' \cap B'} \BBB^{(x)} \dint{\mu^{(1)}_A(x)}\dd\
(A,B) \in \IIi\Bigr\}\Bigr) =\\
= \bigoplus \Bigl\{\int^{\oplus}_{A' \cap B'} \BBB^{(x)} \dint{\mu^{(1)}_A(x)}\dd\ (A,B) \in \IIi\Bigr\}
\end{multline*}
and similarly
$$
\bigoplus_{B \in \BbB_2} \int^{\oplus}_B \BBB^{(x)} \dint{\mu^{(2)}_B(x)} =
\bigoplus \Bigl\{\int^{\oplus}_{A' \cap B'} \BBB^{(x)} \dint{\mu^{(2)}_B(x)}\dd\ (A,B) \in \IIi\Bigr\}.
$$
Now the notice that $\NnN(\mu^{(1)}_A\bigr|_{A' \cap B'}) = \NnN(\mu^{(2)}_B\bigr|_{A' \cap B'})$ combined with (DI4)
\pREF{DI4} yields that $$\bigoplus_{A \in \BbB_1} \int^{\oplus}_A \BBB^{(x)} \dint{\mu^{(1)}_A(x)} =
\bigoplus_{B \in \BbB_2} \int^{\oplus}_B \BBB^{(x)} \dint{\mu^{(2)}_B(x)}.$$
The remainder is left for the reader.
\end{proof}

It is easily seen that the restriction of a summable field to a measurable set is summable as well. Thanks
to \PRO{well-def}, we may rewrite \eqref{eqn:cont-sum} in a new form: whenever $\BbB$ is a standard base
of $(\xxX,\Mm,\NnN)$ and $\{\AAA^{(x)}\}_{x\in\xxX}$ is summable,
\begin{equation}\label{eqn:base}
\contS{x\in\xxX}{\NnN} \AAA^{(x)} = \bigoplus_{B\in\BbB} \Bigl(\contS{x \in B}{\NnN} \AAA^{(x)}\Bigr).
\end{equation}
Using this, one may prove that \eqref{eqn:base} is satisfied for a completely arbitrary (unnecessarily standard) base
$\BbB$.\par
Our next goal is to extend the notion of summability to more general context. In what follows, we equip $\RRR_+ \cup \Card$
with the Borel structure induced by the order topology (precisely, each of the sets $I_{\alpha}$ with $\alpha \in
\Card_{\infty}$ is equipped with this Borel structure).

\begin{lem}{alm-meas}
For a function $f\dd \xxX \to \RRR_+ \cup \Card$ \tfcae
\begin{enumerate}[\upshape(i)]
\item $f$ is almost measurable,
\item there is $Z \in \NnN$ with the following properties:
   \begin{enumerate}[\upshape(a)]
   \item $A = f^{-1}(\RRR_+) \setminus Z \in \Mm$ and $f\bigr|_A\dd A \to \RRR_+$ is measurable,
   \item for every $\alpha \in \Card_{\infty}$, $f^{-1}(\{\alpha\}) \setminus Z \in \Mm$,
   \item for each standard set $B \in \Mm$ there exists $Z_B \in \NnN$ such that the set $f(B \setminus Z_B) \cap
      \Card_{\infty}$ is countable (finite or not).
   \end{enumerate}
\end{enumerate}
\end{lem}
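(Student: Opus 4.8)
The plan is to prove the two implications (i)$\implies$(ii) and (ii)$\implies$(i) separately, the second being the genuinely constructive one.

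For (i)$\implies$(ii): fix $X' \in \Mm$ with $\xxX \setminus X' \in \overline{\NnN}$ and $f\bigr|_{X'}$ measurable; since $X'\in\Mm$ we may take $Z = \xxX \setminus X' \in \NnN$. Then (a) and (b) are immediate: $\RRR_+$ is an $F_{\sigma}$ subset of every $I_{\alpha}$ and each $\{\alpha\}$ ($\alpha \in \Card_{\infty}$) is closed there, so both are Borel in $f(X')$; hence $f^{-1}(\RRR_+)\setminus Z = (f\bigr|_{X'})^{-1}(\RRR_+) \in \Mm$ and likewise $f^{-1}(\{\alpha\})\setminus Z \in \Mm$, and measurability of $f\bigr|_A$ into $\RRR_+$ is inherited from that of $f\bigr|_{X'}$. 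The content of (i)$\implies$(ii) is (c). Given a standard set $B$, pass (discarding a member of $\NnN$) to a $\sigma$-finite standard measure space $(B,\nu)$ on which $g := f\bigr|_B$ is Borel measurable. By the structure theory of such spaces ($B$ is, modulo a $\nu$-null set, a countable disjoint union of atoms and of copies of $([0,1],\Bb([0,1]),\lambda)$, $\lambda$ Lebesgue), it suffices to treat one such piece; on an atomic piece the claim is trivial, and on a copy of $([0,1],\lambda)$ one considers the increasing family $E_{\alpha} := g^{-1}(\{\xi\in\Card_{\infty}\dd\ \xi\leqsl\alpha\})$, $\alpha\in\Card_{\infty}$, together with the monotone function $\alpha \mapsto \nu(E_{\alpha})$. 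A monotone function from the well-ordered class $\Card_{\infty}$ into $[0,\infty)$ has countable range (sending each attained value to the least $\alpha$ realising it embeds the range order-preservingly into $\Card_{\infty}$, so the range is well-ordered by $<$, hence countable), and from this one extracts a null set off which $g$ takes only countably many values in $\Card_{\infty}$; collecting the countably many pieces and the countably many exceptional cardinals produces the required $Z_B$.

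For (ii)$\implies$(i): let $\BbB$ be a standard base of $(\xxX,\Mm,\NnN)$ and, for $B \in \BbB$, let $Z_B \in \NnN$ (with $Z_B \subseteq B$, intersecting if necessary) be as in (c). Since the members of $\BbB$ are pairwise disjoint, $\bigl(\bigcup_{B\in\BbB}Z_B\bigr)\cap B' = Z_{B'} \in \NnN$ for every $B'\in\BbB$, so by the defining property of a base $\bigcup_{B\in\BbB}Z_B$ lies in $\Mm$ and in $\NnN$; put $Z^{*} = Z \cup \bigcup_{B\in\BbB}Z_B \cup (\xxX\setminus\bigcup\BbB) \in \NnN$ and $X' = \xxX\setminus Z^{*} \in \Mm$. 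To see that $f\bigr|_{X'}$ is measurable, use the base property again: a set $D \subseteq X'$ lies in $\Mm$ iff $D\cap B \in \Mm$ for every $B\in\BbB$, so it suffices to check $(f\bigr|_{B\cap X'})^{-1}(B_0) \in \Mm$ for each Borel $B_0 \subseteq f(X')$ and each $B\in\BbB$. On $B\cap X' \subseteq B\setminus Z_B$ the map $f$ takes values in $\RRR_+$ together with the countable set $f(B\setminus Z_B)\cap\Card_{\infty} = \{\alpha_1,\alpha_2,\dots\}$, so, writing $A = f^{-1}(\RRR_+)\setminus Z$,
$$(f\bigr|_{B\cap X'})^{-1}(B_0) = \bigl[(f\bigr|_A)^{-1}(B_0)\cap B\cap X'\bigr] \cup \bigcup\{(f^{-1}(\{\alpha_i\})\setminus Z)\cap B\cap X'\dd\ \alpha_i\in B_0\},$$
which lies in $\Mm$ by (a) and (b). Hence $f\bigr|_{X'}$ is measurable and $f$ is almost measurable.

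The main obstacle I expect is the step inside (i)$\implies$(ii)(c) passing from ``$\alpha\mapsto\nu(E_{\alpha})$ has countable range'' to ``$f$ takes, off a $\nu$-null set, only countably many values in $\Card_{\infty}$''. The delicate point is continuity of the filtration $\{E_{\alpha}\}$ at limit cardinals of uncountable cofinality: it is not formal that $\nu\bigl(\bigcup_{\alpha}E_{\alpha}\bigr)$ equals $\sup_{\alpha}\nu(E_{\alpha})$, and one must genuinely use that $B$ is standard Borel (so that the image of $\nu$ under $f$ has a separable measure algebra, which excludes ``club-measure''-type behaviour on the well-ordered space $\Card_{\infty}$) to close this gap. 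The remaining work — the Borel-ness computations and the $\sigma$-ideal/base bookkeeping in (ii)$\implies$(i) — is routine.
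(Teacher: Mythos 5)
Your (ii)$\implies$(i) direction and the treatment of (a) and (b) in (i)$\implies$(ii) are correct and fill in exactly the bookkeeping the paper leaves implicit. The problem is (i)$\implies$(ii)(c), and it is precisely the gap you flag yourself. Your filtration argument shows that $\alpha\mapsto\nu(E_{\alpha})$ has countable range, hence that the cardinals split into countably many convex blocks $A_r$ on each of which every set $g^{-1}\bigl((\min A_r,\alpha]\bigr)$ is $\nu$-null. But the set you must show is null, $g^{-1}(A_r\setminus\{\min A_r\})$, is the union of this family over a possibly uncountable, uncountably cofinal chain of $\alpha$'s, and an uncountable increasing union of null sets need not be null (nor need its measure be the supremum of the measures of its members). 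Separability of the measure algebra does not repair this: it only re-proves that the classes $[E_{\alpha}]$ form a countable chain, and says nothing about whether $\bigcup_{\alpha\in A_r}E_{\alpha}$ realizes the supremum of that chain in the measure algebra. In fact no purely measure-theoretic argument of this shape can close the gap, because the statement one needs --- that a Borel function cannot spread a positive-measure standard set over a well-ordered family of values with every initial-segment preimage null --- is essentially equivalent to the countability of the image, which is what you are trying to prove.

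The paper closes this by a descriptive-set-theoretic argument that yields a stronger conclusion, with no measure and no exceptional null set at all: if $(Y,\Nn)$ is a standard Borel space and $u\dd Y\to I_{\gamma}$ is Borel, then $D=u(Y)\cap\Card_{\infty}$ is countable. Since $D$ is well ordered, it suffices to bound the subset $D_0$ of elements of $D$ having a direct predecessor in $D$; each such point is isolated in $D$, so every subset of $D_0$ is relatively open, $u^{-1}(D_0)$ is Borel, and $u$ maps the standard Borel space $u^{-1}(D_0)$ onto the discrete space $D_0$. If $D_0$ were uncountable, one could compose with a surjection of $D_0$ onto a non-Souslin subset of $[0,1]$ (automatically continuous from a discrete space) and exhibit a non-Souslin set as the Borel image of a standard Borel space, which is impossible. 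You should replace your filtration step by this argument (or an equivalent); once the image itself is known to be countable, condition (c) holds with $Z_B$ simply the null set off which $f\bigr|_B$ is Borel, and the rest of your proof stands as written.
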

\begin{proof}
Suppose all conditions of (ii) are fulfilled. In what follows we preserve the notation of (ii). Let $\BbB$ be a standard
base of $\xxX$. Put $\zzZ = Z \cup \bigcup_{B\in\BbB} (B \cap Z_B)$. Then $\zzZ \in \NnN$ and points (a)--(c) imply that
$f\bigr|_{\xxX \setminus \zzZ}$ is measurable.\par
Now assume that $Z \in \NnN$ is such that $f\bigr|_{\xxX \setminus Z}$ is measurable. It is clear that conditions
(a) and (b) are satisfied. To show (c), it suffices to prove the following claim: if $(Y,\Nn)$ is a standard Borel space
and $u\dd Y \to I_{\gamma}$ is measurable, then $D = u(Y) \cap \Card_{\infty}$ is countable. Since $D$ is well ordered,
$D$ is countable iff so is the subset $D_0$ of $D$ consisting of all elements of $D$ which have the direct precedessor
(relative to $D$) in $D$. Note that if $\alpha \in D_0$, then $\{\alpha\}$ is open in $D$ with respect to the topology
inherited from $I_{\gamma}$. Consequently, every subset of $D_0$ is open in $D$ and hence $Y_0 = u^{-1}(D_0)$ is Borel and
$u\bigr|_{Y_0}$ is a Borel function of $Y_0$ (which is a standard Borel space) onto the discrete space $D_0$. It therefore
follows from theory of Souslin sets that $D_0$ is countable. (Indeed, if $D_0$ was uncountable, there would exist
a continuous mapping of $D_0$ onto a non-Souslin subset of $[0,1]$. It would then follow that a non-Souslin subset
of $[0,1]$ could be the image of a standard Borel space under a Borel function, which is impossible.)
\end{proof}

\LEM{alm-meas} has two important consequences: if $f, g\dd \xxX \to \RRR_+ \cup \Card$ are almost measurable and $\alpha
\in \Card$, the functions $f + g$ and $\alpha \cdot f$ are almost measurable as well. We shall use these facts several
times.\par
In the next two paragraphs $\Phi\dd \ddD \to \SsS\EeE\PpP_N$ is a summable field and $f\dd \ddD \to \RRR_+ \cup \Card$ is
an almost measurable function where $\ddD \in \Mm$ (notice that $\ddD$ is multi-standard).\par
We say that $f$ \textit{fits} to $\Phi$ iff there are two disjoint measurable sets $D_1$ and $D_2$ such that $\ddD
\setminus (D_1 \cup D_2) \in \NnN$, $f(D_1) \subset \Card$ and $\Phi(D_2) \subset \SsS\MmM_N$. (If $f$ fits to $\Phi$,
$f(x) \odot \Phi(x)$ makes sense for almost all $x \in \ddD$.)\par
There is $Z \in \NnN$ such that the sets $A = f^{-1}(I_{\aleph_0} \setminus \{0\}) \setminus Z$ and $A_{\alpha} =
f^{-1}(\{\alpha\}) \setminus Z$ with uncountable $\alpha$'s are measurable and the function $f\bigr|_A\dd A \to
I_{\aleph_0}$ is Borel. We call the pair $(f,\Phi)$ \textit{summable} if $f$ fits to $\Phi$ and the field $A \ni x
\mapsto f(x) \odot \Phi(x) \in \SsS\EeE\PpP_N$ is summable. If this is the case, we define $\bigoplus^{\NnN}_{x\in\ddD}
f(x) \odot \Phi(x)$ by
$$
\contS{x\in\ddD}{\NnN} f(x) \odot \Phi(x) = \Bigl(\contS{x \in A}{\NnN} f(x) \odot \Phi(x)\Bigr) \oplus
\bigoplus_{\alpha>\aleph_0} \Bigl(\alpha \odot \contS{x \in A_{\alpha}}{\NnN} \Phi(x)\Bigr).
$$
It is clear that summability of $(f,\Phi)$ and the formula for $\bigoplus^{\NnN}_{x\in\ddD} f(x) \odot \Phi(x)$ is
independent of the choice of $Z$. Notice that summability of $\Phi$ is equivalent to summability of $(\delta,\Phi)$ where
$\delta\dd \ddD \to \RRR_+ \cup \Card$ is constantly equal to $1$.\par
The following properties are infered from (DI0)--(DI4) \pREF{DI0} and \eqref{eqn:base}. Everywhere below
$(f,\{\AAA^{(x)}\}_{x\in\xxX})$ is a summable pair.
\begin{enumerate}[(CS1)]\addtocounter{enumi}{-1}
\item For each $\ddD \in \Mm$ the pair $(f,\{\AAA^{(x)}\}_{x\in\ddD})$ is summable as well and $\bigoplus^{\NnN}_{x\in\ddD}
   f(x) \odot \AAA^{(x)} = \zero$ iff $s_{\ddD}(f) := \{x\in\ddD\dd\ f(x) \neq 0\} \in \overline{\NnN}$;
   $\bigoplus^{\NnN}_{x\in\ddD} f(x) \odot \AAA^{(x)} \in \SsS\EeE\PpP_N$ iff there is $Z \in \NnN$ such that $s_{\ddD}(f)
   \setminus Z$ is standard.
\item The pair $(f,\{\bB(\AAA^{(x)})\}_{x\in\xxX})$ is summable and $\bB(\bigoplus^{\NnN}_{x\in\xxX} f(x) \odot \AAA^{(x)})
   = \bigoplus^{\NnN}_{x\in\xxX} f(x) \odot \bB(\AAA^{(x)})$.
\item\label{CS3} Whenever $\BbB$ is a base of $(\xxX,\Mm,\NnN)$,
   $$\contS{x\in\xxX}{\NnN} f(x) \odot \AAA^{(x)} = \bigoplus_{B\in\BbB} \Bigl(\contS{x \in B}{\NnN} f(x) \odot
   \AAA^{(x)}\Bigr).$$
\item \begin{enumerate}[(A)]
   \item If $(f,\{\BBB^{(x)}\}_{x\in\xxX})$ is summable, so is $(f,\{\AAA^{(x)} \oplus \BBB^{(x)}\}_{x\in\xxX})$ and
      $$\contS{x\in\xxX}{\NnN} f(x) \odot (\AAA^{(x)} \oplus \BBB^{(x)}) = \Bigl(\contS{x\in\xxX}{\NnN} f(x) \odot
      \AAA^{(x)}\Bigr) \oplus \Bigl(\contS{x\in\xxX}{\NnN} f(x) \odot \BBB^{(x)}\Bigr).$$
   \item For every $\alpha \in \Card$, the pair $(\alpha \cdot f,\{\AAA^{(x)}\}_{x\in\xxX})$ is summable
      and $\bigoplus^{\NnN}_{x\in\xxX} (\alpha \cdot f(x)) \odot \AAA^{(x)} = \alpha \odot (\bigoplus^{\NnN}_{x\in\xxX}
      f(x) \odot \AAA^{(x)})$.
   \item If in addition $(g,\{\AAA^{(x)}\}_{x\in\xxX})$ is summable, so is the pair $(f+g,\{\AAA^{(x)}\}_{x\in\xxX})$ and
      $$\contS{x\in\xxX}{\NnN} (f(x) + g(x)) \odot \AAA^{(x)} = \Bigl(\contS{x\in\xxX}{\NnN} f(x) \odot \AAA^{(x)}\Bigr)
      \oplus \Bigl(\contS{x\in\xxX}{\NnN} g(x) \odot \AAA^{(x)}\Bigr).$$
   \end{enumerate}
\item If $\psi\dd \xxX' \to \xxX$ is an almost null-isomorphism, the pair
   $(f \circ \psi,\{\AAA^{(\psi(x'))}\}_{x'\in\xxX'})$ is summable and $\bigoplus^{\NnN'}_{x'\in\xxX'} f(\psi(x')) \odot
   \AAA^{(\psi(x'))} = \bigoplus^{\NnN}_{x\in\xxX} f(x) \odot \AAA^{(x)}$.
\end{enumerate}
Since properties (CS3)--(B) and (CS3)--(C) are of great importance for us and are not so easy, let us prove them. It is
quite simple that both the pairs appearing in the assertions of these points are summable. Thanks to (CS2), we may assume
that $\xxX$ is standard. It then follows from \LEM{alm-meas} that we may also assume both the sets $f(\xxX) \cap
\Card_{\infty}$ and $g(\xxX) \cap \Card_{\infty}$ are countable and $f$ and $g$ are Borel. We start with (CS3)--(B).
Observe that (DI3) yields the assertion for $\alpha \leqsl \aleph_0$. So, $\aleph_0 \odot (\bigoplus^{\NnN}_{x\in\xxX} f(x)
\odot \AAA^{(x)}) = \bigoplus^{\NnN}_{x\in\xxX} (\aleph_0 \cdot f(x)) \odot \AAA^{(x)}$. This implies that we may further
assume that $f(\xxX) \subset \Card_{\infty}$ (replacing $f$ by $\aleph_0 \cdot f$ and reducing $\xxX$ to $s(f) =
s_{\xxX}(f)$). But then the assertion easily follows from (CS2) and the countability of $f(\xxX)$.\par
We now pass to (CS3)--(C). Put $A_f(\aleph_0) = f^{-1}(I_{\aleph_0})$ and $A_f(\alpha) = f^{-1}(\{\alpha\})$
for uncountable $\alpha$. In the same way define the sets $A_g(\beta)$ (corresponding to $g$) for $\beta \in
\Card_{\infty}$. Notice that the sets $I_f = \{\alpha\in\Card_{\infty}\dd\ A_f(\alpha) \neq \varempty\}$ and $I_g =
\{\alpha\in\Card_{\infty}\dd\ A_g(\alpha) \neq \varempty\}$ are countable and hence the family $\{A_f(\alpha) \cap
A_g(\beta)\dd\ (\alpha,\beta) \in I_f \times I_g\}$ is a base of $(\xxX,\Mm,\NnN)$. Using again (CS2), we may therefore
assume that $I_f$ and $I_g$ consist of single cardinals. The case $I_f = I_g = \{\aleph_0\}$ follows from (DI3), while
the one when $\aleph_0 \notin I_f \cup I_g$ is obvious. Finally, if e.g. $I_f = \{\aleph_0\}$ and $I_g = \{\alpha\}$ for
some $\alpha > \aleph_0$, then (by (CS3)--(B)) $\bigoplus^{\NnN}_{x\in\xxX} (f(x) + g(x)) \odot \AAA^{(x)} =
\bigoplus^{\NnN}_{x\in\xxX} g(x) \odot \AAA^{(x)} = \alpha \odot \bigoplus^{\NnN}_{x\in\xxX} \AAA^{(x)} \geqsl
\bigoplus^{\NnN}_{x\in\xxX} \aleph_0 \odot \AAA^{(x)}$ and (again by (CS2) and (CS3)--(B))
\begin{multline*}
\contS{x\in\xxX}{\NnN} \aleph_0 \odot \AAA^{(x)} = \Bigl(\contS{x\in\xxX}{\NnN} (\aleph_0 \cdot f(x)) \odot
\AAA^{(x)}\Bigr) \oplus \Bigl(\contS[\!\!]{x \notin s(f)}{\NnN} \aleph_0 \odot \AAA^{(x)}\Bigr)\\
\geqsl \aleph_0 \odot \Bigl(\contS{x\in\xxX}{\NnN} f(x) \odot \AAA^{(x)}\Bigr)
\geqsl \contS{x\in\xxX}{\NnN} f(x) \odot \AAA^{(x)}
\end{multline*}
which finishes the proof.\par
We now repeat the idea of the previous section. Let $(f,\{\AAA^{(x)}\}_{x\in\xxX})$ be a summable pair. If
\begin{equation}\label{eqn:disj-sum}
\contS{x\in\ddD'}{\NnN} f(x) \odot \AAA^{(x)} \disj \contS{x\in\ddD''}{\NnN} f(x) \odot \AAA^{(x)}
\end{equation}
for any two disjoint sets $\ddD', \ddD'' \in \Mm$, we call the pair $(f,\{\AAA^{(x)}\}_{x\in\xxX})$ \textit{regular}
and we write $\bigsqplus^{\NnN}_{x\in\xxX} f(x) \odot \AAA^{(x)}$ in place of $\bigoplus^{\NnN}_{x\in\xxX} f(x) \odot
\AAA^{(x)}$.\par
Similarly, a summable field $\{\AAA^{(x)}\}_{x\in\xxX}$ is regular iff \eqref{eqn:disj-sum} if fulfilled with $f$
constantly equal to $1$. As usual, the usage of $\bigsqplus^{\NnN}_{x\in\xxX} f(x) \odot \AAA^{(x)}$ includes information
that $(f,\{\AAA^{(x)}\}_{x\in\xxX})$ is regular. Note that, by definition, regular pairs and fields are summable.\par
The next result collects fundamental facts on the just defined notion.

\begin{thm}{disj-sum}
Let $\Phi\dd \xxX \ni x \to \AAA^{(x)} \in \Ff_N$ be any function.
\begin{enumerate}[\upshape(I)]
\item \TFCAE
   \begin{enumerate}[\upshape(i)]
   \item the field $\{\AAA^{(x)}\}_{x\in\xxX}$ is regular,
   \item for every standard set $A \in \Mm$ there is $Z \in \NnN$ such that $\Phi(A \setminus Z)$ is a measurable domain
      and $\Phi\bigr|_{A \setminus Z}$ is a Borel isomorphism of $A \setminus Z$ onto $\Phi(A \setminus Z)$.
   \end{enumerate}
\item If $\Phi$ satisfies condition \textup{(ii)} of point \textup{(I)} and $f\dd \xxX \to \RRR_+ \cup \Card$ is an almost
   measurable function which fits to $\Phi$, then $(f,\{\AAA^{(x)}\}_{x\in\xxX})$ is regular. Moreover,
   \begin{multline}\label{eqn:cont-ords}
   \Bigl\{\YYY \in \CDD_N\dd\ \YYY \leqsl^s \contSQ{x\in\xxX}{\NnN} f(x) \odot \AAA^{(x)}\Bigr\} =\\=
   \Bigl\{\contSQ{x\in\ddD}{\NnN} f(x) \odot \AAA^{(x)}\dd\ \ddD \in \Mm\Bigr\}.
   \end{multline}
\end{enumerate}
\end{thm}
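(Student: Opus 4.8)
The plan is to reduce everything to the two ``standard measure space'' results already available, namely \PRO{meas-domain} and \PRO{pd-transl}, by cutting $(\xxX,\Mm,\NnN)$ along a standard base, and then to glue the pieces using the fact recorded after \DEF{meas-domain} that a countable union of pairwise strongly unitarily disjoint measurable domains is again a measurable domain. Throughout I identify, via \PRO{transl} and \PRO{pd-transl}, central projections in the $*$-bicommutant of a direct integral of factor $N$-tuples with multiplications by characteristic functions of measurable sets.

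First I would prove (I). Fix a standard base $\BbB$ of $(\xxX,\Mm,\NnN)$; each $B\in\BbB$ is, after deleting a set in $\NnN$, a standard measure space $(B,\Mm|_B,\mu_B)$ with $\NnN(\mu_B)=\NnN|_B$, on $B$ the summand $\contS{x\in B}{\NnN}\AAA^{(x)}$ equals the unitary equivalence class of $\int^{\oplus}_B\AAA^{(x)}\dint{\mu_B(x)}$, and $\contS{x\in\xxX}{\NnN}\AAA^{(x)}=\bigoplus_{B\in\BbB}\bigl(\contS{x\in B}{\NnN}\AAA^{(x)}\bigr)$ by (CS3) \pREF{CS3} (cf. \eqref{eqn:base}). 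Using (DI2) inside each $B$ and (PR2) between distinct base elements, one sees that the global field $\{\AAA^{(x)}\}_{x\in\xxX}$ is regular precisely when (a) for each $B\in\BbB$ the field $\{\AAA^{(x)}\}_{x\in B}$ is regular in the sense of \eqref{eqn:disj-int}, and (b) $\contS{x\in B'}{\NnN}\AAA^{(x)}\disj\contS{x\in B''}{\NnN}\AAA^{(x)}$ for distinct $B',B''\in\BbB$. By \PRO{meas-domain}, condition (a) is equivalent to: up to a $\mu_B$-null set, $\Phi(B)$ is a measurable domain and $\Phi|_B$ is a Borel isomorphism onto it; and by \LEM{orth-meas} (the implication (i)$\Rightarrow$(iii), obtained via \PRO{pd-transl}), condition (b) forces the corresponding, countably describable images to be pairwise \emph{strongly} unitarily disjoint. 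Now a standard set $A\in\Mm$ meets only countably many members of $\BbB$ modulo $\NnN$ (the same countability fact used in the proof of \PRO{well-def}, resting on the last part of the proof of \LEM{alm-meas}), so $A$ splits, up to a null set, into countably many standard pieces, each contained in a base element; applying the previous observations to these pieces and the ``countable union of strongly unitarily disjoint measurable domains'' fact yields condition (ii). Conversely, if (ii) holds, taking $A=B$ for $B\in\BbB$ gives (a) by \PRO{meas-domain}, and applying (ii) to $B'\cup B''$ gives (b), since the images of $B'$ and $B''$ then lie in a common measurable domain and hence are unitarily disjoint; thus $\{\AAA^{(x)}\}_{x\in\xxX}$ is regular.

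Next, (II). Assume $\Phi$ satisfies (ii) and $f$ fits to $\Phi$; then $(f,\Phi)$ is summable by definition and \LEM{alm-meas}. To see it is regular, observe that on the support of $f$ one has $\aleph_0\cdot f(x)=\aleph_0$, so by property (CS3)--(B) (its $\alpha=\aleph_0$ case together with (DI3)) one gets, for every $\ddD\in\Mm$, $\aleph_0\odot\bigl(\contS{x\in\ddD}{\NnN}f(x)\odot\AAA^{(x)}\bigr)=\contS{x\in\ddD}{\NnN}(\aleph_0\cdot f(x))\odot\AAA^{(x)}=\aleph_0\odot\bigl(\contS{x\in\ddD}{\NnN}\AAA^{(x)}\bigr)$. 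Since disjointness is stable under $\aleph_0\odot$ (by (AO14), $\aleph_0\odot(\XXX\wedge\YYY)=(\aleph_0\odot\XXX)\wedge(\aleph_0\odot\YYY)$) and the plain field is regular by part (I), \eqref{eqn:disj-sum} follows for $(f,\Phi)$; this is the continuous counterpart of \LEM{disj-pr}. The inclusion ``$\supseteq$'' in \eqref{eqn:cont-ords} is immediate, as $\contSQ{x\in\ddD}{\NnN}f(x)\odot\AAA^{(x)}$ corresponds (via part (I) and \PRO{transl}) to multiplication by the characteristic function of the image of $\ddD$, which is a central projection, whence $\contSQ{x\in\ddD}{\NnN}f(x)\odot\AAA^{(x)}\leqsl^s\contSQ{x\in\xxX}{\NnN}f(x)\odot\AAA^{(x)}$. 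For ``$\subseteq$'', take $\YYY\leqsl^s\contSQ{x\in\xxX}{\NnN}f(x)\odot\AAA^{(x)}$; working base element by base element and using (CS3), pass through \PRO{pd-transl}: on each base piece the $*$-bicommutant of the big $N$-tuple is a direct integral of factors (after dividing off an $\alpha\odot$ where $f\geqsl\aleph_0$, handled exactly as in \LEM{pd-sep} via (PR6) \pREF{PR6}), so its central projections are precisely the diagonalizable ones, i.e. multiplications by characteristic functions of measurable sets $\ddD_B\subset B$. Patching the $\ddD_B$ into a single $\ddD\in\Mm$ and invoking \COR{ue} for consistency across the base yields $\YYY=\contSQ{x\in\ddD}{\NnN}f(x)\odot\AAA^{(x)}$.

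The main obstacle I expect is the bookkeeping in (I): establishing that a standard set meets only countably many base elements modulo $\NnN$ and, more delicately, upgrading the merely unitary disjointness between base pieces (which is literally what regularity gives) to the \emph{strong} unitary disjointness needed so that the countable union of the images is a measurable domain. This is precisely where \LEM{orth-meas} and the Kaplansky density arguments behind \PRO{meas-domain} and \PRO{wdisj} must be invoked, and it is the one nonformal step that does not follow mechanically from (CS0)--(CS4); the rest is routine juggling of the properties (DI1)--(DI4), (CS1)--(CS4) and the already-proved direct-integral lemmas.
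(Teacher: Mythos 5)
Your overall strategy --- cutting $(\xxX,\Mm,\NnN)$ along a standard base, invoking \PRO{meas-domain} piecewise and regluing with (PR2), (CS2) and \PRO{pd-transl} --- is the same as the paper's, and your treatment of (I)(ii)$\Rightarrow$(i) and of the inclusion ``$\subseteq$'' in \eqref{eqn:cont-ords} is essentially the printed argument. Two points need attention.

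First, in your regularity argument for (II) the displayed identity $\contS{x\in\ddD}{\NnN}(\aleph_0\cdot f(x))\odot\AAA^{(x)}=\aleph_0\odot\bigl(\contS{x\in\ddD}{\NnN}\AAA^{(x)}\bigr)$ is false as stated: $f$ is only assumed to fit to $\Phi$, so on $\xxX_I\cup\xxX_{\tIII}$ it may take uncountable cardinal values $\alpha$, and there $\aleph_0\cdot f(x)=\alpha\neq\aleph_0$, so the left-hand side contains summands of the form $\alpha\odot(\cdots)$ which in general differ from $\aleph_0\odot(\cdots)$. The disjointness you are after does survive, because $\alpha\odot\XXX$ and $\aleph_0\odot\XXX$ mutually cover each other and unitary disjointness depends only on covering classes; the paper sidesteps the problem by replacing $f$ with the truncation $f_0$ (equal to $f$ on $f^{-1}(I_{\aleph_0}\setminus\{0\})$ and to $1$ elsewhere), establishing regularity of $(f_0,\Phi)$ on standard sets via \LEM{disj-pr}, and then using $f\odot\Phi\ll f_0\odot\Phi$. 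Either repair works, but the equality as you wrote it is wrong. Relatedly, summability of $(f,\Phi)$ is not ``by definition'': one must produce an integrable field of representatives for $x\mapsto f(x)\odot\Phi(x)$ on $f^{-1}(I_{\aleph_0}\setminus\{0\})$, which is exactly what \THM{arb-meas} (through \LEM{disj-pr}) supplies.

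Second, for (I)(i)$\Rightarrow$(ii) you take a detour that manufactures the very difficulty you single out as the main obstacle. Given a standard $A\in\Mm$, the restriction of a regular field to $A$ is already a regular field over the standard measure space $(A,\Mm\bigr|_A,\mu_A)$ with $\NnN(\mu_A)=\NnN\bigr|_A$, so \PRO{meas-domain} applies to $A$ directly and yields (ii) in one step. There is no need to split $A$ along the base, upgrade unitary disjointness between the pieces to strong unitary disjointness via \LEM{orth-meas}, and glue countably many measurable domains. Your route can be made to work, but the gluing step it forces is avoidable, and this is precisely where the paper's proof is shorter.
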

\begin{proof}
Implication `(i)$\implies$(ii)' in point (I) follows immediately from \PROp{meas-domain}. To prove the converse one, first
note that $\Phi$ is summable because of (ii), the existence of a standard base of $\xxX$ and \PRO{meas-domain}. Further,
take two disjoint nonnull measurable sets $\ddD_1$ and $\ddD_2$. Let $\BbB_j$ be a standard base of $\ddD_j$. Since $B_1
\cup B_2$ is standard for $B_j \in \ddD_j$, we infer from point (ii) and \PRO{meas-domain} that
$\bigoplus^{\NnN}_{x \in B_1} \AAA^{(x)} \disj \bigoplus^{\NnN}_{x \in B_2} \AAA^{(x)}$. Consequently,
$\bigoplus_{B \in \BbB_1} (\bigoplus^{\NnN}_{x \in B} \AAA^{(x)}) \disj \bigoplus_{B \in \BbB_2}
(\bigoplus^{\NnN}_{x \in B} \AAA^{(x)})$ and hence the assertion of (i) follows from (CS2).\par
Now assume $\Phi$ and $f$ are as in (II). We may assume that $f$ is Borel. Define $f_0\dd \xxX \to I_{\aleph_0} \setminus
\{0\}$ by $f_0(x) = f(x)$ if $f(x) \in I_{\aleph_0} \setminus \{0\}$ and $f_0(x) = 1$ otherwise. The function $f_0$ is
Borel and fits to $\Phi$. Let $B \in \Mm$ be a standard set. Then there is a standard measure $\mu$ on $(B,\Mm\bigr|_B)$
such that $\NnN(\mu) = \NnN\bigr|_B$. We infer from the assumptions that $(\mu,\Phi\bigr|_B) \in \rgS(B,\Mm\bigr|_B)$.
Hence, \LEMp{disj-pr} implies that
\begin{equation}\label{eqn:aux122}
(\mu,(f_0 \odot \Phi)\bigr|_B) \in \rgS(B,\Mm\bigr|_B).
\end{equation}
Consequently, if $B_1$ and $B_2$ are two disjoint standard (measurable) subsets of $\xxX$, then
\begin{equation}\label{eqn:aux123}
\contS{x \in B_1}{\NnN} f_0(x) \odot \AAA^{(x)} \disj \contS{x \in B_2}{\NnN} f_0(x) \odot \AAA^{(x)}.
\end{equation}
We also conclude from \eqref{eqn:aux122} that $(f_0,\Phi)$ is summable on every standard subset of $\xxX$. Since $\xxX$
is multi-standard, $(f_0,\Phi)$ is therefore summable. It now follows from the definitions of $f_0$ and of summability that
$(f,\Phi)$ is summable as well.\par
Further, if $B$ is a standard subset of $\xxX$, it follows from \LEM{alm-meas} and the definitions of $f_0$
and of $\bigoplus^{\NnN}_{x \in B} f(x) \odot \Phi(x)$ that $\bigoplus^{\NnN}_{x \in B} f(x) \odot \Phi(x) \ll
\bigoplus^{\NnN}_{x \in B} f_0(x) \odot \AAA^{(x)}$. This combined with \eqref{eqn:aux123} yields that
\begin{equation}\label{eqn:aux125}
\contS{x \in B_1}{\NnN} f(x) \odot \AAA^{(x)} \disj \contS{x \in B_2}{\NnN} f(x) \odot \AAA^{(x)}
\end{equation}
for any two disjoint standard sets $B_1, B_2 \subset \xxX$. Now if $\ddD'$ and $\ddD''$ are two arbitrary disjoint nonnull
Borel subsets of $\xxX$, the fact that they are multi-standard together with (CS2) and \eqref{eqn:aux125} gives
\eqref{eqn:disj-sum}. It therefore suffices to check \eqref{eqn:cont-ords}. We have already shown the inclusion `$\supset$'
in \eqref{eqn:cont-ords} (cf. (CS2)). To this end, fix $\YYY \in \CDD_N$ such that
\begin{equation}\label{eqn:aux130}
\YYY \leqsl^s \contSQ{x\in\xxX}{\NnN} f(x) \odot \AAA^{(x)}.
\end{equation}
Let $\BbB_0$ be a standard base of $\xxX$. Thanks to \LEM{alm-meas}, for every $B \in \BbB_0$ there are pairwise disjoint
measurable subsets $W^B_0, W^B_1, \ldots$ of $B$ such that $B \setminus \bigcup_{n=0}^{\infty} W^B_n \in \NnN$, $f(W^B_0)
\subset \RRR_+ \setminus \{0\}$ and $f\bigr|_{W^B_n}$ is constantly equal to some $\alpha \in \Card_{\infty} \cup \{0\}$.
Notice that then $\BbB = \{W^B_n\dd\ B \in \BbB_0,\ n \geqsl 0\} \setminus \NnN$ is a standard base of $\xxX$ as well.
Denote by $\BbB_f$ the set of all $B \in \BbB$ for which $f(B) \subset \RRR_+ \setminus \{0\}$ and let $\BbB' = \BbB
\setminus \BbB_f$. For each $B \in \BbB'$ there is (unique) $\alpha_B \in \Card_{\infty} \cup \{0\}$ such that $f(B) =
\{\alpha_B\}$. Now (CS2), (CS3) and already proved part of (II) give
\begin{multline}\label{eqn:aux133}
\contSQ{x\in\xxX}{\NnN} f(x) \odot \AAA^{(x)} =\\=
\Bigl[\Bigsqplus_{B \in \BbB_f} \Bigl(\contSQ{x \in B}{\NnN} f(x) \odot \AAA^{(x)}\Bigr)\Bigr] \sqplus
\Bigr[\Bigsqplus_{B \in \BbB'} \alpha_B \odot \Bigl(\contSQ{x \in B}{\NnN} \AAA^{(x)}\Bigr)\Bigr].
\end{multline}
It may be deduced from \eqref{eqn:aux130} and \eqref{eqn:aux133} (using e.g. \PROP{leqsl-leqsls}, and \THMP{ords}) that
$\YYY$ is of the form
$$
\YYY = \Bigl(\Bigsqplus_{B \in \BbB_f} \YYY_B\Bigr) \sqplus \Bigl(\Bigsqplus_{B \in \BbB'} \widetilde{\YYY}_B\Bigr)
$$
where $\YYY_B \leqsl^s \bigsqplus^{\NnN}_{x \in B} f(x) \odot \AAA^{(x)}$ for $B \in \BbB_f$ and $\widetilde{\YYY}_B
\leqsl^s \alpha_B \odot \bigsqplus^{\NnN}_{x \in B} \AAA^{(x)}$ for $B \in \BbB'$. Further, by (PR6) \pREF{PR6}, for each
$B \in \BbB'$ there is $\YYY_B \leqsl^s \bigsqplus^{\NnN}_{x \in B} \AAA^{(x)}$ such that $\widetilde{\YYY}_B = \alpha_B
\odot \YYY_B$. Since $\BbB$ consists of standard sets, we infer from \PROp{pd-transl} that for every $B \in \BbB$ there
exists a measurable set $\ddD_B \subset B$ for which $\YYY_B = \bigsqplus^{\NnN}_{x\in\ddD_B} f(x) \odot \AAA^{(x)}$
provided $B \in \BbB_f$ and $\YYY_B = \bigsqplus^{\NnN}_{x\in\ddD_B} \AAA^{(x)}$ if $B \in \BbB'$. Put $\ddD =
\bigcup_{B \in \BbB} \ddD_B$ and note that $\ddD$ is Borel since $\BbB$ is a base. Finally, the family $\{\ddD_B\dd\
B \in \BbB\}$ is a base of $\ddD$ and hence we obtain from (CS2) and (CS3) that
\begin{multline*}
\contSQ{x\in\ddD}{\NnN} f(x) \odot \AAA^{(x)} = \Bigsqplus_{B \in \BbB} \Bigl(\contSQ{x\in\ddD_B}{\NnN} f(x) \odot
\AAA^{(x)}\Bigr) =\\= \Bigl(\Bigsqplus_{B\in\BbB_f} \YYY_B\Bigr) \sqplus \Bigl(\Bigsqplus_{B\in\BbB'} \alpha_B \odot
\YYY_B\Bigr) = \YYY
\end{multline*}
and we are done.
\end{proof}

Similarly as in the previous section, for a field $\Phi\dd \xxX \to \Ff_N$ we shall write $\Phi \in \rgS_{loc}$
or $\Phi \in \rgS_{loc}(\xxX)$ if $\Phi$ satisfies condition (ii) of \THM{disj-sum}.

\SECT{Prime decomposition}

Semiprimes are those members of $\pP_N$ which make the issue of prime decomposition of $N$-tuples more complicated and
ambiguous. To shape this in a way similar to that in the ring of natural numbers, we have to allow multiplicity functions
to take real values (beside infinite cardinals) instead of (only) integer ones. Such an approach is therefore similar
to Ernest's multiplicity theory (Chapter~4 of \cite{e}) and will enable us to propose the prime decomposition
of an arbitrary $N$-tuple in an (essentially) unique form (see \THMP{prime}). We consider this as a more attractive manner
of `factor decomposing' of $N$-tuples than Ernest's central decomposition \cite{e}.\par
In this section $(\xxX,\Phi)$ is a fixed pair such that $(\xxX,\Mm,\NnN)$ is a multi-standard measurable space with nullity
and $\Phi \in \rgS_{loc}(\xxX)$ is such that $\Phi(\xxX) \subset \pP_N$. After erasing from $\xxX$ a null measurable set,
we may assume $\Phi$ is measurable. Let
$$
\xxX_I = \Phi^{-1}(\aA_N), \qquad \xxX_{\tII} = \Phi^{-1}(\sS_N), \qquad \xxX_{\tIII} = \Phi^{-1}(\fF_N).
$$
Notice that $\xxX_I, \xxX_{\tII}$ and $\xxX_{\tIII}$ are measurable, pairwise disjoint and $\xxX_I \cup \xxX_{\tII} \cup
\xxX_{\tIII} = \xxX$.

\begin{dfn}{admiss}
A function $f\dd \ddD \to \RRR_+ \cup \Card$ where $\ddD \in \Mm$ is \textit{admissible} for $\Phi$ iff $f$ is almost
measurable, $f(\xxX_I \cap \ddD) \subset \Card$ and $f(\xxX_{\tIII} \cap \ddD) \subset \{0\} \cup \Card_{\infty}$.
The class of all admissible functions on $\xxX$ is denoted by $\aaA(\xxX,\Phi)$ or shortly by $\aaA(\xxX)$.\par
For each $f \in \aaA(\xxX)$, $s(f)$ is the \textit{support} of $f$, i.e. $s(f) = \{x\in\xxX\dd\ f(x) \neq 0\}$ ($s(f)$ is
measurable provided so is $f$).
\end{dfn}

Note that each admissible function fits to $\Phi$. Thus, by \THMp{disj-sum}, for every $f \in \aaA(\xxX)$ we may write
$\bigsqplus^{\NnN}_{x\in\xxX} f(x) \odot \Phi(x)$. As it is practised in measure theory, the term \textit{almost
everywhere}, which we shall abbreviate by writing \textit{a.e.}, will mean that suitable property (relation, etc.) holds
true on $\xxX \setminus Z$ for some $Z \in \overline{\NnN}$.\par
As a consequence of \LEMp{alm-meas} we obtain

\begin{cor}{A(X)}
For $f, g \in \aaA(\xxX)$,
\begin{enumerate}[\upshape(a)]
\item $f + g$, $f \cdot g$, $f \vee g$, $f \wedge g \in \aaA(\xxX)$ where $f \vee g = \max(f,g)$ and $f \wedge g =
   \min(f,g)$,
\item $\alpha \cdot f \in \aaA(\xxX)$ for each $\alpha \in \Card$,
\item if $f(\xxX_I \cup \xxX_{\tIII}) \subset \{0\}$, $t \cdot f \in \aaA(\xxX)$ for every $t \in \RRR_+$,
\item if $f \leqsl g$ a.e., there is $u \in \aaA(\xxX)$ such that $g = f + u$ a.e.
\end{enumerate}
\end{cor}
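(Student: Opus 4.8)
The plan is to reduce all four claims to \LEM{alm-meas}. Parts (a), (b) and (c) concern pointwise algebraic operations on functions, so for each of them it suffices to check two things: that the operation carries almost measurable functions to almost measurable ones, and that it preserves the two value restrictions in \DEF{admiss}, namely being $\Card$-valued on $\xxX_I$ and $\{0\} \cup \Card_{\infty}$-valued on $\xxX_{\tIII}$. Part (d) will be proved by mimicking the construction used for \COR{minus}, with the topological extension argument there replaced by an appeal to \LEM{alm-meas}.

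For (a) and (b), recall the two consequences of \LEM{alm-meas} recorded right after it: $f + g$ and $\alpha \cdot f$ (for $\alpha \in \Card$) are almost measurable. The same reasoning covers $f \cdot g$, $f \vee g$ and $f \wedge g$: each of the binary operations $+$, $\cdot$, $\max$, $\min$ is Borel on $I_{\gamma} \times I_{\gamma}$ for every $\gamma$, and takes an infinite value at a pair of arguments only when one of the arguments is infinite, in which case the output lies among the two arguments. Thus, applying \LEM{alm-meas}(ii) to $f$ and to $g$ over a standard base of $\xxX$, we may, after deleting a null set, partition $\xxX$ into the measurable piece on which $f$ and $g$ are both real and countably many measurable pieces on each of which $f$, respectively $g$, is constantly an infinite cardinal; the composite function is real-valued and measurable on the first piece, measurable on the others, and over every standard set its infinite values are among those of $f$ and of $g$, hence countable, so conditions (ii)(a)--(c) of \LEM{alm-meas} hold for it. Finally, $\Card$ is closed under $+$, $\cdot$, $\max$, $\min$ and multiplication by any cardinal, and so is $\{0\} \cup \Card_{\infty}$, with the standard conventions $0 \cdot \alpha = 0$ and $t + \alpha = \alpha = t \cdot \alpha$ for $t \in \RRR_+ \setminus \{0\}$, $\alpha \in \Card_{\infty}$; hence the value restrictions pass to $f+g$, $f \cdot g$, $f \vee g$, $f \wedge g$ and $\alpha \cdot f$, which gives (a) and (b). For (c), if $f$ vanishes on $\xxX_I \cup \xxX_{\tIII}$ then so does $t \cdot f$ for $t \in \RRR_+$; multiplication by $t$ is the identity on the infinite cardinals and ordinary scaling on $\RRR_+$, so the very decomposition of $\xxX$ witnessing almost measurability of $f$ also witnesses it for $t \cdot f$, and (c) follows.

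For (d), after deleting a null measurable set we may assume that $f \leqsl g$ everywhere and that $f$, $g$ are measurable. Define $u \dd \xxX \to \RRR_+ \cup \Card$ by $u(x) = g(x) - f(x)$ when $g(x) \in \RRR_+$ (so $f(x) \in \RRR_+$ as well) and $u(x) = g(x)$ when $g(x) \in \Card_{\infty}$. Then $f(x) + u(x) = g(x)$ at every $x$: this is clear for $g(x) \in \RRR_+$, and for $g(x) = \alpha$ infinite it holds because $s + \alpha = \alpha$ for real $s$ and $\beta + \alpha = \alpha$ for an infinite $\beta \leqsl \alpha$. Since $u^{-1}(\RRR_+) = g^{-1}(\RRR_+)$ with $u$ there equal to the measurable real-valued function $g - f$, since $u^{-1}(\{\alpha\}) = g^{-1}(\{\alpha\})$ for each infinite $\alpha$, and since over any standard set the infinite values of $u$ coincide with those of $g$ and hence are countable off a null set, \LEM{alm-meas}(ii)$\implies$(i) shows $u$ to be almost measurable. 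Moreover $u$ is admissible: on $\xxX_I$, where $g$ is $\Card$-valued, $u(x)$ is either the finite cardinal $g(x) - f(x)$ or the infinite cardinal $g(x)$, so $u(x) \in \Card$; on $\xxX_{\tIII}$, where $g$ takes values in $\{0\} \cup \Card_{\infty}$, $u(x)$ equals $0$ (if $g(x) = 0$) or $g(x) \in \Card_{\infty}$. Hence $u \in \aaA(\xxX)$ and $g = f + u$ a.e. The one point needing care throughout is the interaction of measurability with the infinite-cardinal values, i.e. verifying condition (ii)(c) of \LEM{alm-meas} for the composites and for $u$; this is handled uniformly by passing to a standard base and using that the $\Card_{\infty}$-values of $f$ and of $g$ are countable over each standard set, the remaining checks being routine cardinal arithmetic.
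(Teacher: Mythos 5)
Your proof is correct, and it follows exactly the route the paper intends: the paper leaves \COR{A(X)} as an exercise, having just recorded (as consequences of \LEM{alm-meas}) that $f+g$ and $\alpha\cdot f$ are almost measurable, and your argument reduces all four parts to \LEM{alm-meas} together with the stated cardinal-arithmetic conventions in precisely that spirit. The only (immaterial) point left implicit is that the $u$ of part (d), built off a null set, should be extended by $0$ to all of $\xxX$ to land literally in $\aaA(\xxX)$.
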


We leave the proof of \COR{A(X)} as an exercise. A part of it may be strengthened:

\begin{lem}{pd-count}
Whenever $f_1,f_2,\ldots$ are admissible functions, so are $\bigwedge_{n\geqsl1} f_n\dd \xxX \ni x \mapsto \inf_{n\geqsl1}
f_n(x) \in \RRR_+ \cup \Card$ and $\bigvee_{n\geqsl1} f_n\dd \xxX \ni x \mapsto \sup_{n\geqsl1} f_n(x) \in \RRR_+ \cup
\Card$. In particular, $\sum_{n=1}^{\infty} f_n \in \aaA(\xxX)$ (where $(\sum_{n=1}^{\infty} f_n)(x) = \sum_{n=1}^{\infty}
f_n(x)$).
\end{lem}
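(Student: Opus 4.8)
The plan is to verify the three clauses of admissibility (\DEF{admiss}) for $g:=\bigwedge_{n\geqsl1}f_n$ and $h:=\bigvee_{n\geqsl1}f_n$; we may assume all the $f_n$ share the common domain $\xxX$ (otherwise replace $\xxX$ by the measurable set $\bigcap_n\dom(f_n)$). The two ``type'' clauses are immediate and purely order-theoretic. Since cardinals are well ordered, a nonempty countable set of cardinals has a least element, so its infimum computed in $\RRR_+\cup\Card$ is a cardinal, and the supremum of a countable set of cardinals is again a cardinal; the same remarks apply to $\{0\}\cup\Card_{\infty}$. Hence $g$ and $h$ send $\xxX_I$ into $\Card$ and $\xxX_{\tIII}$ into $\{0\}\cup\Card_{\infty}$, and the whole content of the lemma is the almost measurability of $g$ and of $h$.

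For this I would apply \LEM{alm-meas}. Fix a standard base $\BbB$ of $(\xxX,\Mm,\NnN)$, which exists because $\xxX$ is multi-standard (cf. \DEF{standard}); for each $n$ fix $Z_n\in\NnN$ off which $f_n$ is measurable (which exists by definition of almost measurability) and, for each standard set $B\in\Mm$, a null set $Z_{n,B}$ with $f_n(B\setminus Z_{n,B})\cap\Card_{\infty}$ countable (which exists by \LEM{alm-meas}). Because the members of $\BbB$ are pairwise disjoint, the $\sigma$-ideal $\NnN$ is detected on $\BbB$ (the base property), and $\xxX\setminus\bigcup\BbB\in\NnN$, the union $Z$ of $\xxX\setminus\bigcup\BbB$, all the $Z_n$, and all the $B\cap Z_{n,B}$ lies in $\NnN$; on each piece $B':=B\setminus Z$ every $f_n$ is measurable and the $f_n$ jointly assume only countably many values in $\Card_{\infty}$. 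The crucial point is that this countable set of values, being a set of cardinals, is order-isomorphic to a countable ordinal, hence has only countably many initial segments; consequently the set of suprema of its subsets — which contains every value of $\inf_n f_n$ and of $\sup_n f_n$ lying in $\Card_{\infty}$ and taken on $B'$ — is again countable. This gives clause (c) of \LEM{alm-meas} for $g$ and $h$. For clauses (a) and (b) one works on each $B'$ with countable operations only: on $g^{-1}(\RRR_+)\cap B'$ the function $g$ is the infimum of the countably many $[0,\infty]$-valued Borel functions obtained from the $f_n$ by sending their $\Card_{\infty}$-values to $+\infty$, hence Borel, and, since an infimum of cardinals is, when infinite, a minimum, $g^{-1}(\{\alpha\})\cap B'$ is for $\alpha\in\Card_{\infty}$ a countable union and intersection of sets $f_n^{-1}(E)\cap B'$ with $E$ Borel; for $h$ the $\RRR_+$-valued part is analogous, and $h^{-1}(I_{\alpha})\cap B'=\bigcap_n f_n^{-1}(I_{\alpha})\cap B'$ together with the countability just established expresses $h^{-1}(\{\alpha\})\cap B'$ as a countable Boolean combination of measurable sets. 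Patching over $\BbB$ (again by the base property) yields (a) and (b), so $g,h\in\aaA(\xxX)$.

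The ``in particular'' then follows at once: the partial sums $s_N:=f_1+\dots+f_N$ lie in $\aaA(\xxX)$ by \COR{A(X)}, they increase pointwise, and $\bigl(\sum_{n=1}^{\infty}f_n\bigr)(x)=\sup_N s_N(x)$, so $\sum_{n=1}^{\infty}f_n=\bigvee_N s_N\in\aaA(\xxX)$ by the supremum case already treated. The step I expect to be the main obstacle is the almost measurability of $h$: the target $\RRR_+\cup\Card$ is badly non-second-countable and a supremum of infinitely many infinite cardinals need not be attained, so one cannot argue that $h(x)$ is one of the $f_n(x)$; the way around it is precisely the well-ordering of cardinals, which forces the attained infinite values — and their subset-suprema — to stay countable on each standard piece, together with closure of the relevant $\sigma$-algebras under the countable Boolean operations above. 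The infimum $g$ is milder, since an infimum of cardinals is, when infinite, attained.
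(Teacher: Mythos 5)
Your proof is correct and follows essentially the same route as the paper's: both reduce the statement, via \LEM{alm-meas}, to the fact that a countable set of infinite cardinals admits only countably many suprema of its subsets (equivalently, has countable closure in $I_{\gamma}$), which the paper proves by injecting the new limit points back into the set and you prove by counting initial segments of a countable well-order --- the same observation. The only difference is that you spell out the verification of clauses (a)--(c) of \LEM{alm-meas} and the patching over a standard base, which the paper leaves as an exercise.
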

\begin{proof}
We leave this as an exercise that it is enough to show, thanks to \LEMp{alm-meas}, that the closure of any countable subset
$K$ of $\Card_{\infty}$ (in $I_{\gamma} \supset K$ whatever $\gamma \in \Card_{\infty}$ is) is countable as well (recall
that countable compact Hausdorff spaces are metrizable, by \cite[Theorem~3.1.9]{e}). But this is quite simple: for every
non-last element $x$ of $L = (\cll K) \setminus K$ there exists $c_x \in K$ which lies between $x$ and its direct successor
(relative to $L$) in $L$. Since the function $L \ni x \mapsto c_x \in K$ is one-to-one, the assertion follows.
\end{proof}

\begin{pro}{uniq}
For $f,g \in \aaA(\xxX)$,
\begin{equation}\label{eqn:equal}
\contSQ{x\in\xxX}{\NnN} f(x) \odot \Phi(x) = \contSQ{x\in\xxX}{\NnN} g(x) \odot \Phi(x)
\end{equation}
iff $f = g$ a.e.
\end{pro}
\begin{proof}
The `if' part is clear. Suppose \eqref{eqn:equal} holds true. It follows from (CS3) \pREF{CS3} that
$\bigsqplus^{\NnN}_{x\in\bbB} u(x) \odot \Phi(x) \ll \bigsqplus^{\NnN}_{x\in\bbB} \Phi(x)$ for each $\bbB \in \Mm$ and
$u \in \{f,g\}$. Since $\bigsqplus^{\NnN}_{x\in\bbB} \Phi(x) \disj \bigsqplus^{\NnN}_{x\notin\bbB} \Phi(x)$,
\eqref{eqn:equal} and (CS2) imply therefore that
\begin{equation}\label{eqn:eq}
\contSQ{x\in\bbB}{\NnN} f(x) \odot \Phi(x) = \contSQ{x\in\bbB}{\NnN} g(x) \odot \Phi(x)
\end{equation}
for any $\bbB \in \Mm$. Let $\ddD \in \Mm$ be standard. It suffices to check that $f = g$ almost everywhere on $\ddD$.
Thanks to \LEMp{alm-meas} we may assume that $f\bigr|_{\ddD}$ and $g\bigr|_{\ddD}$ are Borel and
\begin{equation}\label{eqn:aux140}
(f(\ddD) \cup g(\ddD)) \cap \Card_{\infty} \textup{ is countable.}
\end{equation}
By \eqref{eqn:aux140}, the sets $\ddD_+ = \{x \in \ddD\dd\ f(x) < g(x)\}$ and $\ddD_- = \{x \in \ddD\dd\ f(x) > g(x)\}$ are
Borel. Suppose, for the contrary, that e.g. $\ddD_+ \notin \NnN$. We distinguish between two cases.\par
Assume there are a nonnull measurable set $\bbB \subset \ddD_+$ and two cardinals $\alpha$ and $\beta$ such that $f(\bbB) =
\{\alpha\}$ and $g(\bbB) = \{\beta\}$. Let $\BBB = \bigsqplus^{\NnN}_{x\in\bbB} \Phi(x)$. We infer from (CS0) that $\BBB
\neq \zero$. Moreover, since $\Phi(\xxX) \subset \pP_N$, $\Phi \in \rgS_{loc}$ and $\bbB$ is standard, \LEMp{pd-sep} yields
that $\BBB$ is the direct sum of a minimal $N$-tuple and a semiminimal one. Consequently, $\alpha \odot \BBB < \beta \odot
\BBB$ (to convince of that use e.g. \THMP{decomp}, and (AO4), \PREF{AO4}, if needed). But this denies \eqref{eqn:eq}
because $\bigsqplus^{\NnN}_{x\in\bbB} f(x) \odot \Phi(x) = \alpha \odot \BBB$ and $\bigsqplus^{\NnN}_{x\in\bbB} g(x) \odot
\Phi(x) = \beta \odot \BBB$.\par
Finally, if there is no set $\bbB$ with all above mentioned properties, it may be deduced from \eqref{eqn:aux140} that
there exists a nonnull measurable set $\bbB \subset \ddD_+ \cap \xxX_{\tII}$ such that $f(\bbB) \subset \RRR_+$. Let $\BBB
= \bigsqplus^{\NnN}_{x\in\bbB} f(x) \odot \Phi(x)$. As before, an application of \LEM{pd-sep} shows that
\begin{equation}\label{eqn:sM}
\BBB \in \SsS\MmM_N.
\end{equation}
On the other hand, there is a measurable function $u\dd \bbB \to (\RRR_+ \cup \Card) \setminus \{0\}$ such that $g(x) =
f(x) + u(x)$ for all $x \in \bbB$. Then (CS3) combined with \eqref{eqn:eq} gives $\BBB = \bigsqplus^{\NnN}_{x\in\bbB} g(x)
\odot \Phi(x) = \BBB \oplus (\bigsqplus^{\NnN}_{x\in\bbB} u(x) \odot \Phi(x))$ which means, thanks to \eqref{eqn:sM}, that
$\bigsqplus^{\NnN}_{x\in\bbB} u(x) \odot \Phi(x) = \zero$ (cf. (AO4)), contradictory to (CS0).
\end{proof}

\begin{thm}{reg-id}
Let $\TTT = \bigsqplus^{\NnN}_{x\in\xxX} \Phi(x)$. Then
$$
\Bigl\{\contSQ{x\in\xxX}{\NnN} f(x) \odot \Phi(x)\dd\ f \in \aaA(\xxX,\Phi)\Bigr\} = \{\XXX \in \CDD_N\dd\ \XXX \ll \TTT\}.
$$
\end{thm}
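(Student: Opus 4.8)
The plan is to prove the asserted equality of classes by establishing the two inclusions separately; throughout write $\TTT=\bigsqplus^{\NnN}_{x\in\xxX}\Phi(x)$.

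The inclusion ``$\subseteq$'' is the easy half. Given $f\in\aaA(\xxX,\Phi)$, the pair $(f,\Phi)$ is regular by \THMp{disj-sum}, and precisely as already observed inside the proof of \PROp{uniq}, (CS3) \pREF{CS3} gives $\bigsqplus^{\NnN}_{x\in\bbB}f(x)\odot\Phi(x)\ll\bigsqplus^{\NnN}_{x\in\bbB}\Phi(x)$ for every $\bbB\in\Mm$; taking $\bbB=\xxX$ shows $\bigsqplus^{\NnN}_{x\in\xxX}f(x)\odot\Phi(x)\ll\TTT$. (A self-contained argument would pass to a standard base $\BbB$ of $(\xxX,\Mm,\NnN)$, split each $f\bigr|_B$ into its $I_{\aleph_0}$-valued part and the level sets of its part valued in $\Card_{\infty}\setminus\{\aleph_0\}$---only countably many by \LEMp{alm-meas}---, apply the implication ``(ii)$\Rightarrow$(i)'' of \LEMp{pd-sep}(B) to the former and note that each remaining piece has the form $\alpha\odot(\text{a summand}\leqsl^s\JJJ)$, hence is $\ll\TTT$, and reassemble via (CS2) and (CS3).)

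For ``$\supseteq$'', fix $\XXX\ll\TTT$ and a standard base $\BbB$ of $(\xxX,\Mm,\NnN)$. By (CS3) the family $\{\TTT_B\}_{B\in\BbB}$, where $\TTT_B:=\bigsqplus^{\NnN}_{x\in B}\Phi(x)$, is regular and $\bigsqplus_{B\in\BbB}\TTT_B=\TTT$. Choosing a cardinal $\gamma$ with $\XXX\leqsl\gamma\odot\TTT$ and using that $\gamma\odot(\cdot)$ preserves unitary disjointness (cf. (PR2)), one has $\gamma\odot\TTT=\bigsqplus_{B\in\BbB}(\gamma\odot\TTT_B)$, so \PROp{leqsl-leqsls}(A) yields the regular decomposition $\XXX=\bigsqplus_{B\in\BbB}\XXX_B$ with $\XXX_B:=\XXX\wedge(\gamma\odot\TTT_B)\ll\TTT_B$. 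It then suffices to produce, for each $B\in\BbB$, a function $f_B$ on $B$ admissible for $\Phi\bigr|_B$ with $\XXX_B=\bigsqplus^{\NnN}_{x\in B}f_B(x)\odot\Phi(x)$: pasting the $f_B$'s over the base $\BbB$ defines an almost measurable $f\dd\xxX\to\RRR_+\cup\Card$ (the base axioms reduce measurability of $f$ to that of the $f_B$'s), $f$ is admissible since each $f_B$ is and the range conditions on $\xxX_I$ and $\xxX_{\tIII}$ are checked piecewise, and then (CS3) gives $\XXX=\bigsqplus_{B\in\BbB}\XXX_B=\contSQ{x\in\xxX}{\NnN}f(x)\odot\Phi(x)$. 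To build $f_B$: if $B=\{x_B\}$ is an atom of $(\xxX,\Mm,\NnN)$, then $\TTT_B=\Phi(x_B)\in\pP_N$, $\XXX_B\ll\Phi(x_B)$, and inspecting the factor $\WWw'(\Phi(x_B))$ via \PROp{factor}, \PROp{fractal} and \PROp{semiprime} gives $\XXX_B=t_B\odot\Phi(x_B)$ for a unique $t_B$ which is a cardinal, an element of $\{0\}\cup\Card_{\infty}$, or an element of $\RRR_+\cup\Card$ according as $x_B$ lies in $\xxX_I$, $\xxX_{\tIII}$ or $\xxX_{\tII}$; hence $f_B:=t_B$ is admissible. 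If $B$ is a continuous standard set, fix a standard measure $\mu_B$ on $(B,\Mm\bigr|_B)$ with $\NnN(\mu_B)=\NnN\bigr|_B$; then $\TTT_B=\int^{\sqplus}_B\Phi(x)\dint{\mu_B(x)}\in\SsS\EeE\PpP_N$ is the direct sum of a minimal and a semiminimal $N$-tuple by \LEMp{pd-sep}(C). Decompose $\XXX_B$ by \THMp{decomp} along the type $I$, $\tII$, $\tIII$ parts of $\TTT_B$: each portion of $\XXX_B$ of uniform cardinal multiplicity $\alpha$ living over the type $I$ (resp. type $\tIII$) part is, by the fibrewise version of the factor analysis above together with \PROp{pd-transl}, of the form $\alpha\odot\int^{\sqplus}_{\ddD}\Phi(x)\dint{\mu_B(x)}$ with $\ddD\subseteq B\cap\xxX_I$ (resp. $\ddD\subseteq B\cap\xxX_{\tIII}$) measurable---so $f_B$ equals the cardinal $\alpha$ there---, while the residual countable-dimensional part living over the type $\tII$ part lies in $\SsS\EeE\PpP_N$ and is $\ll\TTT_B$, so \LEMp{pd-sep}(B) (with the measurable selection of \THMp{arb-meas}, exactly as in the proof of \LEM{pd-sep}) supplies an almost measurable, $\RRR_+\cup\{\aleph_0\}$-valued contribution to $f_B$. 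Gluing these contributions yields an admissible $f_B$ with $\XXX_B=\bigsqplus^{\NnN}_{x\in B}f_B(x)\odot\Phi(x)$ by (CS3)--(B) and (CS3)--(C).

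The main obstacle is the continuous case: \LEMp{pd-sep}(B) describes only members of $\SsS\EeE\PpP_N$, so one must first peel off from $\XXX_B$ the---a priori arbitrarily large---cardinal multiplicities, and this is exactly what the partition of unity of \THMp{decomp} together with the collapse $2\odot(\cdot)=\aleph_0\odot(\cdot)$ on type $\tII$ (\THMp{minimal}(I)) accomplish, reducing the essential content to a countable-dimensional residue; one must also check that the multiplicity functions selected fibrewise over the continuous pieces are genuinely almost measurable, which is the same measure-theoretic selection that already underlies the proof of \LEM{pd-sep}, so no new difficulty arises beyond careful bookkeeping. The only remaining point to state explicitly is that the $f_B$'s, defined separately on the members of an in general uncountable base, glue to an almost measurable $f$ on $\xxX$, which is immediate from the defining axioms of a base.
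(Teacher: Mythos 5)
Your proposal is correct and follows essentially the same route as the paper: reduce to a standard piece via a base, use \THM{decomp} to peel off the (possibly uncountably many, but only countably many nonzero) cardinal multiplicities $\EEE^i_{\alpha}(\XXX)$, realize the resulting countable-dimensional residues via \LEM{pd-sep}(B) and transport the multiplicity functions back along $\Phi$, then glue over the base using (CS2)--(CS3). The differences (using $\XXX\wedge(\gamma\odot\TTT_B)$ in place of $\EEE(\XXX|\TTT_B)$, treating atomic base elements separately, and organizing the continuous case fibrewise by type rather than applying \LEM{pd-sep} once on the standard piece) are cosmetic repackagings of the paper's argument.
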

\begin{proof}
It easily follows from (CS3) \pREF{CS3} that $\bigsqplus^{\NnN}_{x\in\xxX} f(x) \odot \Phi(x) \ll \TTT$ for every $f \in
\aaA(\xxX)$. We fix $\XXX \in \CDD_N$ such that $\XXX \ll \TTT$. Let $\{B_s\}_{s \in S}$ be a standard base of $\xxX$. We
may assume that $\bigcup_{s \in S} B_s = \xxX$. For each $s \in S$ put $\TTT_s = \bigsqplus^{\NnN}_{x \in B_s} \Phi(x)$. We
infer from (CS0) that $\TTT_s \in \SsS\EeE\PpP_N$ and from (CS2) that $\TTT = \bigsqplus_{s \in S} \TTT_s$. Let $\XXX_s =
\EEE(\XXX | \TTT_s)$. Observe that $\XXX = \bigsqplus_{s \in S} \XXX_s$ and $\XXX_s \ll \TTT_s$. Suppose for each $s \in S$
there is an admissible function $f_s\dd B_s \to \RRR_+ \cup \Card$ such that $\XXX_s = \bigsqplus^{\NnN}_{x \in B_s} f_s(x)
\odot \Phi(x)$. Then the union $f\dd \xxX \to \RRR_+ \cup \Card$ of $f_s$'s is admissible as well and it follows from (CS2)
that
$$
\contSQ{x\in\xxX}{\NnN} f(x) \odot \Phi(x) = \Bigsqplus_{s \in S} \Bigl(\contSQ{x \in B_s}{\NnN} f_s(x) \odot \Phi(x)\Bigr)
= \Bigsqplus_{s \in S} \XXX_s = \XXX.
$$
The above argument reduces the problem to the case when $\xxX$ is standard. Then there is a standard measure $\mu$
on $\Mm$ such that $\NnN(\mu) = \NnN$. Consequently,
\begin{equation}\label{eqn:aux127}
\contSQ{x\in\xxX}{\NnN} f(x) \odot \Phi(x) = \int^{\sqplus}_{\xxX} f(x) \odot \Phi(x) \dint{\mu(x)}
\end{equation}
for every Borel function $f\dd \xxX \to I_{\aleph_0}$ which fits to $\Phi$. Recall that for each $\AAA \in \CDD_N$,
$s(\AAA)$ is given by \EQp{s(A)} and $s(\AAA) = \bigwedge \{\EEE \leqsl^s \JJJ\dd\ \AAA \ll \EEE\}$. Since $\TTT \in
\SsS\EeE\PpP_N$ (because $\xxX$ is standard), $s(\TTT) \in \SsS\EeE\PpP_N$ as well. So, if $\XXX \ll \TTT$, then $s(\XXX)
\leqsl^s s(\TTT)$ and consequently the set $J = \{(i,\alpha) \in \Upsilon\dd\ \EEE^i_{\alpha}(\XXX) \neq \zero\}$ is
countable.\par
We infer from \LEMp{pd-sep} that:
\begin{itemize}
\item $\TTT$ is the direct sum of a minimal $N$-tuple and a semiminimal one,
\item there is $\lambda \in \rgM(\pP_N)$ such that $\TTT = \int^{\sqplus}_{\pP_N} \PPP \dint{\lambda(\PPP)}$,
\item for each $(i,\alpha) \in J$ there is a Borel function $u^i_{\alpha}\dd \pP_N \to I_{\aleph_0}$ such that
   $u^i_{\alpha}(\aA_N) \subset \Card$, $u^i_{\alpha}(\fF_N) \subset \{0,\aleph_0\}$ and
   \begin{equation}\label{eqn:aux128}\begin{array}{l @{\quad} l}
   \EEE^i_{\alpha}(\XXX) = \int^{\sqplus}_{\pP_N} u^i_{\alpha}(\PPP) \odot \PPP \dint{\lambda(\PPP)} &
   \textup{if } (i,\alpha) \neq (\tII,1),\\
   \EEE_{sm}(\XXX) = \int^{\sqplus}_{\pP_N} u^i_{\alpha}(\PPP) \odot \PPP \dint{\lambda(\PPP)} &
   \textup{if } (i,\alpha) = (\tII,1).
   \end{array}\end{equation}
\end{itemize}
Further, it follows from \CORp{ue} that
\begin{equation}\label{eqn:aux129}
\Phi^*(\mu) \ll \lambda \ll \Phi^*(\mu)
\end{equation}
(cf. \eqref{eqn:aux127}). Since $\xxX$ is standard and $\Phi \in \rgS_{loc}(\xxX)$, we may assume that $\Phi$ is a Borel
isomorphism of $\xxX$ onto a measurable domain. Put $g^i_{\alpha} = u^i_{\alpha} \circ \Phi$ for $(i,\alpha) \in J$
and note that $g^i_{\alpha} \in \aaA(\xxX)$. Now \eqref{eqn:aux127}, \eqref{eqn:aux128} and \eqref{eqn:aux129} combined
with (DI4) \pREF{DI4} for every $(i,\alpha) \in J$ yield
\begin{gather}
\EEE^i_{\alpha}(\XXX) = \contSQ{x\in\xxX}{\NnN} g^i_{\alpha}(x) \odot \Phi(x) \quad
\textup{if } (i,\alpha) \neq (\tII,1),\label{eqn:aux131}\\
\EEE_{sm}(\XXX) = \contSQ{x\in\xxX}{\NnN} g^i_{\alpha}(x) \odot \Phi(x) \quad
\textup{if } (i,\alpha) = (\tII,1).\label{eqn:aux132}
\end{gather}
Let $(i,\alpha)$ and $(i',\alpha')$ be distinct elements of $J$. Suppose $s(g^i_{\alpha}) \cap s(g^{i'}_{\alpha'}) \notin
\NnN$ ($s(g^i_{\alpha})$'s are measurable since $g^i_{\alpha}$'s are such). Then there is a nonnull measurable set $\bbB$
which is contained in $s(g^i_{\alpha}) \cap s(g^{i'}_{\alpha'})$. Consequently, thanks to (CS3)
and \eqref{eqn:aux131}--\eqref{eqn:aux132}, $\aleph_0 \odot \bigsqplus^{\NnN}_{x\in\bbB} \Phi(x) \leqsl \aleph_0 \odot
\EEE^i_{\alpha}(\XXX)$ as well as $\aleph_0 \odot \bigsqplus^{\NnN}_{x\in\bbB} \Phi(x) \leqsl \aleph_0 \odot
\EEE^{i'}_{\alpha'}(\XXX)$ which is impossible since $\EEE^i_{\alpha}(\XXX) \disj \EEE^{i'}_{\alpha'}(\XXX)$
and $\bigsqplus^{\NnN}_{x\in\bbB} \Phi(x) \neq \zero$. This proves that $s(g^i_{\alpha}) \cap s(g^{i'}_{\alpha'})
\in \NnN$ for any different members $(i,\alpha)$ and $(i',\alpha')$ of $J$. It then follows from the countability of $J$
that there is $\zzZ \in \NnN$ such that the sets $\ssS^i_{\alpha} = s(g^i_{\alpha}) \setminus \zzZ$ ($(i,\alpha) \in J$)
are pairwise disjoint. Now we define $f\dd \xxX \to \RRR_+ \cup \Card$ by the rules: $f(x) = \alpha \cdot g^i_{\alpha}(x)$
for $x \in \ssS^i_{\alpha}$ with $(i,\alpha) \in J \setminus \{(\tII,1)\}$; $f(x) = g^{\tII}_1(x)$ for $x \in
\ssS^{\tII}_1$ provided $(\tII,1) \in J$; and $f(x) = 0$ for $x \notin \bigcup_{(i,\alpha) \in J} \ssS^i_{\alpha}$.
It follows from the construction that $f \in \aaA(\xxX)$. Finally, \THMp{decomp}, \eqref{eqn:aux131}--\eqref{eqn:aux132},
(CS2) and (CS3) \pREF{CS3} give $\XXX = \bigsqplus^{\NnN}_{x \in \xxX} f(x) \odot \Phi(x)$.
\end{proof}

\THM{reg-id} asserts that $\IiI(\Phi) = \{\bigsqplus^{\NnN}_{x\in\xxX} f(x) \odot \Phi(x)\dd\ f \in \aaA(\xxX)\}$ is
an ideal. We call a quadruple $(\yyY,\Nn,\ZzZ,\Psi)$ or a pair $(\yyY,\Psi)$ a \textit{covering} for an ideal $\AaA \subset
\CDD_N$ iff $(\yyY,\Nn,\ZzZ)$ is a multi-standard measurable space with nullity, $\Psi \in \rgS_{loc}(\yyY)$, $\Psi(\yyY)
\subset \pP_N$ and $\IiI(\Psi) = \AaA$ (with this terminology we are inspired by condition (ii) of \THMP{disj-sum}).
Whenever the ideal $\AaA$ is irrelevant, we shall speak shortly of a \textit{covering}. A \textit{full covering} is
a covering for $\CDD_N$.\par
As usual, whenever $\ddD \in \Mm$, $j_{\ddD}$ stands for the characteristic function of $\ddD$.

\begin{cor}{pd-prp}
Let $f, g, h_1, h_2, \ldots \in \aaA(\xxX)$, $\XXX = \bigsqplus^{\NnN}_{x\in\xxX} f(x) \odot \Phi(x)$ and $\YYY =
\bigsqplus^{\NnN}_{x\in\xxX} g(x) \odot \Phi(x)$.
\begin{enumerate}[\upshape(A)]
\item $\XXX \leqsl \YYY$ iff $f \leqsl g$ a.e.
\item $\XXX \disj \YYY$ iff $f \cdot g = 0$ a.e.
\item $\XXX \ll \YYY$ iff $s(f) \setminus s(g) \in \overline{\NnN}$.
\item $\XXX \leqsl^s \YYY$ iff $f = g \cdot j_{\ddD}$ a.e. for some $\ddD \in \Mm$.
\item $\bigsqplus^{\NnN}_{x\in\xxX} [\sum_{n=1}^{\infty} h_n(x)] \odot \Phi(x) = \bigoplus_{n=1}^{\infty}
   [\bigsqplus^{\NnN}_{x\in\xxX} h_n(x) \odot \Phi(x)]$.
\end{enumerate}
\end{cor}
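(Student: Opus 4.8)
The plan is to prove part (A) first — it carries all the weight — and to read off (B)--(E) from it together with \THM{reg-id}, \PRO{uniq}, the additivity rules (CS0)--(CS3), and the closure properties \COR{A(X)} and \LEM{pd-count}. The ``if'' half of (A) is short: if $f\leqsl g$ a.e., then \COR{A(X)} gives $u\in\aaA(\xxX)$ with $g=f+u$ a.e., and (CS3)--(C) writes $\YYY=\XXX\oplus\bigl(\bigsqplus^{\NnN}_{x\in\xxX}u(x)\odot\Phi(x)\bigr)$, so $\XXX\leqsl\YYY$.

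For the converse I would argue by contradiction, running the same dichotomy as in the proof of \PRO{uniq}. Assume $\XXX\leqsl\YYY$ while $\ddD=\{x:f(x)>g(x)\}$ is nonnull. First reduce to a standard piece: fix a standard base $\{B_s\}_{s\in S}$ of $(\xxX,\Mm,\NnN)$ and put $\TTT_s=\bigsqplus^{\NnN}_{x\in B_s}\Phi(x)$, $\XXX_s=\bigsqplus^{\NnN}_{x\in B_s}f(x)\odot\Phi(x)$, $\YYY_s=\bigsqplus^{\NnN}_{x\in B_s}g(x)\odot\Phi(x)$. Since $\Phi\in\rgS_{loc}$, the $\TTT_s$ are mutually unitarily disjoint (\THM{disj-sum}) and, by \THM{reg-id}, $\XXX_s\ll\TTT_s$, $\YYY_s\ll\TTT_s$; hence $\XXX_s\disj\YYY_{s'}$ for $s\ne s'$, and combining $\XXX=\bigsqplus_s\XXX_s\leqsl\YYY=\bigsqplus_s\YYY_s$ with (PR1)--(PR2) yields $\XXX_s\leqsl\YYY_s$ for each $s$. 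So one may take $\xxX$ standard, and by \LEM{alm-meas} also assume $f,g$ Borel with countably many infinite-cardinal values, so that $\ddD$ is Borel and, by hypothesis, nonnull. Passing to a nonnull standard $\bbB\subset\ddD$ and using the same disjointness trick once more so that $\XXX|_{\bbB}:=\bigsqplus^{\NnN}_{x\in\bbB}f(x)\odot\Phi(x)\leqsl\YYY|_{\bbB}$, I would invoke the \PRO{uniq}-dichotomy: \emph{either} some nonnull $\bbB'\subset\bbB$ carries $f$ and $g$ as distinct constants $\alpha>\beta$, in which case $\BBB=\bigsqplus^{\NnN}_{x\in\bbB'}\Phi(x)$ is nonzero (by (CS0)), lies in $\SsS\EeE\PpP_N$, and is a direct sum of a minimal and a semiminimal $N$-tuple (by \LEM{pd-sep}(C)); then $\alpha\odot\BBB\leqsl\beta\odot\BBB\leqsl\alpha\odot\BBB$ forces $\alpha\odot\BBB=\beta\odot\BBB$ by \PRO{order}, contradicting the cancellation laws (AO1)--(AO4) (with a dimension count when a constant is an uncountable cardinal, since fractals are $\aleph_0$-dimensional, and the linear structure of $\EeE_N$ when the constants are real); \emph{or} some nonnull $\bbB'\subset\bbB\cap\xxX_{\tII}$ has $f,g$ real-valued with $f>g$, in which case $\BBB=\bigsqplus^{\NnN}_{x\in\bbB'}g(x)\odot\Phi(x)\in\SsS\MmM_N\subset\fIN_N$ (by \LEM{pd-sep}(C)) and, writing $f=g+u$ with $u>0$ a.e.\ (\COR{A(X)}), (CS3)--(C) gives $\BBB\oplus\UUU\leqsl\YYY|_{\bbB'}=\BBB$ with $\UUU=\bigsqplus^{\NnN}_{x\in\bbB'}u(x)\odot\Phi(x)\ne\zero$ (by (CS0)), so $\BBB\oplus\UUU=\BBB$ by \PRO{order} and hence $\UUU=\zero$ by (AO4) --- a contradiction.

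Granting (A), the remaining parts are short. For (B): if $f\cdot g=0$ a.e., pick measurable $\ddD$ with $\ddD\triangle s(f)\in\overline{\NnN}$; then $\XXX=\bigsqplus^{\NnN}_{x\in\ddD}f(x)\odot\Phi(x)$ and $\YYY=\bigsqplus^{\NnN}_{x\notin\ddD}g(x)\odot\Phi(x)$, each covered by $\bigsqplus^{\NnN}$ over its own set, hence disjoint by \THM{disj-sum}; conversely, if $s(f)\cap s(g)$ were nonnull, a nonnull standard $\bbB$ inside it would make $\bigsqplus^{\NnN}_{x\in\bbB}(f\wedge g)(x)\odot\Phi(x)$ nonzero (by (CS0) and \COR{A(X)}), and by (A) this is a common nonzero lower bound of $\XXX$ and $\YYY$, contradicting $\XXX\disj\YYY$. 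For (C): $\alpha\cdot g\in\aaA(\xxX)$ (\COR{A(X)}) and $\bigsqplus^{\NnN}(\alpha\cdot g)\odot\Phi=\alpha\odot\YYY$ by (CS3)--(B), so by (A) and the definition of `$\ll$', $\XXX\ll\YYY$ iff $f\leqsl\alpha\cdot g$ a.e.\ for some cardinal $\alpha$; since $f$ has at most $\dim(\XXX)+\aleph_0$ distinct cardinal values and $\alpha\cdot g(x)\geqsl f(x)$ whenever $g(x)>0$ and $\alpha$ is large, this is equivalent to $s(f)\setminus s(g)\in\overline{\NnN}$. For (D): ``if'' is (CS3)--(C) together with (B); ``only if'' uses (PR5) to write $\YYY=\XXX\sqplus\ZZZ$, then \THM{reg-id} to represent $\ZZZ=\bigsqplus^{\NnN}h\odot\Phi$, and finally \PRO{uniq} with (B) to get $f+h=g$ a.e.\ and $f\cdot h=0$ a.e., whence $f=g\cdot j_{\ddD}$ a.e.\ for suitable measurable $\ddD$. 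For (E): $\sum_n h_n\in\aaA(\xxX)$ by \LEM{pd-count}; with $g_N=\sum_{n\leqsl N}h_n$, (CS3)--(C) gives $\bigsqplus^{\NnN}g_N\odot\Phi=\bigoplus_{n\leqsl N}\bigsqplus^{\NnN}h_n\odot\Phi$, and since by (A) the map $u\mapsto\bigsqplus^{\NnN}u\odot\Phi$ reflects the order (and one stays inside $\IiI(\Phi)$ by passing to $\WWW\wedge\bigsqplus^{\NnN}(\sum_n h_n)\odot\Phi$, as in the proof of \THM{reg-id}) one obtains $\bigsqplus^{\NnN}(\sum_n h_n)\odot\Phi=\bigvee_N\bigsqplus^{\NnN}g_N\odot\Phi$; an application of (AO6) then turns the join of partial sums into the full direct sum.

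The main obstacle is exactly the ``only if'' half of (A): carrying the global inequality $\XXX\leqsl\YYY$ down to a single nonnull set on which one of the two \PRO{uniq}-alternatives holds, and then checking that the corresponding $\bigsqplus^{\NnN}\Phi$ (or its semiminimal summand) is minimal or semiminimal so that the cancellation identities of Section~14 become available --- this is the same technical core as in \PRO{uniq}. Everything after that is bookkeeping with (CS0)--(CS3), \COR{A(X)}, \PRO{uniq} and \THM{reg-id}.
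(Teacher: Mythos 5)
Your proposal is correct, but for the one part that carries real weight --- the ``only if'' half of (A) --- you take a genuinely different and substantially heavier route than the paper. The paper's argument is three lines: if $\XXX \leqsl \YYY$, write $\YYY = \XXX \oplus \AAA$; since $\AAA \leqsl \YYY \ll \bigsqplus^{\NnN}_{x\in\xxX}\Phi(x)$ and \THM{reg-id} identifies $\IiI(\Phi)$ with the set of all regular sums $\bigsqplus^{\NnN}_{x\in\xxX} h(x)\odot\Phi(x)$, the complement $\AAA$ is itself such a sum; then (CS3)--(C) gives $\YYY = \bigsqplus^{\NnN}_{x\in\xxX}(f+h)(x)\odot\Phi(x)$ and \PRO{uniq} forces $g = f+h$ a.e., hence $f \leqsl g$ a.e. You instead argue by contradiction and re-run the entire dichotomy from the proof of \PRO{uniq} (localize to a standard nonnull piece of $\{f>g\}$, split into the constant-cardinal case and the real-valued type-II case, and invoke the cancellation laws of Section~12). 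Your version works --- the localization via (PR1)--(PR2) is sound, and both alternatives do yield contradictions via (AO1)/(AO4) --- but it duplicates measure-theoretic case analysis that \PRO{uniq} has already packaged, and your parenthetical about ``the linear structure of $\EeE_N$ when the constants are real'' is misplaced in the first alternative (there the constants are cardinals; the real-valued situation is exactly your second alternative, which you handle correctly via (AO4)). What the paper's route buys is precisely that \THM{reg-id} plus \PRO{uniq} together turn the order-theoretic statement into a formal identity of admissible functions, so no new dichotomy is needed. Your treatments of (B)--(E) agree with the paper's in substance: the paper derives (D) directly from \eqref{eqn:cont-ords} and \PRO{uniq} rather than via (PR5), and (C) from (CS3) and (B) rather than via a cardinal-multiplier bound, but these are cosmetic differences.
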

\begin{proof}
Observe that point (D) is an immediate consequence of \EQp{cont-ords} and \PRO{uniq}; (B) follows from (A)
and \THM{reg-id}; (E) is implied by (A), (CS3) \pREF{CS3} and (AO6) \pREF{AO6};  while (C) follows from (CS3) and (B). It
therefore suffice to prove (A). Implication `$\impliedby$' is a consequence of (CS3) and point (d) of \COR{A(X)}. Finally,
the converse implication follows from \PRO{uniq} and \THM{reg-id}. Indeed, if $\XXX \leqsl \YYY$, there is $\AAA \in
\CDD_N$ such that $\YYY = \XXX \oplus \AAA$. Then $\AAA \in \IiI(\Phi)$ and consequently there is $h \in \aaA(\xxX)$ for
which $\AAA = \bigsqplus^{\NnN}_{x\in\xxX} h(x) \odot \Phi(x)$. We now deduce from (CS3) that $\bigsqplus^{\NnN}_{x\in\xxX}
g(x) \odot \Phi(x) = \bigsqplus^{\NnN}_{x\in\xxX} (f + h)(x) \odot \Phi(x)$ and hence, by \PRO{uniq}, $g = f + h$ a.e.
\end{proof}

For need of the next result, we put $\xxX_{I_n} = \Phi^{-1}(\aA_N(n))$, $\xxX_{\tII_1} = \Phi^{-1}(\sS_N(1))$
and $\xxX_{\tII_{\infty}} = \Phi^{-1}(\sS_N(\infty))$. Observe that all just defined sets are pairwise disjoint,
$\xxX_I = \bigcup_{n=1}^{n=\infty} \xxX_{I_n}$ and $\xxX_{\tII} = \xxX_{\tII_1} \cup \xxX_{\tII_{\infty}}$, and they
are measurable if so is $\Phi$ (and this is our assumption, for simplicity).

\begin{cor}{pd-type}
Let $f \in \aaA(\xxX)$ and $\AAA = \bigsqplus^{\NnN}_{x\in\xxX} f(x) \odot \Phi(x)$.
\begin{enumerate}[\upshape(a)]
\item $\AAA \in \MmM\FfF_N$ (respectively $\AAA \in \HhH\IiI\MmM_N$; $\AAA \in \SsS\MmM_N$) iff $f = j_{\ddD}$ a.e.
   for some measurable $\ddD \subset \xxX_I$ (respectively $f = \aleph_0 \cdot j_{\ddD}$ a.e. for some measurable $\ddD
   \subset \xxX_{\tIII}$; there is $\zzZ \in \NnN$ such that $f((\xxX_I \cup \xxX_{\tIII}) \setminus \zzZ) \subset \{0\}$
   and $f(\xxX_{\tII} \setminus \zzZ) \subset \RRR_+$). In particular, $\bigsqplus^{\NnN}_{x\in\xxX} \Phi(x)$ is the direct
   sum of a minimal $N$-tuple and a semiminimal one.
\item $\AAA \in \SsS\EeE\PpP_N$ (respectively $\AAA \in \aA_N$; $\AAA \in \fF_N$; $\AAA \in \sS_N$; $\AAA \in \Ff_N$) iff
   there is $\zzZ \in \NnN$ such that $s(f) \setminus \zzZ$ is standard and $f(\xxX \setminus \zzZ) \subset I_{\aleph_0}$
   (respectively $f = j_{\{x\}}$ a.e. for some $x \in \xxX_I \cap \xxX^d$; $f = \aleph_0 \cdot j_{\{x\}}$ a.e. for some
   $x \in \xxX_{\tIII} \cap \xxX^d$; $f = t \cdot j_{\{x\}}$ a.e. for some $x \in \xxX_{\tII} \cap \xxX^d$
   and $t \in \RRR_+ \setminus \{0\}$; $f = s \cdot j_{\{x\}}$ a.e. for some $x \in \xxX^d$ and $s \in I_{\aleph_0}
   \setminus \{0\}$).
\item $\AAA$ is type $I$; $I^n$; $\tII$; $\tII^1$; $\tII^{\infty}$; $\tIII$ iff, respectively, $s(f) \setminus \xxX_I$;
   $s(f) \setminus \xxX_{I_n}$; $s(f) \setminus \xxX_{\tII}$; $s(f) \setminus \xxX_{\tII_1}$; $s(f) \setminus
   \xxX_{\tII_{\infty}}$; $s(f) \setminus \xxX_{\tIII}$ is a member of $\overline{\NnN}$.
\item $\AAA^d = \bigsqplus_{x\in\xxX^d} f(x) \odot \Phi(x)$ and $\AAA^c = \bigsqplus^{\NnN}_{x\in\xxX^c} f(x) \odot
   \Phi(x)$.
\item Let $\zzZ \in \NnN$ be such that $f\bigr|_{\xxX \setminus \zzZ}$ is Borel and $\xxX \setminus \zzZ$ is the union
   of a base $\BbB$ consisting of sets each of which is isomorphic either to $([0,1],\Bb([0,1]),\LlL_0)$ or to a one-point
   nontrivial measurable space with nullity (there exists such $\zzZ$). Put $\eeE_{sm} = f^{-1}(\RRR_+ \setminus \{0\})
   \cap \xxX_{\tII} \setminus \zzZ$ and $\eeE^i_{\alpha} = f^{-1}(\{\alpha\}) \cap \xxX_i \setminus \zzZ$ for $(i,\alpha)
   \in \Upsilon_*$. Then $\EeE = \{\eeE^i_{\alpha}\dd\ (i,\alpha) \in \Upsilon_*\} \cup \{\eeE_{sm}\}$ is a base of $\xxX$,
   and $\EEE_{sm}(\AAA) = \bigsqplus^{\NnN}_{x\in\eeE_{sm}} f(x) \odot \Phi(x)$ and $\EEE^i_{\alpha}(\AAA) =
   \bigsqplus^{\NnN}_{x\in\eeE^i_{\alpha}} \Phi(x)$ for $(i,\alpha) \in \Upsilon_*$ with $\alpha \neq 0$.
\end{enumerate}
\end{cor}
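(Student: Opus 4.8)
The plan is to derive the whole corollary from the apparatus of Sections~20--22 that is already in place: \PRO{uniq} (the admissible density $f$ representing $\AAA$ is unique a.e.), \COR{pd-prp} (which reads off $\leqsl$, $\disj$, $\ll$, $\leqsl^s$ and countable $\bigsqplus$'s directly from $f$), \THM{reg-id} (so that $\IiI(\Phi)$ is an ideal and hence closed under reduced parts), \LEM{pd-sep} and its part (C) (membership of regular integrals in $\MmM\FfF_N$, $\HhH\IiI\MmM_N$, $\SsS\MmM_N$), together with the structure theory of $\pP_N$ from Sections~16--17 (\PRO{factor}, \PRO{fractal}, \PRO{semiprime}, \PRO{typei}, \THM{minimal}) and \THM{decomp}. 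The recurring technical move is: after deleting a null measurable set assume $\Phi$ and $f$ are Borel, pick a standard base $\BbB$ of $(\xxX,\Mm,\NnN)$, and use (CS2)--(CS3) to write $\AAA=\bigsqplus_{B\in\BbB}\bigl(\bigsqplus^{\NnN}_{x\in B}f(x)\odot\Phi(x)\bigr)$, then note that a regular direct sum of members of a fixed ideal lies again in that ideal, and a regular direct sum of $N$-tuples $\leqsl\JJJ_I$ is $\leqsl\JJJ_I$ (by \COR{disjoint} and order-completeness of $(\CDD_N,\leqsl)$). Part (a) is proved first, since (b) and (e) use it.

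For (a): if $f=j_{\ddD}$ a.e.\ with $\ddD\subset\xxX_I$ then $\Phi(x)\in\aA_N$ for a.e.\ $x\in\ddD$, so over a standard base each summand $\bigsqplus^{\NnN}_{x\in B\cap\ddD}\Phi(x)\in\MmM\FfF_N$ by \LEM{pd-sep}(C)(a); these summands are mutually unitarily disjoint, hence their $\bigsqplus$ is their $\bigvee$ and, each being $\leqsl\JJJ_I$ (\THM{minimal}), the join is $\leqsl\JJJ_I$, i.e.\ $\AAA\in\MmM\FfF_N$. Conversely, if $f(x)\geqsl2$ on a nonnull measurable $\bbB\subset\xxX_I$ then by \COR{pd-prp}(A) and (CS3) one has $2\odot\bigl(\bigsqplus^{\NnN}_{x\in\bbB}\Phi(x)\bigr)\leqsl\AAA$ with $\bigsqplus^{\NnN}_{x\in\bbB}\Phi(x)$ nontrivial by (CS0), contradicting $\AAA\in\MmM\FfF_N$; and if $f$ is supported on a nonnull part of $\xxX_{\tII}$ (resp.\ $\xxX_{\tIII}$) the corresponding reduced part of $\AAA$ is semiminimal by \LEM{pd-sep}(C)(c) (resp.\ a nonzero hereditary idempotent by \LEM{pd-sep}(C)(b)), neither of which is $\leqsl\JJJ_I$; hence $f=j_{s(f)}$ a.e.\ with $s(f)\subset\xxX_I$. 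The statements for $\HhH\IiI\MmM_N$ and $\SsS\MmM_N$ follow by the same pattern, using admissibility ($f(\xxX_{\tIII})\subset\{0\}\cup\Card_\infty$) for the former and, for the latter, \LEM{pd-sep}(C)(c) plus the observation that $f$ with an infinite value on a nonnull part of $\xxX_{\tII}$ produces an $\aleph_0\odot$-piece with properly infinite $\ast$-commutant, which cannot be semiminimal.

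For (b): $\AAA\in\SsS\EeE\PpP_N$ means $0<\dim(\AAA)\leqsl\aleph_0$, which by (CS0) together with the remark that $f$ must be $I_{\aleph_0}$-valued a.e.\ (else $\dim(\AAA)>\aleph_0$) gives the stated condition. For $\aA_N,\fF_N,\sS_N,\Ff_N$ I invoke \PRO{factor}: $\WWw''(\aAA)$ is a factor iff $\{\XXX\in\CDD_N:\XXX\leqsl^s\AAA\}=\{\zero,\AAA\}$, which by \COR{pd-prp}(D) says precisely that $s(f)$ equals, up to a null set, a single atom of $\xxX$; adjoining the type information ((a) for $\MmM\FfF$, \PRO{fractal} for type~$\tIII$, \PRO{semiprime} for type~$\tII$) forces that atom into $\xxX_I\cap\xxX^d$, $\xxX_{\tIII}\cap\xxX^d$, or $\xxX_{\tII}\cap\xxX^d$ respectively and identifies the admissible multiplier. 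For (c): $\AAA$ is of type $i^n$ iff $\AAA\in\IiI_{i_n}$, an ideal by \PRO{typei}, closed under reduced parts; so it suffices to show $\bigsqplus^{\NnN}_{x\in\bbB}f(x)\odot\Phi(x)$ is of type $i^n$ exactly when $\bbB\cap s(f)\subset\xxX_{i_n}$ up to null, and this follows from \COR{pd-prp}(A),(D) together with the fact (already used in the proof of \LEM{pd-sep}) that central decompositions of von~Neumann algebras preserve the type, each $\Phi(x)\in\pP_N$ carrying the type recorded by the piece $\xxX_{i_n}$ it lies in.

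For (d): $\bigsqplus_{x\in\xxX^d}f(x)\odot\Phi(x)$ is a direct sum of $N$-tuples each covered by a prime or semiprime, hence in $\IiI^d$; and $\bigsqplus^{\NnN}_{x\in\xxX^c}f(x)\odot\Phi(x)\in\IiI^c$, because every reduced part of it lies in $\IiI(\Phi)$ (\THM{reg-id}), is represented by a density supported, up to null, in $\xxX^c$ (\COR{pd-prp}), and therefore by (b) is not a member of $\pP_N$; splitting $\AAA$ along the base $\{\{x\}:x\in\xxX^d\}\cup(\text{a base of }\xxX^c)$ via (CS2)--(CS3) and invoking uniqueness of the $\IiI^d/\IiI^c$-decomposition (\THM{main}) identifies the two pieces as $\AAA^d$ and $\AAA^c$. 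Finally (e): choose $\zzZ\in\NnN$ and a granular base $\BbB$ of $\xxX\setminus\zzZ$ — each member a point of $\xxX^d$ or a copy of $([0,1],\Bb([0,1]),\LlL_0)$ — with $f\bigr|_{\xxX\setminus\zzZ}$ Borel (possible by the multi-standard structure of $\xxX$); check that $\EeE=\{\eeE^i_\alpha\}_{(i,\alpha)\in\Upsilon_*}\cup\{\eeE_{sm}\}$ is a base (it is a measurable partition of $\xxX$ up to null, and on each standard $B\in\BbB$ only countably many of its members meet $B$ by \LEM{alm-meas}, so the detection property of $\EeE$ inherits from that of $\BbB$ even though class-many of the $\eeE^i_\alpha$ may be nonnull); apply (CS3) along $\EeE$ to split $\AAA$, identify the $\eeE_{sm}$-piece as semiminimal and each $\eeE^i_\alpha$-piece via (a) and \LEM{pd-sep}(C) as $\alpha\odot$ a minimal/semiminimal/hereditary-idempotent $N$-tuple, and conclude via the uniqueness clause of \THM{decomp}.

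The main obstacle I expect is the bookkeeping in (e): one must check that the summands produced really meet the precise requirements of \THM{decomp}. In particular the type~$\tII$ contributions carrying an infinite multiplier must be handled with an $\aleph_0\odot$-normalization so that $\EEE^{\tII}_\alpha(\AAA)$ sits $\leqsl^s\JJJ_{\tII}$ (this is where the distinction between $\sS_N(1)$ and $\sS_N(\infty)$, and the fact that $\dim(\JJJ_{\tII})=2^{\aleph_0}$ by \COR{continuum}, enter), and one must verify the compatibility of $\EeE$ with the partition of unity of \THM{decomp} fiberwise over $\BbB$. All the ingredients — countability of the infinite-cardinal values of $f$ on standard sets (\LEM{alm-meas}, \LEM{pd-count}), the properties (CS0)--(CS4) of the continuous sum, and \COR{A(X)} — are available, but this matching is the delicate point rather than a routine verification.
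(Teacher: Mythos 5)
Your proposal is correct and follows essentially the same route as the paper: points (a)--(d) via \LEM{pd-sep}, \COR{pd-prp}, \PRO{factor}, \PRO{typei} and the type-preservation of central decompositions (the paper leaves these as exercises with exactly these references), and point (e) by verifying that $\EeE$ is a base — using the countability of $f(B)\cap\Card_{\infty}$ on each standard base element $B$ from \LEM{alm-meas} — and then invoking (CS2), (CS3), part (a) and the uniqueness clause of \THM{decomp}. The "delicate matching" you flag at the end is handled in the paper precisely by this base argument plus uniqueness, so no further work is needed.
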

\begin{proof}
Points (a)--(d) are left as exercises. They are almost immediate consequences of Propositions \ref{pro:typei}
\pREF{pro:typei}, \ref{pro:pd-transl} \pREF{pro:pd-transl} and the fact that central decompositions of von Neumann algebras
preserve the types. Note also that $\bigsqplus^{\NnN}_{x\in\xxX^d} f(x) \odot \Phi(x) = \bigsqplus_{x\in\xxX^d} f(x) \odot
\Phi(x)$ since $\NnN\bigr|_{\xxX^d} = \{\varempty\}$. Here we shall focus on (e).\par
To prove (e), it suffices to show that $\EeE$ is a base of $\xxX$, since then the remainder will follow from (CS2),
(CS3) \pREF{CS3}, (a) and the uniqueness in \THMp{decomp}. It is clear that $\EeE$ consists of pairwise disjoint,
measurable sets (because $f$ is measurable on $\xxX \setminus \zzZ$) and $\xxX \setminus \bigcup \EeE = \zzZ$. Now assume
$A \subset \xxX \setminus \zzZ$ is such that $A \cap \eeE \in \Mm$ (respectively $A \cap \eeE \in \NnN$) for any $\eeE \in
\EeE$. Let $\BbB$ be as in (e). It follows from the proof of \LEMp{alm-meas} that $f(B) \cap \Card_{\infty}$ is countable
for each $B \in \BbB$. Consequently, also the set $\EeE(B) = \{E \in \EeE\dd\ E \cap B \neq \varempty\}$ is countable and
thus $A \cap B = \bigcup_{E\in\EeE(B)} [(A \cap E) \cap B]$ is a member of $\Mm$ (respectively $\NnN$) for any $B \in
\BbB$. Since $\BbB$ is a base, we obtain $A \in \Mm$ ($A \in \NnN$) and we are done.
\end{proof}

\begin{rem}{axiom}
For need of this remark, for every measurable set $\ddD \subset \xxX$, let $\jJ_{\ddD}$ denote an admissible function
which is equal to $0$ off $\ddD$, $1$ on $\ddD \setminus \xxX_{\tIII}$ and $\aleph_0$ on $\ddD \cap \xxX_{\tIII}$.\par
Using point (E) of \COR{pd-prp} as well as properties (CS2) and (CS3)--(B) \pREF{CS3}, one may show that whenever
$(\xxX,\Mm,\NnN,\Phi)$ is a covering, the regular (continuous) direct sums of the form $\bigsqplus^{\NnN}_{x\in\xxX} f(x)
\odot \Phi(x)$ with $f \in \aaA(\xxX)$ may axiomatically be defined by axioms (AX0)--(AX3) stated below. Namely, it is
now quite easy to prove that if $\Psi\dd \aaA(\xxX) \to \CDD_N$ is an assignment such that
\begin{enumerate}[({A}X1)]\addtocounter{enumi}{-1}
\item for every $\ddD \in \Mm$, $\Psi(\jJ_{\ddD}) = \bigsqplus^{\NnN}_{x\in\ddD} \Phi(x)$,
\item whenever $\BbB$ is a base of $\xxX$, $\Psi(f) = \bigoplus_{B\in\BbB} \Psi(\jJ_B \cdot f)$ for every
   $f \in \aaA(\xxX)$,
\item $\Psi(\alpha \cdot f) = \alpha \odot \Psi(f)$ for any $\alpha \in \Card$ and $f \in \aaA(\xxX)$,
\item $\Psi(\sum_{n=1}^{\infty} f_n) = \bigoplus_{n=1}^{\infty} \Psi(f_n)$ for all $f_1,f_2,\ldots \in \aaA(\xxX)$,
\end{enumerate}
then $\Psi(f) = \bigsqplus^{\NnN}_{x\in\xxX} f(x) \odot \Phi(x)$ for any $f \in \aaA(\xxX)$ (to show this, use
\COR{pd-type}--(e) and the fact that a real-valued measurable function may be written as the series of rational-valued
simple functions). However, at this moment we do not know whether $\Phi$ is uniquely determined (up to a.e. equality)
by `its' continuous direct sums appearing in (AX0). This (and even more) will be proved later, in \THMp{mod-pr}.
\end{rem}

The next result follows from \COR{pd-type} and its proof is left for the reader.

\begin{cor}{cov}
Let $(\yyY,\Psi)$ be a covering for an ideal $\AaA \subset \CDD_N$ and let $\BBB = \JJJ(\AaA)$. Then $\iota^d(\yyY) =
\card(\{\XXX \in \Ff_N\dd\ \XXX \leqsl^s \BBB\})$ and $\iota^c(\yyY) = \dim(\BBB^c)$.
\end{cor}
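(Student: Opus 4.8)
The plan is to put $\BBB=\JJJ(\AaA)$ into the normal form $\BBB=\bigsqplus^{\NnN}_{x\in\yyY}u(x)\odot\Psi(x)$ for a distinguished admissible function $u$ and then read off both equalities from Corollaries \ref{cor:pd-type} and \ref{cor:pd-prp}. Here $\NnN$ denotes the nullity of $\yyY$; after discarding a null set I may assume $\Psi$ is measurable with $\Psi(\yyY)\subset\pP_N$, and I write $\yyY_I=\Psi^{-1}(\aA_N)$, $\yyY_{\tII}=\Psi^{-1}(\sS_N)$, $\yyY_{\tIII}=\Psi^{-1}(\fF_N)$. Let $u\in\aaA(\yyY,\Psi)$ be the function equal to $1$ on $\yyY_I$ and to $\aleph_0$ on $\yyY_{\tII}\cup\yyY_{\tIII}$, and put $\YYY=\bigsqplus^{\NnN}_{x\in\yyY}u(x)\odot\Psi(x)$. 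By \COR{pd-type}(a) the $\yyY_I$-, $\yyY_{\tII}$- and $\yyY_{\tIII}$-parts of $\YYY$ are respectively a multiplicity free $N$-tuple, an $\aleph_0\odot(\text{semiminimal})$ one, and a hereditary idempotent minimal one; by \THM{minimal}(VI)(a),(d) and \REM{J2} these lie $\leqsl^s\JJJ_I$, $\leqsl^s\JJJ_{\tII}$ and $\leqsl^s\JJJ_{\tIII}$, whence $\YYY\leqsl^s\JJJ$. Since for each $x$ the $N$-tuples $u(x)\odot\Psi(x)$ and $\Psi(x)$ generate the same principal ideal, \COR{pd-prp}(c) gives $\YYY\ll\TTT\ll\YYY$ with $\TTT=\bigsqplus^{\NnN}_{x\in\yyY}\Psi(x)$, so $\{\ZZZ\dd\ZZZ\ll\YYY\}=\IiI(\Psi)=\AaA$, and \COR{1id-1} forces $\YYY=\JJJ(\AaA)=\BBB$. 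Applying \COR{pd-type}(d) to $\BBB$ then also gives $\BBB^c=\bigsqplus^{\NnN}_{x\in\yyY^c}u(x)\odot\Psi(x)$.

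For the discrete equality, let $\XXX\in\Ff_N$. If $\XXX\leqsl^s\BBB$ then $\XXX\ll\BBB$, so $\XXX\in\IiI(\Psi)$ and $\XXX=\bigsqplus^{\NnN}_{x\in\yyY}f(x)\odot\Psi(x)$ for a (unique, by \PRO{uniq}) $f\in\aaA(\yyY)$; by \COR{pd-prp}(d), $\XXX\leqsl^s\BBB$ is equivalent to $f=u\cdot j_{\ddD}$ a.e.\ for some measurable $\ddD$, while by \COR{pd-type}(b) the condition $\XXX\in\Ff_N$ forces $f=s\cdot j_{\{x_0\}}$ a.e.\ for some $x_0\in\yyY^d$, $s\in I_{\aleph_0}\setminus\{0\}$. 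Comparing the two descriptions — on $\yyY^c$ the function $f$ must vanish a.e., and singletons in $\yyY^d$ are non‑null — gives $\ddD=\{x_0\}$ up to a null set and $s=u(x_0)$, i.e.\ $\XXX=u(x_0)\odot\Psi(x_0)$. Conversely every $u(y)\odot\Psi(y)$ with $y\in\yyY^d$ belongs to $\Ff_N$ and is $\leqsl^s\BBB$ (take $\ddD=\{y\}$ in \COR{pd-prp}(d)). Hence $\{\XXX\in\Ff_N\dd\XXX\leqsl^s\BBB\}=\{u(y)\odot\Psi(y)\dd y\in\yyY^d\}$; since $\Psi\bigr|_{\yyY^d}$ is injective with pairwise unitarily disjoint values, $y\mapsto u(y)\odot\Psi(y)$ is a bijection onto this set, and its cardinality is $\card(\yyY^d)=\iota^d(\yyY)$.

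For the continuous equality, fix a standard base $\BbB$ of $\yyY^c$; as $\yyY^c$ has no atoms, each $B\in\BbB$ is (almost) isomorphic to $([0,1],\Bb([0,1]),\LlL_0)$ and carries a standard atomless probability measure $\mu_B$ with null ideal $\NnN\bigr|_B$. By (CS3), $\BBB^c=\bigsqplus_{B\in\BbB}\int^{\oplus}_B u(x)\odot\Psi(x)\dint{\mu_B(x)}$. Each such direct integral acts on $\bigoplus_n L^2(\mu_B\bigr|_{C_n},\HHh_n)$, where $C_n=\{x\in B\dd\dim\overline{\DdD}(u(x)\odot\Psi(x))=n\}$; since $\mu_B$ is atomless and the fibres all have dimension $\geqsl 1$, some summand with $\mu_B(C_n)>0$ is infinite-dimensional, so this space is separable and infinite-dimensional. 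Therefore $\dim(\BBB^c)=\aleph_0\cdot\card(\BbB)$, which by the definition of $\iota^c$ equals $\iota^c(\yyY)$ in each of the three cases ($\yyY^c$ null, standard, or with an uncountable standard base).

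The substantive point is the first paragraph: pinning down the function $u$ is exactly pinning down the type decomposition of $\JJJ$ itself (types I$_1$, II$_{\aleph_0}$, III$_{\aleph_0}$, and in particular the absence of a type II$_1$ summand, which is what forces the multiplicity $\aleph_0$ on $\yyY_{\tII}$), so this is where \REM{J2}/\eqref{eqn:J2}, \THM{minimal} and the uniqueness in \COR{1id-1} do the real work; once the normal form is in place, the remaining two steps are bookkeeping with \COR{pd-type} and \COR{pd-prp}.
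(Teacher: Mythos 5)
Your proof is correct and follows exactly the route the paper intends: the paper leaves the proof to the reader with the remark that it follows from \COR{pd-type}, and your argument is precisely that, preceded by the one genuinely necessary preliminary — identifying $\JJJ(\AaA)$ with $\bigsqplus^{\NnN}_{x\in\yyY}u(x)\odot\Psi(x)$ for $u=1$ on $\yyY_I$ and $u=\aleph_0$ on $\yyY_{\tII}\cup\yyY_{\tIII}$ — which you carry out correctly via \THM{reg-id}, \COR{1id-1} and the characterizations in \COR{pd-type}--(a). The subsequent bookkeeping with \COR{pd-type}--(b),(d) and \COR{pd-prp}--(d) for the discrete count, and with a standard base of $\yyY^c$ for the dimension count, is sound.
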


Our next aim is to establish (in a sense) uniqueness (\THM{uniq-cov} and \COR{uniq-cov} below) and existence
(\PRO{exis-cov}) of coverings for arbitrary ideals in $\CDD_N$.

\begin{thm}{uniq-cov}
Let $(\xxX_1,\Mm_1,\NnN_1,\Phi_1)$ and $(\xxX_2,\Mm_2,\NnN_2,\Phi_2)$ be two coverings such that
\begin{equation}\label{eqn:eq-cov}
\contSQ{x\in\xxX_1}{\NnN_1} \Phi_1(x) = \contSQ{x\in\xxX_2}{\NnN_2} \Phi_2(x).
\end{equation}
Then there are sets $\zzZ_j \in \NnN_j\ (j=1,2)$ and a null-isomorphism $\tau\dd \xxX_1 \setminus \zzZ_1 \to \xxX_2
\setminus \zzZ_2$ such that $\Phi_1(x) = \Phi_2(\tau(x))$ for each $x \in \xxX_1 \setminus \zzZ_1$.
\end{thm}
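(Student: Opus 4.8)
The plan is to reduce the statement to the case in which both ambient spaces are \emph{standard} measurable spaces with nullity, where it becomes a statement about mutually absolutely continuous pushforward measures on $\pP_N$, and then to glue the resulting local null-isomorphisms along a common base. Throughout, $\TTT$ denotes the common value of the two sides of \eqref{eqn:eq-cov}; by \THM{reg-id} both coverings cover the same ideal $\AaA=\{\XXX\in\CDD_N:\XXX\ll\TTT\}$, so the two hypotheses only interact through $\TTT$.

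\textbf{Reduction to the standard case.} First I would fix standard bases $\BbB_1$ of $\xxX_1$ and $\BbB_2$ of $\xxX_2$, put $\TTT_B=\bigsqplus^{\NnN_1}_{x\in B}\Phi_1(x)$ for $B\in\BbB_1$ and $\SSS_C=\bigsqplus^{\NnN_2}_{x\in C}\Phi_2(x)$ for $C\in\BbB_2$. By (CS0) these lie in $\SsS\EeE\PpP_N$, by (CS2) $\TTT=\bigsqplus_{B\in\BbB_1}\TTT_B=\bigsqplus_{C\in\BbB_2}\SSS_C$, and the $\TTT_B$'s (resp.\ $\SSS_C$'s) are mutually unitarily disjoint and $\leqsl^s\TTT$. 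Writing $\EEE_{B,C}=\TTT_B\wedge\SSS_C$ (which is $\leqsl^s\TTT$, since $\TTT_B,\SSS_C$ correspond to central projections of $\WWw'(\tTT)$, cf.\ \THM{ords}), \PRO{leqsl-leqsls}(A) gives $\TTT_B=\bigsqplus_{C}\EEE_{B,C}$ and $\SSS_C=\bigsqplus_{B}\EEE_{B,C}$, and \eqref{eqn:cont-ords} of \THM{disj-sum} produces measurable sets $B_{B,C}\subset B$ and $C_{B,C}\subset C$ with
\[
\EEE_{B,C}=\bigsqplus^{\NnN_1}_{x\in B_{B,C}}\Phi_1(x)=\bigsqplus^{\NnN_2}_{x\in C_{B,C}}\Phi_2(x).
\]
Since $\EEE_{B,C}\leqsl\TTT_B\in\SsS\EeE\PpP_N$, \COR{pd-type}(b) makes $B_{B,C}$ and $C_{B,C}$ standard modulo null sets, while (CS0) shows $B_{B,C}\in\NnN_1\iff\EEE_{B,C}=\zero\iff C_{B,C}\in\NnN_2$; using \COR{pd-prp}(A)--(C) applied to the fields $\Phi_1$ and $\Phi_2$ one checks the $B_{B,C}$ (resp.\ $C_{B,C}$) may be chosen pairwise disjoint, exhausting $\xxX_1$ (resp.\ $\xxX_2$) modulo null, and that within each $B$ only countably many are nonnull. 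Hence $\{B_{B,C}\}$ and $\{C_{B,C}\}$ (with null members discarded) are standard bases of $\xxX_1$ and $\xxX_2$, respectively, matched by the index $(B,C)$, and it suffices to produce, for each nonnull $(B,C)$, a null-isomorphism $\tau_{B,C}$ of $B_{B,C}$ onto $C_{B,C}$ with $\Phi_1=\Phi_2\circ\tau_{B,C}$: their union $\tau$ works, since membership in $\Mm_j$ and $\NnN_j$ is tested base-by-base.

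\textbf{The standard case.} Assume $\xxX_1,\xxX_2$ standard; fix standard measures $\mu_j$ on $(\xxX_j,\Mm_j)$ with $\NnN(\mu_j)=\NnN_j$, and, using $\Phi_j\in\rgS_{loc}$ (condition (ii) of \THM{disj-sum}(I)) together with \PRO{meas-domain}, remove null sets so that $\Phi_j$ is a Borel isomorphism of $\xxX_j$ onto a measurable domain $\FfF_j\subset\pP_N$. Then $(\mu_j,\Phi_j)\in\rgS(\xxX_j,\Mm_j)$, and with $\widehat{\mu}_j=\Phi_j^*(\mu_j)$ (a $\sigma$-finite standard measure concentrated on $\FfF_j$) property (DI4) together with $\bigsqplus^{\NnN_j}_{x\in\xxX_j}\Phi_j(x)=\int^{\sqplus}_{\xxX_j}\Phi_j(x)\dint{\mu_j(x)}$ yields
\[
\int^{\sqplus}_{\pP_N}\PPP\dint{\widehat{\mu}_1(\PPP)}=\TTT=\int^{\sqplus}_{\pP_N}\PPP\dint{\widehat{\mu}_2(\PPP)}.
\]
By \COR{ue}(a) this forces $\widehat{\mu}_1\ll\widehat{\mu}_2\ll\widehat{\mu}_1$, i.e.\ $\NnN(\widehat{\mu}_1)=\NnN(\widehat{\mu}_2)$, so in particular $\FfF_1\setminus\FfF_2$ and $\FfF_2\setminus\FfF_1$ are $\widehat{\mu}_j$-null. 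Put $\FfF=\FfF_1\cap\FfF_2$ (again a measurable domain) and $\ZzZ_j=\Phi_j^{-1}(\pP_N\setminus\FfF)\in\NnN_j$; then each $\Phi_j$ is a Borel isomorphism of $\xxX_j\setminus\ZzZ_j$ onto $\FfF$, so $\tau:=\bigl(\Phi_2|_{\xxX_2\setminus\ZzZ_2}\bigr)^{-1}\circ\Phi_1|_{\xxX_1\setminus\ZzZ_1}$ is a Borel isomorphism of $\xxX_1\setminus\ZzZ_1$ onto $\xxX_2\setminus\ZzZ_2$ with $\Phi_2\circ\tau=\Phi_1$. Under $\Phi_j$ the nullity $\NnN_j|_{\xxX_j\setminus\ZzZ_j}$ corresponds to $\NnN(\widehat{\mu}_j)|_{\FfF}$, and these coincide, so $\tau$ is a null-isomorphism, which completes the standard case.

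\textbf{Expected obstacle.} The measure-theoretic core (the standard case) is essentially a repackaging of \COR{ue} and should be routine. The delicate part is the reduction in the first paragraph: one must verify that the doubly-indexed family $\{B_{B,C}\}$ (and $\{C_{B,C}\}$) is genuinely a base of $\xxX_j$ — i.e.\ that a refinement of a standard base by measurable partitions of its standard members, only countably many nonnull in each member, again detects $\Mm_j$- and $\NnN_j$-membership — and that a family of null-isomorphisms indexed over such a base glues to a single null-isomorphism of the ambient spaces. Both facts rest on testing measurability and nullity base-by-base in a multi-standard space, but arranging the sets $B_{B,C}$, $C_{B,C}$ compatibly (pairwise disjoint, exhausting modulo null, matched index-by-index) via \COR{pd-prp} is where the real bookkeeping lies.
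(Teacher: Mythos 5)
Your proposal is correct and follows essentially the same route as the paper's proof: refine the two standard bases by the meets $\TTT_B\wedge\SSS_C$ (the paper's $\TTT_{B_1,B_2}$), use countability of the nonnull cross-indices to get a matched pair of standard bases, settle each standard piece by pushing the standard measures forward along $\Phi_j$ and invoking \COR{ue} to get mutual absolute continuity, and glue the resulting local null-isomorphisms base-by-base. The bookkeeping you flag as the delicate point is handled in the paper exactly as you outline it.
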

\begin{proof}
Let $\BbB_j$ be a standard base of $\xxX_j$. For $B \in \BbB_j$ put $\TTT^{(j)}_B = \bigsqplus^{\NnN_j}_{x \in B}
\Phi_j(x)$. It follows from (CS2) and \eqref{eqn:eq-cov} that
\begin{equation}\label{eqn:aux134}
\Bigsqplus_{B\in\BbB_1} \TTT^{(1)}_B = \Bigsqplus_{B\in\BbB_2} \TTT^{(2)}_B.
\end{equation}
Let $I = \{(B_1,B_2) \in \BbB_1 \times \BbB_2|\quad \TTT_{B_1,B_2} := \TTT^{(1)}_{B_1} \wedge \TTT^{(2)}_{B_2} \neq
\zero\}$. We conclude from \eqref{eqn:aux134} that
\begin{gather}
\TTT^{(1)}_B = \Bigsqplus \{\TTT_{B,B'}\dd\ (B,B') \in I\} \qquad (B \in \BbB_1),\label{eqn:aux201}\\
\TTT^{(2)}_B = \Bigsqplus \{\TTT_{B',B}\dd\ (B,B') \in I\} \qquad (B \in \BbB_2).\label{eqn:aux202}
\end{gather}
It follows from \COR{pd-prp} and \eqref{eqn:aux201}--\eqref{eqn:aux202} that sets $I_2(B') = \{B_2 \in \BbB_2\dd\
(B',B_2) \in I\}$ and $I_1(B'') = \{B_1 \in \BbB_1\dd\ (B_1,B'') \in I\}$ are countable (since $\TTT^{(j)}_B \in
\SsS\EeE\PpP_N$) for any $B' \in \BbB_1$ and $B'' \in \BbB_2$ and thus there are families of pairwise disjoint sets
$\{D^1_{B',B}\}_{B \in I_2(B')} \subset \Mm_1$ and $\{D^2_{B,B''}\}_{B \in I_1(B'')} \subset \Mm_2$ such that $B' =
\bigcup_{B \in I_2(B')} D^1_{B',B}$, $B'' = \bigcup_{B \in I_1(B'')} D^2_{B,B''}$ and
\begin{equation}\label{eqn:aux205}
\TTT_{B_1,B_2} = \contSQ[\!\!\!\!\!]{x \in D^1_{B_1,B_2}}{\NnN_1} \Phi_1(x) =
\contSQ[\!\!\!\!\!]{x \in D^2_{B_1,B_2}}{\NnN_2} \Phi_2(x)
\end{equation}
for every $(B_1,B_2) \in I$ (cf. \COR{pd-prp} or \THMP{disj-sum}). We also infer from the countability of the sets
$I_1(B_2)$'s and $I_2(B_1)$'s that
\begin{equation}\label{eqn:aux207}
\{D^j_{B_1,B_2}\dd\ (B_1,B_2) \in I\} \textup{ is a base of } \xxX_j.
\end{equation}
Fix $(B_1,B_2) \in I$. Since $D^j_{B_1,B_2}$ is standard and $\Phi_j \in \rgS_{loc}$, there is a Borel set $G_j \subset
D^j_{B_1,B_2}$ such that $D^j_{B_1,B_2} \setminus G_j \in \NnN_j$, $\Phi_j(G_j)$ is a measurable domain
and $\Phi_j\bigr|_{G_j}$ is a Borel isomorphism of $G_j$ onto $\Phi_j(G_j)$. Let $\mu_j$ be a standard measure
on $\Mm_j\bigr|_{G_j}$ for which $\NnN_j\bigr|_{G_j} = \NnN(\mu_j)$. Relation \eqref{eqn:aux205} yields that
$\int^{\sqplus}_{G_1} \Phi_1(x) \dint{\mu_1(x)} = \int^{\sqplus}_{G_2} \Phi_2(x) \dint{\mu_2(x)}$. Hence \CORp{ue} implies
that $\widehat{\mu}_1 \ll \widehat{\mu}_2 \ll \widehat{\mu}_1$ where $\widehat{\mu}_j(\FfF) = \mu_j(\Phi_j^{-1}(\FfF) \cap
G_j)$ for $\FfF \in \Bb(\pP_N)$. Consequently, $Z^j_{B_1,B_2} = D^j_{B_1,B_2} \setminus [\Phi_j^{-1}(\Phi_1(G_1) \cap
\Phi_2(G_2)) \cap G_j]\in \NnN_j$ and $\tau_{B_1,B_2}\dd D^1_{B_1,B_2} \setminus Z^1_{B_1,B_2} \ni x \mapsto
(\Phi_2\bigr|_{G_2})^{-1}(\Phi_1(x)) \in D^2_{B_1,B_2} \setminus Z^2_{B_1,B_2}$ is a well defined null-isomorphism such
that
\begin{equation}\label{eqn:aux208}\begin{cases}
\tau_{B_1,B_2}\dd D^1_{B_1,B_2} \setminus Z^1_{B_1,B_2} \to D^2_{B_1,B_2} \setminus Z^2_{B_1,B_2}&\\
\Phi_2 \circ \tau_{B_1,B_2} = \Phi_1\bigr|_{D^1_{B_1,B_2} \setminus Z^1_{B_1,B_2}}.&
\end{cases}\end{equation}
Now it suffices to put $\zzZ_j = (\xxX_j \setminus \bigcup \BbB_j) \cup \bigcup_{(B_1,B_2) \in I} Z^j_{B_1,B_2}$ and define
$\tau\dd \xxX_1 \setminus \zzZ_1 \to \xxX_2 \setminus \zzZ_2$ as the union of $\{\tau_{B_1,B_2}\}_{(B_1,B_2) \in I}$. It
follows from \eqref{eqn:aux207} and \eqref{eqn:aux208} that $\zzZ_j \in \NnN_j$ and $\tau$ is a null-isomorphism
we searched for.
\end{proof}

\begin{cor}{uniq-cov}
Let $\AaA \subset \CDD_N$ be an ideal and $(\xxX^1,\Mm^1,\NnN^1,\Phi^1)$ and $(\xxX^2,\Mm^2,\NnN^2,\Phi^2)$ be two
coverings for $\AaA$. Then there are sets $\zzZ^j \in \NnN^j\ (j=1,2)$, a Borel function $u\dd \xxX^1 \to \RRR_+
\setminus \{0\}$ with $u(\xxX^1_I \cup \xxX^1_{\tIII}) \subset \{1\}$, and a null-isomorphism $\tau\dd \xxX^1 \setminus
\zzZ^1 \to \xxX^2 \setminus \zzZ^2$ such that $\Phi^2(\tau(x)) = u(x) \odot \Phi^1(x)$ for every $x \in \xxX^1 \setminus
\zzZ^1$.
\end{cor}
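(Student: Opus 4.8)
The plan is to reduce the corollary to \THM{uniq-cov} by replacing $\Phi^1$ with a suitably rescaled field of primes whose continuous direct sum coincides with that of $\Phi^2$. Put $\TTT^1=\bigsqplus^{\NnN^1}_{x\in\xxX^1}\Phi^1(x)$ and $\TTT^2=\bigsqplus^{\NnN^2}_{x\in\xxX^2}\Phi^2(x)$. Since each $(\xxX^j,\Phi^j)$ is a covering for $\AaA$, \THM{reg-id} gives $\AaA=\IiI(\Phi^j)=\{\XXX\in\CDD_N\dd\ \XXX\ll\TTT^j\}$, and as $\TTT^1,\TTT^2\in\AaA$ (each covers itself) this forces $\TTT^1\ll\TTT^2\ll\TTT^1$. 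In particular $\TTT^2\in\IiI(\Phi^1)$, so by \THM{reg-id} there is $f\in\aaA(\xxX^1,\Phi^1)$ with $\TTT^2=\bigsqplus^{\NnN^1}_{x\in\xxX^1}f(x)\odot\Phi^1(x)$; and from $\TTT^1\ll\TTT^2$ together with the characterization of `$\ll$' in \COR{pd-prp} the function $f$ is nonzero almost everywhere.

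The next step is to normalize $f$. Read off from the covering $(\xxX^2,\Phi^2)$ via \COR{pd-type} the partition of unity of $\TTT^2$: since $\Phi^2$ takes values among atoms, fractals and semiprimes, one gets $\EEE^I_\alpha(\TTT^2)=\zero$ for every $\alpha\neq1$, $\EEE^{\tIII}_\alpha(\TTT^2)=\zero$ for every $\alpha\neq\aleph_0$, and $\EEE^{\tII}_\alpha(\TTT^2)=\zero$ for every infinite $\alpha$. Computing the same objects from the $\xxX^1$-side (\COR{pd-type} applied to $\Phi^1$ and $f$) expresses each $\EEE^i_\alpha(\TTT^2)$ as a continuous direct sum over $f^{-1}(\{\alpha\})\cap\xxX^1_i$, and such a sum is trivial exactly when its domain is null. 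Comparing, and using that $f$ is a.e.\ nonzero with $f(\xxX^1_I)\subset\Card$ and $f(\xxX^1_{\tIII})\subset\{0\}\cup\Card_\infty$, one obtains after discarding a null set that $f\equiv1$ on $\xxX^1_I$, $f\equiv\aleph_0$ on $\xxX^1_{\tIII}$, and $f(\xxX^1_{\tII})\subset\RRR_+\setminus\{0\}$. Define $u\dd\xxX^1\to\RRR_+\setminus\{0\}$ by $u=j_{\xxX^1_I\cup\xxX^1_{\tIII}}+f\cdot j_{\xxX^1_{\tII}}$; then $u$ is Borel, $u(\xxX^1_I\cup\xxX^1_{\tIII})\subset\{1\}$, and since every fractal equals $\aleph_0\odot$ itself (\PRO{fractal} and the note on \PREF{note}), $u(x)\odot\Phi^1(x)=f(x)\odot\Phi^1(x)$ for almost all $x$.

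Now set $\Phi^1_f:=u\odot\Phi^1$. Its values lie in $\pP_N$: atoms and fractals on $\xxX^1_I\cup\xxX^1_{\tIII}$ are unchanged, while on $\xxX^1_{\tII}$ a positive real multiple of a semiprime is again a semiprime (\PRO{semiprime}). Moreover $\Phi^1_f\in\rgS_{loc}(\xxX^1)$: the function $u$ is almost measurable and fits to $\Phi^1$ (take $\xxX^1\setminus\xxX^1_{\tII}$ and $\xxX^1_{\tII}$ as the two pieces, the latter mapping into $\SsS\MmM_N$ by \PRO{semiprime}), so by \THM{disj-sum}(II) the pair $(u,\{\Phi^1(x)\})$ is regular, which — since $\bigsqplus^{\NnN^1}_{x\in\ddD}\Phi^1_f(x)=\bigsqplus^{\NnN^1}_{x\in\ddD}u(x)\odot\Phi^1(x)$ for every $\ddD\in\Mm^1$ — means precisely that the field $\Phi^1_f$ is regular, hence $\rgS_{loc}$ by the equivalence in \THM{disj-sum}(I). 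Consequently $(\xxX^1,\Mm^1,\NnN^1,\Phi^1_f)$ is a covering, with continuous direct sum $\bigsqplus^{\NnN^1}_{x\in\xxX^1}u(x)\odot\Phi^1(x)=\bigsqplus^{\NnN^1}_{x\in\xxX^1}f(x)\odot\Phi^1(x)=\TTT^2=\bigsqplus^{\NnN^2}_{x\in\xxX^2}\Phi^2(x)$. Applying \THM{uniq-cov} to $(\xxX^1,\Mm^1,\NnN^1,\Phi^1_f)$ and $(\xxX^2,\Mm^2,\NnN^2,\Phi^2)$ yields null sets $\zzZ^j\in\NnN^j$ and a null-isomorphism $\tau\dd\xxX^1\setminus\zzZ^1\to\xxX^2\setminus\zzZ^2$ with $\Phi^2(\tau(x))=\Phi^1_f(x)$ throughout; after enlarging $\zzZ^1$ to absorb the null set on which $\Phi^1_f\neq u\odot\Phi^1$, this reads $\Phi^2(\tau(x))=u(x)\odot\Phi^1(x)$, which is the assertion.

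The step I expect to be the main obstacle is the normalization of $f$ in the second paragraph: matching the two descriptions of the partition of unity of $\TTT^2$ requires care, because \COR{pd-type} describes $\EEE^i_\alpha$ and $\EEE_{sm}$ through supports of $f$ over the sets $\xxX^1_i$, and one must control a single null exceptional set simultaneously over the type I, II and III pieces (and reconcile the discrete part $\xxX^d$ with the continuous one). A lighter but still delicate point is the identity $\bigsqplus^{\NnN^1}_{x\in\ddD}\Phi^1_f(x)=\bigsqplus^{\NnN^1}_{x\in\ddD}u(x)\odot\Phi^1(x)$, used to pass from regularity of the pair $(u,\Phi^1)$ to regularity of the field $\Phi^1_f$; it rests on the invariance of continuous direct sums under modification of the field on a null set and on $\aleph_0\odot\PPP=\PPP$ for fractals $\PPP$.
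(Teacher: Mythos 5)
Your proposal is correct and follows essentially the same route as the paper: obtain $f\in\aaA(\xxX^1)$ with $\TTT^2=\bigsqplus^{\NnN^1}_{x\in\xxX^1}f(x)\odot\Phi^1(x)$, use \COR{pd-type} and the fact that $\TTT^2$ is the direct sum of a minimal and a semiminimal $N$-tuple to force $f=1$ on $\xxX^1_I$, $f=\aleph_0$ on $\xxX^1_{\tIII}$ and $f\in\RRR_+\setminus\{0\}$ on $\xxX^1_{\tII}$ off a null set, replace $f$ by the real-valued $u$ (harmless on fractals since $\aleph_0\odot\PPP=\PPP$ there), and feed the covering $(\xxX^1,u\odot\Phi^1)$ into \THM{uniq-cov}. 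The only cosmetic difference is that the paper sets $u=f$ on $A\setminus\xxX^1_{\tIII}$ and $1$ elsewhere rather than via the indicator decomposition you use, which changes nothing.
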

\begin{proof}
Let $\TTT_j = \bigsqplus^{\NnN^j}_{x\in\xxX^j} \Phi^j(x)$. It follows from the assumptions and \THM{reg-id} that
\begin{equation}\label{eqn:aux210}
\TTT_1 \ll \TTT_2
\end{equation}
and there is $f \in \aaA(\xxX^1)$ such that
\begin{equation}\label{eqn:aux211}
\TTT_2 = \contSQ[\!]{x\in\xxX^1}{\NnN^1} f(x) \odot \Phi^1(x).
\end{equation}
Now \COR{pd-type} implies that $\TTT_2$ is the direct sum of a minimal $N$-tuple and a semiminimal one, and consequently
there is $Z \in \NnN^1$ such that $A := s(f) \setminus Z \in \Mm^1$, $f\bigr|_A$ is Borel, $f(A \cap \xxX^1_I) \subset
\{1\}$, $f(A \cap \xxX^1_{\tIII}) \subset \{\aleph_0\}$ and $f(A \cap \xxX^1_{\tII}) \subset \RRR_+ \setminus \{0\}$.
Further, \COR{pd-prp} combined with \eqref{eqn:aux210} yields that $\xxX^1 \setminus s(f) \in \overline{\NnN^1}$ and hence
$\xxX^1 \setminus A \in \NnN^1$. Define $u\dd \xxX^1 \to \RRR_+ \setminus \{0\}$ by $u(x) = f(x)$ for $x \in A \setminus
\xxX_{\tIII}$ and $u(x) = 1$ otherwise. Observe that $u$ is Borel and fits to $\Phi^1$, and $u(x) \odot \Phi^1(x) = f(x)
\odot \Phi^1(x)$ for $x \in A$. So, \eqref{eqn:aux211} gives
\begin{equation}\label{eqn:aux209}
\contSQ[\!]{x\in\xxX^2}{\NnN^2} \Phi^2(x) = \contSQ[\!]{x\in\xxX^1}{\NnN^1} u(x) \odot \Phi^1(x).
\end{equation}
Finally, since $u$ is real-valued, $u \odot \Phi^1\dd \xxX^1 \to \pP_N$ and we deduce from \THMp{disj-sum} that
$(\xxX^1,u \odot \Phi^1)$ is a covering. So, the assertion follows from \THM{uniq-cov}, thanks to \eqref{eqn:aux209}.
\end{proof}

To establish existence of coverings, we need the following

\begin{lem}{disj-meas}
Let $\eeE \subset \rgM(\pP_N)$ be such a family that
\begin{equation}\label{eqn:disj-meas}
\mu \perp_s \nu \quad \textup{if} \quad \mu \neq \nu \quad \textup{and} \quad \mu, \nu \in \eeE.
\end{equation}
Let $(\xxX,\Mm,\NnN) = \bigoplus_{\mu\in\eeE} (\pP_N,\Bb(\pP_N),\NnN(\mu))$ and $\Phi\dd \xxX \to \pP_N$ be the canonical
projection. Then $(\xxX,\Phi)$ is a covering and
\begin{equation}\label{eqn:realiz}
\contSQ{x\in\xxX}{\NnN} \Phi(x) = \Bigsqplus_{\mu\in\eeE} \int^{\sqplus}_{\pP_N} \PPP \dint{\mu(\PPP)}.
\end{equation}
\end{lem}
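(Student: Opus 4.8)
The plan is to establish, in order: that $(\xxX,\Mm,\NnN)$ is multi-standard; that $\Phi\in\rgS_{loc}(\xxX)$ (which together with $\Phi(\xxX)\subset\pP_N$ and \THM{reg-id} makes $(\xxX,\Phi)$ a covering); and finally the identity \eqref{eqn:realiz}. For the first point, for each $\mu\in\eeE$ I would fix a measurable domain $\FfF_{\mu}\subset\pP_N$ carrying $\mu$; since $\mu\bigr|_{\FfF_{\mu}}$ is a $\sigma$-finite measure on a Souslin-Borel set it is standard, so $\{\FfF_{\mu}\}$ is a standard base of $(\pP_N,\Bb(\pP_N),\NnN(\mu))$, and consequently $\BbB:=\{\FfF_{\mu}\times\{\mu\}\dd\ \mu\in\eeE\}$ is a standard base of the direct sum $(\xxX,\Mm,\NnN)$; in particular the latter is multi-standard.

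For $\Phi\in\rgS_{loc}(\xxX)$ I would verify condition (ii) of \THM{disj-sum}. Given a standard set $A\in\Mm$, choose a standard measure $m$ on $(A,\Mm\bigr|_A)$ with $\NnN(m)=\NnN\bigr|_A$. Since $m$ is $\sigma$-finite, only countably many copies are relevant: if $\eeE_0$ is the (countable) set of $\mu$ with $m(A\cap(\pP_N\times\{\mu\}))>0$, then every slice of $A\setminus\bigcup_{\mu\in\eeE_0}(\pP_N\times\{\mu\})$ is $\mu$-null, so — because $\NnN$ is defined slicewise — this set lies in $\NnN$, and after deleting a null set I may assume $A\subset\bigcup_{\mu\in\eeE_0}(\pP_N\times\{\mu\})$. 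Writing $A_{\mu}=\pi(A\cap(\pP_N\times\{\mu\}))\in\Bb(\pP_N)$, each $A_{\mu}\cap\FfF_{\mu}$ is a measurable subset of a measurable domain, hence a measurable domain, with $A_{\mu}\setminus\FfF_{\mu}$ $\mu$-null.

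The decisive step, and the one I expect to cause the most trouble, is gluing the slices into a single measurable domain; this is where the hypothesis $\mu\perp_s\nu$ enters. For distinct $\mu,\nu\in\eeE_0$, \LEM{orth-meas} applied to $\mu\perp_s\nu$ provides strongly unitarily disjoint measurable sets $C_{\mu,\nu},C_{\nu,\mu}\subset\Ff_N$ with $\mu$ concentrated on $C_{\mu,\nu}$ and $\nu$ on $C_{\nu,\mu}$. Since $\eeE_0$ is countable, $B_{\mu}:=(A_{\mu}\cap\FfF_{\mu})\cap\bigcap_{\nu\in\eeE_0\setminus\{\mu\}}C_{\mu,\nu}$ is again a measurable domain with $(A_{\mu}\cap\FfF_{\mu})\setminus B_{\mu}$ $\mu$-null, and $B_{\mu}\sdisj B_{\nu}$ for $\mu\neq\nu$ because strong unitary disjointness is inherited by subsets; hence, by the observations following \DEF{meas-domain}, $\bigcup_{\mu\in\eeE_0}B_{\mu}$ is a measurable domain. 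Taking $Z\in\NnN$ to be the accumulated null set (and enlarging it so that $A\setminus Z$ is a standard Borel space), $\Phi\bigr|_{A\setminus Z}$ is a Borel injection of a standard Borel space into the Souslin-Borel set $\bigcup_{\mu}B_{\mu}$ — injective because the $B_{\mu}$ are pairwise disjoint — hence a Borel isomorphism onto it by \REM{weak}. This gives (ii), so $\Phi\in\rgS_{loc}(\xxX)$, the field $\{\Phi(x)\}_{x\in\xxX}$ is regular, and $(\xxX,\Phi)$ is a covering (the ideal $\IiI(\Phi)$ being genuinely an ideal by \THM{reg-id}).

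Finally, for \eqref{eqn:realiz} I would apply (CS2) with the base $\BbB$ and the regularity just proved to obtain
$$
\bigsqplus^{\NnN}_{x\in\xxX}\Phi(x)=\bigsqplus_{\mu\in\eeE}\Bigl(\bigsqplus^{\NnN}_{x\in\FfF_{\mu}\times\{\mu\}}\Phi(x)\Bigr),
$$
the outer sum being regular since $\mu\perp_s\nu$ (again \LEM{orth-meas}). For each $\mu$ the inner term is a continuous direct sum over the standard set $\FfF_{\mu}\times\{\mu\}$, so — taking the singleton base together with the (pullback of the) standard measure $\mu$, which is legitimate by (di5), and transporting along the canonical null-isomorphism $\FfF_{\mu}\times\{\mu\}\to\FfF_{\mu}$ via (DI4) — it equals $\int^{\oplus}_{\FfF_{\mu}}\PPP\dint{\mu(\PPP)}=\int^{\sqplus}_{\pP_N}\PPP\dint{\mu(\PPP)}$, the last step because $\mu(\pP_N\setminus\FfF_{\mu})=0$ and the field is regular. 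Substituting yields \eqref{eqn:realiz}. Apart from the gluing step, everything reduces to routine null-set bookkeeping together with the invariance properties (di1)–(di6), (DI0)–(DI4) and (CS0)–(CS4) of the constructions.
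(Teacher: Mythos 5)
Your argument is correct, and its overall skeleton (standard base of copies, reduction to a countable subfamily $\eeE_0$ by $\sigma$-finiteness, injectivity of $\Phi$ off a null set into a measurable domain, then \REM{weak} and (CS2) for the formula) matches the paper's. The one step where you genuinely diverge is the gluing of the slices into a single measurable domain. The paper forms the auxiliary $N$-tuple $\TTT = \bigsqplus_{\mu\in\eeE'} \int^{\sqplus}_{\pP_N}\PPP\dint{\mu(\PPP)}$, invokes point (A) of \LEM{pd-sep} to produce one regularity measure $\lambda$ with $\TTT = \int^{\sqplus}_{\pP_N}\PPP\dint{\lambda(\PPP)}$, deduces $\mu\ll\lambda$ for all $\mu\in\eeE'$ from \COR{ue}, and then places the pairwise disjoint carriers $S_\mu$ (obtained from plain mutual singularity) inside the single measurable domain of $\lambda$. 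You instead exploit the strong form of the hypothesis directly: condition (iii) of \LEM{orth-meas} hands you, for each pair, carriers $C_{\mu,\nu}\sdisj C_{\nu,\mu}$; intersecting the countably many relevant ones with $\FfF_\mu$ yields pairwise strongly unitarily disjoint measurable domains $B_\mu$, whose union is again a measurable domain by the closure properties recorded after \DEF{meas-domain}. Your route is more self-contained at this point --- it bypasses the construction of $\TTT$, \LEM{pd-sep} and \COR{ue} entirely --- at the price of leaning on the (asserted but unproved) fact that a countable union of pairwise strongly unitarily disjoint measurable domains is a measurable domain, whereas the paper reuses heavier, already-developed machinery so as to work inside a single domain from the outset. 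Both are sound; the only point worth making explicit in your write-up is that the sets $B_\mu$ are pairwise disjoint as subsets of $\pP_N$ because $C_{\mu,\nu}\cap C_{\nu,\mu}$ can contain no nontrivial tuple (no nontrivial tuple is unitarily disjoint from itself), which is what gives the injectivity of $\Phi$ on their union.
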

\begin{proof}
First of all, the usage of `$\bigsqplus_{\mu\in\eeE}$' in the right-hand side expression of \eqref{eqn:realiz} is allowed
by \LEMp{orth-meas}, thanks to \eqref{eqn:disj-meas}. Further, since regularity measures are concentrated on measurable
domains which are Souslin-Borel sets, $(\xxX,\Mm,\NnN)$ is a multi-standard measurable space with nullity and $\{\pP_N
\times \{\mu\}\}_{\mu\in\eeE}$ is a standard base of $\xxX$. Thus, it suffices to check that $\Phi \in \rgS_{loc}(\xxX)$
(then \eqref{eqn:realiz} will automatically be satisfied). It is clear that $\Phi$ is Borel.\par
Let $A \in \Mm$ be standard. We will show that condition (ii) of \THMp{disj-sum} is fulfilled. Since $A$ is standard,
the set $\eeE' = \{\mu \in \eeE\dd\ \Phi(A) \notin \NnN(\mu)\}$ is countable. Observe that $Z_0 = A \cap
[\bigcup_{\mu\notin\eeE'} (\pP_N \times \{\mu\})]$ belongs to $\NnN$. Since $A \setminus Z_0 \subset \pP_N \times \eeE'
\in \Mm$, we may assume that
\begin{equation}\label{eqn:aux225}
A = \pP_N \times \eeE'.
\end{equation}
For $\mu \in \eeE'$ let $\TTT_{\mu} = \int^{\sqplus}_{\pP_N} \PPP \dint{\mu(\PPP)}$. Put $\TTT = \bigsqplus_{\mu \in \eeE'}
\TTT_{\mu}$. It follows from point (C) of \LEMp{pd-sep} that $\TTT_{\mu}$ ($\mu \in \eeE'$) is the direct sum of a minimal
$N$-tuple and a semiminimal one, and thus so is $\TTT$. Moreover, since $\eeE'$ is countable, $\TTT \in \SsS\EeE\PpP_N$
($\TTT \neq \zero$ because standard sets are nonnull). Now point (A) of \LEM{pd-sep} asserts that there is a measure
$\lambda \in \rgM(\pP_N)$ such that $\TTT = \int^{\sqplus}_{\pP_N} \PPP \dint{\lambda(\PPP)}$. Since $\TTT_{\mu} \leqsl^s
\TTT$, we conclude from \CORp{ue} that
\begin{equation}\label{eqn:aux217}
\mu \ll \lambda \qquad (\mu \in \eeE').
\end{equation}
Further, it follows from \eqref{eqn:disj-meas} and the countability of $\eeE'$ that there is a collection
$\{S_{\mu}\}_{\mu\in\eeE'}$ of pairwise disjoint measurable subsets of $\pP_N$ such that $\mu(\pP_N \setminus S_{\mu}) = 0$
for every $\mu \in \eeE'$. Finally, let $\FfF \subset \pP_N$ be a measurable domain such that $\lambda(\pP_N \setminus
\FfF) = 0$. Put $$D = \bigcup_{\mu\in\eeE'} [(S_{\mu} \cap \FfF) \times \{\mu\}].$$
Observe that $D \subset A$ (by \eqref{eqn:aux225}), $A \setminus D \in \NnN$ ($\pP_N \setminus (S_{\mu} \cap \FfF) \in
\NnN(\mu)$ by \eqref{eqn:aux217}), $\Phi\bigr|_D$ is one-to-one (since the sets $S_{\mu}$'s are pairwise disjoint)
and $\Phi(D) \subset \FfF$. So, \REM{weak} \pREF{rem:weak} finishes the proof.
\end{proof}

\begin{pro}{exis-cov}
Let $\TTT \in \CDD_N$ be the direct sum of a minimal $N$-tuple and a semiminimal one. There is a covering
$(\xxX,\Mm,\NnN,\Phi)$ such that
$$
\TTT = \contSQ{x\in\xxX}{\NnN} \Phi(x).
$$
\end{pro}
\begin{proof}
By Zorn's lemma, there is a maximal family $\eeE \subset \rgM(\pP_N)$ such that \eqref{eqn:disj-meas} is satisfied
and $\TTT_{\mu} := \int^{\sqplus}_{\pP_N} \PPP \dint{\mu(\PPP)} \leqsl^s \TTT$ for each $\mu \in \eeE$ (since $\TTT_{\mu}
\neq \zero$; cf. \LEMP{continuum}). It follows from \LEM{disj-meas} and its proof that $\bigsqplus_{\mu\in\eeE} \TTT_{\mu}
\leqsl^s \TTT$ and that it is enough to show that $\XXX := \TTT \sqminus (\bigsqplus_{\mu\in\eeE} \TTT_{\mu})$ is equal to
$\zero$. Suppose, for the contrary, that $\XXX \neq \zero$. Since $\TTT \leqsl \JJJ$, we infer from \PROp{count} that there
is $\YYY \in \SsS\EeE\PpP_N$ such that $\YYY \leqsl^s \XXX$. Then $\YYY$ is the direct sum of a minimal $N$-tuple and
a semiminimal one (because $\XXX \leqsl^s \TTT$). Now \LEMp{pd-sep} yields that there is $\nu \in \rgM(\pP_N)$ such that
$\int^{\sqplus}_{\pP_N} \PPP \dint{\nu(\PPP)} = \YYY (\leqsl^s \TTT)$. Finally, since $\YYY \disj \TTT_{\mu}$ for every
$\mu \in \eeE$, \LEMp{orth-meas} asserts that $\nu \perp_s \mu$ for any $\mu \in \eeE$, contradictory to the fact that
$\eeE$ is maximal.
\end{proof}

The next theorem is an immediate consequence of all previously established properties. This result may be formulated
for arbitrary coverings. Our main interest however are full ones. To make the theorem most transparent, we repeat some
of properties proved earlier.

\begin{thmm}{prime}{\textbf{Prime Decomposition}}
\begin{enumerate}[\upshape(I)]
\item There exists a full covering. What is more, for every $\TTT \in \SsS\MmM_N$ with $\aleph_0 \odot \TTT = \JJJ_{\tII}$
   there is a full covering $(\xxX,\Mm,\NnN,\Phi)$ such that $\bigsqplus^{\NnN}_{x\in\xxX} \Phi(x) = \JJJ_I \sqplus \TTT
   \sqplus \JJJ_{\tIII}$.
\item Let $(\xxX^1,\Mm^1,\NnN^1,\Phi^1)$ and $(\xxX^2,\Mm^2,\NnN^2,\Phi^2)$ be two full coverings. There are a Borel
   function $u\dd \xxX^1 \to \RRR_+ \setminus \{0\}$ such that $u(\xxX^1_I \cup \xxX^{\tIII}) = \{1\}$ and an almost
   null-isomorphism $\tau\dd \xxX^1 \to \xxX^2$ such that $\Phi^2 \circ \tau = u \odot \Phi^1$ a.e. In particular,
   for every $f \in \aaA(\xxX^2)$, $(f \circ \tau)u \in \aaA(\xxX^1)$ and
   $$
   \contSQ[\!]{x\in\xxX^2}{\NnN^2} f(x) \odot \Phi^2(x) = \contSQ[\!]{x\in\xxX^1}{\NnN^1} [(f \circ \tau)u](x) \odot
   \Phi^1(x).
   $$
\item Let $(\xxX,\Mm,\NnN,\{\PPP_x\}_{x\in\xxX})$ be a full covering.
   \begin{enumerate}[\upshape(A)]
   \item For each $\AAA \in \CDD_N$ there is $f \in \aaA(\xxX)$ such that $\AAA = \bigsqplus^{\NnN}_{x\in\xxX} f(x) \odot
      \PPP_x$.
   \item For every $f_1, f_2, f_3, \ldots \in \aaA(\xxX)$, $\bigsqplus^{\NnN}_{x\in\xxX} [\sum_{n=1}^{\infty} f_n(x)] \odot
      \PPP_x = \bigoplus_{n=1}^{\infty} [\bigsqplus^{\NnN}_{x\in\xxX} f_n(x) \odot \PPP_x]$.
   \item Let $f, g \in \aaA(\xxX)$. Put $\XXX = \bigsqplus^{\NnN}_{x\in\xxX} f(x) \odot \PPP_x$ and $\YYY =
      \bigsqplus^{\NnN}_{x\in\xxX} g(x) \odot \PPP_x$. Then:
      \begin{enumerate}[\upshape(a)]
      \item $\XXX = \YYY \iff f = g$ a.e.,
      \item $\XXX \leqsl \YYY \iff f \leqsl g$ a.e.,
      \item $\XXX \leqsl^s \YYY \iff f = g \cdot j_{\ddD}$ a.e. for some $\ddD \in \Mm$,
      \item $\XXX \ll \YYY \iff s(f) \setminus s(g) \in \overline{\NnN}$,
      \item $\XXX \disj \YYY \iff f \cdot g = 0$ a.e. $\iff s(f) \cap s(g) \in \overline{\NnN}$,
      \item $\alpha \odot \XXX = \bigsqplus^{\NnN}_{x\in\xxX} (\alpha \cdot f)(x) \odot \PPP_x$ for any $\alpha \in \Card$,
      \item $\XXX \in \SsS\MmM_N \iff s(f) \setminus \xxX_{\tII} \in \overline{\NnN}$ and $f^{-1}(\Card_{\infty}) \in
         \overline{\NnN}$; if $\XXX \in \SsS\MmM_N$, then $t \odot \XXX = \bigsqplus^{\NnN}_{x\in\xxX} [t \cdot f(x)] \odot
         \PPP_x$ for each $t \in \RRR_+$,
      \item $\XXX \in \SsS\EeE\PpP_N$ iff there is $\zzZ \in \NnN$ such that $s(f) \setminus \zzZ$ is standard
         and $f(\xxX \setminus \zzZ) \subset I_{\aleph_0}$.
      \end{enumerate}
   \end{enumerate}
\end{enumerate}
\end{thmm}

We leave the proofs of point (g) and of a part of point (II) of \THM{prime} as exercises.\par
\THM{prime} says that after fixing $\TTT \in \SsS\MmM_N$ such that $\aleph_0 \odot \TTT = \JJJ_{\tII}$, there is a unique
(up to almost null-isomorphism) full covering $(\xxX,\Mm,\NnN,\Phi)$ such that $\bigsqplus^{\NnN}_{x\in\xxX_{\tII}} \Phi(x)
= \TTT$. Then for every $\AAA \in \CDD_N$ there is a unique (up to almost everywhere equality) function $\mM \in
\aaA(\xxX)$ such that
\begin{equation}\label{eqn:pd}
\AAA = \contSQ{x\in\xxX}{\NnN} \mM(x) \odot \Phi(x).
\end{equation}
The function $\mM$ is called \textit{multiplicity function of $\AAA$ (relative to $\TTT$)} (compare with Chapter~4
of \cite{e}) and the formula \eqref{eqn:pd} is called \textit{prime decomposition of $\AAA$ (relative to $\TTT$)}.
One may check that $\AAA \in \CDD_N$ has multiplicity function (respectively prime decomposition) of a unique
(i.e. independent of the choice of $\TTT$) form iff $\EEE_{sm}(\AAA) = 0$ (respectively $\AAA \disj \JJJ_{\tII}$).\par
Since $\aA_N(n)$ for finite $n$ consists of bounded $N$-tuples, \THM{prime} implies that every $N$-tuple $\XXX$ whose type
I$_{\infty}$, II and III parts vanish admits a decomposition in the form $\XXX = \bigoplus_{n=1}^{\infty} \XXX^{(n)}$ where
each $\XXX^{(n)}$ is bounded. So, under the notation of \EXS{id-clopen} \pREF{exs:id-clopen}, every such $\XXX$ belongs
to $\IiI[\cll\Omega(\operatorname{bd})]$.

\begin{rem}{mod}
\THM{prime} implies that all measurable spaces with nullities being ingredients of full coverings are almost isomorphic.
One may therefore ask of their (common) characteristic numbers $\iota^d$ and $\iota^c$. Using results of the next section
and \COR{cov} one may show that both of them are equal to $2^{\aleph_0}$. Even more: whenever $(\xxX,\Phi)$ is a full
covering, for $\yyY \in \{\xxX,\xxX_I,\xxX_{I_1},\xxX_{I_2},\ldots,\xxX_{I_{\infty}},\xxX_{\tII},\xxX_{\tII_1},
\xxX_{\tII_{\infty}},\xxX_{\tIII}\}$ one has $\iota^d(\yyY) = \iota^c(\yyY) = 2^{\aleph_0}$.
\end{rem}

\begin{rem}{mod-mod}
There is a striking resemblance between Theorems~\ref{thm:model} \pREF{thm:model} and \ref{thm:prime}, and between
the forms of $\Lambda(\Omega)$ (where $\Omega$ is an underlying model space) and of $\aaA(\xxX,\Psi)$ (where $(\xxX,\Psi)$
is a full covering). It is not a coincidence. When $(\xxX,\Mm,\NnN,\Psi)$ is a full covering, $\AAa =
L^{\infty}(\xxX,\Mm,\NnN)$ is a $\WWw^*$-algebra (since $\xxX$ is multi-standard---see the notes of the first paragraph
of \S1.18 in \cite{sak}). Now if $\Omega$ is the Gelfand spectrum of $\AAa$, there is a one-to-one correspondence between
clopen subsets of $\Omega$ and members of $\Mm$, which naturally correspond to $N$-tuples $\XXX$ such that $\XXX \leqsl^s
\widetilde{\TTT} := \JJJ_I \sqplus \TTT \sqplus \JJJ_{\tIII}$ where $\TTT := \bigsqplus^{\NnN}_{x\in\xxX_{\tII}} \Psi(x)$.
Since $\ZZz(\WWw''(\widetilde{\tTT}))$ is isomorphic to $\ZZz(\WWw''(\jJJ))$ (because $\widetilde{\TTT} \ll \JJJ \ll
\widetilde{\TTT}$; cf. (PR6), \PREF{PR6}), $\Omega$ is therefore homeomorphic to the Gelfand spectrum
of $\ZZz(\WWw''(\jJJ))$, that is, $\Omega$ is an underlying model space. Further, using results of Sections~15 and 22, one
may show that there is a `natural' correspondence, $f \mapsto \widehat{f}$, between $\Lambda(\Omega)$ and $\aaA(\xxX)$
(induced by the isomorphism between $\CCc(\Omega)$ and $\AAa$) where in $\aaA(\xxX)$ we identify functions which are equal
almost everywhere. One may then check that the assignment
$$
\Lambda(\Omega) \ni f \mapsto \contSQ{x\in\xxX}{\NnN} \widehat{f}(x) \odot \Psi(x)
$$
is inverse to $\Phi_{\TTT}$ introduced in \THM{model}. Thus $\aaA(\xxX)$ may be considered as a `concrete realization'
of $\Lambda(\Omega)$. With such an approach, the multiplicity function $\mM \in \aaA(\xxX)$ (relative to $\TTT$)
of $\XXX \in \SsS\MmM_N$ corresponds to $\DiNT{\XXX}{\TTT}$.
\end{rem}

\begin{thm}{mod-pr}
Let $(\xxX,\Mm,\NnN)$ be a multi-standard measurable space with nullity.
\begin{enumerate}[\upshape(I)]
\item Let $\Phi\dd \xxX \to \pP_N$ be such that $(\xxX,\Phi)$ is a covering and let $\mu\dd \Mm \to \CDD_N$ be given by
   \begin{equation}\label{eqn:mu-Phi}
   \mu(\aaA) = \contSQ{x\in\aaA}{\NnN} \Phi(x) \qquad (\aaA \in \Mm).
   \end{equation}
   Then:
   \begin{enumerate}[\upshape(M1)]\addtocounter{enumii}{-1}
   \item $\mu(\xxX)$ is the direct sum of a minimal $N$-tuple and a semiminimal one,
   \item for every $\aaA \in \Mm$, $\mu(\aaA) = \zero \iff \aaA \in \NnN$,
   \item whenever $\aaA$ and $\bbB$ are two measurable disjoint sets, $\mu(\aaA \cup \bbB) = \mu(\aaA) \sqplus \mu(\bbB)$,
   \item for every $\AAA \in \CDD_N$ such that $\AAA \leqsl^s \mu(\xxX)$ there exists $\aaA \in \Mm$ for which $\mu(\aaA)
      = \AAA$.
   \end{enumerate}
\item For every function $\mu\dd \Mm \to \CDD_N$ satisfying conditions \textup{(M0)--(M3)} there exists a unique
   (up to almost everywhere equality) function $\Phi\dd \xxX \to \pP_N$ such that $(\xxX,\Phi)$ is a covering
   and \eqref{eqn:mu-Phi} is fulfilled.
\end{enumerate}
\end{thm}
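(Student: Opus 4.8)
The plan is to treat the two parts separately. Part~(I) is a direct verification using the calculus of continuous direct sums from Sections~20--21. Given a covering $(\xxX,\Mm,\NnN,\Phi)$, set $\mu(\aaA)=\bigsqplus^{\NnN}_{x\in\aaA}\Phi(x)$. Then (M0) is precisely the last assertion of \COR{pd-type}(a); (M1) follows from (CS0) applied to the constant function $1$, whose support over $\aaA$ is $\aaA$, together with the fact that a measurable set lies in $\overline{\NnN}$ exactly when it lies in $\NnN$; (M2) follows by applying (CS3) \pREF{CS3} to the base $\{\aaA,\bbB\}$ of $\aaA\cup\bbB$, together with regularity of the field $\{\Phi(x)\}$ (which by \eqref{eqn:disj-sum} gives $\mu(\aaA)\disj\mu(\bbB)$, so the $\oplus$ is really a $\sqplus$); and (M3) is \eqref{eqn:cont-ords} in \THM{disj-sum}(II) applied with $f\equiv1$ (which trivially fits $\Phi$), using that $1\odot\PPP=\PPP$ for every $\PPP\in\pP_N$ (for fractals, $\PPP=\aleph_0\odot\PPP$ by \PRO{fractal}). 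This is all routine bookkeeping.

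For part~(II) I would first extract from the axioms the fact that drives everything: $\mu$ is countably additive, i.e. $\mu\bigl(\bigcup_n\aaA_n\bigr)=\bigoplus_n\mu(\aaA_n)$ for pairwise disjoint $\aaA_n\in\Mm$. The inequality $\geqsl$ is clear, since $\mu(\aaA_1)\sqplus\dots\sqplus\mu(\aaA_n)=\mu(\aaA_1\cup\dots\cup\aaA_n)\leqsl^s\mu\bigl(\bigcup_k\aaA_k\bigr)$ for every $n$, so by \THM{ords}(B) the $\leqsl^s$-supremum of this chain exists and, by (AO6) \pREF{AO6}, equals $\bigoplus_n\mu(\aaA_n)\leqsl^s\mu\bigl(\bigcup_k\aaA_k\bigr)$. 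For the reverse, put $\XXX=\mu\bigl(\bigcup_k\aaA_k\bigr)\sqminus\bigoplus_n\mu(\aaA_n)$; by (M3), $\XXX=\mu(\aaA_0)$ for some $\aaA_0\in\Mm$; from $\XXX\disj\mu(\aaA_1\cup\dots\cup\aaA_n)$ together with (M1) and (M2) one gets $\aaA_0\cap\aaA_k\in\NnN$ for all $k$, hence $\aaA_0\cap\bigcup_k\aaA_k\in\NnN$, so $\XXX=\mu(\aaA_0)=\mu\bigl(\aaA_0\setminus\bigcup_k\aaA_k\bigr)$; but the latter is disjoint from $\mu\bigl(\bigcup_k\aaA_k\bigr)\geqsl\XXX$ by (M2), forcing $\XXX=\zero$. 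It follows that $\mu$ descends to a $\sigma$-complete Boolean-algebra isomorphism $\overline{\mu}$ of the measure algebra $\Mm/\NnN$ onto $\{\XXX\in\CDD_N\dd\ \XXX\leqsl^s\mu(\xxX)\}$ (bijectivity is (M1)$+$(M3); preservation of joins and complements comes from (M2) applied to the partitions $\{\aaA\setminus\bbB,\aaA\cap\bbB,\bbB\setminus\aaA\}$ and $\{\aaA,\xxX\setminus\aaA\}$, plus the countable additivity just proved).

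Next, existence. Put $\TTT:=\mu(\xxX)$; by (M0) and \PRO{exis-cov} there is a covering on some multi-standard space $(\yyY,\Nn,\ZzZ)$ with prime-valued field $\Psi$ whose total continuous direct sum is $\TTT$, and by part~(I) the map $\nu(\bbB):=\bigsqplus^{\ZzZ}_{y\in\bbB}\Psi(y)$ satisfies (M0)--(M3) with $\nu(\yyY)=\TTT$, hence gives a $\sigma$-complete Boolean isomorphism $\overline{\nu}$ of $\Nn/\ZzZ$ onto the \emph{same} algebra $\{\XXX\leqsl^s\TTT\}$. Thus $\theta:=\overline{\nu}^{-1}\circ\overline{\mu}$ is a $\sigma$-complete isomorphism $\Mm/\NnN\to\Nn/\ZzZ$, and since both spaces are multi-standard it is induced by an almost null-isomorphism $\sigma\dd\xxX\to\yyY$ with $[\sigma(\aaA)]=\theta([\aaA])$: the atoms of $\Mm/\NnN$ are the classes of the one-point sets $\{x\}$, $x\in\xxX^d$, so $\theta$ matches $\xxX^d$ with $\yyY^d$ bijectively; on the atomless parts one takes standard bases, realises the blocks as copies of $([0,1],\Bb([0,1]),\LlL_0)$ as in Section~21, implements $\theta$ block by block through the isomorphism theorem for measure algebras of standard spaces, and reassembles via the base property. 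Setting $\Phi:=\Psi\circ\sigma$ (extended arbitrarily off a null set), one checks $\Phi(\xxX)\subset\pP_N$ and $\Phi\in\rgS_{loc}(\xxX)$ (a standard $A\in\Mm$ has standard image $\sigma(A)$, to which $\Psi\in\rgS_{loc}(\yyY)$ applies), so $(\xxX,\Mm,\NnN,\Phi)$ is a covering; and for $\aaA\in\Mm$ one computes $\mu(\aaA)=\overline{\nu}(\theta([\aaA]))=\nu(\sigma(\aaA))=\bigsqplus^{\ZzZ}_{y\in\sigma(\aaA)}\Psi(y)=\bigsqplus^{\NnN}_{x\in\aaA}\Psi(\sigma(x))=\bigsqplus^{\NnN}_{x\in\aaA}\Phi(x)$ by (CS4), which is \eqref{eqn:mu-Phi}. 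For uniqueness, if $\Phi_1,\Phi_2$ both satisfy the conclusion then by \THM{reg-id} both $(\xxX,\Phi_j)$ are coverings of the ideal $\{\XXX\dd\ \XXX\ll\TTT\}$, so \COR{uniq-cov} supplies an almost null-isomorphism $\tau\dd\xxX\to\xxX$ and a Borel $u\dd\xxX\to\RRR_+\setminus\{0\}$ with $u(\xxX_I\cup\xxX_{\tIII})=\{1\}$ and $\Phi_2\circ\tau=u\odot\Phi_1$ a.e.; pushing $\bigsqplus^{\NnN}_{x\in\aaA}\Phi_2(x)=\mu(\aaA)=\bigsqplus^{\NnN}_{x\in\aaA}\Phi_1(x)$ through (CS4) and then through \PRO{uniq} gives $u\cdot\jJ_{\tau^{-1}(\aaA)}=\jJ_\aaA$ a.e. for every $\aaA\in\Mm$ (with $\jJ_{\ddD}$ as in \REM{axiom}); taking $\aaA=\xxX$ forces $u\equiv1$ a.e., whence $\tau$ preserves every member of $\Mm$ modulo $\NnN$, so (countable separation on each block of a standard base, reassembled via the base property) $\tau=\mathrm{id}$ a.e. and $\Phi_1=\Phi_2$ a.e.

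The crux --- and the step I expect to cost the most effort --- is the transport in part~(II): passing from the purely order-theoretic isomorphism $\theta$ of measure algebras to a genuine pointwise almost null-isomorphism $\sigma$ of the multi-standard spaces, and in particular checking that $\theta$ respects the discrete/continuous splitting and that the cardinal invariants $\iota^d,\iota^c$ of $\xxX$ and $\yyY$ coincide (this is implicit in \COR{cov}). Everything else is either the already-established algebra of continuous direct sums ((CS0)--(CS4), \THM{disj-sum}, \COR{pd-type}, \PRO{uniq}), the existence of \emph{some} covering of the relevant ideal (\PRO{exis-cov}), or the rigidity of coverings of a fixed ideal (\COR{uniq-cov}); the remaining work is a careful but standard unwinding of the structure theory of multi-standard measurable spaces with nullities from Section~21.
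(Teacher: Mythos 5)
Your proposal is correct and follows essentially the same route as the paper: part (I) is the same routine verification the paper leaves to the reader, and part (II) proceeds, as in the paper, by turning (M0)--(M3) into a $\sigma$-complete Boolean isomorphism of $\Mm/\NnN$ onto $\{\XXX\dd\ \XXX\leqsl^s\mu(\xxX)\}$, comparing it with the one induced by a reference covering from \PRO{exis-cov}, realizing the resulting measure-algebra isomorphism pointwise via the structure theory of multi-standard spaces (the paper cites Royden's isomorphism theorem block by block over a standard base, exactly the step you flag as the crux), and deducing uniqueness from the rigidity of coverings. The only cosmetic differences are that the paper works with its auxiliary properties (M4)--(M5) rather than countable additivity, defines the isomorphism directly between the two quotient algebras instead of composing two maps onto the common target, and in the uniqueness step applies \THM{uniq-cov} directly (so no multiplier $u$ appears) before invoking the uniqueness half of the measure-algebra isomorphism theorem.
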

\begin{proof}
Point (I) is left for the reader. Here we focus only on (II).\par
Let $\mu$ be as in (II). Put $\TTT = \mu(\xxX)$. Observe that:
\begin{enumerate}[(M1)]\addtocounter{enumi}{3}
\item for any $\aaA, \bbB \in \Mm$, $\mu(\aaA) \leqsl^s \mu(\bbB)$ iff $\aaA \setminus \bbB \in \NnN$,
\item $\{\mu(\aaA)\dd\ \aaA \in \Mm\} = \{\AAA \in \CDD_N\dd\ \AAA \leqsl^s \TTT\}$.
\end{enumerate}
Indeed, (M5) easily follows from (M2) and (M3), because $\TTT = \mu(\aaA) \sqplus \mu(\xxX \setminus \aaA)$ for every
measurable $\aaA$. To prove (M4), first of all note that
\begin{equation}\label{eqn:aux150}
\mu(\aaA) = \mu(\bbB) \iff (\aaA \setminus \bbB) \cup (\bbB \setminus \aaA) \in \NnN,
\end{equation}
since, by (M2), $\mu(\aaA) = \mu(\aaA \setminus \bbB) \sqplus \mu(\aaA \cap \bbB)$, $\mu(\bbB) = \mu(\bbB \setminus \aaA)
\sqplus \mu(\aaA \cap \bbB)$ and (again thanks to (M2)) $\mu(\aaA \setminus \bbB) \disj \mu(\bbB \setminus \aaA)$.
These combined with (M1) give \eqref{eqn:aux150}. Now if $\aaA \setminus \bbB \in \NnN$, we infer from \eqref{eqn:aux150}
that $\mu(\aaA) = \mu(\aaA \cap \bbB)$ and hence, by (M2), $\mu(\bbB) = \mu(\bbB \setminus \aaA) \sqplus \mu(\aaA)$ which
yields $\mu(\aaA) \leqsl^s \mu(\bbB)$. Conversely, if the latter inequality is fulfilled, we conclude from (M5) that there
is $\ccC \in \Mm$ such that $\mu(\ccC) = \mu(\bbB) \sqminus \mu(\aaA)$. Since then (again by (M2)) $\mu(\ccC \cup \aaA) =
\mu(\ccC) \sqplus \mu(\aaA \setminus \ccC) = \mu(\aaA) \sqplus \mu(\ccC \setminus \aaA)$, $\mu(\ccC) \disj \mu(\aaA)$
and $\mu(\aaA \setminus \ccC) \disj \mu(\ccC \setminus \aaA)$, we get $\mu(\aaA) = \mu(\aaA \setminus \ccC)$
and consequently $\mu(\aaA \cup \ccC) = \mu(\ccC) \sqplus \mu(\aaA) = \mu(\bbB)$. So, \eqref{eqn:aux150} yields
the assertion of (M4).\par
Further, it follows from (M0) and \PRO{exis-cov} that there is a covering $(\xxX',\Mm',\NnN',\Psi)$ such that $\TTT =
\bigsqplus^{\NnN'}_{x\in\xxX'} \Psi(x)$. Put $\mu'\dd \Mm' \ni \aaA \mapsto \bigsqplus^{\NnN'}_{x\in\aaA} \Psi(x) \in
\CDD_N$. It may be infered from \PROp{uniq}, \THMp{reg-id} and \CORp{pd-prp} that conditions (M4) and (M5) as well as
\eqref{eqn:aux150} are satisfied when $\mu$, $\Mm$ and $\NnN$ are replaced by (respectively) $\mu'$, $\Mm'$ and $\NnN'$.
Let $\MmM$ and $\MmM'$ denote the quotient (abstract) Boolean $\sigma$-algebras $\Mm / \NnN$ and $\Mm' / \NnN'$
(respectively). We shall denote the equivalence class in $\MmM$ (in $\MmM'$) of $\aaA \in \Mm$ (of $\aaA \in \Mm'$)
by $[\aaA]_{\NnN}$ (by $[\aaA]_{\NnN'}$). (M4), (M5) and \eqref{eqn:aux150} for both $\mu$ and $\mu'$ imply that
the rule
$$
\tau([\aaA]_{\NnN}) = [\bbB]_{\NnN'} \iff \mu(\aaA) = \mu'(\bbB)
$$
well defines an order isomorphism $\tau\dd \MmM \to \MmM'$. One deduces from this that whenever $\tau([\aaA]_{\NnN}) =
[\bbB]_{\NnN'}$,
$$
\aaA \textup{ is standard} \iff \bbB \textup{ is standard.}
$$
Since $\tau$ is an order isomorphism, it is an isomorphism of Boolean $\sigma$-algebras as well. Now an application
of \cite[Corollary~14.4.12]{roy} separately for every member of a standard base of $\xxX$ yields that there are sets
$\zzZ \in \NnN$ and $\zzZ' \in \NnN'$, and a null-isomorphism $\varphi\dd \xxX \setminus \zzZ \to \xxX' \setminus \zzZ'$
such that $\tau([\aaA]_{\NnN}) = [\varphi(\aaA \setminus \zzZ)]_{\NnN'}$ for every $\aaA \in \Mm$. In particular,
$\mu(\aaA) = \mu'(\varphi(\aaA \setminus \zzZ))$ or, equivalently,
$$
\mu(\aaA) = \contSQ[\!\!\!\!\!\!]{y\in\varphi(\aaA\setminus\zzZ)}{\NnN'} \Psi(y) =
\contSQ[\!\!\!]{x\in\aaA\setminus\zzZ}{\NnN} (\Psi \circ \varphi)(x)
$$
for any $\aaA \in \Mm$. So, to obtain \eqref{eqn:mu-Phi} it suffices to define $\Phi\dd \xxX \to \pP_N$ as an arbitrary
extension of $\Psi \circ \varphi$.\par
Now assume that $\Phi'\dd \xxX \to \pP_N$ is another function such that $(\xxX,\Phi')$ is a covering and $\mu(\aaA) =
\bigsqplus_{x\in\aaA}^{\NnN} \Phi'(x)$ for every $\aaA \in \Mm$. Then $\bigsqplus_{x\in\xxX}^{\NnN} \Phi(x) =
\bigsqplus_{x\in\xxX}^{\NnN} \Phi'(x)$ and consequently---by \THM{uniq-cov}---there is an almost null-isomorphism
$\kappa\dd \xxX \to \xxX$ such that $\Phi' = \Phi \circ \kappa$ almost everywhere. It suffices to check that $\kappa(x)
= x$ for almost all $x \in \xxX$. Take $\zzZ \in \NnN$ such that $\kappa\bigr|_{\xxX \setminus \zzZ}$ is a null
isomorphism of $\xxX \setminus \zzZ$ onto its (measurable) range. For simplicity, for every $\aaA \in \Mm$ put $\aaA_* =
\aaA \setminus \zzZ$. Notice that then
$$
\contSQ{x\in\aaA_*}{\NnN} \Phi(x) = \contSQ[\!\!\!]{x\in\kappa(\aaA_*)}{\NnN} \Phi(x).
$$
This implies (cf. \PROP{uniq}) that $(\aaA_* \setminus \kappa(\aaA_*)) \cup (\kappa(\aaA_*) \setminus \aaA_*) \in \NnN$.
Equivalently, $[\aaA_*]_{\NnN} = [\kappa(\aaA_*)]_{\NnN}$ for every $\aaA \in \Mm$. Since $\xxX$ is multi-standard,
it follows from the uniqueness in \cite[Theorem~14.4.10]{roy} that $\kappa(x) = x$ almost everywhere and we are done.
\end{proof}

\SECT{Classification of ideals up to isomorphism}

This section is the only part of the treatise in which we will compare ideals of tuples of different lengths (that is,
ideals in $\CDD_N$ as well as in $\CDD_{N'}$ with $N' \neq N$).\par
We begin with

\begin{exm}{existence}
It is known that every properly infinite or type I von Neumann algebra acting in a separable Hilbert space is singly
generated (\cite{wog}, \cite{sai}, \cite{g-s}). There are also known examples of singly generated type II$_1$ factors
(\cite{g-s}). Also tensor products of two singly generated von Neumann algebras acting in separable Hilbert spaces are
singly generated (\cite[Corollary~2.1]{sai}). Further, according to \cite[Theorem~2.6.6]{sak}, the $\WWw^*$-tensor product
of a type I$_n$, II$_1$, II$_{\infty}$ or III $\WWw^*$-algebra and $L^{\infty}([0,1])$ is of the same type. Also,
for a factor $\MmM$,
\begin{equation}\label{eqn:aux47}
\ZZz(\MmM \bar{\otimes} L^{\infty}([0,1])) \cong L^{\infty}([0,1]),
\end{equation}
by \cite[Proposition~2.6.7]{sak} or \cite[Corollary~IV.5.11]{tk1}. Finally, if $T$ is a bounded operator and $\tTT =
(T,\ldots,T) \in \CDDc_N$, then $\WWw(T) = \WWw(\tTT)$. All these notices yield that the ideals $\IiI_{I_n}^c$,
$\IiI_{\tII_1}^c$, $\IiI_{\tII_{\infty}}^c$ and $\IiI_{\tIII}^c$ are nonntrivial. (Indeed, take a singly generated factor
$\MmM$ acting in a separable Hilbert space of a fixed type $i$ and let $T$ be a generator of $\MmM \bar{\otimes}
L^{\infty}([0,1])$. Then $\tTT = (T,\ldots,T) \in \IiI_i^c$, by \eqref{eqn:aux47}.)
\end{exm}

\begin{cor}{123}
Let $\Omega$ denote the underlying model space for $\CDD_N$. Each of the spaces $\Omega$, $\Omega_I$, $\Omega_{I_n}$
($n=1,2,\ldots,\infty$), $\Omega_{\tII}$, $\Omega_{\tII_1}$, $\Omega_{\tII_{\infty}}$ and $\Omega_{\tIII}$ is homeomorphic
to the topological disjoint union of $\beta D(2^{\aleph_0})$ and $\beta[D(2^{\aleph_0}) \times \Xx]$
where $D(2^{\aleph_0})$ is the discrete space of power $2^{\aleph_0}$ and $\Xx$ is the Gelfand spectrum
of $L^{\infty}([0,1])$.
\end{cor}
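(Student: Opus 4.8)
The plan is to reduce everything to \THM{homeo}: each of the listed sets is clopen in $\Omega$, so once I know that $\kappa_d(E)=\kappa_c(E)=2^{\aleph_0}$ for each of them, \THM{homeo} delivers the homeomorphism with the topological sum of $\beta D(2^{\aleph_0})$ and $\beta[D(2^{\aleph_0})\times\Xx]$. Clopenness is routine: $\Omega$ is clopen in itself; $\Omega_I$, $\Omega_{\tII}$, $\Omega_{\tIII}$ are the three clopen pieces of the regular decomposition $\JJJ=\JJJ_I\sqplus\JJJ_{\tII}\sqplus\JJJ_{\tIII}$; and $\Omega_{I_n}$, $\Omega_{\tII_1}$, $\Omega_{\tII_{\infty}}$ are supports of the ideals $\IiI_{i_n}$, hence clopen by \PRO{id-clopen}. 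Since $\Omega$ is extremely disconnected, $\Omega^d$ (the closure of the set of isolated points of $\Omega$) is clopen, so $\Omega^c$ and every $E^c=E\cap\Omega^c$ are clopen too; note also that $\supp_{\Omega}\IiI_{i_n}^c=\Omega_{i_n}^c$.

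For the discrete cardinals, \PRO{C-S} says that each of $\Omega_{I_n}^d$ ($n=1,2,\ldots,\infty$), $\Omega_{\tII_1}^d$, $\Omega_{\tII_{\infty}}^d$, $\Omega_{\tIII}^d$ equals $\beta D(2^{\aleph_0})$, whose set of isolated points has power $2^{\aleph_0}$; since the isolated points of a clopen $E$ are exactly those of $E^d$, this gives $\kappa_d(E)=2^{\aleph_0}$ for each of these building blocks. For $\Omega_I$ (resp.\ $\Omega_{\tII}$, resp.\ $\Omega$) the clopen pieces $\Omega_{I_n}$ (resp.\ $\Omega_{\tII_1},\Omega_{\tII_{\infty}}$; resp.\ $\Omega_I,\Omega_{\tII},\Omega_{\tIII}$) have dense union, so each isolated point lies in one of them; hence $\kappa_d$ is the corresponding countable (resp.\ finite) sum of $2^{\aleph_0}$'s, again $2^{\aleph_0}$.

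For $\kappa_c(E)=c_*(E^c)$ the upper bound is uniform. For any clopen $E\subseteq\Omega$, a family of pairwise disjoint nonempty clopen subsets of $E$ corresponds --- clopen subsets of $\Omega$ being matched with $N$-tuples $\leqsl^s\JJJ$ (cf.\ \COR{1id-1}) and disjoint families with regular families --- to a regular family of nontrivial members of $\CDD_N$, hence has power at most $2^{\aleph_0}$ by \LEM{continuum}; since $\Omega$ is extremely disconnected the Souslin number is attained on clopen families (as in the proof of \PRO{Dim}), so $c_*(E)\leqsl2^{\aleph_0}$ and a fortiori $\kappa_c(E)\leqsl2^{\aleph_0}$. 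The real work is the lower bound: I must produce, for every $i\in\{I_1,\ldots,I_{\infty},\tII_1,\tII_{\infty},\tIII\}$, a family of $2^{\aleph_0}$ pairwise unitarily disjoint nontrivial $N$-tuples inside the continuous ideal $\IiI_i^c$ (whose nontriviality is exactly the content of \EXM{existence}); their supports then form $2^{\aleph_0}$ pairwise disjoint nonempty clopen subsets of $\Omega_i^c$, and the cases $\Omega_I,\Omega_{\tII},\Omega$ follow at once since these contain $\Omega_{I_1}^c$, $\Omega_{\tII_1}^c$, $\Omega_{I_1}^c$ respectively. To build such a family I would use the singly generated factors furnished by \EXM{existence}: fix a singly generated factor $\MmM$ of type $i$ on a separable Hilbert space, with generator $T_0$ of norm $1$; fix an injective bounded Borel function $c$ of the Cantor space $\{0,1\}^{\NNN}$ into $(1,2)$; and for each $\theta\in(0,1)$ let $\mu_{\theta}$ be the Bernoulli product measure on $\{0,1\}^{\NNN}$ with parameter $\theta$, so that the $\mu_{\theta}$ are non-atomic and pairwise mutually singular. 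Then $T_{\theta}:=\int^{\oplus}_{\{0,1\}^{\NNN}}c(x)T_0\dint{\mu_{\theta}(x)}$ generates (a copy of) $\MmM\bar{\otimes}L^{\infty}(\mu_{\theta})$, which is of type $i$ by the tensor-product results quoted in \EXM{existence} and, having non-atomic centre, carries no prime parts; so $\tTT_{\theta}:=(T_{\theta},\ldots,T_{\theta})$ represents a nontrivial member of $\IiI_i^c$. Finally, for $\theta\neq\theta'$ the fibre operators $c(x)T_0$ are pairwise non-unitarily-equivalent (their norms $c(x)$ are distinct), so the spectral measure classes of $T_{\theta}$ and $T_{\theta'}$ are push-forwards of the mutually singular $\mu_{\theta}$, $\mu_{\theta'}$ under one and the same essentially injective assignment, hence themselves mutually singular; by \LEM{orth-meas} (equivalently \PRO{pd-transl}) this forces $\tTT_{\theta}\disj\tTT_{\theta'}$. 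In the case $i=I_1$ one replaces this by the operators of multiplication by the identity on $L^2(c_*\mu_{\theta})$, which are multiplicity free, continuous, and pairwise unitarily disjoint for the same reason.

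Combining the two estimates, $\kappa_d(E)=\kappa_c(E)=2^{\aleph_0}$ for each $E$ on the list, and \THM{homeo} finishes the proof. The main obstacle I anticipate is the lower bound $\kappa_c(E)\geqsl2^{\aleph_0}$: it is precisely here that one must invoke the existence of singly generated factors of types $\tII_1$, $\tII_{\infty}$ and $\tIII$ (i.e.\ \EXM{existence}), and the accompanying verifications --- that the direct-integral operators $T_{\theta}$ really generate $\MmM\bar{\otimes}L^{\infty}(\mu_{\theta})$, that this algebra has the prescribed type and no prime parts, and that mutual singularity of the $\mu_{\theta}$ translates into unitary disjointness of the $\tTT_{\theta}$ through \LEM{orth-meas} --- all need to be carried out carefully.
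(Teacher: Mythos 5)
Your reduction to \THM{homeo} (clopenness of the listed sets, $\kappa_d=2^{\aleph_0}$ via \PRO{C-S}, the upper bound $\kappa_c\leqsl 2^{\aleph_0}$ via \LEM{continuum}) is the same as the paper's; the divergence, and the problem, is in the lower bound $\kappa_c(E)\geqsl 2^{\aleph_0}$. The paper uses \EXM{existence} only to know that the continuous ideal $\IiI_{i_k}^c$ is nontrivial; it then takes \emph{any} nontrivial $\TTT_0\in\IiI_{i_k}^c\cap\SsS\EeE\PpP_N$ which is minimal or semiminimal, applies \LEM{pd-sep} to write $\TTT_0=\int^{\sqplus}_{\pP_N}\PPP\dint{\mu_0(\PPP)}$ with $\mu_0\in\rgM(\pP_N)$ concentrated on an uncountable standard measurable domain $\FfF$ consisting of primes/semiprimes of the right type, and only then chooses $2^{\aleph_0}$ mutually singular nonatomic probability measures $\lambda_t$ \emph{on $\FfF$}. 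Since $\FfF$ is already a measurable domain, every $\lambda_t$ is automatically a regularity measure, so \LEM{orth-meas} yields pairwise unitary disjointness of the $\XXX_t$ and the conclusion follows.

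Your explicit construction $T_{\theta}=\int^{\oplus}c(x)T_0\dint{\mu_{\theta}(x)}$ skips exactly the step this detour is designed to secure. For $T_{\theta}$ to generate $\MmM\bar{\otimes}L^{\infty}(\mu_{\theta})$ --- equivalently, for the field $x\mapsto c(x)T_0$ to be regular, equivalently for the diagonalizable algebra to lie in $\WWw(T_{\theta})$ (\PRO{pd-transl}) --- it is not enough that the fibres $c(x)T_0$ are pairwise unitarily disjoint (that much does follow from $\|c(x)T_0\|\neq\|c(y)T_0\|$). By \PRO{meas-domain} one needs the image $\{rT_0\dd\ r\in c(\{0,1\}^{\NNN})\}$ to be (essentially) a measurable domain, i.e.\ one needs \emph{strong} unitary disjointness witnessed by sequences of polynomials, and the paper emphasizes that this is genuinely delicate: strong and ordinary unitary disjointness differ already for scalars (remark after \PRO{wdisj}), Ernest's examples give standard measures on $\pP_N\cap\SsS\EeE\PpP_N(\infty)$ not concentrated on any measurable domain, and even the finite-dimensional case is left as a conjecture in Section~24. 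Without regularity of the field, \LEM{orth-meas} is not applicable, and your inference from mutual singularity of the $\mu_{\theta}$ to $\tTT_{\theta}\disj\tTT_{\theta'}$ has no justification; the phrase ``spectral measure classes'' has no meaning here outside the regular setting. (Your fallback for $i=I_1$ via multiplication operators is fine, since measures on $\CCC^N$ are handled in Section~24.A, but that does not cover the type II and III cases, which are the substance of the corollary.) To repair the argument you would have to prove that $\{rT_0\}_r$ is a measurable domain for your specific generator, or else fall back on the paper's route through \LEM{pd-sep}, which manufactures the measurable domain abstractly.
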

\begin{proof}
By \THMp{homeo}, it suffice to show that $\kappa_c(E) = 2^{\aleph_0}$ where $E$ denotes any of the sets under
the question. Equivalently (cf. \PROP{Dim}), this is to say that $\dim(\JJJ(\AaA)) = 2^{\aleph_0}$ where $\AaA$ is one
of $\IiI_{I_n}^c$, $\IiI_{\tII_1}^c$, $\IiI_{\tII_{\infty}}^c$, $\IiI_{\tIII}^c$. To simplify the argument, let
$(i,k,\Zz)$ be one of $(I,n,\aA_N(n))$ (where $n \in \{1,2,\ldots,\infty\}$), $(\tII,1,\sS_N(1))$,
$(\tII,\infty,\sS_N(\infty))$, $(\tIII,\infty,\fF_N)$ and let $\AaA = \IiI_{i_k}^c$. By \EXM{existence} we know $\AaA$ is
nontrivial. Hence (e.g. by \PROP{count}) there is $\TTT_0 \in \AaA \cap \SsS\EeE\PpP_N$ which is either minimal
or semiminimal. Now according to \LEMp{pd-sep}, there is $\mu_0 \in \rgM(\pP_N)$ such that $\TTT_0 =
\int^{\sqplus}_{\pP_N} \PPP \dint{\mu_0(\PPP)}$. There is a measurable domain $\FfF$ on which $\mu$ concentrates. Since
$\mu$ is standard, we may assume that $\FfF$ is a standard Borel space, and that $\FfF \subset \Zz$,
by \COR{pd-type}--(c) \pREF{cor:pd-type}. We infer from the fact that $\AAA^c = \AAA$ that $\mu_0$ is nonatomic
and consequently that $\FfF$ is uncountable. So, $\FfF$ is Borel isomorphic to $[0,1]$ which implies that there is
a family $\{\lambda_t\}_{t\in\RRR}$ of probabilistic nonatomic Borel measures on $\FfF$ which are mutually singular. Since
every measure on $\FfF$ is a regularity measure, \LEMp{orth-meas} shows that $\XXX_s := \int^{\sqplus}_{\FfF} \PPP
\dint{\lambda_s(\PPP)} \disj \int^{\sqplus}_{\FfF} \PPP \dint{\lambda_t(\PPP)} = \XXX_t$ for any distinct real numbers $s$
and $t$. Finally, again thanks to \COR{pd-type}, $\XXX_s \in \AaA$ (because $\FfF \subset \Zz$ and $\lambda_s$ is
nonatomic) and $\XXX_s$ is minimal or semiminimal for every $s\in\RRR$. Consequently, $\XXX := \bigsqplus_{s\in\RRR}
\XXX_s$ is a minimal or semiminimal member of $\AaA$ as well. This gives $\XXX \leqsl \JJJ(\AaA)$ and therefore
$\dim(\JJJ(\AaA)) \geqsl \dim(\XXX) = 2^{\aleph_0}$ (since $\XXX_s \in \SsS\EeE\PpP_N$ for each $s \in \RRR$).
\end{proof}

As an important consequence of \COR{123} we obtain that the underlying model space for $\CDD_N$ and its `characteristic'
subsets are independent of $N$. This will be crucial in our investigations. Hence, we may shortly speak
of an \textit{underlying model space}.\par
Everywhere below $\AaA$ and $\BbB$ denote arbitrary ideals in $\CDD_N$ and $\CDD_{N'}$ (respectively).

\begin{dfn}{isom}
A function $\Phi\dd \AaA \to \BbB$ is an \textit{isomorphism} iff $\Phi$ is a bijection and $\Phi(\bigoplus_{s \in S}
\AAA_s) = \bigoplus_{s \in S} \Phi(\AAA_s)$ for every collection $\{\AAA_s\}_{s \in S} \subset \AaA$ (where, of course,
 $S$ is a set). An isomorphism $\Phi\dd \AaA \to \BbB$ is
\begin{itemize}
\item an \textit{s-isomorphism} iff $\dim \Phi(\AAA) = \dim \AAA$ for every $\AAA \in \AaA$,
\item a \textit{t-isomorphism} iff for each $\AAA \in \AaA$ the following condition is fulfilled: $\Phi(\AAA)$ is
 of type $i^k$ iff so is $\AAA$ where $i^k$ is one of $I^n$ ($n=1,2,\ldots,\infty$), $\tII^1$, $\tII^{\infty}$, $\tIII$.
\end{itemize}
Two ideals are \textit{isomorphic}, \textit{s-isomorphic} or \textit{t-isomorphic} if there exists an isomorphism
of suitable kind between them.\par
Let `i' be the empty, `s' or `t' prefix. We write $\AaA \cong^i \BbB$ iff $\AaA$ and $\BbB$ are i-isomorphic.
Additionally, we write $\AaA \preccurlyeq^i \BbB$ if $\AaA \cong^i \BbB'$ for some ideal $\BbB' \subset \BbB$.
\end{dfn}

As it is easily seen, every t-isomorphism is a d-isomorphism. Therefore:
\begin{gather*}
\AaA \cong^t \BbB \implies \AaA \cong^s \BbB \implies \AaA \cong \BbB,\\
\AaA \preccurlyeq^t \BbB \implies \AaA \preccurlyeq^s \BbB \implies \AaA \preccurlyeq \BbB.
\end{gather*}
It is also clear that `$\preccurlyeq^i$' is transitive, while `$\cong^i$' is an equivalence.\par
The main tool of this section is the following

\begin{thm}{isom}
If $\Phi\dd \AaA \to \BbB$ is a bijection such that
\begin{equation}\label{eqn:add2}
\Phi(\XXX \oplus \YYY) = \Phi(\XXX) \oplus \Phi(\YYY)
\end{equation}
for any $\XXX, \YYY \in \AaA$, then $\Phi$ is an isomorphism and $\Phi$ preserves all notions, features
and operations appearing in \textup{(ST1)--(ST17) (pp. \pageref{ST*}--\pageref{ST+})}.
\end{thm}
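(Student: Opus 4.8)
The plan is to re-run the argument of Section~13 --- the chain of reconstructions (ST1)--(ST17) together with the characterization \eqref{eqn:Ealpha} of infinite direct sums --- with $\CDD_N$ replaced throughout by the ideal $\AaA$ and $\CDD_{N'}$ by $\BbB$. Granting \eqref{eqn:add2}, $\Phi$ is a bijection of $\AaA$ onto $\BbB$ commuting with the binary operation $\oplus$, hence an isomorphism of the structures $(\AaA,\oplus)$ and $(\BbB,\oplus)$; its inverse likewise commutes with $\oplus$ (every pair of elements of $\BbB$ is of the form $\Phi(\XXX),\Phi(\YYY)$), so from the relativized (ST2) --- which reads $\AAA\leqsl\BBB\iff\BBB=\AAA\oplus\XXX$ for some $\XXX\in\AaA$ --- one gets that $\Phi$ is an order isomorphism for $\leqsl$, and therefore preserves all $\leqsl$-l.u.b.'s and g.l.b.'s of sets (both $\AaA$ and $\BbB$ are order-complete sublattices of their ambient classes, by \THM{complete}). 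Descending the list (ST1)--(ST17), each relativized notion is then defined by a clause involving only $\zero$, $\oplus$, $\leqsl$, countable joins and previously defined notions, so $\Phi$ carries the $\AaA$-version of it to the $\BbB$-version of the same; and finally \eqref{eqn:Ealpha}, relativized, yields $\Phi(\bigoplus_{s\in S}\AAA^{(s)})=\bigoplus_{s\in S}\Phi(\AAA^{(s)})$ for every set $\{\AAA^{(s)}\}_{s\in S}\subset\AaA$, which is exactly the assertion that $\Phi$ is an isomorphism in the sense of \DEF{isom}.

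What carries the whole proof, and the first thing I would establish, is that relativizing (ST1)--(ST17) to an ideal does not alter the notions: the $\AaA$-version of each notion coincides with the restriction to $\AaA$ of the corresponding notion on $\CDD_N$. The enabling observations are the closure properties of $\AaA$. By \PRO{unitid}, $\AaA=\{\XXX\dd\ \XXX\ll\JJJ(\AaA)\}$, so $\AaA$ is hereditary under both $\leqsl$ and $\ll$; it is closed under $\mM\odot(\cdot)$ and under set-indexed direct sums; as a sublattice of $(\CDD_N,\leqsl)$ it is order-complete with $\bigvee$ and $\bigwedge$ of sets computed exactly as in $\CDD_N$; and, crucially, for any $\AAA\in\AaA$ and any $\XXX\in\CDD_N$ the Ernest shadow $\EEE(\XXX|\AAA)$ belongs to $\AaA$, since $\EEE(\XXX|\AAA)\ll\AAA\ll\JJJ(\AaA)$. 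With these in hand the great majority of the steps relativize at once: in (ST2), if $\BBB=\AAA\oplus\XXX$ with $\AAA,\BBB\in\AaA$ then $\XXX\leqsl\BBB$ forces $\XXX\in\AaA$; in (ST6) the relativized $\bigoplus_{n=1}^{\infty}$ agrees with the genuine one by (AO6); in (ST9) the relativized $\MmM\FfF$ is $\MmM\FfF_N\cap\AaA$ and the relativized $\JJJ_I$ is $\JJJ(\AaA)\wedge\JJJ_I$ (by \THM{minimal}); and similarly for the other purely $\oplus$-definable items. The $\bigwedge$/$\bigvee$ clauses (e.g. the transfinite-induction clause of (ST14)) are handled uniformly: the quantity they produce already lies in $\AaA$ and is a member of, hence bounds, the relativized defining set, so the $\AaA$-infimum/supremum is pinned to the $\CDD_N$-value.

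The step where restricting the quantifier is genuinely not immediate, and which I regard as the main obstacle, is (ST10), minimality: there $\XXX$ ranges over all of $\CDD_N$. The claim to prove is that $\AAA\in\AaA$ is minimal in $\CDD_N$ iff $\AAA\leqsl\XXX$ for every $\XXX\in\AaA$ with $\AAA\ll\XXX$. The nontrivial direction goes by the shadow device: given an arbitrary $\XXX\in\CDD_N$ with $\AAA\ll\XXX$, set $\YYY=\EEE(\XXX|\AAA)\in\AaA$, note $\YYY\leqsl^s\XXX$ and, by (PR1) (for $\ll$), $\AAA\ll\YYY$; the ideal-internal hypothesis then gives $\AAA\leqsl\YYY\leqsl\XXX$. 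The same pattern --- replacing a witness by its shadow, or, invoking \PROp{count}, by a suitable $N$-tuple $\leqsl^s\JJJ$, neither operation affecting the relevant (in)equalities --- disposes of the remaining places in (ST14) and (ST17) where a witness in $\CDD_N$ must be traded for one in $\AaA$; the underlying structural input is \THM{decomp} and \THM{minimal}, used exactly as in Section~13.

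Finally, I would note that the tuple length enters (ST1)--(ST17) only through $\JJJ_I,\JJJ_{\tII},\JJJ_{\tIII}$, which appear there solely via their intrinsic $\oplus$-characterizations in (ST9)--(ST13); hence the possibility $N'\neq N$ is harmless. Once every step of (ST1)--(ST17) has been seen to relativize, the passage through \eqref{eqn:Ealpha} is word for word that of Section~13, and the proof is complete --- this being, in effect, the ideal-relative version of \PRO{fin-infin}.
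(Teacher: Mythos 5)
Your proposal is correct and follows exactly the route the paper intends: the paper's own "proof" is just the remark that the argument of Section~13 (i.e.\ of \PRO{fin-infin}) carries over, and what you have written is precisely that argument with the one genuinely delicate point --- relativizing the quantifiers in (ST10), (ST14), (ST17) and \eqref{eqn:Ealpha} from $\CDD_N$ to the ideal via the shadow $\EEE(\XXX|\AAA)$ together with the hereditary/order-completeness properties of ideals --- worked out correctly.
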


The above result is a generalization of \PROp{fin-infin} and its proof goes similarly (see Section~13).
In particular, for every isomorphism $\Phi\dd \AaA \to \BbB$ and each $\AAA \in \AaA$ one has: $\dim \Phi(\AAA)$
is uncountable iff so is $\dim(\AAA)$ and if this is the case, they are equal. So, $\Phi$ is an s-isomorphism
if $\Phi$ preserves `$\dim$' for members of $\SsS\EeE\PpP$ (the prefix `s' is after `separable'). One may also
check that $\Phi$ preserves atoms, fractals, semiprimes (using their definitions and the note on page
\pageref{note} after \DEF{fractal}), factor tuples (by \PROP{factor}) and types I, II and III. Consequently,
$\Phi(\AaA^d) = \BbB^d$ and $\Phi(\AaA^c) = \BbB^c$.\par
We shall now define characteristics of ideals which turn out to be sufficient for answering the questions whether
$\AaA \cong^i \BbB$ or $\AaA \preccurlyeq^i \BbB$.

\begin{dfn}{charact}
For any $D \in \{I,I_1,I_2,\ldots,I_{\infty},\tII,\tII_1,\tII_{\infty},\tIII\}$ let
$$
\chi_D^d(\AaA) = \card(\{\XXX\dd\ \XXX \in \Ff_N \cap \IiI_D,\ \XXX \leqsl^s \JJJ(\AaA)\}),
$$
$\chi_D^c(\AaA) = \dim(\JJJ(\AaA^c \cap \IiI_D))$ and $\chi_D(\AaA) = (\chi_D^d(\AaA),\chi_D^c(\AaA))$.
Finally, let
\begin{gather*}
\chi(\AaA) = (\chi_I(\AaA);\chi_{\tII}(\AaA);\chi_{\tIII}(\AaA)),\\
\chi_s(\AaA) = (\chi_{I_1}^d(\AaA),\chi_{I_2}^d(\AaA),\ldots,\chi_{I_{\infty}}^d(\AaA)),\\
\chi_t(\AaA) = (\chi_{I_1}(\AaA);\chi_{I_2}(\AaA);\ldots,\chi_{I_{\infty}}(\AaA);\chi_{\tII_1}(\AaA);
\chi_{\tII_{\infty}}(\AaA)).
\end{gather*}
\end{dfn}

When comparing sequences (finite or infinite) of the same length whose entries are cardinals, `$\leqsl$' will
denote the coordinatewise order.\par
Let $\Omega$ be an underlying model space and let $\Psi_N = \Phi_{\TTT}\dd \CDD_N \to \Lambda(\Omega)$ be
as in \THMp{model}. For $E = \supp_{\Omega} \AaA$ we have (under the notation of \DEF{charact}): $\chi_D^d(\AaA)
= \kappa_d(E \cap \Omega_D)$ and $\chi_D^c(\AaA) = \kappa_c(E \cap \Omega_D)$ (cf. \PROP{Dim}). So, according
to \THM{homeo} (page \pageref{thm:homeo}; below `$\cong$' means `homeomorphic'),
\begin{equation}\label{eqn:iso-homeo}
\Omega_D \cap \supp_{\Omega} \AaA \cong \Omega_D \cap \supp_{\Omega} \BbB \iff \chi_D(\AaA) = \chi_D(\BbB).
\end{equation}
As an application of \THM{isom}, \THMp{model}, \COR{123} and \eqref{eqn:iso-homeo} we obtain

\begin{thm}{iso-iso}
Let $N$ and $N'$ be arbitrary positive integers, $\AaA \subset \CDD_N$ and $\BbB \subset \CDD_{N'}$ be ideals.
\begin{enumerate}[\upshape(I)]
\item $\CDD_N \cong^t \CDD_{N'}$. What is more, each entry of $\chi(\CDD_N)$, $\chi_s(\CDD_N)$
   or $\chi_t(\CDD_N)$ is equal to $2^{\aleph_0}$.
\item $\AaA \cong \BbB \iff \chi(\AaA) = \chi(\BbB)$; $\AaA \preccurlyeq \BbB \iff \chi(\AaA) \leqsl \chi(\BbB)$.
\item $\AaA \cong^s \BbB \iff \chi(\AaA) = \chi(\BbB)$ and $\chi_s(\AaA) = \chi_s(\BbB)$; $\AaA \preccurlyeq^s
   \BbB \iff \chi(\AaA) \leqsl \chi(\BbB)$ and $\chi_s(\AaA) \leqsl \chi_s(\BbB)$.
\item $\AaA \cong^t \BbB \iff \chi(\AaA) = \chi(\BbB)$ and $\chi_t(\AaA) = \chi_t(\BbB)$; $\AaA \preccurlyeq^t
   \BbB \iff \chi(\AaA) \leqsl \chi(\BbB)$ and $\chi_t(\AaA) \leqsl \chi_t(\BbB)$.
\item Up to isomorphism (respectively t-isomorphism), there is only $\gamma$ ($2^{\aleph_0}$) different ideals
   where $\gamma = \card(\{\alpha\in\Card\dd\ \alpha \leqsl 2^{\aleph_0}\})$.
\end{enumerate}
\end{thm}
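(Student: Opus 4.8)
The strategy is to translate the whole problem into the topology of clopen subsets of the underlying model space $\Omega$ and its canonical clopen stratification, and then quote \THMP{homeo}, \COR{123} and the dictionary \eqref{eqn:iso-homeo} already in place. The two workhorses will be \THMP{isom} — every additive bijection of ideals is automatically an isomorphism and preserves all the notions and operations (ST1)--(ST17), in particular types I, II, III, atoms, factor tuples, $\SsS\EeE\PpP_N$ and $\dim$ on uncountable-dimensional tuples — and \THMP{model}, together with \PROp{id-clopen}, which identifies an ideal $\AaA\subset\CDD_N$ with the clopen set $E=\supp_{\Omega}\AaA$ and, via $\Phi_{\TTT}$, identifies $\AaA$ with the sublattice $\{u\in\Lambda(\Omega)\dd\supp u\subset E\}$.

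I would first prove (II). If $\Phi\dd\AaA\to\BbB$ is an isomorphism, transport it through $\Phi_{\TTT}$ to an order- and $+$-isomorphism of $\{u\in\Lambda(\Omega)\dd\supp u\subset E\}$ onto $\{u\in\Lambda(\Omega)\dd\supp u\subset E'\}$, where $E'=\supp_{\Omega}\BbB$; restricting to characteristic functions of clopen sets (which are order-theoretically distinguished among nonnegative elements) yields a Boolean isomorphism of the clopen algebras of $E$ and $E'$, hence by Stone duality a homeomorphism $h\dd E\to E'$. By \THMP{isom}, $\Phi$ respects the types I, II, III, so $h$ carries $E\cap\Omega_{I},E\cap\Omega_{\tII},E\cap\Omega_{\tIII}$ onto the corresponding strata of $E'$, and \eqref{eqn:iso-homeo} gives $\chi(\AaA)=\chi(\BbB)$. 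Conversely, if $\chi(\AaA)=\chi(\BbB)$, then \THMP{homeo} produces homeomorphisms $E\cap\Omega_{D}\to E'\cap\Omega_{D}$ for $D\in\{I,\tII,\tIII\}$; since $\Omega=\Omega_{I}\sqcup\Omega_{\tII}\sqcup\Omega_{\tIII}$ is a partition into clopen sets (each $\Omega_{i}$ clopen by \PROp{id-clopen}, and their union is $\supp_{\Omega}\CDD_{N}=\Omega$ because $\JJJ=\JJJ_{I}\sqplus\JJJ_{\tII}\sqplus\JJJ_{\tIII}$), these glue to a homeomorphism $h\dd E\to E'$, and $\Phi_{\TTT}^{-1}\circ h^{*}\circ\Phi_{\TTT}\dd\AaA\to\BbB$ is an additive bijection, hence an isomorphism by \THMP{isom}. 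For the relation $\preccurlyeq$ the same argument runs with ``homeomorphism onto'' replaced by ``homeomorphism onto a clopen subset'': the forward direction uses $\BbB'\subset\BbB\Rightarrow\JJJ(\BbB')\leqsl^{s}\JJJ(\BbB)$ (whence every stratum of $\supp_{\Omega}\BbB'$ is clopen in the corresponding stratum of $\supp_{\Omega}\BbB$, so its $\kappa_{d},\kappa_{c}$ drop), and the converse realizes $E\cap\Omega_{D}$ as a clopen subset of $E'\cap\Omega_{D}$ by passing to a sub-index-set inside the canonical form $\beta D(\kappa_{d})\sqcup\beta[D(\kappa_{c})\times\Xx]$ of \THMP{homeo}, using $\kappa_{d}(E\cap\Omega_{D})\leqsl\kappa_{d}(E'\cap\Omega_{D})$ and $\kappa_{c}(E\cap\Omega_{D})\leqsl\kappa_{c}(E'\cap\Omega_{D})$.

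Parts (III) and (IV) differ from (II) only in which strata the transported homeomorphism must respect: an s-isomorphism preserves $\dim$ and atoms, hence preserves each ideal $\IiI_{I_{n}}$ ($n=1,\dots,\infty$) — it is the ideal generated by the atoms of dimension $n$ — so the induced $h$ respects all the $\Omega_{I_{n}}$, and a t-isomorphism additionally respects $\Omega_{\tII_{1}},\Omega_{\tII_{\infty}},\Omega_{\tIII}$ by definition; running the (II)-proof stratum by stratum gives the stated $\chi_{s}$- and $\chi_{t}$-criteria. For (I): by \COR{123} every stratum of the model space of $\CDD_{N}$ — $\Omega$ and all of $\Omega_{I},\Omega_{I_{n}},\Omega_{\tII},\Omega_{\tII_{1}},\Omega_{\tII_{\infty}},\Omega_{\tIII}$ — is homeomorphic to the one fixed space $\beta D(2^{\aleph_{0}})\sqcup\beta[D(2^{\aleph_{0}})\times\Xx]$ independently of $N$, so I would pick a homeomorphism $\Omega_{N}\to\Omega_{N'}$ carrying each named stratum onto the corresponding one, transport it through the two model maps, and apply \THMP{isom}; reading the invariants off this homeomorphism shows that every entry of $\chi,\chi_{s},\chi_{t}$ equals $2^{\aleph_{0}}$. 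For (V): by \COR{1id-1} there are at most $2^{2^{\aleph_{0}}}$ ideals; by (II) an isomorphism class is determined by the six cardinals of $\chi(\AaA)$, each $\leqsl 2^{\aleph_{0}}$ (by \LEMP{continuum} for the $d$-parts and \CORP{continuum} for the $c$-parts), giving at most $\gamma^{6}=\gamma$ classes, while by (IV) a t-isomorphism class is determined by the countable family $\chi_{t}(\AaA)$ of such cardinals, giving at most $\gamma^{\aleph_{0}}=2^{\aleph_{0}}$ classes; the matching lower bounds come from realizing prescribed invariants — clopen subsets of $\Omega^{d}\cap\Omega_{I}$ of each cardinality $\alpha\leqsl 2^{\aleph_{0}}$ give $\gamma$ pairwise non-isomorphic ideals, and $\{0,1\}$-valued prescriptions of $(\chi_{I_{n}}^{d})_{n}$ give $2^{\aleph_{0}}$ pairwise non-$t$-isomorphic ones.

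The main obstacle I expect is the bookkeeping hidden in ``running the (II)-proof stratum by stratum'' for (III): I must check that $\chi_{s}(\AaA)$ — which records only the discrete parts $\chi_{I_{n}}^{d}$ — together with $\chi(\AaA)$ really determines every stratum $E\cap\Omega_{I_{n}}$ up to homeomorphism (equivalently, pins down the continuous parameters $\kappa_{c}(E\cap\Omega_{I_{n}})$), since an s-isomorphism is forced to respect all the $\Omega_{I_{n}}$; this rests on the structural fact, to be extracted from Sections~16--17 and from \COR{cov}, that the type-$I^{n}$ continuous stratum of any ideal is governed by the ambient type-I continuous dimension rather than being an independent invariant. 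A secondary point requiring care is the Stone-duality step in (II)'s forward direction: verifying that a $+$- and order-isomorphism of the two sublattices of $\Lambda(\Omega)$ arises from a homeomorphism $E\to E'$ respecting the $\Omega_{i}$, for which one uses that the clopen-indicator functions form an order-theoretically definable Boolean algebra inside each sublattice and that \THMP{isom} transfers the type-stratification across $\Phi$.
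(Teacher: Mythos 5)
Your overall scheme — \THM{isom} for the necessity of the numerical conditions, and for sufficiency a homeomorphism of $\supp_{\Omega}\AaA$ onto (a clopen subset of) $\supp_{\Omega}\BbB$ glued from homeomorphisms of the strata via \THM{homeo} and \eqref{eqn:iso-homeo} and then transported through the model map of \THM{model} — is exactly the paper's, which proves only (IV) in detail and derives (I), (II), (III) and (V) from the same idea. Your treatments of (I), (II), (IV) and (V) are sound (the Stone-duality detour in the forward direction of (II) is a harmless elaboration of what \THM{isom} already gives directly).

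There is, however, a genuine error in your argument for (III). You assert that an s-isomorphism ``preserves each ideal $\IiI_{I_n}$ --- it is the ideal generated by the atoms of dimension $n$ --- so the induced $h$ respects all the $\Omega_{I_n}$.'' This is false: the ideal generated by the atoms of dimension $n$ is only $\IiI_{I_n}^d$, whereas $\IiI_{I_n}^c$ is nontrivial (\EXM{existence}, \COR{123}) and contains no atoms. Indeed, take clopen sets $V_1\subset\Omega_{I_1}^c$ and $V_2\subset\Omega_{I_2}^c$ with $c(V_1)=c(V_2)=\aleph_0$; then $\IiI[V_1]$ and $\IiI[V_2]$ have identical $\chi$ and $\chi_s$ (all discrete entries vanish and $\chi_I^c=\aleph_0$ for both by \eqref{eqn:dim}), so (III) asserts they are s-isomorphic, yet no isomorphism between them can preserve $\IiI_{I_1}$, since every member of $\IiI[V_1]$ is normal and no nontrivial member of $\IiI[V_2]$ is. So if your claim were true, the criterion in (III) would have to include the invariants $\chi_{I_n}^c$, which it does not, and ``running the (II)-proof stratum by stratum'' over the full strata $\Omega_{I_n}$ would require $\chi_{I_n}^c(\AaA)=\chi_{I_n}^c(\BbB)$, which is not among your hypotheses. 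What preservation of atoms and of $\dim$ actually yields in the forward direction is only $\chi_{I_n}^d(\AaA)=\chi_{I_n}^d(\BbB)$. In the converse direction you must glue homeomorphisms of the discrete strata $\Omega_{I_n}\cap\Omega^d\cap E$ together with one of $\Omega_I^c\cap E$ taken as a whole (ignoring the $\Omega_{I_m}^c$ substrata), and then separately verify from \PRO{dim} that the resulting isomorphism preserves $\dim$: the point is that on the continuous part every nonzero $c_f(U^I_n\cap\Omega_{I_m})$ is infinite, so the finite factors $nm$ in \eqref{eqn:dim} are absorbed and $\dim$ does not see the $I_m$-stratification of the support. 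Your closing remark gestures at this but misstates the resolution: the cardinals $\kappa_c(E\cap\Omega_{I_n})$ are genuinely independent invariants of the ideal; they simply fail to be invariants of its s-isomorphism class.
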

\begin{proof}
The second claim of (I) follows from \COR{123} and \PROp{a-f-s}. Since (V) and the remainder of (I) follow from
(IV), it is sufficient to prove points (II)--(IV). Since their proofs are based on the same idea, we focus
on (IV) and skip proofs for (II) and (III).\par
Since representatives of members of $\IiI^c$ act in infinite-dimensional Hilbert spaces, \THM{isom} yields that
if $\Phi\dd \AaA \to \AaA' \subset \BbB$ is a t-isomorphism, then necessarily $\chi(\AaA) = \chi(\AaA') \leqsl
\chi(\BbB)$ and $\chi_t(\AaA) = \chi_t(\AaA') \leqsl \chi_t(\BbB)$. Conversely, if $\chi(\AaA) \leqsl
\chi(\BbB)$ and $\chi_t(\AaA) \leqsl \chi_t(\BbB)$ (respectively $\chi(\AaA) = \chi(\BbB)$ and $\chi_t(\AaA) =
\chi_t(\BbB)$), there is an ideal $\AaA' \subset \BbB$ ($\AaA' = \BbB$) for which $\chi(\AaA') = \chi(\AaA)$
and $\chi_t(\AaA') = \chi_t(\AaA)$ (this may be deduced e.g. from \eqref{eqn:iso-homeo}; $\AaA'$ may be defined
as $\IiI[F]$ for suitable clopen set $F \subset \supp_{\Omega} \BbB$). Now \THMp{homeo} combined with
\eqref{eqn:iso-homeo} implies that there are homeomorphisms $h_D\dd \Omega_D \cap \supp_{\Omega} \AaA \to
\Omega_D \cap \supp_{\Omega} \AaA'$ where $D$ runs over $I_1,I_2,\ldots,I_{\infty},\tII_1,\tII_{\infty},\tIII$.
Define a homeomorphism $H\dd \supp_{\Omega} \AaA \to \supp_{\Omega} \BbB$ as the unique continuous extension
of the union of all $h_D$'s. Finally let $\Phi\dd \AaA \to \BbB$ be defined as follows. For $\AAA \in \AaA$
put $f = \Psi_N(\AAA) \in \Lambda(\Omega)$. Since $\supp f \subset \supp_{\Omega} \AaA$, the rules
$g = f \circ H^{-1}$ on $\supp_{\Omega} \BbB$ and $g = 0$ elsewhere well defines $g \in \Lambda(\Omega)$
such that $\supp g \subset \supp_{\Omega} \BbB$. We put $\Phi(\AAA) = \Psi_{N'}^{-1}(g)$. It is easily seen
that $\Phi$ is a well defined bijection. What is more, $\Phi$ satisfies condition \eqref{eqn:add2}, by point
(D4') of \THMp{model}. Consequently, \THM{isom} yields that $\Phi$ is an isomorphism. It follows from
the construction that $\Phi$ is in fact a t-isomorphism.
\end{proof}

\begin{cor}{antisym}
If $\AaA \preccurlyeq^i \BbB$ and $\BbB \preccurlyeq^i \AaA$, then $\AaA \cong^i \BbB$.
\end{cor}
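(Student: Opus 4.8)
The plan is to reduce \COR{antisym} to the antisymmetry of the coordinatewise order on tuples of cardinal numbers, exploiting the numerical classification provided by \THMP{iso-iso}. Recall from \DEF{isom} and \DEF{charact} that, for each of the three prefixes `i' (empty, `s' or `t'), \THM{iso-iso} attaches to every ideal a tuple of cardinal invariants --- $\chi(\cdot)$ in the empty case, the pair $(\chi(\cdot),\chi_s(\cdot))$ in the `s' case, and the pair $(\chi(\cdot),\chi_t(\cdot))$ in the `t' case --- in such a way that two ideals are i-isomorphic precisely when the corresponding invariants agree, and that $\AaA \preccurlyeq^i \BbB$ holds precisely when the invariant of $\AaA$ is $\leqsl$ the invariant of $\BbB$ entrywise.

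First I would treat the empty prefix. Suppose $\AaA \preccurlyeq \BbB$ and $\BbB \preccurlyeq \AaA$. By the `$\preccurlyeq$' clause of point (II) of \THM{iso-iso}, the two hypotheses read $\chi(\AaA) \leqsl \chi(\BbB)$ and $\chi(\BbB) \leqsl \chi(\AaA)$, where `$\leqsl$' is the coordinatewise order. Comparing the tuples coordinate by coordinate and invoking, in each coordinate, the antisymmetry of the comparison of cardinals (the Cantor--Schr\"oder--Bernstein theorem, or merely the fact that $\Card$ is linearly ordered) gives $\chi(\AaA) = \chi(\BbB)$; the `$\cong$' clause of point (II) then yields $\AaA \cong \BbB$. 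The `s' and `t' cases are identical in form, using points (III) and (IV) in place of (II): the hypotheses now additionally force $\chi_s(\AaA) \leqsl \chi_s(\BbB) \leqsl \chi_s(\AaA)$ (respectively $\chi_t(\AaA) \leqsl \chi_t(\BbB) \leqsl \chi_t(\AaA)$), hence $\chi_s(\AaA)=\chi_s(\BbB)$ (respectively $\chi_t(\AaA)=\chi_t(\BbB)$) together with $\chi(\AaA)=\chi(\BbB)$, and therefore $\AaA \cong^s \BbB$ (respectively $\AaA \cong^t \BbB$).

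Since the whole argument is a mechanical consequence of \THM{iso-iso}, there is no genuine obstacle to overcome; the only point worth a moment's care is that every entry of $\chi$, $\chi_s$ and $\chi_t$ is a bona fide cardinal number (in fact $\leqsl 2^{\aleph_0}$ by point (I) of \THM{iso-iso}, though that bound plays no role here), so that entrywise antisymmetry of `$\leqsl$' really is the antisymmetry of cardinal comparison. It may also be worth recording, alongside the statement just proved, that `$\preccurlyeq^i$' is transitive --- as noted immediately after \DEF{isom} --- so that together these facts make `$\preccurlyeq^i$' a genuine partial order on the i-isomorphism classes of ideals.
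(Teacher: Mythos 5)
Your argument is correct and is exactly the intended one: the paper states this corollary as an immediate consequence of \THM{iso-iso}, whose points (II)--(IV) translate both `$\preccurlyeq^i$' and `$\cong^i$' into the coordinatewise comparison of the cardinal-valued invariants $\chi$, $\chi_s$, $\chi_t$, so that antisymmetry of the order on cardinals gives the claim. No further comment is needed.
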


\begin{cor}{contract}
$\CDD_N \cong^t \IiI(1)$ where $\IiI(1) \subset \CDD$ is the ideal of all contraction operators.
\end{cor}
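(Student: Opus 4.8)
The plan is to sandwich $\IiI(1)$ between two ideals that are $\mathrm{t}$-isomorphic to $\CDD_N$, and then to invoke the Schr\"oder--Bernstein-type \COR{antisym}. By \THM{iso-iso}(I) we may freely replace $\CDD_N$ by $\CDD=\CDD_1$ throughout, so it suffices to establish $\IiI(1)\preccurlyeq^t\CDD$ and $\CDD\preccurlyeq^t\IiI(1)$. The first of these is immediate: $\IiI(1)$ is an ideal sitting \emph{inside} $\CDD$, so the identity map of $\IiI(1)$ onto itself witnesses $\IiI(1)\preccurlyeq^t\CDD$ (hence $\IiI(1)\preccurlyeq^t\CDD_N$).

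For the reverse direction $\CDD\preccurlyeq^t\IiI(1)$ I would exploit the $\bB$-transform on $\CDD$. By \PROp{b} (together with \EXS{id-clopen}(B)) the $\bB$-transform is a bijection of $\CDD$ onto the ideal $\IiI(\bB)=\{\bB(\AAA)\dd\ \AAA\in\CDD\}$, and by (BT4) it is additive, $\bB(\XXX\oplus\YYY)=\bB(\XXX)\oplus\bB(\YYY)$; hence by \THM{isom} it is an isomorphism of ideals. Moreover the $\bB$-transform preserves each of the types $I^n$ ($n=1,2,\ldots,\infty$), $\tII^1$, $\tII^{\infty}$ and $\tIII$ --- this is exactly the list of equivalences opening the section on strongly unitarily disjoint families --- so $\bB\dd\CDD\to\IiI(\bB)$ is in fact a $\mathrm{t}$-isomorphism, giving $\CDD\cong^t\IiI(\bB)$. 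Finally, every $\bB(\AAA)$ satisfies $\|\bB(\AAA)x\|<\|x\|$ for $x\neq0$ by \PROp{b}, hence $\|\bB(\AAA)\|\leqsl1$ and $\IiI(\bB)\subset\IiI(1)$; the identity on $\IiI(\bB)$ then witnesses $\IiI(\bB)\preccurlyeq^t\IiI(1)$, and by transitivity of $\preccurlyeq^t$ we get $\CDD_N\cong^t\CDD\cong^t\IiI(\bB)\preccurlyeq^t\IiI(1)$.

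With both $\IiI(1)\preccurlyeq^t\CDD_N$ and $\CDD_N\preccurlyeq^t\IiI(1)$ in hand, \COR{antisym} applied with the prefix $\mathrm{t}$ yields $\CDD_N\cong^t\IiI(1)$. I do not expect a genuine obstacle; the two points that need a moment's care are: (i) confirming that $\bB$ is a $\mathrm{t}$-isomorphism and not just a bare isomorphism --- a general \THM{isom}-isomorphism need not preserve type $I^n$ for finite $n$ (the operations (ST1)--(ST17) recover only $\aleph_0\cdot\dim(\AAA)$, which is blind to finite dimensions), but $\bB$ does preserve it by the cited equivalences, so no extra work is required; and (ii) observing that $\IiI(\bB)\subset\IiI(1)$ is \emph{strict} (cf.\ $\Omega(\bB)\varsubsetneq\Omega(1)$ in \EXS{id-clopen}(B)), which is precisely why the Schr\"oder--Bernstein step, and not a single isomorphism, is what is needed. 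As an alternative route one could bypass the $\bB$-transform altogether: compute $\chi(\IiI(1))$ and $\chi_t(\IiI(1))$ directly, showing that every entry equals $2^{\aleph_0}$ by reproducing the argument of \COR{123} --- invoking \EXM{existence} and rescaling the relevant factor generators by positive scalars so that they fall inside $\IiI(1)$ --- and then apply \THM{iso-iso}(IV); but the argument through the $\bB$-transform is considerably shorter.
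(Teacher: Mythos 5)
Your proposal is correct and follows essentially the same route as the paper: reduce to $N=1$ via \THM{iso-iso}, observe that the $\bB$-transform is a t-isomorphism of $\CDD$ onto the subideal $\IiI(\bB)\subset\IiI(1)$, and conclude by the Schr\"oder--Bernstein-type \COR{antisym}. The extra care you take in checking that $\bB$ is a t-isomorphism (and not merely an isomorphism in the sense of \THM{isom}) is a detail the paper leaves implicit, but it is exactly the right point to verify.
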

\begin{proof}
Thanks to \THM{iso-iso} we may assume that $N=1$. Observe that the $\bB$-transform is a t-isomorphism
of $\CDD$ onto a subideal of $\IiI(1)$. So, the assertion follows from \COR{antisym}.
\end{proof}

\begin{cor}{unitary}
Let $\UuU \subset \CDD$ be the ideal of all unitary operators.
\begin{enumerate}[\upshape(1)]
\item $\IiI_{I_1} \subset \CDD_N$, the ideal of all normal $N$-tuples, is t-isomorphic to $\UuU$.
\item $\IiI_I \subset \CDD_N$, the ideal of all $N$-tuples of type I, is d-isomorphic to $\UuU$.
\end{enumerate}
\end{cor}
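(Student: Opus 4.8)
The plan is to deduce both statements from \THMp{iso-iso} by computing the characteristics of the three ideals involved. First I would record the two relevant structural facts: $\UuU=\NNn(\{z\in\CCC\dd\ |z|=1\})$, and (by \EXM{normal}) $\IiI_{I_1}=\NnN_N$, so that every member of $\UuU$ or of $\IiI_{I_1}$ is of type $I^1$, while every member of $\IiI_I$ is of type $I$. Since the type of an $N$-tuple is inherited by its $\leqsl^s$-parts and is unaffected by passing to the continuous part, this forces, for $\AaA\in\{\UuU,\IiI_{I_1},\IiI_I\}$, that all the type-$\tII$ and type-$\tIII$ components of $\chi(\AaA)$ vanish, and that for $\AaA\in\{\UuU,\IiI_{I_1}\}$ the components $\chi_{I_n}(\AaA)$ with $n\geqsl2$ as well as $\chi_{\tII_1}(\AaA)$, $\chi_{\tII_\infty}(\AaA)$ vanish too. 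Hence everything reduces to showing that the surviving components equal $(2^{\aleph_0},2^{\aleph_0})$: namely $\chi_I(\UuU)=\chi_I(\IiI_I)=\chi_I(\IiI_{I_1})$ and $\chi_{I_1}(\UuU)=\chi_{I_1}(\IiI_{I_1})$, all equal to $(2^{\aleph_0},2^{\aleph_0})$.

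For the discrete halves $\chi^d$ I would argue as follows. In each of the three cases $\JJJ(\AaA)$ is multiplicity free, hence minimal, so by \THMp{minimal} every $\leqsl^s$-part of $\JJJ(\AaA)$ is minimal; and a minimal factor $N$-tuple is an atom (by \PROp{factor} a factor has no proper $\leqsl^s$-part, while for a minimal tuple `$\leqsl^s$' coincides with `$\leqsl$'). Since an atom of dimension $n$ is of type $I^n$, the factor tuples $\XXX\leqsl^s\JJJ(\AaA)$ are: the points of the unit circle (for $\AaA=\UuU$), all of $\aA_N(1)\cong\CCC^N$ (for $\AaA=\IiI_{I_1}$), and all atoms $\aA_N=\bigcup_{n}\aA_N(n)$ (for $\AaA=\IiI_I$) — in each case every such atom does lie $\leqsl^s$-below the corresponding unity by \PROp{fractal} and minimality, and only such atoms occur. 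By \PROp{a-f-s} each $\aA_N(n)$ has power $2^{\aleph_0}$, hence so do $\aA_N$ and the unit circle, and none of these cardinals depends on $N$; thus all the discrete components equal $2^{\aleph_0}$.

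For the continuous halves $\chi^c=\dim(\JJJ(\AaA^c))$: for $\IiI_{I_1}$ this is $2^{\aleph_0}$ directly by \CORp{123}, and for $\IiI_I$ it follows since $\IiI_{I_1}^c\subset\IiI_I^c$ gives $2^{\aleph_0}=\dim(\JJJ(\IiI_{I_1}^c))\leqsl\dim(\JJJ(\IiI_I^c))\leqsl\dim(\JJJ)=2^{\aleph_0}$ by \CORp{continuum}. The remaining case $\dim(\JJJ(\UuU^c))$ is the main obstacle and I would handle it by imitating the proof of \COR{123}. First, $\UuU^c$ is nontrivial: the class $\TTT_0$ of multiplication by $e^{i\theta}$ on $L^2([0,2\pi])$ has $\ast$-commutant $\{M_\phi\dd\ \phi\in L^\infty([0,2\pi])\}$, which has no nonzero minimal projection, so $\TTT_0$ is multiplicity free and has no atomic, fractal or semiprime reduced part (fractals and semiprimes are not of type $I$); hence $\TTT_0\in\IiI^c\cap\UuU$. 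Next, using \LEMp{pd-sep}(A) write $\TTT_0=\int^{\sqplus}_{\pP_1}\PPP\dint{\mu_0(\PPP)}$ with $\mu_0\in\rgM(\pP_1)$ concentrated on a measurable domain $\FfF$ which, $\mu_0$ being standard, may be taken a standard Borel space; by \CORp{pd-type} (type $I$, multiplicity free, continuous) $\FfF$ may be taken inside $\aA_1(1)$, and — invoking that every reduced part of a unitary operator is unitary — inside the dimension-one unitary atoms, i.e. the unit circle, with $\mu_0$ non-atomic and $\FfF$ uncountable. Choosing a continuum-sized family $\{\lambda_t\}_{t\in\RRR}$ of mutually singular probabilistic non-atomic Borel measures on $\FfF$ (all regularity measures, as $\FfF$ is a measurable domain), the $N$-tuples $\XXX_t:=\int^{\sqplus}_{\FfF}\PPP\dint{\lambda_t(\PPP)}$ are mutually unitarily disjoint by \LEMp{orth-meas}, minimal by \LEMp{pd-sep}(C), and lie in $\UuU^c$ (a direct integral of unitary operators is unitary, and $\lambda_t$ non-atomic makes the discrete part vanish by \CORp{pd-type}); so $\bigsqplus_{t\in\RRR}\XXX_t\leqsl\JJJ(\UuU^c)$ has dimension $2^{\aleph_0}$, whence $\dim(\JJJ(\UuU^c))=2^{\aleph_0}$ (the upper bound again by \CORp{continuum}).

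Assembling the computations gives $\chi(\UuU)=\chi(\IiI_I)=((2^{\aleph_0},2^{\aleph_0});(0,0);(0,0))$, so $\IiI_I\cong\UuU$ by \THMp{iso-iso}(II); and $\chi(\UuU)=\chi(\IiI_{I_1})$ together with $\chi_t(\UuU)=\chi_t(\IiI_{I_1})$ (each having $\chi_{I_1}=(2^{\aleph_0},2^{\aleph_0})$ and every other component $(0,0)$, which is insensitive to the ambient $N$), so $\IiI_{I_1}\cong^t\UuU$ by \THMp{iso-iso}(IV). The delicate point I expect to cost the most is exactly the step in the previous paragraph where the prime factors occurring in the decomposition of an element of $\UuU^c$ must be kept unitary, so that the $\XXX_t$ genuinely lie in $\UuU$; this needs a careful reconciliation of "reduced part of a unitary operator" with the regular direct-integral machinery of Sections 20--22.
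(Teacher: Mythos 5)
Your proof is correct, and for part (2) it is exactly the paper's argument: check that all the relevant entries of $\chi(\IiI_I)$ and $\chi(\UuU)$ equal $2^{\aleph_0}$ (the others being $0$) and apply \THMp{iso-iso}. For part (1), however, you take a genuinely different route. The paper's one-line proof of (1) says to repeat the argument of \COR{contract}: reduce to $N=1$ by \THMp{iso-iso}, note that $\UuU$ is already a subideal of $\NnN_1=\IiI_{I_1}$, exhibit a t-isomorphism of $\NnN_1$ onto a subideal of $\UuU$ (the analogue of the $\bB$-transform here is functional calculus along a Borel injection of $\CCC$ into the unit circle, which turns a normal operator into a unitary generating the same von Neumann algebra), and conclude by the antisymmetry \COR{antisym}. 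That route sidesteps entirely the one computation that costs you the most work, namely $\dim(\JJJ(\UuU^c))=2^{\aleph_0}$: with two-way embeddings one never needs to know the continuous characteristic of $\UuU$. Your route instead computes $\chi_t(\UuU)$ in full by transplanting the proof of \COR{123} to the circle (the multiplication operator by $e^{i\theta}$ on $L^2([0,2\pi])$, a measurable domain inside the circle of scalar atoms, and a continuum of mutually singular non-atomic measures there), which is sound --- the resulting regular direct integrals are unitary, multiplicity free and have trivial discrete part by \CORp{pd-type} --- but it leans on the machinery of Sections~20--22 where the paper's argument for (1) needs essentially only Section~23. What your approach buys is uniformity (both parts fall out of the same characteristic computation) and, as a by-product, the explicit value of every entry of $\chi_t(\UuU)$; what the paper's buys is brevity and independence from the prime-decomposition apparatus.
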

\begin{proof}
To see (1), repeat the argument in the proof of \COR{contract}. To show (2), observe that all entries
of the suitable characteristics of both the ideals $\IiI_1$ and $\UuU$ are equal to $2^{\aleph_0}$ and apply
\THM{iso-iso}.
\end{proof}

The above corollaries say that whatever can be said about single (unitary) contraction operators in the language
of `discrete' direct sums, this will have its natural `counterpart' for arbitrary (type I) $N$-tuples.

\begin{rem}{spatial}
Since $\CDD_N \cong^t \CDD_{N'}$ for any $N$ and $N'$, we may also speak of \textit{spatially i-isomorphic}
ideals. Precisely, ideals $\AaA \subset \CDD_N$ and $\AaA' \subset \CDD_{N'}$ are spatially i-isomorphic
(as usual, `i' is the empty, `s' or 't' prefix) iff there is an i-isomorphism $\Phi\dd \CDD_N \to \CDD_{N'}$
which sends $\AaA$ onto $\AaA'$. However, this idea brings nothing new. Indeed, it is quite easy to check that
$\AaA$ and $\AaA'$ are spatially i-isomorphic iff $\AaA \cong^i \AaA'$ and $\AaA^{\perp} \cong^i
(\AaA')^{\perp}$. So, we only have to double the length of characteristics. One information in this subject
however may be interesting: up to spatial isomorphim, there is only $\card(\{\alpha\in\Card\dd\ \alpha \leqsl
2^{\aleph_0}\})$ different ideals. So, under the continuum hypothesis, this number is countable.
\end{rem}

\SECT{Concluding remarks}

\textbf{A.} Results of Sections~20-22, especially Lemmas~\ref{lem:pd-sep} \pREF{lem:pd-sep} and \ref{lem:disj-meas}
\pREF{lem:disj-meas}, prove that it is good to know how to recognize regularity measures. Especially in finite-dimensional
case, since \PROp{finint} simply characterizes summable fields of $N$-tuples. The author is not aware of the existence
of any result in this direction. We suppose that
\begin{quote}\textbf{Conjecture.}
\textit{Every $\sigma$-finite (Borel) measure on $\aA_N(n)$ for finite $n$ is concentrated on a measurable domain.}
\end{quote}
Below we answer the conjecture in the affirmative for $n=1$. (We are convinced this is well known. However, we could not
find anything about this in the literature.) Let us first make some comments on consequences of the conjecture. If only it
is true, every pair $(\xxX,\{\PPP_x\}_{x\in\xxX})$ where $(\xxX,\MmM,\NnN)$ is standard and $\xxX \ni x \mapsto \PPP_x \in
\bigcup_{n=1}^{\infty} \aA_N(n)$ is a one-to-one Borel function is a regular system, i.e. $\PPP_x$'s `form' the prime
decomposition of some $\XXX \in \SsS\EeE\PpP_N$. Indeed, the sets $\xxX_n = \{x\in\xxX\dd\ \dim(\PPP_x) = n\}$
($n=1,2,\ldots$) are measurable and there is a finite Borel measure $\mu_n$ on $\aA_N(n)$ such that the assignment $\xxX_n
\ni x \mapsto \PPP_x \in \aA_N(n)$ is an almost null-isomorphism between $(\xxX_n,\Mm\bigr|_{\xxX_n},\NnN\bigr|_{\xxX_n})$
and $(\aA_N(n),\Bb(\aA_N(n)),\NnN(\mu_n))$. Now it follows from the conjecture that $\mu_n \in \rgM(\aA_N(n))$
and consequently $\{\PPP_x\}_{x\in\xxX_n} \in \rgS_{loc}$. Put $\XXX_n = \bigsqplus^{\NnN}_{x\in\xxX_n} \PPP_x
(= \int^{\sqplus}_{\aA_N(n)} \PPP \dint{\mu_n(\PPP)})$. We conclude from \CORp{pd-type} that $\XXX_n \in \IiI_{I_n}$. So,
$\XXX_n \disj \XXX_m$ for $n \neq m$ and therefore $\mu_n \perp_s \mu_m$, thanks to \LEMp{orth-meas}. Now it suffices
to apply \LEMp{disj-meas} to obtain that $\{\PPP_x\}_{x\in\xxX} \in \rgS_{loc}$ (and $\bigsqplus_{n=1}^{\infty} \XXX_n =
\bigsqplus^{\NnN}_{x\in\xxX} \PPP_x$).\par
Let us add here that the work of Ernest shows that there are standard Borel measures on $\pP_N \cap \SsS\EeE\PpP_N(\infty)$
which are not concentrated on measurable domains (see Propositions~1.53 and 3.13 in \cite{e}).\par
Let us now show that every $\sigma$-finite Borel measure $\mu$ on $\CCC^N$ is concentrated on a measurable domain. Since
there is a finite Borel measure $\nu$ on $\CCC^N$ such that $\mu \ll \nu$, we may assume $\mu$ is finite. First assume
$\mu$ is concentrated on a compact set. Put $\tTT = \int^{\oplus}_{\CCC^N} \xi \dint{\mu(\xi)}$. It follows from
the Stone-Weierstrass theorem that $M_f \in \WWw(\tTT)$ for every $f \in \CCc(K)$ where $M_f$ is the multiplication
operator by $f$. This implies that $M_u \in \WWw(\tTT)$ for every $u \in L^{\infty}(\mu)$ as well. Consequently,
$M_u \in \ZZz(\WWw(\tTT))$ (since $\WWw(\tTT)$ consists of decomposable operators) and hence $\int^{\oplus}_A \xi
\dint{\mu(\xi)} \disj \int^{\oplus}_{\CCC^N \setminus A} \xi \dint{\mu(\xi)}$ which shows that $\TTT =
\int^{\sqplus}_{\CCC^N} \xi \dint{\mu(\xi)}$ and thus $\mu \in \rgM(\CCC^N)$.\par
Now if $\mu$ is arbitrary, there is a sequence $(K_n)_{n=1}^{\infty}$ of compact pairwise disjoint subsets of $\CCC^N$
such that $\mu(\CCC^N \setminus \bigcup_{n=1}^{\infty} K_n) = 0$. The above argument proves that $\mu\bigr|_{K_n} \in
\rgM(\CCC^N)$ for every $n$. Put $\XXX_n = \int^{\sqplus}_{K_n} \xi \dint{\mu(\xi)}$. Now we repeat earlier argument:
$\XXX_n \disj \XXX_m$ for $n \neq m$ (by \LEMP{orth-meas}) and thus $\mu \in \rgM(\CCC^N)$, thanks to \LEMp{disj-meas}.
\vspace{0.3cm}

\textbf{B.} \THM{mod-pr} (\PREF{thm:mod-pr}; cf. also \REM{axiom}, \PREF{rem:axiom}) establishes a one-to-one
correspondence between coverings and functions $\mu\dd \Mm \to \CDD_N$ satisfying conditions (M0)--(M3) (see the statement
of \THM{mod-pr}). These conditions are purely `discrete', i.e. they need no measure-theoretic nor topological background
and are formulated in terms of the direct sum operation of a pair. So, it seems to be interesting (and may turn out to be
relevant) which topological or measure-theoretic notions (operations, features, tools, etc.) are sufficient for
reconstructing from $\mu$ the covering to which it corresponds.\vspace{0.3cm}

\textbf{C.} Similarly as we defined continuous direct sums, one may try to define `continuous' ideals in $\CDD_N$. It may
be done in at least a few ways. Here we propose only one of them. Let us call an ideal $\AaA \subset \CDD_N$
\textit{continuous} if $\AaA$ satisfies the following condition. Whenever $(\xxX,\Mm,\NnN,\Phi)$ is a full covering
and $\AAA = \bigsqplus^{\NnN}_{x\in\xxX} \mM(x) \odot \Phi(x)$ for some $\mM \in \aaA(\xxX)$, then $\AAA \in \AaA$ \iaoi{}
there is a set $\zzZ \in \NnN$ such that $\Phi(x) \in \AaA$ for every $x \in s(\mM) \setminus \zzZ$. Using \THMp{prime} one
may easily check that it suffices to verify the above condition with a one fixed full covering and only for $\AAA \in
\SsS\EeE\PpP_N$. For example, $\IiI_i$ is a continuous ideal for each $i \in \{I,I_1,I_2,\ldots,I_{\infty},\tII,\tII_1,
\tII_{\infty},\tIII\}$, while $\IiI_i^c$ and $\IiI_i^d$ are not. A \textit{p-isomorphism} (the prefix `p' stands for `prime
decomposition') between continuous ideals is such an isomorphism $\Psi\dd \AaA \to \BbB$ that whenever $\AAA =
\bigsqplus^{\NnN}_{x\in\xxX} \mM(x) \odot \PPP_x$ is a prime decomposition of $\AAA \in \AaA$, prime decomposition
of $\Psi(\AAA)$ may be written in the form $\bigsqplus^{\NnN}_{x\in\xxX} \mM(x) \odot \Psi(\PPP_x)$, and the same for
$\Psi^{-1}$. The following problem may be interesting.
\begin{quote}
\textbf{Question.} \textit{Are $\CDD_N$ and $\CDD_{N'}$ p-isomorphic?}
\end{quote}
\vspace{0.3cm}

\textbf{D.} Our last remark is about the length of tuples. Readers interested in sequences (that is, countable infinite
families) of closed densely defined operators acting in common Hilbert spaces may verify that most of the results (with
no changes in proofs) of this treatise remain true also in that case, i.e. for $N = \infty$. (However, when working with
uncountable families, a counterpart of crucial \THMP{common}, fails to be true which causes that the whole theory crashes
in that case.) Since infinite sequences are rarely investigated, we restricted our study to finite collections.

\end{document}